\newtheorem{theorem}{Theorem}[section]
\newtheorem{lemma}[theorem]{Lemma}
\newtheorem{proposition}[theorem]{Proposition}
\newtheorem{corollary}[theorem]{Corollary}
\newtheorem{definition}[theorem]{Definition}
\newtheorem{remark}[theorem]{Remark}
\newtheorem{conjecture}[theorem]{Conjecture}
\newtheorem{assumption}[theorem]{Assumption}
\numberwithin{equation}{section}
\newcommand{\bea}{\begin{eqnarray}}
\newcommand{\eea}{\end{eqnarray}}
\def\beaa{\begin{eqnarray*}}
\def\eeaa{\end{eqnarray*}}
\def\ba{\begin{array}}
\def\ea{\end{array}}
\def\be#1{\begin{equation} \label{#1}}
\def \eeq{\end{equation}}
\newcommand{\bsub}{\begin{subequations}}
\newcommand{\esub}{\end{subequations}}
\def\lab{\label}
\newcommand{\nn}{\nonumber}
\def\les{\lesssim}
\def\c{\cdot}
\def\f12{{\frac 1 2}}
\def\err{\mbox{Err}}
\def\ov{\overline}
\def\a{{\alpha}}
\def\b{{\beta}}
\def\ga{\gamma}
\def\Ga{\Gamma}
\def\de{\delta}
\def\De{\Delta}
\def\ep{\epsilon}
\def\ka{\kappa}
\def\la{\lambda}
\def\La{\Lambda}
\def\Si{\Sigma}
\def\om{\omega}
\def\Th{\Theta}
\def\vphi{\varphi}
\def\th{\theta}
\def\nab{\nabla}
\def\pr{{\partial}}
\def\div{{\mbox div\,}}
\def\f{\frac}
\def\th{\theta}
\def\rd{\partial}
\def\Carter{\mathfrak{C}}
\def\AA{{\mathcal A}}
\def\BB{{\mathcal B}}
\def\FF{{\mathcal F}}
\def\GG{{\mathcal G}}
\def\HH{{\mathcal H}}
\def\II{{\mathcal I}}
\def\JJ{{\mathcal J}}
\def\LL{{\mathcal L}}
\def\MM{{\mathcal M}}
\def\NN{{\mathcal N}}
\def\QQ{{\mathcal Q}}
\def\A{{\bf A}}
\def\D{{\bf D}}
\def\E{{\bf E}}
\def\F{{\bf F}}
\def\g{{\bf g}}
\def\M{{\bf M}}
\def\S{{\bf S}}
\def\R{{r^2 +a^2}}
\DeclareFontFamily{U}{mathx}{\hyphenchar\font45}
\DeclareFontShape{U}{mathx}{m}{n}{
      <5> <6> <7> <8> <9> <10>
      <10.95> <12> <14.4> <17.28> <20.74> <24.88>
      mathx10
      }{}
\DeclareSymbolFont{mathx}{U}{mathx}{m}{n}
\DeclareMathAccent{\widecheck}{0}{mathx}{"71}
\def\Gac{\widecheck{\mathbf{\Ga}}}
\def\dk{\mathfrak{d}}
\def\reg{s}
\def\Reals{\mathbb{R}}
\providecommand{\abs}[1]{\lvert#1\rvert}
\def\trap{\text{trap}}
\def\nontrap{\cancel{\trap}}
\def\BLS{\S^{\textbf{BL}}}
\providecommand{\bf}[1]{\textbf{#1}}
\def\Opw{\mathbf{Op}_w}
\def\df{\delta_{\FF}}
\def\xit{\xi_{\tt}}
\def\xiphi{\xi_{\tphi}}
\def\xitheta{\xi_{\th}}
\def\xirstar{\tilde{\xi}_{r^*}}
\def\tt{\tau}
\def\tphi{\tilde{\phi}}
\def\tmod{t_{\text{mod}}}
\def\phimod{\phi_{\text{mod}}}
\def\dbl{\delta_{\textbf{BL}}}
\def\dred{\delta_{\text{red}}}
\def\dhor{\delta_{\HH}}
\def\dec{\de_{\text{dec}}}
\def\gcheck{\widecheck{\g}}
\def\gam{\g_{a,m}}
\def\Vref{V_{\text{ref}}}
\def\qs{|q|^2}
\def\EM{{\bf EM}}
\def\EMF{{\bf EMF}}
\def\EF{{\bf EF}}
\newcounter{mnotecount}[section]
\def\Gtz{\GG_{\Xi}}
          \def\Mtrap{\,\MM_{\trap}}
\def\Mntrap{{\MM_{\nontrap}}}
\def\tauu{\underline{\tau}}
\def\tauut{\widetilde{\tauu}}
\def\NNtilde{\widetilde{\NN}}
\def\MF{{\bf MF}}
\def\bsplit{\begin{split}}
\def\prtan{\pr_{\text{tan}}}
\begin{document}
\allowdisplaybreaks

\title{Energy-Morawetz estimates for the wave equation in perturbations of Kerr}
\author{Siyuan Ma and J\'{e}r\'{e}mie Szeftel}

\begin{abstract}
In this paper, we prove energy and Morawetz estimates for solutions to the  scalar wave equation in spacetimes with metrics that are perturbations, compatible with nonlinear applications, of Kerr metrics in the full subextremal range. Central to our approach is the proof of a global in time energy-Morawetz estimate conditional on a low frequency control of the solution using microlocal multipliers adapted to the $r$-foliation of the spacetime.  This result constitutes a first step towards extending the current proof of Kerr stability in \cite{GCM1} \cite{GCM2} \cite{KS:Kerr} \cite{GKS} \cite{Shen}, valid in the slowly rotating case, to a complete resolution of the black hole stability conjecture, i.e., the statement that the Kerr family of spacetimes is nonlinearly stable for all subextremal angular momenta.
\end{abstract}

\maketitle

\tableofcontents


\section{Introduction}



\subsection{Kerr stability conjecture}


We begin with introducing the Einstein vacuum equations, the Kerr solution and the Kerr stability conjecture.


\subsubsection{Einstein vacuum equations}


The Einstein vacuum equations (EVE) in a {Lorentzian manifold} $(\MM, \g)$ take the form
\beaa
\mathbf{R}_{\a\b}=0,
\eeaa
where $\mathbf{R}_{\a\b}$ {denotes} the Ricci curvature tensor of the metric $\g$. Foundational contributions by Choquet-Bruhat \cite{CB52}, and by  
Choquet-Bruhat and Geroch \cite{CBG69}, formulate EVE as an evolution problem of hyperbolic type and associate to any suitable sufficiently regular initial data set a unique (up to diffeomorphisms) maximal Cauchy development.


\subsubsection{Kerr solution}


The EVE admit a family of explicit solutions, found by Kerr \cite{Kerr63} in 1963, which describe asymptotically flat, stationary, axially symmetric black hole spacetimes. The metrics of Kerr spacetimes are parameterized by an angular momentum per unit mass $a$ and a mass $m$,  satisfying $|a|\leq m$, and take the following form in the Boyer--Lindquist  \cite{BL67} coordinates $(t,r,\th, \phi)$
\bea\lab{eq:expressionofKerrmetricinBLcoordinates:intro}
\gam=-\frac{\Delta \qs}{\Sigma^2} dt^2 + \frac{\sin^2\th\Sigma^2 }{\qs}\bigg(d\phi - \frac{2amr}{\Sigma^2} dt\bigg)^2 +\frac{\qs}{\Delta} dr^2 + \qs d\th^2,
\eea
where
\bea
\Delta = r^2 - 2mr +a^2, \quad \qs=r^2+a^2\cos^2\th, \quad \Sigma^2=(\R)^2 - a^2\sin^2\th \Delta.
\eea
Note that the particular case $a=0$ with $m>0$ corresponds to the family of Schwarzschild spacetimes, introduced by Schwarzschild \cite{Sch16} in 1916.

We consider in this work the family of \textit{subextremal} Kerr spacetimes, in which the two parameters $(a,m)$ satisfy the strict inequality $|a|<m$. Such a subextremal Kerr spacetime contains a black hole $\{r<r_+\}$ with a nondegenerate event horizon located at $\{r=r_+\}$ where $r_+:=m+\sqrt{m^2-a^2}$ is the larger root of $\Delta=\De(r)$, see Figure \ref{fig:penrosediagramofKerr} for the corresponding Penrose diagram.

\begin{figure}[htbp]
  \begin{center}
\begin{tikzpicture}[scale=1.1]
\tikzstyle{every node}=[font=\Small]
      \draw[dashed, color=red, thick] (0.05,3.95) -- (3.2,0.8);
  \fill[yellow!50] (-0.05,3.95)--(-3.2,0.8) arc(225:315: 4.51 and 4.51) -- (0.05,3.95);
   \draw[dashed, color=red, thick] (-0.05,3.95)--(-3.2,0.8) ;
     \node at (-2.8,3) {Black hole region};
     \node at (0.1,4.3) {$i_+$};
       \node[rotate=315]  at (2.0,2.4) {$\II_+$};
       \node[rotate=45] at (-1.6,2.1) {Event horizon $r=r_+$};
         \draw[] (0,4) circle (0.05);
         \node at (0,1) {Domain of outer communication};  
\end{tikzpicture}
\end{center}
\caption{\footnotesize{Penrose diagram of subextremal Kerr spacetimes.}}
\lab{fig:penrosediagramofKerr}
\end{figure}
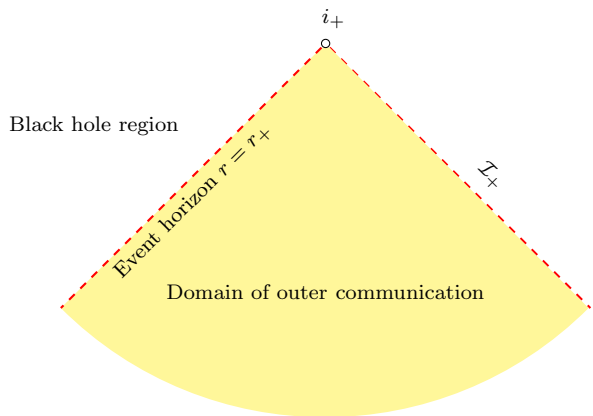


\subsubsection{Kerr stability conjecture}


The \textit{black hole stability conjecture} is one of the central open problems in general relativity. We provide a rough statement below.

\begin{conjecture}[Kerr stability conjecture]
The maximal Cauchy development of any initial data set for EVE, that is sufficiently close to a subextremal Kerr initial data in a suitable sense, has a complete future null infinity and a domain of outer communication\footnote{The domain of outer communication is the complement of the black hole region.} which is asymptotic to a nearby member of the subextremal Kerr family. 
\end{conjecture}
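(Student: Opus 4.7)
The plan is to attack the conjecture through a bootstrap continuity argument on a carefully foliated spacetime region, following and extending the strategy developed in the slowly rotating case in \cite{GCM1} \cite{GCM2} \cite{KS:Kerr} \cite{GKS} \cite{Shen}. Starting from initial data close to a reference subextremal Kerr metric $\gam$ with $|a|<m$, I would construct a maximal future development $\MM$ admitting a double null or geodesic foliation refined by general covariant modulated (GCM) spheres, along which a dynamical family of Kerr parameters $(a(u),m(u))$ is pinned down by effective balance conditions, so that the difference between the true geometry and the associated Kerr background remains small in suitable weighted norms.

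The core difficulty reduces to robust decay for the linearized gravitational dynamics around each Kerr background. I would extract the Teukolsky equations satisfied by the extreme curvature components, apply a Chandrasekhar-type transformation to obtain Regge-Wheeler type wave equations for the invariant quantities $\qf$ and $\qfb$, and then propagate decay backwards through transport and elliptic structure to all curvature components and to the Ricci coefficients, using the GCM construction to eliminate the residual gauge indeterminacy. The crucial scalar input for this chain, in the full subextremal range, is precisely an energy-Morawetz estimate for the wave equation in perturbations of Kerr, with quantitative control of trapped and superradiant frequencies adapted to an $r$-foliation, which is what the present paper supplies conditionally on a low-frequency control.

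The step I expect to be the main obstacle is the unconditional removal of that low-frequency assumption. In the slowly rotating regime, superradiance is perturbatively small and trapping is localized at a well-separated photon sphere, so the difficulty is essentially technical. For the full subextremal range, superradiance and trapping interact nontrivially, and the qualitative mode stability known for Kerr does not directly deliver a quantitative resolvent bound at low frequency that is stable under the nonlinear error terms generated by the GCM foliation and the dynamical parameters $(a(u),m(u))$. I would attempt to attack this by combining an $r^p$-weighted physical-space hierarchy at null infinity with a semiclassical spectral analysis of the radial ODE obtained after frequency decomposition, exploiting symmetry operators of Kerr that persist, to leading order, under non-axisymmetric perturbations, and converting the qualitative absence of modes into an effective integrated decay estimate near zero frequency.

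Once linear decay is unconditional and commutation with enough vector fields and derived operators provides the needed higher-order control, I would close the nonlinear bootstrap by showing that all error terms generated by the GCM construction, the commutators with the dynamical foliation, and the genuine nonlinearities of EVE are higher order in the smallness parameter. It then remains to verify convergence of the dynamical parameters $(a(u),m(u))\to(a_\infty,m_\infty)$ with $|a_\infty|<m_\infty$ as $u\to\infty$, together with completeness of future null infinity, yielding the asymptotic convergence of the domain of outer communication to a nearby member of the subextremal Kerr family.
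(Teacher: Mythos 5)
There is a genuine gap, and it is structural: the statement you are addressing is stated in the paper as a \emph{conjecture}, and it remains open; the paper itself does not prove it and does not claim to. What you have written is a research program, not a proof. The decisive steps are exactly the ones you defer with ``I would attempt to attack this by\dots'': an unconditional, quantitative integrated decay estimate at all frequencies (including the low-frequency regime and the superradiant/trapped interplay) for the relevant linearized equations on a \emph{dynamical} perturbation of a subextremal Kerr background, and then its extension from the scalar wave equation to the Teukolsky equations and Bianchi identities, where mode stability in the full range $|a|<m$ does not by itself yield the quantitative bounds needed, and where the error terms generated by the GCM foliation and the drifting parameters $(a(u),m(u))$ must be absorbed. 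None of this is established in your sketch; asserting that one could combine $r^p$-weights with a semiclassical analysis of the radial ODE is a plan, not an argument, and it is precisely where the known proofs stop outside the slowly rotating regime.

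It is also worth noting how your sketch relates to what the paper actually does. The paper's contribution is a single ingredient of the program you describe: global-in-time energy-Morawetz estimates for the \emph{scalar} wave equation on perturbations of Kerr in the full subextremal range, obtained from microlocal multipliers adapted to the $r$-foliation (Theorem \ref{th:mainenergymorawetzmicrolocal}, conditional on lower-order control) combined with a lower-order estimate losing one derivative that uses the exact-Kerr result of \cite{DRSR} as a black box (Proposition \ref{prop:energymorawetzmicrolocalwithblackbox}); the conditionality is thereby removed for scalar waves, but the Teukolsky-level estimates, the higher-order commuted estimates, and the nonlinear closure with GCM gauge and parameter modulation are all outside the paper and outside your argument. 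So the correct assessment is that your proposal reproduces the expected strategy at the level of an outline, but it does not close any of the open steps, and the conjecture cannot be regarded as proved by it.
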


For an in-depth introduction to the Kerr stability conjecture, see for example \cite{KS:Brief}. The state of the art on this conjecture is its resolution in the slowly rotating case, i.e., $|a|/m\ll 1$, in the series of works \cite{GCM1} \cite{GCM2} \cite{KS:Kerr} \cite{GKS} \cite{Shen}. The  restriction on the size of $a$ is connected to the proof of energy-Morawetz estimates in \cite{GKS}, and a complete resolution of the Kerr stability conjecture will require in particular to prove energy-Morawetz estimates for the scalar wave equation and for Teukolsky equations on perturbations of any subextremal Kerr background.

Our focus in the present work is to initiate the first step in this program, i.e., to derive energy-Morawetz estimates for the scalar wave equation on perturbations, compatible with the ones in \cite{KS:Kerr}, of any subextremal Kerr background. Before stating our main result, we first start with a review of energy-Morawetz estimates for the scalar wave equation on asymptotically flat black hole backgrounds.


\subsection{State of the art on energy-Morawetz {estimates} for scalar wave equations}


In this section, we review the literature concerning the derivation of energy-Morawetz estimates for the scalar wave equation on Kerr and perturbations of Kerr. We start by discussing the influence of the geometry of Kerr spacetimes on the decay of scalar waves.


\subsubsection{Influence of the geometry of Kerr spacetimes on the decay of scalar waves}
\lab{sec:influencegeomKerrdecaywaves}


The simplest toy model for the Kerr stability conjecture consists in deriving decay estimates in the domain of outer communication, i.e., in $r>r_+$, for solutions $\psi$ to the scalar wave equation on Kerr
\bea\lab{eq:scalarwaveeqonKerrwithRHS:F:intro}
\square_{\gam}\psi = 0.
\eea
Now, solutions $\psi$ to \eqref{eq:scalarwaveeqonKerrwithRHS:F:intro} decay in $r>r_+$ if scalar waves leave the domain of outer communication, i.e., if they travel towards null infinity, or enter the black hole region $r<r_+$. In particular, this behavior takes place at least in the following two favorable regions of the Kerr spacetime: 
\begin{itemize}
\item \textit{The asymptotically flat region.} This is the region $r\geq R$, for $R\gg m$ large enough, where the Kerr metric is close to the Minkowski metric. 

\item \textit{The redshift region.} This is the region $|r-r_+|\leq r_+\dred$ where $0<\dred\les \sqrt{m^2-a^2}$ is small enough, i.e., the redshift region is close to the event horizon.
\end{itemize}

On the other hand, the complement  of the redshift and{asymptotically flat {regions} contains two unfavorable regions from the point of view of the decay of  solutions to \eqref{eq:scalarwaveeqonKerrwithRHS:F:intro}:
\begin{itemize}
\item \textit{Trapping region.} This is a finite {co-dimension} hypersurface of the cotangent bundle of the spacetime which is spanned by trapped null geodesics, i.e., null geodesics neither going inside the black hole nor towards null infinity. As high-frequency waves tend to travel along null geodesics, such trapped null geodesics are potential obstructions to decay.  

\item \textit{Ergoregion and superradiant frequencies.} In view of \eqref{eq:expressionofKerrmetricinBLcoordinates:intro}, $\pr_t$ is a Killing vector field, which thus generates a conserved energy by Noether's theorem. Since $(\gam)_{tt}=-\frac{\De-a^2\sin^2\th}{|q|^2}$, $\pr_t$ is timelike only outside of the ergoregion $r\leq m+\sqrt{m^2-a^2\cos^2\th}$ so that the corresponding conserved energy is not coercive for a set of frequencies of the co-tangent bundle called superradiant, leaving the possibility of the growth of a coercive energy.
\end{itemize}

Finally, two properties of trapped and superradiant frequencies allow to mitigate their potential obstructions to decay of waves:
\begin{enumerate}
\item The trapping in Kerr is normally hyperbolic, i.e., unstable; null geodesics off from the trapped hypersurface escape fast towards null infinity or inside the black hole region. 
\item There is no overlap between superradiant and trapped frequencies. This can simply be observed  in physical space\footnote{For example, in Schwarzschild, i.e., for $a=0$, trapped frequencies are located at $r=3m$ while there are no superradiant frequencies.} in the range $|a|<\frac{m}{\sqrt{2}}$, but requires to go to frequency space in the range $\frac{m}{\sqrt{2}}\leq |a|<m$\, see Figures \ref{fig:penrosediagramofKerr1} and \ref{fig:penrosediagramofKerr2} for the corresponding Penrose diagrams.
\end{enumerate}

\begin{figure}[htbp]
  \begin{center}
\begin{tikzpicture}[scale=1.1]
\tikzstyle{every node}=[font=\Small]
  \draw[color=black] (-3.2,1.84) arc(300:324.65:10 and 7.5);
      \draw[dashed, color=red] (0.05,3.95) -- (3.2,0.8);
         \draw[] (0,4) circle (0.05);
          \draw[color=red!100] (-0.02, 3.96) arc(145:167:10 and 9);
 \fill[red!50] (-0.03,3.97)--(-2.6,1.4)  arc(211:186.9:2.73 and 1.35) arc(303:325.65:10.392 and 7.5);
   \fill[orange!50] (-0.03,3.97)  arc(137:158.5:10 and 9) arc(265.65:254.4:3 and 8);
   \fill[yellow!100]  (-0.025, 3.96) arc(145:166:10 and 9) arc(265.65:256.3:3 and 6) arc(158.5:137:10 and 9);
                  \draw[dashed, color=red] (-0.03,3.97)--(-3.2,0.8) ;
        \draw[color=blue,dashed] (-0.03, 3.97) arc(137:160:10 and 9);
        \draw[color=blue,dashed] (0.025, 3.95) arc(45:14.8:5 and 7);
        \draw[color=blue] (0.025, 3.95) arc(40:14.5:5 and 8);
        \draw[color=blue] (0, 3.95) arc(149.84:167.7:10 and 11.15);
             \fill[blue!70] (0.02, 3.95) arc(40:15.4:5 and 8) arc(290:255:3.5 and 3) arc(167:149.84:10 and 11.15) -- cycle;
           \fill[purple!40] (0.03, 3.95) arc(45:16.7:5 and 7) arc(292.5:314.1:3.53 and 2.9)--(0.04,3.96)--cycle;
        \draw[->,color=red] (-2.455,1.115)--(-2.51,1.5); 
            \draw[->,color=red] (-1.45,0.73)--(-1.51,1); 
        \node[color=red, font=\Tiny] at (-2.4,1) {$\HH_+$};
         \node[rotate=32, font=\tiny] at (-2.25,2.48) {$\AA$};
     \node[font=\tiny] at (0.1,4.2) {$i_+$};
       \node[rotate=20, font=\tiny] at (-1.85,2.22) [align=center]{redshift\\region};
        \node[rotate=20, font=\tiny] at (-1.85,1.55) [align=center]{ergoregion};
       \node[rotate=315, font=\tiny]  at (1.95,2.3) {$\II_+$};
        \node[color=red, font=\Tiny] at (-1.4,0.67) {$r=2m$};
          \node[font=\tiny] at (0,1.6) [align=center]{trapping\\region};
        \node[rotate=315, font=\tiny] at (1.45,2.15) [align=center]{Asymptotically flat\\ region};  
\end{tikzpicture}
\end{center}
\caption{\footnotesize{Penrose diagram of Kerr for $|a|<m/\sqrt{2}$: the redshift region consists of the red and orange parts, the ergoregion consists of the orange and yellow parts, the trapping region consists of the blue part, and the asymptotically flat region consists of the purple region. }}
\lab{fig:penrosediagramofKerr1}
\end{figure}
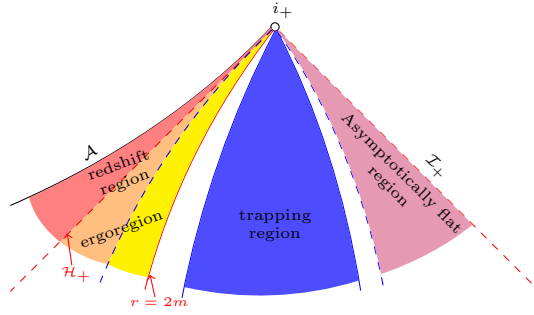

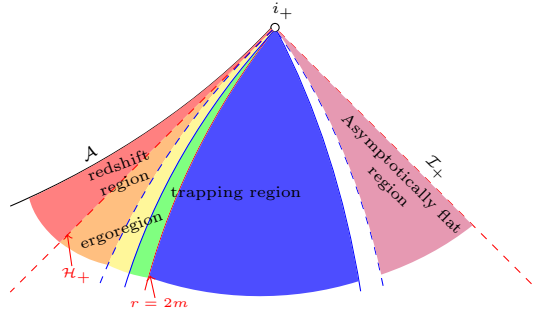
\begin{figure}[htbp]
  \begin{center}
\begin{tikzpicture}[scale=1.1]
\tikzstyle{every node}=[font=\Small]
  \draw[color=black] (-3.2,1.84) arc(300:324.65:10 and 7.5);
      \draw[dashed, color=red] (0.05,3.95) -- (3.2,0.8);
         \draw[] (0,4) circle (0.05);
          \draw[color=red!100] (-0.02, 3.96) arc(145:167:10 and 9);
 \fill[red!50] (-0.03,3.97)--(-2.6,1.4)  arc(211:186.9:2.73 and 1.35) arc(303:325.65:10.392 and 7.5);
  \fill[orange!50] (-0.03,3.97)  arc(137:158.5:10 and 9) arc(265.65:254.4:3 and 8);
   \fill[green!50]  (-0.025, 3.96) arc(145:166:10 and 9) arc(265.65:261.2:3 and 6) arc(164.05:142:10 and 8.5);
      \fill[yellow!50]  (-0.025, 3.96) arc(142:164.28:10 and 8.5) arc(261.2:256.3:3 and 6) arc(158.5:137:10 and 9);
                  \draw[dashed, color=red] (-0.03,3.97)--(-3.2,0.8) ;
        \draw[color=blue,dashed] (-0.03, 3.97) arc(137:160:10 and 9);
        \draw[color=blue,dashed] (0.025, 3.95) arc(45:14.8:5 and 7);
        \draw[color=blue] (0.025, 3.95) arc(40:14.5:5 and 8);
        \draw[color=blue] (-0.02, 3.95) arc(142:165.4:10 and 8.5);
             \fill[blue!70] (0.02, 3.95) arc(40:15.4:5 and 8) arc(290:247.55:3.5 and 3) arc(165.95:145:10 and 9) -- cycle;
           \fill[purple!40] (0.03, 3.95) arc(45:16.7:5 and 7) arc(292.5:314.1:3.53 and 2.9)--(0.04,3.96)--cycle;
        \draw[->,color=red] (-2.455,1.115)--(-2.51,1.5); 
            \draw[->,color=red] (-1.45,0.73)--(-1.51,1); 
        \node[color=red, font=\Tiny] at (-2.4,1) {$\HH_+$};
         \node[rotate=32, font=\tiny] at (-2.25,2.48) {$\AA$};
     \node[font=\tiny] at (0.1,4.2) {$i_+$};
       \node[rotate=20, font=\tiny] at (-1.85,2.22) [align=center]{redshift\\region};
        \node[rotate=20, font=\tiny] at (-1.85,1.55) [align=center]{ergoregion};
       \node[rotate=315, font=\tiny]  at (1.95,2.3) {$\II_+$};
        \node[color=red, font=\Tiny] at (-1.4,0.67) {$r=2m$};
          \node[font=\tiny] at (-0.47,2) [align=center]{trapping region};
        \node[rotate=315, font=\tiny] at (1.45,2.15) [align=center]{Asymptotically flat\\ region};  
\end{tikzpicture}
\end{center}
\caption{\footnotesize{Penrose diagram of Kerr for {$m/\sqrt{2}<|a|<m$}: the redshift region consists of the red and orange parts, the ergoregion consists of the orange, yellow and green  parts, the trapping region consists of the green and blue parts, and the asymptotically flat region consists of the purple region. In particular, the ergoregion and trapping region intersect with each other in physical space.}}
\lab{fig:penrosediagramofKerr2}
\end{figure}

In the next section, we present a robust strategy to derive quantitative decay estimates for solutions $\psi$ to \eqref{eq:scalarwaveeqonKerrwithRHS:F:intro} which takes into account the above geometric features of subextremal Kerr spacetimes.


\subsubsection{Derivation of quantitative decay estimates for scalar waves}
\lab{sec:strategytogetdecayfromenergyMorawetzredshiftandrp:intro}


The first uniform boundedness result for the scalar wave equation on a  Schwarzschild background has been obtained by Kay-Wald  \cite{KW87}. In Kerr, the first result is due to Whiting \cite{Whiting}, who proved the absence of exponentially growing modes in the full subextremal range. To go beyond these initial results and derive quantitative decay for solutions to the wave equation, the following robust strategy has emerged: 
\begin{enumerate}
\item \textit{Energy-Morawetz estimates.} This step, which has been introduced initially in the breakthrough work of Blue-Soffer \cite{BS03}, must necessarily come first, as it is the only step that is implemented on a causal region. The resulting estimate yields weak decay which is {square} integrable in time, with weights in $r$ degenerating both at spatial infinity and at the event horizon. Moreover, the weights  in $r$ in front of all first order derivatives except $\pr_r$  degenerate also in the trapping region.  

\item \textit{Redshift estimates.} The degeneracy at the event horizon in the first step is removed thanks to an estimate, introduced by Dafermos-Rodnianski  \cite{DR09}, in the redshift region. 

\item \textit{$r^p$-weighted method.} This step, introduced by Dafermos-Rodnianski in \cite{DR10}, relies on estimates in the region $r\geq R$ with $R$ large enough. First, estimates yielding decay in $r$ are derived, and then, using the mean value theorem, this decay in $r$ is traded for decay in time hence concluding the derivation of quantitative decay estimates. 
\end{enumerate}

The last two steps are performed in the simpler redshift and faraway regions and can easily be shown to apply to the full subextremal range and for perturbations of Kerr as well. On the other hand, the first step, on the derivation of  energy-Morawetz estimates, is by far the most demanding as it must deal with the entire domain of outer communication, and in particular with both trapped and superradiant frequencies. From now on, we review the state of the art on the derivation of energy-Morawetz estimates in Kerr and perturbations of Kerr, starting with the particular case of Schwarzschild.

\begin{remark}
Let us mention an alternative approach for deriving decay estimates on black hole spacetimes, based on a blend of spectral and microlocal methods. Originally designed  to prove decay estimates for wave equations on Kerr-de Sitter black holes, it led to   the seminal proof of the nonlinear stability of slowly rotating Kerr-de Sitter black holes by Hintz-Vasy \cite{HVas}. It has since been extended to the asymptotically flat setting, first in slowly rotating Kerr by H\"afner-Hintz-Vasy \cite{HHV} and then in {weakly charged slowly rotating} Kerr-Newman by He \cite{He}. We refer the reader to \cite{Hafner:review} for a review of this approach.
\end{remark}


\subsubsection{Decay for scalar wave equations on Schwarzschild}


A Morawetz estimate, together with a degenerate conserved energy estimate based on the causal Killing vectorfield $\pr_t$, is first derived by Blue-Soffer in \cite{BS03} using a radial vectorfield linearly degenerating at the trapping radius $r=3m$. It is then used in \cite{BS03} and later in \cite{BS07,BS06} to study the decay of semilinear wave equations in Schwarzschild spacetime. These energy-Morawetz estimates are subsequently reproduced by Dafermos-Rodnianski \cite{DR09}, in which the degeneracy near the event horizon is removed by the so-called redshift estimates via exploiting the positivity of the surface gravity on the event horizon, and by Marzuola-Metcalfe-Tataru-Tohaneanu \cite{MMTT} where the authors additionally prove Strichartz estimates.


\subsubsection{Energy-Morawetz {estimates} for scalar wave equations on Kerr}
\lab{sec:reviewofenergyMorawetzresultsforscalarwaveon/Kerr:intro}


Recall from Section \ref{sec:influencegeomKerrdecaywaves} that, in contrast to the case of Schwarzschild, trapping is not localized in physical space and superradiance is present in Kerr for $0<|a|<m$. Also, superradiant and trapped frequencies overlap in physical space in the range $\frac{m}{\sqrt{2}}\leq |a|<m$, but are well separated in frequency space. 

In the slowly rotating case, i.e., $|a|\ll m$, Dafermos-Rodnianski \cite{DR11} obtained a uniform boundedness result using a frequency decomposition into modes. Soon after, the first energy-Morawetz estimates, as well as weak decay estimates, for scalar fields in slowly rotating Kerr were proved by Tataru-Tohaneanu \cite{Ta-Toh} using microlocal multipliers. Andersson-Blue \cite{AB15} then obtained a new proof using a purely physical space approach based on Carter's Killing $2$-tensor. 

The proof of energy-Morawetz estimates in the full subextremal range has been obtained by Dafermos-Rodnianski-Shlapentokh-Rothman \cite{DRSR} using mode decomposition based on the full separability of the wave equation in Kerr.


\subsubsection{Energy-Morawetz {estimates for scalar waves} on perturbations of Kerr  with $|a|\ll m$}


In view of nonlinear applications, it is important to extend the results in Kerr reviewed in Section \ref{sec:reviewofenergyMorawetzresultsforscalarwaveon/Kerr:intro} to perturbations of Kerr. There are few recent works going in that direction. Lindblad-Tohaneanu proved in \cite{LT18,LT20} global existence for small data solutions to a quasilinear wave equation in small perturbations of, first Schwarzschild and then Kerr with $|a|/m\ll 1$, using a microlocal approach adapted to the constant time level sets as in \cite{Ta-Toh}. Dafermos-Holzegel-Rodnianski-Taylor \cite{DHRT22} later  generalized those results to allow the presence of quadratic semilinear terms satisfying the null condition. On the other hand, the analysis for wave equations in perturbations of Kerr beyond the slowly rotating case is an open problem and the focus of the present paper.

\begin{remark}
We have reviewed above the state of the art concerning energy-Morawetz estimates for wave equations in Kerr or in perturbations of Kerr. A closely related topic concerns the derivation of energy-Morawetz estimates for wave equations in Kerr-Newman or in perturbations of Kerr-Newman, where Kerr-Newman is a 3-parameter family of asymptotically flat charged black holes that are stationary solutions to Einstein-Maxwell equations. For a review of this problem, we refer to the introduction of \cite{Giorgi}.
\end{remark}


\subsection{First version of the main result}


The goal of this paper is the derivation of energy-Morawetz estimates for the inhomogeneous scalar wave equation on perturbations of Kerr with $|a|<m$.

Given constants $(a, m)$ with $|a|<m$ and $0<\dhor\ll m-|a|$,  let the spacetime $(\MM, \g)$, whose Penrose diagram is depicted in Figure \ref{fig:penrosediagramofM}, be such that:
\begin{itemize}
\item $\MM=\{(\tau, r, \om)\,/\,\tau\in\Reals,  r_+(1-\dhor)\leq r<+\infty,  \om \in\mathbb{S}^2\}$ is a four dimensional manifold, where $(\tau, r)$ are two coordinates on $\MM$ and $r_+:=m+\sqrt{m^2-a^2}$,

\item $\g$ is a Lorentzian metric on $\MM$ sufficiently close to the Kerr metric\footnote{The closeness of $\g$ to $\g_{a,m}$ will be made precise in Section \ref{subsubsect:assumps:perturbedmetric}.} $\g_{a,m}$,

\item the level sets of $\tau$ are spacelike and asymptotically null as $r\to +\infty$,

\item the boundary $\AA:=\{r=r_+(1-\dhor)\}$ of $\MM$ is spacelike.
\end{itemize}

\begin{figure}[htbp]
  \begin{center}
\begin{tikzpicture}[scale=1.1]
\tikzstyle{every node}=[font=\Small]
  \draw[color=black] (-3.2,1.84) arc(300:324.65:10 and 7.5);
    \draw[dashed, color=red] (-0.05,3.95)--(-3.2,0.8) ;
      \draw[dashed, color=red] (0.05,3.95) -- (3.2,0.8);
     \node[rotate=32] at (-2.3,2.58) {$\AA$};
     \node at (0.1,4.3) {$i_+$};
       \node[rotate=315]  at (2.0,2.4) {$\II_+$};
       \node[rotate=45] at (-1.5,2.1) {$\HH_+$};
         \draw[] (0,4) circle (0.05);
  \draw[blue] (-2.88,1.98) arc(220:313.5:3.53 and 2.9);
         \node at (0,0.7) {$\Sigma(\tau_1)$};
    \draw[blue] (-1.77,2.595) arc(240:302.7:3 and 2.2);
      \node at (0,2.58) {$\Sigma(\tau_2)$};

\end{tikzpicture}
\end{center}
\caption{\footnotesize{Penrose diagram of $(\MM, \g)$. $\Si(\tau_1)$ and $\Si(\tau_2)$ are two spacelike and asymptotically null level hypersurfaces of a function $\tau$, and $\AA=\{r=r_+(1-\dhor)\}$ is spacelike.}}\lab{fig:penrosediagramofM}
\end{figure}

Then, we consider the Cauchy problem for the inhomogeneous scalar wave equation on $(\MM, \g)$
\begin{align}
\label{intro:eq:scalarwave}
\left\{
             \begin{array}{rl}
            \Box_{\g} \psi &= F, \qquad \text{on }\,\,\MM,\\
(\psi, N_{\Sigma(\tau)}\psi)\vert_{\Sigma(\tau_1)}&=(\psi_0, \psi_1),
             \end{array}
           \right.
\end{align}
where $\tau_1\geq 1$ and $N_{\Sigma(\tau)}$ is the future-directed unit normal vector to $\Sigma(\tau)$.

Our main result is the derivation of energy-Morawetz estimates for solutions to \eqref{intro:eq:scalarwave}, where $\g$ is a perturbation of a Kerr metric $\gam$ with $|a|<m$. We provide below a rough version of our main theorem, see Theorem \ref{thm:main} for the precise version.

\begin{theorem}[Main theorem, rough version] 
\label{thm:main:0}
Let $\g$ be a perturbation of a Kerr metric $\gam$ with $|a|<m$ in the sense of Section \ref{subsubsect:assumps:perturbedmetric}. Then, we have for solutions to the wave equation \eqref{intro:eq:scalarwave} the following energy-Morawetz-flux estimates, for any $1\leq\tau_1<\tau_2 <+\infty$ and any given $0<\de\leq 1$,
\bea
\label{intro:MainEnerMora:psi}
\sup_{\tau\in [\tau_1, \tau_2]}\E^{(1)}[{\psi}](\tau) + {\M^{(1)}_\de}[{\psi}](\tau_1,\tau_2) +{\F}^{(1)}[{\psi}](\tau_1, \tau_2) \lesssim \E^{(1)}[\psi](\tau_1)+{\mathcal{N}^{(1)}_\de}[ \psi, F](\tau_1, \tau_2).
\eea
Here, the terms $\E^{(1)}[{\cdot}]( \tau)$, ${\F}^{(1)}[{\cdot}](\tau_1, \tau_2)$ and ${\M^{(1)}_\de}[\cdot](\tau_1,\tau_2)$ are {first-order energy on a constant-$\tau$ hypersurface $\Sigma(\tau)$,  fluxes on both $\AA(\tau_1,\tau_2)$ and $\II_+(\tau_1,\tau_2)$ and Morawetz terms over $\MM(\tau_1,\tau_2)$} that are defined as in Section \ref{subsect:norms}, the term ${\mathcal{N}^{(1)}_\de}[ \psi, F](\tau_1, \tau_2)$ is also defined in Section \ref{subsect:norms}, and the implicit constants in $\lesssim$ are independent of  $\tau_1$ and $\tau_2$, and depend only on the black hole parameters $a$ and $m$, as well as on the constants $\dhor$ and $\de$\footnote{The dependence on $a$, $m$, $\dhor$ and $\de$ will be always compressed in $\lesssim$. Also, note that the constants in $\les$ depend in addition on the constant $\dec$ tied to the assumptions on the closeness of $\g$ to $\gam$ made in Section \ref{subsubsect:assumps:perturbedmetric}.}. 
\end{theorem}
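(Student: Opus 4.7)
The overall approach is dictated by the abstract: I would first establish a global-in-time energy-Morawetz estimate conditional on a low-frequency control of $\psi$, and then close the argument by removing that low-frequency assumption. To start, I would write $\Box_\g = \Box_{\gam} + \mathcal{P}$, where $\mathcal{P}$ is a second-order differential operator whose coefficients are quantitatively small of order $\dec$ and suitably decaying in $\tau$. Rewriting \eqref{intro:eq:scalarwave} as $\Box_{\gam}\psi = F - \mathcal{P}\psi$ allows all multiplier computations to be performed on the exact Kerr background $\gam$, with $\mathcal{P}\psi$ treated as an additional source that is absorbed at the end. The Morawetz identity on $\gam$ itself would then be produced by patching together three currents tied to a decomposition of $\MM$ into the redshift region near $\{r=r_+\}$, the asymptotically flat region $\{r\geq R\}$ for $R$ large, and the intermediate region where trapping and (for $m/\sqrt{2}\leq |a|<m$) superradiance sit.

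In the redshift and asymptotically flat regions I would rely on the Dafermos-Rodnianski redshift vectorfield and the $r^p$-method respectively; these are robust under perturbation and directly deliver the $\AA$- and $\II_+$-fluxes together with the required $r$-weights. The real difficulty lies in the intermediate region. Instead of applying pseudodifferential multipliers between two constant-$\tau$ slices, which requires $\partial_\tau$ to be nearly Killing and fails in the full subextremal range because trapped and superradiant frequencies overlap in physical space, I would build multipliers adapted to the $r$-foliation: for each $r$, construct $\Opw(q_r)$ pseudodifferential in the tangential variables $(\tau,\tphi,\th)$ whose symbol $q_r$ is designed via the Carter-Whiting separation of the principal symbol of $\Box_{\gam}$. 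Because the separation holds at the symbol level, $q_r$ can be arranged to vanish exactly on the codimension-three trapped set in cotangent fibers and to dominate the tangential Morawetz density elsewhere, while supplying the right sign at the event horizon. Integrating $\mathrm{Re}\bigl(\Opw(q_r)\psi\cdot\partial_r\psi\bigr)$ in $r$ against fixed tangential measures produces a positive Morawetz spacetime term together with boundary fluxes that match those supplied by the redshift and $r^p$ currents.

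The main obstacle is to absorb the two classes of errors generated by $\Opw(q_r)$: commutator errors $\bigl[\Opw(q_r),\Box_{\gam}\bigr]\psi$, of order two in the tangential variables, which after pseudodifferential calculus and the very definition of $q_r$ reduce to terms microlocally elliptic off the trapped set and hence absorbable by the bulk; and perturbation errors $\Opw(q_r)\mathcal{P}\psi$, which are absorbable by the smallness of $\dec$. Crucially, neither class can be controlled purely by the Morawetz bulk: each leaves a residual supported at bounded tangential frequencies, and these residuals together form the low-frequency term that forces the conditional formulation of the estimate. I would then close the argument by proving a separate low-frequency bound for $\psi$. On the exact Kerr background this can be derived from a quantitative refinement of Whiting's mode stability applied to the frequency-localised solution; for the perturbed metric $\g$ the same bound is obtained by absorbing the $\dec$-small corrections via a continuity/bootstrap argument in $\tau_2$, which closes once $\dec$ is chosen sufficiently small. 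Combining this low-frequency bound with the conditional estimate yields the unconditional inequality \eqref{intro:MainEnerMora:psi}.
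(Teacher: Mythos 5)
Your high-level skeleton (a conditional estimate built from microlocal multipliers adapted to the $r$-foliation, combined with a separate control of the low-frequency/lower-order residual) matches the paper, but two of your key steps do not go through as described. First, you propose to perform \emph{all} multiplier computations on the exact Kerr background by rewriting the equation as $\Box_{\gam}\psi=F-\mathcal{P}\psi$ and "absorbing $\mathcal{P}\psi$ at the end". Since $\mathcal{P}$ is second order, the error terms $\int \mathcal{P}\psi\cdot X\psi$ (and a fortiori $\int \Opw(q_r)\mathcal{P}\psi\cdot\pr\psi$) live at one derivative above the energy-Morawetz norm you are trying to close, so this trick inevitably loses a derivative. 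The paper confines it to the \emph{lower-order} estimate (Proposition \ref{prop:energymorawetzmicrolocalwithblackbox}), where the loss appears with an $\ep$ prefactor and is later absorbed through the commuted hierarchy ($\pr_\tau$, $\pr_{\tphi}$, elliptic estimates of Lemma \ref{lemma:higherorderenergyMorawetzestimates}); the top-order microlocal estimate is instead carried out directly on the perturbed metric, with the perturbative errors in the bulk and boundary terms controlled by the dyadic-in-$\tau$ and commutator lemmas (Lemmas \ref{lem:gpert:MMtrap}, \ref{lemma:controloferrortermsinTXEmircolocalenergybulk}, \ref{lemma:controloferrortermsinBDRmircolocalenergyboundary}). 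As stated, your scheme has no mechanism to recover the lost derivative at top order.

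Second, your closure of the low-frequency term via "a quantitative refinement of Whiting's mode stability applied to the frequency-localised solution", upgraded to the perturbed metric by a bootstrap in $\tau_2$ with $\dec$ small, is not a viable step: mode decomposition is unavailable on the non-stationary metric $\g$, turning Whiting's qualitative statement into a uniform bounded-frequency estimate is essentially the hard content of \cite{DRSR} (not something to be rederived in passing), and $\dec$ is the decay exponent of the perturbation, not a smallness parameter ($\ep$ is). The paper instead controls the residual $\int_{\MM}r^{-4}|\psi|^2$ by invoking the full DRSR energy-Morawetz estimate for the \emph{inhomogeneous} wave equation on exact subextremal Kerr as a black box, applied to $\Box_{\gam}\psi=F-(\Box_\g-\Box_{\gam})\psi$, accepting the one-derivative loss precisely because it carries $\ep$. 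Finally, you omit the reduction from the finite-slab statement \eqref{intro:MainEnerMora:psi} to a genuinely global-in-time problem: since your multipliers are pseudodifferential in $\tau$ they are non-local in time, so one must extend $(\g,\psi,F)$ beyond $(\tau_1,\tau_2)$ (with Kerr outside, auxiliary sources such as $\widetilde{F}_{(0)}$, and a compact-support/density argument on $\Si(\tau_1)$) and then recover the estimate on the slab via local energy estimates; this occupies Section \ref{subsect:proofofThm4.1} of the paper and cannot be skipped.
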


\begin{remark}[Relevance to nonlinear stability of subextremal Kerr spacetimes]
\lab{rem:thm1:metricassumptions}
Concerning the relevance of Theorem \ref{thm:main:0} to the nonlinear stability of subextremal Kerr spacetimes, notice that:
\begin{itemize}
\item A large part of the proof of Kerr stability for $|a|\ll m$ in \cite{GCM1,GCM2,KS:Kerr,GKS,Shen} does not require  the smallness of $|a|/m$. In fact, the results in \cite{GCM1,GCM2,KS:Kerr,Shen} are valid in the full subextremal range, while the assumption $|a|\ll m$ is only needed in \cite{GKS} for the derivation of the main energy-Morawetz estimates for the scalar wave equation, Teukolsky equations, and Bianchi identities in perturbations of Kerr. 

\item The metric assumptions in Theorem \ref{thm:main:0}, i.e., the assumptions for the metric perturbations in Section \ref{subsubsect:assumps:perturbedmetric}, are consistent with the decay estimates for the metric in the proof of the nonlinear stability of Kerr for small angular momentum in \cite{KS:Kerr}.
\end{itemize} 
We thus expect that Theorem \ref{thm:main:0} will play a key role in extending the proof of Kerr stability for $|a|\ll m$ in \cite{GCM1,GCM2,KS:Kerr,GKS,Shen} to the full subextremal range.
\end{remark}

\begin{remark}[High order energy-Morawetz-flux estimates]
\lab{rem:thm1:highorderEMF}
 The extension to higher order derivatives of energy-Morawetz-flux estimates \eqref{intro:MainEnerMora:psi} for equation \eqref{intro:eq:scalarwave} can be derived in the same manner as in our proof and would require metric assumptions for more derivatives than in Section \ref{subsubsect:assumps:perturbedmetric}. In the present paper, we prefer to close our estimates at the minimal regularity level, i.e., at the level of two derivatives for the metric perturbations.
\end{remark}

\begin{remark}[Further decay estimates]\lab{rmk:variousextensionsmaintheorem}
Theorem \ref{thm:main:0} is the main building block for proving the following further decay estimates for solutions to the wave equation \eqref{intro:eq:scalarwave}:
\begin{enumerate}
\item $r^p$-weighted estimates \cite{DR10}  for deriving energy decay estimates in $(\tau, r)$,
\item pointwise decay estimates in $(\tau, r)$,
\end{enumerate}
see the discussion in Section \ref{sec:strategytogetdecayfromenergyMorawetzredshiftandrp:intro}. We do not derive these estimates here as they are fairly standard for perturbations of Kerr in the range $|a|<m$ and focus instead on the proof of the key estimate \eqref{intro:MainEnerMora:psi}.
\end{remark}


\subsection{Strategy of the proof}
\lab{sec:strategyoftheproofofthemaintheorem:intro}


In this section, we provide an outline of the strategy of the proof of Theorem \ref{thm:main:0}.


\subsubsection{Extension to a global-in-time problem}


The proof of Theorem \ref{thm:main:0} ultimately relies on the use of microlocal Morawetz and energy multipliers adapted to the $r$-foliation of the spacetime $\MM$, see Section \ref{subsubsect:sect1:conditionalMora}. Now, given that these microlocal multipliers are non local in time, they can only be applied to global in time solutions to the scalar wave equation. In particular, this requires first to extend the local-in-time Cauchy problem \eqref{intro:eq:scalarwave}, i.e., 
\begin{align*}
\left\{
             \begin{array}{rl}
            \Box_{\g} \psi &= F, \qquad \text{on }\,\,\MM(\tau_1, \tau_2),\\
(\psi, N_{\Sigma(\tau)}\psi)\vert_{\Sigma(\tau_1)} &=(\psi_0, \psi_1),
             \end{array}
           \right.
\end{align*}
to a global-in-time problem
\bea
\lab{eq:sect1:globalintimeequation}
\square_{\tilde{\g}}\widetilde{\psi}= \widetilde{F}\quad\textrm{on}\quad\MM,
\eea
which is such that the following properties are satisfied:
\begin{enumerate}
\item $\tilde{\g}$ equals $\g$ in $\MM(\tau_1+1, \tau_2-1)$, coincides with the Kerr metric $\gam$ in $\MM\setminus{\MM}(\tau_1, \tau_2)$ and satisfies the metric perturbation assumptions of Section \ref{subsubsect:assumps:perturbedmetric} as well, 

\item $\widetilde{\psi}$ is smoothly extended by $0$ for $\tau\leq \tau_1-1$, and $\widetilde{\psi}=\psi$ on $\MM(\tau_1+1, \tau_2-1)$,

\item $ \widetilde{F}$ is supported in ${\MM}(\tau_1-1,\tau_2)$. 
\end{enumerate}
Based on this extension, the proof of Theorem \ref{thm:main:0} is then reduced, in Section \ref{subsect:proofofThm4.1}, to the global in time (i.e., for $\tau\in\mathbb{R}$) first-order energy-Morawetz estimates of Theorem \ref{th:main:intermediary} for $\widetilde{\psi}$.

From now on, and until the end of Section \ref{sec:strategyoftheproofofthemaintheorem:intro}, we will only consider $(\tilde{\g}, \widetilde{\psi}, \widetilde{F})$. Thus, without any confusion, we may denote $(\tilde{\g}, \widetilde{\psi}, \widetilde{F})$ simply by $(\g, \psi, F)$.


\subsubsection{Second reduction}


Having reduced the proof of Theorem \ref{thm:main:0} to the global in time energy-Morawetz estimates of Theorem \ref{th:main:intermediary}, we now reduce the proof of Theorem \ref{th:main:intermediary} to the following two energy-Morawetz estimates:
\begin{enumerate}
\item global microlocal energy-Morawetz estimates that are conditional on lower order derivatives, see Theorem \ref{th:mainenergymorawetzmicrolocal},

\item global lower order energy-Morawetz estimates that lose one derivative,  see Proposition \ref{prop:energymorawetzmicrolocalwithblackbox}.
\end{enumerate}
The proof of the global in time energy-Morawetz estimates of Theorem \ref{th:main:intermediary} then easily follows from these above two energy-Morawetz estimates, see Section \ref{sec:proofofTheoremth:main:intermediary}.

\begin{remark}
The above strategy for the proof of Theorem \ref{th:main:intermediary} consists in reducing it to two estimates, the first one being  conditional on lower derivatives, and the second one controlling lower order derivatives at the expense of a loss of one derivative. This is reminiscent of the combination of 
\begin{itemize}
\item high frequency estimates, 

\item decay estimates for lower order derivatives by throwing the quasilinear term on the RHS,
\end{itemize} 
in the proof of the nonlinear stability of slowly rotating Kerr-de Sitter black holes by Hintz-Vasy \cite{HVas}. It is in fact  even closer to the way bootstrap assumptions on decay and energy are recovered in the new proof of that result by Fang in \cite{Fang1}.
\end{remark}


\subsubsection{Derivation of global conditional microlocal energy-Morawetz estimates}
\label{subsubsect:sect1:conditionalMora}


To complete the proof of Theorem \ref{th:main:intermediary}, and hence of Theorem \ref{thm:main:0}, it remains to prove Theorem \ref{th:mainenergymorawetzmicrolocal} and Proposition \ref{prop:energymorawetzmicrolocalwithblackbox}. We first outline the proof of Theorem \ref{th:mainenergymorawetzmicrolocal}, i.e., the proof of global microlocal energy-Morawetz estimates conditional on lower order derivatives.

First, in order to define our microlocal Morawetz and energy multipliers, we introduce a microlocal calculus adapted to the $r$-foliation of the spacetime $\MM$ in Section \ref{sect:microlocalcalculus}. The operators are differential in $r$ and pseudo-differential w.r.t. the tangential directions to the level sets $H_r$ of $r$. Also,  we rely on the Weyl quantization as the good properties of the corresponding symbolic calculus for commutators, composition and adjoint are convenient to decompose our microlocal energy-Morawetz estimates in:
\begin{itemize} 
\item main terms which enjoy suitable coercivity properties,

\item lower order derivatives terms which can be thrown on the RHS in view of the fact that our desired microlocal energy-Morawetz estimates are conditional on lower order derivatives.
\end{itemize}

Next, in order to prove our global microlocal energy-Morawetz estimates conditional on lower order derivatives, we decompose $\MM$ into three regions, see Figure \ref{fig:separatedregion1}:
\begin{enumerate}
\item the region $r\leq r_+(1+2\dred)$, with $\dred\ll 1-|a|/m$, on which we derive (physical space) redshift estimates, 
\item the region $r_+(1+\dhor')\leq r\leq R$, with $0<\dhor'\ll \dred$ and $R\gg 20 m$,  where we derive microlocal energy-Morawetz estimates conditional on lower order terms,

\item the region $r\geq R$, where we derive physical space energy-Morawetz estimates.
\end{enumerate}

\begin{figure}[htbp]
  \begin{center}
\begin{tikzpicture}[scale=1.1]
\tikzstyle{every node}=[font=\Small]
  \draw[color=black] (-3.2,1.84) arc(300:324.65:10 and 7.5);
      \draw[dashed, color=red] (0.05,3.95) -- (3.2,0.8);
     \node[rotate=32] at (-2.3,2.58) {$\AA$};
     \node at (0.1,4.3) {$i_+$};
       \node[rotate=315]  at (2.0,2.4) {$\II_+$};
         \draw[] (0,4) circle (0.05);
  
          \draw[color=red!100] (-0.01, 3.946) arc(145:167:10 and 9);
               \fill[red!50] (-0.01, 3.96) arc(145:165.7:10 and 9) arc(234:186.8:3.53 and 1.35) arc(301.8:324.65:10 and 7.5);
        \draw[color=blue,dashed] (-0.02, 3.96) arc(137:160:10 and 9);
        \node[rotate=32] at (-1.80,2.15) [align=center]{redshift\\ estimates};
       
        \node at (-0.28,2.4) [align=center]{microlocal \\ Morawetz\\estimates};
       
        \fill[purple!40] (0.03, 3.95) arc(45:17.5:5 and 7) arc(292.5:313.22:3.53 and 2.9)--(0.04,3.96)--cycle;
                \draw[black] (-2.88,1.98) arc(220:313.5:3.53 and 2.9);
         \node at (-0.2,1.2) {$\Sigma(\tau_1-1)$};
        \node[rotate=315] at (1.9,1.5) [align=center]{EMF estimates\\ near infinity};  
         \draw[color=blue,dashed] (0.02, 3.95) arc(45:16:5 and 7);
        \draw[->, color=blue] (1.06,0.8)--(1.26,1.06);       
        \node[color=blue, font=\tiny] at (1.1,0.7) {$r=R$};
        \draw[->,color=red] (-0.95,0.6)--(-1.51,1); 
        \node[color=red, font=\tiny] at (-0.4,0.47) {$r=r_+(1+2\dred)$};
        \draw[->,color=blue] (-2.5,0.95)--(-2,1.15); 
        \node[color=blue, font=\tiny,rotate=332] at (-2.6,0.8) {$r=r_+(1+\dhor')$};
\end{tikzpicture}
\end{center}
\caption{\footnotesize{Global conditional microlocal Morawetz-flux estimates in $\MM$ are a consequence of a combination of the redshift estimates in the red-colored region where $r\leq r_+(1+2\dred)$, the microlocal Morawetz estimates in the region $\MM_{r_+(1+\dhor'), R}$ bounded by two dashed blue-colored curves, and the EMF estimates in the {purple} region where $r\geq R$. }}\lab{fig:separatedregion1}
\end{figure}
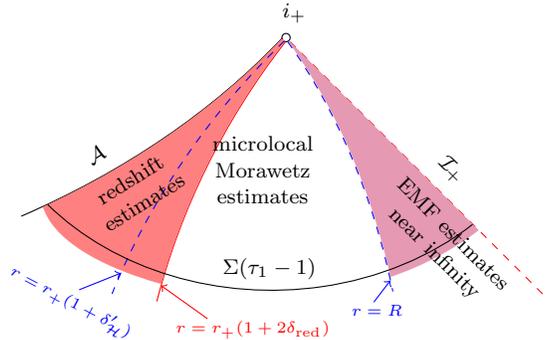

The main part of the proof is the derivation of microlocal energy-Morawetz estimates conditional on lower order terms in the region $r_+(1+\dhor')\leq r\leq R$. It  relies on a microlocal approach:
\begin{itemize}
\item inspired by the one in \cite{Ta-Toh}, see also \cite{LT18,LT20}, where we replace the mixed symbols differential in $\tau$ and microlocal on $\Si(\tau)$ of \cite{Ta-Toh} by the mixed symbols differential in $r$ and microlocal on $H_r$ of Section \ref{sect:PDO:rfoliation},

\item closely following the way of handling high frequencies in phase space in \cite{DRSR} for the inhomogeneous scalar wave equation in a subextremal Kerr spacetime. 
\end{itemize}


\subsubsection{Derivation of global lower order energy-Morawetz estimates with a derivative loss}


To conclude the proof of Theorem \ref{thm:main:0}, we need to prove the global lower order energy-Morawetz estimates with a loss of one derivative of Proposition \ref{prop:energymorawetzmicrolocalwithblackbox}. This relies, in the spirit of \cite{HVas},  on throwing the quasilinear terms to the right-hand side to obtain a wave equation in $(\MM,\gam)$
\beaa
\Box_{\gam}\psi = F - \big(\Box_{\g}\psi-\Box_{\gam}\psi \big),
\eeaa
and then applying the  EMF estimates for linear inhomogeneous wave equations in a subextremal Kerr in \cite{DRSR} as a black box estimate.


\subsection{Structure of the rest of the paper}


We introduce in Section \ref{sect:assumpsandintermediary} some preliminaries on the geometry of perturbations of Kerr spacetimes, and provide the definition of energy, Morawetz and flux norms. Some useful basic estimates for the inhomogeneous scalar wave equation in perturbations of Kerr are collected  in Section \ref{sect:basicestimatesforwaveequations}, including the control of error terms arising from the derivation of energy-Morawetz estimates, local energy estimates, redshift estimates, Morawetz estimates near infinity, and conditional high-order energy-Morawetz estimates. A precise version of the main theorem is presented in Section \ref{sect:maintheoremandproof}, along with its proof  under an assumed global in time first-order energy-Morawetz estimate stated in Theorem \ref{th:main:intermediary}. The remaining sections \ref{sect:microlocalcalculus}--\ref{sec:proofofprop:energymorawetzmicrolocalwithblackbox} are then devoted to the proof of Theorem \ref{th:main:intermediary}. To this end, we first introduce a microlocal calculus adapted to the $r$-foliation of the spacetime $(\MM,\g)$ in Section \ref{sect:microlocalcalculus}. Then, we prove Theorem \ref{th:main:intermediary} in Section \ref{sec:proofofth:main:intermediary}, by assuming two global in time energy-Morawetz estimates for the wave equation in perturbations of Kerr: Theorem \ref{th:mainenergymorawetzmicrolocal} and Proposition \ref{prop:energymorawetzmicrolocalwithblackbox}. Theorem \ref{th:mainenergymorawetzmicrolocal}, a global in time microlocal energy-Morawetz estimate conditional on the control of lower order terms, is proved in Section \ref{sect:CondEMF:Dynamic} by making use of microlocal multipliers adapted to the r-foliation. Finally, Proposition \ref{prop:energymorawetzmicrolocalwithblackbox}, which controls the lower order terms at the price of losing one derivative, is shown in Section \ref{sec:proofofprop:energymorawetzmicrolocalwithblackbox}.


\subsection{Acknowledgments}


The first author is supported by the National Natural Science Foundation of China under Grant No. 12326602 and 12288201, and is much indebted to Fei Wang for her constant  support,  encouragement and grooviest songs. The second author is supported by the ERC grant ERC-2023 AdG 101141855 BlaHSt.


\section{Preliminaries}
\label{sect:assumpsandintermediary}


{We discuss the geometry of Kerr spacetimes} in Section \ref{subsect:normalizedcoords} and of the {perturbed Kerr spacetime} in sections \ref{sect:subregionsandhypersurfaces},
\ref{subsect:assumps:perturbedmetric} and
\ref{subsect:nullinf}. Choices of constants that are involved in the {statement of our main result} are made in Section \ref{sec:smallnesconstants}. Energy, Morawetz and flux norms are defined in Section \ref{subsect:norms}, followed by some useful functional inequalities on hypersurfaces in {section}  \ref{subsect:functionalineqSigmatau}.


\subsection{Normalized coordinates in Kerr spacetimes}
\label{subsect:normalizedcoords}


The Kerr metric in Boyer--Lindquist coordinates $(t,r,\th,\phi)$ is given by
\begin{align}
\gam={}& \g_{tt}dt^2 +\g_{rr}dr^2+(\g_{t\phi}+\g_{\phi t})dtd\phi +\g_{\phi\phi}d\phi^2 +\g_{\th\th}d\th^2,
\end{align}
where
\begin{equation}
\begin{split}
\g_{tt}={}&-\frac{\Delta-a^2\sin^2\theta}{|q|^2}, \quad \g_{t\phi}={}\g_{\phi t}=-\frac{2amr\sin\theta}{|q|^2}, \quad \g_{rr}={}\frac{|q|^2}{\Delta},\\
\g_{\phi\phi}={}&\frac{(\R)^2-a^2\sin^2\theta\Delta}{|q|^2}\sin^2\theta, \quad \g_{\th\th}={}|q|^2,
\end{split}
\end{equation}
with 
\bea
\Delta:=r^2-2mr+a^2,\qquad |q|^2:=r^2+a^2\cos^2\th.
\eea
{In particular, $\partial_{t}$ and $\partial_{\phi}$ are Killing vectorfields and $|\det(\gam)|=|q|^4\sin^2\theta$.} The larger root 
\begin{align}
r_+:=m + \sqrt{m^2 -a^2}
\end{align}
of $\Delta=\De(r)$ corresponds to the location of the event horizon. For convenience, we define 
\bea
\mu :=\frac{\Delta}{\R}.
\eea 

The nontrivial components of the inverse metric are
\begin{equation}
\begin{split}
\label{Kerrmetric:inverse:BL}
\g^{tt}={}&-\frac{(\R)^2-a^2\sin^2\theta\Delta}{|q|^2\Delta},  \quad \g^{rr}={}\frac{\Delta}{|q|^2},\\
\g^{\phi\phi}={}&\frac{\Delta-a^2\sin^2\theta}{|q|^2\Delta\sin^2\th}, \quad \g^{\th\th}={}\frac{1}{|q|^2}, \quad \g^{t\phi}={}\g^{\phi t}=-\frac{2amr}{|q|^2\Delta}.
\end{split}
\end{equation}

We define as well a tortoise coordinate $r^*$ by 
$$dr^*= \mu^{-1}dr, \qquad r^*(3m)=0.$$
Without confusion, we call $(t,r^*,\th,\phi)$ the tortoise coordinates and we denote $\pr_{r^*}$ as the coordinate derivative in this tortoise coordinate system.

It is well-known that the metric is singular on the event horizon in both the Boyer--Lindquist and the tortoise coordinates. To extend the Kerr metric beyond the future event horizon, we define the ingoing Eddington--Finkelstein coordinates $(v_+, r,\th,\phi_+)$ by 
\bea
dv_+=dt+\mu^{-1}dr, \quad d\phi_+=d\phi+\frac{a}{\Delta}dr \,\,\, \text{mod } 2\pi.
\eea
The Kerr metric in this coordinate system is 
\begin{align}\lab{eq:KerrmetriciningoingEddigtonFinkelstein}
\gam={}&-\bigg(1-\frac{2mr}{|q|^2}\bigg)dv_+^2 +2dr dv_+ -\frac{4amr\sin^2\th}{|q|^2}dv_+d\phi_+ -2a\sin^2\th dr d\phi_+\nn\\
&+|q|^2 d\th^2+\frac{(\R)^2-a^2\sin^2\theta\Delta}{|q|^2}\sin^2\th d\phi_+^2.
\end{align}

In the following lemma, we introduce coordinates systems, referred to as normalized coordinates, and used in particular in the statement of the main result of this paper. 

\begin{lemma}[Normalized coordinates]
\label{lem:specificchoice:normalizedcoord}
We fix constants $\dhor$ and $\dbl$ such that 
$$0<\dhor\ll \dbl\ll 1-\frac{|a|}{m}.$$ 
There exists a choice of smooth functions $\tmod=\tmod(r)$ and $\phimod=\phimod(r)$ such that the coordinate systems $(\tt, r, x^1_0, x^2_0)$ and $(\tt, r, x^1_p, x^2_p)$, defined respectively on $\th\neq 0, \pi$ and $\th\neq\frac{\pi}{2}$, with 
\bea
\tau=v_+-\tmod, \quad \tphi=\phi_+ -  \phimod, \quad x^1_0=\th, \quad x^2_0=\tphi, \quad x^1_p=\sin\th\cos\tphi, \quad x^2_p=\sin\th\sin\tphi,
\eea
satisfy the following properties:
\begin{enumerate}
\item defining the causal spacetime region $\MM$ and corresponding spacelike boundary $\AA$ by 
\beaa
\bsplit
{}\qquad\MM&:=\big(\{(\tt, r, x^1_0, x^2_0),\,\, \th\neq 0, \pi\}\cup\{(\tt, r, x^1_p, x^2_p),\,\, \th\neq \pi/2\}\big)\cap\{r\geq r_+(1-\dhor)\}, \\ 
{}\qquad\AA&:=\pr\MM=\big(\{(\tt, r, x^1_0, x^2_0),\,\, \th\neq 0, \pi\}\cup\{(\tt, r, x^1_p, x^2_p),\,\, \th\neq \pi/2\}\big)\cap\{r=r_+(1-\dhor)\},
\end{split}
\eeaa
$\MM$ is covered by $(\tt, r, x^1_0, x^2_0)$ and $(\tt, r, x^1_p, x^2_p)$ with the metric components and  inverse metric components being smooth on their respective coordinate patch, 

\item $(\tt, r, x^1_0, x^2_0)$ coincides with Boyer--Lindquist coordinates\footnote{In particular, we have
\beaa
\tmod'(r)=\mu^{-1}, \qquad \phimod'(r)=\frac{a}{\De}\quad\textrm{on}\quad r\in [r_+(1+2\dbl), 12m].
\eeaa}
 in $r\in [r_+(1+2\dbl),  12m]$, 

\item for $r\notin (r_+(1+\dbl), 13m)$, we choose 
\beaa
\begin{split}
\tmod'(r) &=\frac{m^2}{r^2}, \qquad \phimod'(r)=0\quad\textrm{on}\quad r\leq r_+(1+\dbl),\\
\tmod'(r)&=2\mu^{-1}-\frac{m^2}{r^2}, \qquad \phimod'(r)=\frac{2a}{\De} \quad\textrm{on}\quad r\geq 13m,
\end{split}
\eeaa

\item the level sets of $\tt$ in $\MM$ are globally spacelike, transverse to the future event horizon $\HH_+$ and the spacelike boundary $\AA$, and asymptotically null to future null infinity $\II_+$.
\end{enumerate}

Furthermore, the nontrivial inverse metric components in the coordinate system $(\tt, r, \th, \tphi)$ are
\begin{align}
\label{eq:inverse:hypercoord}
\gam^{\tt \tt}={}&\frac{a^2\sin^2\th}{|q|^2} -\frac{2(\R)}{|q|^2}\tmod'+\frac{\Delta}{|q|^2}(\tmod')^2, \,\, \gam^{rr}=\frac{\Delta}{|q|^2}, 
\,\, \gam^{\tt r}=\gam^{r \tt}=\frac{\R}{|q|^2}(1-\mu \tmod'), \nn\\
\gam^{r\tphi} ={}&\gam^{\tphi r} =\frac{a}{|q|^2}-\frac{\Delta}{|q|^2}\phimod', \quad \gam^{\tt\tphi}= \gam^{\tphi \tt}= \frac{a}{|q|^2} (1-\tmod')-\phimod'\frac{\R}{|q|^2} (1-\mu \tmod') ,\nn\\
\gam^{\th\th}={}&\frac{1}{|q|^2}, \quad \gam^{\tphi\tphi}=\frac{1}{|q|^2\sin^2\th}-\frac{2a}{|q|^2}\phimod' +\frac{\Delta}{|q|^2}(\phimod')^2,
\end{align}
and the volume form verifies, with $(x^1, x^2)$ denoting either $(x^1_0, x^2_0)$ or $(x^1_p, x^2_p)$,
\bea\lab{eq:assymptiticpropmetricKerrintaurxacoord:volumeform}
\sqrt{|\det(\gam)|}d\tau dr dx^1dx^2 = |q|^2\sqrt{\det(\mathring{\ga})}d\tau dr dx^1dx^2,
\eea
where $\mathring{\ga}$ denotes the metric on the standard unit 2-sphere.

Finally, for $r\geq 13m$, the inverse metric and metric components  satisfy the following asymptotics on their respective coordinate patch, with $(x^1, x^2)$ denoting either $(x^1_0, x^2_0)$ or $(x^1_p, x^2_p)$,
\bea\lab{eq:assymptiticpropmetricKerrintaurxacoord:1}
\bsplit
\gam^{rr}=&1+O(mr^{-1}), \qquad \gam^{r \tt}=-1+O(m^2r^{-2}),\qquad \gam^{rx^a}=O(mr^{-2}), \\
\gam^{\tt \tt}=&O(m^2r^{-2}), \qquad \gam^{\tau x^a}=O(mr^{-2}),\qquad \gam^{x^ax^b}=\frac{1}{r^2}\mathring{\ga}^{x^ax^b}+O(m^2r^{-4})
\end{split}
\eea
and
\bea\lab{eq:assymptiticpropmetricKerrintaurxacoord:2}
\bsplit
(\gam)_{rr}=&O(m^2r^{-2}), \qquad (\gam)_{r \tt}=-1+O(m^2r^{-2}),\qquad (\gam)_{rx^a}=O(m),\\
(\gam)_{\tt \tt} =& -1+O(mr^{-1}),\qquad (\gam)_{\tau x^a}=O(m),\qquad  (\gam)_{x^ax^b}=r^2\mathring{\ga}_{x^ax^b}+O(m^2).
 \end{split}
\eea
\end{lemma}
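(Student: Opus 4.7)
The plan is to construct $\tmod$ and $\phimod$ by prescribing their derivatives on three canonical radial regions and interpolating smoothly between them, then verify each claimed property regime-by-regime. Since the ingoing Eddington--Finkelstein form \eqref{eq:KerrmetriciningoingEddigtonFinkelstein} is smooth across $\HH_+=\{r=r_+\}$ and the map $(v_+,\phi_+)\mapsto(\tau,\tphi)=(v_+-\tmod(r),\phi_+-\phimod(r))$ is a smooth $r$-dependent shear of unit Jacobian, smoothness of the metric, inverse metric, and volume form on each chart is automatic. The two angular patches $(x^1_0,x^2_0)=(\th,\tphi)$ and $(x^1_p,x^2_p)=(\sin\th\cos\tphi,\sin\th\sin\tphi)$ cover $\Sphere\setminus\{\th=0,\pi\}$ and $\Sphere\setminus\{\th=\pi/2\}$ respectively, so together they cover $\MM$.

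First I would define $\tmod'$ and $\phimod'$ by prescribing the three values in items (ii)--(iii) on $r\leq r_+(1+\dbl)$, $r\in[r_+(1+2\dbl),12m]$, and $r\geq 13m$, and interpolate on $[r_+(1+\dbl),r_+(1+2\dbl)]$ and $[12m,13m]$ via a standard partition of unity; integration yields smooth $\tmod,\phimod$. The hierarchy $\dhor\ll\dbl\ll 1-|a|/m$ places the inner transition strictly outside the horizon (so $\mu$ and $1/\mu$ are smooth and bounded there). Formula \eqref{eq:inverse:hypercoord} then follows by applying the transformation law $\gam^{\mu'\nu'}=(\partial x^{\mu'}/\partial x^\mu)(\partial x^{\nu'}/\partial x^\nu)\gam^{\mu\nu}$ to the ingoing Eddington--Finkelstein inverse metric (equivalently, inverting the transformed covariant form), and \eqref{eq:assymptiticpropmetricKerrintaurxacoord:volumeform} is immediate from $|\det\gam|$ being shear-invariant. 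For the far-region asymptotics, plugging $\tmod'=2\mu^{-1}-m^2/r^2$, $\phimod'=2a/\Delta$ into \eqref{eq:inverse:hypercoord} and expanding in $m/r$ yields \eqref{eq:assymptiticpropmetricKerrintaurxacoord:1}; inverting the resulting $4\times 4$ matrix to the stated order gives \eqref{eq:assymptiticpropmetricKerrintaurxacoord:2}.

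The geometric properties in item (iv) reduce to analyzing the quadratic form $|q|^2\gam^{\tau\tau}=a^2\sin^2\th-2(\R)\tmod'+\Delta(\tmod')^2$. On $r\leq r_+(1+\dbl)$, substituting $\tmod'=m^2/r^2$ and using $\R\geq 2mr_+$, $r_+\leq 2m$ and $a^2<m^2$ shows the term $-2(\R)m^2/r^2$ dominates, giving $|q|^2\gam^{\tau\tau}\leq -cm^2<0$. On the Boyer--Lindquist range, substitution of $\tmod'=1/\mu$ reproduces $\gam^{tt}=-[(\R)^2-a^2\sin^2\th\Delta]/(|q|^2\Delta)$, strictly negative since $(\R)^2-a^2\Delta=r(r^3+ra^2+2a^2m)>0$ for $r>r_+$. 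On $r\geq 13m$, direct computation yields $|q|^2\gam^{\tau\tau}=a^2\sin^2\th-2(\R)m^2/r^2+\Delta m^4/r^4\leq a^2-2m^2+m^4/r^2<0$. Transversality to $\HH_+$ follows from $\gam^{\tau r}|_{r=r_+}=(\R)/|q|^2>0$; spacelikeness of $\AA$ from $\gam^{rr}=\Delta/|q|^2<0$ at $r=r_+(1-\dhor)$; and asymptotic nullity from $\gam^{\tau\tau}=O(m^2/r^2)\to 0$.

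The main obstacle is verifying spacelikeness throughout the two transition intervals, where $\tmod'$ takes an interpolated value. The plan is to arrange the partition-of-unity weights so that $\tmod'$ interpolates monotonically between its canonical values; then the quadratic $|q|^2\gam^{\tau\tau}$ in $\tmod'$ remains bounded above by a strictly negative constant because (a) on the inner interval the variation of $\R$, $\Delta$, $|q|^2$ is $O(\dbl)$ while the canonical boundary values of $|q|^2\gam^{\tau\tau}$ are both strictly negative of order $\sim m^2$, and (b) on $[12m,13m]$ the explicit bounds above (together with their analogues at $r=12m$) show that the entire one-parameter family remains negative. The hierarchy $\dhor\ll\dbl\ll 1-|a|/m$ is precisely what makes the smallness requirements in (a) compatible with all other constraints, closing the argument.
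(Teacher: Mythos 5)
Your construction follows the paper's proof essentially step for step: pass from the ingoing Eddington--Finkelstein form by the $r$-dependent shear, read off \eqref{eq:inverse:hypercoord} from the change of coframe, get \eqref{eq:assymptiticpropmetricKerrintaurxacoord:volumeform} from the unit Jacobian, expand the explicit choices $\tmod'=2\mu^{-1}-m^2/r^2$, $\phimod'=2a/\De$ to obtain \eqref{eq:assymptiticpropmetricKerrintaurxacoord:1}--\eqref{eq:assymptiticpropmetricKerrintaurxacoord:2}, and check the sign of $\gam^{\tau\tau}$ region by region (your identity that the quadratic takes the same value at $2\mu^{-1}-m^2/r^2$ as at $m^2/r^2$, and your transversality/boundary checks, are all correct and consistent with the paper).

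The one place where your justification does not hold up as written is the transition intervals, specifically your point (a). The smallness of the variation of $(\R)$, $\Delta$, $|q|^2$ over $[r_+(1+\dbl),r_+(1+2\dbl)]$ is beside the point, because $\tmod'$ itself is not a small perturbation there: it must move from $m^2/r^2=O(1)$ up to $\mu^{-1}\big(r_+(1+2\dbl)\big)\sim m/\big(\dbl\sqrt{m^2-a^2}\big)$, a change of size $\dbl^{-1}$, so the value of $Q(\tmod'):=a^2\sin^2\th-2(\R)\tmod'+\Delta(\tmod')^2$ at an interpolated $\tmod'$ is in no sense a small perturbation of its values at the two canonical choices, and ``negative endpoint values plus $O(\dbl)$ coefficient variation'' proves nothing. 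What does close the argument---and is exactly what the paper encodes in \eqref{eq:conditionontmodprimetobespacelike}---is convexity of $Q$ in $\tmod'$: on both transition intervals $\Delta>0$, so for each fixed $r$ and $\th$ the set $\{\tmod':\,Q(\tmod')<0\}$ is precisely the open interval between the two roots; since both canonical values at that same $r$ (namely $m^2/r^2$ and $\mu^{-1}$ on the inner interval, and $\mu^{-1}$ and $2\mu^{-1}-m^2/r^2$ on $[12m,13m]$, the latter pair by the reflection identity you already exploited in the far region) lie in that interval for every $\th$, any pointwise interpolation $\tmod'(r)=(1-\chi(r))\cdot(\text{first value})+\chi(r)\cdot(\text{second value})$ lies in it as well, giving $\gam^{\tau\tau}<0$ uniformly across the transition regions. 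With (a)/(b) replaced by this one-line convexity observation, your proof coincides with the paper's.
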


\begin{remark}\lab{rmk:phimodprimeisproportionaltoa!!}
Additionally, we may choose $\phimod$ such that 
\beaa
\phimod'(r)=a\phi_{\textrm{mod},0}'(r), \qquad \phi_{\textrm{mod},0}'(r)\geq 0\quad \forall r\in(r_+(1-\dhor), +\infty),
\eeaa
so that $\phimod'(r)$ has the same sign as $a$. From now on, we will assume that our choice of $\phimod$ satisfies this property. In view of \eqref{eq:inverse:hypercoord}, it implies that the inverse metric coefficients $\gam^{\a\b}$ in the normalized coordinates system $(\tau, r, x^1, x^2)$ are invariant under the change $(a, \tphi)\to (-a, -\tphi)$. 
\end{remark}

\begin{proof}
Since $d\tt=dv_+-\tmod' dr$ and $d\tphi=d\phi_+-\phimod'dr$ where $\tmod'=\partial_r \tmod$ and $\phimod'=\partial_r \phimod$, in view of \eqref{eq:KerrmetriciningoingEddigtonFinkelstein}, the Kerr metric in the $(\tau, r, \th, \tphi)$ coordinate system takes the form:
\begin{align}\lab{eq:metricinnormalizedcoordinatesgeneralformula}
\nn\gam={}&-\bigg(1-\frac{2mr}{|q|^2}\bigg)\big(d\tt+\tmod' dr\big)^2 +2dr\big(d\tt+\tmod' dr\big)\\
& -\frac{4amr\sin^2\th}{|q|^2}\big(d\tt+\tmod' dr\big)\big(d\tphi+\phimod'dr\big) -2a\sin^2\th dr(d\tphi+\phimod'dr)\nn\\
&+|q|^2 d\th^2+\frac{(\R)^2-a^2\sin^2\theta\Delta}{|q|^2}\sin^2\th \big(d\tphi+\phimod'dr\big)^2.
\end{align}
We next compute
the components of the inverse metric in terms of the inverse metric components in the Boyer-Lindquist coordinates using $d\tt=dt+(\mu^{-1}-\tmod')dr$ and $d\tphi=d\phi+(\frac{a}{\Delta}-\phimod')dr$:
\beaa
\begin{split}
\g^{\tt \tt}={}&\g^{tt}+ (\mu^{-1}-\tmod')^2 \g^{rr}, \quad \g^{rr}= \g^{rr}, \quad
\g^{\tt r}=\g^{r \tt}= (\mu^{-1}-\tmod') \g^{rr}, \\
  \quad \g^{r\tphi} ={}&\g^{\tphi r}=\left(\frac{a}{\Delta}-\phimod'\right) \g^{rr},\quad  \g^{\tt\tphi}= \g^{\tphi\tt}= \g^{t\phi}+\left(\frac{a}{\Delta} -\phimod'\right)(\mu^{-1}-\tmod')\g^{rr},\\
   \g^{\th\th}={}&\g^{\th\th}, \quad \g^{\tphi\tphi}=\g^{\phi\phi}+\left(\frac{a}{\Delta}-\phimod'\right)^2\g^{rr}.
\end{split}
\eeaa
In view of \eqref{Kerrmetric:inverse:BL} on the values of the inverse metric components in the Boyer--Lindquist coordinates, the expression \eqref{eq:inverse:hypercoord} of the values of these inverse metric components are then verified. From \eqref{eq:inverse:hypercoord} and \eqref{eq:metricinnormalizedcoordinatesgeneralformula}, and by the requirement that $\tmod$ and $\phimod$ are smooth functions of $r$, it is manifest that $\MM$ is covered by $(\tt, r, x^1_0, x^2_0)$ and $(\tt, r, x^1, x^2)$ with the metric components and  inverse metric components being smooth on their respective coordinate patch.  Also, the Jacobian of the change of coordinates  from the Boyer-Lindquist coordinates to the $(\tau, r, \th, \tphi)$ coordinates has determinant 1 so that $\det(\gam)$ coincides with the corresponding quantity in Boyer-Lindquist coordinates which implies \eqref{eq:assymptiticpropmetricKerrintaurxacoord:volumeform}. Moreover, using the fact that $\tmod'(r)=2\mu^{-1}-\frac{m^2}{r^2}$ and $\phimod'(r)=\frac{2a}{\De}$ on $r\geq 13m$, we have
\beaa
\tmod'=2+\frac{4m}{r}+\frac{7m^2}{r^2}+O(m^3r^{-3}), \qquad \phimod'=\frac{2a}{r^2}+O(am r^{-3}),
\eeaa
and hence \eqref{eq:assymptiticpropmetricKerrintaurxacoord:1} and \eqref{eq:assymptiticpropmetricKerrintaurxacoord:2} follow respectively from \eqref{eq:inverse:hypercoord} and \eqref{eq:metricinnormalizedcoordinatesgeneralformula}.

Finally, we consider the level hypersurfaces of $\tt$ in $\MM$. First, in view of the explicit choice of $\tau$ for $r\in [r_+(1-\dhor), +\infty)\setminus((r_+(1+\dbl), r_+(1+2\dbl))\cup(12m, 13m))$, we have
\beaa
\bsplit
\g^{\tt \tt}&=-\frac{m^2(\R)(r^2 - m^2)+r^4(m^2 - a^2(\sin\th)^2)+a^2r^2m^2+2m^5r}{|q|^2r^4} <0\quad\textrm{for}\quad r\leq r_+(1+\dbl),\\
\g^{\tau\tau} & =-\frac{(\R)|q|^2+2m ra^2\sin^2\theta}{|q|^2\Delta}<0\quad\textrm{for}\quad r\in[r_+(1+2\dbl), 12m],\\
\g^{\tau\tau} &= -\frac{m^2(\R)(r^2 - m^2)+r^4(m^2 - a^2(\sin\th)^2)+a^2r^2m^2+2m^5r}{|q|^2r^4} <0 \quad\textrm{for}\quad r\geq 13m,
\end{split}
\eeaa
so that, for $r\in [r_+(1-\dhor), +\infty)\setminus((r_+(1+\dbl), r_+(1+2\dbl))\cup(12m, 13m))$, the level hypersurfaces of $\tt$ are spacelike. Also, the above computation shows, for $r\geq 13m$,   
\beaa
-\frac{2m^2}{r^2}(1+O(m^2r^{-2}))\leq\g^{\tau\tau} = -\frac{(2m^2-a^2(\sin\th)^2)r^4+O(m^4r^2)}{|q|^2r^4}\leq -\frac{m^2}{r^2}(1+O(m^2r^{-2})),
\eeaa
so that the level hypersurfaces of $\tt$ are asymptotically null. 

It remains to consider the region $r\in (r_+(1+\dbl), r_+(1+2\dbl))\cup(12m, 13m)$. In order for the level hypersurfaces of $\tau$ to be strictly spacelike there, we need
\beaa
\g^{\tt\tt}=\frac{a^2\sin^2\th}{|q|^2} -\frac{2(\R)}{|q|^2}\tmod'+\frac{\Delta}{|q|^2}(\tmod')^2<0.
\eeaa
Thus, $\tmod'$ shall satisfy in the region $r\in (r_+(1+\dbl), r_+(1+2\dbl))\cup(12m, 13m)$ the following 
\bea\lab{eq:conditionontmodprimetobespacelike}
\frac{r^2+a^2 -\sqrt{(r^2+a^2)^2 -a^2\sin^2\th \Delta}}{\Delta}<\tmod'<\frac{r^2+a^2 +\sqrt{(r^2+a^2)^2 -a^2\sin^2\th \Delta}}{\Delta}.
\eea
Therefore, we extend smoothly $\tmod$ to $r\in (r_+(1+\dbl), r_+(1+2\dbl))\cup(12m, 13m)$ such that \eqref{eq:conditionontmodprimetobespacelike} is satisfied so that the level hypersurfaces of $\tt$ in $\MM$ are globally spacelike and asymptotically null as stated. Finally, we also extend smoothly $\phimod$ to $r\in (r_+(1+\dbl), r_+(1+2\dbl))\cup(12m, 13m)$ which concludes the proof of Lemma \ref{lem:specificchoice:normalizedcoord}.
 \end{proof}
  
Next, we consider the asymptotic of the induced metric on the level sets of $\tau$ in normalized coordinates in the region $r\geq 13m$.
 \begin{lemma}\lab{lemma:specificchoice:normalizedcoord:inducedmetricSitau}
Let $g_{a,m}$ denote the metric induced by $\gam$ on the level sets of $\tau$. Then, we have in the normalized coordinate systems $(r, x^1_0, x^2_0)$ and $(r, x^1_p, x^2_p)$, in $r\geq 13m$, 
\beaa
\bsplit
(g_{a,m})_{rr}&=O(m^2r^{-2}), \qquad (g_{a,m})_{rx^a}=O(m), \qquad (g_{a,m})_{x^ax^b}=O(r^2), \\
g_{a,m}^{rr}&=O(m^{-2}r^2), \qquad g_{a,m}^{rx^a}=O(m^{-1}), \qquad g_{a,m}^{x^ax^b}=O(r^{-2}),
\end{split} 
\eeaa
and
\beaa
\sqrt{\det(g_{a,m})}drdx^1dx^2 &= r\sqrt{2m^2 -a^2\sin^2\th +O(m^3r^{-1})}\sqrt{\det(\mathring{\ga})}drdx^1dx^2, 
\eeaa
with $(x^1, x^2)$ denoting either $(x^1_0, x^2_0)$ or $(x^1_p, x^2_p)$.
\end{lemma}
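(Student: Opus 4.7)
The tangential components $(g_{a,m})_{ij}$ with $i,j\in\{r,x^1,x^2\}$ are obtained by simply pulling back $\gam$ to the level set $\{\tau=\mathrm{const}\}$, which amounts to dropping the $d\tau$ terms. Thus $(g_{a,m})_{ij}=(\gam)_{ij}$, and the covariant asymptotics in the lemma are read off directly from \eqref{eq:assymptiticpropmetricKerrintaurxacoord:2} in Lemma \ref{lem:specificchoice:normalizedcoord}.

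For the contravariant (inverse) components I plan to use the standard projection identity, valid for any spacelike hypersurface defined by a level set of $\tau$:
\begin{equation*}
g_{a,m}^{ij}=\gam^{ij}-\frac{\gam^{i\tau}\,\gam^{j\tau}}{\gam^{\tau\tau}}.
\end{equation*}
To exploit it I need a slightly sharper asymptotic for $\gam^{\tau\tau}$ than the crude bound $O(m^2 r^{-2})$ given in \eqref{eq:assymptiticpropmetricKerrintaurxacoord:1}. Fortunately the proof of Lemma \ref{lem:specificchoice:normalizedcoord} already produces, for $r\geq 13m$,
\begin{equation*}
\gam^{\tau\tau}=-\frac{2m^2-a^2\sin^2\theta+O(m^3 r^{-1})}{r^2},
\end{equation*}
so in particular $|\gam^{\tau\tau}|\gtrsim m^2/r^2$. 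Plugging the bounds of \eqref{eq:assymptiticpropmetricKerrintaurxacoord:1} into the projection identity, the three cases become: $(\gam^{r\tau})^2/\gam^{\tau\tau}=O(r^2/m^2)$ dominates $\gam^{rr}=O(1)$, giving $g_{a,m}^{rr}=O(m^{-2}r^2)$; $\gam^{r\tau}\gam^{x^a\tau}/\gam^{\tau\tau}=O(m^{-1})$ dominates $\gam^{rx^a}=O(mr^{-2})$, giving $g_{a,m}^{rx^a}=O(m^{-1})$; and $\gam^{x^a\tau}\gam^{x^b\tau}/\gam^{\tau\tau}=O(r^{-2})$ is comparable to $\gam^{x^ax^b}=O(r^{-2})$, giving $g_{a,m}^{x^ax^b}=O(r^{-2})$.

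For the volume element I will use the ADM-style identity
\begin{equation*}
\det(g_{a,m})=\gam^{\tau\tau}\,\det(\gam),
\end{equation*}
which follows from writing $\gam=-N^2 d\tau^2+h_{ij}(dx^i+N^i d\tau)(dx^j+N^j d\tau)$ with $N^2=-1/\gam^{\tau\tau}$. Combining the sharp expansion of $\gam^{\tau\tau}$ above with $|\det(\gam)|=|q|^4\sin^2\theta=r^4\sin^2\theta(1+O(m^2 r^{-2}))$ from Lemma \ref{lem:specificchoice:normalizedcoord} gives
\begin{equation*}
\det(g_{a,m})=r^2\sin^2\theta\bigl(2m^2-a^2\sin^2\theta+O(m^3r^{-1})\bigr),
\end{equation*}
and taking the square root recovers the claimed expression for $\sqrt{\det(g_{a,m})}\,drdx^1 dx^2$ in the $(\theta,\tphi)$ chart, since $\sqrt{\det(\mathring{\ga})}=\sin\theta$ there. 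In the stereographic-type chart $(x^1_p,x^2_p)$ one invokes the same identity and absorbs the Jacobian of $(\theta,\tphi)\mapsto(x^1_p,x^2_p)$ into the corresponding $\sqrt{\det(\mathring{\ga})}$, so the formula has the same form on both patches.

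There is no real obstacle beyond bookkeeping: the only subtlety is that the $O(m^2r^{-2})$ bound on $\gam^{\tau\tau}$ stated in \eqref{eq:assymptiticpropmetricKerrintaurxacoord:1} is not by itself sufficient to obtain a non-degenerate leading coefficient for either $g_{a,m}^{rr}$ or $\det(g_{a,m})$ (since it would only yield one-sided bounds), so I must extract from the proof of Lemma \ref{lem:specificchoice:normalizedcoord} the explicit principal part $-(2m^2-a^2\sin^2\theta)/r^2$ and carry it through the two identities above.
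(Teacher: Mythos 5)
Your proposal is correct, but it takes a genuinely different route from the paper for the inverse metric and the volume element. The paper computes the induced components $g_{rr}, g_{r\tphi}, g_{\th\th}, g_{\tphi\tphi}$ explicitly from \eqref{eq:metricinnormalizedcoordinatesgeneralformula} together with the expansions of $\tmod'$ and $\phimod'$ for $r\geq 13m$, and then inverts the $3\times 3$ matrix by hand (minors, cofactors, and an explicit determinant computation $\det(g)=r^2(2m^2-a^2\sin^2\th+O(m^3r^{-1}))\sin^2\th$). You instead take the covariant bounds directly from \eqref{eq:assymptiticpropmetricKerrintaurxacoord:2} via restriction, and obtain the contravariant bounds and the determinant from the projection identity $g_{a,m}^{ij}=\gam^{ij}-\gam^{i\tau}\gam^{j\tau}/\gam^{\tau\tau}$ and the ADM identity $\det(g_{a,m})=\gam^{\tau\tau}\det(\gam)$, both valid in the adapted coordinates since the level sets of $\tau$ are spacelike; these, fed with \eqref{eq:assymptiticpropmetricKerrintaurxacoord:1}, \eqref{eq:assymptiticpropmetricKerrintaurxacoord:volumeform} and the sharp expansion $\gam^{\tau\tau}=-\big((2m^2-a^2\sin^2\th)r^4+O(m^4r^2)\big)/(|q|^2r^4)$ displayed in the proof of Lemma \ref{lem:specificchoice:normalizedcoord}, reproduce exactly the stated bounds and the volume-form formula. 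You also correctly identify the one real subtlety: the one-sided bound $\gam^{\tau\tau}=O(m^2r^{-2})$ of \eqref{eq:assymptiticpropmetricKerrintaurxacoord:1} is not enough, and one must carry the nondegenerate principal part $-(2m^2-a^2\sin^2\th)/r^2$ (bounded below by $m^2/r^2$ since $|a|<m$) through both identities. What your route buys is economy: no explicit cofactor computation, and the patch issue for $(x^1_p,x^2_p)$ is handled automatically since both identities are coordinate-adapted and the Jacobian cancels against $\sqrt{\det(\mathring\ga)}$ exactly as in \eqref{eq:assymptiticpropmetricKerrintaurxacoord:volumeform}. What the paper's route buys is sharper information, namely the explicit leading coefficients of each inverse component (e.g.\ $g_{a,m}^{rr}=r^2(1+O(mr^{-1}))/(2m^2-a^2\sin^2\th)$ and $g_{a,m}^{r\th}=0$), which the $O$-bounds of the statement do not record but which make the structure of the induced geometry on $\Si(\tau)$ transparent.
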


\begin{proof}
In view of \eqref{eq:metricinnormalizedcoordinatesgeneralformula} and noticing, in  $r\geq 13m$,
\beaa
\tmod'=2+\frac{4m}{r}+\frac{7m^2}{r^2}+O(m^3r^{-3}), \qquad \phimod'=\frac{2a}{r^2}+O(am r^{-3}),
\eeaa
we obtain by a straightforward computation
\beaa
\bsplit
g_{rr}&=\frac{2m^2}{r^2}+O(m^3r^{-3}), \qquad g_{r\th}=0, \qquad g_{r\tphi}=\big(a+O(amr^{-1})\big)\sin^2\th, \\
g_{\th\th}&=r^2(1+O(a^2r^{-2})), \qquad g_{\th\tphi}=0, \qquad g_{\tphi\tphi}=r^2\sin^2\th (1+O(a^2r^{-2})),
\end{split} 
\eeaa
which implies the statement for the asymptotic of the metric coefficients. 

Next, we compute the inverse of the induced metric. To this end, we first compute the minors of the matrix $g_{ij}$ and find in view of the above asymptotic of the induced metric 
\beaa
\bsplit
M_{rr}&=r^4\sin^2\th (1+O(a^2r^{-2})),\qquad M_{r\th}=0,\qquad M_{r\tphi}=-\big(a+O(amr^{-1})\big)r^2\sin^2\th,\\
M_{\th\th}&= \big(2m^2- a^2\sin^2\th+O(m^3r^{-1})\big)\sin^2\th,\qquad M_{\th\tphi}=0,\qquad M_{\tphi\tphi}=  2m^2+O(m^3r^{-1}).
\end{split}
\eeaa
The determinant of $g_{ij}$ is then given by
\beaa
\det(g) &=& g_{rr}M_{rr}-g_{r\th}M_{r\th}+g_{r\tphi}M_{r\tphi}=g_{rr}M_{rr}+g_{r\tphi}M_{r\tphi}\\
&=& r^2\left(2m^2-a^2\sin^2\th +O(m^3r^{-1})\right)\sin^2\th.
\eeaa
Thus, we deduce
\beaa
g^{-1} &=& \frac{1}{\det(g)}\left(\begin{array}{ccc}
M_{rr} & -M_{r\th} & M_{r\tphi}\\
-M_{r\th} & M_{\th\th} & -M_{\th\tphi}\\
M_{r\tphi} & -M_{\th\tphi} & M_{\tphi\tphi} 
\end{array}\right)\\
&=& \left(\begin{array}{ccc}
\frac{r^2(1+O(a^2r^{-2}))}{2m^2-a^2\sin^2\th +O(m^3r^{-1})} & 0 & -\frac{a+O(amr^{-1})}{2m^2-a^2\sin^2\th +O(m^3r^{-1})}\\
0 & \frac{1+O(a^2r^{-2})}{r^2} & 0\\
-\frac{a+O(amr^{-1})}{2m^2-a^2\sin^2\th +O(m^3r^{-1})} & 0 & \frac{2m^2+O(m^3r^{-1})}{r^2(2m^2-a^2\sin^2\th +O(m^3r^{-1}))\sin^2\th}
\end{array}\right)
\eeaa
where we also used 
\beaa
\frac{M_{\th\th}}{\det(g)} = \frac{g_{rr}g_{\tphi\tphi}-(g_{r\tphi})^2}{g_{rr}M_{rr}+g_{r\tphi}M_{r\tphi}}= \frac{g_{rr}g_{\tphi\tphi}-(g_{r\tphi})^2}{g_{rr}g_{\th\th}g_{\tphi\tphi}-g_{r\tphi}g_{r\tphi}g_{\th\th}}=\frac{1}{g_{\th\th}}.
\eeaa
We infer
\beaa
\bsplit
g^{rr}&=\frac{r^2(1+O(mr^{-1}))}{2m^2-a^2\sin^2\th} , \qquad g^{r\th}=0, \qquad g^{r\tphi}=-\frac{a+O(amr^{-1})}{2m^2-a^2\sin^2\th}, \\ 
g^{\th\th}&=\frac{1+O(a^2r^{-2})}{r^2},\qquad g^{\th\tphi}=0, \qquad g^{\tphi\tphi}=\frac{2m^2+O(m^3r^{-1})}{r^2(2m^2-a^2\sin^2\th)\sin^2\th},
\end{split} 
\eeaa
as stated. This concludes the proof of Lemma \ref{lemma:specificchoice:normalizedcoord:inducedmetricSitau}.
\end{proof} 
  
Next, we consider the induced metric on $\AA$.
\begin{lemma}\lab{lemma:specificchoice:normalizedcoord:inducedmetricAA}
Let $(g_{\AA})_{a,m}$ denote the metric induced by $\gam$ on $\AA$. Then,
\beaa
\sqrt{\det((g_{\AA})_{a,m})}d\tau dx^1 dx^2\simeq m\sqrt{\dhor} \sqrt{\det(\mathring{\ga})}d\tau dx^1 dx^2
\eeaa
with $(x^1, x^2)$ denoting either $(x^1_0, x^2_0)$ or $(x^1_p, x^2_p)$.
\end{lemma}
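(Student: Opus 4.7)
The plan is to compute $\det((g_{\AA})_{a,m})$ via a block-matrix determinant identity and then bound the resulting expression using the constraint $0<\dhor\ll 1-|a|/m$. Since $\AA$ is the level set $\{r=r_0\}$ with $r_0:=r_+(1-\dhor)$, the induced metric $(g_{\AA})_{a,m}$ is the $3\times 3$ principal submatrix of $(\gam_{\a\b})$ obtained by deleting the $r$-row and $r$-column in the normalized coordinates $(\tau, r, x^1, x^2)$. Cofactor expansion of $\det(\gam)$ along that row and column yields the standard identity
\begin{equation*}
\det\big((g_{\AA})_{a,m}\big)\big|_{r=r_0} \;=\; \gam^{rr}\big|_{r=r_0}\cdot\det(\gam)\big|_{r=r_0}.
\end{equation*}
Substituting $\gam^{rr}=\Delta/|q|^2$ from \eqref{eq:inverse:hypercoord} and the identity $|\det(\gam)|=|q|^4\det(\mathring{\ga})$ read off from \eqref{eq:assymptiticpropmetricKerrintaurxacoord:volumeform}, and noting that $r_0\in(r_-,r_+)$ forces $\Delta(r_0)<0$, I would obtain
\begin{equation*}
\sqrt{\det\big((g_{\AA})_{a,m}\big)} \;=\; \sqrt{-\Delta(r_0)}\,|q_0|\,\sqrt{\det(\mathring{\ga})}, \qquad |q_0|^2:=r_0^2+a^2\cos^2\th.
\end{equation*}

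The remaining task is to verify $\sqrt{-\Delta(r_0)}\,|q_0|\simeq m\sqrt{\dhor}$. Using the factorization $\Delta(r)=(r-r_+)(r-r_-)$ together with $r_+-r_-=2\sqrt{m^2-a^2}$, one directly computes
\begin{equation*}
-\Delta(r_0) \;=\; r_+\dhor\big(2\sqrt{m^2-a^2}-r_+\dhor\big),
\end{equation*}
and the smallness assumption $\dhor\ll 1-|a|/m$ (equivalently $r_+\dhor\ll\sqrt{m^2-a^2}$) gives $-\Delta(r_0)\simeq 2r_+\sqrt{m^2-a^2}\,\dhor$, with an implicit constant depending only on $(m,a)$. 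Since $r_0\simeq r_+\simeq m$, we also have $|q_0|\simeq m$. Multiplying the two estimates yields $\sqrt{-\Delta(r_0)}\,|q_0|\simeq m\sqrt{\dhor}$, the remaining $(m,a)$-dependent factor $\sqrt{r_+\sqrt{m^2-a^2}}$ being absorbed into the $\simeq$ constant, in accordance with the paper's convention of compressing the dependence on $(m,a,\dhor)$.

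The argument is essentially a direct computation and works identically in both coordinate patches $(\tau,x^1_0,x^2_0)$ and $(\tau,x^1_p,x^2_p)$, since the volume form identity \eqref{eq:assymptiticpropmetricKerrintaurxacoord:volumeform} is patch-independent. I do not anticipate any real obstacle; the only mild point is the sign of $\Delta(r_0)$, which follows from $r_0<r_+$ and is in turn consistent with $\gam^{rr}|_{r=r_0}<0$, i.e., with $\pr_r$ being timelike on $\AA$, which is precisely what makes $\AA$ spacelike as asserted in Lemma \ref{lem:specificchoice:normalizedcoord}.
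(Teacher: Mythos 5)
Your argument is correct, and it reaches the lemma by a route that differs from the paper's in how the determinant is computed. The paper restricts the explicit expression \eqref{eq:metricinnormalizedcoordinatesgeneralformula} to $\{r=r_+(1-\dhor)\}$, expands the $3\times3$ determinant by hand ($g_{\th\th}$ times the $(\tau,\tphi)$ block) and then bounds the result above and below; after simplification its intermediate formula is exactly your identity $\det((g_{\AA})_{a,m})=(-\De)\,|q|^2\sin^2\th$ in the $(x^1_0,x^2_0)$ patch. You instead get there in one line from the adjugate/cofactor identity $\det((g_{\AA})_{a,m})=\gam^{rr}\det(\gam)$, using $\gam^{rr}=\De/|q|^2$ from \eqref{eq:inverse:hypercoord} and $\det(\gam)=-|q|^4\det(\mathring{\ga})$ from \eqref{eq:assymptiticpropmetricKerrintaurxacoord:volumeform} (the sign coming from the Lorentzian signature), and then conclude from $-\De\big(r_+(1-\dhor)\big)=r_+\dhor\big(2\sqrt{m^2-a^2}-r_+\dhor\big)\simeq m\sqrt{m^2-a^2}\,\dhor$ together with $|q|\simeq m$, absorbing the remaining $(m,a)$-dependent factor into $\simeq$, as the paper's conventions allow. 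Your version avoids writing out any induced metric components and is manifestly identical in both coordinate patches, whereas the paper's is a self-contained brute-force computation; both produce the same exact formula, so the difference is one of economy rather than substance. One nitpick: ``equivalently'' overstates things --- $\dhor\ll 1-|a|/m$ does imply $r_+\dhor\ll\sqrt{m^2-a^2}$ (since $r_+\leq 2m$ and $\sqrt{m^2-a^2}\geq m\sqrt{1-|a|/m}$), which is all you use, both for the asymptotics of $-\De$ and to ensure $r_+(1-\dhor)>r_-$ so that $\De<0$ on $\AA$, but the converse is weaker near extremality, so it should be stated as an implication.
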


\begin{proof}
In view of \eqref{eq:metricinnormalizedcoordinatesgeneralformula}, we have 
\beaa
g_{\AA}=-\bigg(1-\frac{2mr}{|q|^2}\bigg) d\tt^2  -\frac{4amr\sin^2\th}{|q|^2}d\tt d\tphi +|q|^2 d\th^2+\frac{(\R)^2-a^2\sin^2\theta\Delta}{|q|^2}\sin^2\th d\tphi^2.
\eeaa
We infer
\beaa
\det(g_\AA) &=& |q|^2\left(\left(\frac{2mr}{|q|^2}-1\right)\frac{(\R)^2-a^2\sin^2\theta\Delta}{|q|^2}\sin^2\th - \frac{4a^2m^2r^2\sin^4\th}{|q|^4}\right)\\
&=& \frac{\sin^2\th(-\De)}{|q|^2}\Big((r^2+a^2)(r^2-a^2\sin^2\th+a^2\cos\th^2)+a^4\sin^4\th\Big)
\eeaa
and hence on $\AA$, we have $\det(g_\AA)\les m^2\dhor\sin^2\th$ and 
\beaa
{\det(g_\AA)} &\geq& \frac{\sin^2\th(-\De)}{|q|^2}\Big((m^2+a^2)(m^2-a^2)+a^4\Big)\geq \frac{\sin^2\th(-\De)m^4}{|q|^2}\\
&\gtrsim& m^2\dhor\sin^2\th
\eeaa
which concludes the proof of the lemma.
\end{proof}


\subsection{Choices of constants}
\lab{sec:smallnesconstants}


The following constants  are  involved in  the statement and in the proof of our main result:
\begin{itemize}
\item The constants $m>0$ and $a$, with $|a|<m$, are the mass and the angular momentum per unit mass of the Kerr solution relative to which the perturbation of the metric $\g$ is measured. 

\item The size of the metric perturbation  is measured by $\ep\geq 0$. 

\item The constant $\dhor$ is tied to the boundary of $\MM$ given by $\pr\MM=\AA=\{r=r_+(1-\dhor)\}$. 

\item The constant $\dred$ measures the width of the redshift region.

\item The constant $\dbl$ appears in the construction of normalized coordinates, see Lemma \ref{lem:specificchoice:normalizedcoord}.

\item The constant $\dec$ is tied to decay estimates in $(r, \tau)$ of the perturbed metric coefficients, see Section \ref{subsubsect:assumps:perturbedmetric}. 

\item The constant $\de$ is tied to $r$-weights in the Morawetz norm {$\M_\de[\psi]$}, see \eqref{def:variousMorawetzIntegrals}.

\item The constant $R$ measures the size of the spacetime region $\MM\cap\{r\leq R\}$ on which we derive microlocal energy-Morawetz estimates, see Section \ref{sect:CondEMF:Dynamic}.
\end{itemize}

These  constants are chosen such that \bea\lab{eq:constraintsonthemainsmallconstantsepanddelta}
0<\ep\ll\dhor\ll\dred\ll \dbl \ll 1-\frac{|a|}{m}, \quad \ep\ll \de\leq  1, \quad \ep\ll \dec,\quad \dred\ll \frac{1}{R}\leq \frac{1}{20m}.
\eea

From now on, in the rest of the paper, $\lesssim$ means bounded by a positive constant multiple, with this positive constant depending only on universal constants (such as constants arising from Sobolev embeddings, elliptic estimates,...) as well as the constants 
$$m,\,\, a, \,\, \dhor,\,\, \dred,\,\, \dbl, \,\, \dec, \,\, \de,\,\, R,$$
\textit{but not on} $\ep$. {Also, note that the} constants $\dhor, \dred$ and $\dbl$ can be {chosen} to be only dependent on $m$ and $a$, and that $R$ can be chosen to be only dependent on $m$.

Throughout this paper, ``LHS" and ``RHS" are {abbreviations} for ``left-hand side" and ``right-hand side", respectively,  ``w.r.t." is {an abbreviation} for ``with respect to", ``EMF" is {an abbreviation} for ``energy-Morawetz-flux",  and $\Re(\cdot)$ means taking the real part.


\subsection{Subregions and hypersurfaces of $\MM$}
\lab{sect:subregionsandhypersurfaces}


Let $(\MM, \g)$ be a four dimensional Lorentzian manifold covered by coordinate systems $(\tt, r, x^1_0, x^2_0)$ and {$(\tt, r, x^1_p, x^2_p)$}, defined respectively on $\th\neq 0, \pi$ and $\th\neq\frac{\pi}{2}$, with 
\beaa
\tau\in\mathbb{R}, \quad r_+(1-\dhor)\leq r<+\infty, \quad x^1_0=\th, \quad x^2_0=\tphi, \quad {x^1_p=\sin\th\cos\tphi, \quad x^2_p=\sin\th\sin\tphi}.
\eeaa

We define a few {subregions and hypersurfaces} of $\MM$. 
 \begin{definition}
 Define the following subregions and hypersurfaces of $\MM$:
 \bsub
 \begin{align}
 \MM(\tt_1,\tt_2):={}&\MM\cap\{\tt_1\leq \tt\leq \tt_2\}, \quad \forall\tau_1<\tau_2,\\
\MM_{r_1,r_2}:={}&\MM\cap \{r_1\leq r\leq r_2\}, \quad \forall r_+(1-\dhor)\leq r_1<r_2,\\
 \Sigma(\tt_1):={}&\MM\cap\{\tt=\tt_1\}, \quad \forall \tt_1\in \Reals,\\
  \Sigma_{r_1,r_2}(\tt_1):={}& \Sigma(\tt_1)\cap \{r_1\leq r\leq r_2\}, \quad \forall \tt_1\in \Reals, \forall r_+(1-\dhor)\leq r_1<r_2,\\
 H_{r_1}:={}&\MM\cap \{r=r_1\},\quad  \forall r_1\geq r_+(1-\dhor), \\
 \AA:={}&\MM\cap\{r=r_+(1-\dhor)\},\\
 \MM_{red}:={}&\MM\cap\{r\leq r_+(1+\dred)\},\\
 \Mtrap:={}&\MM_{r_+(1+2\dbl), 10m},\\ 
 \Mntrap:={}&\MM\setminus\Mtrap.
 \end{align}
 \esub
 \end{definition}

 
 \subsection{Assumptions on the perturbed metric and consequences}
 \label{subsect:assumps:perturbedmetric}
 

In this section, we introduce the assumptions for the metric perturbations relative to a subextremal Kerr and, under these metric perturbation assumptions, derive further estimates for the metric that are used in this work.


\subsubsection{Assumptions on the inverse metric perturbation}
 \label{subsubsect:assumps:perturbedmetric}


In order to introduce below our assumptions on the perturbed metric, we first introduce the notations $\Ga_g$ and $\Ga_b$ which denote any function of the coordinates on $\MM$ satisfying in the coordinates $(\tt,r,x^1_0, x^2_0)$ for $\th\in[\frac{\pi}{4}, \frac{3\pi}{4}]$, and in the coordinates {$(\tt,r,x^1_p, x^2_p)$} for $\th\in [0,\pi]\setminus(\frac{\pi}{3}, \frac{2\pi}{3})$, the following estimates
\bea\lab{eq:decaypropertiesofGabGag}
|\dk^{\leq 2}\Ga_g|\les \ep\min\Big\{r^{-2}\tau^{-\frac{1+\dec}{2}}, \, r^{-1}\tau^{-1-\dec}\Big\}, \qquad |\dk^{\leq 2}\Ga_b|\les \ep r^{-1}\tau^{-1-\dec},
\eea
where $\dec>0$, $\tau\in\mathbb{R}$, $r_+(1-\dhor)\leq r<+\infty$, and where the weighted derivatives $\dk$ are defined by
\bea\lab{eq:defweightedderivative}
\dk:=\{\rd_{\tt}, r\rd_{r}, r\nabla\}\quad\textrm{with}\quad r\nab=\langle \pr_{x^1}, \pr_{x^2}\rangle,
\eea
where $(x^1, x^2)$ denotes from now on either $(x^1_0, x^2_0)$ for $\th\in[\frac{\pi}{4}, \frac{3\pi}{4}]$ or $(x^1_p, x^2_p)$  for $\th\in [0,\pi]\setminus(\frac{\pi}{3}, \frac{2\pi}{3})$.

\begin{remark}
In view of \eqref{eq:decaypropertiesofGabGag}, we note that $\Ga_g$ satisfies the assumptions of $\Ga_b$ and that $r^{-1}\Ga_b$ satisfies the assumptions of $\Ga_g$. Thus, in the rest of the paper, we will systematically replace $\Ga_g+\Ga_b$ by $\Ga_b$ and $r^{-1}\Ga_b+\Ga_g$ by $\Ga_g$.  
\end{remark}

Using the notation $(\Ga_b, \Ga_g)$, we now introduce our assumptions on the perturbed inverse metric.

\begin{assumption}[Inverse metric assumptions]
\label{intro:assump:metric}
Let a subextremal Kerr metric $\gam$ be given and define, in the normalized coordinates $(\tt,r,x^1_0, x^2_0)$ and {$(\tt,r,x^1_p, x^2_p)$}, the inverse metric difference 
\bea
\gcheck^{\a\b}:=\g^{\a\b}-\gam^{\a\b}.
\eea
Then, with $(\Ga_b, \Ga_g)$ verifying \eqref{eq:decaypropertiesofGabGag}, we assume that $\gcheck^{\a\b}$ satisfies the following:
\bea\lab{eq:controloflinearizedinversemetriccoefficients}
\widecheck{\g}^{rr}=r\Ga_b, \quad \widecheck{\g}^{r\tau}=r\Ga_g,\quad \widecheck{\g}^{\tau\tau}=\Ga_g, \quad \widecheck{\g}^{ra}=\Ga_b, \quad \widecheck{\g}^{\tau a}=\Ga_g,\quad \widecheck{\g}^{ab}=r^{-1}\Ga_g.
\eea 
\end{assumption}

\begin{remark}
The decay assumptions \eqref{eq:controloflinearizedinversemetriccoefficients} on $\gcheck^{\a\b}$ are consistent with the decay estimates derived in the proof of the nonlinear stability of Kerr for small angular momentum in \cite{KS:Kerr}.
\end{remark}

The following immediate non-sharp consequence of \eqref{eq:assymptiticpropmetricKerrintaurxacoord:1}, \eqref{eq:controloflinearizedinversemetriccoefficients} and  \eqref{eq:decaypropertiesofGabGag} will be useful\footnote{This is non-sharp only for $\g^{ra}$ which satisfies in fact $\g^{ra}=O(\ep r^{-1}+mr^{-2})$.}
\bea\lab{eq:consequenceasymptoticKerrandassumptionsinverselinearizedmetric}
\begin{split}
\g^{rr}&=O(1), \qquad \g^{r\tau}=O(1), \qquad \g^{ra}=O(r^{-1}),\\
\g^{\tau\tau}&=O(m^2r^{-2}), \qquad \g^{\tau a}=O(mr^{-2}), \qquad \g^{ab}=O(r^{-2}).
\end{split}
\eea


\subsubsection{Control of the metric perturbation}


The following lemma provides the control of the perturbed metric coefficients which follows from the assumption \eqref{eq:controloflinearizedinversemetriccoefficients} on the perturbed inverse metric coefficients. 
\begin{lemma}\lab{lemma:controlofmetriccoefficients:bis}
Assume that $\widecheck{\g}^{\a\b}$ verifies \eqref{eq:controloflinearizedinversemetriccoefficients}. Then, $\widecheck{\g}_{\a\b}:=\g_{\a\b}-(\gam)_{\a\b}$ verifies  
\bea\lab{eq:controloflinearizedmetriccoefficients}
\begin{split}
\widecheck{\g}_{rr} &=\Ga_g, \qquad\quad \widecheck{\g}_{r\tau}=r\Ga_g, \qquad\quad \widecheck{\g}_{\tau\tau}=r\Ga_b, \\
\widecheck{\g}_{\tau a} &=r^2\Ga_b, \qquad\, \widecheck{\g}_{ra}=r^2\Ga_g, \qquad\,\,\,\, \widecheck{\g}_{ab}=r^3\Ga_g.
\end{split}
\eea
Also, we have
\bea\lab{eq:controloflinearizedmetriccoefficients:det}
\widecheck{\det(\g)}=\det(\g_{a,m})r^2\Ga_g, \qquad \widecheck{\det(\g)}:=\det(\g)-\det(\g_{a,m}).
\eea
\end{lemma}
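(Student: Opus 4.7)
The plan is to derive the estimates on $\widecheck{\g}_{\a\b}$ by directly exploiting matrix inversion. Starting from $\g^{\a\b}\g_{\b\ga}=\de^\a_\ga$ and $\gam^{\a\b}(\gam)_{\b\ga}=\de^\a_\ga$, subtracting these two identities and using $\g^{\a\b}=\gam^{\a\b}+\widecheck{\g}^{\a\b}$ yields, after contracting with $(\gam)_{\de\a}$, the exact identity
\begin{align*}
\widecheck{\g}_{\de\ga} \;=\; -(\gam)_{\de\a}\,\widecheck{\g}^{\a\b}\,\g_{\b\ga}\;=\;-(\gam)_{\de\a}\,\widecheck{\g}^{\a\b}\,(\gam)_{\b\ga}\;-\;(\gam)_{\de\a}\,\widecheck{\g}^{\a\b}\,\widecheck{\g}_{\b\ga}.
\end{align*}
The second (quadratic) term will be an obviously absorbable perturbation of the first, so the problem reduces to reading off each component of $-(\gam)_{\de\a}\widecheck{\g}^{\a\b}(\gam)_{\b\ga}$.

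Next I would go component by component, combining the asymptotics \eqref{eq:assymptiticpropmetricKerrintaurxacoord:2} for $(\gam)_{\a\b}$ with the input estimates \eqref{eq:controloflinearizedinversemetriccoefficients} on $\widecheck{\g}^{\a\b}$. Since $(\gam)_{\tau\tau}=-1+O(mr^{-1})$, $(\gam)_{r\tau}=-1+O(m^2r^{-2})$, $(\gam)_{rr}=O(m^2r^{-2})$, $(\gam)_{\tau x^a}=(\gam)_{rx^a}=O(m)$ and $(\gam)_{x^ax^b}=r^2\mathring{\ga}_{x^ax^b}+O(m^2)$, each bilinear product has an obvious dominant pair; for example, $\widecheck{\g}_{rr}$ is dominated by $(\gam)_{r\tau}\widecheck{\g}^{\tau\tau}(\gam)_{\tau r}=\Ga_g$, $\widecheck{\g}_{r\tau}$ by $(\gam)_{r\tau}\widecheck{\g}^{\tau\tau}(\gam)_{\tau\tau}+(\gam)_{r\tau}\widecheck{\g}^{\tau r}(\gam)_{r\tau}=r\Ga_g$, $\widecheck{\g}_{\tau a}$ by $(\gam)_{\tau r}\widecheck{\g}^{rb}(\gam)_{ba}=r^2\Ga_b$, and $\widecheck{\g}_{ab}$ by $(\gam)_{ac}\widecheck{\g}^{cd}(\gam)_{db}=r^3\Ga_g$; the remaining contributions are smaller, using both that $r\gtrsim m$ in $\MM$ and the identities $r^{-1}\Ga_b=\Ga_g$, $m\Ga_b=\Ga_g$. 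The quadratic error $(\gam)_{\de\a}\widecheck{\g}^{\a\b}\widecheck{\g}_{\b\ga}$ can be disposed of by a small bootstrap: one either works in a shrinking $L^\infty$ norm or simply substitutes the already-proved leading bounds back into the quadratic term, which is of strictly better order thanks to the smallness factor $\ep$.

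For \eqref{eq:controloflinearizedmetriccoefficients:det}, the natural tool is Jacobi's formula applied to the path $\g_t:=\gam+t\widecheck{\g}$:
\begin{align*}
\widecheck{\det(\g)}\;=\;\int_0^1\det(\g_t)\,\g_t^{\a\b}\,\widecheck{\g}_{\b\a}\,dt.
\end{align*}
The trace $\g_t^{\a\b}\widecheck{\g}_{\b\a}$ is estimated by summing the components just obtained against the inverse metric bounds \eqref{eq:consequenceasymptoticKerrandassumptionsinverselinearizedmetric}; the largest contributions are $\gam^{r\tau}\widecheck{\g}_{r\tau}$, $\gam^{ra}\widecheck{\g}_{ra}$ and $\gam^{ab}\widecheck{\g}_{ab}$, each of order $r\Ga_g$, which is subsumed by $r^2\Ga_g$. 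Since $\widecheck{\g}^{\a\b}$ is small, $\det(\g_t)=\det(\gam)(1+O(\ep))$ uniformly in $t\in[0,1]$, and the integral is bounded by $\det(\gam)\cdot r^2\Ga_g$, as claimed.

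The only mild obstacle is bookkeeping: one must carefully verify for each component that, among the $\sim 10$ bilinear terms $(\gam)_{\de\a}\widecheck{\g}^{\a\b}(\gam)_{\b\ga}$, no subleading pair violates the target bound, which requires systematically using $r\gtrsim m$ to trade the Kerr cross-terms of size $O(m)$ or $O(m^2/r^2)$ against the $r$-weights in $(\Ga_g,\Ga_b)$; no new idea beyond careful application of \eqref{eq:decaypropertiesofGabGag} is needed.
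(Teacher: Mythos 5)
Your core identity and the componentwise bilinear bookkeeping are fine, and your route is essentially a repackaging of the paper's: the paper also works from $\g^{\a\b}\g_{\b\ga}=\de^\a_\ga$, only it contracts row by row with $\g^{\tau\a}$ (and $\g^{\a a}$) instead of with $(\gam)_{\de\a}$, and it sharpens the components in a specific order. The genuine gap is in your treatment of the quadratic term $(\gam)_{\de\a}\widecheck{\g}^{\a\b}\widecheck{\g}_{\b\ga}$. Your claim that it is ``of strictly better order thanks to the smallness factor $\ep$'' is not correct as stated: the obstruction is not the power of $\ep$ but the $r$-weights in the definition \eqref{eq:decaypropertiesofGabGag}, since $\Ga_g$ must satisfy \emph{both} bounds in the min, in particular the $r^{-2}\tau^{-\frac{1+\dec}{2}}$ decay, while $\Ga_b$ carries no extra $r$-decay to trade. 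Concretely, if you only have the crude all-$\Ga_b$ bounds at your disposal (which you must establish first anyway -- your ``shrinking $L^\infty$ norm'' has to be taken componentwise with the correct $r$-weights, not a plain sup norm), then for the $rr$-component the quadratic piece $(\gam)_{r\tau}\widecheck{\g}^{\tau a}\widecheck{\g}_{a r}\sim \Ga_g\cdot r^2\Ga_b$ is only of class $r\Ga_g$, and for the $ab$-component the piece $(\gam)_{ac}\widecheck{\g}^{c r}\widecheck{\g}_{r b}\sim r^2\cdot\Ga_b\cdot r^2\Ga_b$ fails the $r^{-2}\tau^{-\frac{1+\dec}{2}}$ half of the target at large $r$ for fixed $\tau$; no amount of $\ep$-smallness repairs this, so a single substitution of the ``leading bounds'' does not close for $\widecheck{\g}_{rr}$ and $\widecheck{\g}_{ab}$.

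The fix is exactly the two-stage structure of the paper's proof: first derive the crude bounds $\widecheck{\g}_{rr},\widecheck{\g}_{r\tau},\widecheck{\g}_{\tau\tau}=r\Ga_b$, $\widecheck{\g}_{ra},\widecheck{\g}_{\tau a}=r^2\Ga_b$, $\widecheck{\g}_{ab}=r^3\Ga_b$; then sharpen in an order in which the components entering the dangerous quadratic terms are already sharp -- $\widecheck{\g}_{r\tau}$ and $\widecheck{\g}_{ra}$ first (for these, one pass with the crude bounds does suffice, as you can check), and only then $\widecheck{\g}_{rr}$ and $\widecheck{\g}_{ab}$, where the products $\Ga_g\cdot r^2\Ga_g$ and $r^2\Ga_b\cdot r^2\Ga_g$ do land in the right classes. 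Equivalently, iterate your substitution once more. With that amendment (and the observation that the $\dk^{\leq 2}$-derivative bounds propagate through all products by Leibniz, since the Kerr coefficients in \eqref{eq:assymptiticpropmetricKerrintaurxacoord:2} are stable under $\dk$), your argument, including the Jacobi-formula treatment of the determinant, goes through and reproduces \eqref{eq:controloflinearizedmetriccoefficients} and \eqref{eq:controloflinearizedmetriccoefficients:det}.
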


\begin{proof}
First, using the following non-sharp consequence of \eqref{eq:controloflinearizedinversemetriccoefficients}, we have
\beaa
\widecheck{\g}^{rr}=r\Ga_b, \quad \widecheck{\g}^{r\tau}=r\Ga_b,\quad \widecheck{\g}^{\tau\tau}=r\Ga_b, \quad \widecheck{\g}^{ra}=\Ga_b, \quad \widecheck{\g}^{\tau a}=\Ga_b,\quad \widecheck{\g}^{ab}=r^{-1}\Ga_b,
\eeaa
which immediately implies, using the asymptotics \eqref{eq:assymptiticpropmetricKerrintaurxacoord:1} for $\gam^{\a\b}$, and  \eqref{eq:assymptiticpropmetricKerrintaurxacoord:2} for $(\gam)_{\a\b}$,  
\beaa
\begin{split}
\widecheck{\g}_{rr} &=r\Ga_b, \qquad \widecheck{\g}_{r\tau}=r\Ga_b, \qquad \widecheck{\g}_{\tau\tau}=r\Ga_b, \\
\widecheck{\g}_{\tau a} &=r^2\Ga_b, \qquad \widecheck{\g}_{ra}=r^2\Ga_b, \qquad \widecheck{\g}_{ab}=r^3\Ga_b.
\end{split}
\eeaa
This already yields the stated estimates for $\widecheck{\g}_{\tau\tau}$ and $\widecheck{\g}_{\tau a}$, and we still need to obtain the stated control of $\widecheck{\g}_{rr}$, $\widecheck{\g}_{r\tau}$, $\widecheck{\g}_{ra}$ and $\widecheck{\g}_{ab}$. 

Next, using again the asymptotic properties \eqref{eq:assymptiticpropmetricKerrintaurxacoord:1} for $\gam^{\a\b}$ and  \eqref{eq:assymptiticpropmetricKerrintaurxacoord:2} for $(\gam)_{\a\b}$,  
we have, using also \eqref{eq:controloflinearizedinversemetriccoefficients}, 
\beaa
0 &=& \g^{\tau\a}\g_{\tau\a}-1\\
&=& \Big(O(m^2r^{-2})+\Ga_g\Big)\widecheck{\g}_{\tau\tau}+\Ga_g +\Big(-1+O(m^2r^{-2})+r\Ga_g\Big)\widecheck{\g}_{r\tau}+r\Ga_g\\
&&+\Big(O(mr^{-2})+\Ga_g)\widecheck{\g}_{\tau a}+\Ga_g.
\eeaa
Since we have obtained above that $\widecheck{\g}_{\tau\tau}=r\Ga_b$ and $\widecheck{\g}_{\tau a}=r^2\Ga_b$, we infer $\widecheck{\g}_{r\tau} \in r\Ga_g$ as stated.

Next, proceeding as above, we have
\beaa
0 &=& \g^{\tau\a}\g_{a\a}\\
&=& \Big(O(m^2r^{-2})+\Ga_g\Big)\widecheck{\g}_{a\tau}+\Ga_g +\Big(-1+O(m^2r^{-2})+r\Ga_g\Big)\widecheck{\g}_{ar}+r\Ga_g\\
&&+\Big(O(mr^{-2})+\Ga_g\Big)\widecheck{\g}_{ab}+r^2\Ga_g.
\eeaa
Since we have obtained above that $\widecheck{\g}_{\tau a}=r^2\Ga_b$ and $\widecheck{\g}_{ab}=r^3\Ga_b$, we infer $\widecheck{\g}_{ra} \in r^2\Ga_g$ as stated.

Next, proceeding as above, we have
\beaa
0 &=& \g^{\tau\a}\g_{r\a}\\
&=& \Big(O(m^2r^{-2})+\Ga_g\Big)\widecheck{\g}_{r\tau}+\Ga_g +\Big(-1+O(m^2r^{-2})+r\Ga_g\Big)\widecheck{\g}_{rr}+r^{-1}\Ga_g\\
&&+\Big(O(mr^{-2})+\Ga_g\Big)\widecheck{\g}_{ra}+\Ga_g.
\eeaa
Since we have obtained above that $\widecheck{\g}_{r\tau} \in r\Ga_g$ and $\widecheck{\g}_{ra} \in r^2\Ga_g$, we infer $\widecheck{\g}_{rr} \in\Ga_g$ as stated.

Next, proceeding as above, we have
\beaa
0 &=& \g^{\a a}\g_{\a b}-\de^a\!_b\\
&=& \Big(O(mr^{-2})+\Ga_g\Big)\widecheck{\g}_{\tau b}+\Ga_g+\Big(O(mr^{-2})+\Ga_b\Big)\widecheck{\g}_{rb}+\Ga_b\\
&&+\Big(r^{-2}(1+O(m^2r^{-2})\mathring{\ga}^{ac}+r^{-1}\Ga_g\Big)\widecheck{\g}_{cb}+r\Ga_g.
\eeaa
Since we have obtained above that $\widecheck{\g}_{\tau a} \in r^2\Ga_b$ and $\widecheck{\g}_{ra} \in r^2\Ga_g$, we infer $\widecheck{\g}_{ab} \in r^3\Ga_g$ as stated. 

Finally, using the above estimates for $\g_{\a\b}$, as well as the asymptotics  \eqref{eq:assymptiticpropmetricKerrintaurxacoord:2} and \eqref{eq:assymptiticpropmetricKerrintaurxacoord:volumeform}, we immediately infer 
\beaa
\widecheck{\det(\g)}=\det(\g_{a,m})r^2\Ga_g, \qquad \widecheck{\det(\g)}=\det(\g)-\det(\g_{a,m}),
\eeaa
as stated. This concludes the proof of Lemma \ref{lemma:controlofmetriccoefficients:bis}.
\end{proof}

The following immediate non-sharp consequence of \eqref{eq:assymptiticpropmetricKerrintaurxacoord:2}, \eqref{eq:controloflinearizedmetriccoefficients} and  \eqref{eq:decaypropertiesofGabGag} will be useful\footnote{This is non-sharp only for $\g_{\tau a}$ which satisfies in fact $\g_{\tau a}=O(\ep r+m)$.}
\bea\lab{eq:consequenceasymptoticKerrandassumptionsinverselinearizedmetric:bis}
\bsplit
\g_{rr}&=O(m^2r^{-2}), \qquad \g_{r \tt}=O(1),\qquad \g_{ra}=O(m),\\
\g_{\tt \tt}&= O(1),\qquad\qquad\,\, \g_{\tau a}=O(r),\qquad \, \g_{ab}=O(r^2).
 \end{split}
\eea


\subsubsection{Control of the induced metric on $\Si(\tau)$ and $\AA$}


The following lemma provides the control of the induced metric on $\Si(\tau)$. 
\begin{lemma}\lab{lemma:controllinearizedmetric:inducedmetricSitau}
Let $g$ denote the metric induced by $\g$ on the level sets of $\tau$. Assume that $\widecheck{\g}^{\a\b}$ verifies \eqref{eq:controloflinearizedinversemetriccoefficients}. Then, $\widecheck{g}_{ij}:=g_{ij}-(g_{a,m})_{ij}$ and $\widecheck{g}^{ij}:=g^{ij}-g_{a,m}^{ij}$ verify
\beaa
\bsplit
\widecheck{g}_{rr} &=\Ga_g,  \qquad \widecheck{g}_{ra}=r^2\Ga_g, \qquad \widecheck{g}_{ab}=r^3\Ga_g,\\
\widecheck{g}^{rr} &=r^4\Ga_g,  \qquad \widecheck{g}^{ra}=r^2\Ga_g, \qquad \widecheck{g}^{ab}=\Ga_g.
\end{split}
\eeaa
Also, we have $\widecheck{\det(g)}=r^4\Ga_g$, with $\widecheck{\det(g)}:=\det(g)-\det(g_{a,m})$. 
\end{lemma}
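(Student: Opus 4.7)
The plan is to reduce everything to already-proved spacetime estimates by exploiting the fact that $\partial_r, \partial_{x^1}, \partial_{x^2}$ are tangent to the level sets of $\tau$. The pullback of $\g$ along the inclusion $\Sigma(\tau) \hookrightarrow \MM$ kills $d\tau$, so $g_{ij} = \g_{ij}$ and $(g_{a,m})_{ij} = (\gam)_{ij}$ for $i,j \in \{r, x^1, x^2\}$. Consequently $\widecheck{g}_{ij} = \widecheck{\g}_{ij}$, and the three bounds on $\widecheck{g}_{rr}$, $\widecheck{g}_{ra}$, $\widecheck{g}_{ab}$ follow immediately from the corresponding estimates in Lemma \ref{lemma:controlofmetriccoefficients:bis}.

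For the inverse induced metric I would apply the standard projection formula $g^{ij} = \g^{ij} - \g^{i\tau}\g^{j\tau}/\g^{\tau\tau}$, whose rank-one correction precisely removes the $d\tau$-direction from $\g^{-1}$. Subtracting the corresponding Kerr identity and expanding via $\g^{\alpha\beta} = \gam^{\alpha\beta} + \widecheck{\g}^{\alpha\beta}$ together with the Neumann series $(\g^{\tau\tau})^{-1} = (\gam^{\tau\tau})^{-1}(1 + \widecheck{\g}^{\tau\tau}/\gam^{\tau\tau})^{-1}$, each resulting term is controlled using Assumption \ref{intro:assump:metric}, the Kerr asymptotics \eqref{eq:assymptiticpropmetricKerrintaurxacoord:1} (crucially $\gam^{\tau\tau} = O(m^2r^{-2})$, $\gam^{r\tau} = O(1)$, $\gam^{a\tau} = O(mr^{-2})$), together with the replacements $\Ga_b \subset r\Ga_g$ and $r^{-1}\Ga_b \subset \Ga_g$ from the remark after \eqref{eq:decaypropertiesofGabGag}. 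Tallying powers of $r$ produces the stated bounds $\widecheck{g}^{rr} = r^4\Ga_g$, $\widecheck{g}^{ra} = r^2\Ga_g$ and $\widecheck{g}^{ab} = \Ga_g$; in the two large-power cases the dominant contribution comes from the term $(\gam^{i\tau}\gam^{j\tau}/(\gam^{\tau\tau})^2)\widecheck{\g}^{\tau\tau}$, whose size is dictated by the large factor $(\gam^{\tau\tau})^{-2} = O(r^4/m^4)$.

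For the determinant I would use the cofactor identity $\det(g) = \g^{\tau\tau}\det(\g)$, obtained by cofactor expansion of $\g^{-1}$ along the $\tau$-row and noting that the $(\tau,\tau)$-minor of $\g$ is exactly the spatial $3\times 3$ matrix $g$. Subtracting the Kerr version yields $\widecheck{\det(g)} = \widecheck{\g}^{\tau\tau}\det(\g) + \gam^{\tau\tau}\widecheck{\det(\g)}$, and the bound $\widecheck{\det(g)} = r^4\Ga_g$ drops out after inserting $\widecheck{\g}^{\tau\tau} = \Ga_g$, $\det(\g) = O(r^4)$, $\gam^{\tau\tau} = O(m^2r^{-2})$, together with $\widecheck{\det(\g)} = O(r^6)\Ga_g$ supplied by \eqref{eq:controloflinearizedmetriccoefficients:det}.

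The only point requiring care beyond routine bookkeeping is the denominator $\g^{\tau\tau}$ in the projection formula. Because $\gam^{\tau\tau}$ is small in the asymptotic region, the reciprocal $1/\gam^{\tau\tau} = O(r^2/m^2)$ is large and is precisely what generates the $r^2$ and $r^4$ weights in $\widecheck{g}^{ra}$ and $\widecheck{g}^{rr}$; one must then verify that the relative size $\widecheck{\g}^{\tau\tau}/\gam^{\tau\tau} = O(\ep/m^2)$ is small enough (assured by $\ep \ll 1$ from \eqref{eq:constraintsonthemainsmallconstantsepanddelta}) to make the Neumann expansion of $(\g^{\tau\tau})^{-1}$ convergent and to absorb its implicit constants into $\lesssim$.
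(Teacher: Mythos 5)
Your proof is correct, and its first half is exactly the paper's argument: since $\pr_r,\pr_{x^a}$ are tangent to $\Si(\tau)$, one has $\widecheck{g}_{ij}=\widecheck{\g}_{ij}$ and the covariant bounds follow from Lemma \ref{lemma:controlofmetriccoefficients:bis}. For the inverse components and the determinant you take a genuinely different route. The paper stays three-dimensional: it combines the bounds on $\widecheck{g}_{ij}$ with the asymptotics of $g_{a,m}^{ij}$ and $(g_{a,m})_{ij}$ already computed in Lemma \ref{lemma:specificchoice:normalizedcoord:inducedmetricSitau}, i.e.\ it perturbs the inverse and the determinant of the $3\times 3$ induced metric directly, so it never divides by a small spacetime quantity. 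You instead work four-dimensionally through the Schur-complement identities $g^{ij}=\g^{ij}-\g^{i\tau}\g^{j\tau}/\g^{\tau\tau}$ and $\det(g)=\g^{\tau\tau}\det(\g)$, reading everything off from \eqref{eq:controloflinearizedinversemetriccoefficients}, \eqref{eq:assymptiticpropmetricKerrintaurxacoord:1} and \eqref{eq:controloflinearizedmetriccoefficients:det} without ever inverting the induced metric; your tally of the dominant terms (in particular the $(\gam^{i\tau}\gam^{j\tau}/(\gam^{\tau\tau})^{2})\,\widecheck{\g}^{\tau\tau}$ contribution producing the $r^4$ and $r^2$ weights, and the two pieces $\widecheck{\g}^{\tau\tau}\det(\g)+\gam^{\tau\tau}\widecheck{\det(\g)}$ each of size $r^4\Ga_g$) reproduces the stated bounds. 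What your route buys is independence from Lemma \ref{lemma:specificchoice:normalizedcoord:inducedmetricSitau}; what it costs is the division by $\g^{\tau\tau}$: you need not only $\gam^{\tau\tau}=O(m^2r^{-2})$ but the lower bound $|\gam^{\tau\tau}|\gtrsim m^2r^{-2}$ for $r\geq 13m$ (which appears only inside the proof of Lemma \ref{lem:specificchoice:normalizedcoord}, not in the displayed asymptotics \eqref{eq:assymptiticpropmetricKerrintaurxacoord:1}) together with uniform spacelikeness on the compact region $r\leq 13m$, and you must check that the Neumann expansion and the resulting products preserve the two weighted $\dk$-derivatives encoded in the $\Ga_g$ notation (Leibniz plus $r^2\Ga_g\lesssim\ep$ handles the quadratic and higher terms). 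You flag exactly these points and they go through, so there is no gap — the paper simply sidesteps them by reusing its slice computation.
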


\begin{proof}
Since $g_{ij}=\g_{ij}$ and $(g_{a,m})_{ij}=(\gam)_{ij}$, we have $\widecheck{g}_{ij}=\widecheck{\g}_{ij}$ and hence, we immediately infer from Lemma \ref{lemma:controlofmetriccoefficients:bis}
\beaa
\widecheck{g}_{rr} =\Ga_g,  \qquad \widecheck{g}_{ra}=r^2\Ga_g, \qquad \widecheck{g}_{ab}=r^3\Ga_g,
\eeaa
as stated. Together with the asymptotics for $g_{a,m}^{ij}$ in Lemma \ref{lemma:specificchoice:normalizedcoord:inducedmetricSitau}, this yields
\beaa
\widecheck{g}^{rr} =r^4\Ga_g,  \qquad \widecheck{g}^{ra}=r^2\Ga_g, \qquad \widecheck{g}^{ab}=\Ga_g,
\eeaa
as stated. Also, the above properties of $\widecheck{g}_{ij}$, together with the asymptotics for $(g_{a,m})_{ij}$ in Lemma \ref{lemma:specificchoice:normalizedcoord:inducedmetricSitau}, yields $\widecheck{\det(g)}=r^4\Ga_g$ as stated. This concludes the proof of Lemma \ref{lemma:controllinearizedmetric:inducedmetricSitau}. 
\end{proof}

The following lemma provides the control of the determinant of the induced metric on $\AA$. 
\begin{lemma}\lab{lemma:controllinearizedmetric:inducedmetricAA}
Let $g_\AA$ denote the metric induced by $\g$ on the spacelike hypersurface $\AA$.  Assume that $\widecheck{\g}^{\a\b}$ verifies \eqref{eq:controloflinearizedinversemetriccoefficients}. Then, $\widecheck{\det(g_\AA)}=O(\ep\tau^{-1-\de})$, with $\widecheck{\det(g_\AA)}:=\det(g)-\det(g_\AA)$. 
\end{lemma}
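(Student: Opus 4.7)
The plan is to proceed in direct analogy with Lemma \ref{lemma:controlofmetriccoefficients:bis} by restricting to the hypersurface $\AA=\{r=r_+(1-\dhor)\}$, where the radial coordinate is pinched in a compact range and therefore supplies only a harmless bounded prefactor. Interpreting $\widecheck{\det(g_\AA)}:=\det(g_\AA)-\det((g_\AA)_{a,m})$ (by parallel with the preceding lemma), the goal is to propagate the pointwise smallness of $\widecheck{\g}_{\a\b}$ already established to the determinant of the induced metric.

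First, I would observe that $dr\equiv 0$ along $\AA$, so in the coordinates $(\tau,x^1,x^2)$ the induced metric $g_\AA$ is the $3\times 3$ block of the ambient metric built from $\g_{\tau\tau}$, $\g_{\tau a}$, $\g_{ab}$ evaluated at $r=r_+(1-\dhor)$. In particular,
\[
\widecheck{g_\AA}_{\tau\tau}=\widecheck{\g}_{\tau\tau}, \qquad \widecheck{g_\AA}_{\tau a}=\widecheck{\g}_{\tau a}, \qquad \widecheck{g_\AA}_{ab}=\widecheck{\g}_{ab},
\]
all evaluated at $r=r_+(1-\dhor)$. Feeding in the bounds \eqref{eq:controloflinearizedmetriccoefficients} from Lemma \ref{lemma:controlofmetriccoefficients:bis}, namely $\widecheck{\g}_{\tau\tau}=r\Ga_b$, $\widecheck{\g}_{\tau a}=r^2\Ga_b$ and $\widecheck{\g}_{ab}=r^3\Ga_g$, combined with the definitions \eqref{eq:decaypropertiesofGabGag} of $\Ga_b$ and $\Ga_g$ and the fact that $r$ is of size $m$ on $\AA$ (hence absorbable into $\lesssim$), each component of $\widecheck{g_\AA}$ is pointwise of size $O(\ep\tau^{-1-\dec})$, which in particular dominates $O(\ep\tau^{-1-\de})$.

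Next, I would expand the difference of the determinants by multilinearity. Writing $g_\AA=(g_\AA)_{a,m}+\widecheck{g_\AA}$ and using the standard $3\times 3$ expansion, every term in $\det(g_\AA)-\det((g_\AA)_{a,m})$ contains at least one factor of $\widecheck{g_\AA}$, the remaining one or two factors being components of either $(g_\AA)_{a,m}$ or $\widecheck{g_\AA}$. The background factors of $(g_\AA)_{a,m}$ are uniformly bounded in $\tau$ by constants depending only on $m$, $a$, $\dhor$ (as follows from \eqref{eq:metricinnormalizedcoordinatesgeneralformula} and the explicit induced-metric formula appearing in the proof of Lemma \ref{lemma:specificchoice:normalizedcoord:inducedmetricAA}), while the perturbation factors contribute an additional $O(\ep\tau^{-1-\dec})$ each. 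Absorbing all such bounded prefactors into $\lesssim$ and using $\ep\ll 1$ to discard quadratic and cubic corrections in $\ep$ against the linear one, we arrive at $|\widecheck{\det(g_\AA)}|\lesssim \ep\tau^{-1-\de}$, as claimed.

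I do not expect any serious obstacle: the proof is essentially a bookkeeping exercise, since $\AA$ is a fixed constant-$r$ hypersurface and the compact radial range removes the $r$-weights that made the bulk computation in Lemma \ref{lemma:controlofmetriccoefficients:bis} more delicate. The only point requiring any care is verifying that a single factor of the perturbation already suffices to extract the claimed decay, while the remaining entries of the background $(g_\AA)_{a,m}$ need only be shown to be stationary and uniformly bounded in $\tau$, which is immediate since $\gam$ is stationary and $r$ is constant on $\AA$.
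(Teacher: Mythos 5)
Your proposal is correct and follows essentially the same route as the paper: restrict to the $\tau\tau$, $\tau a$, $ab$ components on $\AA$, import the bounds of Lemma \ref{lemma:controlofmetriccoefficients:bis} together with \eqref{eq:decaypropertiesofGabGag} (with $r$ bounded on $\AA$), and conclude for the determinant by multilinearity; the paper simply states this last step as immediate, and your reading of the (typo-laden) definition of $\widecheck{\det(g_\AA)}$ matches the intended one.
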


\begin{proof}
In view of the control for $\widecheck{\g}_{\tau\tau}$, $\widecheck{\g}_{\tau a}$ and $\widecheck{\g}_{ab}$ provided by Lemma \ref{lemma:controlofmetriccoefficients:bis}, and the control \eqref{eq:decaypropertiesofGabGag} for $(\Ga_b, \Ga_g)$, we have $\widecheck{g_{\AA}}=O(\ep\tau^{-1-\de})$, where $\widecheck{g_\AA}:=g_{\AA}-(g_{a,m})_\AA$. This immediately yields $\widecheck{\det(g_\AA)}=O(\ep\tau^{-1-\de})$ as stated. 
\end{proof}


\subsubsection{Further consequences of the metric assumptions}


In this section, we draw further consequences of the assumption \eqref{eq:controloflinearizedinversemetriccoefficients} on the perturbed inverse metric coefficients. 

\begin{lemma}\lab{lemma:computationofthederiveativeofsrqtg}
Let the 1-form $N_{det}$ be defined by  
\beaa
(N_{det})_\mu:=\frac{1}{\sqrt{|\g|}}\pr_\mu\sqrt{|\g|} - \frac{1}{\sqrt{|\g_{a,m}|}}\pr_\mu\sqrt{|\g_{a,m}|}. 
\eeaa
Then, we have 
\beaa
(N_{det})_r=\dk^{\leq 1}\Ga_g, \qquad (N_{det})_\tau=r\dk^{\leq 1}\Ga_g, \qquad (N_{det})_{x^a}=r\dk^{\leq 1}\Ga_g,
\eeaa 
and 
\beaa
(N_{det})^r=r\dk^{\leq 1}\Ga_g, \qquad (N_{det})^\tau=\dk^{\leq 1}\Ga_g, \qquad (N_{det})^{x^a}=r^{-1}\dk^{\leq 1}\Ga_g.
\eeaa
\end{lemma}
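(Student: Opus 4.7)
The starting point is Jacobi's formula $\pr_\mu \log\sqrt{|h|} = \tfrac{1}{2} h^{\a\b}\pr_\mu h_{\a\b}$, valid for any smooth non-degenerate Lorentzian metric. Applying it to both $\g$ and $\gam$ and subtracting gives the clean decomposition
\begin{equation*}
(N_{det})_\mu \;=\; \frac{1}{2}\Big(\gam^{\a\b}\pr_\mu\gcheck_{\a\b} \;+\; \gcheck^{\a\b}\pr_\mu (\gam)_{\a\b} \;+\; \gcheck^{\a\b}\pr_\mu\gcheck_{\a\b}\Big),
\end{equation*}
where $\gcheck_{\a\b}:=\g_{\a\b}-(\gam)_{\a\b}$ is already controlled via Lemma \ref{lemma:controlofmetriccoefficients:bis}. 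The plan is then to estimate the three terms on the right-hand side using the assumption \eqref{eq:controloflinearizedinversemetriccoefficients} on $\gcheck^{\a\b}$, the bounds \eqref{eq:controloflinearizedmetriccoefficients} on $\gcheck_{\a\b}$, and the Kerr asymptotics \eqref{eq:assymptiticpropmetricKerrintaurxacoord:1}--\eqref{eq:assymptiticpropmetricKerrintaurxacoord:2}.

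The key structural remark is that $\pr_\tau$, $r\pr_r$, and $r\pr_{x^a}$ all belong to the weighted derivative list $\dk$, so that $\pr_\mu(r^k\Ga_g)\in r^{k-1}\dk^{\le 1}\Ga_g$ for $\mu\in\{r,x^a\}$ while $\pr_\tau(r^k\Ga_g)=r^k\dk\Ga_g$, and similarly with $\Ga_b$ in place of $\Ga_g$. Combining this with the $r$-weights of $\gcheck_{\a\b}$, $\gcheck^{\a\b}$, $\gam^{\a\b}$, and $(\gam)_{\a\b}$ block by block, every summand of each of the three terms in the decomposition is bounded with the claimed decay. As an illustrative check, the most delicate summand for $(N_{det})_r$ is the angular--angular block, where $\gam^{ab}\pr_r\gcheck_{ab}\in O(r^{-2})\cdot r^2\dk^{\le 1}\Ga_g = \dk^{\le 1}\Ga_g$, matching the stated estimate exactly; the $\mu=\tau$ case is simpler because $\gam$ is stationary in the normalized coordinates, so $\pr_\tau(\gam)_{\a\b}=0$ and the middle term disappears, while the extra overall factor of $r$ relative to $(N_{det})_r$ arises from the fact that $\pr_\tau$ already lies in $\dk$ and no $r^{-1}$ conversion is needed; the angular case $\mu=x^a$ is handled analogously, noting that $\pr_{x^a}(\gam)_{\b\ga}$ inherits the $r$-weight of $(\gam)_{\b\ga}$ itself.

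For the contravariant components, I would raise the index via $(N_{det})^\mu = \g^{\mu\nu}(N_{det})_\nu$ and use the crude asymptotics \eqref{eq:consequenceasymptoticKerrandassumptionsinverselinearizedmetric} for $\g^{\mu\nu}$: for instance,
\begin{equation*}
(N_{det})^\tau = \g^{\tau r}(N_{det})_r + \g^{\tau\tau}(N_{det})_\tau + \g^{\tau a}(N_{det})_{a} = O(1)\cdot\dk^{\le 1}\Ga_g + O(r^{-2})\cdot r\dk^{\le 1}\Ga_g \subset \dk^{\le 1}\Ga_g,
\end{equation*}
and the cases of $(N_{det})^r$ and $(N_{det})^{x^a}$ are completely analogous. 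The main obstacle is not conceptual but combinatorial: one must track the six covariant $\a\b$-blocks summand by summand, and in the cases where the naive weight exactly saturates the target bound, one must pick the finer of the two decay rates in \eqref{eq:decaypropertiesofGabGag} to absorb the pile-up of $r$-powers. Since all the necessary ingredients are already in hand, the argument is purely algebraic once Jacobi's formula has been invoked.
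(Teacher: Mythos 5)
Your argument is correct and is essentially the paper's own proof: both rest on the logarithmic-derivative (Jacobi) formula for the determinant, a splitting into Kerr plus perturbation, block-by-block counting of $r$-weights, and then raising the index with the crude asymptotics \eqref{eq:consequenceasymptoticKerrandassumptionsinverselinearizedmetric}. The only structural difference is a mirror image of the decomposition: the paper writes $2(N_{det})_\mu=\big((\gam)_{\a\b}+\gcheck_{\a\b}\big)\pr_\mu\gcheck^{\a\b}+\gcheck_{\a\b}\pr_\mu\gam^{\a\b}$, so the derivative falls on the primitive datum $\gcheck^{\a\b}$ of \eqref{eq:controloflinearizedinversemetriccoefficients}, whereas you differentiate the covariant perturbation $\gcheck_{\a\b}$ and therefore rely on Lemma \ref{lemma:controlofmetriccoefficients:bis} together with the fact that the classes $\Ga_g,\Ga_b$ in \eqref{eq:decaypropertiesofGabGag} carry two weighted derivatives, so one derivative of $\gcheck_{\a\b}$ remains under control. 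That ingredient is indeed available, so your variant is legitimate and neither version buys anything over the other.

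One bookkeeping slip to fix: by \eqref{eq:defweightedderivative}, $\dk$ contains the coordinate derivatives $\pr_{x^a}$ themselves (the symbol $r\nab$ denotes $\langle\pr_{x^1},\pr_{x^2}\rangle$), not $r\pr_{x^a}$. Hence $\pr_{x^a}(r^k\Ga_g)\in r^{k}\dk\,\Ga_g$, not $r^{k-1}\dk^{\leq 1}\Ga_g$, and your stated rule would yield the too-strong bound $(N_{det})_{x^a}\in\dk^{\leq 1}\Ga_g$, which is not claimed and not true in general (e.g.\ $\gam^{ab}\pr_{x^c}\gcheck_{ab}=O(r^{-2})\cdot r^3\dk\,\Ga_g$ only gives $r\dk\,\Ga_g$). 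This does not affect the lemma: its angular bound $r\dk^{\leq 1}\Ga_g$ is exactly one factor of $r$ weaker than the $\pr_r$-component bound, which is precisely what the corrected counting (angular derivatives cost a full $\dk$ with no $r^{-1}$ gain, both on the Kerr background and on the perturbation) delivers, so your block-by-block verification closes as claimed once this is adjusted.
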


\begin{proof}
First, note that we have in view of \eqref{eq:consequenceasymptoticKerrandassumptionsinverselinearizedmetric}
\beaa
N^r &=& \g^{r\a}N_\a=O(1)N_r+O(1)N_\tau+O(r^{-1})N_{x^a},\\
N^\tau &=& \g^{\tau\a}N_\a=O(1)N_r+O(r^{-2})N_\tau+O(r^{-2})N_{x^a},\\
N^{x^a} &=& \g^{x^a\a}N_\a=O(r^{-1})N_r+O(r^{-2})N_\tau+O(r^{-2})N_{x^a}.
\eeaa
Thus, in order to prove the lemma, it suffices to focus, from now on, on proving the following 
\beaa
(N_{det})_r=\dk^{\leq 1}\Ga_g, \qquad (N_{det})_\tau=r\dk^{\leq 1}\Ga_g, \qquad (N_{det})_{x^a}=r\dk^{\leq 1}\Ga_g.
\eeaa
 
Next, we compute $(N_{det})_\mu$. We have 
\beaa
\frac{2}{\sqrt{|\g|}}\pr_\mu\sqrt{|\g|} &=& \g_{\a\b}\pr_\mu\g^{\a\b}\\
&=& \frac{2}{\sqrt{|\g_{a,m}|}}\pr_\mu\sqrt{|\g_{a,m}|}+\Big((\g_{a,m})_{\a\b}+\gcheck_{\a\b}\Big)\pr_\mu\gcheck^{\a\b}+\gcheck_{\a\b}\pr_\mu(\g^{a,m}_{\a\b})
\eeaa
and hence
\beaa
(N_{det})_\mu &=& \frac{1}{2}\Big((\g_{a,m})_{\a\b}+\gcheck_{\a\b}\Big)\pr_\mu\gcheck^{\a\b}+\frac{1}{2}\gcheck_{\a\b}\pr_\mu(\g_{a,m}^{\a\b}).
\eeaa

Next, we compute $((\g_{a,m})_{\a\b}+\gcheck_{\a\b})\pr_\mu\gcheck^{\a\b}$ and $\gcheck_{\a\b}\pr_\mu(\g^{a,m}_{\a\b})$. Using the asymptotic for large $r$ of the Kerr metric in $(\tau, r, x^1, x^2)$ coordinates given by \eqref{eq:assymptiticpropmetricKerrintaurxacoord:1}  \eqref{eq:assymptiticpropmetricKerrintaurxacoord:2} and the control of the perturbed metric coefficients provided by \eqref{eq:controloflinearizedinversemetriccoefficients} and \eqref{eq:controloflinearizedmetriccoefficients}, we have 
\beaa
((\g_{a,m})_{\a\b}+\gcheck_{\a\b})\pr_\mu\gcheck^{\a\b} &=& O(r^{-2})\pr_\mu\gcheck^{rr}+O(1)\pr_\mu\gcheck^{r\tau}+O(1)\pr_\mu\gcheck^{\tau\tau}\\
&&+O(m)\pr_\mu\gcheck^{rx^a}+(O(m)+O(\ep r))\pr_\mu\gcheck^{\tau x^a}\\
&&+O(r^2)\pr_\mu\gcheck^{x^ax^b}
\eeaa
and
\beaa
\gcheck_{\a\b}\pr_\mu(\g_{a,m}^{\a\b}) &=& \gcheck_{rr}\pr_\mu(O(mr^{-1}))
+\gcheck_{r\tau}\pr_\mu(O(m^2r^{-2}))+\gcheck_{\tau\tau}\pr_\mu(O(m^2r^{-2}))\\
&&+\gcheck_{rx^a}\pr_\mu(O(mr^{-2}))+\gcheck_{\tau x^a}\pr_\mu(O(mr^{-2}))+\gcheck_{x^ax^b}\pr_\mu(O(r^{-2})).
\eeaa

Since $\mathfrak{d}=(r\pr_r, \pr_\tau, \pr_{x^a})$, we infer
\beaa
&& \Big(r((\g_{a,m})_{\a\b}+\gcheck_{\a\b})\pr_r\gcheck^{\a\b},\,\, ((\g_{a,m})_{\a\b}+\gcheck_{\a\b})\pr_\tau\gcheck^{\a\b},\,\, ((\g_{a,m})_{\a\b}+\gcheck_{\a\b})\pr_{x^a}\gcheck^{\a\b}\Big)\\ 
&=& O(r^{-2})\dk\gcheck^{rr}+O(1)\dk\gcheck^{r\tau}+O(1)\dk\gcheck^{\tau\tau}+O(1)\dk\gcheck^{rx^a}+O(r)\dk\gcheck^{\tau x^a}+O(r^2)\dk\gcheck^{x^ax^b}.
\eeaa
Also, since $\pr_\tau$ is Killing for $\g_{a,m}$, and since $\pr_rO(r^{-p})=O(r^{-p-1})$ and  $\pr_{x^a}O(r^{-p})=O(r^{-p})$, we obtain
\beaa
\gcheck_{\a\b}\pr_\tau(\g_{a,m}^{\a\b}) &=& 0,\\
\eeaa
and 
\beaa
&&\Big(r\gcheck_{\a\b}\pr_r(\g_{a,m}^{\a\b}), \,\gcheck_{\a\b}\pr_{x^a}(\g_{a,m}^{\a\b})\Big)\\ 
&=& O(r^{-1})\gcheck_{rr}+O(r^{-2})\gcheck_{r\tau}+O(r^{-2})\gcheck_{\tau\tau}+O(r^{-2})\gcheck_{rx^a}+O(r^{-2})\gcheck_{\tau x^a}+O(r^{-2})\gcheck_{x^ax^b}.
\eeaa
This yields
\beaa
(N_{det})_\tau &=& O(r^{-2})\dk\gcheck^{rr}+O(1)\dk\gcheck^{r\tau}+O(1)\dk\gcheck^{\tau\tau}+O(1)\dk\gcheck^{rx^a}+O(r)\dk\gcheck^{\tau x^a}+O(r^2)\dk\gcheck^{x^ax^b}
\eeaa
and
\beaa
&&\Big(r(N_{det})_r, \, (N_{det})_{x^a}\Big)\\
&=& O(r^{-2})\dk\gcheck^{rr}+O(1)\dk\gcheck^{r\tau}+O(1)\dk\gcheck^{\tau\tau}+O(1)\dk\gcheck^{rx^a}+O(r)\dk\gcheck^{\tau x^a}+O(r^2)\dk\gcheck^{x^ax^b}\\
&&+O(r^{-1})\gcheck_{rr}+O(r^{-2})\gcheck_{r\tau}+O(r^{-2})\gcheck_{\tau\tau}+O(r^{-2})\gcheck_{rx^a}+O(r^{-2})\gcheck_{\tau x^a}+O(r^{-2})\gcheck_{x^ax^b}.
\eeaa
In view of the assumptions \eqref{eq:controloflinearizedinversemetriccoefficients} on $\gcheck^{\a\b}$ and the control for $\gcheck_{\a\b}$ provided by \eqref{eq:controloflinearizedmetriccoefficients}, we infer
\beaa
\Big(r(N_{det})_r, \, (N_{det})_\tau, \, (N_{det})_{x^a}\Big) &=& r\dk^{\leq 1}\Ga_g
\eeaa
as stated. This concludes the proof of Lemma \ref{lemma:computationofthederiveativeofsrqtg}.
\end{proof}

We have the following corollary of Lemma \ref{lemma:computationofthederiveativeofsrqtg}. 
\begin{corollary}\lab{cor:controloflinearizeddivergencecoordvectorfields}
We have
\beaa
\widecheck{\textbf{\textrm{Div}}(\pr_r)}=\dk^{\leq 1}\Ga_g, \qquad \widecheck{\textbf{\textrm{Div}}(\pr_\tau)}=r\dk^{\leq 1}\Ga_g, \qquad \widecheck{\textbf{\textrm{Div}}(\pr_{x^a})}=r\dk^{\leq 1}\Ga_g, \quad a=1,2.
\eeaa
\end{corollary}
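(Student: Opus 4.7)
The corollary should follow essentially by inspection from Lemma \ref{lemma:computationofthederiveativeofsrqtg}, since the divergence of a coordinate vector field has a particularly simple expression. My plan is as follows.

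First, I would recall the standard identity that for any coordinate vector field $\pr_\mu$ with components $(\pr_\mu)^\a = \delta^\a_\mu$, the Lorentzian divergence is
\begin{equation*}
\textbf{\textrm{Div}}_{\g}(\pr_\mu) \;=\; \nab_\a (\pr_\mu)^\a \;=\; \frac{1}{\sqrt{|\g|}} \pr_\a \big( \sqrt{|\g|} \, \delta^\a_\mu \big) \;=\; \frac{1}{\sqrt{|\g|}} \pr_\mu \sqrt{|\g|},
\end{equation*}
and similarly with $\g$ replaced by $\g_{a,m}$. Subtracting these two identities immediately yields
\begin{equation*}
\widecheck{\textbf{\textrm{Div}}(\pr_\mu)} \;:=\; \textbf{\textrm{Div}}_{\g}(\pr_\mu) - \textbf{\textrm{Div}}_{\g_{a,m}}(\pr_\mu) \;=\; (N_{det})_\mu,
\end{equation*}
which reduces the corollary to controlling the components of the 1-form $N_{det}$.

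The second and final step is to apply Lemma \ref{lemma:computationofthederiveativeofsrqtg} directly with $\mu = r$, $\mu = \tau$, and $\mu = x^a$: this gives $(N_{det})_r = \dk^{\leq 1}\Ga_g$, $(N_{det})_\tau = r\,\dk^{\leq 1}\Ga_g$, and $(N_{det})_{x^a} = r\,\dk^{\leq 1}\Ga_g$, which are exactly the claimed estimates. There is no real obstacle here; the corollary is a tautological rewriting of the lemma once one recognizes the coordinate-divergence formula, so the only thing to verify carefully is that the conventions for $\textbf{\textrm{Div}}$ and for the linearization operation $\,\widecheck{}\,$ match those used in stating Lemma \ref{lemma:computationofthederiveativeofsrqtg}.
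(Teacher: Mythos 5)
Your proposal is correct and follows essentially the same route as the paper: identify $\widecheck{\textbf{\textrm{Div}}(\pr_\mu)}=(N_{det})_\mu$ via the identity $\textbf{\textrm{Div}}(\pr_\mu)=\frac{1}{\sqrt{|\g|}}\pr_\mu\sqrt{|\g|}$, and then quote the bounds on $(N_{det})_\mu$ from Lemma \ref{lemma:computationofthederiveativeofsrqtg}. Nothing further is needed.
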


\begin{proof}
In view of the definition of $(N_{det})_\mu$ in Lemma \ref{lemma:computationofthederiveativeofsrqtg}, we have 
\beaa
\textbf{\textrm{Div}}(\pr_\mu)=\frac{1}{\sqrt{|\g|}}\pr_\mu\sqrt{|\g|}= \frac{1}{\sqrt{|\g_{a,m}|}}\pr_\mu\sqrt{|\g_{a,m}|}+(N_{det})_\mu
\eeaa
and hence 
\beaa
\widecheck{\textbf{\textrm{Div}}(\pr_\mu)}=(N_{det})_\mu. 
\eeaa
In view of the control of $(N_{det})_\mu$ in Lemma \ref{lemma:computationofthederiveativeofsrqtg}, we deduce
\beaa
\widecheck{\textbf{\textrm{Div}}(\pr_r)}=\dk^{\leq 1}\Ga_g, \qquad \widecheck{\textbf{\textrm{Div}}(\pr_\tau)}=r\dk^{\leq 1}\Ga_g, \qquad \widecheck{\textbf{\textrm{Div}}(\pr_{x^a})}=r\dk^{\leq 1}\Ga_g, \quad a=1,2,
\eeaa
as stated. This concludes the proof of Corollary \ref{cor:controloflinearizeddivergencecoordvectorfields}.
\end{proof}

Next, we provide the control of deformation tensors involved in energy-Morawetz estimates{, where the deformation tensor of a vectorfield $X$ is given by
\bea
\label{def:deformationtensor:lastsect}
{}^{(X)}\pi_{\a\b} :=  \D_{\a}X_{\b} + \D_{\b}X_{\a}=\LL_X\g_{\a\b}.
\eea}

\begin{lemma}\lab{lemma:controlofdeformationtensorsforenergyMorawetz}
The deformation {tensors of $\pr_\tau$ and $\pr_{\tphi}$ satisfy}
\beaa
\begin{split}
{}^{(\pr_\tau)}\pi_{rr}{, {}^{(\pr_{\tphi})}\pi_{rr}} &=\dk^{\leq 1}\Ga_g, \qquad {}^{(\pr_\tau)}\pi_{r\tau}{, {}^{(\pr_{\tphi})}\pi_{r\tau}}=r\dk^{\leq 1}\Ga_g, \qquad {}^{(\pr_\tau)}\pi_{\tau\tau}{, {}^{(\pr_{\tphi})}\pi_{\tau\tau}}=r\dk^{\leq 1}\Ga_b, \\
{}^{(\pr_\tau)}\pi_{\tau a}{, {}^{(\pr_{\tphi})}\pi_{\tau a}} &=r^2\dk^{\leq 1}\Ga_b, \qquad {}^{(\pr_\tau)}\pi_{ra}{, {}^{(\pr_{\tphi})}\pi_{ra}}=r^2\dk^{\leq 1}\Ga_g, \qquad {}^{(\pr_\tau)}\pi_{ab}{, {}^{(\pr_{\tphi})}\pi_{ab}}=r^3\dk^{\leq 1}\Ga_g,
\end{split}
\eeaa
and 
\beaa
\begin{split}
{}^{(\pr_\tau)}\pi^{rr}{, {}^{(\pr_{\tphi})}\pi^{rr}} &=r\dk^{\leq 1}\Ga_b, \qquad {}^{(\pr_\tau)}\pi^{r\tau}{, {}^{(\pr_{\tphi})}\pi^{r\tau}}=r\dk^{\leq 1}\Ga_g, \qquad {}^{(\pr_\tau)}\pi^{\tau\tau}{, {}^{(\pr_{\tphi})}\pi^{\tau\tau}}=\dk^{\leq 1}\Ga_g, \\
{}^{(\pr_\tau)}\pi^{\tau a}{, {}^{(\pr_{\tphi})}\pi^{\tau a}} &=\dk^{\leq 1}\Ga_g, \qquad {}^{(\pr_\tau)}\pi^{ra}{, {}^{(\pr_{\tphi})}\pi^{ra}}=\dk^{\leq 1}\Ga_b, \qquad {}^{(\pr_\tau)}\pi^{ab}{, {}^{(\pr_{\tphi})}\pi^{ab}}=r^{-1}\dk^{\leq 1}\Ga_g.
\end{split}
\eeaa

Also, the perturbed deformation tensor of $\pr_r$ satisfies 
\beaa
\begin{split}
\widecheck{{}^{(\pr_r)}\pi}_{rr} &=r^{-1}\dk^{\leq 1}\Ga_g, \qquad \widecheck{{}^{(\pr_r)}\pi}_{r\tau}=\dk^{\leq 1}\Ga_g, \qquad \widecheck{{}^{(\pr_r)}\pi}_{\tau\tau}=\dk^{\leq 1}\Ga_b, \\
\widecheck{{}^{(\pr_r)}\pi}_{\tau a} &=r\dk^{\leq 1}\Ga_b, \qquad \widecheck{{}^{(\pr_r)}\pi}_{ra}=r\dk^{\leq 1}\Ga_g, \qquad \widecheck{{}^{(\pr_r)}\pi}_{ab}=r^2\dk^{\leq 1}\Ga_g,
\end{split}
\eeaa
and
\beaa
\begin{split}
\widecheck{{}^{(\pr_r)}\pi}^{rr} &=\dk^{\leq 1}\Ga_b, \qquad \widecheck{{}^{(\pr_r)}\pi}^{r\tau}=\dk^{\leq 1}\Ga_g, \qquad \widecheck{{}^{(\pr_r)}\pi}^{\tau\tau}=r^{-1}\dk^{\leq 1}\Ga_g, \\
\widecheck{{}^{(\pr_r)}\pi}^{\tau a} &=r^{-1}\dk^{\leq 1}\Ga_g, \qquad \widecheck{{}^{(\pr_r)}\pi}^{ra}=r^{-1}\dk^{\leq 1}\Ga_b, \qquad \widecheck{{}^{(\pr_r)}\pi}^{ab}=r^{-2}\dk^{\leq 1}\Ga_g.
\end{split}
\eeaa
\end{lemma}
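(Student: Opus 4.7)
The plan is to exploit the key observation that $\partial_\tau$ and $\partial_{\tphi}$ are Killing vector fields for the background Kerr metric $\gam$ in the normalized coordinate systems. This is because the coordinate change from Boyer--Lindquist to normalized coordinates only shifts $t\mapsto\tau=t+(\mu^{-1}-\tmod'(r))\int$-data and $\phi\mapsto\tphi=\phi+(a/\Delta-\phimod'(r))\int$-data by functions of $r$ alone, hence $\partial_\tau=\partial_t$ and $\partial_{\tphi}=\partial_\phi$ remain the Killing fields of Kerr; equivalently, $(\gam)_{\alpha\beta}$ in \eqref{eq:metricinnormalizedcoordinatesgeneralformula} is manifestly independent of $\tau$ and $\tphi$. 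Consequently, for $X\in\{\partial_\tau,\partial_{\tphi}\}$,
\[
{}^{(X)}\pi_{\alpha\beta}=\Lie_X\g_{\alpha\beta}=\partial_X\gcheck_{\alpha\beta},\qquad {}^{(X)}\pi^{\alpha\beta}=\g^{\alpha\gamma}\g^{\beta\delta}{}^{(X)}\pi_{\gamma\delta}=-\partial_X\gcheck^{\alpha\beta},
\]
reducing everything to a single derivative of the perturbation applied to the bounds in Lemma~\ref{lemma:controlofmetriccoefficients:bis} and the assumption~\eqref{eq:controloflinearizedinversemetriccoefficients}.

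For the bounds on ${}^{(\partial_\tau)}\pi_{\alpha\beta}$ and ${}^{(\partial_\tau)}\pi^{\alpha\beta}$, I would apply $\partial_\tau\in\dk$ directly to each entry in \eqref{eq:controloflinearizedmetriccoefficients} and \eqref{eq:controloflinearizedinversemetriccoefficients}, using the $\dk^{\leq 1}$ bound in \eqref{eq:decaypropertiesofGabGag} (e.g.\ $\partial_\tau(r\Ga_g)=r\,\partial_\tau\Ga_g\in r\dk^{\leq 1}\Ga_g$, $\partial_\tau(r^2\Ga_b)\in r^2\dk^{\leq 1}\Ga_b$, and so on). For $\partial_{\tphi}$, in the coordinate patch $(x^1_0,x^2_0)=(\theta,\tphi)$ we have $\partial_{\tphi}=\partial_{x^2_0}$ with $r\partial_{\tphi}\in\dk$, while in the polar patch we can write $\partial_{\tphi}=-x^2_p\partial_{x^1_p}+x^1_p\partial_{x^2_p}$ as a bounded combination of vector fields whose $r$-rescalings lie in $\dk$; either way, $\partial_{\tphi}$ applied to a $\Ga_g$ or $\Ga_b$ object produces $r^{-1}\dk^{\leq 1}\Ga_g\subset\dk^{\leq 1}\Ga_g$ (and similarly for $\Ga_b$), matching the claimed bounds with the same weights as for $\partial_\tau$.

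For $\partial_r$, which is \emph{not} Killing for $\gam$, one subtracts the Kerr part to obtain
\[
\widecheck{{}^{(\partial_r)}\pi}_{\alpha\beta}=\partial_r\gcheck_{\alpha\beta},\qquad \widecheck{{}^{(\partial_r)}\pi}^{\alpha\beta}=-\partial_r\gcheck^{\alpha\beta},
\]
after which the bounds follow by writing $\partial_r=r^{-1}(r\partial_r)$ with $r\partial_r\in\dk$ and invoking Leibniz on the $r$-weights in \eqref{eq:controloflinearizedmetriccoefficients} and \eqref{eq:controloflinearizedinversemetriccoefficients}. For instance, $\partial_r(r^k\Ga_g)=kr^{k-1}\Ga_g+r^{k-1}(r\partial_r\Ga_g)\in r^{k-1}\dk^{\leq 1}\Ga_g$ yields exactly the $\widecheck{{}^{(\partial_r)}\pi}_{\alpha\beta}$ entries, and likewise $\partial_r(r^{-1}\Ga_g)\in r^{-2}\dk^{\leq 1}\Ga_g$ accounts for the $\widecheck{{}^{(\partial_r)}\pi}^{ab}$ weight.

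This is essentially a routine bookkeeping of weights; no genuine obstacle is expected. The only mild subtlety is the treatment of $\partial_{\tphi}$ in the polar chart (where it is not a coordinate derivative), and the preliminary verification that $\partial_\tau$ and $\partial_{\tphi}$ remain Killing for $\gam$ in the normalized coordinates, both of which are handled by the observations in the first paragraph.
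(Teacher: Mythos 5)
Your treatment of the lower-index components is essentially the paper's: both arguments reduce ${}^{(\pr_\tau)}\pi_{\a\b}$, ${}^{(\pr_{\tphi})}\pi_{\a\b}$ and $\widecheck{{}^{(\pr_r)}\pi}_{\a\b}$ to one derivative of $\gcheck_{\a\b}$, using that the Kerr components in normalized coordinates are independent of $\tau$ and $\tphi$, and then invoke Lemma \ref{lemma:controlofmetriccoefficients:bis}. Where you genuinely differ is the upper-index components: the paper raises the indices of the lower-index bounds with the asymptotics \eqref{eq:consequenceasymptoticKerrandassumptionsinverselinearizedmetric}, component by component, whereas you use $\Lie_X\g^{\a\b}=-{}^{(X)}\pi^{\a\b}$ to read them off directly as $-\pr_X\gcheck^{\a\b}$ from assumption \eqref{eq:controloflinearizedinversemetriccoefficients}. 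For $\pr_\tau$ (and $\pr_{\tphi}$ up to the chart issue below) this is exact, shorter, and a check of weights shows it reproduces every stated bound without passing through Lemma \ref{lemma:controlofmetriccoefficients:bis}; this is a legitimate and arguably cleaner route. For $\pr_r$, note that $-\pr_r\gcheck^{\a\b}$ is the difference of the two raised deformation tensors (each raised with its own metric), while the paper's proof produces the $\g$-raising of $\widecheck{{}^{(\pr_r)}\pi}_{\a\b}$; the discrepancy consists of terms of the schematic form $\gcheck\cdot\gam^{-1}\cdot\pr_r\gam$, which lie within the stated classes, so both conventions yield the lemma, but this deserves a sentence in a write-up. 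Two harmless slips: in the equatorial chart $\pr_{\tphi}=\pr_{x^2_0}$ itself lies in $\dk$ (since $\dk\ni r\nab=\pr_{x^a}$), not $r\pr_{\tphi}$, so $\pr_{\tphi}\Ga_g\in\dk^{\leq 1}\Ga_g$ rather than the stronger $r^{-1}\dk^{\leq 1}\Ga_g$ you assert; and in the polar chart $-\Lie_{\pr_{\tphi}}\gcheck^{\a\b}$ contains, besides $-\pr_{\tphi}\gcheck^{\a\b}$, constant-coefficient rotation terms in $\gcheck$ coming from the nontrivial brackets $[\pr_{\tphi},\pr_{x^a_p}]$, which again fall within the stated classes, so the conclusions stand.
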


\begin{proof}
We have 
{\beaa
{}^{(\pr_\mu)}\pi_{\a\b} = \LL_{\pr_\mu}\g(\pr_\a, \pr_\b)=\pr_{\mu}(\g_{\a\b}) - \g(\LL_{\pr_\mu}\pr_\a, \pr_\b) -\g(\pr_\a, \LL_{\pr_\mu}\pr_\b)=\pr_{\mu}(\g_{\a\b})
\eeaa}
and hence 
\beaa
{}^{(\pr_\mu)}\pi_{\a\b} = \pr_\mu(\g_{\a\b}), \qquad \widecheck{{}^{(\pr_\mu)}\pi}_{\a\b}=\pr_\mu\widecheck{\g}_{\a\b}.
\eeaa
Since $\pr_\tau((\g_{a,m})_{\a\b})=0${, $\pr_{\tphi}((\g_{a,m})_{\a\b})=0$,} and $\dk=(r\pr_r, \pr_\tau, \pr_{x^a})$, we infer
\beaa
{}^{(\pr_\tau)}\pi_{\a\b}{, {}^{(\pr_{\tphi})}\pi_{\a\b}} = \dk\widecheck{\g}_{\a\b}, \qquad \widecheck{{}^{(\pr_r)}\pi}_{\a\b}=r^{-1}\dk\widecheck{\g}_{\a\b}.
\eeaa
Together with \eqref{eq:controloflinearizedmetriccoefficients}, we deduce 
\beaa
\begin{split}
{}^{(\pr_\tau)}\pi_{rr}{, {}^{(\pr_{\tphi})}\pi_{rr}} &=\dk^{\leq 1}\Ga_g, \qquad {}^{(\pr_\tau)}\pi_{r\tau}{, {}^{(\pr_{\tphi})}\pi_{r\tau}}=r\dk^{\leq 1}\Ga_g, \qquad {}^{(\pr_\tau)}\pi_{\tau\tau}{, {}^{(\pr_{\tphi})}\pi_{\tau\tau}}=r\dk^{\leq 1}\Ga_b, \\
{}^{(\pr_\tau)}\pi_{\tau a}{, {}^{(\pr_{\tphi})}\pi_{\tau a}} &=r^2\dk^{\leq 1}\Ga_b, \qquad {}^{(\pr_\tau)}\pi_{ra}{, {}^{(\pr_{\tphi})}\pi_{ra}}=r^2\dk^{\leq 1}\Ga_g, \qquad {}^{(\pr_\tau)}\pi_{ab}{, {}^{(\pr_{\tphi})}\pi_{ab}}=r^3\dk^{\leq 1}\Ga_g,
\end{split}
\eeaa
and 
\beaa
\begin{split}
\widecheck{{}^{(\pr_r)}\pi}_{rr} &=r^{-1}\dk^{\leq 1}\Ga_g, \qquad \widecheck{{}^{(\pr_r)}\pi}_{r\tau}=\dk^{\leq 1}\Ga_g, \qquad \widecheck{{}^{(\pr_r)}\pi}_{\tau\tau}=\dk^{\leq 1}\Ga_b, \\
\widecheck{{}^{(\pr_r)}\pi}_{\tau a} &=r\dk^{\leq 1}\Ga_b, \qquad \widecheck{{}^{(\pr_r)}\pi}_{ra}=r\dk^{\leq 1}\Ga_g, \qquad \widecheck{{}^{(\pr_r)}\pi}_{ab}=r^2\dk^{\leq 1}\Ga_g,
\end{split}
\eeaa
as stated. 

Next, using \eqref{eq:consequenceasymptoticKerrandassumptionsinverselinearizedmetric}, we have, for a symmetric 2-tensor $\pi$, 
\beaa
\pi^{rr} &=& O(1)\big(\pi_{rr},\,\pi_{r\tau}, \,\pi_{\tau\tau}\big)+O(r^{-1})\big(\pi_{ra}, \pi_{\tau a}\big)+O(r^{-2})\pi_{ab},\\
\pi^{r\tau} &=& O(1)\big(\pi_{rr},\,\pi_{r\tau}\big)+O(r^{-1})\pi_{ra}+O(r^{-2})\big(\pi_{\tau\tau}, \pi_{\tau a}\big)+O(r^{-3})\pi_{ab},\\
\pi^{\tau\tau} &=& O(1)\pi_{rr}+O(r^{-2})\big(\pi_{r\tau}, \pi_{ra}\big)+O(r^{-4})\big(\pi_{\tau\tau}, \pi_{\tau a}, \pi_{ab}\big),\\
\pi^{ra} &=& O(r^{-1})\big(\pi_{rr},\,\pi_{r\tau}\big)+O(r^{-2})\big(\pi_{\tau\tau},\,\pi_{ra}, \,\pi_{\tau a}\big)+O(r^{-3})\pi_{ab},\\
\pi^{\tau a} &=& O(r^{-1})\pi_{rr}+O(r^{-2})\big(\pi_{r\tau},\, \pi_{ra}\big)+O(r^{-4})\big(\pi_{\tau\tau}, \, \pi_{\tau a}, \, \pi_{ab}\big),\\
\pi^{ab} &=& O(r^{-2})\pi_{rr}+O(r^{-3})\big(\pi_{r\tau},\,\pi_{ra}\big)+O(r^{-4})\big(\pi_{\tau\tau},\, \pi_{\tau a}, \,\pi_{ab}\big). 
\eeaa
Together with the above control of ${}^{(\pr_\tau)}\pi_{\a\b}$, ${}^{(\pr_{\tphi})}\pi_{\a\b}$ and $\widecheck{{}^{(\pr_r)}\pi}_{\a\b}$, we obtain 
\beaa
\begin{split}
{}^{(\pr_\tau)}\pi^{rr}{, {}^{(\pr_{\tphi})}\pi^{rr}} &=r\dk^{\leq 1}\Ga_b, \qquad {}^{(\pr_\tau)}\pi^{r\tau}{, {}^{(\pr_{\tphi})}\pi^{r\tau}}=r\dk^{\leq 1}\Ga_g, \qquad {}^{(\pr_\tau)}\pi^{\tau\tau}{, {}^{(\pr_{\tphi})}\pi^{\tau\tau}}=\dk^{\leq 1}\Ga_g, \\
{}^{(\pr_\tau)}\pi^{\tau a}{, {}^{(\pr_{\tphi})}\pi^{\tau a}} &=\dk^{\leq 1}\Ga_g, \qquad {}^{(\pr_\tau)}\pi^{ra}{, {}^{(\pr_{\tphi})}\pi^{ra}}=\dk^{\leq 1}\Ga_b, \qquad {}^{(\pr_\tau)}\pi^{ab}{, {}^{(\pr_{\tphi})}\pi^{ab}}=r^{-1}\dk^{\leq 1}\Ga_g,
\end{split}
\eeaa
and
\beaa
\begin{split}
\widecheck{{}^{(\pr_r)}\pi}^{rr} &=\dk^{\leq 1}\Ga_b, \qquad \widecheck{{}^{(\pr_r)}\pi}^{r\tau}=\dk^{\leq 1}\Ga_g, \qquad \widecheck{{}^{(\pr_r)}\pi}^{\tau\tau}=r^{-1}\dk^{\leq 1}\Ga_g, \\
\widecheck{{}^{(\pr_r)}\pi}^{\tau a} &=r^{-1}\dk^{\leq 1}\Ga_g, \qquad \widecheck{{}^{(\pr_r)}\pi}^{ra}=r^{-1}\dk^{\leq 1}\Ga_b, \qquad \widecheck{{}^{(\pr_r)}\pi}^{ab}=r^{-2}\dk^{\leq 1}\Ga_g,
\end{split}
\eeaa
as stated. This concludes the proof of Lemma \ref{lemma:controlofdeformationtensorsforenergyMorawetz}.
\end{proof}


\subsection{Future null infinity of the perturbed {spacetime}}
\label{subsect:nullinf}


We start by constructing an auxiliary ingoing optical function $\tauu$ in a subregion of $(\MM, \g)$. 
\begin{lemma}\lab{lemma:constructionoftheingoingopticalfunctiontauu}
There exists an ingoing optical function $\tauu$ defined in $\MM\cap\{r\geq |\tau|+ 10m\}$ by 
\bea
\tauu:=\tauu_0 +\tauut, \qquad \tauu_0:=\tau+2r+4m\log\left(\frac{r}{2m}\right),
\eea
where $\tauut$ satisfies 
\bea
|\dk^{\leq 2}\tauut|\les r^{-1}+\ep\quad\textrm{in}\quad\MM\cap\{r\geq |\tau|+ 10m\}.
\eea
\end{lemma}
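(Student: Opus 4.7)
The plan is to construct $\tauu$ as the solution of the eikonal equation $\g^{\a\b}\partial_\a\tauu\partial_\b\tauu=0$ perturbatively around $\tauu_0$. Writing $\tauu=\tauu_0+\tauut$ and expanding, $\tauut$ must satisfy the quasilinear first-order transport equation
\begin{equation*}
L(\tauut) + \g^{\a\b}\partial_\a\tauut\,\partial_\b\tauut = -E,
\end{equation*}
where the linearized generator $L:=2\g^{\a\b}\partial_\a\tauu_0\,\partial_\b$ is a vector field that I expect to be approximately ingoing null, and the ``eikonal error'' $E:=\g^{\a\b}\partial_\a\tauu_0\,\partial_\b\tauu_0$ measures the failure of $\tauu_0$ to be optical. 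Since $\partial_\tau\tauu_0=1$, $\partial_r\tauu_0=2+4mr^{-1}$, $\partial_{x^a}\tauu_0=0$, I would first compute both $E$ and $L$ explicitly using the asymptotic formulas \eqref{eq:assymptiticpropmetricKerrintaurxacoord:1} together with the explicit values of $\tmod'=2\mu^{-1}-m^2r^{-2}$ and $\phimod'=2a/\Delta$ from Lemma \ref{lem:specificchoice:normalizedcoord} valid on $r\geq 13m$, and the inverse metric assumption \eqref{eq:controloflinearizedinversemetriccoefficients}.

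The first substantive step is the estimate of $E$. The precise choice of coefficients $2$ and $4m$ in $\tauu_0$ is arranged so that the $O(1/r)$ contributions in $\gam^{\a\b}\partial_\a\tauu_0\,\partial_\b\tauu_0$ cancel (a direct calculation in Schwarzschild already gives $\gam^{\a\b}\partial_\a\tauu_0\,\partial_\b\tauu_0=-14m^2r^{-2}+O(m^3r^{-3})$, and the $a\neq 0$ Kerr corrections remain of size $O(a^2r^{-2})$), leaving a Kerr contribution of size $O(m^2r^{-2})$. The perturbative contribution $\gcheck^{\a\b}\partial_\a\tauu_0\,\partial_\b\tauu_0$ is bounded, via \eqref{eq:controloflinearizedinversemetriccoefficients}, by $\gcheck^{rr}(2+4mr^{-1})^2+2\gcheck^{r\tau}(2+4mr^{-1})+\gcheck^{\tau\tau}\les r\Ga_b=O(\ep\tau^{-1-\dec})$, so that overall $|E|\les m^2r^{-2}+\ep$. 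Simultaneously I would read off the components of $L$, finding $L=(-4+O(mr^{-1})+O(\ep))\pr_\tau+(2+O(mr^{-1})+O(\ep))\pr_r+O(r^{-1})\pr_{x^a}$; its integral curves are therefore approximately $r(s)=r_0+2s$, $\tau(s)=\tau_0-4s$, and they transversally foliate the region $\MM\cap\{r\geq|\tau|+10m\}$, entering through the hypersurface $\{\tau=r-10m\}$ and exiting through $\{\tau=10m-r\}$.

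The final step is to solve the quasilinear transport equation by Picard iteration, prescribing $\tauut=0$ on the entry hypersurface $\{\tau=r-10m\}$ and integrating the linearized equation $L(\tauut^{(n+1)})=-E-\g^{\a\b}\partial_\a\tauut^{(n)}\partial_\b\tauut^{(n)}$ along the characteristics of $L$ starting from $\tauut^{(0)}=0$. Along such a characteristic reaching a point $(\tau,r,x^a)$, the $m^2r^{-2}$ piece of the source integrates to $\int_{r_{\mathrm{entry}}}^r m^2r'^{-2}dr'\les mr^{-1}$, while the $\ep\tau^{-1-\dec}$ piece integrates via substitution $u=\tau(s)$ to $\ep\int u^{-1-\dec}du\les \ep$, using that the length of integration in $\tau$ is bounded and the decay is integrable. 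The quadratic remainder is subleading provided $|\tauut^{(n)}|\les r^{-1}+\ep$, so the induction closes and the iteration converges to a solution with $|\tauut|\les r^{-1}+\ep$. Higher-order bounds follow by commuting $\dk$ through the equation using Corollary \ref{cor:controloflinearizeddivergencecoordvectorfields} to control $[\dk,L]$, which contributes an additional $\dk^{\leq 1}\Ga_g\cdot\pr\tauut^{(n)}$ on the right-hand side of the commuted equation; these corrections are admissible and the same characteristic-integration argument produces $|\dk^{\leq 2}\tauut|\les r^{-1}+\ep$. The main technical obstacle is the bookkeeping of the $O(m/r)$ cancellations in the Kerr contribution to $E$, which rely on the precise coefficients in $\tauu_0$, and the verification that every characteristic of $L$ reaches the prescribed initial hypersurface after affine time bounded by $O(r)$, so that the source integrals along characteristics close with the stated weights.
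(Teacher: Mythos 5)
Your proposal follows essentially the same route as the paper's proof: substitute $\tauu=\tauu_0+\tauut$ into the eikonal equation, note that the precise coefficients in $\tauu_0$ make the resulting source $O(r^{-2})+r\Ga_b$, and integrate the quasilinear transport equation for $\tauut$ along the approximately ingoing null characteristics, using the integrability of $\tau^{-1-\dec}$ for the $\ep$-part and the $r^{-2}$ decay for the Kerr part, with derivative bounds obtained by commuting $\dk$. The only real difference is cosmetic — you prescribe $\tauut=0$ on the entry boundary $\{\tau=r-10m\}$ whereas the paper initializes on $\Si(1)$ — and this does not affect the stated estimate.
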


\begin{proof}
Since $\tauu$ is an optical function, it satisfies by definition the eikonal equation
\beaa
\g^{\a\b}\pr_{\a}\tauu\pr_{\b}\tauu=0,
\eeaa
and plugging the decomposition $\tauu=\tauu_0 +\tauut$, this reduces to
\bea
\label{eq:eikonaltauut:nullinf}
&&2\g^{\a\b}\pr_{\a}\tauu_0 \pr_\b\tauut+\g^{\a\b}\pr_\a\tauut\pr_\b\tauut\nn\\
&=&-\g^{\a\b}\pr_\a\tauu_0\pr_\b\tauu_0 =-\gam^{\a\b}\pr_\a\tauu_0\pr_\b\tauu_0 +O(1)\big(\gcheck^{\tau\tau},  \gcheck^{r\tau}, \gcheck^{rr}\big) \nn\\
&=&O(r^{-2}) +r\Ga_b,
\eea
where we used the definition of $\tauu_0$, \eqref{eq:assymptiticpropmetricKerrintaurxacoord:2} and \eqref{eq:controloflinearizedmetriccoefficients} in the last equality. \eqref{eq:eikonaltauut:nullinf} is a nonlinear transport equations for $\tauut$ and we initialize it by $\tauut=0$ on $\Si(1)$. Then, integrating \eqref{eq:eikonaltauut:nullinf} from $\Sigma(1)$ in the region $\MM\cap\{r\geq |\tau|+ 10m\}$, together with \eqref{eq:controloflinearizedinversemetriccoefficients} and the control of $(\Ga_g, \Ga_b)$ in \eqref{eq:decaypropertiesofGabGag}, we easily obtain 
\beaa
|\dk^{\leq 2}\tauut|\les r^{-1}+\ep\quad\textrm{in}\quad\MM\cap\{r\geq |\tau|+ 10m\}
\eeaa
which concludes the proof of Lemma \ref{lemma:constructionoftheingoingopticalfunctiontauu}.
\end{proof}

Making use of the ingoing optical function $\tauu$, we may now define $\II_+$. 
\begin{definition}[Definition of $\II_+$]
\lab{def:howmathcalIplusisdefinedinMM} 
Consider the coordinates $(\tauu, \tau, x^1, x^2)$ covering the spacetime region $\MM\cap\{r\geq |\tau|+ 10m\}$, where $\tauu$ is the ingoing optical function constructed in Lemma \ref{lemma:constructionoftheingoingopticalfunctiontauu}. Then, the future null infinity of $(\MM, \g)$ is defined as
\bea
\II_+:=\MM\cap\{\tauu=+\infty\}.
\eea
\end{definition}

The following lemma provides the control of the induced geometry on $\II_+$ in the perturbed spacetime $(\MM, \g)$. 
\begin{lemma}\lab{lemma:controllinearizedmetric:inducedmetricII+}
Let $\II_+$ be given by Definition \ref{def:howmathcalIplusisdefinedinMM}. Consider the coordinates system $(\tau, x^1, x^2)$ covering $\II_+$, and denote by $(\pr_\tau^{\II_+}, \pr_{x^1}^{\II_+}, \pr_{x^2}^{\II_+})$ the corresponding coordinate vectorfields. Then, 
\begin{enumerate}
\item the coordinate vectorfields $\pr_{x^a}^{\II_+}$, $a=1,2,$ satisfy 
\bea\label{expression:prxaIIplus:nullinf}
\pr_{x^a}^{\II_+}=\pr_{x^a}+O(\ep)\pr_r,\,\,\, a=1,2,  \qquad  \nab^{\II_+}=\nab+O(\ep)r^{-1}\pr_r, \quad r\nab^{\II_+}:=\langle \pr_{x^1}^{\II_+}, \pr_{x^2}^{\II_+}\rangle,
\eea

\item the spheres $S^{\II_+}(\tau_1):=\II_+\cap\{\tau=\tau_1\}$ foliating $\II_+$ are round,

\item $\pr_\tau^{\II_+}$ is ingoing null and there exists a scalar function $b^r$ such that
\bea\label{expression:prtauIIplus:nullinf}
\pr_\tau^{\II_+}=\pr_\tau -\frac{1}{2}(1+b^r)\pr_r+O(\ep)\nab, \qquad |\dk^{\leq 1}b^r|\les \ep,
\eea

\item $\pr_r$ is an outgoing null vectorfield on $\II_+$ and satisfies 
\bea\label{expression:prrIIplus:nullinf}
\g(\pr_\tau^{\II_+}, \pr_r)=-1, \qquad \g(r^{-1}\pr_{x^a}^{\II_+}, \pr_r)=0.
\eea
\end{enumerate}
\end{lemma}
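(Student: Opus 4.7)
The plan is to change coordinates from $(\tau, r, x^1, x^2)$ on $\MM\cap\{r\ge|\tau|+10m\}$ to $(\tauu,\tau, x^1, x^2)$ using $\tauu$ from Lemma \ref{lemma:constructionoftheingoingopticalfunctiontauu}, so that $\II_+=\{\tauu=+\infty\}$ is reached in the limit $r\to\infty$ at fixed $(\tau,x^a)$; all four statements then follow by computing the Jacobian of this change of variables and analyzing the asymptotic behavior as $r\to\infty$.

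Writing $f:=\pr_r\tauu = 2+4m/r+\pr_r\tauut$, the defining relations $d\tauu(\pr_\tau^{\II_+})=0=d\tauu(\pr_{x^a}^{\II_+})$, $d\tau(\pr_\tau^{\II_+})=1$, $dx^b(\pr_{x^a}^{\II_+})=\de^b_a$ give
\beaa
\pr_\tau^{\II_+}=\pr_\tau - \frac{1+\pr_\tau\tauut}{f}\pr_r,\qquad \pr_{x^a}^{\II_+}=\pr_{x^a}-\frac{\pr_{x^a}\tauut}{f}\pr_r.
\eeaa
From $|\dk^{\le 2}\tauut|\ls r^{-1}+\ep$ with $\dk=(r\pr_r,\pr_\tau, r\nab)$, one reads off $\pr_\tau\tauut,\,\pr_{x^a}\tauut=O(\ep+r^{-1})$ and $\pr_r\tauut=O(\ep/r+r^{-2})$, so $f=2+O(r^{-1})$; passing to the limit $r\to\infty$ yields \eqref{expression:prxaIIplus:nullinf} and \eqref{expression:prtauIIplus:nullinf} with
\beaa
b^r := \frac{2\pr_\tau\tauut - 4m/r - \pr_r\tauut}{f}\bigg|_{\II_+}= \pr_\tau\tauut|_{\II_+}.
\eeaa
The pointwise bound $|b^r|\ls\ep$ is immediate from $|\pr_\tau\tauut|\ls r^{-1}+\ep$ and, after choosing the initial data for $\tauut$ in a stationary way so that $\pr_\tau\tauut$ vanishes identically in exact Kerr, $|\dk^{\le 1}b^r|\ls\ep$ follows from the same bound applied to $\dk\tauut$.

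For the remaining items, all computations reduce to direct asymptotic expansion at $\II_+$ using the inverse-metric and metric asymptotics. From $\g(\pr_r,\pr_r)=\g_{rr}=O(m^2/r^2)+\Ga_g\to 0$ at $\II_+$, $\pr_r$ is null there, and combining with $\g_{\tau r}=-1+O(m^2/r^2)+r\Ga_g\to -1$ gives $\g(\pr_\tau^{\II_+},\pr_r)=\g_{\tau r}-((1+\pr_\tau\tauut)/f)\g_{rr}\to -1$; similarly $\g(r^{-1}\pr_{x^a}^{\II_+},\pr_r)=r^{-1}\g_{x^a r}+O(\ep/r^3)\to 0$ since $\g_{x^a r}=O(m)+r^2\Ga_g$ stays bounded using $\Ga_g\ls\ep/r^2$ at large $r$. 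Roundness of $S^{\II_+}(\tau_1)$ follows from
\beaa
r^{-2}\g(\pr_{x^a}^{\II_+},\pr_{x^b}^{\II_+})=\mathring{\ga}_{ab}+O(m^2/r^2)+O(\ep/r)\longrightarrow \mathring{\ga}_{ab},
\eeaa
using $\g_{ab}=r^2\mathring{\ga}_{ab}+O(m^2)+r^3\Ga_g$ together with the decomposition of $\pr_{x^a}^{\II_+}$; and the ingoing null character of $\pr_\tau^{\II_+}$ follows by expanding $\g_{\tau\tau}+2B\g_{\tau r}+B^2\g_{rr}$ with $B=-(1+\pr_\tau\tauut)/f$ to the relevant order (or more structurally, from the fact that $\{\tauu=\mathrm{const}\}$ is null and $\pr_\tau^{\II_+}$ reduces to its generator on $\II_+$).

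The main obstacle is making the various $r\to\infty$ limits precise at the level of one derivative beyond the a priori bound on $\tauut$: the estimate $|\dk^{\le 1}b^r|\ls\ep$ requires propagating one extra order of regularity along the transport equation \eqref{eq:eikonaltauut:nullinf} and, more subtly, requires arranging the construction of $\tauut$ to respect the $\pr_\tau$-stationarity of the Kerr background so that $\pr_\tau\tauut\equiv 0$ when $\ep=0$; without this structural input, $b^r|_{\II_+}$ would only be $O(1)$.
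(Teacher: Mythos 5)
Your computation via the change of coordinates $(\tau,r,x^1,x^2)\to(\tauu,\tau,x^1,x^2)$ is essentially the paper's own route: the paper likewise inverts the Jacobian, gets $\pr_{x^a}^{\II_+}=\pr_{x^a}+\err\,\pr_r$ with $|\dk^{\leq 1}\err|\les r^{-1}+\ep$, and deduces (1), (2) and (4) exactly as you do. Two points in your write-up, however, are off.

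First, your ``main obstacle'' is not an obstacle, and the claim attached to it is false. Since $b^r|_{\II_+}=\pr_\tau\tauut|_{\II_+}$ and $\dk b^r$ is controlled by $\dk^{\leq 2}\tauut$ in the limit $r\to+\infty$ at fixed $\tau$, the bound $|\dk^{\leq 1}b^r|\les\ep$ is an immediate consequence of the estimate $|\dk^{\leq 2}\tauut|\les r^{-1}+\ep$ of Lemma \ref{lemma:constructionoftheingoingopticalfunctiontauu}: the $r^{-1}$ contribution disappears at $\II_+$, so no propagation of extra regularity and no stationarity normalization of the initial data for $\tauut$ are needed, and it is not true that without such input $b^r|_{\II_+}$ would only be $O(1)$. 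As written, you leave the derivative bound on $b^r$ contingent on unspecified modifications of Lemma \ref{lemma:constructionoftheingoingopticalfunctiontauu} that you neither justify nor carry out, so within your own argument the key estimate of item (3) is not actually closed, even though it is already available from the cited lemma.

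Second, the null character of $\pr_\tau^{\II_+}$ cannot come from expanding $\g_{\tau\tau}+2B\g_{\tau r}+B^2\g_{rr}$ ``to the relevant order'': the vector $\pr_\tau+B\pr_r$ has no angular component, and on a null hypersurface such a tangent vector is generically spacelike; its norm squared equals that of its angular deviation from the null generator, which at $\II_+$ is of size $O(\ep^2)$ (the coefficients $e_3(x^a)/e_3(\tau)$ are $O(r^{-1}\ep)$ while $\g_{ab}\sim r^2$), hence not zero. The exact statement requires the structural route you only mention in parentheses, and this is precisely what the paper does: it sets $e_3:=-\g^{\a\b}\pr_\a\tauu\,\pr_\b$, which is exactly null because $\tauu$ is optical and is tangent to the level sets of $\tauu$, computes $e_3(r)=-1+\err$, $e_3(\tau)=2+\err$, $e_3(x^a)=r^{-1}\err$, and identifies $\pr_\tau^{\II_+}$ with $\frac{1}{e_3(\tau)}e_3$ on $\II_+$; this identification is also the source of the $O(\ep)\nab$ term in \eqref{expression:prtauIIplus:nullinf}. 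You should carry out this identification rather than rely on the direct expansion.
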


\begin{proof}
For the purpose of the proof, we introduce the notation $\err$ for any term satisfying 
\beaa
|\dk^{\leq 1}\err|\les r^{-1}+\ep\quad\textrm{in}\quad\MM\cap\{r\geq |\tau|+ 10m\}.
\eeaa
Then, in view of the definition and the control of $\tauu$ in Lemma \ref{lemma:constructionoftheingoingopticalfunctiontauu}, we have
\bea\lab{eq:thederivativesoftauuinrtauxacoord}
\pr_r\tauu = 2 +\frac{4m}{r}+r^{-1}\err, \qquad \pr_\tau\tauu = 1 +\err, \qquad \pr_{x^a}\tauu=\err, 
\eea
which implies, denoting by $\widehat{\pr}_\a$ the coordinate vectorfields associated to the coordinates $(\tauu, \tau, x^1, x^2)$,
\beaa
\pr_r=\left(2+\frac{4m}{r}+r^{-1}\err\right)\widehat{\pr}_{\tauu}, \qquad \pr_\tau=\widehat{\pr}_\tau+(1+\err)\widehat{\pr}_{\tauu}, \qquad \pr_{x^a}=\widehat{\pr}_{x^a}+\err\widehat{\pr}_{\tauu},
\eeaa
and in particular 
\beaa
\widehat{\pr}_{x^a}=\pr_{x^a}+\err\pr_r\quad\textrm{in}\quad\MM\cap\{r\geq |\tau|+ 10m\}.
\eeaa
Letting $r\to +\infty$, and using the control of $\err$, we infer  \eqref{expression:prxaIIplus:nullinf}. Also, \eqref{expression:prxaIIplus:nullinf} and the control of $\g_{\a\b}$ provided by \eqref{eq:assymptiticpropmetricKerrintaurxacoord:2} and \eqref{eq:controloflinearizedmetriccoefficients} implies that the spheres $S^{\II_+}(\tau_1)=\II_+\cap\{\tau=\tau_1\}$ foliating $\II_+$ are round. 

Next, we focus on the control of $\pr_\tau^{\II_+}$. Since $\tauu$ is an optical function, the vectorfield $e_3$ given by  
\beaa
e_3:=-\g^{\a\b}\pr_{\a}\tauu\pr_{\b}
\eeaa
is ingoing null and tangent to the level sets of $\tauu$. Also, in view of the above definition of $e_3$, \eqref{eq:thederivativesoftauuinrtauxacoord}, the control of $\g^{\a\b}$ provided by \eqref{eq:assymptiticpropmetricKerrintaurxacoord:1} and \eqref{eq:controloflinearizedinversemetriccoefficients}, and the fact that $\err$ contains in particular $r\Ga_b$ and $r^2\Ga_g$, we have
\beaa
e_3(r)=-1+\err, \qquad e_3(\tau)=2+\err, \qquad e_3(x^a)=r^{-1}\err,
\eeaa
and hence
\beaa
\frac{1}{e_3(\tau)}e_3=\pr_\tau -\frac{1}{2}(1+\err)\pr_r+\err\nab.
\eeaa
Letting $r\to +\infty$, and using the control of $\err$, we infer 
\beaa
\frac{1}{e_3(\tau)}e_3=\pr_\tau -\frac{1}{2}(1+b^r)\pr_r+O(\ep)\nab, \qquad |\dk^{\leq 1}b^r|\les \ep, \quad\textrm{on}\quad\II_+,
\eeaa
and
\beaa
\frac{1}{e_3(\tau)}e_3(\tau)=1, \qquad \frac{1}{e_3(\tau)}e_3(x^a)=0, \quad a=1,2, \quad\textrm{on}\quad\II_+.
\eeaa
Since $\frac{1}{e_3(\tau)}e_3$ is tangent to the level sets of $\tauu$, it is in particular tangent to $\II_+=\{\tauu=+\infty\}$, and we deduce
\beaa
\pr_\tau^{\II_+}=\frac{1}{e_3(\tau)}e_3\quad\textrm{on}\quad\II_+
\eeaa
so that $\pr_\tau^{\II_+}$ is ingoing null and satisfies \eqref{expression:prtauIIplus:nullinf}.

Finally, the fact that $\pr_r$ is an outgoing null vectorfield on $\II_+$ that satisfies \eqref{expression:prrIIplus:nullinf} follows immediately from the control of $\g_{\a\b}$ provided by \eqref{eq:assymptiticpropmetricKerrintaurxacoord:2} and \eqref{eq:controloflinearizedmetriccoefficients} and from  \eqref{expression:prtauIIplus:nullinf}. This concludes the proof of Lemma \ref{lemma:controllinearizedmetric:inducedmetricII+}. 
\end{proof}


\subsection{Energy, Morawetz and flux norms}
\label{subsect:norms}


We introduce in this section the energy, Morawetz and flux norms needed to state our main result. First, given any $(\tau, r)$, and for any scalar function $F$ on the spheres $S(\tau, r)$ of constant $\tau$ and $r$, we introduce the following notation 
\beaa
\int_{\mathbb{S}^2}F(\tau, r, \om)d\mathring{\ga} := \int F(\tau, r, x^1, x^2)\sqrt{\det(\mathring{\ga})}dx^1dx^2,
\eeaa 
{as well as the corresponding notation for the spheres $S^{\II_+}(\tau)$ of constant $\tau$ on $\II_+$.} Then, for $\tau_1<\tau_2$, we define flux norms\footnote{{For $\F_{\II_+}[\psi](\tau_1,\tau_2)$, recall that $\II_+=\MM\cap\{\tauu=+\infty\}$ where the ingoing optical function $\tauu$ has been constructed in Lemma \ref{lemma:constructionoftheingoingopticalfunctiontauu}, and recall that the notations $\pr_\tau^{\II_+}$ and $\nabla^{\II_+}$ on $\II_+$ have been introduced in Lemma \ref{lemma:controllinearizedmetric:inducedmetricII+}.}}
\bsub
\label{def:variousMorawetzIntegrals}
\begin{align}
\F_{\AA}[\psi](\tau_1,\tau_2)={}& \int_{\tau_1}^{\tau_2}\int_{\mathbb{S}^2}\big(|\mu| |\partial_r \psi|^2 +|\partial_{\tt} \psi|^2+|\nabla\psi|^2\big)(\tau, r=r_+(1-\dhor), \om){d\mathring{\ga}d\tau},\\
\F_{\II_+}[\psi](\tau_1,\tau_2)={}& \int_{\tau_1}^{\tau_2}\int_{\mathbb{S}^2}{\big(|\pr_\tau^{\II_+}\psi|^2+|\nabla^{\II_+}\psi|^2\big)(\tauu=+\infty, \tau, \om)r^2d\mathring{\ga}d\tau},\\
\F[\psi](\tau_1,\tau_2)={}& \F_{\II_+} [\psi](\tau_1,\tau_2)+\F_{\AA}[\psi](\tau_1,\tau_2),
\end{align}
the energy norm
\begin{align}
\E[\psi](\tau)={}& \int_{r_+(1-\dhor)}^{+\infty}\int_{\mathbb{S}^2}\Big((\pr_r\psi)^2+|\nab\psi|^2+r^{-2}(\pr_\tau\psi)^2\Big)r^2{d\mathring{\ga}dr},
\end{align}
and the Morawetz norms
\begin{align}
\M_\de[\psi](\tau_1,\tau_2)={}&\int_{\MM_{\nontrap}(\tau_1,\tau_2)}\left(\frac{\abs{\partial_{\tt}\psi}^2}{r^{1+\de}} +\frac{\abs{\nabla\psi}^2}{r}\right)
+\int_{\MM(\tau_1,\tau_2)}\left(\frac{\abs{\partial_r\psi}^2}{r^{1+\de}} +\frac{\abs{\psi}^2}{r^{3+\de}}\right),\\
\M[\psi](\tau_1,\tau_2)={}&\int_{\MM_{\nontrap}(\tau_1,\tau_2)}\left(\frac{\abs{\partial_{\tt}\psi}^2}{r^2} +\frac{\abs{\nabla\psi}^2}{r}\right)
+\int_{\MM(\tau_1,\tau_2)}\left(\frac{\abs{\partial_r\psi}^2}{r^2} +\frac{\abs{\psi}^2}{r^4}\right),
\end{align}
\esub
for any given $0<\de\leq 1$. We also define the norms of the right-hand side 
\bea\lab{eq:defmathcalNpsif}
\mathcal{N}_\de[ \psi, F](\tau_1, \tau_2) &:=& \!\!\int_{\MM(\tau_1, \tau_2)}r^{1+\de}|F|^2\\
\nn&+&\!\!\min\left[\left(\int_{\Mtrap(\tau_1, \tau_2)}|F|^2\right)^{\frac{1}{2}} \left(\int_{\Mtrap(\tau_1, \tau_2)}|\pr\psi|^2\right)^{\frac{1}{2}}, 
\int_{\Mtrap(\tau_1, \tau_2)}\tau^{1+\de}|F|^2\right],
\eea
and
\bea\lab{eq:defintionwidehatmathcalNfpsinormRHS}
&&\widehat{\mathcal{N}}[ \psi, F](\tau_1, \tau_2) \nn\\
\nn&:=&\sup_{\tau\in[\tau_1,\tau_2]}\bigg|\int_{\Mntrap(\tau_1, \tau)}\pr_\tau\psi F\bigg|+\int_{\Mntrap(\tau_1, \tau_2)}\big(|\pr_r\psi|+r^{-1}|\psi|\big)|F|+\int_{\MM(\tau_1, \tau_2)}|F|^2\\
&+&\!\!\min\left[\left(\int_{\Mtrap(\tau_1, \tau_2)}|F|^2\right)^{\frac{1}{2}} \left(\int_{\Mtrap(\tau_1, \tau_2)}|\pr\psi|^2\right)^{\frac{1}{2}}, 
\int_{\Mtrap(\tau_1, \tau_2)}\tau^{1+\de}|F|^2\right].
\eea

\begin{remark}\lab{rmk:controlofwidehatNfpsibyNfpsi}
In view of the above definitions, we immediately deduce the following bound
\beaa
\widehat{\mathcal{N}}[ \psi, F](\tau_1, \tau_2)\les \Big(\M_\de[\psi](\tau_1,\tau_2)\Big)^{\frac{1}{2}}\Big(\mathcal{N}_\de[ \psi, F](\tau_1, \tau_2)\Big)^{\frac{1}{2}}+\mathcal{N}_\de[ \psi, F](\tau_1, \tau_2).
\eeaa
Also, note that $\M[\psi](\tau_1,\tau_2)=\M_1[\psi](\tau_1,\tau_2)$. 
\end{remark}

Next, we introduce the notation 
\beaa
\pr\psi:=(\pr_\tau\psi, \pr_r\psi, \nab\psi), 
\eeaa
where $\nab$ is defined as in \eqref{eq:defweightedderivative}
 by $r\nab=\langle \pr_{x^1}, \pr_{x^2}\rangle$, and for any nonnegative integer $\reg$, let
\bea
\lab{def:normshighorder:FMdeandNNnorms}
\bsplit
\F^{(\reg)}[\psi](\tau_1,\tau_2)&:=\F[\pr^{\leq\reg}\psi](\tau_1,\tau_2),\qquad\qquad\qquad\quad \E^{(\reg)}[\psi](\tau):=\E[\pr^{\leq s}\psi](\tau), \\ 
\M^{(\reg)}_\de[\psi](\tau_1,\tau_2)&:=\M_\de[\pr^{\leq s}\psi](\tau_1,\tau_2),\qquad\qquad\, \M^{(\reg)}[\psi](\tau_1,\tau_2):=\M[\pr^{\leq s}\psi](\tau_1,\tau_2),\\
\mathcal{N}^{(\reg)}_\de[ \psi, F](\tau_1, \tau_2)&:=\mathcal{N}_\de[\pr^{\leq\reg}\psi, \pr^{\leq \reg}F](\tau_1, \tau_2),\quad\widehat{\mathcal{N}}^{(\reg)}[ \psi, F](\tau_1, \tau_2):=\widehat{\mathcal{N}}[\pr^{\leq\reg}\psi, \pr^{\leq \reg}F](\tau_1, \tau_2).
\end{split}
\eea

Finally, we define for any nonnegative integer $\reg$ the following combined norms 
\bea
\bsplit
\EMF^{(s)}_\de[\psi](\tau_1,\tau_2) := \sup_{\tt\in [\tau_1, \tau_2]} \E^{(s)}[\psi](\tt) + \M^{(s)}_\de[\psi](\tau_1,\tau_2)+\F^{(s)}[\psi](\tau_1,\tau_2),\\
\EMF^{(s)}[\psi](\tau_1,\tau_2) := \sup_{\tt\in [\tau_1, \tau_2]} \E^{(s)}[\psi](\tt) + \M^{(s)}[\psi](\tau_1,\tau_2)+\F^{(s)}[\psi](\tau_1,\tau_2),
\end{split}
\eea
with $\EM^{(s)}_\de[\psi](\tau_1,\tau_2)$, $\EM^{(s)}[\psi](\tau_1,\tau_2)$, $\MF^{(s)}[\psi](\tau_1,\tau_2)$ and $\EF^{(s)}[\psi](\tau_1,\tau_2)$ being defined in a similar way.

The reason we may choose flat volume elements in \eqref{def:variousMorawetzIntegrals} for the definition of $\F_\AA[\psi]$, $\F_{\II_+}[\psi]$ and $\E[\psi]$ is justified by the following lemma.
\begin{lemma}\lab{lemma:justificationboundarytermsflatvolume}
Let $\mathcal{Q}$ denote the energy momentum tensor of $\psi$, i.e., 
\beaa
\mathcal{Q}_{\a\b}:=\Re\bigg(\pr_\a\psi\ov{\pr_\b\psi }-\frac{1}{2}\g_{\a\b}\pr^\mu\psi\ov{\pr_\mu\psi}\bigg).
\eeaa
Also, let $X$ be a globally timelike vectorfield which coincides with $\pr_\tau$ in $r\geq 13m$. Then, the corresponding boundary terms generated by using $X$ as a multiplier in the energy estimates satisfy
\beaa
\F_{\AA}[\psi](\tau_1,\tau_2) &\simeq& \int_{\AA(\tau_1,\tau_2)}\mathcal{Q}(X, N_\AA),\\
\F_{\II_+}[\psi](\tau_1,\tau_2) &\simeq& \int_{\II_+(\tau_1,\tau_2)}\mathcal{Q}(X, N_{\II_+}),\\
\E[\psi](\tau) &\simeq&\int_{\Sigma(\tt)}\mathcal{Q}(X, N_{\Si(\tt)}),
\eeaa
where $f\simeq h$ if $f\les h$ and $h\les f$. 
\end{lemma}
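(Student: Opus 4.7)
The three equivalences share a common structure. For each, one would compute $\mathcal{Q}(X, N)$ in a frame adapted to the causal character of the surface, verify (up to the $O(\ep)$ perturbation corrections, which are absorbed into $\simeq$ by \eqref{eq:constraintsonthemainsmallconstantsepanddelta}) that it is pointwise equivalent to the coordinate quadratic form in $\pr\psi$ appearing in the flat flux norm, and then match the induced volume form from Sections \ref{subsect:normalizedcoords}--\ref{subsect:assumps:perturbedmetric} with the flat measure. For the spacelike boundary $\AA$ and the bounded portion $\Sigma_{r_+(1-\dhor), 13m}(\tau)$ of $\Sigma(\tau)$, both $X$ and the future unit normal are uniformly future-timelike (the first by hypothesis, the second by spacelike-ness together with the non-degeneracy quantified by $\dhor$); the dominant energy condition then forces $\mathcal{Q}(X, N)$ to be a positive-definite quadratic form in $\pr\psi = (\pr_\tau\psi, \pr_r\psi, \nab\psi)$ with coefficients bounded above and below uniformly, by compactness in $(r, \omega)$ and approximate $\tau$-invariance of the Kerr background (up to the decaying perturbation). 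Combined with the volume forms from Lemma \ref{lemma:specificchoice:normalizedcoord:inducedmetricAA} and Lemma \ref{lemma:specificchoice:normalizedcoord:inducedmetricSitau} and their perturbation counterparts Lemma \ref{lemma:controllinearizedmetric:inducedmetricAA} and Lemma \ref{lemma:controllinearizedmetric:inducedmetricSitau}, this yields the desired equivalence on these regions. The weight $|\mu| \simeq \dhor$ in front of $|\pr_r\psi|^2$ in $\F_\AA$ is a fixed positive constant on $\AA$ and so is absorbed into $\simeq$.

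For $\Sigma(\tau)$ in the asymptotic region $r \geq 13m$ we have $X = \pr_\tau$, and the hypersurface becomes asymptotically null, so $N_\Sigma = -\g^{\tau\a}\pr_\a/\sqrt{-\g^{\tau\tau}}$ degenerates: from \eqref{eq:assymptiticpropmetricKerrintaurxacoord:1}--\eqref{eq:assymptiticpropmetricKerrintaurxacoord:2} one obtains $\sqrt{-\g^{\tau\tau}} \sim \sqrt{2m^2 - a^2\sin^2\th}/r$, so $N_\Sigma$ carries a $\pr_r$-component of order $r/m$ together with a $\pr_\tau$-component of order $m/r$. I would introduce the asymptotic null frame $e_4 = \pr_r$, $e_3 = \pr_\tau - \tfrac{1}{2}\pr_r$, $e_a = r^{-1}\pr_{x^a}$, in which $e_3, e_4$ are null to leading order with $\g(e_3, e_4) = -1 + O(mr^{-1})$, and expand $\mathcal{Q}(\pr_\tau, N_\Sigma)$ directly to obtain $\mathcal{Q}(\pr_\tau, N_\Sigma) \simeq (m/r)|\pr_\tau\psi|^2 + (r/m)\bigl((\pr_r\psi)^2 + |\nab\psi|^2\bigr)$. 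Multiplying by $\sqrt{|\det g_\Sigma|} \simeq rm\sqrt{\det\mathring\ga}$ from Lemma \ref{lemma:specificchoice:normalizedcoord:inducedmetricSitau} then reconstructs the integrand $r^2\bigl[(\pr_r\psi)^2 + |\nab\psi|^2 + r^{-2}(\pr_\tau\psi)^2\bigr] d\mathring\ga \, dr$ of $\E[\psi]$, with the mismatch $m^2|\pr_\tau\psi|^2$ versus $|\pr_\tau\psi|^2$ absorbed in the $\simeq$ constant.

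The case of $\F_{\II_+}$ is handled in the same spirit, at the boundary $\tauu = +\infty$: I would use Lemma \ref{lemma:controllinearizedmetric:inducedmetricII+} to work in the null frame $\{\pr_\tau^{\II_+}, \pr_r, e_a\}$ with $e_a$ angular orthonormal on the round sphere, where $\pr_\tau^{\II_+}$ is ingoing null, $\pr_r$ is outgoing null, $\g(\pr_\tau^{\II_+}, \pr_r) = -1$, and $\pr_\tau = \pr_\tau^{\II_+} + \tfrac{1}{2}(1 + b^r)\pr_r + O(\ep)\nab$; a short frame computation gives $\mathcal{Q}(\pr_\tau, \pr_\tau^{\II_+}) = (\pr_\tau^{\II_+}\psi)^2 + \tfrac{1}{4}|\nab^{\II_+}\psi|^2$ modulo $O(\ep)$ corrections, and the round sphere area $r^2 d\mathring\ga$ completes the match with $\F_{\II_+}$. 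The main technical point is the asymptotic region on $\Sigma(\tau)$, where the hypersurface is nearly null and a naive application of the dominant energy condition misses the sharp $r$-weights prescribed by $\E[\psi]$; shifting to the asymptotic null frame above and carefully tracking the cancellation between the blow-up of $N_\Sigma$ along $\pr_r$, the degeneration of $\sqrt{-\g^{\tau\tau}}$, and the induced volume element $\sqrt{|\det g_\Sigma|}$ is what produces the correct weights.
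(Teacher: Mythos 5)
Your proposal is correct and follows essentially the same route as the paper: the $\II_+$ case is the identical null-frame computation based on Lemma \ref{lemma:controllinearizedmetric:inducedmetricII+}, and for $\Si(\tau)$ and $\AA$ you expand $\QQ(X,N)$ against the unit normal and match the induced measure using Lemmas \ref{lemma:specificchoice:normalizedcoord:inducedmetricSitau}, \ref{lemma:specificchoice:normalizedcoord:inducedmetricAA} and their perturbed counterparts, which is exactly the paper's computation $\sqrt{\det g}/|\D\tau|=\sqrt{\det g/|\g^{\tau\tau}|}\simeq r^2\sqrt{\det\mathring{\ga}}$ on $\Si(\tau)$ and $\sqrt{\det g_\AA}/|\D r|\simeq\sqrt{\det\mathring{\ga}}$ on $\AA$. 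The only differences are presentational: you keep the unit normal and explicit weights $(m/r)|\pr_\tau\psi|^2+(r/m)\big(|\pr_r\psi|^2+|\nab\psi|^2\big)$ where the paper folds the normalization into the measure factor, and you replace the paper's explicit quadratic-form estimate on $\AA$ by a soft timelike-positivity argument, both of which are legitimate since the implicit constants may depend on $m$, $a$ and $\dhor$.
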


\begin{proof}
{On $\II_+$, in view of Lemma \ref{lemma:controllinearizedmetric:inducedmetricII+}, $\pr^{\II_+}_\tau$ is a null ingoing vectorfield tangent to $\II_+$ and the spheres $S^{\II_+}(\tau)$ foliating $\II_+$ are round. Thus, noticing that $X=\pr_\tau$ on $\II_+$, we have
\beaa
\mathcal{Q}(\pr_\tau, N_{\II_+})d\II_+ = \mathcal{Q}(\pr_\tau, \pr_\tau^{\II_+})r^2d\mathring{\ga} d\tau
\eeaa
and the statement for $\F_{\II_+}[\psi]$ then follows immediately from the following computation
\beaa
\mathcal{Q}(\pr_\tau, \pr_\tau^{\II_+}) &=& \mathcal{Q}\left(\pr_\tau^{\II_+}+\frac{1}{2}(1+O(\ep))\pr_r+O(\ep)\nab^{\II_+}, \pr_\tau^{\II_+}\right)\\
&=& \mathcal{Q}(\pr_\tau^{\II_+}, \pr_\tau^{\II_+})+\frac{1}{2}(1+O(\ep))\QQ(\pr_r, \pr_\tau^{\II_+})+O(\ep)\QQ(\nab^{\II_+}, \pr^{\II_+}_\tau)\\
&=& (\pr_\tau^{\II_+}\psi)^2+\frac{1}{4}(1+O(\ep))|\nab^{\II_+}\psi|^2 +O(\ep)|\pr_\tau^{\II_+}\psi||\nabla^{\II_+}\psi|\\
&\simeq&  |\pr_\tau^{\II_+}\psi|^2+|\nabla^{\II_+}\psi|^2,
\eeaa
where we used the identities \eqref{expression:prtauIIplus:nullinf} and \eqref{expression:prrIIplus:nullinf}, as well as the fact that $\pr_r$ is outgoing null on $\II_+$ in view of Lemma \ref{lemma:controllinearizedmetric:inducedmetricII+}.}

Next, we consider the case of $\E[\psi](\tau)$ and focus on the region $r\geq 12m$ where $X=\pr_\tau$. Then, we notice that 
\beaa
\QQ(\pr_\tau, N_{\Si(\tau)})d\Si(\tau) &=& \QQ\left(\pr_\tau, -\frac{\D\tau}{|\D\tau|}\right)\sqrt{\det(g)}drdx^1dx^2\\
&\simeq& \Big((\pr_r\psi)^2+|\nab\psi|^2+r^{-2}(\pr_\tau\psi)^2\Big)\frac{\sqrt{\det(g)}}{|\D\tau|}drdx^1dx^2
\eeaa
and the statement for $\E[\psi](\tau)$ follows from 
\beaa
\frac{\sqrt{\det(g)}}{|\D\tau|}drdx^1dx^2 &=& \sqrt{\frac{\det(g)}{|\g^{\tau\tau}|}}drdx^1dx^2\simeq\sqrt{\frac{m^2r^2(1+ O(mr^{-1})+O(\ep))\det(\mathring{\ga})}{\frac{m^2}{r^2}(1+O(mr^{-1})+O(\ep))}}drdx^1dx^2\\
&\simeq& r^2\sqrt{\det(\mathring{\ga})}drdx^1dx^2=r^2drd\mathring{\ga}\quad\textrm{on}\quad\Si(\tau)
\eeaa
where we used Lemmas  \ref{lemma:specificchoice:normalizedcoord:inducedmetricSitau} and \ref{lemma:controllinearizedmetric:inducedmetricSitau} to control $\sqrt{\det(g)}$, and Lemma \ref{lem:specificchoice:normalizedcoord} and \eqref{eq:controloflinearizedinversemetriccoefficients} to control $\g^{\tau\tau}$.

Finally, we consider the case of $\F_\AA[\psi]$. We have
\beaa
\QQ(X, N_\AA)d\AA &=& \QQ\left(X, -\frac{\D r}{|\D r|}\right)\sqrt{\det(g_\AA)}d\tau dx^1dx^2\\
&\simeq& \big(|\mu||\pr_r\psi|^2+|\pr_\tau\psi|^2+|\nab\psi|^2\big)\frac{\sqrt{\det(g_\AA)}}{|\D r|}d\tau dx^1dx^2
\eeaa
and the statement for $\F_\AA[\psi]$ follows from  
\beaa
\frac{\sqrt{\det(g_\AA)}}{|\D r|}d\tau dx^1dx^2 &=& \sqrt{\frac{\det(g_\AA)}{|\g^{rr}|}}d\tau dx^1dx^2\simeq\sqrt{\frac{m^2(\dhor+O(\ep))\det(\mathring{\ga})}{\dhor+O(\ep)}}d\tau dx^1dx^2\\
&\simeq& \sqrt{\det(\mathring{\ga})}d\tau dx^1dx^2=d\tau d\mathring{\ga}\quad\textrm{on}\quad\AA
\eeaa
where we used Lemmas \ref{lemma:specificchoice:normalizedcoord:inducedmetricAA} and \ref{lemma:controllinearizedmetric:inducedmetricAA} to control $\det(g_\AA)$, and \eqref{eq:inverse:hypercoord} and \eqref{eq:controloflinearizedinversemetriccoefficients} to control $\g^{rr}$.  This concludes the proof of Lemma \ref{lemma:justificationboundarytermsflatvolume}.
\end{proof}


\subsection{Functional inequalities on $\Si(\tau)$, $\II_+$ and $\AA$}
\label{subsect:functionalineqSigmatau}


In this section, we derive estimates on $\Si(\tau)$, $\II_+$ and $\AA$. We start with Hardy estimates on $\Si(\tau)$ and $\II_+$.
\begin{lemma}[Hardy estimates on $\Si(\tau)$ and $\II_+(\tau_1, \tau_2)$]
\lab{lemma:HardySitauandIIplus}
For any $\tau_1<\tau_2$, the following Hardy estimates hold
\bea
\bsplit
\int_{\tau_1}^{\tau_2}\int_{\mathbb{S}^2}\frac{\psi^2}{r^2}{(\tauu=+\infty, \tau, \om)} r^2{d\mathring{\ga}d\tau} &\les \F_{\II_+}[\psi](\tau_1,\tau_2), \\ 
\int_{r_+(1-\dhor)}^{+\infty}\int_{\mathbb{S}^2}\frac{\psi^2}{r^2}r^2{d\mathring{\ga}dr}  &\les\E[\psi](\tau).
\end{split}
\eea
\end{lemma}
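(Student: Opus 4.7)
Both inequalities will follow from the classical one-dimensional Hardy inequality, applied fiberwise after freezing the angular variables (and, for the first one, after interpreting the evaluation at $\tauu=+\infty$ as a limit).

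For the second estimate on $\Sigma(\tau)$, I will fix $\tau$ and $\omega\in\mathbb{S}^2$, set $r_0:=r_+(1-\dhor)$, and use the identity $\pr_r(r\psi^2)=\psi^2+2r\psi\,\pr_r\psi$. Integrating over $r\in[r_0,+\infty)$ and using that the finiteness of $\E[\psi](\tau)$ implies $r\psi^2(\tau,r,\om)\to 0$ as $r\to+\infty$ --- justified by density of smooth compactly supported functions in the finite-energy space --- the boundary contribution at $r=+\infty$ drops out and leaves
\begin{align*}
\int_{r_0}^{+\infty}\psi^2\,dr + r_0\,\psi^2(\tau,r_0,\omega) \;=\; -2\int_{r_0}^{+\infty} r\,\psi\,\pr_r\psi\,dr.
\end{align*}
Since $r_0\,\psi^2(\tau,r_0,\omega)\ge 0$, combining this with Cauchy--Schwarz and Young's inequality (with parameter $\tfrac12$) will yield $\int_{r_0}^{+\infty}\psi^2\,dr\les \int_{r_0}^{+\infty} r^2(\pr_r\psi)^2\,dr$. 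Integrating over $\mathbb{S}^2$ against $d\mathring{\ga}$ and recognising the right-hand side as a sub-term of $\E[\psi](\tau)$ will then give the second inequality, noting that $(\psi^2/r^2)\,r^2=\psi^2$ cancels the volume weight.

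For the first estimate on $\II_+$, the strategy will be the same. The matching $r^2$ weights on both sides encode the common sphere-area factor on $\II_+$, where the spheres $S^{\II_+}(\tau)$ are round by Lemma \ref{lemma:controllinearizedmetric:inducedmetricII+}, so that taking $\tauu\to+\infty$ reduces both sides to proper integrals on the $3$-manifold $\II_+\cap\{\tau_1\leq\tau\leq\tau_2\}$. A one-dimensional Hardy-type integration by parts along the null generator $\pr_\tau^{\II_+}$ of $\II_+$, supplemented when necessary by a Poincar\'e estimate on $\mathbb{S}^2$ for the angular mean, will then provide the desired control once the boundary contribution has been discarded by the analogous density argument in the finite-flux class defined by $\F_{\II_+}[\psi](\tau_1,\tau_2)$.

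The only non-routine point I anticipate is the justification of the boundary-term vanishing step in each case; this is classical and reduces to density of smooth compactly supported functions in the ambient finite-energy (respectively finite-flux) space. All other ingredients are elementary.
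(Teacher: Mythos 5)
Your treatment of the second estimate, on $\Si(\tau)$, is correct: it is exactly the standard flat one-dimensional Hardy inequality which the paper simply invokes, and your identity $\pr_r(r\psi^2)=\psi^2+2r\psi\,\pr_r\psi$, the favourable sign of the boundary term at $r=r_+(1-\dhor)$, and the density argument to kill the contribution at $r=+\infty$ are all fine.

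For the estimate on $\II_+$, however, there is a genuine gap: you never say against which weight the ``one-dimensional Hardy-type integration by parts along the generator'' is performed, and that choice is the entire content of the lemma. The paper's proof hinges on the identity $\pr_\tau^{\II_+}=\pr_\tau-\frac{1}{2}(1+b^r)\pr_r+O(\ep)\nab$ with $|b^r|\les\ep$ from Lemma \ref{lemma:controllinearizedmetric:inducedmetricII+}, i.e. $\pr_\tau^{\II_+}(r)=-\frac{1}{2}(1+b^r)$: one writes $\psi^2=-2\pr_\tau^{\II_+}(r)\,\psi^2-b^r\psi^2$, integrates by parts in $\tau$ (for $\psi$ compactly supported, then by density) to get $4\int r\,\psi\,\pr_\tau^{\II_+}\psi$, and applies Cauchy--Schwarz against the $r^2$-weighted term $|\pr_\tau^{\II_+}\psi|^2r^2$ of the flux, absorbing the $O(\ep)$ error. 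It is this use of $r$ as the Hardy weight along the generators that produces the $r^2$ weight on the right-hand side with a constant independent of $\tau_1,\tau_2$. Your sketch instead reads the $r^2$ factors as a ``common sphere-area factor'' on both sides, but on the left the explicit $1/r^2$ cancels it, so the inequality is precisely a trade of unweighted $\psi^2$ for $r^2$-weighted $|\pr_\tau^{\II_+}\psi|^2$; the literal analogue of your $\Si(\tau)$ argument, a one-dimensional Hardy or Poincar\'e inequality in the variable $\tau$ along the generator, yields a constant growing with $\tau_2-\tau_1$ and never produces the $r^2$ weight. The Poincar\'e estimate on $\mathbb{S}^2$ you propose as a supplement cannot repair this, since it controls only the deviation from the spherical mean and not the mean itself, which must be handled by the $r$-weighted integration by parts just described; the angular derivatives in the flux are in fact not needed at all. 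The density step you invoke does match the paper's; what is missing is the weight.
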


\begin{proof}
Given the definition of $\E[\psi](\tau)$ in \eqref{def:variousMorawetzIntegrals}, Hardy estimate on $\E[\psi](\tau)$ simply reduces to the standard Hardy estimate of the flat case. {Next, we focus on $\F_{\II_+}[\psi](\tau_1,\tau_2)$. Recalling from Lemma \ref{lemma:controllinearizedmetric:inducedmetricII+} that the vectorfield $\pr_\tau^{\II_+}$ is tangent to $\II_+$ and satisfies 
\beaa
\pr_\tau^{\II_+}=\pr_\tau -\frac{1}{2}(1+b^r)\pr_r+O(\ep)\nab, \qquad |\dk^{\leq 1}b^r|\les \ep,
\eeaa
we have, for $\psi$ compactly supported in $\II_+(\tau_1, \tau_2)$, 
\beaa
\int_{\tau_1}^{\tau_2}\int_{\mathbb{S}^2}\frac{\psi^2}{r^2} r^2d\mathring{\ga} d\tau  &=& -2\int_{\tau_1}^{\tau_2}\int_{\mathbb{S}^2}\pr_\tau^{\II_+}(r)\psi^2 d\mathring{\ga} d\tau  -\int_{\tau_1}^{\tau_2}\int_{\mathbb{S}^2}b^r\frac{\psi^2}{r^2}r^2 d\mathring{\ga} d\tau \\
&=& 4\int_{\tau_1}^{\tau_2}\int_{\mathbb{S}^2}\psi\pr_\tau^{\II_+}(\psi) r d\mathring{\ga}d\tau +O(\ep)\int_{\tau_1}^{\tau_2}\int_{\mathbb{S}^2}\frac{\psi^2}{r^2} r^2d\mathring{\ga} d\tau.
\eeaa
Thus, we infer
\beaa
\bsplit
(1+O(\ep))\int_{\tau_1}^{\tau_2}\int_{\mathbb{S}^2}\frac{\psi^2}{r^2} r^2d\mathring{\ga} d\tau  
&\leq 2\left(\int_{\tau_1}^{\tau_2}\int_{\mathbb{S}^2}\frac{\psi^2}{r^2} r^2d\mathring{\ga} d\tau  \right)^{\frac{1}{2}}\left(\int_{\tau_1}^{\tau_2}\int_{\mathbb{S}^2}|\pr_\tau^{\II_+}(\psi)|^2 r^2d\mathring{\ga} d\tau  \right)^{\frac{1}{2}}\\
&\leq 2\left(\int_{\tau_1}^{\tau_2}\int_{\mathbb{S}^2}\frac{\psi^2}{r^2} r^2d\mathring{\ga} d\tau  \right)^{\frac{1}{2}}\left(\F_{\II_+}[\psi](\tau_1,\tau_2)\right)^{\frac{1}{2}}
\end{split}
\eeaa}
and hence
\beaa
\int_{\tau_1}^{\tau_2}\int_{\mathbb{S}^2}\frac{\psi^2}{r^2} r^2{d\mathring{\ga}d\tau} \les \F_{\II_+}[\psi](\tau_1,\tau_2).
\eeaa
One then concludes using the density of compactly supported functions in the set of functions $\psi$ with $\F_{\II_+}[\psi](\tau_1,\tau_2)<+\infty$. This concludes the proof of the lemma.
\end{proof}

Also, we {will} use the trace estimate to control {lower order} terms on $\AA$ and $\II_+$ from Morawetz.
\begin{lemma}[Trace estimates on $\AA$ and $\II_+$]
\lab{lem:trace:fluxcontrolledbyMorawetz}
We have the following trace estimates on $\AA$ and $\II_+$
\beaa
\int_{\tau_1}^{\tau_2}\int_{\mathbb{S}^2}\psi^2(\tau, r=r_+(1-\dhor),\om)d\mathring{\ga} d\tau &\les& {\M}[\psi](\tau_1,\tau_2),\\
{\int_{\tau_1}^{\tau_2}\int_{\mathbb{S}^2}\psi^2(\tauu=+\infty, \tau, \om) r^2d\mathring{\ga} d\tau } &\les& \int_{\MM_{{11m}, \infty}(\tau_1,\tau_2)}|\pr^{\leq 1}\psi|^2.
\eeaa
\end{lemma}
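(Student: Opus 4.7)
Both estimates follow from the fundamental theorem of calculus in $r$, combined with a suitable smooth cutoff and Cauchy--Schwarz. The underlying principle is the same in both cases: the trace at a level set of $r$ is represented as a radial integral of $\pr_r$ of a weighted square, after which the $r$-weights of the resulting spacetime integral are matched with the respective right-hand sides.

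For the first estimate, I would choose a smooth cutoff $\chi(r)$ equal to $1$ at $r=r_+(1-\dhor)$ and supported in $[r_+(1-\dhor), r_+(1+\dbl)]$, so that its support lies in $\MM_{\nontrap}$. The identity
\beaa
\psi^2(\tau, r_+(1-\dhor), \om) = -\int_{r_+(1-\dhor)}^{r_+(1+\dbl)} \pr_r(\chi \psi^2)\, dr = -\int \big(\chi'\psi^2 + 2\chi\psi\pr_r\psi\big)\, dr,
\eeaa
after integration in $(\tau, \om)$ and Cauchy--Schwarz on the cross term, reduces the estimate to bounding $\int |\psi|^2$ and $\int |\pr_r\psi|^2$ on the compact subregion $\MM_{r_+(1-\dhor), r_+(1+\dbl)}$ against the spacetime volume element. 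Since $r$ is bounded there, $|\psi|^2 \simeq r^{-4}|\psi|^2$ and $|\pr_r\psi|^2 \simeq r^{-2}|\pr_r\psi|^2$ under the spacetime volume, so both quantities are directly controlled by $\M[\psi](\tau_1, \tau_2)$ (via the $r^{-4}|\psi|^2$ and $r^{-2}|\pr_r\psi|^2$ terms in its definition).

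For the second estimate, I work in the normalized coordinates $(\tau, r, x^1, x^2)$ and integrate along curves of constant $(\tau, x^1, x^2)$. With $\widetilde{\chi}(r)$ smooth, vanishing for $r\leq 10m$ and equal to $1$ for $r\geq 20m$, FTC yields, first for smooth $\psi$ with adequate decay and then by density,
\beaa
r^2\psi^2\big|_{\tauu=+\infty} = \int_{10m}^{\infty} \pr_r\big(\widetilde{\chi}\, r^2\psi^2\big)\, dr = \int_{10m}^{\infty}\big(\widetilde{\chi}'\, r^2\psi^2 + 2\widetilde{\chi}\, r\psi^2 + 2\widetilde{\chi}\, r^2\psi\pr_r\psi\big)\, dr,
\eeaa
where the boundary value at $\tauu=+\infty$ is the limit along the curve, justified by the fact that along curves of constant $(\tau, x^1, x^2)$ one has $\tauu = \tau + 2r + 4m\log(r/2m) + \tauut \to +\infty$ as $r\to\infty$ by Lemma \ref{lemma:constructionoftheingoingopticalfunctiontauu}. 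Integrating in $(\tau, \om)$ and applying Cauchy--Schwarz on the last term, together with the asymptotic $\sqrt{|\g|}\,dr\,d\mathring{\ga}\,d\tau \simeq r^2\,dr\,d\mathring{\ga}\,d\tau$ for large $r$, yields
\beaa
\int_{\tau_1}^{\tau_2}\int_{\mathbb{S}^2} r^2\psi^2|_{\tauu=+\infty}\, d\mathring{\ga}\, d\tau \les \int_{\MM_{10m, 20m}(\tau_1,\tau_2)}|\psi|^2 + \int_{\MM_{10m,\infty}(\tau_1,\tau_2)}\big(r^{-1}|\psi|^2 + |\psi|^2 + |\pr_r\psi|^2\big).
\eeaa
Since $r\geq 10m$, the $r^{-1}|\psi|^2$ contribution is trivially absorbed into $\int|\psi|^2$, delivering the claimed bound by $\int_{\MM_{10m,\infty}(\tau_1, \tau_2)}|\pr^{\leq 1}\psi|^2$. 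The argument is routine; the only mild technical point is the rigorous interpretation of the trace at $\II_+$ via approximation by smooth compactly supported functions and the passage to the limit, so I do not anticipate any serious obstacle.
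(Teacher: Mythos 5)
Your proof is correct and follows essentially the same route as the paper: a radial fundamental-theorem-of-calculus identity with a cutoff, Cauchy--Schwarz on the cross term, and comparison of the flat and spacetime measures, bounding the result by the $|\pr_r\psi|^2/r^2$ and $|\psi|^2/r^4$ parts of $\M[\psi]$ near $\AA$ and by $\int_{\MM_{10m,\infty}}|\pr^{\leq 1}\psi|^2$ near $\II_+$. The paper only writes out the $\AA$ case and notes the $\II_+$ case "follows in the same manner," which is exactly the argument you have spelled out (including the limiting interpretation of the trace at $\tauu=+\infty$).
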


\begin{proof}
Let $\chi=\chi(r)$ a smooth cut-off function such that $\chi(r)=1$ for $r\leq 3m$ and $\chi(r)=0$ for $r\geq 4m$. Then, we have
\beaa
\int_{\tau_1}^{\tau_2}\int_{\mathbb{S}^2}\psi^2(\tau, r=r_+(1-\dhor),\om)d\mathring{\ga} d\tau  &=& -\int_{\tau_1}^{\tau_2}\int_{r_+(1-\dhor)}^{4m}\int_{\mathbb{S}^2}\pr_r(\chi\psi^2)(\tau, r,\om)d\mathring{\ga}dr d\tau\\
&\les& \int_{\tau_1}^{\tau_2}\int_{r_+(1-\dhor)}^{4m}\int_{\mathbb{S}^2}\Big((\pr_r\psi)^2+\psi^2\Big)(\tau, r,\om)d\mathring{\ga}dr d\tau\\
&\les& \int_{\MM_{r_+(1-\dhor), 4m}(\tau_1, \tau_2)}\Big((\pr_r\psi)^2+\psi^2\Big)\\
&\les& {\M}[\psi](\tau_1,\tau_2)
\eeaa
as stated. The other estimate on $\II_+$ follows in the same manner.
\end{proof}

Finally, the following estimate on $\II_+$ will be useful.
\begin{lemma}\label{lem:nullnfFluxBdedByEnergy}
For any $\tau_1<\tau_2$ and any $\de>0$, we have
\bea
{\liminf_{\tauu\to+\infty}}\int_{\tau_1}^{\tau_2}\int_{\mathbb{S}^2}{(1+\tau-\tau_1)^{-1-\de}}r^{-1}|\mathfrak{d}^{\leq 1}\psi|^2{({\tauu}, \tau, \omega)}r^2d\mathring{\ga}d\tau   \les \sup_{\tau\in[\tau_1, \tau_2]}\E[\psi](\tau).
\eea
 \end{lemma}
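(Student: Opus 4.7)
The plan is to exploit the freedom in the $\liminf$ by a dyadic averaging argument in the null coordinate $\tauu$ near $\II_+$. Working in the coordinates $(\tauu,\tau,x^1,x^2)$ of Lemma \ref{lemma:constructionoftheingoingopticalfunctiontauu} on the region $\MM\cap\{r\geq|\tau|+10m\}$, for $U_0$ sufficiently large compared to $|\tau_1|+|\tau_2|+m$, the slab $\{\tauu\in[U_0,2U_0],\,\tau\in[\tau_1,\tau_2]\}$ lies entirely in the domain of the null coordinate. If I can show
\begin{equation*}
K(U_0):=\int_{U_0}^{2U_0}\int_{\tau_1}^{\tau_2}\int_{\mathbb{S}^2}(1+\tau-\tau_1)^{-1-\de}r^{-1}|\mathfrak{d}^{\leq 1}\psi|^2(\tauu,\tau,\om)\,r^2\,d\mathring{\ga}\,d\tau\,d\tauu\;\les\; U_0\sup_{\tau\in[\tau_1,\tau_2]}\E[\psi](\tau),
\end{equation*}
then the mean value theorem applied on the length-$U_0$ interval $[U_0,2U_0]$ produces a $\tauu(U_0)\in[U_0,2U_0]$ whose inner $(\tau,\omega)$-slice integral is $\lesssim\sup_\tau\E[\psi](\tau)$; letting $U_0\to+\infty$ yields a sequence $\tauu_n\to+\infty$ witnessing the $\liminf$.

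To bound $K(U_0)$, I change variables from $(\tauu,\tau,\omega)$ to $(r,\tau,\omega)$ at fixed $(\tau,\omega)$. By \eqref{eq:thederivativesoftauuinrtauxacoord}, $\pr_r\tauu=2+O(mr^{-1}+r^{-1}+\ep)$, so the Jacobian is uniformly bounded above and below by positive constants. Furthermore $\tauu\in[U_0,2U_0]$ with bounded $\tau$ forces $r\sim U_0$ along an $r$-interval of length $\sim U_0$, so the factor $r^{-1}\cdot r^2=r$ is itself $\lesssim U_0$. Therefore
\begin{equation*}
K(U_0)\;\les\;U_0\int_{\tau_1}^{\tau_2}(1+\tau-\tau_1)^{-1-\de}\left[\int_{r_+(1-\dhor)}^{+\infty}\int_{\mathbb{S}^2}|\mathfrak{d}^{\leq 1}\psi|^2(\tau,r,\om)\,d\mathring{\ga}\,dr\right]d\tau.
\end{equation*}

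It remains to control the inner bracket by $\E[\psi](\tau)$. Recalling $\mathfrak{d}=\{\pr_\tau,r\pr_r,r\nabla\}$ with $r\nabla=\langle\pr_{x^1},\pr_{x^2}\rangle$, one has
\begin{equation*}
|\mathfrak{d}^{\leq 1}\psi|^2\simeq \psi^2+(\pr_\tau\psi)^2+r^2(\pr_r\psi)^2+r^2|\nabla\psi|^2.
\end{equation*}
The three derivative terms, integrated against $d\mathring{\ga}\,dr$, are precisely the integrand of $\E[\psi](\tau)=\int\big((\pr_r\psi)^2+|\nabla\psi|^2+r^{-2}(\pr_\tau\psi)^2\big)r^2\,d\mathring{\ga}\,dr$ after absorbing its $r^2$ volume factor, while the $\psi^2$ contribution is bounded by $\E[\psi](\tau)$ via the Hardy estimate of Lemma \ref{lemma:HardySitauandIIplus}. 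Since $\de>0$ makes $(1+\tau-\tau_1)^{-1-\de}$ integrable in $\tau$, we conclude $K(U_0)\lesssim U_0\sup_\tau\E[\psi](\tau)$, as required.

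There is no substantive obstacle in this argument: it is a standard dyadic averaging trick and the only delicate point is bookkeeping the coordinate change (Jacobian bounded, $r\sim U_0$) and matching the $r$-weights appearing in $\mathfrak{d}$ with those built into the energy $\E$.
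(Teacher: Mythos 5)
Your proof is correct and follows essentially the same route as the paper's: the comparison $\int\int|\mathfrak{d}^{\leq 1}\psi|^2\,d\mathring{\ga}\,dr\les \E[\psi](\tau)$ (with the zeroth-order term handled by the Hardy estimate of Lemma \ref{lemma:HardySitauandIIplus}), the change of variables between $r$ and $\tauu$ with bounded Jacobian and $r\simeq\tauu$ in the far region, and a pigeonhole extraction of good $\tauu$-slices to reach the $\liminf$. The paper packages the last step as a globally $\tauu^{-1}$-weighted bulk bound combined with the divergence of $\int d\tauu/\tauu$, rather than your dyadic slab average over $[U_0,2U_0]$, but this is only a cosmetic repackaging of the same argument.
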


{\begin{remark}\lab{rmk:limitingargumentwithliminfonII+}
In practice, Lemma \ref{lem:nullnfFluxBdedByEnergy} will be used to control error terms appearing when estimating quantities $A(\tauu)$ that have limits as $\tauu\to +\infty$, such as the restriction of $\EMF[\psi](\tau_1, \tau_2)$ to $\MM(\tauu\leq \tauu')$ which admits a limit as $\tauu'\to +\infty$. It then suffices to control lower order terms appearing in the estimate for $A(\tauu)$ using Lemma \ref{lem:nullnfFluxBdedByEnergy} along a particular sequence $\tauu_{(q)}$ with ${\tauu}_{(q)}\to +\infty$ as $q\to +\infty$ such that the bound of Lemma \ref{lem:nullnfFluxBdedByEnergy} holds uniformly along the sequence ${\tauu}_{(q)}$. This then yields an upper bound for $A(\tauu_{(q)})$ which is uniform in $q$ and the bound for $A(\tauu=+\infty)$ follows by taking the limit $q\to +\infty$. To ease notations, we will skip this limiting argument and directly use Lemma \ref{lem:nullnfFluxBdedByEnergy} at $\tauu=+\infty$. 
\end{remark}}

\begin{proof}
Using the definition of $\E[\psi](\tau)$ and $\dk$, as well as the Hardy estimate on $\Si(\tau)$ derived in Lemma \ref{lemma:HardySitauandIIplus}, we have
\beaa
\E[\psi](\tau) &\simeq& \int_{r_+(1-\dhor)}^{+\infty}\int_{\mathbb{S}^2}r^{-2}|\mathfrak{d}^{\leq 1}\psi|^2r^2d\mathring{\ga}dr.
\eeaa
We infer, using also \eqref{eq:assymptiticpropmetricKerrintaurxacoord:volumeform} and \eqref{eq:controloflinearizedmetriccoefficients:det}, 
\beaa
&&\int_{\MM(\tau_1, \tau_2)}{(1+\tau-\tau_1)^{-1-\de}}r^{-2}|\mathfrak{d}^{\leq 1}\psi|^2\\ &\les& \int_{\tau_1}^{\tau_2}{(1+\tau-\tau_1)^{-1-\de}}\left(\int_{r_+(1-\dhor)}^{+\infty}\int_{\mathbb{S}^2}r^{-2}|\mathfrak{d}^{\leq 1}\psi|^2r^2d\mathring{\ga}dr\right)d\tau\\
&\les& \left(\int_{\tau_1}^{\tau_2}{(1+\tau-\tau_1)^{-1-\de}}d\tau\right)\sup_{\tau\in[\tau_1, \tau_2]}\E[\psi](\tau)
\eeaa
and hence 
\bea\lab{eq:intermediaryboundonMMtoboundbulkonscri}
\int_{\MM(\tau_1, \tau_2)}{(1+\tau-\tau_1)^{-1-\de}}r^{-2}|\mathfrak{d}^{\leq 1}\psi|^2 &\les& \sup_{\tau\in[\tau_1, \tau_2]}\E[\psi](\tau).
\eea

{Next, recall from Lemma \ref{lemma:constructionoftheingoingopticalfunctiontauu} that the ingoing optical function $\tauu$ satisfies 
\beaa
\tauu=\tauu_0 +\tauut, \qquad \tauu_0=\tau+2r+4m\log\left(\frac{r}{2m}\right),\qquad |\dk^{\leq 2}\tauut|\les r^{-1}+\ep\quad\textrm{in}\quad\MM\cap\{r\geq |\tau|+ 10m\}.
\eeaa
Then, one easily checks that  
\beaa
\frac{\underline{\tau}}{8}\leq r\leq \underline{\tau}\quad\textrm{if}\quad\underline{\tau}\geq 2\max(|\tau|,40m), 
\eeaa
and hence, using \eqref{eq:assymptiticpropmetricKerrintaurxacoord:volumeform} and \eqref{eq:controloflinearizedmetriccoefficients:det}, as well as the following change of variable identity 
\beaa
drd\tau dx^1dx^2=\left|\frac{\pr\tauu}{\pr r}\right|d\tauu d\tau dx^1dx^2=\left(2 +\frac{4m}{r}+O(r^{-2}+r^{-1}\ep)\right)d\tauu d\tau dx^1dx^2,
\eeaa
we infer
\beaa
&&\int_{\MM(\tau_1, \tau_2)\cap\{\underline{\tau}\geq 2\max(|\tau_1|, |\tau_2|, 40m)\}}{(1+\tau-\tau_1)^{-1-\de}}r^{-2}|\mathfrak{d}^{\leq 1}\psi|^2\\ 
&\simeq& \int_{\tau_1}^{\tau_2}\int_{\underline{\tau}=2\max(|\tau_1|, |\tau_2|, 40m)}^{+\infty}\int_{\mathbb{S}^2}{(1+\tau-\tau_1)^{-1-\de}}r^{-2}|\mathfrak{d}^{\leq 1}\psi|^2r^2d\mathring{\ga} d\underline{\tau}d\tau\\
&\simeq& \int_{\underline{\tau}=2\max(|\tau_1|, |\tau_2|, 40m)}^{+\infty}\frac{1}{\underline{\tau}}\left(\int_{\tau_1}^{\tau_2}\int_{\mathbb{S}^2}{(1+\tau-\tau_1)^{-1-\de}}r^{-1}|\mathfrak{d}^{\leq 1}\psi|^2r^2d\mathring{\ga}d\tau\right)d\underline{\tau}.
\eeaa}
Together with \eqref{eq:intermediaryboundonMMtoboundbulkonscri}, we deduce
\beaa
\int_{\underline{\tau}=2\max(|\tau_1|, |\tau_2|, 40m)}^{+\infty}\frac{1}{\underline{\tau}}\left(\int_{\tau_1}^{\tau_2}\int_{\mathbb{S}^2}{(1+\tau-\tau_1)^{-1-\de}}r^{-1}|\mathfrak{d}^{\leq 1}\psi|^2r^2d\mathring{\ga}d\tau\right)d\underline{\tau} &\les& \sup_{\tau\in[\tau_1, \tau_2]}\E[\psi](\tau).
\eeaa
In particular, we infer the existence of a increasing sequence $(\underline{\tau}_{(q)})_{q\geq 1}$ such that $\underline{\tau}_{(q)}\to +\infty$ as $q\to +\infty$, and, for any $q\geq 1$, we have
\beaa
\int_{\tau_1}^{\tau_2}\int_{\mathbb{S}^2}{(1+\tau-\tau_1)^{-1-\de}}r^{-1}|\mathfrak{d}^{\leq 1}\psi|^2(\underline{\tau}=\underline{\tau}_{(q)}, \tau, \omega)r^2d\mathring{\ga}d\tau &\les& \sup_{\tau\in[\tau_1, \tau_2]}\E[\psi](\tau),
\eeaa
uniformly in $q$. In particular, letting $q\to +\infty$, we deduce 
\beaa
{\liminf_{\tauu\to+\infty}}\int_{\tau_1}^{\tau_2}\int_{\mathbb{S}^2}{(1+\tau-\tau_1)^{-1-\de}}r^{-1}|\mathfrak{d}^{\leq 1}\psi|^2{({\tauu}, \tau, \omega)}r^2d\mathring{\ga}d\tau   \les \sup_{\tau\in[\tau_1, \tau_2]}\E[\psi](\tau)
\eeaa
as stated. This concludes the proof of Lemma \ref{lem:nullnfFluxBdedByEnergy}.
\end{proof}


\section{Basic estimates for the wave equation}
\lab{sect:basicestimatesforwaveequations}


In this section, we collect estimates for solutions to the wave equation \eqref{intro:eq:scalarwave}, i.e.
\beaa
\Box_{\g} \psi = F, \qquad \MM,
\eeaa
which can be proved on perturbations of Kerr in the range $|a|<m$. {Some of these estimates are by now classical and proofs are provided for the convenience of the reader.}


\subsection{Standard calculation for generalized currents}


{Recall from Lemma \ref{lemma:justificationboundarytermsflatvolume} the definition of the energy-momentum tensor $\QQ_{\a\b}$ for $\psi$
\beaa
\QQ_{\a\b} &=& \Re\bigg(\pr_{\a}\psi \ov{ \pr_{\b}\psi }- \frac{1}{2}\g_{\a\b} \pr_{\nu}\psi \ov{\pr^{\nu}\psi}\bigg),
\eeaa
and recall from \eqref{def:deformationtensor:lastsect} the definition of the deformation tensor of a vectorfield $X$
\beaa
{}^{(X)}\pi_{\a\b} =  \D_{\a}X_{\b} + \D_{\b}X_{\a}.
\eeaa

The following lemma provides a standard calculation for generalized currents associated to the scalar wave equation.
\begin{lemma}
\lab{lem:generalenergyidentity:waveeq}
Given a vectorfield $X$ and a scalar function $w$, we have
\bea
\label{eq:EnerIden:General:wave}
&&\D^{\a}\bigg(\QQ_{\a\b}[\psi]X^{\b}+\Re\bigg(w\ov\psi \rd_{\a} \psi -\frac{1}{2}\rd_{\a}w \psi\ov\psi\bigg)\bigg)\nn\\
 &=&\bigg(\frac{1}{2}{}^{(X)} \pi \cdot \QQ[\psi] +\Re \bigg(w\rd_{\a}\psi \ov{\rd^{\a}\psi} - \frac{1}{2} \Box_{\g} w \psi \ov\psi\bigg)\bigg)+ \Re\Big(\Box_{\g} \psi \ov{({X\psi} +w\psi)}\Big).
\eea
\end{lemma}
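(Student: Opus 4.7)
This is the standard divergence identity for the energy-momentum tensor of the scalar wave equation augmented by a lower-order Morawetz-type correction. My plan is to split the left-hand side into its $X$-part and its $w$-part, compute each divergence separately using only the Leibniz rule, the divergence identity $\D^\a\QQ_{\a\b}[\psi]=\Re(\Box_\g\psi\,\ov{\pr_\b\psi})$, and the symmetry of $\QQ$, and then recombine.

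For the $X$-part, I first compute
\begin{equation*}
\D^\a(\QQ_{\a\b}[\psi]X^\b)=(\D^\a\QQ_{\a\b}[\psi])X^\b+\QQ_{\a\b}[\psi]\D^\a X^\b.
\end{equation*}
The first term is $\Re(\Box_\g\psi\,\ov{X\psi})$ by the well-known identity for the divergence of the energy-momentum tensor, obtained by expanding $\D^\a\QQ_{\a\b}$ and using $\D^\a\D_\a\psi=\Box_\g\psi$ with the symmetry of second covariant derivatives on scalars. For the second term, I use the symmetry $\QQ_{\a\b}=\QQ_{\b\a}$ to rewrite $\QQ_{\a\b}\D^\a X^\b=\tfrac12\QQ_{\a\b}(\D^\a X^\b+\D^\b X^\a)=\tfrac12\QQ\cdot{}^{(X)}\pi$, using the definition of the deformation tensor.

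For the $w$-part, I apply $\D^\a$ and the Leibniz rule directly:
\begin{align*}
\D^\a\big(w\ov\psi\pr_\a\psi\big)&=(\pr^\a w)\ov\psi\,\pr_\a\psi+w\,(\pr^\a\ov\psi)(\pr_\a\psi)+w\ov\psi\,\Box_\g\psi,\\
\D^\a\big(\tfrac12(\pr_\a w)\psi\ov\psi\big)&=\tfrac12(\Box_\g w)|\psi|^2+\tfrac12(\pr^\a w)\big(\psi\,\pr_\a\ov\psi+\ov\psi\,\pr_\a\psi\big).
\end{align*}
Taking real parts, the two terms containing $(\pr^\a w)$ paired with $\psi\pr\ov\psi$ or $\ov\psi\pr\psi$ cancel exactly, leaving
\begin{equation*}
\D^\a\Re\!\left(w\ov\psi\,\pr_\a\psi-\tfrac12(\pr_\a w)\psi\ov\psi\right)=w\,\pr_\a\psi\,\ov{\pr^\a\psi}-\tfrac12(\Box_\g w)|\psi|^2+\Re(\Box_\g\psi\,\ov{w\psi}).
\end{equation*}

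Adding the two contributions yields exactly \eqref{eq:EnerIden:General:wave}. No step presents a real obstacle: the only care needed is to keep track of real parts (noting that $w$ is real and that $\pr_\a\psi\,\ov{\pr^\a\psi}$ is manifestly real) and to use the symmetry of $\QQ$ when contracting with $\D X$ so as to produce the symmetrized deformation tensor. The identity is purely algebraic once the divergence of $\QQ$ is granted.
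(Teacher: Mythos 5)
Your computation is correct: splitting the current into its $X$-part and $w$-part, using $\D^\a\QQ_{\a\b}[\psi]=\Re\big(\Box_\g\psi\,\ov{\pr_\b\psi}\big)$ together with the symmetry of $\QQ$ to produce $\tfrac12\,{}^{(X)}\pi\cdot\QQ$, and cancelling the $\pr^\a w$ cross terms after taking real parts reproduces \eqref{eq:EnerIden:General:wave} exactly. The paper states this lemma as a standard identity without giving a proof, and your argument is precisely the standard calculation it relies on.
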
}


\subsection{Control of error terms}
\lab{sec:controlforerrortermsinNRGMorawetz}


In this section, we provide estimates for error terms appearing in energy-Morawetz estimates, as well as in {commutators} between first-order derivatives and the wave operator.


\subsubsection{Control of error terms for energy-Morawetz estimates}

 
In order to control error terms arising in the derivation of energy-Morawetz estimates, we start with the following basic lemma. 
\begin{lemma}\lab{lemma:basiclemmaforcontrolNLterms}
{For $s=0,1$, we have}
\beaa
&&\int_{\Mntrap(\tau_1, \tau_2)}\bigg[\tau^{-1-\dec}\left|\big(\pr_r, r^{-1}\pr_{x^a}, r^{-1}\big){\pr^{\leq s}}\psi\right|^2 
+ r^{-2}\left|\big(\pr_\tau, \pr_r, r^{-1}\pr_{x^a}, r^{-1}\big){\pr^{\leq s}}\psi\right|^2 \\
&&\qquad\qquad\qquad  +r^{-1}\tau^{-\frac{1+\dec}{2}}\left|\big(\pr_r, r^{-1}\pr_{x^a}{,}\, r^{-1}\big){\pr^{\leq s}}\psi\right||\pr_\tau({\pr^{\leq s}}\psi)|\bigg]\\ 
&\les& {\EM^{(s)}[\psi](\tau_1, \tau_2)}.
\eeaa
\end{lemma}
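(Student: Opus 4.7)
The strategy is to show that each of the three integrands is pointwise bounded, up to a Cauchy--Schwarz (or Young) splitting, by either a term controlled by $\sup_{\tau\in[\tau_1,\tau_2]}\E^{(s)}[\psi](\tau)$ after integrating the integrable weight $\tau^{-1-\dec}$ in $\tau$, or by an integrand defining $\M^{(s)}[\psi](\tau_1,\tau_2)$. The argument is identical for $s=0$ and $s=1$, since replacing $\psi$ by $\pr^{\leq 1}\psi$ turns the $\E$, $\M$, $\EM$ norms into their order-one versions; I therefore focus on $s=0$.

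First, for the term weighted by $\tau^{-1-\dec}$, using the natural spacetime volume element $dvol\simeq r^2\, dr\, d\mathring{\ga}\, d\tau$ from \eqref{eq:assymptiticpropmetricKerrintaurxacoord:volumeform}--\eqref{eq:controloflinearizedmetriccoefficients:det}, and the Hardy inequality of Lemma \ref{lemma:HardySitauandIIplus} on each $\Sigma(\tau)$, I have
\[
\int_{\Sigma(\tau)}\bigl|(\pr_r,\,r^{-1}\pr_{x^a},\,r^{-1})\psi\bigr|^2\, r^2\, dr\, d\mathring{\ga}
\;\lesssim\; \int_{\Sigma(\tau)}\!\!\Big((\pr_r\psi)^2+|\nab\psi|^2+r^{-2}|\psi|^2\Big)\, r^2\, dr\, d\mathring{\ga}
\;\lesssim\; \E[\psi](\tau),
\]
and then the integral of $\tau^{-1-\dec}$ over any subinterval of $\Reals$ is uniformly bounded, so this contribution is controlled by $\sup_\tau\E[\psi](\tau)$.

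Second, the term with $r^{-2}$ weight is handled by direct pointwise comparison with the Morawetz integrand in \eqref{def:variousMorawetzIntegrals}. Using $|\nab\psi|^2=r^{-2}|\pr_{x^a}\psi|^2$ and the fact that $r\geq r_+(1-\dhor)$ so $r^{-2}\lesssim r^{-1}$,
the four contributions $r^{-2}|\pr_\tau\psi|^2$, $r^{-2}|\pr_r\psi|^2$, $r^{-4}|\pr_{x^a}\psi|^2=r^{-2}|\nab\psi|^2$ and $r^{-4}|\psi|^2$ are each dominated by one of the integrands $r^{-2}|\pr_\tau\psi|^2$ (on $\Mntrap$), $r^{-2}|\pr_r\psi|^2$ (on $\MM$), $r^{-1}|\nab\psi|^2$ (on $\Mntrap$) or $r^{-4}|\psi|^2=r^{-(3+\de)}|\psi|^2$ with $\de=1$ (on $\MM$) defining $\M[\psi]$.

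Third, the cross term is reduced to the other two by Young's inequality:
\[
r^{-1}\tau^{-(1+\dec)/2}\,\bigl|(\pr_r,\,r^{-1}\pr_{x^a},\,r^{-1})\psi\bigr|\,|\pr_\tau\psi|
\;\leq\;\tfrac{1}{2}\tau^{-1-\dec}\bigl|(\pr_r,\,r^{-1}\pr_{x^a},\,r^{-1})\psi\bigr|^2+\tfrac{1}{2}r^{-2}|\pr_\tau\psi|^2,
\]
which puts it back into the first and second cases. Combining the three bounds yields the claim. No step is genuinely hard; the only subtlety is verifying that each zeroth-order piece $r^{-1}\psi$ can be absorbed (via Hardy on slices for Term 1, and via the Morawetz weight $r^{-(3+\de)}|\psi|^2$ for Term 2), and that the order $s=1$ case uses the correspondingly differentiated Hardy inequality applied to $\pr\psi$.
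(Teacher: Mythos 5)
Your proposal is correct and follows essentially the same route as the paper: the $\tau^{-1-\dec}$-weighted term is bounded slice-by-slice by $\E^{(s)}$ using the Hardy inequality of Lemma \ref{lemma:HardySitauandIIplus} and the integrability of $\tau^{-1-\dec}$, the $r^{-2}$-weighted term is compared directly with the Morawetz integrand, and the cross term is split onto the other two (the paper uses Cauchy--Schwarz in integral form, yielding $\sqrt{\sup_\tau\E^{(s)}}\sqrt{\M^{(s)}}$, while you use the pointwise Young inequality — an immaterial difference). No gaps.
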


\begin{proof}
In view of the definition of the energy norm {and Lemma \ref{lemma:HardySitauandIIplus}}, we have 
\beaa
\int_{r_+(1-\dhor)}^{+\infty}\int_{\mathbb{S}^2}\Big(|\pr_r\psi|^2+r^{-2}\big(|\pr_{x^a}\psi|^2+|\psi|^2\big)\Big)r^2dr d\mathring{\ga}\les \E[\psi](\tau). 
\eeaa
Also, in view of the definition of the Morawetz norm, we have 
\beaa
\int_{\Mntrap(\tau_1, \tau_2)}\Big(r^{-2}\big(|\pr_\tau\psi|^2+|\pr_r\psi|^2\big)+r^{-3}|\pr_{x^a}\psi|^2+r^{-4}|\psi|^2\Big) \les \M[\psi].
\eeaa
We infer
\beaa
&&\int_{\Mntrap(\tau_1, \tau_2)}\Big[\tau^{-1-\dec}\left|\big(\pr_r, r^{-1}\pr_{x^a}, r^{-1}\big){\pr^{\leq s}}\psi\right|^2 + r^{-2}\left|\big(\pr_\tau, \pr_r, r^{-1}\pr_{x^a}, r^{-1}\big){\pr^{\leq s}}\psi\right|^2 \\
&&\qquad\qquad\qquad +r^{-1}\tau^{-\frac{1+\dec}{2}}\left|\big(\pr_r, r^{-1}\pr_{x^a}\, r^{-1}\big){\pr^{\leq s}}\psi\right||\pr_\tau({\pr^{\leq s}}\psi)|\Big]\\ 
&\les& {\sup_{\tau\in[\tau_1, \tau_2]} \E^{(s)}[\psi](\tau)+\sqrt{\sup_{\tau\in[\tau_1, \tau_2]} \E^{(s)}[\psi](\tau)}\sqrt{\M^{(s)}[\psi](\tau_1, \tau_2)}+\M^{(s)}[\psi](\tau_1, \tau_2)}\\
&\les& {\EM^{(s)}[\psi](\tau_1, \tau_2)}
\eeaa
as stated. This concludes the proof of Lemma \ref{lemma:basiclemmaforcontrolNLterms}.
\end{proof}

The next {two lemmas} will allow us to control all error terms arising in the derivation of energy-Morawetz estimates in $\MM(\tau_1,\tau_2)$. 
{\begin{lemma}\lab{lemma:basiclemmaforcontrolNLterms:ter}
Let $h\in r^{-1}\dk^{\leq 1}\Ga_b$ be a scalar function and let 
$M^{\a\b}$ be symmetric and satisfy
\beaa
&& M^{rr}\in r\dk^{\leq 1}\Ga_b, \qquad M^{r\tau}\in r\dk^{\leq 1}\Ga_g, \qquad M^{\tau\tau}\in \dk^{\leq 1}\Ga_g,\\
&& M^{rx^a}\in \dk^{\leq 1}\Ga_b, \qquad M^{\tau x^a}\in \dk^{\leq 1}\Ga_g,  \qquad M^{x^ax^b}\in r^{-1}\dk^{\leq 1}\Ga_g,
\eeaa
where $a,b=1,2$. Then, the following estimate holds
\beaa
\int_{\MM(\tau_1, \tau_2)}\Big(\big|M^{\a\b}\pr_\a\psi\pr_\b\psi\big|+h|\psi|^2\Big) &\les& \ep\EM[\psi](\tau_1, \tau_2).
\eeaa
\end{lemma}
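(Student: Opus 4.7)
The plan is to expand $M^{\a\b}\pr_\a\psi\pr_\b\psi$ into its six independent components using symmetry, apply the pointwise bounds on $M^{\a\b}$ and $h$ coming from the class assumptions of the lemma together with the decay estimates \eqref{eq:decaypropertiesofGabGag}, and match each resulting integrand against one of the three building-block integrands of Lemma \ref{lemma:basiclemmaforcontrolNLterms} (applied with $s=0$), each of which is already controlled by $\EM[\psi](\tau_1,\tau_2)$.

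Concretely, the diagonal terms $M^{rr}(\pr_r\psi)^2$ and $M^{x^ax^b}\pr_{x^a}\psi\pr_{x^b}\psi$, as well as the zero-order term $h\psi^2$, all produce integrands of the form $\ep\tau^{-1-\dec}|(\pr_r\psi, r^{-1}\pr_{x^a}\psi, r^{-1}\psi)|^2$, matching the first type in Lemma \ref{lemma:basiclemmaforcontrolNLterms}; the diagonal term $M^{\tau\tau}(\pr_\tau\psi)^2$, upon choosing the first branch of the min in \eqref{eq:decaypropertiesofGabGag}, gives $\ep r^{-2}|\pr_\tau\psi|^2$, matching the second type; and the cross terms $M^{r\tau}\pr_r\psi\pr_\tau\psi$ and $M^{\tau x^a}\pr_\tau\psi\pr_{x^a}\psi$, again using the first branch of the min, produce $\ep r^{-1}\tau^{-(1+\dec)/2}|(\pr_r\psi, r^{-1}\pr_{x^a}\psi)||\pr_\tau\psi|$, matching the third (mixed) type. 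The remaining cross term $M^{rx^a}\pr_r\psi\pr_{x^a}\psi$ is bounded by $\ep\tau^{-1-\dec}|\pr_r\psi||r^{-1}\pr_{x^a}\psi|$, and AM-GM reduces it to two first-type contributions.

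Because Lemma \ref{lemma:basiclemmaforcontrolNLterms} only integrates over $\Mntrap$, the residual contribution from $\Mtrap(\tau_1,\tau_2)$ must be handled separately: on $\Mtrap$, the coordinate $r$ is bounded above and below by constants depending only on $m$, so each integrand above is pointwise at most a constant multiple of $\ep\tau^{-1-\dec}(|\pr\psi|^2+|\psi|^2)$; Fubini together with $\int_{\tau_1}^{\tau_2}\tau^{-1-\dec}d\tau\les 1$ and the Hardy estimate of Lemma \ref{lemma:HardySitauandIIplus} on each slice $\Sigma(\tau)\cap\Mtrap$ then bound this contribution by $\ep\sup_{\tau\in[\tau_1,\tau_2]}\E[\psi](\tau)\les \ep\EM[\psi](\tau_1,\tau_2)$. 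I anticipate no real obstacle: the proof reduces to a routine weight-matching between the six components of $M^{\a\b}$ and $h$ on the one hand and the three building-block integrands of Lemma \ref{lemma:basiclemmaforcontrolNLterms} on the other, together with this short separate treatment of the trapping region.
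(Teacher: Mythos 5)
Your proposal is correct and follows essentially the same route as the paper: pointwise weight-matching of the six components of $M^{\a\b}$ (and of $h$) against the three building-block integrands of Lemma \ref{lemma:basiclemmaforcontrolNLterms}, using the two branches of the min in \eqref{eq:decaypropertiesofGabGag} exactly as needed. Your explicit treatment of the $\Mtrap$ contribution (bounded $r$, the $\tau^{-1-\dec}$ weight integrated against $\sup_\tau\E[\psi](\tau)$, with Hardy for the zeroth-order term) is a point the paper's one-line proof leaves implicit, and it is handled correctly.
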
}

{\begin{remark}
In practice, Lemma \ref{lemma:basiclemmaforcontrolNLterms:ter} will be used to control error terms generated by the RHS of the divergence identity \eqref{eq:EnerIden:General:wave}.
\end{remark}}

\begin{proof}
{In view of the control of $h$ and $M^{\a\b}$, and the assumptions for $\Ga_g$ and $\Ga_b$, we have
\beaa
&&\int_{\MM(\tau_1, \tau_2)}\big|M^{\a\b}\pr_\a\psi\pr_\b\psi\big| \\
&\les& \ep\int_{\Mntrap(\tau_1, \tau_2)}\bigg[\tau^{-1-\dec}\left|\big(\pr_r, r^{-1}\pr_{x^a}, r^{-1}\big)\psi\right|^2  +r^{-1}\tau^{-\frac{1+\dec}{2}}\left|\big(\pr_r, r^{-1}\pr_{x^a}\big)\psi\right||\pr_\tau\psi|\\
&&\qquad\qquad\qquad\qquad\qquad\qquad + r^{-2}\left|\pr_\tau\psi\right|^2\bigg]\\ 
&\les& \ep\EM[\psi](\tau_1, \tau_2).
\eeaa
where we have used Lemma \ref{lemma:basiclemmaforcontrolNLterms} in the last estimate.} 
\end{proof}

\begin{lemma}\lab{lemma:basiclemmaforcontrolNLterms:bis}
Let $M^{\a\b}$ be symmetric and satisfy
\beaa
&& M^{rr}\in r\dk^{\leq 1}\Ga_b, \qquad M^{r\tau}\in r\dk^{\leq 1}\Ga_g, \qquad M^{\tau\tau}\in \dk^{\leq 1}\Ga_g,\\
&& M^{rx^a}\in \dk^{\leq 1}\Ga_b, \qquad M^{\tau x^a}\in \dk^{\leq 1}\Ga_g,  \qquad M^{x^ax^b}\in r^{-1}\dk^{\leq 1}\Ga_g,
\eeaa
where $a,b=1,2$. Then, the following estimate holds 
\beaa
\int_{\MM(\tau_1, \tau_2)}\big|M^{\a\b}\pr_\a\pr_\b\psi\big|^2
+\bigg|\int_{\MM(\tau_1, \tau_2)}M^{\a\b}\pr_\a\pr_\b\psi \pr_\tau(\pr^{\leq 1}\psi)\bigg|\\
+\int_{\MM(\tau_1, \tau_2)}\big|M^{\a\b}\pr_\a\pr_\b\psi\big| {\Big|\big(\pr_r, r^{-1}\pr_\tau, r^{-1}\pr_{x^a}, r^{-1}\big)(\pr^{\leq 1}\psi)\Big|} &\les& \ep\EM^{(1)}[\psi](\tau_1, \tau_2). 
\eeaa

Also, let $N$ be a spacetime vectorfield such that we have 
\beaa
N^r\in r\dk^{\leq 2}\Ga_g, \qquad N^\tau\in \dk^{\leq 2}\Ga_g, \qquad N^{x^a}\in \dk^{\leq 2}\Ga_g.
\eeaa
Then, the following holds
\beaa
\int_{\MM(\tau_1, \tau_2)}\big|N^\a\pr_\a\psi\big|^2+ \int_{\MM(\tau_1, \tau_2)}\big|N^\a\pr_\a\psi\big| \Big|\big(\pr_\tau, \pr_r, r^{-1}\pr_{x^a}, r^{-1}\big)\pr^{\leq 1}\psi\Big| \les \ep\EM^{(1)}[\psi](\tau_1, \tau_2). 
\eeaa
\end{lemma}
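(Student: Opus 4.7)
The plan is to reduce both estimates to the basic inequality of Lemma \ref{lemma:basiclemmaforcontrolNLterms}, applied at $s=1$ for the $M$-statement and at $s=0$ for the $N$-statement, in the same spirit as the proof of Lemma \ref{lemma:basiclemmaforcontrolNLterms:ter}. Specifically, I would first expand
\begin{align*}
M^{\a\b}\pr_\a\pr_\b\psi
&= M^{rr}\pr_r^2\psi + 2M^{r\tau}\pr_r\pr_\tau\psi + M^{\tau\tau}\pr_\tau^2\psi\\
&\quad + 2M^{rx^a}\pr_r\pr_{x^a}\psi + 2M^{\tau x^a}\pr_\tau\pr_{x^a}\psi + M^{x^ax^b}\pr_{x^a}\pr_{x^b}\psi,
\end{align*}
view each $\pr_\a\pr_\b\psi$ as a first derivative of $\pr^{\leq 1}\psi$, and combine the prescribed weighted bounds on $M^{\a\b}$ with the pointwise decay \eqref{eq:decaypropertiesofGabGag} of $\Ga_g$ and $\Ga_b$. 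This produces, for each component, a pointwise estimate of the form $|M^{\a\b}\pr_\a\pr_\b\psi|\lesssim \ep\, w_M\, |\mathfrak{X}\pr^{\leq 1}\psi|$ in which the weight $w_M$ and the differential $\mathfrak{X}\pr^{\leq 1}\psi$ match one of the groups appearing inside the integrand of Lemma \ref{lemma:basiclemmaforcontrolNLterms}. Concretely, the two $\Ga_b$-components $M^{rr}$ and $M^{rx^a}$ produce $\tau^{-1-\dec}$-type weights acting on $(\pr_r, r^{-1}\pr_{x^a}, r^{-1})\pr^{\leq 1}\psi$, while the $\Ga_g$-components carry enough additional $r$-decay to fit into either the $r^{-1}$-weighted slot acting on $(\pr_\tau, \pr_r, r^{-1}\pr_{x^a}, r^{-1})\pr^{\leq 1}\psi$ or the mixed $r^{-1}\tau^{-(1+\dec)/2}$-weighted cross term appearing in that lemma.

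With this pointwise bound in hand, the squared integral $\int|M^{\a\b}\pr_\a\pr_\b\psi|^2$ is controlled by squaring, distributing, and applying Lemma \ref{lemma:basiclemmaforcontrolNLterms} at $s=1$ directly to each term. The two cross products with $|\pr_\tau\pr^{\leq 1}\psi|$ and with $|(\pr_r, r^{-1}\pr_{x^a}, r^{-1})\pr^{\leq 1}\psi|$ are handled by Young's inequality after extracting the small factor $\ep$, again reducing to quantities already controlled in Lemma \ref{lemma:basiclemmaforcontrolNLterms}. The bound for $N^\a\pr_\a\psi$ is proved identically, but at the level of first derivatives only: the hypotheses $N^r\in r\dk^{\leq 2}\Ga_g$, $N^\tau,N^{x^a}\in\dk^{\leq 2}\Ga_g$ give a pointwise estimate $|N^\a\pr_\a\psi|\lesssim \ep\, w_N\, |\mathfrak{X}\psi|$ of exactly the same structure (no derivatives of $N$ are actually needed, so the $\dk^{\leq 2}$-hypothesis is used only to get the $\ep\, w_N$ pointwise bound), and Lemma \ref{lemma:basiclemmaforcontrolNLterms} at $s=0$ then closes the argument.

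The only genuine obstacle is careful bookkeeping of the $(r,\tau)$-weights component by component: the classes prescribed for $M^{\a\b}$ and $N^\a$ have been tailor-made to match the integrand of Lemma \ref{lemma:basiclemmaforcontrolNLterms}, so no new analytic input beyond that lemma is required; what one must verify is that, component by component, the product $w_M\,|\mathfrak{X}\pr^{\leq 1}\psi|$ (respectively $w_N\,|\mathfrak{X}\psi|$) indeed lands in one of the admissible weighted groupings. The key observation making this matching work is that the additional $r^{-1}$ available on the $\Ga_g$-components precisely compensates for the missing $r$-weights, converting for instance $M^{\tau\tau}\pr_\tau^2\psi$ into an $r^{-1}|(\pr_\tau,\ldots)\pr^{\leq 1}\psi|$-type contribution that fits the Morawetz-controlled slot.
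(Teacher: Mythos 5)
There is a genuine gap, and it sits exactly where the statement is most delicate: the term $\big|\int_{\MM(\tau_1,\tau_2)}M^{\a\b}\pr_\a\pr_\b\psi\,\pr_\tau(\pr^{\leq 1}\psi)\big|$ for the two $\Ga_b$-components $M^{rr}$ and $M^{rx^a}$. Your pointwise bound for these components is $|M^{rr}\pr_r^2\psi|\les \ep\,\tau^{-1-\dec}\,|\pr_r\pr^{\leq 1}\psi|$ (and similarly for $M^{rx^a}$), i.e.\ a $\tau^{-1-\dec}$ weight with \emph{no} extra $r$-decay. Pairing this with $|\pr_\tau(\pr^{\leq 1}\psi)|$ and applying Young's inequality produces $\int\tau^{-1-\dec}|\pr_\tau\pr^{\leq 1}\psi|^2$ over the exterior region, which is \emph{not} controlled by $\EM^{(1)}[\psi]$: the energy only carries $r^{-2}|\pr_\tau\pr^{\leq 1}\psi|^2$ and the Morawetz norm only $r^{-2}$ (or $r^{-1-\de}$) weights on $\pr_\tau$, and the cross term admissible in Lemma \ref{lemma:basiclemmaforcontrolNLterms} carries the weight $r^{-1}\tau^{-(1+\dec)/2}$, which does \emph{not} dominate $\tau^{-1-\dec}$ when $r\gtrsim\tau^{(1+\dec)/2}$. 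So no splitting of $\tau^{-1-\dec}|\pr_r\pr^{\leq 1}\psi||\pr_\tau\pr^{\leq 1}\psi|$ lands in the admissible groupings, and this is precisely why the lemma keeps the absolute value \emph{outside} that integral rather than inside. The paper's proof isolates exactly these two contributions (after a cutoff $\chi(r)$ that disposes of the region $r\leq 11m$ by $\tau$-integrability against $\sup_\tau\E^{(1)}$, which is also needed because Lemma \ref{lemma:basiclemmaforcontrolNLterms} only covers $\Mntrap$) and handles them by integrating by parts first in $\pr_r$ and then in $\pr_\tau$, using the symmetry/structure of the signed integral, Corollary \ref{cor:controloflinearizeddivergencecoordvectorfields} for the volume-form factors, and Lemma \ref{lem:nullnfFluxBdedByEnergy} to control the boundary terms generated on $\II_+$ (plus energy terms on $\Sigma(\tau_1)\cup\Sigma(\tau_2)$). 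This integration-by-parts step, together with the treatment of its boundary terms, is a necessary ingredient that your argument omits.

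The rest of your plan is sound and matches the paper: the squared term $\int|M^{\a\b}\pr_\a\pr_\b\psi|^2$, the cross term against $(\pr_r,r^{-1}\pr_{x^a},r^{-1})\pr^{\leq 1}\psi$, all the $\Ga_g$-components (whose extra $r^{-1}$ indeed places them in the admissible slots of Lemma \ref{lemma:basiclemmaforcontrolNLterms} at $s=1$), and the entire $N$-statement (where the $\min$ structure of $\Ga_g$ supplies the $r^{-1}\tau^{-(1+\dec)/2}$ weight needed against $\pr_\tau$, so no integration by parts is required) are all handled in the paper exactly as you describe. But as written, your proof does not close for the $M^{rr}$ and $M^{rx^a}$ contributions to the $\pr_\tau$-pairing, and without the integration-by-parts device the estimate you are trying to prove for those terms with the absolute value inside the integral is simply not available from the assumed norms.
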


\begin{remark}
In practice, concerning the quantities estimated in Lemma \ref{lemma:basiclemmaforcontrolNLterms:bis}:
\begin{itemize}
\item $\big(\pr_\tau, \pr_r, r^{-1}\pr_{x^a}, r^{-1}\big)\pr^{\leq 1}\psi$ will be due to  energy-Morawetz multipliers, 

\item  $M^{\a\b}\pr_\a\pr_\b\psi$ and $N^\a\pr_\a\psi$ will come from the RHS of the wave equation, in particular after commutation with {various vectorfields such as} $\pr_\tau$.
\end{itemize}
\end{remark}

\begin{proof}
We introduce a smooth cut-off function $\chi=\chi(r)$ such that $0\leq\chi\leq1$, $\chi=1$ on $r\leq 10m$ and $\chi$ is supported in $r\leq 11m$. We then split all integrals to be estimated into two sub-integrals, one where the integrand is multiplied by $\chi$ and the other where it is multiplied by $1-\chi$. In the first case, we may bound all integrals by
\beaa
\int_{\MM_{r_+(1-\dhor), 11m}(\tau_1, \tau_2)}|r\dk^{\leq 1}\Ga_b||\pr^{\leq 2}\psi|^2&\les& \ep\left(\int \tau^{-1-\dec}d\tau\right)\sup_{\tau\in[\tau_1, \tau_2]}\E^{(1)}[\psi](\tau)\\
&\les& \ep\sup_{\tau\in[\tau_1, \tau_2]}\E^{(1)}[\psi](\tau)
\eeaa
as stated. We thus focus, from now on, on proving the estimates when the integrand is multiplied by $1-\chi$, and hence supported in $\Mntrap$.

We start with the first estimate. In view of the control of $M^{\a\b}$, and the assumptions for $\Ga_g$ and $\Ga_b$, we have
{\beaa
&&\int_{\MM(\tau_1, \tau_2)}(1-\chi)\big|M^{\a\b}\pr_\a\pr_\b\psi\big| {\Big|\big(\pr_r, r^{-1}\pr_\tau, r^{-1}\pr_{x^a}, r^{-1}\big)(\pr^{\leq 1}\psi)\Big|}\\
&&+\bigg|\int_{\MM(\tau_1, \tau_2)}(1-\chi)M^{\a\b}\pr_\a\pr_\b\psi \pr_\tau(\pr^{\leq 1}\psi)\bigg|+\int_{\MM(\tau_1, \tau_2)}(1-\chi)\big|M^{\a\b}\pr_\a\pr_\b\psi\big|^2\\
&\les&\bigg|\int_{\MM(\tau_1, \tau_2)}(1-\chi)M^{rr}\pr_r^2\psi \pr_\tau(\pr^{\leq 1}\psi)\bigg|+\bigg|\int_{\MM(\tau_1, \tau_2)}(1-\chi)M^{rx^a}\pr_r\pr_{x^a}\psi \pr_\tau(\pr^{\leq 1}\psi)\bigg|\\
&&+\ep\int_{\Mntrap(\tau_1, \tau_2)}\bigg[\tau^{-1-\dec}\left|\big(\pr_r, r^{-1}\pr_{x^a}, r^{-1}\big)\pr^{\leq 1}\psi\right|^2 
+ r^{-2}\left|\big(\pr_\tau, \pr_r, r^{-1}\pr_{x^a}, r^{-1}\big)\pr^{\leq 1}\psi\right|^2 \\
&&\qquad\qquad\qquad\quad\,\,
+r^{-1}\tau^{-\frac{1+\dec}{2}}\left|\big(\pr_r, r^{-1}\pr_{x^a}, r^{-1}\big)\pr^{\leq 1}\psi\right||\pr_\tau(\pr^{\leq 1}\psi)|\bigg]\\ 
&\les&\bigg|\int_{\MM(\tau_1, \tau_2)}(1-\chi)M^{rr}\pr_r^2\psi \pr_\tau(\pr^{\leq 1}\psi)\bigg|+\bigg|\int_{\MM(\tau_1, \tau_2)}(1-\chi)M^{rx^a}\pr_r\pr_{x^a}\psi \pr_\tau(\pr^{\leq 1}\psi)\bigg|\nn\\
&&+\ep\EM^{(1)}[\psi](\tau_1, \tau_2)
\eeaa}
where we have used Lemma \ref{lemma:basiclemmaforcontrolNLterms} in the last estimate. 

{Next, we control the remaining two terms on the RHS of the above estimate.  Integrating by parts first in $\pr_r$ and then in $\pr_{\tau}$ and using the control of $M^{\a\b}$ as well as Corollary \ref{cor:controloflinearizeddivergencecoordvectorfields}, we have 
\beaa
&& \left|\int_{\MM(\tau_1, \tau_2)}(1-\chi)M^{rr}\pr_r^2\psi \pr_\tau(\pr^{\leq 1}\psi)\right|+\bigg|\int_{\MM(\tau_1, \tau_2)}(1-\chi)M^{rx^a}\pr_r\pr_{x^a}\psi \pr_\tau(\pr^{\leq 1}\psi)\bigg|\\
&\les& \int_{\Mntrap(\tau_1, \tau_2)}\Big(|\dk^{\leq 1}(M^{rr})||\pr_r(\pr^{\leq 1}\psi)|^2+r^{-1}|\dk^{\leq 1}(M^{rr})||\pr_r\psi||\pr_\tau(\pr^{\leq 1}\psi)|\Big)\\
&&+\int_{\Mntrap(\tau_1, \tau_2)}|\dk^{\leq 1}(M^{rx^a})||\pr_r(\pr^{\leq 1}\psi)||(\pr_\tau, \pr_{x^a})(\pr^{\leq 1}\psi)|+\ep\int_{\MM_{10m,11m}(\tau_1, \tau_2)}\tau^{-1-\dec}|\pr^{\leq 2}\psi|^2\\
&& +\ep\int_{\Sigma(\tau_1)\cup\Sigma(\tau_2)}r^{-2}\tau^{-1-\dec}|\dk^{\leq 1}\pr^{\leq 1}\psi|^2+\ep\int_{\II_+(\tau_1,\tau_2)}r^{-1}\tau^{-1-\dec}|\dk^{\leq 1}\pr^{\leq 1}\psi|^2 \\
&\les&\ep\int_{\Mntrap(\tau_1, \tau_2)}\Big(\tau^{-1-\dec}\left|\big(\pr_r, r^{-1}\pr_{x^a}, r^{-1}\big)\pr^{\leq 1}\psi\right|^2+ r^{-2}\left|\pr_\tau\pr^{\leq 1}\psi\right|^2\Big)+ \ep\EM^{(1)}[\psi](\tau_1, \tau_2)\\
&\les& \ep\EM^{(1)}[\psi](\tau_1, \tau_2)
\eeaa}
where we have used Lemma \ref{lem:nullnfFluxBdedByEnergy} in the second last estimate and Lemma \ref{lemma:basiclemmaforcontrolNLterms} in the last estimate.  We deduce
{\beaa
&&\int_{\MM(\tau_1, \tau_2)}(1-\chi)\big|M^{\a\b}\pr_\a\pr_\b\psi\big| {\Big|\big(\pr_r, r^{-1}\pr_\tau, r^{-1}\pr_{x^a}, r^{-1}\big)(\pr^{\leq 1}\psi)\Big|}\\
&&+\bigg|\int_{\MM(\tau_1, \tau_2)}(1-\chi)M^{\a\b}\pr_\a\pr_\b\psi \pr_\tau(\pr^{\leq 1}\psi)\bigg|+\int_{\MM(\tau_1, \tau_2)}(1-\chi)\big|M^{\a\b}\pr_\a\pr_\b\psi\big|^2\\
&\les&\bigg|\int_{\MM(\tau_1, \tau_2)}(1-\chi)M^{rr}\pr_r^2\psi \pr_\tau(\pr^{\leq 1}\psi)\bigg|+\bigg|\int_{\MM(\tau_1, \tau_2)}(1-\chi)M^{rx^a}\pr_r\pr_{x^a}\psi \pr_\tau(\pr^{\leq 1}\psi)\bigg|\nn\\
&&+\ep\EM^{(1)}[\psi](\tau_1, \tau_2)\\
&\les& \ep\EM^{(1)}[\psi](\tau_1, \tau_2)
\eeaa
which concludes the proof of the first estimate.} 

Next, we consider the second estimate. In view of the control of $N^\a$ and the assumptions for $\Ga_g$, we have 
\beaa
&&{\int_{\MM(\tau_1, \tau_2)}(1-\chi)\big|N^\a\pr_\a\psi\big|^2+\int_{\MM(\tau_1, \tau_2)}(1-\chi)\big|N^\a\pr_\a\psi\big| \Big|\big(\pr_\tau, \pr_r, r^{-1}\pr_{x^a}, r^{-1}\big)\pr^{\leq 1}\psi\Big|}\\
&\les&\int_{\Mntrap(\tau_1, \tau_2)}\big|N^\a\pr_\a\psi\big|^2+\int_{\Mntrap(\tau_1, \tau_2)}\big|N^\a\pr_\a\psi\big| \Big|\big(\pr_\tau, \pr_r, r^{-1}\pr_{x^a}, r^{-1}\big)\pr^{\leq 1}\psi\Big|\\
&\les& \ep\int_{\Mntrap(\tau_1, \tau_2)}\Big(r^{-1}\tau^{-\frac{1+\dec}{2}}|\pr_r\psi|\left|\big(\pr_\tau, \pr_r, r^{-1}\pr_{x^a}, r^{-1}\big)\pr^{\leq 1}\psi\right|\\
&&\qquad\qquad\qquad\qquad +r^{-2}|\pr_\tau\psi|\left|\big(\pr_\tau, \pr_r, r^{-1}\pr_{x^a}, r^{-1}\big)\pr^{\leq 1}\psi\right|\\
&&\qquad\qquad\qquad\qquad +r^{-2}\tau^{-\frac{1+\dec}{2}}|\pr_{x^a}\psi|\left|\big(\pr_\tau, \pr_r, r^{-1}\pr_{x^a}, r^{-1}\big)\pr^{\leq 1}\psi\right|\Big)\\
&&+\ep\int_{\Mntrap(\tau_1, \tau_2)}r^{-2}\Big[|\pr_r\psi|^2+r^{-2}|\pr_{x^a}\psi|^2+|\pr_\tau\psi|^2\Big].
\eeaa
Together with the control provided by Lemma \ref{lemma:basiclemmaforcontrolNLterms}, we infer
{\beaa
&&\int_{\MM(\tau_1, \tau_2)}(1-\chi)\big|N^\a\pr_\a\psi\big|^2+\int_{\MM(\tau_1, \tau_2)}(1-\chi)\big|N^\a\pr_\a\psi\big| \Big|\big(\pr_\tau, \pr_r, r^{-1}\pr_{x^a}, r^{-1}\big)\pr^{\leq 1}\psi\Big|\\
&\les& \ep\EM^{(1)}[\psi](\tau_1, \tau_2)
\eeaa
which concludes the proof Lemma \ref{lemma:basiclemmaforcontrolNLterms:bis}.}
\end{proof}


\subsubsection{Commutators between first-order derivatives and the wave operator}


The following lemma {provides the structure of commutators between first-order derivatives and the wave operator.}

\begin{lemma}
\lab{lem:commutatorwithwave:firstorderderis}
The commutator between $\square_{\g}$ and $\pr_{\tau}$ satisfies
{\bea
\label{esti:commutatorBoxgandT:general}
\, [ \pr_{\tau}, \square_{\g}]\psi = \pr_{\tau}(\gcheck^{\a\b})\pr_{\a}\pr_{\b}\psi+
\dk^{\leq 2}\Ga_g\c\dk\psi.
\eea
Also, the} commutator between $\square_\g$ and $(\pr_r,r^{-1}\pr_{x^a})$ satisfies 
\bea
\lab{eq:localwavecommutators:withfirstordergoodderis}
{[(\pr_r,r^{-1}\pr_{x^a}), \square_{\g}]\psi} = O(r^{-1}) \pr^{\leq 1}\pr\psi
+r^{-1}\dk^{\leq 2}\Ga_b\dk\pr\psi+r^{-1}\dk^{\leq 2}\Ga_g\dk \psi.
\eea
\end{lemma}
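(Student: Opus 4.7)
The strategy is to write the wave operator in the form
\beaa
\square_{\g}\psi = \g^{\a\b}\pr_{\a}\pr_{\b}\psi + V^{\b}\pr_{\b}\psi, \qquad V^{\b}:=\pr_{\a}\g^{\a\b} + \g^{\a\b}\,\textbf{\textrm{Div}}(\pr_{\a}),
\eeaa
decompose $\g^{\a\b}=\gam^{\a\b}+\gcheck^{\a\b}$ and $V^{\b}=V_{a,m}^{\b}+\widecheck{V}^{\b}$, and use, for any coordinate vector field $\pr_{\mu}$, the elementary identity
\beaa
[\pr_{\mu},\square_{\g}]\psi = \pr_{\mu}(\g^{\a\b})\pr_{\a}\pr_{\b}\psi + \pr_{\mu}(V^{\b})\pr_{\b}\psi.
\eeaa
Both parts of the lemma then follow by specializing this identity to $\mu=\tau,\,r$, and extending it to $r^{-1}\pr_{x^{a}}$ by the Leibniz rule.

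\textbf{Part 1 (identity \eqref{esti:commutatorBoxgandT:general}).} The key observation is that $\pr_{\tau}$ is Killing for $\gam$, so $\pr_{\tau}\gam^{\a\b}\equiv 0$ and $\pr_{\tau}V_{a,m}^{\b}\equiv 0$; hence $[\pr_{\tau},\square_{\gam}]=0$ and the commutator collapses to
\beaa
[\pr_{\tau},\square_{\g}]\psi = \pr_{\tau}(\gcheck^{\a\b})\pr_{\a}\pr_{\b}\psi + \pr_{\tau}(\widecheck{V}^{\b})\pr_{\b}\psi,
\eeaa
the first summand being the displayed principal term. For the remainder, I would expand
\beaa
\widecheck{V}^{\b} = \pr_{\a}\gcheck^{\a\b} + \gcheck^{\a\b}\,\textbf{\textrm{Div}}_{\gam}(\pr_{\a}) + \g^{\a\b}\,\widecheck{\textbf{\textrm{Div}}(\pr_{\a})},
\eeaa
apply $\pr_{\tau}\in\dk$, and invoke Assumption \ref{intro:assump:metric} together with Corollary \ref{cor:controloflinearizeddivergencecoordvectorfields} and the Kerr asymptotics \eqref{eq:assymptiticpropmetricKerrintaurxacoord:1}. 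A case-by-case check on the six independent tuples $(\a,\b)$ gives $\pr_{\tau}(\widecheck{V}^{\b})\in \dk^{\leq 2}\Ga_{b}$ with index-dependent $r$-weights. Pairing with $\pr_{\b}\psi$ (using $\pr_{r}\psi=r^{-1}\dk\psi$ and $\pr_{x^{a}}\psi=\dk\psi$) and the embedding $r^{-1}\Ga_{b}\subset\Ga_{g}$ noted just after \eqref{eq:decaypropertiesofGabGag} converts every term into the claimed $\dk^{\leq 2}\Ga_{g}\cdot\dk\psi$ form.

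\textbf{Part 2 (identity \eqref{eq:localwavecommutators:withfirstordergoodderis}).} For $X\in\{\pr_{r},r^{-1}\pr_{x^{a}}\}$, I would split $[X,\square_{\g}]=[X,\square_{\gam}]+[X,\gcheck^{\a\b}\pr_{\a}\pr_{\b}+\widecheck{V}^{\b}\pr_{\b}]$. The Kerr commutator $[X,\square_{\gam}]\psi$ is evaluated directly from \eqref{eq:inverse:hypercoord} and the Kerr asymptotics, observing that each $\pr_{r}$- or $\pr_{x^{a}}$-derivative of $\gam^{\a\b}$ or $V_{a,m}^{\b}$ costs at least one power of $r^{-1}$ at large $r$ while remaining bounded for $r$ in compact sets; this produces the $O(r^{-1})\pr^{\leq 1}\pr\psi$ term. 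For the perturbation commutator I would use $X\in r^{-1}\dk$ (valid because $\pr_{r}=r^{-1}\cdot r\pr_{r}$ and $r^{-1}\pr_{x^{a}}\in r^{-1}\dk$ since $\pr_{x^{a}}\in\dk$) to write $X(\gcheck^{\a\b})\pr_{\a}\pr_{\b}\psi \in r^{-1}\dk\gcheck^{\a\b}\cdot\pr_{\a}\pr_{\b}\psi$ and re-express each $\pr_{\a}\pr_{\b}\psi$ through $\dk\pr\psi$ via identities such as $\pr_{r}^{2}\psi=r^{-1}\dk(\pr_{r}\psi)-r^{-1}\pr_{r}\psi$; Assumption \ref{intro:assump:metric}, with the worst component $\gcheck^{rr}\in r\Ga_{b}$, then produces the stated $r^{-1}\dk^{\leq 2}\Ga_{b}\,\dk\pr\psi$ tail. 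The lower-order piece $X(\widecheck{V}^{\b})\pr_{\b}\psi$ is treated as in Part 1 but with an additional $r^{-1}$ gain coming from $X\in r^{-1}\dk$, producing the $r^{-1}\dk^{\leq 2}\Ga_{g}\,\dk\psi$ contribution. Finally, for $X=r^{-1}\pr_{x^{a}}$ the Leibniz rule gives the extra terms $-(\square_{\g}r^{-1})\pr_{x^{a}}\psi + 2r^{-2}\g^{r\nu}\pr_{\nu}\pr_{x^{a}}\psi$, which via $\pr_{\mu}(r^{-1})=-r^{-2}\de^{r}_{\mu}$ and the asymptotics \eqref{eq:consequenceasymptoticKerrandassumptionsinverselinearizedmetric} are absorbed into $O(r^{-1})\pr^{\leq 1}\pr\psi$.

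\textbf{Expected obstacle.} No single step is conceptually hard; the main technical hurdle is the combinatorial accounting in the perturbation commutator of Part 2, where for each index tuple $(\a,\b)$ one must simultaneously track (i) whether $\gcheck^{\a\b}$ is of $\Ga_{b}$- or $\Ga_{g}$-type, (ii) the $r$-weight carried by that component, (iii) the number of $\dk$'s landing on $\gcheck$ versus on $\psi$, and (iv) that the outcome fits the schematic form on the right-hand side. A clean way to organize this is to tabulate, for each of the six tuples, the worst-case $r$-weighted bound on $\dk\gcheck^{\a\b}$ and then systematically apply the embeddings $r^{-1}\Ga_{b}\subset\Ga_{g}$, $\pr_{r}\in r^{-1}\dk$ and $\pr_{x^{a}}\in\dk$ to push every term into the required bucket.
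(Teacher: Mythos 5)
Your proposal is correct and follows essentially the same route as the paper's proof: both decompose $\square_{\g}$ into its Kerr part plus perturbative second- and first-order coefficients, use that $\pr_{\tau}$ is Killing for $\gam$ (so $[\pr_{\tau},\square_{\gam}]=0$) and that $(\pr_r,r^{-1}\pr_{x^a})$ applied to $\square_{\gam}$ costs a factor $r^{-1}$, and then bound the derivatives of the perturbed coefficients via Assumption \ref{intro:assump:metric}, Lemma \ref{lemma:computationofthederiveativeofsrqtg} (your Corollary \ref{cor:controloflinearizeddivergencecoordvectorfields} is just its restatement), the Kerr asymptotics, and the embedding $r^{-1}\Ga_b\subset\Ga_g$. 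Your packaging of the first-order coefficients as $V^{\b}$ rather than through $N_{det}$ is only cosmetic, and your weighted case-by-case accounting lands on the same schematic terms as the paper.
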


\begin{proof}
{We compute for any coordinate vectorfield $\pr_\a$ 
\beaa
\pr_\a\Box_{\g}&=&\pr_\a\left(\frac{1}{\sqrt{|\g|}} \pr_{\mu}\Big(\sqrt{|\g|}\Big) \g^{\mu\nu}\pr_{\nu}
+\pr_{\mu} (\g^{\mu\nu}\pr_{\nu})\right)
\\
&=&
 \pr_\a\left(\frac{1}{\sqrt{|\g|}} \pr_{\mu}\Big(\sqrt{|\g|}\Big) \g^{\mu\nu}\right)\pr_{\nu}+ \pr_{\mu}(\pr_\a(\g^{\mu\nu})\pr_{\nu})
+\Box_{\g}\pr_{\a}
\\
&=&\pr_\a\left((N_{det})_{\mu}\g^{\mu\nu}
+\frac{1}{\sqrt{|\gam|}} \pr_{\mu}\Big(\sqrt{|\gam|}\Big) \gcheck^{\mu\nu}\right)\pr_{\nu}
+ \pr_{\mu}(\pr_\a(\gcheck^{\mu\nu})\pr_{\nu})+[\pr_\a, \Box_{\gam}]\\
&& +\Box_{\g}\pr_\a,
\eeaa
so that 
\beaa
[\pr_\a, \Box_{\g}] &=& \pr_\a\left((N_{det})_{\mu}\g^{\mu\nu}
+\frac{1}{\sqrt{|\gam|}} \pr_{\mu}\Big(\sqrt{|\gam|}\Big) \gcheck^{\mu\nu}\right)\pr_{\nu}
+ \pr_{\mu}(\pr_\a(\gcheck^{\mu\nu})\pr_{\nu})+[\pr_\a, \Box_{\gam}].
\eeaa
Now, using \eqref{eq:controloflinearizedinversemetriccoefficients}--\eqref{eq:consequenceasymptoticKerrandassumptionsinverselinearizedmetric} 
 and Lemma \ref{lemma:computationofthederiveativeofsrqtg}, we have
 \beaa
\pr_\a\left(\frac{1}{\sqrt{|\gam|}} \pr_{\mu}\Big(\sqrt{|\gam|}\Big)\gcheck^{\mu\nu}\right)\pr_{\nu}=(r^{-1}\dk^{\leq 1}\Ga_b +\dk^{\leq 1}\Ga_g)\c \dk=\dk^{\leq 1}\Ga_g \c\dk
\eeaa
and 
\beaa
\pr_\a\big((N_{det})_{\mu}\g^{\mu\nu}\big)\pr_{\nu}=\dk^{\leq 2}\Ga_g\c\dk, \qquad 
(\pr_{\mu}\pr_\a(\gcheck^{\mu\nu}))\pr_{\nu} = r^{-1}\dk^{\leq 2}\Ga_b\c\dk+\dk^{\leq 2}\Ga_g\c\dk=\dk^{\leq 2}\Ga_g\c\dk,
\eeaa
which yields in view of the above
\beaa
[\pr_\a, \Box_{\g}] &=& [\pr_\a, \Box_{\gam}] + \pr_\a(\gcheck^{\mu\nu})\pr_{\mu}\pr_{\nu}+\dk^{\leq 2}\Ga_g\c\dk.
\eeaa
We deduce, since $[\pr_\tau, \Box_{\gam}]=0$, 
\beaa
[ \pr_{\tau}, \square_{\g}]\psi&=&\pr_{\tau}(\gcheck^{\mu\nu})\pr_{\mu}\pr_{\nu}\psi+
\dk^{\leq 2}\Ga_g\c\dk\psi,
\eeaa
as stated in \eqref{esti:commutatorBoxgandT:general}, as well as 
\beaa
[(\pr_r,r^{-1}\pr_{x^a}), \Box_{\g}] &=& [(\pr_r,r^{-1}\pr_{x^a}), \Box_{\gam}] + r^{-1}\dk(\gcheck^{\mu\nu})\pr_{\mu}\pr_{\nu}+\dk^{\leq 2}\Ga_g\c\dk\\
&=& O(r^{-1}) \pr^{\leq 1}\pr\psi +r^{-1}\dk^{\leq 1}\Ga_b\dk\pr\psi +\dk^{\leq 2}\Ga_g\c\dk\psi
\eeaa
as stated in \eqref{eq:localwavecommutators:withfirstordergoodderis}, where we used again \eqref{eq:controloflinearizedinversemetriccoefficients} in the last line. This concludes the proof of Lemma \ref{lem:commutatorwithwave:firstorderderis}.}
\end{proof}


\subsection{Local energy estimate}


We have the following basic local (in time) energy estimate for solutions to the wave equation \eqref{intro:eq:scalarwave}.
\begin{lemma}[Local energy estimate]
\lab{lemma:localenergyestimate}
Let $\g$ satisfy the assumptions of Section \ref{subsubsect:assumps:perturbedmetric}. For any $\tau_0\in\mathbb{R}$ and $q>0$, we have for solutions to the wave equation \eqref{intro:eq:scalarwave} the following future directed local energy estimates, for $s=0,1$,  
\bea
\label{eq:localenergyestimate:future}
\EF^{(\reg)}[\psi](\tau_0, \tau_0+q) \les_q \E^{(\reg)}[\psi](\tau_0) + \widehat{\mathcal{N}}^{(\reg)}[\psi, F](\tau_0, \tau_0+q),
\eea
and the following past directed local energy estimates, for $s=0,1$, 
\bea
\label{eq:localenergyestimate:past}
\EF^{(\reg)}[\psi](\tau_0-q, \tau_0) \les_q \E^{(\reg)}[\psi](\tau_0) +\F^{(\reg)}[\psi](\tau_0-q, \tau_0)+ \widehat{\mathcal{N}}^{(\reg)}[\psi, F](\tau_0-q, \tau_0).
\eea
\end{lemma}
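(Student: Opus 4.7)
The plan is to apply the divergence identity from Lemma \ref{lem:generalenergyidentity:waveeq} with a multiplier $X$ that is globally timelike on $\MM$ and coincides with $\pr_\tau$ for $r \geq 13m$ (whose existence is used in Lemma \ref{lemma:justificationboundarytermsflatvolume}), taking $w = 0$. First I would treat the $s=0$ future directed case. Integrating the resulting divergence identity over $\MM(\tau_0, \tau)$ for any $\tau \in [\tau_0, \tau_0+q]$, the boundary contributions on $\Si(\tau_0)$, $\Si(\tau)$, $\II_+(\tau_0,\tau)$ and $\AA(\tau_0,\tau)$ are, by Lemma \ref{lemma:justificationboundarytermsflatvolume}, comparable to $\E[\psi](\tau_0)$, $\E[\psi](\tau)$, $\F_{\II_+}[\psi](\tau_0,\tau)$ and $\F_\AA[\psi](\tau_0,\tau)$ respectively, with the sign appropriate to put the latter three on the LHS and $\E[\psi](\tau_0)$ on the RHS.

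For the bulk term $\frac12{}^{(X)}\pi \cdot \QQ[\psi]$, I would split at $r = 13m$. In the compact region $r \leq 13m$, $|{}^{(X)}\pi|$ is uniformly bounded so that the spacetime integral is controlled by $\int_{\tau_0}^\tau \E[\psi](\tau')\, d\tau'$, while in the asymptotic region $X = \pr_\tau$ has deformation tensor estimated by Lemma \ref{lemma:controlofdeformationtensorsforenergyMorawetz}, and the resulting integral is absorbed by applying Lemma \ref{lemma:basiclemmaforcontrolNLterms:ter} and giving an $\ep$-small multiple of $\EM[\psi](\tau_0, \tau_0+q)$. For the bulk term $\Re(F\cdot \overline{X\psi})$, I would split at the trapping region: on $\Mntrap$ one obtains the first two terms in the definition \eqref{eq:defintionwidehatmathcalNfpsinormRHS} of $\widehat{\mathcal{N}}$, while on $\Mtrap$ Cauchy--Schwarz together with $\int|F|^2 + \min[\cdots]$ yields the remaining terms of $\widehat{\mathcal{N}}$. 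The result is an inequality
\[
\E[\psi](\tau) + \F[\psi](\tau_0, \tau) \leq C\left(\E[\psi](\tau_0) + \widehat{\mathcal{N}}[\psi, F](\tau_0, \tau_0+q) + \int_{\tau_0}^{\tau} \E[\psi](\tau')\,d\tau'\right),
\]
from which Gr\"onwall's inequality applied on the bounded interval $[\tau_0, \tau_0+q]$ yields \eqref{eq:localenergyestimate:future} for $s=0$. The past-directed case \eqref{eq:localenergyestimate:past} is proved the same way by integrating over $\MM(\tau, \tau_0)$ for $\tau \in [\tau_0-q, \tau_0]$: the orientation of $\AA$ and $\II_+$ relative to the integration region now forces $\F_\AA$ and $\F_{\II_+}$ to appear on the RHS, which explains the extra $\F^{(s)}[\psi](\tau_0-q, \tau_0)$ term in \eqref{eq:localenergyestimate:past}.

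For $s = 1$ I would commute the wave equation with $\pr_\tau$, so that $\Box_\g(\pr_\tau \psi) = \pr_\tau F + [\pr_\tau, \Box_\g]\psi$ where the commutator is given by Lemma \ref{lem:commutatorwithwave:firstorderderis}. Applying the $s=0$ estimate to $\pr_\tau \psi$, the contribution of $[\pr_\tau, \Box_\g]\psi$ to $\widehat{\mathcal{N}}[\pr_\tau\psi, \cdots]$ is estimated by Lemma \ref{lemma:basiclemmaforcontrolNLterms:bis} as $\ep\, \EM^{(1)}[\psi]$, which can be absorbed into the LHS. To close the full $\E^{(1)}$ norm including $\pr_r$ and $\pr_{x^a}$ derivatives, I would either commute further with these spatial derivatives (using Lemma \ref{lem:commutatorwithwave:firstorderderis}) or invoke elliptic regularity on each slice $\Si(\tau)$ to solve the wave equation algebraically for the remaining second derivatives in terms of $\pr_\tau^2\psi$ and first derivatives. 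The main obstacle is that the commutator $[(\pr_r, r^{-1}\pr_{x^a}), \Box_\g]$ in Lemma \ref{lem:commutatorwithwave:firstorderderis} contains a non-small $O(r^{-1})\pr^{\leq 1}\pr\psi$ term which is \emph{not} absorbable by $\ep$-smallness; however, in the compact region this is bounded by $\int_{\tau_0}^{\tau} \E^{(1)}[\psi](\tau')\,d\tau'$ and closed via Gr\"onwall with a $q$-dependent constant, while in the asymptotic region the $r^{-1}$ weight combined with the volume form gives a genuine gain that is controlled via the energy, yielding the full $s=1$ local estimate.
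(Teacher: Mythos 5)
Your proposal is correct and follows essentially the same route as the paper: the multiplier identity of Lemma \ref{lem:generalenergyidentity:waveeq} with a globally timelike $X$ equal to $\pr_\tau$ at large $r$ and $w=0$, the deformation-tensor bulk controlled via Lemma \ref{lemma:controlofdeformationtensorsforenergyMorawetz} and Gr\"onwall on the bounded interval, the $F$-term organized into $\widehat{\mathcal{N}}$ by splitting trapped/non-trapped regions, and the $s=1$ case obtained by commuting with $\pr_\tau$ and then $(\pr_r, r^{-1}\pr_{x^a})$ using Lemmas \ref{lem:commutatorwithwave:firstorderderis} and \ref{lemma:basiclemmaforcontrolNLterms:bis}. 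The only deviations are cosmetic bookkeeping (e.g.\ bounding the bulk error by $\ep\,\EM$ and absorbing, where the paper bounds it directly by $\int_{\tau_0}^{\tau_0+q}\E[\psi](\tau)\,d\tau$ before Gr\"onwall), which does not change the argument.
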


\begin{remark}\lab{rmk:whywearestatingallourresultforatmostonederivative}
Lemma \ref{lemma:localenergyestimate}, as well as Lemmas \ref{lemma:redshiftestimates}, \ref{lemma:redshiftestimates:general} and \ref{lemma:higherorderenergyMorawetzestimates} below, and our main result, see Theorem \ref{thm:main}, are all stated for at most $\reg=1$ {derivatives} of the solution $\psi$ to \eqref{intro:eq:scalarwave}. As mentioned in {Remark \ref{rem:thm1:highorderEMF}}, we could {easily} extend these results to higher order derivatives, but this would require to include more derivatives in the metric assumptions of Section \ref{subsubsect:assumps:perturbedmetric}. Such an extension is fairly standard for perturbations of Kerr in the range $|a|<m$, and we prefer in this work to focus on closing at the minimum number of derivatives for $\psi$, i.e., $\reg=1$.
\end{remark}

\begin{proof}
We first {focus on the case $s=0$}. In the {standard calculation for generalized currents} \eqref{eq:EnerIden:General:wave}, we {choose $w=0$,} and a vector field $X$ that is globally {uniformly} timelike in $\MM$ and equals\footnote{{Note, in view of \eqref{eq:metricinnormalizedcoordinatesgeneralformula} and \eqref{eq:controloflinearizedmetriccoefficients}, that $\g(\pr_\tau, \pr_\tau)=(\gam)_{\tau\tau}+r\Ga_b=-(1-\frac{2mr}{|q|^2})+O(\ep)\les -1$ in $r\geq 3m$.}} $\pr_{\tau}$ for $r\geq 3m${. By} integrating over $\MM(\tau_0, \tau_0+q)$, we infer
\bea
\lab{eq:localenergyestimate:zeroorder:general}
\EF[\psi](\tau_0, \tau_0+q) \les \E[\psi](\tau_0) +\bigg| \int_{\MM(\tau_0, \tau_0+q)} {X(\psi)F} \bigg|+\frac{1}{2} \bigg|\int_{\MM(\tau_0, \tau_0+q)}{}^{(X)} \pi \cdot \QQ[\psi]\bigg|{.}
\eea
{Next, we estimate the last integral in \eqref{eq:localenergyestimate:zeroorder:general}. Since we have in view of Lemma \ref{lemma:controlofdeformationtensorsforenergyMorawetz}, 
\beaa
&& \big({}^{(\pr_{\tau})} \pi\big)^{rr}\in r\dk^{\leq 1}\Ga_b, \qquad \big({}^{(\pr_{\tau})} \pi\big)^{r\tau}\in r\dk^{\leq 1}\Ga_g, \qquad \big({}^{(\pr_{\tau})} \pi\big)^{\tau\tau}\in \dk^{\leq 1}\Ga_g,\\  
&& \big({}^{(\pr_{\tau})} \pi\big)^{rx^a}\in \dk^{\leq 1}\Ga_b, \qquad \big({}^{(\pr_{\tau})} \pi\big)^{\tau x^a}\in \dk^{\leq 1}\Ga_g,  \qquad \big({}^{(\pr_{\tau})} \pi\big)^{x^ax^b}\in r^{-1}\dk^{\leq 1}\Ga_g,
\eeaa
we deduce, as $X=\pr_\tau$ for $r\geq 3m$, 
\beaa
\bigg|\int_{\MM(\tau_0, \tau_0+q)}{}^{(X)} \pi \cdot \QQ[\psi]\bigg| &\les& \int_{\MM_{r_+(1-\dhor), 3m}(\tau_0, \tau_0+q)}|\pr\psi|^2+\int_{\MM_{3m,+\infty}(\tau_0, \tau_0+q)}|\dk^{\leq 1}\Ga_g||\dk\psi|^2\\
&\les& \int_{\MM(\tau_0, \tau_0+q)}r^{-2}|\dk\psi|^2\\
&\les& \int_{\tau_0}^{\tau_0+q} \E[\psi](\tau) d\tau
\eeaa}
which thus yields
\beaa
\EF[\psi](\tau_0, \tau_0+q) \les \E[\psi](\tau_0) +\int_{\tau_0}^{\tau_0+q} \E[\psi](\tau) d\tau+ \bigg|\int_{\MM(\tau_0, \tau_0+q)}{X(\psi)F}\bigg|.
\eeaa
Applying the Gr\"onwall's inequality then proves the desired estimate \eqref{eq:localenergyestimate:future} {in the case $s=0$}. The other inequality \eqref{eq:localenergyestimate:past} {in the case $s=0$} follows in the same manner.

It remains to show \eqref{eq:localenergyestimate:future} and \eqref{eq:localenergyestimate:past} {in the case $s=1$}.  We first commute $\pr_{\tau}$ with the wave equation and derive
{\beaa
\square_{\g}\pr_{\tau}\psi&=&\pr_{\tau} F +[\square_{\g}, \pr_{\tau}]\psi,
\eeaa}
where by \eqref{esti:commutatorBoxgandT:general}, we have
\beaa
\, [ \pr_{\tau}, \square_{\g}]\psi&=& \pr_{\tau}(\gcheck^{\a\b})\pr_{\a}\pr_{\b}\psi+
\dk^{\leq 2}\Ga_g\c\dk\psi.
\eeaa
Applying the energy estimate \eqref{eq:localenergyestimate:zeroorder:general} with $(\psi, F)\rightarrow (\pr_{\tau}\psi, \pr_{\tau} F +[\square_{\g}, \pr_{\tau}]\psi)$, we deduce
\beaa
\EF[\pr_{\tau}\psi](\tau_0, \tau_0+q) &\les& \E[\pr_{\tau}\psi](\tau_0) +\bigg| \int_{\MM(\tau_0, \tau_0+q)} \big(\pr_{\tau} F -\pr_{\tau}(\gcheck^{\a\b})\pr_{\a}\pr_{\b}\psi+
\dk^{\leq 2}\Ga_g\c\dk\psi\big){X\pr_{\tau}\psi} \bigg|\nn\\
&&+\frac{1}{2} \bigg|\int_{\MM(\tau_0, \tau_0+q)}{}^{(X)} \pi \cdot \QQ[\pr_{\tau}\psi]\bigg|.
\eeaa
The last term is estimated in the same manner as above by $\int_{\tau_0}^{\tau_0+q} \E[\pr_{\tau}\psi](\tau) d\tau$ and, noticing that{, in view of \eqref{eq:controloflinearizedinversemetriccoefficients},} $\pr_{\tau}(\gcheck^{\a\b})$ satisfies the assumptions of $M^{\a\b}$ made in Lemma \ref{lemma:basiclemmaforcontrolNLterms:bis}, the {before to} last term is controlled by
\beaa
\widehat{\mathcal{N}}^{(1)}[F, \psi](\tau_0, \tau_0+q) 
+\ep{\EM^{(1)}[\psi](\tau_0,\tau_0+q)}
\eeaa
by using Lemma \ref{lemma:basiclemmaforcontrolNLterms:bis} to estimate
the integral of  $\pr_{\tau}(\gcheck^{\a\b})\pr_{\a}\pr_{\b}\psi{X\pr_{\tau}\psi}$. Consequently, we infer
\bea
\lab{eq:localenergyestimate:firstorder:T:general}
\EF[\pr_{\tau}\psi](\tau_0, \tau_0+q) &\les& \E[\pr_{\tau}\psi](\tau_0) +\widehat{\mathcal{N}}^{(1)}[\psi, F](\tau_0, \tau_0+q) \nn\\
&&
+\int_{\tau_0}^{\tau_0+q} \E[\pr_{\tau}\psi](\tau) d\tau
+\ep {\EM^{(1)}[\psi](\tau_0,\tau_0+q)}.
\eea

Next, we commute the wave equation with $(\pr_r, r^{-1}\pr_{x^a})$ and apply the energy estimate \eqref{eq:localenergyestimate:zeroorder:general} with $(\psi, F)\rightarrow ((\pr_r, r^{-1}\pr_{x^a})\psi, (\pr_r, r^{-1}\pr_{x^a}) F +[\square_{\g}, (\pr_r, r^{-1}\pr_{x^a})]\psi)$. {As above, we have}
\beaa
\frac{1}{2} \bigg|\int_{\MM(\tau_0, \tau_0+q)}{}^{(X)} \pi \cdot \QQ[(\pr_r, r^{-1}\pr_{x^a})\psi]\bigg| {\les \int_{\tau_0}^{\tau_0+q} \E[(\pr_r, r^{-1}\pr_{x^a})\psi](\tau) d\tau}\les  \int_{\tau_0}^{\tau_0+q} \E^{(1)}[\psi](\tau) d\tau. 
\eeaa
{Also, from \eqref{eq:localwavecommutators:withfirstordergoodderis},
we have}
\beaa
&&\bigg| \int_{\MM(\tau_0, \tau_0+q)}[\square_{\g},(\pr_r,r^{-1}\pr_{x^a})]\psi{X(\pr_r, r^{-1}\pr_{x^a})\psi}\bigg|\\ 
&\les& {\int_{\MM(\tau_0, \tau_0+q)}\big(r^{-1}|\pr^{\leq 1}\pr\psi|+\dk^{\leq 2}\Ga_g \dk\pr\psi^{\leq 1}\psi|\big)r^{-1}|\dk\pr\psi|}\\
&\les& {\int_{\MM(\tau_0, \tau_0+q)}r^{-2}|\dk\pr^{\leq 1}\psi|^2} \les \int_{\tau_0}^{\tau_0+q} \E^{(1)}[\psi](\tau) d\tau .
\eeaa
Based on the above{, we deduce}
\beaa
&&\EF[(\pr_r, r^{-1}\pr_{x^a})\psi](\tau_0, \tau_0+q)  \nn\\&\les& \E[(\pr_r, r^{-1}\pr_{x^a})\psi](\tau_0) +\widehat{\mathcal{N}}^{(1)}[\psi, F](\tau_0, \tau_0+q)
+\int_{\tau_0}^{\tau_0+q} \E^{(1)}[\psi](\tau) d\tau,
\eeaa
{which together with \eqref{eq:localenergyestimate:firstorder:T:general} yields, for $\ep$ small enough,
\beaa
\EF^{(1)}(\tau_0, \tau_0+q)  \les \E^{(1)}[\psi](\tau_0) +\widehat{\mathcal{N}}^{(1)}[\psi, F](\tau_0, \tau_0+q)
+\int_{\tau_0}^{\tau_0+q} \E^{(1)}[\psi](\tau) d\tau.
\eeaa
Applying} Gr\"onwall's inequality, we thus arrive at the desired estimate \eqref{eq:localenergyestimate:future} for $s=1$. The other inequality \eqref{eq:localenergyestimate:past} for $s=1$ follows in the same manner. {This concludes the proof of Lemma \ref{lemma:localenergyestimate}.}
 \end{proof}


\subsection{Improved Morawetz estimates}


The following lemma allows to improve the {$r$-weights} in Morawetz estimates. 
\begin{lemma}\lab{lemma:exteriorMorawetzestimates}
Let $\g$ satisfy the assumptions of Section \ref{subsubsect:assumps:perturbedmetric}. For any $1\leq\tau_1<\tau_2 <+\infty$, and for any $0<\de\leq 1$, we have for solutions to the wave equation \eqref{intro:eq:scalarwave} the improved estimates: 
\bea
\lab{eq:exteriorMorawetzestimates:strong}
\M_{\de}[\psi](\tau_1, \tau_2)&\les& \EMF[\psi](\tau_1, \tau_2)+\bigg|\int_{\MM_{r\geq 11m}(\tau_1, \tau_2)}(1+O(r^{-\de}))\pr_{\tau}\psi F\bigg|
\nn\\
&&
+\int_{\MM_{r\geq 11m}(\tau_1, \tau_2)}|F|\Big|\big(\pr_r, r^{-1}\pr_{x^a}, r^{-1}\big)\psi\Big|
\eea
and
\bea
\lab{eq:exteriorMorawetzestimates:weak}
\M_{\de}[\psi](\tau_1, \tau_2)&\les& \EMF[\psi](\tau_1, \tau_2)+\int_{\MM(\tau_1, \tau_2)}r^{1+\de}|F|^2.
\eea
\end{lemma}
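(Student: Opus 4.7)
The plan is to establish \eqref{eq:exteriorMorawetzestimates:strong} via a Morawetz-type multiplier supported in the asymptotically flat region, then derive \eqref{eq:exteriorMorawetzestimates:weak} as an immediate consequence by Cauchy--Schwarz. First, I would introduce a smooth cutoff $\chi=\chi(r)$ with $\chi\equiv 0$ on $\{r\leq 10m\}$ and $\chi\equiv 1$ on $\{r\geq 11m\}$, and select a multiplier pair $(X_\delta,w_\delta)$ of the form $X_\delta=\chi(r)\bigl[(1+h_\delta(r))\partial_\tau+f_\delta(r)\partial_r\bigr]$ and $w_\delta(r)=\chi(r)k_\delta(r)$, with $h_\delta(r)=O(r^{-\delta})$, $f_\delta(r)\sim 1$, and $k_\delta(r)\sim r^{-1}$ at infinity. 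The profiles $h_\delta,f_\delta,k_\delta$ are chosen -- in parallel with the far-region Morawetz construction in Minkowski/Schwarzschild, adapted to the normalized coordinates of Lemma \ref{lem:specificchoice:normalizedcoord} -- so that the principal (Kerr) part of $\tfrac{1}{2}{}^{(X_\delta)}\pi|_{\gam}\cdot\QQ[\psi]+w_\delta\partial^\alpha\psi\overline{\partial_\alpha\psi}-\tfrac{1}{2}\Box_{\gam}w_\delta|\psi|^2$ reproduces, with a positive coefficient, the $\M_\delta$ integrand $\chi(r)\bigl(r^{-(1+\delta)}(|\partial_\tau\psi|^2+|\partial_r\psi|^2)+r^{-1}|\nabla\psi|^2+r^{-(3+\delta)}|\psi|^2\bigr)$.

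Next, I would apply the divergence identity \eqref{eq:EnerIden:General:wave} with this multiplier and integrate over $\MM(\tau_1,\tau_2)$. The LHS boundary terms contribute integrals on $\Sigma(\tau_i)$ ($i=1,2$) and $\II_+$ only (the $\AA$ contribution vanishes since $\chi$ is supported away from the horizon), bounded by $\EMF[\psi](\tau_1,\tau_2)$ via Lemma \ref{lemma:justificationboundarytermsflatvolume}. The RHS splits into three bulk contributions: (i) the Kerr bulk produces, by design, a positive multiple of the $\M_\delta$ integrand in $\{r\geq 11m\}$, while derivatives of $\chi$ yield errors supported in the compact transition region $\{r\in[10m,11m]\}$, bounded by $\EMF[\psi](\tau_1,\tau_2)$; (ii) the perturbation errors, arising from replacing $\gam$ by $\g$ in ${}^{(X_\delta)}\pi$ and $\Box_{\g}w_\delta$, have the structure of $M^{\alpha\beta}$ and $h$ required by Lemma \ref{lemma:basiclemmaforcontrolNLterms:ter} (as follows from the control of the perturbed deformation tensors in Lemma \ref{lemma:controlofdeformationtensorsforenergyMorawetz}), and are therefore bounded by $\ep\,\EMF[\psi](\tau_1,\tau_2)$; (iii) the coupling with $F$, namely $\int F\,\overline{(X_\delta\psi+w_\delta\psi)}$, reproduces in $\{r\geq 11m\}$ exactly the RHS of \eqref{eq:exteriorMorawetzestimates:strong}, modulo transition-region terms again absorbed into $\EMF$. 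Rearranging and choosing $\ep$ small enough to absorb the perturbation errors yields \eqref{eq:exteriorMorawetzestimates:strong}.

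Finally, \eqref{eq:exteriorMorawetzestimates:weak} follows from \eqref{eq:exteriorMorawetzestimates:strong} by Cauchy--Schwarz: each coupling term $|F|\,|(1,\partial_r,r^{-1}\partial_{x^a},r^{-1})\psi|$ on the RHS of \eqref{eq:exteriorMorawetzestimates:strong} is bounded for any $\eta>0$ by $\eta\,r^{-(1+\delta)}|(\cdot)\psi|^2+C_\eta\,r^{1+\delta}|F|^2$; choosing $\eta$ small enough allows the $\psi$-quadratic contributions to be absorbed into $\M_\delta[\psi](\tau_1,\tau_2)$ on the LHS. The principal obstacle is the explicit construction of $(h_\delta,f_\delta,k_\delta)$ delivering a coercive bulk with the sharp $r^{-(1+\delta)}$ weights simultaneously for $|\partial_\tau\psi|^2$, $|\partial_r\psi|^2$, and $r^{-2}|\psi|^2$ (together with the classical $r^{-1}|\nabla\psi|^2$); this requires a careful balance of $h_\delta'(r)$, $f_\delta'(r)$ and $\Box_{\gam}w_\delta$ against the asymptotics \eqref{eq:assymptiticpropmetricKerrintaurxacoord:1}--\eqref{eq:assymptiticpropmetricKerrintaurxacoord:2} of $\gam$ in normalized coordinates, where $\partial_r$ is nearly outgoing null and the cross-terms involving $\gam^{r\tau}$ and $\gam_{r\tau}$ must be handled with care.
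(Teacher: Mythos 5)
Your proposal is correct and follows essentially the same route as the paper: the paper takes $X=\mu f\,\bar{\pr}_r$, $w=\mu h$ with $f=\chi_{R}(1-m^{\de}r^{-\de})$, $h=\chi_R r^{-1}(1-m^{\de}r^{-\de})$ and a cutoff at a large radius $R\geq 14m$, which in the normalized coordinates is precisely your $(1+O(r^{-\de}))\pr_\tau+O(1)\pr_r+O(r^{-1})\nab$ multiplier with $w\sim r^{-1}$; it then runs the divergence identity, controls the perturbation errors via Lemmas \ref{lemma:controlofdeformationtensorsforenergyMorawetz} and \ref{lemma:basiclemmaforcontrolNLterms:ter}, absorbs the compact transition-region bulk into $\EMF[\psi]$, and obtains \eqref{eq:exteriorMorawetzestimates:weak} by Cauchy--Schwarz, exactly as you outline. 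The only cosmetic deviations are your cutoff placement at $10m$--$11m$ (the paper cuts at $R-m$ with $R$ large so the explicit Boyer--Lindquist computation of the Kerr bulk is manifestly coercive, the intermediate region being absorbed into $\EMF$) and the fact that the residual transition terms involving $F$ are absorbed into the flexible $O(r^{-\de})$ coefficient of the $F$-coupling rather than into $\EMF[\psi]$.
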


\begin{proof}
Recall from Lemma \ref{lem:generalenergyidentity:waveeq} that
given a vectorfield $X$ and a scalar function $w$, we have the following {standard calculation for generalized currents}
\beaa
&&\nab^{\a}\bigg(\Re\bigg(\QQ_{\a\b}[\psi]X^{\b}+w\psi \ov{\rd_{\a} \psi} -\frac{1}{2}\rd_{\a}w{|\psi|^2}\bigg)\bigg)\\
 &=&\Re\bigg(\frac{1}{2}{}^{(X)} \pi \cdot \QQ[\psi] +w\rd_{\a}\psi \ov{\rd^{\a}\psi} - \frac{1}{2} \Box_{\g} w {|\psi|^2}\bigg)+\Re\big( \Box_{\g} \psi \ov{({X\psi} +w\psi)}\big){,}
\eeaa
{which we apply with the choices
\beaa
X=\mu f\bar{\pr}_r, \qquad w=\mu h, \qquad f=\chi_{R}(1-m^{\de}r^{-\de}),\qquad 
h=\chi_R r^{-1} (1-m^{\de}r^{-\de}),
\eeaa
where $\bar{\pr}_r$ is a coordinate derivative in Boyer--Lindquist coordinates, where $R\geq 14m$ is a constant that will be chosen large enough below, and where $\chi_R=\chi_R(r)$ is a smooth cutoff function that equals $1$ for $r\geq R$ and vanishes for $r\leq R-m$. Integrating the above divergence identity over $\MM(\tau_1, \tau_2)$, we infer
\beaa
&&\left|\int_{\MM(\tau_1, \tau_2)}\Re\bigg(\frac{1}{2}{}^{(X)} \pi \cdot \QQ[\psi] +w\rd_{\a}\psi \ov{\rd^{\a}\psi} - \frac{1}{2} \Box_{\g} w|\psi|^2\bigg)\right|\\ 
&\les& \EF[\psi](\tau_1, \tau_2)+\left|\int_{\MM(\tau_1, \tau_2)}\Re\big(F\ov{({X\psi} +w\psi)}\big)\right|.
\eeaa}

{Next, we introduce the following decomposition 
\beaa
\JJ:=\Re\bigg(\frac{1}{2}{}^{(X)} \pi \cdot \QQ[\psi] +w\rd_{\a}\psi \ov{\rd^{\a}\psi} - \frac{1}{2} \Box_{\g} w |\psi|^2\bigg) = \JJ_{a,m}+\widecheck{\JJ}
\eeaa 
where $\JJ_{a,m}$ denotes the computation of $\JJ$ in Kerr, and where $\widecheck{\JJ}$ is given by
\beaa
\widecheck{\JJ}:=\Re\left(\frac{1}{2}\widecheck{{}^{(X)}\pi}^{\a\b}\pr_a\psi\ov{\pr_\b\psi} -\frac{1}{2}\left(\div(X)\widecheck{\g}^{\mu\nu}+\widecheck{\div(X)}\gam^{\mu\nu}-2w\gcheck^{\mu\nu}\right)\pr_\mu\psi\ov{\pr_\nu\psi } - \frac{1}{2}\widecheck{\Box_{\g}w} |\psi|^2\right).
\eeaa
In view of the above, this yields
\bea\lab{eq:intermediarycomputation:exteriorMorawetzestimates1}
\left|\int_{\MM(\tau_1, \tau_2)}\JJ_{a,m}\right| \les \EF[\psi](\tau_1, \tau_2)+\left|\int_{\MM(\tau_1, \tau_2)}\Re\big(F\ov{({X\psi} +w\psi)}\big)\right|+\int_{\MM(\tau_1, \tau_2)}\left|\widecheck{\JJ}\right|.
\eea}

{Next, we estimate the last term on the RHS of \eqref{eq:intermediarycomputation:exteriorMorawetzestimates1}. To this end, we decompose $\widecheck{\JJ}$ as
\beaa
\bsplit
\widecheck{\JJ}=& \widecheck{\JJ}_1+\widecheck{\JJ}_2+\widecheck{\JJ}_3,\\
\widecheck{\JJ}_1:=& \Re\left(\frac{1}{2}\widecheck{{}^{(X)}\pi}^{\a\b}\pr_\a\psi\ov{\pr_\b\psi}\right),\\
\widecheck{\JJ}_2:=& \Re\left(-\frac{1}{2}\left(\div(X)\widecheck{\g}^{\mu\nu}+\widecheck{\div(X)}\gam^{\mu\nu}-2w\gcheck^{\mu\nu}\right)\pr_\mu\psi\pr_\nu\psi\right),\\
\widecheck{\JJ}_3:=& \Re\left(- \frac{1}{2}\widecheck{\Box_{\g}w} |\psi|^2\right),
\end{split}
\eeaa
and first consider $\widecheck{\JJ}_1$. Note that $\chi_R$ is supported in $[13m,+\infty)$ since $R\geq 14m$, and hence, in view of our choice of normalized coordinates, we have
\bea\lab{eq:decompositionofBLprrwrttocoordinatesvectorfieldnormalizedcoord}
\ov{\pr}_r &=& \left(-\mu^{-1}+\frac{m^2}{r^2}\right)\pr_\tau+\pr_r-\frac{a}{\De}\pr_{\tphi}\quad\textrm{on the support of}\,\,\chi_R.
\eea
Together with the choice of $X$, we deduce
\beaa
{}^{(X)}\pi_{\a\b} &=& \g\left(\D_\a\left(\mu \chi_{R}(1-m^{\de}r^{-\de})\left(\left(-\mu^{-1}+\frac{m^2}{r^2}\right)\pr_\tau+\pr_r-\frac{a}{\De}\pr_{\tphi}\right)\right), \pr_\b\right)\\
&=& O(1){}^{(\pr_\tau)}\pi_{\a\b}+O(1){}^{(\pr_r)}\pi_{\a\b}+O(r^{-2}){}^{(\pr_{\tphi})}\pi_{\a\b} +\de_{\a r}O(r^{-1-\de})(\g_{\tau\b}, \g_{r\b})\\
&&+\de_{\a r}O(r^{-3-\de})\g_{\tphi\b}+\de_{\b r}O(r^{-1-\de})(\g_{\tau\a}, \g_{r\a})+\de_{\b r}O(r^{-3-\de})\g_{\tphi\a},
\eeaa
and hence
\beaa
{}^{(X)}\pi^{\a\b} &=& O(1){}^{(\pr_\tau)}\pi^{\a\b}+O(1){}^{(\pr_r)}\pi^{\a\b}+O(r^{-2}){}^{(\pr_{\tphi})}\pi^{\a\b}+\g^{\a r}O(r^{-1-\de})(\de_{\tau\b}, \de_{r\b})\\
&&+\g^{\a r}O(r^{-3-\de})\de_{\tphi\b}+\g^{\b r}O(r^{-1-\de})(\de_{\tau\a}, \de_{r\a})+\g^{\b r}O(r^{-3-\de})\de_{\tphi\a},
\eeaa
which implies 
\beaa
\widecheck{{}^{(X)}\pi}^{\a\b} &=& O(1){}^{(\pr_\tau)}\pi^{\a\b}+O(1)\widecheck{{}^{(\pr_r)}\pi}^{\a\b}+O(r^{-2}){}^{(\pr_{\tphi})}\pi^{\a\b} +\widecheck{\g}^{\a r}O(r^{-1-\de})(\de_{\tau\b}, \de_{r\b})\\
&&+\widecheck{\g}^{\a r}O(r^{-3-\de})\de_{\tphi\b}+\widecheck{\g}^{\b r}O(r^{-1-\de})(\de_{\tau\a}, \de_{r\a})+\widecheck{\g}^{\b r}O(r^{-3-\de})\de_{\tphi\a}.
\eeaa
In view of Lemma \ref{lemma:controlofdeformationtensorsforenergyMorawetz} and \eqref{eq:controloflinearizedinversemetriccoefficients}, we infer
\beaa
\bsplit
&\widecheck{{}^{(X)}\pi}^{rr}=\chi_R r\dk^{\leq 1}\Ga_b +\chi_R'r\Ga_b, \quad \widecheck{{}^{(X)}\pi}^{r\tau}=\chi_R r\dk^{\leq 1}\Ga_g +\chi_R'r\Ga_g, \quad \widecheck{{}^{(X)}\pi}^{\tau\tau}=\chi_R\dk^{\leq 1}\Ga_g +\chi_R'\Ga_g,\\
&\widecheck{{}^{(X)}\pi}^{\tau a}=\chi_R \dk^{\leq 1}\Ga_g +\chi_R'\Ga_g,\quad \widecheck{{}^{(X)}\pi}^{ra}=\chi_R \dk^{\leq 1}\Ga_b +\chi_R'\Ga_b,\quad \widecheck{{}^{(X)}\pi}^{ab}=\chi_R r^{-1}\dk^{\leq 1}\Ga_g +\chi_R' r^{-1}\Ga_b,
\end{split}
\eeaa
which together with Lemma \ref{lemma:basiclemmaforcontrolNLterms:ter} implies, in view of the definition of $\widecheck{\JJ}_1$, 
\bea\lab{eq:intermediarysteptocontroJJc:controlofJJc1}
\int_{\MM(\tau_1, \tau_2)}\left|\widecheck{\JJ}_1\right|\les \int_{\MM(\tau_1, \tau_2)}\left|\Re\left(\frac{1}{2}\widecheck{{}^{(X)}\pi}^{\a\b}\pr_\a\psi\ov{\pr_\b\psi}\right)\right| \les\ep\EM[\psi](\tau_1, \tau_2).
\eea}

{Next, we consider $\widecheck{\JJ}_2$. We have
\beaa
\div(X)=\pr_\a(X^\a)+\frac{1}{\sqrt{|\g|}}\pr_\a\left(\sqrt{|\g|}\right)X^\a
\eeaa
and hence, in view of the definition of $X$, we infer  
\beaa
\widecheck{\div(X)} = (N_{det})_\a X^\a=O(1)(N_{det})_\tau+O(1)(N_{det})_r+O(r^{-2})(N_{det})_{\tphi}= r\dk^{\leq 1}\Ga_g
\eeaa
where we used Lemma \ref{lemma:computationofthederiveativeofsrqtg}. Together with  \eqref{eq:controloflinearizedinversemetriccoefficients}, \eqref{eq:assymptiticpropmetricKerrintaurxacoord:1}, and the fact that $\div(X)=O(r^{-1})$ and $w=O(r^{-1})$, this yields the following non sharp estimates 
\beaa
\bsplit
&M_2^{\a\b}:=\div(X)\widecheck{\g}^{\mu\nu}+\widecheck{\div(X)}\gam^{\mu\nu}-2w\gcheck^{\mu\nu},\\
&M_2^{rr}\in r\dk^{\leq 1}\Ga_b, \qquad M_2^{r\tau}\in r\dk^{\leq 1}\Ga_g, \qquad M_2^{\tau\tau}\in \dk^{\leq 1}\Ga_g,\\
& M_2^{rx^a}\in \dk^{\leq 1}\Ga_b, \qquad M_2^{\tau x^a}\in \dk^{\leq 1}\Ga_g,  \qquad M_2^{x^ax^b}\in r^{-1}\dk^{\leq 1}\Ga_g,
\end{split}
\eeaa
which together with Lemma \ref{lemma:basiclemmaforcontrolNLterms:ter} implies, in view of the fact that $2\widecheck{\JJ}_2=-\Re(M_2^{\a\b}\pr_\a\psi\ov{\pr_\b\psi})$, 
\bea\lab{eq:intermediarysteptocontroJJc:controlofJJc2}
\int_{\MM(\tau_1, \tau_2)}\left|\widecheck{\JJ}_2\right|\les \int_{\MM(\tau_1, \tau_2)}\left|M_2^{\a\b}\pr_a\psi\ov{\pr_\b\psi}\right| \les\ep\EM[\psi](\tau_1, \tau_2).
\eea}

{Next, we consider $\widecheck{\JJ}_3$. Recalling that $w=\chi_R\mu r^{-1}(1-m^\de r^{-\de})$, we have
\beaa
\bsplit
\widecheck{\square_\g w} =& \widecheck{\g}^{\a\b}\pr_\a\pr_\b w+\frac{1}{\sqrt{|\gam|}}\pr_\a\left(\sqrt{|\gam|}\widecheck{\g}^{\a\b}\right)\pr_\b w+(N_{det})^\a\pr_\a w\\
=& O(r^{-3})\widecheck{\g}^{rr}+O(r^{-2})\frac{1}{\sqrt{|\gam|}}\pr_\a\left(\sqrt{|\gam|}\widecheck{\g}^{\a r}\right)+ O(r^{-2})(N_{det})^r
\end{split}
\eeaa
which together with \eqref{eq:controloflinearizedinversemetriccoefficients} and Lemma \ref{lemma:computationofthederiveativeofsrqtg} yields
\beaa
\widecheck{\square_\g w} = r^{-2}\Ga_b+r^{-1}\dk^{\leq 1}\Ga_b+r^{-1}\dk^{\leq 1}\Ga_g = r^{-1}\dk^{\leq 1}\Ga_b.
\eeaa
As $2\widecheck{\JJ}_3=-\Re(\widecheck{\square_\g w})|\psi|^2$, we deduce, using Lemma \ref{lemma:basiclemmaforcontrolNLterms:ter} with the choice $h=\Re(\widecheck{\square_\g w})$, 
\beaa
\int_{\MM(\tau_1, \tau_2)}\left|\widecheck{\JJ}_3\right|\les \int_{\MM(\tau_1, \tau_2)}\left|\Re(\widecheck{\square_\g w)|\psi|^2}\right| \les\ep\EM[\psi](\tau_1, \tau_2),
\eeaa
which, together with \eqref{eq:intermediarysteptocontroJJc:controlofJJc1} and \eqref{eq:intermediarysteptocontroJJc:controlofJJc2}, and the fact that $\widecheck{\JJ}=\widecheck{\JJ}_1+\widecheck{\JJ}_2+\widecheck{\JJ}_3$, yields
\beaa
\int_{\MM(\tau_1, \tau_2)}\left|\widecheck{\JJ}\right|\les \ep\EM[\psi](\tau_1, \tau_2).
\eeaa
In view of \eqref{eq:intermediarycomputation:exteriorMorawetzestimates1}, we deduce 
\bea\lab{eq:intermediarycomputation:exteriorMorawetzestimates2}
\left|\int_{\MM(\tau_1, \tau_2)}\JJ_{a,m}\right| \les \EF[\psi](\tau_1, \tau_2)+\left|\int_{\MM(\tau_1, \tau_2)}\Re\big(F\ov{({X\psi} +w\psi)}\big)\right|+\ep\EM[\psi](\tau_1, \tau_2).
\eea}

{Next, we compute the LHS of  \eqref{eq:intermediarycomputation:exteriorMorawetzestimates2}, i.e., $\JJ_{a,m}$, where we recall that $\JJ_{a,m}$ denotes the computation of $\JJ$ in Kerr. Recalling that $X=\mu f\bar{\pr}_r$ and $w=\mu h$, with $f=f(r)$ and $h=h(r)$, we have in the Boyer--Lindquist coordinates}
\beaa
{|q|^2\JJ_{a,m}} &=& |q|^2\bigg(\frac{1}{2}{}^{(X)} \pi \cdot \QQ[\psi]+\Re(w\rd_{\a}\psi \ov{\rd^{\a}\psi}) - \frac{1}{2} \Box_{\g} w |\psi|^2\bigg)_{\gam}\nn\\
&=&\bigg(\Delta^2\pr_r\bigg(\frac{f}{2(r^2+a^2)}\bigg) +\mu^2 (r^2+a^2)h\bigg)(\bar\pr_r\psi)^2
\nn\\
&&+\bigg(\frac{1}{2}\pr_r\big(f(r^2+a^2)\big)-h(r^2+a^2) \bigg)(\bar\pr_t\psi)^2
+\bigg(-\frac{1}{2}\pr_r (\mu f) + \mu h\bigg)(\bar\pr_{\th}\psi)^2\nn\\
&&
-\bigg(\frac{1}{\sin^2\th}\bigg(\frac{1}{2}\pr_r (\mu f) - \mu h\bigg)
+h\frac{a^2}{r^2+a^2}-\pr_r\bigg(\frac{a^2 f}{2(r^2+a^2)}\bigg)\bigg)(\bar\pr_{\phi}\psi)^2\nn\\
&&+\bigg(\pr_r\bigg(\frac{2amr}{r^2+a^2}f\bigg) -\frac{4amr}{r^2+a^2}h \bigg)\Re\big(\bar\pr_t\psi \ov{\bar\pr_{\phi}\psi}\big)-\frac{1}{2}\pr_r\big(\pr_r (\mu h) \Delta\big) |\psi|^2,
\eeaa
where {$\bar{\pr}_\a$ denotes coordinate derivatives} in Boyer--Lindquist coordinates. {Recalling that we chosen the functions $f$ and $h$ as}
\beaa
f=\chi_{R}(1-m^{\de}r^{-\de}),\qquad 
h=\chi_R r^{-1} (1-m^{\de}r^{-\de}),
\eeaa
where $\chi_R=\chi_R(r)$ is a smooth cutoff function that equals $1$ for $r\geq R$ and vanishes for {$r\leq R-m$}, {we infer, for $r\geq R$,}
\beaa
{|q|^2\JJ_{a,m}} &=&\frac{1}{|q|^2}\bigg[\bigg(\frac{\de m^{\de}}{2}r^{1-\de} +O(r^{1-2\de})\bigg)(\pr_r\psi)^2
+\bigg(\frac{\de m^{\de}}{2}r^{1-\de} +O(r^{1-2\de})\bigg)(\pr_t\psi)^2
\nn\\
&&
+(r +O(r^{1-\de}))|\nab\psi|^2+O(r^{-2})\Re\big(\pr_t\psi\ov{ \pr_{\phi}\psi}\big)+\bigg(\frac{1}{2}\de (1+\de) m^{\de} r^{-1-\de} +O(r^{-1-2\de})\bigg) |\psi|^2\bigg]\nn\\
&\gtrsim& \de m^{\de} r^{-1-\de}\big( (\pr_{r}\psi)^2 + (\pr_{\tau}\psi)^2\big)
+ r^{-1}|\nab\psi|^2 + \de m^{\de} r^{-3-\de}  \psi^2,
\eeaa
{provided $R\geq 14m$ is chosen} large enough. {This yields 
\beaa
\int_{\MM(\tau_1, \tau_2)}\JJ_{a,m} &\gtrsim& \de\M_{\de, r\geq R}[\psi](\tau_1, \tau_2) - O(1)\M_{R-m, R}[\psi](\tau_1, \tau_2),
\eeaa
which together with \eqref{eq:intermediarycomputation:exteriorMorawetzestimates2} implies 
\beaa
\M_{\de}[\psi](\tau_1, \tau_2)\les \EMF[\psi](\tau_1, \tau_2)+\left|\int_{\MM(\tau_1, \tau_2)}\Re\big(F\ov{({X\psi} +w\psi)}\big)\right|.
\eeaa}
Applying Cauchy--Schwarz to the last term gives the desired estimate \eqref{eq:exteriorMorawetzestimates:weak}.
Since it holds that $X=O(1) \pr_r  + (1+O(r^{-\de})) \pr_{\tau} + O(r^{-1})\nab$ for $r$ large, we substitute this and $w=O(r^{-1})$  into the last term of the previous inequality and  hence prove the other desired estimate \eqref{eq:exteriorMorawetzestimates:strong}. {This concludes the proof of Lemma \ref{lemma:exteriorMorawetzestimates}.}
\end{proof}


\subsection{Redshift estimates}


To remove degeneracies of the energy in the neighborhood of the horizon, we make use of the Dafermos-Rodnianski redshift vectorfield. The goal of this section is to prove the following proposition.

\begin{lemma}[Redshift estimates]
\lab{lemma:redshiftestimates}
Let $\g$ satisfy the assumptions of Section \ref{subsubsect:assumps:perturbedmetric}. For any $\tau_1<\tau_2$, we have for solutions to the wave equation \eqref{intro:eq:scalarwave}, for $s=0,1$, 
\bea
\nn\EMF^{(s)}_{r\leq r_+(1+\dred)}[\psi](\tau_1, \tau_2)&\les& \E^{(s)}[\psi](\tau_1)+\dred^{-1}\M_{r_+(1+\dred), r_+(1+2\dred)}^{(s)}[\psi](\tau_1, \tau_2)\\
&&+\int_{\MM_{r\leq r_+(1+2\dred)}(\tau_1, \tau_2)}|\pr^{\leq s}F|^2.
\eea
\end{lemma}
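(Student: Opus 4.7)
The plan is to implement the Dafermos--Rodnianski redshift vectorfield method in our perturbed setting, exploiting the positivity of the surface gravity at the (unperturbed) event horizon and then absorbing all errors generated by the metric perturbation via the smallness bounds of Section \ref{subsubsect:assumps:perturbedmetric}.

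\textbf{Step 1 (construction of the multiplier).} I would choose a vectorfield of the form $X = \chi_{\text{red}}Y + C\,\pr_{\tau}$, where $Y$ is a Dafermos--Rodnianski redshift vectorfield, i.e., a smooth, globally timelike vectorfield in $\MM_{r_+(1-\dhor),\,r_+(1+2\dred)}$ which is transverse to $\HH_+$ and whose Kerr deformation tensor $(\gam)^{-1}\cdot {}^{(Y)}\pi_{a,m}$ is positive definite in $\MM_{r_+(1-\dhor),\,r_+(1+2\dred)}$ (this is possible in the full subextremal range since the surface gravity $\ka_+ = (r_+-m)/(r_+^2+a^2)>0$). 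The cutoff $\chi_{\text{red}}=\chi_{\text{red}}(r)$ equals $1$ on $r\leq r_+(1+\dred)$ and vanishes for $r\geq r_+(1+2\dred)$, with $|\chi_{\text{red}}'|\lesssim \dred^{-1}$, and the constant $C=C(\dred)$ is chosen so that $X$ is uniformly timelike on all of $\MM_{r_+(1-\dhor),\,r_+(1+2\dred)}$ and so that $C\pr_\tau$ absorbs the sign-indefinite contribution of $Y$ at $\Si(\tau)$.

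\textbf{Step 2 (energy identity for $s=0$).} Applying the identity \eqref{eq:EnerIden:General:wave} with this $X$ and $w=0$, and integrating over $\MM_{r\leq r_+(1+2\dred)}(\tau_1,\tau_2)$, the boundary terms on $\Si(\tau_1)$, $\Si(\tau)$ and $\AA$ are controlled by $\E[\psi](\tau_1)$, $\sup_\tau \E_{r\leq r_+(1+\dred)}[\psi](\tau)$ and $\F_\AA[\psi]$ thanks to Lemma \ref{lemma:justificationboundarytermsflatvolume} (since $\AA$ is spacelike and $X$ is timelike). The bulk term splits as
\[
\tfrac{1}{2}{}^{(X)}\pi\cdot\QQ[\psi] \;=\; \tfrac{1}{2}\chi_{\text{red}}\,{}^{(Y)}\pi_{a,m}\cdot\QQ[\psi]_{a,m} \;+\; \tfrac{1}{2}\chi_{\text{red}}'\,{}^{(Y)}\pi_{a,m}^{\text{cutoff}}\cdot\QQ[\psi] \;+\; \tfrac{C}{2}{}^{(\pr_\tau)}\pi\cdot\QQ[\psi] \;+\; \widecheck{J},
\]
where $\widecheck{J}$ collects all terms involving $\widecheck{\g}$, $\widecheck{{}^{(Y)}\pi}$ and $\widecheck{{}^{(\pr_\tau)}\pi}$. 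In $r\leq r_+(1+\dred)$ the first term is pointwise coercive and produces $\M_{r\leq r_+(1+\dred)}[\psi]$ (with \emph{non-degenerate} $\pr_r$-weight); the cutoff term $\chi_{\text{red}}'$ is supported in $r\in[r_+(1+\dred),r_+(1+2\dred)]$ and is bounded by $\dred^{-1}\M_{r_+(1+\dred),r_+(1+2\dred)}[\psi]$; the $C\pr_\tau$ term is handled as in Lemma \ref{lemma:localenergyestimate}; and $\widecheck{J}$ is absorbed by $\ep\,\EM_{r\leq r_+(1+2\dred)}[\psi]$ via Lemma \ref{lemma:basiclemmaforcontrolNLterms:ter} using the estimates of Lemma \ref{lemma:controlofdeformationtensorsforenergyMorawetz} (since ${}^{(Y)}\pi$ and $\widecheck{\g}$ have exactly the perturbative structure required). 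The source term $\int X\psi\,\bar F$ is handled by Cauchy--Schwarz, absorbing $|X\psi|^2$ into the coercive bulk. This closes the $s=0$ case after absorbing the $\ep$-small remainder into the LHS.

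\textbf{Step 3 (commutation for $s=1$).} For the first-order estimate I would commute the wave equation with $\pr_\tau$ and with a transverse derivative (e.g.\ $\pr_r$). Using \eqref{esti:commutatorBoxgandT:general}, the $\pr_\tau$-commuted equation contains $\pr_\tau(\widecheck\g^{\a\b})\pr_\a\pr_\b\psi + \dk^{\leq 2}\Ga_g\cdot\dk\psi$, whose contribution to the energy identity is controlled by $\ep\,\EM^{(1)}_{r\leq r_+(1+2\dred)}[\psi]$ via Lemma \ref{lemma:basiclemmaforcontrolNLterms:bis}. For $\pr_r$, the commutator $[\pr_r,\Box_\g]\psi$ includes the standard Kerr bulk term $[\pr_r,\Box_{\gam}]\psi = O(1)\pr\pr_r\psi + O(1)\pr\pr_\tau\psi + O(1)\pr\psi$, and the key point (which is the source of the redshift effect at the level of $\pr_r\psi$) is that, after choosing $Y$ as above, the combined deformation-tensor + commutator quadratic form applied to $\pr_r\psi$ remains coercive in $r\leq r_+(1+\dred)$ for sufficiently large $C$; the terms involving $\pr_\tau\psi$ and $\nab\psi$ are already controlled by Step 2 applied to those derivatives. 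Repeating Step 2 for $(\pr_\tau\psi, \pr_r\psi, r^{-1}\pr_{x^a}\psi)$ yields the first-order estimate, with the $F$-contribution $\int |\pr^{\leq 1}F|^2$ appearing on the RHS via Cauchy--Schwarz.

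\textbf{Main obstacle.} The delicate point is the $s=1$ case, specifically verifying that the redshift coercivity survives commutation with $\pr_r$ in the \emph{perturbed} metric. The Kerr computation is classical, but one must check that the perturbative contributions from $\widecheck\g^{\a\b}\pr_\a\pr_\b(\pr_r\psi)$ and from lower-order Christoffel-type terms are genuinely of size $\ep$ (in $\dk^{\leq 2}\Ga_b$, $\dk^{\leq 2}\Ga_g$) and hence absorbable; this follows from Lemmas \ref{lemma:controlofdeformationtensorsforenergyMorawetz}--\ref{lemma:basiclemmaforcontrolNLterms:bis}, but requires careful bookkeeping of the non-trivial $r$-weights near $\AA$, where the $\Ga_b$ bounds give only $\ep$ and not extra $r$-decay.
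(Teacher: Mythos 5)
Your overall strategy is the same as the paper's (a Dafermos--Rodnianski redshift multiplier localized near the horizon, cutoff losses dumped into $\dred^{-1}\M_{r_+(1+\dred),r_+(1+2\dred)}$, perturbative errors absorbed via Lemmas \ref{lemma:controlofdeformationtensorsforenergyMorawetz}--\ref{lemma:basiclemmaforcontrolNLterms:bis}, and an enhanced-redshift argument under $\pr_r$-commutation for $s=1$), but as written there are two concrete gaps. First, in Step 2 your multiplier $X=\chi_{\text{red}}Y+C\pr_\tau$ does not cut off the $\pr_\tau$ part. Since you integrate only over $\MM_{r\le r_+(1+2\dred)}(\tau_1,\tau_2)$, the identity \eqref{eq:EnerIden:General:wave} produces a flux through the timelike hypersurface $\{r=r_+(1+2\dred)\}$ coming from $C\pr_\tau$; this term is not sign-definite and is controlled neither by $\E^{(s)}[\psi](\tau_1)$ nor by the Morawetz bulk on the RHS of the lemma (and $C\pr_\tau$ alone is not timelike there, since $r_+(1+2\dred)$ can lie well inside the ergoregion, so integrating globally instead makes the $\Si(\tau)$-contribution indefinite). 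The paper avoids this by cutting off the \emph{entire} multiplier, $Y_\HH=\ka_\HH\,(Y+2\pr_\tau)$ with $\ka_\HH$ supported in $r\le r_+(1+2\dred)$, so every $\Si(\tau)$-term is nonnegative and no outer boundary term appears; this is an easy but necessary fix.

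Second, your $s=1$ step is circular for the angular derivatives: near the horizon $[\pr_{x^a},\square_\g]\psi=O(1)\pr^{\le 2}\psi$ contains angular second derivatives, so ``repeating Step 2 for $r^{-1}\pr_{x^a}\psi$'' presupposes a bulk bound on exactly the quantities you are trying to prove. The paper breaks this circle by (i) proving the more general Lemma \ref{lemma:redshiftestimates:general} for equations of the form \eqref{eq:Redshift-gen.equation} with $C_+\ge 0$, so that the $\pr_r$-commuted equation is again of this form with the strictly larger coefficient $C_{+,\pr_r}=C_++\tfrac{2(r_+-m)}{r_+^2+a^2\cos^2\th}$ (it is the sign of the surface gravity, not the largeness of your constant $C$, that makes the commuted estimate coercive); (ii) using the wave equation plus elliptic estimates on the spheres to first control the full first-order Morawetz bulk, as in \eqref{eq:redshift:general:TandPhiandprr:proof}; and only then (iii) commuting with $\pr_{x^a}$, at which point the $O(1)\pr^{\le 2}\psi$ source is already under control. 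Your ``main obstacle'' about the perturbation is in fact milder than you fear: near the horizon no $r$-weights are needed, and the crude bound $[\pr_\a,\square_\g]=[\pr_\a,\square_{\gam}]+O(\ep)\pr^{\le 2}$ suffices, the $O(\ep)$ terms being absorbed at the end.
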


{Following Section 9.4 of \cite{GKS}, we} shall in fact prove the {following} more general  statement which applies to equations   which, in the red shift region $r\leq r_+(1+2\dred)$, can be written in the  form,
\bea
\lab{eq:Redshift-gen.equation}
\square_\g\psi={-}\left(C_++{O\left(\bigg|{\frac{r}{r_+}}-1\bigg|\right)}\right)\pr_r\psi +O(1)\pr_\tau\psi+O(1)\pr_{x^a}\psi+F{,}
\eea
where $C_+$   is a {function} satisfying
 \bea
 \lab{eq:Redshift-gen.equation-1}
 C_+\ge   0, \qquad |\pr C_+|\les 1.
 \eea

\begin{lemma}[Redshift estimates-General]
\lab{lemma:redshiftestimates:general}
Let $\g$ satisfy the assumptions of Section \ref{subsubsect:assumps:perturbedmetric}. For any $1\leq\tau_1<\tau_2 <+\infty$, we have for solutions to the wave equation \eqref{eq:Redshift-gen.equation}, with $C_+$ satisfying \eqref{eq:Redshift-gen.equation-1}, for $s=0,1$,  
\bea
\lab{eq:redshift:general:highorderregularity}
\nn\EMF^{(s)}_{r\leq r_+(1+\dred)}[\psi](\tau_1, \tau_2)&\les& \E^{(s)}[\psi](\tau_1)+\dred^{-1}\M_{r_+(1+\dred), r_+(1+2\dred)}^{(s)}[\psi](\tau_1, \tau_2)\\
&&+\int_{\MM_{r\leq r_+(1+2\dred)}(\tau_1, \tau_2)}|\pr^{\leq s}F|^2.
\eea
\end{lemma}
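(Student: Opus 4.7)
The plan is to prove Lemma \ref{lemma:redshiftestimates:general} by the Dafermos--Rodnianski redshift multiplier method, adapted to the perturbed metric and to the more general equation \eqref{eq:Redshift-gen.equation}, following Section 9.4 of \cite{GKS}. First I would construct a future-directed vectorfield $Y$ defined in a neighborhood of $\AA$, transverse to $\{r=r_+\}$ and timelike there, whose deformation tensor in the Kerr metric obeys
$${}^{(Y)}\pi^{\a\b}_{a,m}\,\QQ_{\a\b}[\psi] \geq \kappa_0\,|\pr\psi|^2 \quad\text{on}\quad r=r_+,$$
for some $\kappa_0>0$ proportional to the surface gravity, uniformly on the full subextremal range $|a|<m$. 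By continuity of $\gam$, and for $\dred$ chosen small enough depending only on $a$ and $m$, this coercivity persists (with $\kappa_0/2$) throughout $r_+(1-\dhor)\le r\le r_+(1+3\dred)$. The metric assumptions \eqref{eq:controloflinearizedinversemetriccoefficients} give only $O(\varepsilon)$-perturbative corrections that will be absorbed via Lemma \ref{lemma:basiclemmaforcontrolNLterms:ter}.

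For $s=0$, I would apply the divergence identity \eqref{eq:EnerIden:General:wave} with $w=0$ and multiplier $X=\chi Y+KT$, where $\chi=\chi(r)$ is a smooth cutoff equal to $1$ on $r\le r_+(1+\dred)$ and supported in $r\le r_+(1+2\dred)$, $T$ is the globally timelike vectorfield from the proof of Lemma \ref{lemma:localenergyestimate}, and $K\gg 1$ is a large constant to be fixed below. Integrating over $\MM_{r\le r_+(1+2\dred)}(\tau_1,\tau_2)$, the boundary contributions on $\Sigma(\tau_1),\Sigma(\tau_2)$ and $\AA$ are comparable to $\E[\psi](\tau_i)$ and to $\F_\AA[\psi](\tau_1,\tau_2)$ by Lemma \ref{lemma:justificationboundarytermsflatvolume} (since $X$ is uniformly timelike and the boundaries are spacelike); the bulk produces a coercive term
$$\int_{\MM_{r\le r_+(1+\dred)}(\tau_1,\tau_2)}|\pr\psi|^2,$$
together with three error types: (i) the cutoff term involving $\chi'$, supported in the annulus $r\in[r_+(1+\dred),r_+(1+2\dred)]$ and bounded by $\dred^{-1}\M_{r_+(1+\dred),r_+(1+2\dred)}[\psi](\tau_1,\tau_2)$; (ii) perturbation errors of size $\varepsilon\EM_{r\le r_+(1+2\dred)}[\psi]$, handled by Lemma \ref{lemma:basiclemmaforcontrolNLterms:ter}; (iii) the right-hand side of \eqref{eq:Redshift-gen.equation} paired with $X\psi$.

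The handling of the structured right-hand side is the heart of the argument. Substituting \eqref{eq:Redshift-gen.equation}, the integrand $\square_\g\psi\cdot X\psi$ yields $-(C_++O(r/r_+-1))\pr_r\psi\cdot X\psi+O(1)\pr_\tau\psi\cdot X\psi+O(1)\pr_{x^a}\psi\cdot X\psi+F\cdot X\psi$. Orienting $Y$ so that $Y(r)>0$ on $\AA$, the Dafermos--Rodnianski construction is precisely calibrated so that the $-C_+\pr_r\psi$ term, with $C_+\geq 0$, enhances (rather than fights) the redshift coercivity from ${}^{(Y)}\pi\cdot\QQ$. The factor $O(r/r_+-1)$ is of size $\dred$ in the region of interest, so it is absorbed for $\dred$ small. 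The $O(1)\pr_\tau\psi$ and $O(1)\pr_{x^a}\psi$ contributions are controlled by Cauchy--Schwarz using the coercive energy density $\gtrsim K|\pr\psi|^2$ produced by the $KT$ part of $X$, upon taking $K$ large enough, while leaving boundary terms positive. The source $F\cdot X\psi$ is absorbed by Cauchy--Schwarz with a small parameter, generating the $\int|F|^2$ contribution on the right-hand side of \eqref{eq:redshift:general:highorderregularity}. The flux term $\F_\AA[\psi]$ on the left-hand side comes for free from the boundary contribution on $\AA$ by Lemma \ref{lemma:justificationboundarytermsflatvolume}.

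For $s=1$, I would commute with each of $\pr_\tau$, $\pr_r$, $r^{-1}\pr_{x^a}$. By Lemma \ref{lem:commutatorwithwave:firstorderderis}, the commutators $[\pr_*,\square_\g]\psi$ produce terms of the form $O(1)\pr^{\leq 1}\pr\psi+\dk^{\leq 2}\Ga_g\cdot\dk\psi$; in the bounded region $r\le r_+(1+2\dred)$, these are absorbed into the bulk of the redshift identity for $\pr_*\psi$ (the first via smallness of $\dred$ combined with the $s=0$ bound, the second via $\varepsilon$-smallness). Commuting the full equation \eqref{eq:Redshift-gen.equation} preserves its structure at the price of $\pr_*F$ plus controllable lower-order terms, since $|\pr C_+|\lesssim 1$ and the $O(1)$ coefficients have bounded derivatives in this compact region. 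The main obstacle is the tangential term $O(1)\pr_\tau\psi$ on the right-hand side of \eqref{eq:Redshift-gen.equation}: since $\pr_\tau$ fails to be timelike in the ergoregion (which contains part of $r\le r_+(1+2\dred)$ when $|a|$ is not small), one cannot use energy conservation to close, and the absorption must be achieved entirely through the large parameter $K$ accompanying the timelike multiplier $T$; choosing $K$ depending only on $a,m,\dred$ (and not on $\varepsilon$ or $\tau_1,\tau_2$) is what makes the implicit constant universal in the sense \eqref{eq:constraintsonthemainsmallconstantsepanddelta}.
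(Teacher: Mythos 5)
Your overall framework (redshift multiplier near $\AA$, cutoff at $r_+(1+2\dred)$ generating the $\dred^{-1}$ annulus term, $C_+\geq 0$ reinforcing the coercivity, Cauchy--Schwarz on $F$) matches the paper's, but the step on which everything hinges is wrong: you claim the bulk terms $O(1)\pr_\tau\psi\cdot X\psi$ and $O(1)\pr_{x^a}\psi\cdot X\psi$ are absorbed ``using the coercive energy density $\gtrsim K|\pr\psi|^2$ produced by the $KT$ part of $X$.'' A timelike multiplier produces coercive \emph{boundary} fluxes, not a coercive spacetime bulk: the bulk contribution of $KT$ is $\tfrac{K}{2}{}^{(T)}\pi\cdot\QQ$, which is unsigned (and of size $O(K)$ where $T$ is not Killing, i.e.\ exactly in $r\leq 3m$), so taking $K$ large makes the uncontrolled bulk errors grow while the genuinely coercive redshift bulk in the $\tilde e_3$ direction is fixed by the surface gravity and cannot absorb them; nor can a bulk integral over $\MM(\tau_1,\tau_2)$ be traded for boundary energies without a Gr\"onwall loss in $\tau_2-\tau_1$, which the statement forbids. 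The paper's mechanism (following Lemma 9.4.4 and Proposition 9.4.7 of \cite{GKS}) puts the large parameters \emph{inside} the redshift vectorfield, $Y=\underline{d}(r)\tilde e_3+d(r)\tilde e_4$ with $d=d_0(r-r_+)$, $\underline d=1+\underline d_0(r-r_+)$ and $Y_\HH=\ka_\HH(Y+2\pr_\tau)$: then the bulk coefficients of $(\tilde e_4\psi)^2$ and $|\tilde\nab\psi|^2$ are $d_0+O(1)$ and $\underline d_0+O(1)$, while the dangerous cross terms with $\tilde e_3\psi$ keep $O(1)$ coefficients, so Cauchy--Schwarz against the fixed coefficient $\pr_r(\De/|q|^2)|_{r=r_+}+2C_+\gtrsim \sqrt{m^2-a^2}/(mr_+)$ of $(\tilde e_3\psi)^2$ closes after choosing $d_0,\underline d_0$ large. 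Near the horizon $\pr_\tau$ and $\pr_{x^a}$ decompose into $\tilde e_4$, angular directions and an $O(\De)$ multiple of $\tilde e_3$, which is why the tangential terms on the right-hand side of \eqref{eq:Redshift-gen.equation} fall into exactly this absorbable class; your $\chi Y+KT$ ansatz does not reproduce this structure.

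There is a second, smaller gap at $s=1$: you treat the commutators as uniformly ``controllable lower-order terms,'' but $[\pr_r,|q|^2\square_{\gam}]$ contains the top-order transversal term $2(r-m)\pr_r^2\psi$, whose coefficient near $r_+$ is $\sim 2\sqrt{m^2-a^2}$, not $O(\dred)$, so it cannot be absorbed ``via smallness of $\dred$ combined with the $s=0$ bound.'' The whole point of stating the lemma for the general class \eqref{eq:Redshift-gen.equation} is that this term has the favorable sign and is reabsorbed as an enhanced coefficient $C_{+,\pr_r}=C_++\frac{2(r_+-m)}{r_+^2+a^2\cos^2\th}>0$, after which the $s=0$ case applies to $\pr_r\psi$. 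Moreover the order of commutation matters: one must first treat $\pr_\tau\psi$ and $\pr_{\tphi}\psi$ (whose commuted sources are genuinely lower order), since the source for $\pr_r\psi$ contains $\pr_\tau\pr\psi$ and $\pr_{\tphi}\pr\psi$, and finally recover the second angular derivatives from the equation itself (elliptic estimates on the spheres) before applying the $s=0$ estimate to $\pr_{x^a}\psi$; commuting with $\pr_\tau,\pr_r,r^{-1}\pr_{x^a}$ simultaneously, as you propose, leaves the top-order commutator terms circularly dependent on the estimate being proved.
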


\begin{proof}
We start with the case $s=0$ by first introducing {the} frame 
\beaa
\begin{split}
{\tilde{e}_3}&=-\pr_r +\frac{m^2}{r^2}\pr_{\tau}, \quad {\tilde{e}_4}=\left(\frac{2(r^2+a^2)}{|q|^2}-\frac{m^2}{r^2}\frac{\De}{|q|^2}\right)\pr_\tau +\frac{\De}{|q|^2}\pr_r +\frac{2a}{|q|^2}{\pr_{\tphi}}, \\
 \tilde{e}_1&=\frac{1}{|q|}\pr_{\th}, \qquad\quad\quad \,\,\tilde{e}_2= \frac{1}{|q|\sin\th}(\pr_{\tphi} +a(\sin\th)^2 \pr_{\tau}),
 \end{split}
\eeaa
which {corresponds to the ingoing principal null frame of Kerr. Then, we introduce as in Lemma 9.4.4 in \cite{GKS} the vectorfield 
\beaa
Y:=\underline{d}(r)\tilde{e}_3+d(r)\tilde{e}_4.
\eeaa
Under the condition
\beaa
\sup_{r\leq 4m}\Big(|d(r)|+|d'(r)|+|\underline{d}(r)|+|\underline{d}'(r)|\Big) &\les& 1,
\eeaa
the vectorfield $Y$ satisfies\footnote{{Lemma 9.4.4 in \cite{GKS} relies on the fact that the metric $\g$ is an $O(\ep)$ perturbations of the Kerr metric in the region $r\leq 4m$ which holds true here in view of assumption \eqref{eq:controloflinearizedinversemetriccoefficients}.}} according to Lemma 9.4.4 in \cite{GKS}, for $r\leq 4m$, 
\beaa
\QQ\c{}^{(Y)}\pi &=& \left(\pr_r\left(\frac{\De}{|q|^2}\right)\underline{d}(r) -\frac{\De}{|q|^2}\underline{d}'(r)\right)(\tilde{e}_3(\psi))^2 +d'(r)(\tilde{e}_4(\psi))^2\\
&&+\left(\underline{d}'(r) -\pr_r\left(\frac{\De}{|q|^2}\right)d(r) -\frac{\De}{|q|^2}d'(r)\right)|\tilde{\nab}\psi|^2\\
&& +O(1)(|\underline{d}(r)|+|d(r)|)|\tilde{e}_3(\psi)|\big(|\tilde{e}_4(\psi)|+|\tilde{\nab}(\psi)|\big)+O(\ep)\Big((\tilde{e}_3\psi)^2+(\tilde{e}_4\psi)^2+|\tilde{\nab}\psi|^2\Big).
\eeaa
Next, we introduce, as in Proposition 9.4.7 in \cite{GKS} the vectorfield 
\beaa
Y_\HH := \ka_\HH Y_{(0)}, \qquad Y_{(0)}=Y+2\pr_\tau, \quad \ka_\HH=\ka\left(\frac{\frac{r}{r_+}-1}{\dred}\right),
\eeaa
where $\ka(r)$ is a positive function supported in $[-2,2]$ and equal to 1 on $[-1,1]$, and we choose
\beaa
d(r)=d_0(r-r_+), \qquad \underline{d}(r)=1+\underline{d}_0(r-r_+), 
\eeaa
for constants $d_0>0$ and $\underline{d}_0>0$ to be chosen large enough below. This yields
\beaa
&&\QQ\c{}^{(Y_\HH)}\pi +2\square_\g\psi Y_\HH(\psi)\\
&=& \ka_\HH\Bigg\{\left(\pr_r\left(\frac{\De}{|q|^2}\right)_{r=r_+} +2C_+ \right)(\tilde{e}_3(\psi))^2 +\Big(d_0+O(1)\Big)(\tilde{e}_4(\psi))^2+\Big(\underline{d}_0 +O(1)\Big)|\tilde{\nab}\psi|^2\\
&& +O(1)|\tilde{e}_3(\psi)|\big(|\tilde{e}_4(\psi)|+|\tilde{\nab}(\psi)|\big)+O(\dred+\ep)\Big((\tilde{e}_3\psi)^2+(\tilde{e}_4\psi)^2+|\tilde{\nab}\psi|^2\Big)\Bigg\}\\
&&+O(\dred^{-1})\Big((\tilde{e}_3\psi)^2+(\tilde{e}_4\psi)^2+|\tilde{\nab}\psi|^2\Big){\bf{1}}_{r\in[r_+(1+\dred), r_+(1+2\dred)]}\\
&&+O(1)|\big(|\tilde{e}_3(\psi)|+|\tilde{e}_4(\psi)|+|\tilde{\nab}(\psi)|\big)|F|,
\eeaa
where we have used the fact that $\psi$ satisfies \eqref{eq:Redshift-gen.equation}. Since we have, for $|a|<m$, 
\beaa
\left(\pr_r\left(\frac{\De}{|q|^2}\right)\right)_{r=r_+}=\frac{r_+-r_-}{r_+^2+a^2(\cos\th)^2}\geq \frac{r_+-r_-}{r_+^2+a^2}=\frac{\sqrt{m^2-a^2}}{mr_+}>0,
\eeaa
and $C_+\geq 0$, we may choose $d_0$ and $\underline{d}_0$ large enough such that 
\beaa
&&\QQ\c{}^{(Y_\HH)}\pi +2\square_\g\psi Y_\HH(\psi)\\
&\geq& \frac{\sqrt{m^2-a^2}}{2mr_+}\Big((\pr_r\psi)^2+(\pr_\tau\psi)^2+|\nab\psi|^2\Big){\bf{1}}_{r\leq r_+(1+\dred)}\\
&&-O(\dred^{-1})\Big((\tilde{e}_3\psi)^2+(\tilde{e}_4\psi)^2+|\tilde{\nab}\psi|^2\Big){\bf{1}}_{r\in[r_+(1+\dred), r_+(1+2\dred)]}-O(1)|F|^2.
\eeaa
Using the energy identity of Lemma \ref{lem:generalenergyidentity:waveeq} with the choices $X=Y_\HH$ and $w=0$, and noticing that $Y_\HH$ is timelike, we infer  
\beaa
\nn\EMF_{r\leq r_+(1+\dred)}[\psi](\tau_1, \tau_2)&\les& \E[\psi](\tau_1)+\dred^{-1}\M_{r_+(1+\dred), r_+(1+2\dred)}[\psi](\tau_1, \tau_2)\\
&&+\int_{\MM_{r\leq r_+(1+2\dred)}(\tau_1, \tau_2)}|F|^2
\eeaa
which proves the} $\reg=0$ case of \eqref{eq:redshift:general:highorderregularity}.

It remains to  show {\eqref{eq:redshift:general:highorderregularity} in the case $\reg=1$. To control the commutators, we will simply use the following non-sharp consequence of \eqref{eq:controloflinearizedinversemetriccoefficients} and Lemma \ref{lemma:computationofthederiveativeofsrqtg} which yields, in $r\leq 4m$, 
\beaa
[\pr_\a, h(r,\cos\th)\square_\g]=[\pr_\a, h(r, \cos\th)\square_{\gam}]+O(\ep)\pr^{\leq 2},
\eeaa
for any smooth function $h(r, \cos\th)$. Now, we have, in $r\leq 4m$, 
\beaa
\bsplit
[\pr_\tau, \square_{\gam}]&=0, \qquad [\pr_{\tphi}, \square_{\gam}]=0,\qquad \,[\pr_{x^a}, \square_{\gam}]=O(1)\pr^{\leq 2}, \\
[\pr_r, |q|^2\square_{\gam}]&=[\pr_r, |q|^2\gam^{\mu\nu}\pr_\mu\pr_\nu]+O(1)\pr^{\leq 1}=2(r-m)\pr_r^2+O(1)\pr_\tau\pr+O(1)\pr_{\tphi}\pr+O(1)\pr^{\leq 1},
\end{split}
\eeaa
which implies for solutions $\psi$ of the wave equation \eqref{eq:Redshift-gen.equation} in $r\leq r_+(1+2\dred)$
\beaa
\square_\g(\pr_\a\psi)=-\left(C_{+,\pr_\a}+O\left(\frac{r}{r_+} -1\right)\right)\pr_r(\pr_\a\psi) +O(1)\pr_\tau(\pr_\a\psi)+O(1)\pr_{x^a}(\pr_\a\psi)+F_{\pr_\a},
\eeaa
where the functions $C_{+,\pr_\a}$ are given by 
\beaa
C_{+,\pr_\tau}=C_{+,\pr_{\tphi}}=C_{+,\pr_{x^a}}=C_+>0, \qquad C_{+,\pr_r}=C_++\frac{2(r_+-m)}{r_+^2+a^2(\cos\th)^2}>C_+>0,
\eeaa
and hence all satisfy \eqref{eq:Redshift-gen.equation-1}, and where the functions $F_{\pr_\a}$ are given in $r\leq r_+(1+2\dred)$ by
\beaa
\bsplit
F_{\pr_\tau}&=\pr_\tau F+O(1)\pr\psi+O(\ep)\pr^{\leq 2}\psi, \qquad F_{\pr_{\tphi}}=\pr_{\tphi}F+O(1)\pr\psi+O(\ep)\pr^{\leq 2}\psi, \\
F_{\pr_r}&=\pr_r F+O(1)\pr_\tau\pr+O(1)\pr_{\tphi}\pr+O(1)\pr^{\leq 1}+O(\ep)\pr^{\leq 2}\psi,\qquad F_{\pr_{x^a}}=\pr_{x^a}F+O(1)\pr^{\leq 2}\psi.
\end{split}
\eeaa
Applying \eqref{eq:redshift:general:highorderregularity} with $\reg=0$ to the above wave equations for} $\pr_{\tau}\psi$ and $\pr_{\tphi}\psi$, and {also using \eqref{eq:redshift:general:highorderregularity} with $\reg=0$} to control the {lower order} terms arising from $O(1)\pr\psi$ {in $F_{\pr_\tau}$ and $F_{\pr_{\tphi}}$},  we infer
\bea
\lab{eq:redshift:general:TandPhi:proof}
&&\EMF_{r\leq r_+(1+\dred)}[(\pr_{\tau}, \pr_{\tphi})^{\leq 1}\psi](\tau_1, \tau_2)\nn\\
&\les& \E[(\pr_{\tau}, \pr_{\tphi})^{\leq 1}\psi](\tau_1)+\dred^{-1}\M_{r_+(1+\dred), r_+(1+2\dred)}^{(1)}[\psi](\tau_1, \tau_2)\nn\\
&&+\int_{\MM_{r_+(1-\dhor), r_+(1+2\dred)}(\tau_1, \tau_2)}|(\pr_{\tau}, \pr_{\tphi})^{\leq 1}F|^2 \nn\\
&&
+\ep\M_{r_+(1-\dhor), {r_+(1+\dred)}}^{(1)}[\psi](\tau_1, \tau_2).
\eea

{Next, we apply \eqref{eq:redshift:general:highorderregularity} with $\reg=0$ to the above wave equations for $\pr_r\psi$, using also  \eqref{eq:redshift:general:TandPhi:proof} to control $F_{\pr_r}$. This yields}
\beaa
&&\EMF_{r\leq r_+(1+\dred)}[(\pr_{\tau}, \pr_{\tphi}, \pr_r)^{\leq 1}\psi](\tau_1, \tau_2)\nn\\
&\les& \E[(\pr_{\tau}, \pr_{\tphi},\pr_r)^{\leq 1}\psi](\tau_1)+\dred^{-1}\M_{r_+(1+\dred), r_+(1+2\dred)}^{(1)}[\psi](\tau_1, \tau_2)\nn\\
&&+\int_{\MM_{r_+(1-\dhor), r_+(1+2\dred)}(\tau_1, \tau_2)}|(\pr_{\tau}, \pr_{\tphi}, \pr_r)^{\leq 1}F|^2 
+\ep\M_{r_+(1-\dhor), {r_+(1+\dred)}}^{(1)}[\psi](\tau_1, \tau_2).
\eeaa
{Then,} expanding the wave equation into
\beaa
\mathring{\gamma}^{ab}\pr_{x^a}\pr_{x^b} \psi = O(1) (\pr_{\tau}, \pr_{\tphi}, \pr_r)^{\leq 1}\pr \psi +O(1)\nab\psi + O(1)F
+O(\ep) \pr^2\psi, \quad {\textrm{for}\,\,r\leq 4m,}
\eeaa
we make use of the above estimate{, together with elliptic estimates on the spheres $S(\tau, r)$ foliating $r\leq r_+(1+\dred)$,} to infer
\bea
\lab{eq:redshift:general:TandPhiandprr:proof}
\begin{split}
&{\EF_{r\leq r_+(1+\dred)}[(\pr_{\tau}, \pr_{\tphi}, \pr_r)^{\leq 1}\psi](\tau_1, \tau_2)
+\M^{(1)}_{r\leq r_+(1+\dred)}[\psi](\tau_1, \tau_2)}\\
\les {}&\E[(\pr_{\tau}, \pr_{\tphi},\pr_r)^{\leq 1}\psi](\tau_1)+\dred^{-1}\M_{r_+(1+\dred), r_+(1+2\dred)}^{(1)}[\psi](\tau_1, \tau_2)\\
&+\int_{\MM_{r_+(1-\dhor), r_+(1+2\dred)}(\tau_1, \tau_2)}|(\pr_{\tau}, \pr_{\tphi}, \pr_r)^{\leq 1}F|^2 
+\ep\M_{r_+(1-\dhor), {r_+(1+\dred)}}^{(1)}[\psi](\tau_1, \tau_2).
\end{split}
\eea

{Finally, we apply \eqref{eq:redshift:general:highorderregularity} with $\reg=0$ to the above wave equations for $\pr_{x^a}\psi$, using also  \eqref{eq:redshift:general:TandPhiandprr:proof}, to control $F_{\pr_{x^a}}$. This yields}
\beaa
&&\EMF^{(1)}_{r\leq r_+(1+\dred)}[\psi](\tau_1, \tau_2)
\nn\\
&\les& \E^{(1)}[\psi](\tau_1)+\dred^{-1}\M_{r_+(1+\dred), r_+(1+2\dred)}^{(1)}[\psi](\tau_1, \tau_2)\nn\\
&&+\int_{\MM_{r_+(1-\dhor), r_+(1+2\dred)}(\tau_1, \tau_2)}|\pr^{\leq 1}F|^2 
+\ep\M_{r_+(1-\dhor), {r_+(1+\dred)}}^{(1)}[\psi](\tau_1, \tau_2),
\eeaa
and the desired estimate \eqref{eq:redshift:general:highorderregularity} {for the case $\reg=1$} then follows from the above inequality  by taking $\ep$ small enough. {This concludes the proof of Lemma \ref{lemma:redshiftestimates:general}.} 
\end{proof}


\subsection{Conditional higher order derivatives energy-Morawetz estimates}


The following lemma allows to derive conditional higher order derivatives energy and Morawetz estimates. 

\begin{lemma}\lab{lemma:higherorderenergyMorawetzestimates}
Let $\g$ satisfy the assumptions of Section \ref{subsubsect:assumps:perturbedmetric}. For any $1\leq\tau_1<\tau_2 <+\infty$, we have for solutions to the wave equation \eqref{intro:eq:scalarwave} the improved estimate
\bea\lab{eq:higherorderenergymorawetzforlargerconditionalonsprtau}
\nn\EMF^{(1)}_{r\geq 11m}[\psi](\tau_1, \tau_2) &\les& \EMF[\psi](\tau_1, \tau_2)+\EMF[\pr_\tau\psi](\tau_1, \tau_2)
{+\widehat{\mathcal{N}}^{(1)}_{r\geq 10m}[\psi, F](\tau_1, \tau_2)}\\
&&+\big(\EMF[\psi](\tau_1, \tau_2)\big)^{\frac{1}{2}}\left(\EMF^{(1)}[\psi](\tau_1, \tau_2)\right)^{\frac{1}{2}},
\eea
and
\bea\lab{eq:higherorderenergymorawetzconditionalonsprtausprphis}
\nn\EMF^{(1)}[\psi](\tau_1, \tau_2) &\les& \E^{(1)}[\psi](\tau_1)+\EMF[\psi](\tau_1, \tau_2)+\EMF[\pr_\tau\psi](\tau_1, \tau_2)\\
&&+\EMF_{r_+(1-\dhor), 11m}[\pr_{\tphi}\psi](\tau_1, \tau_2)+{\widehat{\mathcal{N}}^{(1)}_{r\geq 10m}[\psi, F](\tau_1, \tau_2)}\nn\\
&&{+\int_{\MM(\tau_1, \tau_2)}|\pr^{\leq 1}F|^2.}
\eea
Furthermore, we have, for any $0<\de\leq 1$ and for $s=0,1$, 
\bea\lab{eq:imporvedhigherorderenergymorawetzconditionalonlowerrweigth}
\M^{(s)}_{\de}[\psi](\tau_1, \tau_2)\les \EMF^{(s)}[\psi](\tau_1, \tau_2)+\int_{\MM(\tau_1, \tau_2)}r^{1+\de}|\pr^{\leq s}F|^2.
\eea
\end{lemma}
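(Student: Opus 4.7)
My plan is to prove the three estimates of Lemma \ref{lemma:higherorderenergyMorawetzestimates} in order, exploiting commutations of the wave equation with $\pr_r$ and $r^{-1}\pr_{x^a}$ in the asymptotically flat region, redshift in the near-horizon region, and purely spatial elliptic regularity on the compact intermediate slab.

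For \eqref{eq:higherorderenergymorawetzforlargerconditionalonsprtau}, the EMF of $\pr_\tau\psi$ is already available on the RHS, so it remains to estimate the EMF of $\pr_r\psi$ and $r^{-1}\pr_{x^a}\psi$ in $r\geq 11m$. I commute the wave equation with each of these derivatives; by \eqref{eq:localwavecommutators:withfirstordergoodderis} the commuted source has the schematic form $\pr^{\leq 1}F + O(r^{-1})\pr^{\leq 1}\pr\psi + r^{-1}\dk^{\leq 2}\Ga_b\,\dk\pr\psi + r^{-1}\dk^{\leq 2}\Ga_g\,\dk\psi$. I then apply a globally timelike multiplier for the energy together with the improved Morawetz identity \eqref{eq:exteriorMorawetzestimates:strong} of Lemma \ref{lemma:exteriorMorawetzestimates} to each commuted equation, integrated on $\MM_{11m,+\infty}(\tau_1,\tau_2)$; the boundary flux on the timelike slab $r=11m$ is absorbed into $\EMF[\psi]$ using the trace estimate of Lemma \ref{lem:trace:fluxcontrolledbyMorawetz}. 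Among the resulting error terms, the $\pr^{\leq 1}F$ contribution produces the $\widehat{\NN}^{(1)}_{r\geq 10m}[\psi,F]$ part by \eqref{eq:exteriorMorawetzestimates:strong} and the $\int|\pr^{\leq 1}F|^2$ part by \eqref{eq:exteriorMorawetzestimates:weak}; the $O(r^{-1})\pr^{\leq 1}\pr\psi$ term paired with the Morawetz and energy weights of $\pr\pr\psi$ gives, after Cauchy--Schwarz, exactly the cross term $(\EMF[\psi])^{1/2}(\EMF^{(1)}[\psi])^{1/2}$; the $\Ga_b,\Ga_g$ contributions carry a factor $\ep$ and are absorbed by Lemmas \ref{lemma:basiclemmaforcontrolNLterms:ter} and \ref{lemma:basiclemmaforcontrolNLterms:bis} for $\ep$ small enough.

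For \eqref{eq:higherorderenergymorawetzconditionalonsprtausprphis}, I combine the previous estimate with the redshift estimate of Lemma \ref{lemma:redshiftestimates} on $r\leq r_+(1+\dred)$, whose Morawetz term on $[r_+(1+\dred),r_+(1+2\dred)]$ is reabsorbed into $\EMF[\psi]$ since there is no trapping there. What remains is the compact slab $r_+(1+\dred)\leq r\leq 11m$, on which the Morawetz weights are uniformly comparable to the ordinary $H^2$-Dirichlet norm on each $\Si(\tau)$. Since $\EMF[\pr_\tau\psi]$ and $\EMF_{r_+(1-\dhor),11m}[\pr_{\tphi}\psi]$ supply all second derivatives involving $\pr_\tau$ or $\pr_{\tphi}$, the wave equation becomes, on each slice and modulo these known quantities and $F$, an equation for the remaining purely spatial second derivatives, which is uniformly elliptic in the two directions transverse to $(\pr_\tau,\pr_{\tphi})$. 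Standard elliptic regularity on the slab recovers the missing second derivatives; adding this with the bound in $r\geq 11m$ from \eqref{eq:higherorderenergymorawetzforlargerconditionalonsprtau} and the redshift contribution in $r\leq r_+(1+\dred)$ produces \eqref{eq:higherorderenergymorawetzconditionalonsprtausprphis}, with $\E^{(1)}[\psi](\tau_1)$ coming from the initial-slice boundary terms of all multiplier identities.

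Estimate \eqref{eq:imporvedhigherorderenergymorawetzconditionalonlowerrweigth} follows directly from \eqref{eq:exteriorMorawetzestimates:weak} applied to $\psi$ (and, after commutation with $\pr_r$ and $r^{-1}\pr_{x^a}$, to $\pr^{\leq 1}\psi$ when $s=1$), combined with Lemma \ref{lemma:redshiftestimates} to furnish the nondegenerate Morawetz in $r\leq r_+(1+2\dred)$; the quadratic source $\int r^{1+\de}|\pr^{\leq s}F|^2$ is precisely the one produced by \eqref{eq:exteriorMorawetzestimates:weak}, and commutator errors are $\ep$-small as before. The principal obstacle of the lemma lies in the elliptic recovery step of (b): one must verify that on the compact slab $r_+(1+\dred)\leq r\leq 11m$ the purely spatial principal symbol of $\square_\g$ is uniformly elliptic modulo the controlled $(\pr_\tau,\pr_{\tphi})$-second derivatives, despite the presence of the trapping set in $[r_+(1+2\dbl),10m]$. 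This is not an obstruction here, because elliptic estimates on a fixed slice do not integrate along the null bicharacteristic flow; the trapping only affects $\EMF[\psi]$, $\EMF[\pr_\tau\psi]$ and $\EMF_{r_+(1-\dhor),11m}[\pr_{\tphi}\psi]$, all of which appear on the RHS of \eqref{eq:higherorderenergymorawetzconditionalonsprtausprphis} as given quantities.
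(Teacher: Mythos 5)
Your strategy for the far region is to commute the equation with $\pr_r$ and $r^{-1}\pr_{x^a}$ and then run energy/Morawetz multipliers on the commuted equations, claiming the commutator error $O(r^{-1})\pr^{\leq 1}\pr\psi$ yields, after Cauchy--Schwarz, exactly the cross term $(\EMF[\psi])^{1/2}(\EMF^{(1)}[\psi])^{1/2}$. This step fails. The genuinely second-order part of $[\pr_r,\square_{\gam}]$ is $\pr_r(\gam^{ab})\pr_{x^a}\pr_{x^b}\psi\simeq -\tfrac{2}{r}\,\mathring{\ga}^{ab}r^{-2}\pr_{x^a}\pr_{x^b}\psi$, i.e.\ an angular second derivative with coefficient of size $2/r$ that is neither $\ep$-small nor rapidly decaying. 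Tested against the main multiplier terms for the commuted variable (e.g.\ $\pr_\tau\pr_r\psi$ or $\pr_r^2\psi$), it produces integrals of the schematic form $\int_{r\geq 11m} r^{-1}|\nab^2\psi|\,|\pr_\tau\pr_r\psi|$; the norms available on your right-hand side give only $r^{-1}$ weights for $\nab\pr^{\leq 1}\psi$ and $r^{-2}$ weights for $\pr_\tau\pr_r\psi$ and $\pr_r^2\psi$, so Cauchy--Schwarz either leaves an uncontrolled factor (a loss of one power of $r$) or returns $\EMF^{(1)}[\psi]$ with an order-one constant, which cannot be absorbed since the left-hand side of \eqref{eq:higherorderenergymorawetzforlargerconditionalonsprtau} is restricted to $r\geq 11m$ and the claimed cross term has the small factor $(\EMF[\psi])^{1/2}$, not $(\EMF^{(1)}[\psi])^{1/2}$. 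The same commutation also undermines your treatment of \eqref{eq:imporvedhigherorderenergymorawetzconditionalonlowerrweigth} for $s=1$: via \eqref{eq:exteriorMorawetzestimates:weak} you would need $\int r^{1+\de}|[\pr_r,\square_\g]\psi|^2\supset\int r^{-1+\de}|\nab^2\psi|^2$, which exceeds $\M^{(1)}[\psi]$ by a factor $r^{\de}$. One can partially repair the bulk term by integrating by parts in the angular and $\pr_\tau$ directions, but this changes the cross term (it becomes $(\EMF[\pr_\tau\psi])^{1/2}(\EMF^{(1)}[\psi])^{1/2}$, which is not bounded by the stated right-hand side) and you have not identified or carried out this step.

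The paper avoids commutation with $\pr_r$, $r^{-1}\pr_{x^a}$ altogether in the far region: it writes the equation as \eqref{eq:commutewave:ellipticesti:high}, uses that the slice-tangential part $\gam^{ij}\pr_i\pr_j$ ($i,j$ running over $r,x^1,x^2$) is elliptic for $r\geq 10m$ (see \eqref{eq:ellipticityofinversemetricrestritectorxaforrgeq10m}), and squares the equation with weight $\chi_1 r^{-2}$ for the Morawetz part, on slabs $\MM(n,n+1)$ combined with the mean-value theorem and the local energy estimate for the energy part, and with a separate argument restricted to $\II_+$ (using \eqref{expression:prtauIIplus:nullinf} and Lemma \ref{lem:nullnfFluxBdedByEnergy}) for the flux $\F^{(1)}_{\II_+}$, which your proposal does not address at all; the only error terms are lower order from the integrations by parts, and these produce the cross term $\sqrt{\EMF[\psi]}\sqrt{\EMF^{(1)}[\psi]}$. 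For \eqref{eq:imporvedhigherorderenergymorawetzconditionalonlowerrweigth} with $s=1$, the paper commutes only with $\pr_\tau$ (whose commutator is $\ep$-small by \eqref{esti:commutatorBoxgandT:general} and Lemma \ref{lemma:basiclemmaforcontrolNLterms:bis}) and recovers $\pr_r^2\psi$ pointwise from the wave equation, again avoiding your problematic commutators. Your treatment of the intermediate slab $r_+(1+\dred)\leq r\leq 11m$ for \eqref{eq:higherorderenergymorawetzconditionalonsprtausprphis} (slice-wise ellipticity of $\De\pr_r^2+\mathring{\ga}^{ab}\pr_{x^a}\pr_{x^b}$ modulo $(\pr_\tau,\pr_{\tphi})$-derivatives, combined with the redshift estimate) is essentially the paper's argument and is sound, but the far-region part needs to be redone along the non-commutation lines above.
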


\begin{proof}
The proof proceeds in the following steps.

{\noindent{\bf Step 1.} \textit{Proof of  \eqref{eq:higherorderenergymorawetzforlargerconditionalonsprtau}: Morawetz part}. In view of \eqref{eq:controloflinearizedinversemetriccoefficients}, \eqref{eq:assymptiticpropmetricKerrintaurxacoord:1} and Lemma \ref{lemma:computationofthederiveativeofsrqtg}, we have the following non-sharp identity
\beaa
\bsplit
\square_\g -\square_{\gam} =& \widecheck{\g}^{\a\b}\pr_\a\pr_\b +\frac{1}{\sqrt{|\gam|}}\pr_\a\left(\sqrt{|\gam|}\widecheck{\g}^{\a\b}\right)\pr_\b +(N_{det})^\a\pr_\a \\
=& O(\ep)(r^{-1}\pr_\tau, \pr_r, \nab)\pr+O(\ep r^{-1})\pr
\end{split}
\eeaa
which allows to rewrite the wave equation \eqref{intro:eq:scalarwave} as
\beaa
\gam^{\a\b}\pr_\a\pr_\b\psi &=& F+O(\ep)(r^{-1}\pr_\tau, \pr_r, \nab)\pr\psi+O(r^{-1})\pr\psi.
\eeaa
We infer
\bea\label{eq:commutewave:ellipticesti:high}
\gam^{ij}\pr_i\pr_j\psi &=& F+O(1)(\pr_r, r^{-1}\pr_\tau, \nab)\pr_\tau\psi
+O(\ep)(\pr_r, \nab)^2\psi+O(r^{-1})\pr\psi,
\eea
with $1\leq i,j\leq 3$ indices corresponding to the $(r, x^1, x^2)$ coordinates. Taking the square of both sides of \eqref{eq:commutewave:ellipticesti:high}, multiplying both squares} by $\chi_1(r) r^{-2}$ with $\chi_1$ a smooth cutoff function satisfying
\beaa
\chi_1(r)=1 \,\,\text{for } r\geq 10.5m, \qquad  \chi_1(r)=0 \,\,\text{for }r\leq 10m, 
\eeaa
and integrating over $\MM(\tau_1,\tau_2)$, we deduce
{\beaa
&&\int_{\MM(\tau_1,\tau_2)} \chi_1 r^{-2} |\gam^{ij}\pr_i \pr_j\psi |^2 \nn\\
&\les & {\M_{r\geq 10m}[\pr_{\tau}^{\leq 1}\psi](\tau_1,\tau_2)}
+\int_{{\MM_{r\geq 10m}}(\tau_1,\tau_2)}  r^{-2}  |F|^2 +\ep^2\int_{\MM(\tau_1,\tau_2)} \chi_1 r^{-2}|(\pr_r, \nab)^2\psi |^2.
\eeaa
Now, noticing from \eqref{eq:inverse:hypercoord} that 
\bea\lab{eq:ellipticityofinversemetricrestritectorxaforrgeq10m}
\gam^{ij}\pr_i\psi\pr_j\psi\gtrsim |(\pr_r, \nab)\psi|^2\quad\textrm{for}\,\, r\geq 10m,
\eea
and using integration by parts, we have
\bea
\label{eq:highorderMora:rgeq11m:otherderis}
&&\int_{\MM(\tau_1,\tau_2)}  \chi_1 r^{-2} |(  \pr_r, \nab)^2\psi|^2 \nn\\
&\les&\int_{\MM(\tau_1,\tau_2)} \chi_1 r^{-2} |\gam^{ij}\pr_i \pr_j\psi |^2+\sqrt{\M[\psi](\tau_1,\tau_2)}\sqrt{\M^{(1)}[\psi](\tau_1,\tau_2)},
\eea
where the lower order terms are absorbed in the last term on the RHS, and where the boundary terms vanish on $\Si(\tau_1)$ and $\Si(\tau_2)$ because $\pr_i$ are tangent and on $\II_+$ because of the $r^{-2}$ weight. Together} with the previous estimate, this  yields{, for $\ep$ small enough,
\beaa
&&\int_{\MM(\tau_1,\tau_2)}   r^{-2}\chi_1 |(  \pr_r, \nab)^2\psi|^2 \nn\\
&\les&{\M_{r\geq 10m}[\pr_{\tau}^{\leq 1}\psi](\tau_1,\tau_2)}
+\int_{\MM_{{r\geq 10m}}(\tau_1,\tau_2)} r^{-2}  | F|^2 +\sqrt{\M[\psi](\tau_1,\tau_2)}\sqrt{\M^{(1)}[\psi](\tau_1,\tau_2)}.
\eeaa
Also, multiplying \eqref{eq:commutewave:ellipticesti:high} with $r^{-3}\gam^{bc}\pr_{x^b}\pr_{x^c}\psi$, integrating over $\MM(\tau_1, \tau_2)$, and using the following consequence of integration by parts 
\bea\label{eq:highorderMora:rgeq11m:otherderis:angularderivatives}
&&\int_{\MM(\tau_1, \tau_2)}\chi_1r^{-1}|\nab(\pr, \nab)\psi|^2\nn\\
&\les& \int_{\MM(\tau_1, \tau_2)}\chi_1r^{-3}\gam^{ij}\pr_i\psi\pr_j\psi \gam^{bc}\pr_{x^b}\pr_{x^c}\psi+\sqrt{\M[\psi](\tau_1,\tau_2)}\sqrt{\M^{(1)}[\psi](\tau_1,\tau_2)},
\eea
where the lower order terms are absorbed in the last term on the RHS, and where the boundary terms vanish on $\Si(\tau_1)$ and $\Si(\tau_2)$ because $\pr_i$ are tangent and on $\II_+$ because of the $r^{-1}$ weight, we obtain, for $\ep$ small enough, and after integrating by parts various terms on the RHS to distribute angular derivatives, 
\beaa
&&\int_{\MM(\tau_1, \tau_2)}\chi_1r^{-1}|\nab(\pr, \nab)\psi|^2\\
&\les& {\M_{r\geq 10m}[\pr_{\tau}^{\leq 1}\psi](\tau_1,\tau_2)}
+\int_{\MM_{{r\geq 10m}}(\tau_1,\tau_2)} r^{-1}  | F|^2 +\sqrt{\EM[\psi](\tau_1,\tau_2)}\sqrt{\EM^{(1)}[\psi](\tau_1,\tau_2)}.
\eeaa
Together with the above estimate for $\int_{\MM(\tau_1,\tau_2)}   r^{-2}\chi_1 |(  \pr_r, \nab)^2\psi|^2$, we infer
\bea
\lab{eq:EM:AsympKerr:highMora:v1:mid}
\M^{(1)}_{r\geq 10.5m}[\psi](\tau_1,\tau_2) 
&\les& {\M_{r\geq 10m}[\pr_{\tau}^{\leq 1}\psi](\tau_1,\tau_2)}
+\int_{\MM_{{r\geq 10m}}(\tau_1,\tau_2)} r^{-1}  | F|^2\nn\\
&& +\sqrt{\EM[\psi](\tau_1,\tau_2)}\sqrt{\EM^{(1)}[\psi](\tau_1,\tau_2)}.
\eea}

{\noindent{\bf Step 2.} \textit{Proof of  \eqref{eq:higherorderenergymorawetzforlargerconditionalonsprtau}: energy part}. Let $n$ be any integer such that $[n,n+1]\subset(\tau_1, \tau_2)$. We take the square of both sides of equation \eqref{eq:commutewave:ellipticesti:high}, multiply both squares by $\chi_2^2$ where 
\beaa
\chi_2(r)=1 \,\,\text{for } r\geq 11m, \qquad  \chi_2(r)=0 \,\,\text{for }r\leq 10.5m, 
\eeaa
integrate\footnote{{While the LHS of \eqref{eq:commutewave:ellipticesti:high} only contains derivatives that are tangential to $\Si(\tau)$, integrating its square by parts  on $\Si(\tau)$ would generate boundary terms on $\Si(\tau)\cap\II_+$. To avoid such problematic boundary terms, we instead perform in this step integrations on $\MM(n,n+1)$.}} over $\MM(n,n+1)$, and use an estimate analog to \eqref{eq:highorderMora:rgeq11m:otherderis}. This yields
\beaa
&&\int_{\MM(n,n+1)}\chi_2^2|(\pr_r, \nab)^2 \psi |^2 \nn\\
&\les &
\int_{\MM(n,n+1)}\chi_2^2|\g^{ij}\pr_i\pr_j \psi |^2 +\sqrt{\MF{[\psi]}(\tau_1,\tau_2)}\sqrt{\MF^{(1)}{[\psi]}(\tau_1,\tau_2)}\\
\nn\\
&\les & \sup_{\tau\in[\tau_1, \tau_2]}\E_{{r\geq 10.5m}}[{\pr_{\tau}^{\leq 1}}\psi](\tau)+ \int_{\MM{_{r\geq 10.5m}(n,n+1)}}|F|^2
 \\
&&+\ep^2\int_{\MM(n,n+1)}\chi_2|(\pr_r, \nab)^2 \psi |^2+\sqrt{\MF{[\psi]}(\tau_1,\tau_2)}\sqrt{\MF^{(1)}{[\psi]}(\tau_1,\tau_2)},
\eeaa
and hence, for $\ep$ small enough,
\beaa
\int_{\MM(n,n+1)}\chi_2^2|(\pr_r, \nab)^2 \psi |^2 
&\les & \sup_{\tau\in[\tau_1, \tau_2]}\E_{{r\geq 10.5m}}[{\pr_{\tau}^{\leq 1}}\psi](\tau)+ \int_{\MM{_{r\geq 10.5m}(n,n+1)}}|F|^2 \nn\\
&&+\sqrt{\MF{[\psi]}(\tau_1,\tau_2)}\sqrt{\MF^{(1)}{[\psi]}(\tau_1,\tau_2)}.
\eeaa}
 
By the mean-value theorem, there exists $\tau_n\in [n,n+1]$ such that
\beaa
\int_{\Sigma(\tau_n)}\chi_2^2|(\pr_r, \nab)^2 \psi |^2
&\les &\int_{\MM(n,n+1)}\chi_2^2|(\pr_r,\nab)^2 \psi |^2.
\eeaa
Hence, it follows that
\beaa
\E^{(1)}[\chi_2\psi] (\tau_n)
&\les &\E[{\chi_2}\pr_{\tau}\psi](\tau_n) 
+\int_{\Sigma(\tau_n)}\chi_2^2|(\pr_r, \nab)^2 \psi |^2
+\E_{{r\geq 10.5m}}[\psi](\tau_n)
\nn\\
&\les & \sup_{\tau\in[\tau_1, \tau_2]}\E_{{r\geq 10.5m}}[{(\pr_{\tau})^{\leq 1}}\psi](\tau)+ \int_{\MM{_{r\geq 10.5m}(n,n+1)}}|F|^2 \nn\\
&&+\sqrt{\MF{[\psi]}(\tau_1,\tau_2)}\sqrt{\MF^{(1)}{[\psi]}(\tau_1,\tau_2)}.
\eeaa
{Applying local energy estimate to $\pr^{\leq 1}(\chi_2\psi)$ using the vectorfield $\pr_{\tt}$,} we deduce
\beaa
&&\EF^{(1)}[\chi_2\psi](\tau_n, \min(\tau_n+2, \tau_2))\\ 
&\les& \E^{(1)}[\chi_2\psi](\tau_n)+{\bigg|\int_{\MM(\tau_n, \min(\tau_n+2, \tau_2))}\Re\Big(\ov{\pr_{\tt}\pr^{\leq 1}(\chi_2\psi)} \pr^{\leq 1}\big(\chi_2F+[\square_\g, \chi_2]\psi\big)\Big)\bigg|}\\
&&{+\bigg|\int_{\MM(\tau_n, \min(\tau_n+2, \tau_2))}\Re\Big(\ov{\pr_{\tt}\pr^{\leq 1}(\chi_2\psi)} \pr^{\leq 2}(\chi_2\psi)\Big)\bigg|}\\
&\les& \sup_{\tau\in[\tau_1, \tau_2]}\E_{{r\geq 10.5m}}[{(\pr_{\tau})^{\leq 1}}\psi](\tau)
+{\bigg|\int_{\MM(\tau_n, \min(\tau_n+2, \tau_2))}\Re\Big(\ov{\pr_{\tt}\pr^{\leq 1}(\chi_2\psi)} \pr^{\leq 1}\big(\chi_2F\big)\Big)\bigg|} \\
&& {+\int_{\MM_{r\geq 10.5m}(n,n+1)}|F|^2} +\sqrt{\MF{[\psi]}(\tau_1,\tau_2)}\sqrt{\MF^{(1)}{[\psi]}(\tau_1,\tau_2)}\\
&&+\M^{(1)}_{10.5m, 11m}[\psi]{(\tau_n, \min(\tau_n+2, \tau_2))}.
\eeaa
Plugging \eqref{eq:EM:AsympKerr:highMora:v1:mid} to control the last term on the RHS, we infer
\beaa
&&\EF^{(1)}[\chi_2\psi](\tau_n, \min(\tau_n+2, \tau_2))\\ 
&\les& \EM_{{r\geq 10m}} [{(}\pr_{\tau}{)^{\leq 1}}\psi](\tau_1,\tau_2)+\sqrt{\EMF{[\psi]}(\tau_1,\tau_2)}\sqrt{\EMF^{(1)}{[\psi]}(\tau_1,\tau_2)}\\
&&+{\bigg|\int_{\MM(\tau_n, \min(\tau_n+2, \tau_2))}\Re\Big(\ov{\pr_{\tt}\pr^{\leq 1}(\chi_2\psi)} \pr^{\leq 1}\big(\chi_2F\big)\Big)\bigg|}+ {\sup_{\tau\in[\tau_1,\tau_2-2]} \int_{\MM_{r\geq {10m}}(\tau,\tau+2)}}|F|^2.
\eeaa
Proceeding similarly on $(\tau_1, \tau_1+2)$ and noticing that the above time intervals cover $(\tau_1, \tau_2)$, we infer
\bea\lab{eq:highorderEMF:energy:pre2}
&&\sup_{\tau\in[\tau_1, \tau_2]}\E^{(1)}[\chi_2\psi](\tau)+\F_{\II_+}^{(1)}[\psi](\tau_1, \tau_1+1)+\F_{\II_+}^{(1)}[\psi](\tau_2-1, \tau_2)\nn\\ 
\nn&\les&  \EM_{{r\geq 10m}} [{(\pr_{\tau})^{\leq 1}\psi}](\tau_1,\tau_2) +\sqrt{\EMF{[\psi]}(\tau_1,\tau_2)}\sqrt{\EMF^{(1)}{[\psi]}(\tau_1,\tau_2)}\\
&&+{\bigg|\int_{\MM(\tau_n, \min(\tau_n+2, \tau_2))}\Re\Big(\ov{\pr_{\tt}\pr^{\leq 1}(\chi_2\psi)} \pr^{\leq 1}\big(\chi_2F\big)\Big)\bigg|}\nn\\
&&+ {\sup_{\tau\in[\tau_1,\tau_2-2]} \int_{\MM_{r\geq {10.5m}}(\tau,\tau+2)}}|F|^2,
\eea
and hence{, in view of the definition of $\widehat{\mathcal{N}}^{(1)}_{r\geq 10m}[\psi, F](\tau_1, \tau_2)$ in Section \ref{subsect:norms},}
\bea
\lab{eq:highorderEMF:energylarger}
\nn&&\sup_{\tau\in[\tau_1,\tau_2]}\E^{(1)}_{r\geq 11m}[\psi](\tau){+\F_{\II_+}^{(1)}[\psi](\tau_1, \tau_1+1)+\F_{\II_+}^{(1)}[\psi](\tau_2-1, \tau_2)}\\
&\les&  \EM_{{r\geq 10m}} [{(\pr_{\tau})^{\leq 1}\psi}](\tau_1,\tau_2)  +\sqrt{\EMF{[\psi]}(\tau_1,\tau_2)}\sqrt{\EMF^{(1)}{[\psi]}(\tau_1,\tau_2)}\nn\\
&&+\widehat{\mathcal{N}}^{(1)}_{r\geq 10m}[\psi, F](\tau_1, \tau_2).
\eea

{\noindent{\bf Step 3.} \textit{Proof of  \eqref{eq:higherorderenergymorawetzforlargerconditionalonsprtau}: flux part}. We start by using \eqref{eq:controloflinearizedinversemetriccoefficients}, \eqref{eq:assymptiticpropmetricKerrintaurxacoord:1} and Lemma \ref{lemma:computationofthederiveativeofsrqtg} to write the wave operator as 
\beaa
\square_\g &=& \pr_r^2-2\pr_\tau\pr_r+\frac{1}{r^2}\mathring{\ga}^{ab}\pr_{x^a}\pr_{x^b}+O(r^{-1})\pr(\pr^{\leq 1}\psi).
\eeaa
Recalling from \eqref{expression:prxaIIplus:nullinf} and \eqref{expression:prtauIIplus:nullinf} that
\beaa
\pr_{x^a}^{\II_+}=\pr_{x^a}+O(\ep)\pr_r,\,\,\, a=1,2,  \qquad \pr_\tau^{\II_+}=\pr_\tau -\frac{1}{2}(1+b^r)\pr_r+O(\ep)\nab, \qquad |\dk^{\leq 1}b^r|\les \ep,
\eeaa
we infer on $\II_+$
\beaa
\square_\g &=& -2\pr_\tau^{\II_+}\pr_r+\frac{1}{r^2}\mathring{\ga}^{ab}\pr_{x^a}^{\II_+}\pr_{x^b}^{\II_+}+O(\ep)(\pr_r, \nab)\pr_r+O(r^{-1})\pr\pr^{\leq 1}\\
&=& 4(\pr^{\II_+}_\tau)^2+\frac{1}{r^2}\mathring{\ga}^{ab}\pr_{x^a}^{\II_+}\pr_{x^b}^{\II_+}+O(1)\pr^{\II_+}_\tau\pr_\tau +O(\ep)(\pr_r, \pr^{\II_+}_\tau, \nab^{\II_+})\pr_r+O(r^{-1})\pr\pr^{\leq 1}
\eeaa
and hence
\beaa
\left(4(\pr^{\II_+}_\tau)^2+\frac{1}{r^2}\mathring{\ga}^{ab}\pr_{x^a}^{\II_+}\pr_{x^b}^{\II_+}\right)\psi = F+O(1)\pr^{\II_+}_\tau(\pr_\tau\psi) +O(\ep)(\pr_r, \pr^{\II_+}_\tau, \nab^{\II_+})\pr_r\psi+O(r^{-1})\pr(\pr^{\leq 1}\psi).
\eeaa
Taking the square on both sides and integrating on $\II_+(\tau_1, \tau_2)$, and multiplying both squares with $\chi_3^2$ where $\chi_3=\chi_3(\tau)$ is a smooth cut-off supported in $(\tau_1, \tau_2)$ and $\chi_3=1$ on $(\tau_1+1, \tau_2-1)$,  we infer
\beaa
&&\int_{\tau_1}^{\tau_2}\int_{\mathbb{S}^2}\chi_3^2\left|\left(4(\pr^{\II_+}_\tau)^2+\frac{1}{r^2}\mathring{\ga}^{ab}\pr_{x^a}^{\II_+}\pr_{x^b}^{\II_+}\right)\psi\right|^2(\tauu=+\infty, \tau, \omega)r^2d\mathring{\ga}d\tau\\
&\les& \int_{\tau_1}^{\tau_2}\int_{\mathbb{S}^2}\chi_3^2\Big(|F|^2+|\pr^{\II_+}_\tau(\pr_\tau\psi)|^2+\ep^2|(\pr^{\II_+}_\tau, \nab^{\II_+})\pr_r\psi|^2\Big)(\tauu=+\infty, \tau, \omega)r^2d\mathring{\ga}d\tau,
\eeaa
where the remaining terms vanish on $\II_+(\tau_1, \tau_2)$ in view of Lemma \ref{lem:nullnfFluxBdedByEnergy}. Integrating by parts the LHS, we obtain
\beaa
\F_{\II_+}[(\pr^{\II_+}_\tau, \nab^{\II_+})\psi](\tau_1+1, \tau_2-1)&\les&  {\int_{\II_+(\tau_1,\tau_2)}|F|^2} + \F_{\II_+}[\pr_\tau\psi](\tau_1, \tau_2)\\
&&+\sqrt{\F_{\II_+}[\psi](\tau_1, \tau_2)}\sqrt{\F^{(1)}_{\II_+}[\psi](\tau_1, \tau_2)}+\ep^2\F^{(1)}_{\II_+}[\psi](\tau_1, \tau_2).
\eeaa
Using again the fact that $\pr_\tau^{\II_+}=\pr_\tau -\frac{1}{2}(1+b^r)\pr_r+O(\ep)\nab$, as well as the control of $\F_{\II_+}^{(1)}[\psi](\tau_1, \tau_1+1)$ and $\F_{\II_+}^{(1)}[\psi](\tau_2-1, \tau_2)$ provided by \eqref{eq:highorderEMF:energy:pre2}, we infer
\beaa
\F^{(1)}_{\II_+}[\psi](\tau_1, \tau_2)&\les& \ep^2\F^{(1)}_{\II_+}[\psi](\tau_1, \tau_2)+ {\EMF_{r\geq 10m}[\pr_{\tau}^{\leq 1}\psi](\tau_1,\tau_2)}\\
&&+{\sup_{\tau\in[\tau_1,\tau_2-2]}}\widehat{\mathcal{N}}^{(1)}_{r\geq {10.5}m}[\psi, F]({\tau, \tau+2})+ \int_{{\II_+(\tau_1,\tau_2)}}|F|^2\\
&&+\sqrt{\EMF{[\psi]}(\tau_1,\tau_2)}\sqrt{\EMF^{(1)}{[\psi]}(\tau_1,\tau_2)},
\eeaa
which yields for $\ep>0$ small enough
{\bea
\F^{(1)}_{\II_+}[\psi](\tau_1, \tau_2)&\les&  \EMF_{r\geq 10m}[\pr_{\tau}^{\leq 1}\psi](\tau_1,\tau_2)+{\sup_{\tau\in[\tau_1,\tau_2-2]}}\widehat{\mathcal{N}}^{(1)}_{r\geq 10.5m}[\psi, F]({\tau, \tau+2})\nn\\
&&+ \int_{{\II_+(\tau_1,\tau_2)}}|F|^2+\sqrt{\EMF{[\psi]}(\tau_1,\tau_2)}\sqrt{\EMF^{(1)}{[\psi]}(\tau_1,\tau_2)}.
\eea
Applying  Lemma \ref{lem:trace:fluxcontrolledbyMorawetz} to control the term $ \int_{\II_+(\tau_1,\tau_2)}|F|^2$ on the RHS by  
\beaa
\int_{\II_+(\tau_1,\tau_2)}|F|^2\les \int_{\MM{_{r\geq {10.5m}}}(\tau_1,\tau_2)}|\pr^{\leq 1}F|^2 \les \widehat{\mathcal{N}}^{(1)}_{r\geq 10m}[\psi, F](\tau_1, \tau_2),
\eeaa
where we have also used the definitions \eqref{eq:defintionwidehatmathcalNfpsinormRHS} and \eqref{def:normshighorder:FMdeandNNnorms} in the last step,
we infer}
\bea\lab{eq:highorderEMF:fluxnullinf:final}
\F^{(1)}_{\II_+}[\psi](\tau_1, \tau_2)&\les& \EMF[\pr_{\tau}\psi](\tau_1,\tau_2)+\EMF[\psi](\tau_1,\tau_2)+\widehat{\mathcal{N}}^{(1)}_{r\geq 10m}[\psi, F](\tau_1, \tau_2)\nn\\
&& +\sqrt{\EMF{[\psi]}(\tau_1,\tau_2)}\sqrt{\EMF^{(1)}{[\psi]}(\tau_1,\tau_2)}.
\eea}

{\noindent{\bf Step 4.} \textit{End of the proof of  \eqref{eq:higherorderenergymorawetzforlargerconditionalonsprtau}}}. Combining the estimates \eqref{eq:EM:AsympKerr:highMora:v1:mid}, \eqref{eq:highorderEMF:energylarger} and \eqref{eq:highorderEMF:fluxnullinf:final}, {and in view of the following bound
\beaa
\int_{\MM_{r\geq 10m}(\tau_1,\tau_2)}|\pr^{\leq 1} F|^2 \les \widehat{\mathcal{N}}^{(1)}_{r\geq 10m}[\psi, F](\tau_1, \tau_2)
\eeaa
which follows from the definition $\widehat{\mathcal{N}}^{(1)}_{r\geq 10m}[\psi, F](\tau_1, \tau_2)$ in Section \ref{subsect:norms},} we get
the desired estimate \eqref{eq:higherorderenergymorawetzforlargerconditionalonsprtau}.

{\noindent{\bf Step 5.} \textit{Proof of  \eqref{eq:higherorderenergymorawetzconditionalonsprtausprphis}: Morawetz part and flux part on $\AA$}. In view of \eqref{eq:controloflinearizedinversemetriccoefficients}, \eqref{eq:inverse:hypercoord} and Lemma \ref{lemma:computationofthederiveativeofsrqtg}, we have the following non-sharp identity in $r\leq 12m$ 
\beaa
|q|^2\square_\g &=& \De\pr_r^2+\mathring{\ga}^{ab}\pr_{x^a}\pr_{x^b}+O(1)(\pr_\tau, \pr_{\tphi}){(\pr_\tau, \pr_{\tphi},\pr_r)}+O(1)\pr\psi+O(\ep)\pr^2
\eeaa
which yields
\bea\lab{eq:elliptic:waveform:outsidehorizon}
\Big(\De\pr_r^2+\mathring{\ga}^{ab}\pr_{x^a}\pr_{x^b}\Big)\psi &=& |q|^2F+O(1)(\pr_\tau, \pr_{\tphi}){(\pr_\tau, \pr_{\tphi},\pr_r)}\psi+O(1)\pr\psi+O(\ep)\pr^2\psi.
\eea} 

Next, we consider a cut-off function $\chi_4$ such hat 
\beaa
\chi_4(r)=1 \,\,\text{for }r_+(1+\dred)\leq r\leq 11m, \qquad  \chi_4(r)=0 \,\,\text{for }r\geq 12m\,\,\textrm{and }r\leq r_+(1+\dred/2).
\eeaa
Then, we multiply both sides of \eqref{eq:elliptic:waveform:outsidehorizon} by $\chi_4^2\pr^2_r\psi$ and integrate over $\MM(\tau_1, \tau_2)$ which yields, after integration by parts on the RHS, 
\bea
&&\int_{\MM(\tau_1, \tau_2)}\chi_4^2\Big(\De\pr_r^2+\mathring{\ga}^{ab}\pr_{x^a}\pr_{x^b}\Big)\psi\pr^2_r\psi\nn\\
&\les& \left(\int_{\MM(\tau_1, \tau_2)}|F|^2 {+\M[\pr_\tau\psi](\tau_1, \tau_2)}+{\M[\chi_4\pr_{\tphi}\psi]}(\tau_1, \tau_2)\right)^{\frac{1}{2}}\Big(\M[\chi_4\pr_r\psi](\tau_1, \tau_2)\Big)^{\frac{1}{2}}\nn\\
&&+\sqrt{\EMF[\psi](\tau_1, \tau_2)}\sqrt{\EMF^{(1)}[\psi](\tau_1, \tau_2)}+\ep\M^{(1)}[\psi](\tau_1, \tau_2)\nn\\
&&{+\M[\pr_\tau\psi](\tau_1, \tau_2)+\M_{r_+(1+\dred/2), 12m}[\pr_{\tphi}\psi](\tau_1, \tau_2).} 
\eea
Integrating by parts the LHS, we infer 
\beaa
&& \M[\chi_4\pr_r\psi](\tau_1, \tau_2)\\
&\les& \left(\int_{\MM(\tau_1, \tau_2)}|F|^2 {+\M[\pr_\tau\psi](\tau_1, \tau_2)}+{\M[\chi_4\pr_{\tphi}\psi]}(\tau_1, \tau_2)\right)^{\frac{1}{2}}\Big(\M[\chi_4\pr_r\psi](\tau_1, \tau_2)\Big)^{\frac{1}{2}}\\
&&+\sqrt{\EMF[\psi](\tau_1, \tau_2)}\sqrt{\EMF^{(1)}[\psi](\tau_1, \tau_2)} +\ep\M^{(1)}[\psi](\tau_1, \tau_2)\nn\\
&& +\M[\pr_\tau\psi](\tau_1, \tau_2) {+\M_{r_+(1+\dred/2), 12m}[\pr_{\tphi}\psi](\tau_1, \tau_2).}
\eeaa
and hence
\begin{align}
\lab{eq:controlofchi4prrpsi:Morawetz:recoveringderivatives}
& \M{[\chi_4}\pr_r\psi](\tau_1, \tau_2)\nn\\
\les{}& \int_{\MM(\tau_1, \tau_2)}|F|^2{+\M[\pr_\tau\psi](\tau_1, \tau_2)}+\M_{r_+(1+{\dred/2}), 11m}[\pr_{\tphi}\psi](\tau_1, \tau_2)\nn\\
{}&+\M^{(1)}_{r\geq 11m}[\psi](\tau_1, \tau_2)+\sqrt{\EMF[\psi](\tau_1, \tau_2)}\sqrt{\EMF^{(1)}[\psi](\tau_1, \tau_2)}+\ep\M^{(1)}[\psi](\tau_1, \tau_2).
\end{align}

Next, we use the following consequence of \eqref{eq:elliptic:waveform:outsidehorizon}
\beaa
\mathring{\ga}^{ab}\pr_{x^a}\pr_{x^b}\psi &=& |q|^2F+O(1)(\pr_\tau, \pr_{\tphi}, \pr_r)\pr\psi+O(1)\pr\psi+O(\ep)\pr^2\psi.
\eeaa
Taking the square on both sides and integrating on $\MM(r_+(1+\dred), 11m)\setminus\Mtrap$, we infer
\beaa
&&\int_{\MM(r_+(1+\dred), 11m)\setminus\Mtrap}|\mathring{\ga}^{ab}\pr_{x^a}\pr_{x^b}\psi|^2\\
&\les& \int_{{\Mntrap_{r_+(1+\dred), 11m}(\tau_1, \tau_2)}}|F|^2+\M[\psi](\tau_1, \tau_2)+\M[\pr_\tau\psi](\tau_1, \tau_2)\\
&&+\M_{r_+(1+\dred), 11m}[\pr_{\tphi}\psi](\tau_1, \tau_2) {+\M_{r_+(1+\dred), 11m}[\pr_r\psi](\tau_1, \tau_2)}\\
&&{+\ep\M_{r_+(1+\dred), 11m}^{(1)}[\psi](\tau_1, \tau_2)}+\sqrt{\EMF[\psi](\tau_1, \tau_2)}\sqrt{\EMF^{(1)}[\psi](\tau_1, \tau_2)}.
\eeaa
{Integrating the LHS by parts,} we infer, for $\ep>0$ small enough, 
\bea
&& \M^{(1)}_{r_+(1+\dred), 11m}[\psi](\tau_1, \tau_2)\nn\\
&\les& {\int_{\Mntrap_{r_+(1+\dred), 11m}(\tau_1, \tau_2)}|F|^2+\M[\pr_\tau\psi](\tau_1, \tau_2)+\M_{r_+(1+\dred), 11m}[\pr_{\tphi}\psi](\tau_1, \tau_2)} \nn\\
&&{+\M[\psi](\tau_1, \tau_2)+\M_{r_+(1+\dred), 11m}[\pr_r\psi](\tau_1, \tau_2)} \nn\\
&&{+\sqrt{\EMF[\psi](\tau_1, \tau_2)}\sqrt{\EMF^{(1)}[\psi](\tau_1, \tau_2)}}\nn\\
&\les& \int_{{\MM}(\tau_1, \tau_2)}|F|^2+\M[\psi](\tau_1, \tau_2)+\M[\pr_\tau\psi](\tau_1, \tau_2)+\M_{r_+(1+{\dred/2}), 11m}[\pr_{\tphi}\psi](\tau_1, \tau_2)\nn\\
&&+\M^{(1)}_{r\geq 11m}[\psi](\tau_1, \tau_2)+\sqrt{\EMF[\psi](\tau_1, \tau_2)}\sqrt{\EMF^{(1)}[\psi](\tau_1, \tau_2)}\nn\\
&&{+\ep\M_{r\leq r_+(1+\dred)}^{(1)}[\psi](\tau_1, \tau_2)},
\eea
{where in the last step we have used the above control of $\M_{r_+(1+\dred), 11m}[\pr_r\psi](\tau_1, \tau_2)$ in \eqref{eq:controlofchi4prrpsi:Morawetz:recoveringderivatives}.}
Together with  the red-shift estimate {of} Lemma \ref{lemma:redshiftestimates} with $s=1$, we infer{, for $\ep$ small enough,}
\bea
&&\sup_{\tau\in[\tau_1,\tau_2]}\E^{(1)}_{r\leq r_+ (1+\dred)}[\psi](\tau)
+\F_{\AA(\tau_1,\tau_2)}^{(1)}[\psi](\tau_1, \tau_2)+ \M^{(1)}_{r_+(1-\dhor), 11m}[\psi](\tau_1, \tau_2)\nn\\
&\les& \E^{(1)}[\psi](\tau_1)+\M[\psi](\tau_1, \tau_2)+\M[\pr_\tau\psi](\tau_1, \tau_2)+\M_{r_+(1+{\dred/2}), 11m}[\pr_{\tphi}\psi](\tau_1, \tau_2)\nn\\
&&+\sqrt{\EMF[\psi](\tau_1, \tau_2)}\sqrt{\EMF^{(1)}[\psi](\tau_1, \tau_2)}\nn\\
&&+\int_{\MM(\tau_1, \tau_2)}|\pr^{\leq 1}F|^2+\M^{(1)}_{r\geq 11m}[\psi](\tau_1, \tau_2).
\eea
Plugging the control of \eqref{eq:EM:AsympKerr:highMora:v1:mid} for $\M^{(1)}_{r\geq 11m}[\psi](\tau_1, \tau_2)$, we deduce 
\bea\lab{eq:highorderMora:improveMora:v3}
\nn&&\sup_{\tau\in[\tau_1,\tau_2]}\E^{(1)}_{r\leq r_+ (1+\dred)}[\psi](\tau)
+\F_{\AA(\tau_1,\tau_2)}^{(1)}[\psi](\tau_1, \tau_2)+ \M^{(1)}[\psi](\tau_1, \tau_2)\\
\nn&\les& \E^{(1)}[\psi](\tau_1)+\M[\psi](\tau_1, \tau_2)+\M[\pr_\tau\psi](\tau_1, \tau_2)+\M_{r_+(1+{\dred/2}), 11m}[\pr_{\tphi}\psi](\tau_1, \tau_2)\\
&&+\sqrt{\EMF[\psi](\tau_1, \tau_2)}\sqrt{\EMF^{(1)}[\psi](\tau_1, \tau_2)} + \int_{\MM(\tau_1, \tau_2)}|\pr^{\leq 1}F|^2.
\eea

\noindent{\bf Step 6.} \textit{Proof of  \eqref{eq:higherorderenergymorawetzconditionalonsprtausprphis}: energy part}. We square both sides of \eqref{eq:elliptic:waveform:outsidehorizon} and multiply both squares by $\chi_4^2$, where, as in Step 5, the smooth cut-off function $\chi_4$ satisfies 
\beaa
\chi_4(r)=1 \,\,\text{for }r_+(1+\dred)\leq r\leq 11m, \qquad  \chi_4(r)=0 \,\,\text{for }r\geq 12m\,\,\textrm{and }r\leq r_+(1+\dred/2).
\eeaa
Integrating on $\Si(\tau)$, we obtain, for any $\tau_1\leq\tau\leq \tau_2$, 
\beaa
\int_{\Si(\tau)}\chi_4^2\left|\Big(\De\pr_r^2+\mathring{\ga}^{ab}\pr_{x^a}\pr_{x^b}\Big)\psi\right|^2&\les& \int_{\Si(\tau)}\chi_4^2|F|^2+\E[\pr_\tau\psi](\tau)+\E_{r_+(1+{\dred/2}), 11m}[\pr_{\tphi}\psi](\tau)\\
&&{+\E_{r\geq 11m}[\pr^{\leq 1}\psi](\tau)}+\E[\psi](\tau)+\ep^2\E^{(1)}[\psi](\tau)\\
&\les&{ \int_{\Si(\tau)}|F|^2}+\E[\pr_\tau\psi](\tau)+\E_{r_+(1+{\dred/2}), 11m}[\pr_{\tphi}\psi](\tau)\\
&&{+\E_{r\geq 11m}[\pr^{\leq 1}\psi](\tau)}+\E[\psi](\tau)+\ep^2\E^{(1)}[\psi](\tau).
\eeaa
Integrating by parts the LHS, we infer
\beaa
\int_{\Si(\tau)}\chi_4^2|(\pr_r, \nab)^2\psi|^2 &\les& { \int_{\Si(\tau)}|F|^2}+\E[\pr_\tau\psi](\tau)+\E_{r_+(1+{\dred/2}), 11m}[\pr_{\tphi}\psi](\tau)+\E[\psi](\tau)\\
&&{+\E_{r\geq 11m}[\pr^{\leq 1}\psi](\tau)}+\ep^2\E^{(1)}[\psi](\tau)+\sqrt{\E[\psi](\tau)}\sqrt{\E^{(1)}[\psi](\tau)},
\eeaa
and hence
\begin{align}
\E_{r_+(1+\dred), 11m}^{(1)}[\psi](\tau) \les& {\int_{\Si(\tau)}|F|^2}+\E[\pr_\tau\psi](\tau)+\E_{r_+(1+{\dred/2}), 11m}[\pr_{\tphi}\psi](\tau)+\E[\psi](\tau)\nn\\
&{+\E_{r\geq 11m}[\pr^{\leq 1}\psi](\tau)}+\ep^2\E^{(1)}[\psi](\tau)+\sqrt{\E[\psi](\tau)}\sqrt{\E^{(1)}[\psi](\tau)}.
\end{align}
Taking the supremum in $\tau\in[\tau_1, \tau_2]$ and using also \eqref{eq:highorderEMF:energylarger} and \eqref{eq:highorderMora:improveMora:v3}, {and applying a trace estimate to control $\int_{\Si(\tau)}|F|^2$ by $\int_{\MM(\tau_1, \tau_2)}|\pr^{\leq 1}F|^2$,}
 we infer, for $\ep$ small enough, 
 \bea\lab{eq:highorder:energyneartrap:total}
\nn&&\EM^{(1)}[\psi](\tau_1, \tau_2)
+\F_{\AA(\tau_1,\tau_2)}^{(1)}[\psi](\tau_1, \tau_2)\\
\nn&\les& \E^{(1)}[\psi](\tau_1)+\EM[\psi](\tau_1, \tau_2)+\EM[\pr_\tau\psi](\tau_1, \tau_2)+\EM_{r_+(1+{\dred/2}), 11m}[\pr_{\tphi}\psi](\tau_1, \tau_2)\\
&&+\sqrt{\EMF[\psi](\tau_1, \tau_2)}\sqrt{\EMF^{(1)}[\psi](\tau_1, \tau_2)}\nn\\
&&+\widehat{\mathcal{N}}^{(1)}_{r\geq 10m}[\psi, F](\tau_1, \tau_2)+ \int_{\MM(\tau_1, \tau_2)}|\pr^{\leq 1}F|^2.
\eea

{\noindent{\bf Step 7.} \textit{End of the proof of  \eqref{eq:higherorderenergymorawetzconditionalonsprtausprphis}}}. 
By adding {\eqref{eq:highorderEMF:fluxnullinf:final}} and \eqref{eq:highorder:energyneartrap:total} together,  we prove the desired estimate \eqref{eq:higherorderenergymorawetzconditionalonsprtausprphis}.

{\noindent{\bf Step 8.} \textit{Proof of  \eqref{eq:imporvedhigherorderenergymorawetzconditionalonlowerrweigth}}. As the  $s=0$ case has been shown in Lemma \ref{lemma:exteriorMorawetzestimates}, we focus on the case $s=1$. We rely on the following immediate consequence of  \eqref{eq:commutewave:ellipticesti:high}, \eqref{eq:inverse:hypercoord} and \eqref{eq:assymptiticpropmetricKerrintaurxacoord:1}
\bea
\pr^2_r\psi = F+O(1)\nab\pr\psi+O(1)\pr_r\pr_\tau\psi
+O(\ep)\pr_r^2\psi+O(r^{-1})\pr\pr^{\leq 1}\psi, \quad \text{for }\,\, r\geq 10m.
\eea
We square both sides of the above identity, multiply both squares by $r^{-1-\de}$ and integrate over $\MM_{r\geq 10m}(\tau_1, \tau_2)$ which yields
\beaa
\int_{\MM_{r\geq 10m}(\tau_1, \tau_2)}\frac{|\pr_r^2\psi|^2}{r^{1+\de}} \les \int_{\MM(\tau_1, \tau_2)}\frac{|F|^2}{r^{1+\de}}+\M^{(1)}[\psi](\tau_1, \tau_2)+\M_\de[\pr_\tau\psi](\tau_1, \tau_2)+\ep^2\M^{(1)}_\de[\psi](\tau_1, \tau_2).
\eeaa
Since 
\beaa
\M^{(1)}_\de[\psi](\tau_1, \tau_2) \les \M^{(1)}[\psi](\tau_1, \tau_2)+\int_{\MM_{r\geq 10m}(\tau_1, \tau_2)}\frac{|\pr_r^2\psi|^2}{r^{1+\de}}+\M_\de[\pr_\tau\psi](\tau_1, \tau_2),
\eeaa
we infer, for $\ep$ small enough, 
\bea\lab{eq:highorderMora:improveMora:byTpsi}
\M^{(1)}_\de[\psi](\tau_1, \tau_2) \les \int_{\MM(\tau_1, \tau_2)}|F|^2+\M^{(1)}[\psi](\tau_1, \tau_2)+\M_\de[\pr_\tau\psi](\tau_1, \tau_2).
\eea}

{In view of \eqref{eq:highorderMora:improveMora:byTpsi}, it remains to control $\M_{\de}  [\pr_{\tau}\psi](\tau_1,\tau_2)$.} By Lemma \ref{lem:commutatorwithwave:firstorderderis}, $\pr_{\tau}\psi$ satisfies the following wave equation
\beaa
\square_{\g}\pr_{\tau}\psi= \pr_{\tau} F + [\square_{\g}, \pr_{\tau}]\psi,
\eeaa
with
\bea\lab{eq:structurecommutatorwaveequationwithprtauusedtocontrolMdeprtaupsi}
\, [ \pr_{\tau}, \square_{\g}]\psi&=& \pr_{\tau}(\gcheck^{\mu\nu})\pr_{\mu}\pr_{\nu}\psi+
\dk^{\leq 2}\Ga_g\c\dk\psi.
\eea
Applying Lemma \ref{lemma:exteriorMorawetzestimates} to this wave equation of $\pr_{\tau}\psi$, we obtain
\beaa
&&\M_{{\de}}  [\pr_{\tau}\psi](\tau_1,\tau_2)\nn\\
&\les&\EMF[\pr_{\tau}\psi](\tau_1,\tau_2) + \int_{\MM(\tau_1, \tau_2)} r^{1+\de} |\pr_{\tau} F|^2 
+ \bigg|\int_{\MM_{r\geq 11m}(\tau_1, \tau_2)} [\pr_{\tau},\square_{\g}]\psi (1+O(r^{-\de}))\pr_{\tau}\pr_{\tau}\psi \bigg|\nn\\
&&
+
\int_{\MM_{r\geq 11m}(\tau_1, \tau_2)}|[\pr_{\tau},\square_{\g}]\psi|
\Big|\big(\pr_r, r^{-1}\pr_{x^a}, r^{-1}\big)\pr_{\tau}\psi\Big|.
\eeaa
{Now, in view of \eqref{eq:structurecommutatorwaveequationwithprtauusedtocontrolMdeprtaupsi}, the last two terms on the RHS are controlled as follows 
\beaa
&&  \bigg|\int_{\MM_{r\geq 11m}(\tau_1, \tau_2)} [\pr_{\tau},\square_{\g}]\psi (1+O(r^{-\de}))\pr_{\tau}\pr_{\tau}\psi \bigg| \\
&+&
\int_{\MM_{r\geq 11m}(\tau_1, \tau_2)}|[\pr_{\tau},\square_{\g}]\psi|
\Big|\big(\pr_r, r^{-1}\pr_{x^a}, r^{-1}\big)\pr_{\tau}\psi\Big|\\
&\les&  \bigg|\int_{\MM_{r\geq 11m}(\tau_1, \tau_2)}\pr_{\tau}(\gcheck^{\mu\nu})\pr_{\mu}\pr_{\nu}\psi(1+O(r^{-\de}))\pr_{\tau}\pr_{\tau}\psi \bigg| \\
&+&
\int_{\MM(\tau_1, \tau_2)}|\pr_{\tau}(\gcheck^{\mu\nu})\pr_{\mu}\pr_{\nu}\psi|
\Big|\big(\pr_r, r^{-1}\pr_{x^a}, r^{-1}\big)\pr_{\tau}\psi\Big|+\ep\int_{\MM(\tau_1, \tau_2)}\tau^{-\frac{1}{2}-\dec}r^{-2}|\dk\psi|
\Big|\big(\pr, r^{-1}\big)\pr_{\tau}\psi\Big|\\
&\les&  \ep \EM^{(1)}[\psi](\tau_1,\tau_2)+\ep\sqrt{\sup_{\tau\in[\tau_1, \tau_2]}\E[\psi](\tau)}\sqrt{\M^{(1)}[\psi](\tau_1, \tau_2)}\\
&\les& \ep \EM^{(1)}[\psi](\tau_1,\tau_2),
\eeaa
where we have in particular used Lemma \ref{lemma:basiclemmaforcontrolNLterms:bis} with $M^{\a\b}=\pr_\tau(\gcheck^{\a\b})$ which satisfies the needed assumptions in view of \eqref{eq:controloflinearizedinversemetriccoefficients}. We deduce} 
\beaa
\M_{{\de}}  [\pr_{\tau}\psi](\tau_1,\tau_2)
&\les&\EMF[\pr_{\tau}\psi](\tau_1,\tau_2) + \int_{\MM(\tau_1, \tau_2)} r^{1+\de} |\pr_{\tau} F|^2 
+\ep {\EM^{(1)}[\psi](\tau_1,\tau_2)},
\eeaa
{which together with \eqref{eq:highorderMora:improveMora:byTpsi} concludes the proof of the desired estimate \eqref{eq:imporvedhigherorderenergymorawetzconditionalonlowerrweigth}. This concludes the proof of Lemma \ref{lemma:higherorderenergyMorawetzestimates}.}
\end{proof}


\section{Statement and proof of the main theorem}
\lab{sect:maintheoremandproof}



In this section, we state a precise version of our main theorem on energy-Morawetz estimates in perturbations of Kerr and provide its proof.


\subsection{Statement of the main theorem}


We now provide a precise version of our main theorem on the derivation of energy-Morawetz estimates for solutions to the inhomogeneous scalar wave equation on $(\MM, \g)$, i.e.
\begin{equation}
\label{eq:scalarwave}
\Box_{\g} \psi = F, \qquad \MM,
\end{equation}
where $\g$ is a perturbation of a Kerr metric $\gam$ with $|a|<m$. 

\begin{theorem}[Energy-Morawetz for scalar waves, precise version] 
\label{thm:main}
Let $\g$ satisfy the assumptions of Section \ref{subsubsect:assumps:perturbedmetric}. There exists a suitably small constant $\ep'>0$ such that for any $\ep\leq \ep'$, we have for solutions to the inhomogeneous wave equation \eqref{eq:scalarwave} the following energy-Morawetz-flux estimates, for any $1\leq\tau_1<\tau_2 <+\infty$ and any $0<\de\leq 1$,
\bea
\label{MainEnerMora:psi}
\EMF^{(1)}_\de[{\psi}](\tau_1,\tau_2) \lesssim \E^{(1)}[\psi](\tau_1)+\mathcal{N}^{(1)}_\de[\psi, F](\tau_1, \tau_2),
\eea
where the norms $\EMF^{(1)}_\de[{\psi}](\tau_1,\tau_2)$, $\E^{(1)}[\psi](\tau_1)$ and $\mathcal{N}^{(1)}_\de[\psi, F](\tau_1, \tau_2)$ have been introduced in Section \ref{subsect:norms}, and where the implicit constant in $\lesssim$ only depends on $a$, $m$, $\de$ and $\dec$ (with $\dec$ appearing in \eqref{eq:decaypropertiesofGabGag}).
\end{theorem}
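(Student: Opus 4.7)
The plan is to follow exactly the three-step reduction sketched in Section 1.5. Because the microlocal Morawetz multipliers one is forced to employ later are non-local in time, the first move is to extend the Cauchy problem to a global-in-time problem on $(\MM,\widetilde\g)$, where $\widetilde\g$ agrees with $\g$ on $\MM(\tau_1+1,\tau_2-1)$, equals $\gam$ outside $\MM(\tau_1,\tau_2)$, and still satisfies the perturbation assumptions of Section \ref{subsubsect:assumps:perturbedmetric}; correspondingly I extend the data by zero and replace $F$ by a smooth cutoff $\widetilde F$ supported in $(\tau_1-1,\tau_2)$. Solving $\square_{\widetilde\g}\widetilde\psi = \widetilde F$ with trivial data at $\tau = \tau_1-1$ produces a global-in-time solution that agrees with $\psi$ on $\MM(\tau_1+1,\tau_2-1)$, and the local-in-time estimates of Lemma \ref{lemma:localenergyestimate} absorb all mismatch on the transition slabs in terms of $\E^{(1)}[\psi](\tau_1)$ and $\mathcal{N}^{(1)}_\de[\psi,F](\tau_1,\tau_2)$. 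The statement \eqref{MainEnerMora:psi} therefore reduces to a global-in-time EMF bound on $\widetilde\psi$, which is Theorem \ref{th:main:intermediary}.

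The next step is to split Theorem \ref{th:main:intermediary} into two ingredients: (i) a \emph{conditional} microlocal EMF estimate (Theorem \ref{th:mainenergymorawetzmicrolocal}) of the form $\EMF^{(1)}_\de[\widetilde\psi] \les (\text{lower order norms of } \widetilde\psi) + \mathcal{N}^{(1)}_\de[\widetilde\psi,\widetilde F]$; and (ii) a \emph{lower order} EMF estimate with one derivative loss (Proposition \ref{prop:energymorawetzmicrolocalwithblackbox}) controlling precisely those lower order norms. Combining (i) and (ii) and taking $\ep$ small enough lets one absorb the conditional terms onto the left-hand side and closes the desired bound.

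For (i), I decompose $\MM$ into a redshift region $\{r\leq r_+(1+2\dred)\}$, a bulk region $\{r_+(1+\dred/2)\leq r\leq R\}$, and an asymptotic region $\{r\geq R\}$. In the redshift region I invoke Lemma \ref{lemma:redshiftestimates}; in the asymptotic region I apply the improved Morawetz estimate \eqref{eq:exteriorMorawetzestimates:strong} of Lemma \ref{lemma:exteriorMorawetzestimates} combined with a standard timelike energy identity. The heart of the proof, and the main obstacle, is the bulk region, where I build microlocal multipliers adapted to the $r$-foliation using the Weyl quantization of Section \ref{sect:microlocalcalculus}; the good symbolic calculus properties of the Weyl quantization ensure that commutators, compositions, and adjoints contribute only terms one derivative below the principal level, which can legitimately be thrown onto the RHS as conditional quantities. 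The symbols themselves must be designed so that the resulting positive commutator is coercive both at the normally hyperbolic trapped set and on superradiant frequencies, exploiting the non-overlap between trapped and superradiant frequencies in phase space throughout the full subextremal range, in the spirit of Dafermos--Rodnianski--Shlapentokh-Rothman. The key technical novelty is that the $r$-foliation (rather than a $\tau$-foliation as in Tataru--Tohaneanu) makes this construction robust against non-stationary perturbations of $\gam$.

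For (ii), I rewrite the extended wave equation as $\square_{\gam}\widetilde\psi = \widetilde F - \bigl(\square_{\widetilde\g}-\square_{\gam}\bigr)\widetilde\psi$ and treat the full Kerr EMF estimate of Dafermos--Rodnianski--Shlapentokh-Rothman as a black box applied to $\widetilde\psi$. By the assumptions of Section \ref{subsubsect:assumps:perturbedmetric} together with Lemma \ref{lemma:computationofthederiveativeofsrqtg}, the difference operator satisfies $\bigl(\square_{\widetilde\g}-\square_{\gam}\bigr)\widetilde\psi = \widecheck{\g}^{\a\b}\partial_\a\partial_\b\widetilde\psi + \text{(lower order)}$ with coefficients of size $\ep$ and decay in $\tau$; the black box input and a Grönwall absorption then yield (ii), the one-derivative loss being the price paid for regarding the quasilinear correction as a source. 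The principal difficulty in the whole proof remains the construction of the bulk microlocal multiplier in step (i), which is where the subextremal geometry of Kerr genuinely enters.
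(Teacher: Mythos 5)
Your overall architecture matches the paper's (extension to a global-in-time problem, then Theorem \ref{th:main:intermediary} obtained from the conditional microlocal estimate of Theorem \ref{th:mainenergymorawetzmicrolocal} plus the derivative-losing estimate of Proposition \ref{prop:energymorawetzmicrolocalwithblackbox}), but the step you actually have to prove here --- the reduction of Theorem \ref{thm:main} to Theorem \ref{th:main:intermediary} --- has a genuine gap. You claim that solving $\square_{\tilde\g}\widetilde\psi=\widetilde F$ with \emph{trivial} data at $\tau=\tau_1-1$ produces a solution agreeing with $\psi$ on $\MM(\tau_1+1,\tau_2-1)$. This is false: by causality such a $\widetilde\psi$ vanishes on $\Sigma(\tau_1)$, whereas $\psi$ carries the nontrivial data $(\psi_0,\psi_1)$ there, and no local-in-time estimate on ``transition slabs'' repairs this, because the discrepancy is not a small mismatch but the entire Cauchy data. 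The paper's proof (Section \ref{subsect:proofofThm4.1}, Steps 3--7) handles this by constructing a backward auxiliary solution $\psi_{aux}$ on $\MM(\tau_1-1,\tau_1)$ with prescribed fluxes on $\AA$ and $\II_+$, cutting it off, and encoding the data as an extra source $\widetilde F_{(0)}=\square_{\g_{\chi_{\tau_1,\tau_2}}}(\chi_{\tau_1}\psi_{aux})$ supported in $(\tau_1-1,\tau_1)$; only then does the global solution $\widetilde\psi_1$ vanish for $\tau\leq\tau_1-1$ and coincide with $\psi$ on $\MM(\tau_1+1,\tau_2-1)$. Moreover $\widetilde F_{(0)}$ contains $-2\chi_{\tau_1}'\bigl(\pr_r\psi_{aux}+r^{-1}\psi_{aux}\bigr)$ with no $r$-decay, so it is \emph{not} admissible for the weighted norm $\mathcal{N}^{(1)}_\de$ (the $\int r^{1+\de}|F|^2$ term diverges for data with finite energy); controlling its contribution requires the special structure $r^{-1}\pr_r(r\psi_{aux})$, repeated integrations by parts against the wave operator, a reduction to data compactly supported on $\Sigma(\tau_1)$ to kill boundary terms at $\II_+$ (Lemma \ref{lemma:howtoexploitthecompactsupportofpsionSigramtau}), and a closing density argument. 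None of this is addressed by your sketch.

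A second, smaller omission: Theorem \ref{th:main:intermediary} does not directly give $\EMF^{(1)}_\de[\widetilde\psi]$ conditionally; it controls only $\EMF[\widetilde\psi]$, $\EMF[\pr_\tau\widetilde\psi]$ and $\EMF_{r_+(1-\dhor),11m}[\pr_{\tphi}\widetilde\psi]$ with error $\ep\,\EM^{(1)}$, and its RHS is the global norm $\widetilde{\mathcal{N}}^{(1)}$, which involves microlocal operators $S^1\psi$ in the trapping region. To reach \eqref{MainEnerMora:psi} you still need (a) the conditional higher-order elliptic-type estimates \eqref{eq:higherorderenergymorawetzforlargerconditionalonsprtau}--\eqref{eq:imporvedhigherorderenergymorawetzconditionalonlowerrweigth} of Lemma \ref{lemma:higherorderenergyMorawetzestimates} to upgrade from $(\psi,\pr_\tau\psi,\pr_{\tphi}\psi)$ to all first derivatives and to insert the $\de$-weights, and (b) the comparison of $\widetilde{\mathcal{N}}^{(1)}$ with $\widehat{\mathcal{N}}^{(1)}$ and then with $\mathcal{N}^{(1)}_\de$ (Lemma \ref{lemma:additivepropertyofwidetildeN} and Remark \ref{rmk:controlofwidehatNfpsibyNfpsi}), exploiting the support properties of $\widetilde F$ and $\widetilde F_{(0)}$ in $\tau$. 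Your two-line absorption argument skips both ingredients, so as written the reduction does not close.
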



\subsection{Global energy-Morawetz estimates}


In order to prove our main Theorem \ref{thm:main}, i.e., the derivation of energy-Morawetz estimates for $\tau$ in $(\tau_1, \tau_2)$, we first state in this section global energy-Morawetz estimates, i.e., energy-Morawetz estimates for $\tau$ in $\mathbb{R}$. 
\begin{theorem}\lab{th:main:intermediary}
Let $\g$ satisfy the assumptions of Section \ref{subsubsect:assumps:perturbedmetric}, and let $\psi$ be a solution to the inhomogeneous wave equation \eqref{eq:scalarwave} with RHS $F$. Assume that $\psi$ can be smoothly extended by $0$ for $\tau\leq 1$. Also, assume that the metric $\g$ coincides with $\gam$ for $\tau\leq 1$ and for $\tau\geq \tau_*$ with $\tau_*$ arbitrarily large. Finally, assume that $F$ is supported in $(1,\tau_*)$.
Then, we have 
\bea\lab{eq:th:main:intermediary1}
\EMF[\psi](\mathbb{R})+\EMF[\pr_\tau\psi](\mathbb{R}) \les \widetilde{\mathcal{N}}^{(1)}[ \psi, F](\mathbb{R})+\ep \EM^{(1)}[\psi](\mathbb{R})
\eea
and
\bea\lab{eq:th:main:intermediary2}
{\EMF_{r_+(1-\dhor), 11m}[\pr_{\tphi}\psi](\mathbb{R})} &\les& \widetilde{\mathcal{N}}^{(1)}[\psi, F](\mathbb{R})+\ep \EM^{(1)}[\psi](\mathbb{R})+{\M_{11m, 12m}[\pr\psi](\mathbb{R})},
\eea
where $\widetilde{\mathcal{N}}[\psi, F](\mathbb{R})$ is defined in \eqref{eq:definitionwidetildemathcalNpsijnormRHS}.
\end{theorem}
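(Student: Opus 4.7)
The plan is to deduce the two global energy-Morawetz estimates from the combination of the conditional microlocal estimate stated in Theorem \ref{th:mainenergymorawetzmicrolocal} and the black-box estimate with one derivative loss stated in Proposition \ref{prop:energymorawetzmicrolocalwithblackbox}. The support assumptions on $\psi$ and $F$, together with the fact that $\g$ is exactly Kerr outside $(1,\tau_*)$, guarantee that all integrals over $\mathbb{R}$ are convergent, so that the compatibility between $\psi$ vanishing for $\tau\le 1$ and the assumed backward-in-time smallness in the microlocal calculus makes sense.

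First, I would prove \eqref{eq:th:main:intermediary1} for $\psi$ itself. Applying Theorem \ref{th:mainenergymorawetzmicrolocal} gives a global bound of the form
\[
\EMF[\psi](\mathbb{R}) \lesssim \widetilde{\mathcal{N}}[\psi,F](\mathbb{R}) + \textrm{lower order terms in }\psi + \ep\, \EM^{(1)}[\psi](\mathbb{R}),
\]
where the $\ep$-term absorbs error contributions from the perturbation of $\gam$. The lower order terms are then controlled by Proposition \ref{prop:energymorawetzmicrolocalwithblackbox}, at the price of replacing them by an $\widetilde{\mathcal N}^{(1)}$ norm with one additional derivative on $F$. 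This is the very source of the one-derivative loss in the final estimate. Combining these steps produces the bound for $\EMF[\psi](\mathbb{R})$. Next, I would commute the wave equation with $\pr_\tau$. Since $\pr_\tau$ is Killing for $\gam$, Lemma \ref{lem:commutatorwithwave:firstorderderis} shows that $[\pr_\tau,\Box_\g]\psi$ consists only of terms with an $\ep$-smallness factor (through $\pr_\tau\gcheck^{\a\b}$ and $\dk^{\le 2}\Ga_g$). Applying the same combination of Theorem \ref{th:mainenergymorawetzmicrolocal} and Proposition \ref{prop:energymorawetzmicrolocalwithblackbox} to $\pr_\tau\psi$, using Lemmas \ref{lemma:basiclemmaforcontrolNLterms:ter}--\ref{lemma:basiclemmaforcontrolNLterms:bis} to absorb the commutator contribution into $\ep\,\EM^{(1)}[\psi](\mathbb{R})$, yields the stated bound on $\EMF[\pr_\tau\psi](\mathbb{R})$ and completes \eqref{eq:th:main:intermediary1}.

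For \eqref{eq:th:main:intermediary2}, the key observation is that in the normalized coordinates $\phimod'(r)=a/\Delta$ on $r\in[r_+(1+2\dbl),12m]$, so on that range $\pr_{\tphi}=\pr_{\phi_+}$ coincides with the Kerr Killing vectorfield $\pr_\phi$. Thus one can commute $\pr_{\tphi}$ through $\Box_{\gam}$ exactly, and through $\Box_{\g}$ up to $\ep$-small terms involving $\pr_{\tphi}\gcheck^{\a\b}$. To restrict to $r\le 11m$ I would introduce a cutoff $\chi(r)$ supported in $r\le 12m$ and equal to $1$ on $r\le 11m$, and apply the estimate already established for $\psi$ to the localized field $\chi\pr_{\tphi}\psi$. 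The equation satisfied by $\chi\pr_{\tphi}\psi$ has three sources on the right-hand side: (i) the commutator $\chi[\Box_\g,\pr_{\tphi}]\psi$, which is $\ep$-small; (ii) $\chi\pr_{\tphi}F$, which contributes to $\widetilde{\mathcal N}^{(1)}[\psi,F](\mathbb{R})$; and (iii) the localization error $[\Box_\g,\chi]\pr_{\tphi}\psi$, which is supported in $r\in[11m,12m]$ and is therefore controlled by $\M_{11m,12m}[\pr\psi](\mathbb{R})$ after Cauchy--Schwarz. This is precisely the origin of the last term in \eqref{eq:th:main:intermediary2}.

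The main obstacle in executing this plan is the bookkeeping of the nonlinear error terms at the top derivative order: the estimate for $\pr_\tau\psi$ (and similarly for $\chi\pr_{\tphi}\psi$) inevitably produces terms of the form $\pr_\tau(\gcheck^{\a\b})\pr_\a\pr_\b\psi$ paired with multipliers, which can only be absorbed provided they satisfy the symbolic assumptions of Lemma \ref{lemma:basiclemmaforcontrolNLterms:bis}. The decay assumption \eqref{eq:controloflinearizedinversemetriccoefficients} on $\gcheck^{\a\b}$ is tailored to exactly this purpose and yields the $\ep\,\EM^{(1)}[\psi](\mathbb{R})$ term on the right-hand side, which, being at the same derivative level as the left-hand side, is not directly absorbable within a single equation: this is precisely why the conclusion of Theorem \ref{th:main:intermediary} retains the $\ep\,\EM^{(1)}[\psi](\mathbb{R})$ term, to be absorbed later in Section \ref{sect:maintheoremandproof} by working simultaneously with $\psi$, $\pr_\tau\psi$, and $\pr_{\tphi}\psi$ and exploiting the smallness of $\ep$ to close.
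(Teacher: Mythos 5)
Your high-level route is the same as the paper's: both \eqref{eq:th:main:intermediary1} and \eqref{eq:th:main:intermediary2} are obtained by combining the conditional microlocal estimate of Theorem \ref{th:mainenergymorawetzmicrolocal} with the derivative-losing estimate of Proposition \ref{prop:energymorawetzmicrolocalwithblackbox}, after commuting with $\pr_\tau$ and with a cutoff times $\pr_{\tphi}$, and your treatment of the commutators (via Lemma \ref{lem:commutatorwithwave:firstorderderis} and Lemmas \ref{lemma:basiclemmaforcontrolNLterms:ter}--\ref{lemma:basiclemmaforcontrolNLterms:bis}) and of the localization error in $[11m,12m]$ matches the paper. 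However, there is a genuine gap in how you close the estimate for the commuted quantities. When Theorem \ref{th:mainenergymorawetzmicrolocal} is applied to $\pr_\tau\psi$ (resp.\ to $\chi\pr_{\tphi}\psi$), its right-hand side contains the term $\int_{\MM}r^{-4}|\pr_\tau\psi|^2$ (resp.\ $\int_{\MM}r^{-4}|\chi\pr_{\tphi}\psi|^2$). Away from trapping this is bounded by $\M[\psi](\Reals)$ and hence by the black-box estimate applied to $\psi$; but on $\Mtrap$ it is comparable, with no small constant, to the zeroth-order piece of $\M[\pr_\tau\psi]$ sitting on the left-hand side, so it cannot be absorbed, and it is \emph{not} controlled by $\M[\psi]$ or $\sup_\tau\E[\psi](\tau)$, since the Morawetz norm degenerates at trapping precisely in the $\pr_\tau$ and angular directions and the energy does not control spacetime integrals over infinite time. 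Your proposed remedy --- ``applying the same combination of Theorem \ref{th:mainenergymorawetzmicrolocal} and Proposition \ref{prop:energymorawetzmicrolocalwithblackbox} to $\pr_\tau\psi$'' --- would invoke the black box at the level of $\pr_\tau\psi$, producing $\int_{\Mtrap}|\pr(\pr_\tau F)|^2$ (two derivatives of $F$) and $\ep\EM^{(1)}[\pr_\tau\psi]$ (second-order in $\psi$), neither of which is admissible on the right-hand side of \eqref{eq:th:main:intermediary1}; so the argument as stated does not close at the claimed regularity.

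The paper resolves exactly this point with Lemma \ref{lemma:controllowerordertermusingtrickprrrmrtrap}: since $r_{\trap}$ is a tangential symbol with $\pr_r(r-r_{\trap})=1$, an integration by parts in $r$ against $\Opw(r-r_{\trap})\phi$, for $\phi=\pr_\tau\psi$ or $\pr_{\tphi}\psi$, yields $\int_{\Mtrap}|\phi|^2\les \big(\M_{r\leq 11m}[\phi](\Reals)\big)^{1/2}\big(\widetilde{\M}[\psi]\big)^{1/2}$, where the nondegenerate microlocal norm $\widetilde{\M}[\psi]$ (carrying $|\Opw(\sigma_{\trap})\psi|^2$) enters. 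Splitting this product by Young's inequality, the first factor is absorbed into the left-hand side for the commuted variable, while $\widetilde{\M}[\psi]$ is controlled by Theorem \ref{th:mainenergymorawetzmicrolocal} and Proposition \ref{prop:energymorawetzmicrolocalwithblackbox} applied to $\psi$ \emph{only}, so the one-derivative loss on $F$ is incurred exactly once and the $\ep\EM^{(1)}[\psi]$ term survives as stated. Without this interpolation mechanism (or an equivalent exploitation of the nondegenerate trapped-frequency control of $\psi$), the trapped-region zeroth-order terms of $\pr_\tau\psi$ and $\pr_{\tphi}\psi$ remain uncontrolled in your plan. A minor further remark: in the normalized coordinates $\pr_{\tphi}$ is Killing for $\gam$ globally (the metric coefficients are independent of $\tphi$), not only on $[r_+(1+2\dbl),12m]$; the cutoff is needed only because \eqref{eq:th:main:intermediary2} is localized to $r\leq 11m$, not to make the commutation with $\Box_{\gam}$ exact.
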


The proof of Theorem \ref{th:main:intermediary} requires microlocal estimates in $\Mtrap$ and is postponed to Section \ref{sec:proofofth:main:intermediary}. Based on Theorem \ref{th:main:intermediary}, we are now ready to prove our main theorem, i.e., Theorem \ref{thm:main}.


\subsection{Proof of {Theorem \ref{thm:main}}}
\lab{subsect:proofofThm4.1}


Let $\g$ satisfy the assumptions of Section \ref{subsubsect:assumps:perturbedmetric},  let $\tau_1$ and $\tau_2$ be such that $1\leq\tau_1<\tau_2 <+\infty$, and let $\psi$ be a solution to the inhomogeneous scalar wave \eqref{eq:scalarwave} with RHS $F$. If $\tau_2\leq\tau_1+4$, then \eqref{MainEnerMora:psi} follows immediately from {the local energy estimates of Lemma \ref{lemma:localenergyestimate} together with  \eqref{eq:imporvedhigherorderenergymorawetzconditionalonlowerrweigth}}, so we may assume from now on that $\tau_2\geq \tau_1+4$. {Similarly, if $1\leq \tau_1<2$, we may use the local energy estimates of Lemma \ref{lemma:localenergyestimate} together \eqref{eq:imporvedhigherorderenergymorawetzconditionalonlowerrweigth} on $(\tau_1, 2)$ to reduce to the case $\tau_1\geq 2$. Thus, we assume from now on that $2\leq \tau_1<\tau_1+4\leq \tau_2<+\infty$.} We proceed in the following steps.

{\noindent{\bf Step 0.} We will first prove Theorem \ref{thm:main} under the assumption that $\psi$ is compactly supported in $\Si(\tau_1)$, see Steps 1--8, and we will then extend it to the general case by density, see Step 9. This assumption of compact support will be used in Step 7 in conjunction with the following lemma.}

{\begin{lemma}\lab{lemma:howtoexploitthecompactsupportofpsionSigramtau}
Let $\tau_0\in\mathbb{R}$ and $q>0$, and assume that $\psi$ vanishes on $\Si(\tau_0)\cap\II_+$. Then, we have 
\bea
{\liminf_{\tauu\to +\infty}}\int_{\tau_0-q}^{\tau_0}\int_{\mathbb{S}^2}|\pr^{\leq 2}\psi|^2({\tauu}, \tau, \om)r^2d\mathring{\ga}d\tau \les_q \EF^{(1)}[\psi](\tau_0-q,\tau_0).
\eea
\end{lemma}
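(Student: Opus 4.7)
The strategy is to bound each component of $|\pr^{\leq 2}\psi|^2 r^2$ on $\II_+$ separately, combining three tools: direct control by the flux $\F_{\II_+}^{(1)}[\psi]$ that is included in $\EF^{(1)}[\psi](\tau_0-q,\tau_0)$; an averaging argument in $\underline{\tau}$ analogous to the proof of Lemma \ref{lem:nullnfFluxBdedByEnergy}, based on $\int_{\tau_0-q}^{\tau_0}\E^{(1)}[\psi](\tau)\,d\tau \lesssim_q \EF^{(1)}$; and the vanishing hypothesis $\psi|_{\Sigma(\tau_0)\cap\II_+}=0$ to handle the zeroth-order piece.

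For the zeroth-order contribution $|\psi|^2 r^2$, since $\pr_\tau^{\II_+}$ is tangent to $\II_+$ and $\psi$ vanishes at $\tau=\tau_0$ on $\II_+$ by hypothesis, we apply the fundamental theorem of calculus along $\II_+$ to write
\[
\psi|_{\II_+}(\tau,\omega)=-\int_\tau^{\tau_0}\pr_{\tau'}^{\II_+}\psi|_{\II_+}(\tau',\omega)\,d\tau',
\]
so that Cauchy--Schwarz yields $\int r^2|\psi|^2|_{\II_+}\,d\mathring{\ga}d\tau \leq q^2\,\F_{\II_+}[\psi] \lesssim_q \EF^{(1)}[\psi](\tau_0-q,\tau_0)$. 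For the higher-order pieces $|\pr^{\leq 1}\psi|^2 r^2$ and $|\pr^2\psi|^2 r^2$, we decompose coordinate derivatives using Lemma \ref{lemma:controllinearizedmetric:inducedmetricII+}: on $\II_+$ one has $\pr_\tau=\pr_\tau^{\II_+}+\frac{1}{2}(1+b^r)\pr_r+O(\ep)\nabla$ and $\pr_{x^a}=\pr_{x^a}^{\II_+}+O(\ep)\pr_r$. This expresses $\pr^{\leq 2}\psi$ on $\II_+$ as a combination of tangential-to-$\II_+$ derivatives applied to $\pr^{\leq 1}\psi$, which are directly controlled at the required $r^2$ weight by $\F_{\II_+}^{(1)}[\psi]\leq \EF^{(1)}$, plus transverse $\pr_r$-derivatives applied to $\pr^{\leq 1}\psi$.

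The transverse pieces $|\pr_r\pr^{\leq 1}\psi|^2 r^2$ are treated by an averaging argument in $\underline{\tau}$: converting to the coordinates $(\underline{\tau},\tau,\omega)$ with $d\underline{\tau}/dr\approx 2$ at large $r$ (as in Lemma \ref{lemma:constructionoftheingoingopticalfunctiontauu}), the time-integrated energy bound implies that $B(\underline{\tau}):=\int_{\tau_0-q}^{\tau_0}\int_{\mathbb{S}^2}r^2|\pr_r\pr^{\leq 1}\psi|^2\,d\mathring{\ga}d\tau$ is integrable in $\underline{\tau}$, hence $\liminf_{\underline{\tau}\to\infty}B(\underline{\tau})=0$. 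Choosing a common sequence $\underline{\tau}_n\to\infty$ along which all averaging integrals vanish while the tangential-flux pieces remain bounded (as in Remark \ref{rmk:limitingargumentwithliminfonII+}), and summing the three bounds, yields the desired estimate. The main technical care will be in tracking the distribution of $r$-weights in the decomposition of $\pr^{\leq 2}\psi$ into tangential and transverse pieces, so that the $r^2$ weight on the LHS matches both what $\F_{\II_+}^{(1)}$ provides for tangential derivatives and what the averaging argument provides for the transverse ones.
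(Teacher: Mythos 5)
Your proof is correct and follows essentially the same route as the paper: the zeroth-order piece is handled by the fundamental theorem of calculus along $\II_+$ from the vanishing at $\Si(\tau_0)\cap\II_+$ plus Cauchy--Schwarz against $\F_{\II_+}[\psi]$, while $\pr(\pr^{\leq 1}\psi)$ is split into $\II_+$-tangential derivatives (controlled by $\F_{\II_+}^{(1)}$) and $\pr_r$-derivatives (controlled by averaging the integrated energy in $\tauu$). The only cosmetic difference is that the paper routes the $\pr_r$-piece through Lemma \ref{lem:nullnfFluxBdedByEnergy} rather than rederiving the integrability-in-$\tauu$ argument, and both treat the common-sequence/limiting issue at the same level of rigor as Remark \ref{rmk:limitingargumentwithliminfonII+}.
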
} 

{\begin{proof}
In view of Lemma \ref{lem:nullnfFluxBdedByEnergy}, we have
\beaa
{\liminf_{\tauu\to +\infty}}\int_{\tau_0-q}^{\tau_0}\int_{\mathbb{S}^2}r^{-1}|\dk^{\leq 1}(\pr^{\leq 1}\psi)|^2({\tauu}, \tau, \om)r^2d\mathring{\ga}d\tau \les_q \sup_{\tau\in[\tau_0-q, \tau_0]}\E^{(1)}[\psi](\tau).
\eeaa
We infer
\beaa
{\liminf_{\tauu\to +\infty}}\int_{\tau_0-q}^{\tau_0}\int_{\mathbb{S}^2}r|\pr_r(\pr^{\leq 1}\psi)|^2({\tauu}, \tau, \om)r^2d\mathring{\ga}d\tau \les_q \sup_{\tau\in[\tau_0-q, \tau_0]}\E^{(1)}[\psi](\tau),
\eeaa
which together with the definition of $\F_\II^{(1)}[\psi](\tau_0-q,\tau_0)$ implies
\beaa
{\liminf_{\tauu\to +\infty}}\int_{\tau_0-q}^{\tau_0}\int_{\mathbb{S}^2}|(\pr_\tau, \pr_r, \nab)\pr^{\leq 1}\psi|^2({\tauu}, \tau, \om)r^2d\mathring{\ga}d\tau \les_q  \EF^{(1)}[\psi](\tau_0-q,\tau_0).
\eeaa
It thus remains to control $\psi$. Since $\psi$ vanishes on $\Si(\tau_0)\cap\II_+$, we have
\beaa
\psi(\tauu=+\infty, \tau, \om)= -\frac{1}{2}\int_\tau^{\tau_0}{\pr_\tau^{\II_+}}\psi(\tauu=+\infty, \tau', \om)d\tau', \quad \tau_0-q\leq\tau\leq\tau_0,
\eeaa
and hence
\beaa
\int_{\tau_0-q}^{\tau_0}\int_{\mathbb{S}^2}|\psi|^2(\tauu=+\infty, \tau, \om)r^2d\mathring{\ga}d\tau  &\les_q& \int_{\tau_0-q}^{\tau_0}\int_{\mathbb{S}^2}|{\pr_\tau^{\II_+}}\psi|^2(\tauu=+\infty, \tau, \om)r^2d\mathring{\ga}d\tau \\
&\les_q& \F_\II[\psi](\tau_0-q,\tau_0)
\eeaa
which concludes the proof of Lemma \ref{lemma:howtoexploitthecompactsupportofpsionSigramtau}. 
\end{proof}}

\noindent{\bf Step 1.} Our goal is to deduce the proof of Theorem \ref{thm:main} as a consequence of Theorem \ref{th:main:intermediary}. To this end, we need to introduce an auxiliary wave equation defined in $\MM$. We start with the construction of the corresponding RHS. We define the scalar function $\widetilde{F}$ 
\bea\label{def:tildef}
\widetilde{F}=\chi_{\tau_1, \tau_2}F
\eea
where $\chi_{\tau_1, \tau_2}=\chi_{\tau_1, \tau_2}(\tau)$ is a smooth cut-off function satisfying 
\bea\lab{eq:propertieschitoextendmetricg}
\chi_{\tau_1, \tau_2}(\tau)=0\,\,\,\textrm{on}\,\,\, \mathbb{R}\setminus(\tau_1, \tau_2), \quad\chi_{\tau_1, \tau_2}(\tau)=1\,\,\,\textrm{on}\,\,\, [\tau_1+1, \tau_2-1], \quad \|\chi_{\tau_1, \tau_2}\|_{W^{2,+\infty}(\mathbb{R})}\les 1.
\eea

\noindent{\bf Step 2.} Next, we introduce a new Lorentzian metric $\g_{\chi_{\tau_1, \tau_2}}$ defined as follows 
\bea
\g_{\chi_{\tau_1, \tau_2}}^{\a\b}=\chi_{\tau_1, \tau_2} \g^{\a\b}+(1-\chi_{\tau_1, \tau_2})\g_{a,m}^{\a\b},
\eea
where $\chi_{\tau_1, \tau_2}$ is defined in \eqref{eq:propertieschitoextendmetricg}. In particular, we have
\beaa
\widecheck{\g}_{\chi_{\tau_1, \tau_2}}^{\a\b}=\chi_{\tau_1, \tau_2} \g^{\a\b}+(1-\chi_{\tau_1, \tau_2})\gam^{\a\b} -\gam^{\a\b}=\chi_{\tau_1, \tau_2}\left(\g^{\a\b}-\gam^{\a\b}\right)=\chi_{\tau_1, \tau_2}\widecheck{\g}^{\a\b}
\eeaa
and hence
\beaa
|\dk^{\leq 2}\widecheck{\g}_{\chi_{\tau_1, \tau_2}}^{\a\b}|\les |\dk^{\leq 2}\chi_{\tau_1, \tau_2}||\dk^{\leq 2}\widecheck{\g}^{\a\b}|\les \|\chi_{\tau_1, \tau_2}\|_{W^{2,+\infty}}|\dk^{\leq 2}\widecheck{\g}^{\a\b}| \les |\dk^{\leq 2}\widecheck{\g}^{\a\b}|,
\eeaa
where we used the fact that $\dk\chi_{\tau_1, \tau_2}=(\pr_\tau, r\pr_r, \pr_{x^a})\chi_{\tau_1, \tau_2}=\pr_\tau\chi_{\tau_1, \tau_2}=\chi_{\tau_1, \tau_2}'$ since $\chi_{\tau_1, \tau_2}=\chi_{\tau_1, \tau_2}(\tau)$ and in view of the definition of the weighted derivatives $\dk$. Since $\g$ satisfies the assumptions of Section \ref{subsubsect:assumps:perturbedmetric}, and in view of the properties \eqref{eq:propertieschitoextendmetricg} of $\chi_{\tau_1, \tau_2}$, we deduce that $\g_{\chi_{\tau_1, \tau_2}}$ also satisfies the assumptions of Section \ref{subsubsect:assumps:perturbedmetric}, and in addition coincides with $\gam$ in $\mathbb{R}\setminus(\tau_1, \tau_2)$. 

\noindent{\bf Step 3.}  Next, we introduce the solution $\widetilde{\psi}$ to the following auxiliary scalar wave equation
\bea\lab{eq:waveequationdefiningfirstextensionwidetildepsi}
\bsplit
\square_{\g_{\chi_{\tau_1, \tau_2}}}\widetilde{\psi} &= \widetilde{F}\quad\textrm{on}\quad\MM(\tau_1, +\infty),\\
\widetilde{\psi} &= \psi, \quad N_{\Sigma(\tau_1+1)}\widetilde{\psi}=N_{\Sigma(\tau_1+1)}\psi\quad\textrm{on}\quad\Si(\tau_1+1),\\
\widetilde{\psi} &= \psi\quad\textrm{on}\quad(\AA_+\cup\II_+)\cap\{\tau_1\leq\tau\leq\tau_1+1\}.
\end{split}
\eea 
Then, we have in view of the second local energy estimate of Lemma \ref{lemma:localenergyestimate} with $s=1$ for \eqref{eq:waveequationdefiningfirstextensionwidetildepsi}
\beaa
\EF^{(1)}[\widetilde{\psi}](\tau_1, \tau_1+1)\les \EF^{(1)}[\psi](\tau_1+1)+\widehat{\mathcal{N}}^{(1)}[\widetilde{\psi}, \widetilde{F}](\tau_1, \tau_1+1).
\eeaa
Together with the local energy estimate of Lemma \ref{lemma:localenergyestimate} with $s=1$ for $\psi$, we infer
\bea\lab{eq:localenergyestimateforwidetildepsionSigma}
\EF^{(1)}[\widetilde{\psi}](\tau_1, \tau_1+1)\les \E^{(1)}[\psi](\tau_1)+\widehat{\mathcal{N}}^{(1)}[\psi, F](\tau_1, \tau_1+1)+\widehat{\mathcal{N}}^{(1)}[\widetilde{\psi}, \widetilde{F}](\tau_1, \tau_1+1).
\eea

Also, we introduce the solution $\psi_{aux}$ to the following auxiliary scalar wave equation
\bea\lab{eq:waveequationdefiningpsiaux}
\bsplit
\square_{\g_{\chi_{\tau_1, \tau_2}}}\psi_{aux} &= 0\quad\textrm{on}\quad\MM(\tau_1-1, \tau_1),\\
\psi_{aux}&=\widetilde{\psi}, \quad N_{\Sigma(\tau_1)}\psi_{aux}=N_{\Sigma(\tau_1)}\widetilde{\psi}\quad\textrm{on}\quad\Si(\tau_1),\\
\psi_{aux}&= \psi_\AA\quad\textrm{on}\quad\AA_+\cap\{\tau_1-1\leq\tau\leq\tau_1\},\\ \psi_{aux}&=\psi_\II\quad\textrm{on}\quad\II_+\cap\{\tau_1-1\leq\tau\leq\tau_1\},
\end{split}
\eea 
where $\psi_\AA$ and $\psi_\II$ are smooth extensions of $\psi$ respectively from $\AA\cap\{\tau\geq \tau_1\}$ to $\AA\cap\{\tau_1-1\leq\tau\leq\tau_1\}$ and from $\II\cap\{\tau\geq \tau_1\}$ to $\II\cap\{\tau_1-1\leq\tau\leq\tau_1\}$ satisfying
\bea\lab{eq:propertyextensionpsiAAofpsitotauleqtau1}
\F_\AA^{(1)}[\psi_\AA](\tau_1-1, \tau_1)\les \F_\AA^{(1)}[\psi](\tau_1, \tau_1+1), \quad {\F_\II^{(1)}[\psi_\II](\tau_1-1, \tau_1)\les \F_\II^{(1)}[\psi](\tau_1, \tau_1+1).}
\eea 
The second local energy estimate of Lemma \ref{lemma:localenergyestimate} with $s=1$ for  \eqref{eq:waveequationdefiningpsiaux} yields 
\beaa
\EF^{(1)}[\psi_{aux}](\tau_1-1, \tau_1) &\les& \E^{(1)}[\psi_{aux}](\tau_1)+\F^{(1)}[\psi_{aux}](\tau_1-1, \tau_1)\\
&\les& \E^{(1)}[\widetilde{\psi}](\tau_1)+\F^{(1)}_\AA[\psi_\AA](\tau_1-1, \tau_1)+\F^{(1)}_\II[\psi_\II](\tau_1-1, \tau_1)
\eeaa
which together with \eqref{eq:localenergyestimateforwidetildepsionSigma} and \eqref{eq:propertyextensionpsiAAofpsitotauleqtau1} implies
\beaa
\EF^{(1)}[\psi_{aux}](\tau_1-1, \tau_1) \les \EF^{(1)}[\psi](\tau_1, \tau_1+1)+\widehat{\mathcal{N}}^{(1)}[\psi, F](\tau_1, \tau_1+1)+\widehat{\mathcal{N}}^{(1)}[\widetilde{\psi}, \widetilde{F}](\tau_1, \tau_1+1).
\eeaa
Using again the {local} energy estimate of Lemma \ref{lemma:localenergyestimate} with $s=1$ for {$\psi$}, we deduce
\bea\lab{eq:localenergyestimateforpsiauxonSigma}
\EF^{(1)}[\psi_{aux}](\tau_1-1, \tau_1) \les \E^{(1)}[\psi](\tau_1)+\widehat{\mathcal{N}}^{(1)}[\psi, F](\tau_1, \tau_1+1)+\widehat{\mathcal{N}}^{(1)}[\widetilde{\psi}, \widetilde{F}](\tau_1, \tau_1+1).
\eea

\noindent{\bf Step 4.} Next, we define
\bea
\label{def:tildef0}
\widetilde{F}_{(0)} = \left\{\begin{array}{l}
\square_{\g_{\chi_{\tau_1, \tau_2}}}(\chi_{\tau_1}\psi_{aux}) \quad\textrm{on}\quad \MM(\tau_1-1, \tau_1),\\[2mm]
0\quad\textrm{on}\quad \MM\setminus\MM(\tau_1-1, \tau_1),
\end{array}\right.
\eea
where the smooth cut-off $\chi_{\tau_1}=\chi_{\tau_1}(\tau)$ is such that $\chi_{\tau_1}=1$ for $\tau\geq \tau_1$ and 
$\chi_{\tau_1}=0$ for $\tau\leq \tau_1-1$. In particular, \eqref{eq:waveequationdefiningpsiaux} and \eqref{def:tildef0} imply that, for all $\tau\in\mathbb{R}$, 
\beaa
\widetilde{F}_{(0)} &=& \square_{\g_{\chi_{\tau_1, \tau_2}}}(\chi_{\tau_1}\psi_{aux})\\
&=& 2\g_{\chi_{\tau_1, \tau_2}}^{\a\b}\pr_\a(\chi_{\tau_1})\pr_\b(\psi_{aux})+\square_{\g_{\chi_{\tau_1, \tau_2}}}(\chi_{\tau_1})\psi_{aux}\\
&=&  2\g_{\chi_{\tau_1, \tau_2}}^{\tau\tau}\chi_{\tau_1}'(\tau)\pr_\tau(\psi_{aux})+2\g_{\chi_{\tau_1, \tau_2}}^{\tau r}\chi_{\tau_1}'(\tau)\pr_r(\psi_{aux})+2\g_{\chi_{\tau_1, \tau_2}}^{\tau x^a}\chi_{\tau_1}'(\tau)\pr_{x^a}(\psi_{aux})\\
&& +\left(\g_{\chi_{\tau_1, \tau_2}}^{\tau\tau}\chi_{\tau_1}''(\tau)+\frac{1}{\sqrt{|\g_{\chi_{\tau_1, \tau_2}}|}}\pr_\a(\sqrt{|\g_{\chi_{\tau_1, \tau_2}}|}\g_{\chi_{\tau_1, \tau_2}}^{\a\tau})\chi_{\tau_1}'(\tau)\right)\psi_{aux}.
\eeaa
Now, since $\g_{\chi_{\tau_1, \tau_2}}$ satisfies the assumptions of Section \ref{subsubsect:assumps:perturbedmetric} in view of Step 2, we infer from  \eqref{eq:assymptiticpropmetricKerrintaurxacoord:1} and \eqref{eq:controloflinearizedinversemetriccoefficients} that 
\beaa
\g_{\chi_{\tau_1, \tau_2}}^{\tau\tau}=O(m^2r^{-2}), \qquad \g_{\chi_{\tau_1, \tau_2}}^{\tau r}=-1+O(m^2r^{-2})+r\Ga_g, \qquad \g_{\chi_{\tau_1, \tau_2}}^{\tau a}=O(mr^{-2}),
\eeaa
and from \eqref{eq:assymptiticpropmetricKerrintaurxacoord:volumeform} and Lemma \ref{lemma:computationofthederiveativeofsrqtg} that 
\beaa
\bsplit
\frac{1}{\sqrt{|\g_{\chi_{\tau_1, \tau_2}}|}}\pr_\tau\Big(\sqrt{|\g_{\chi_{\tau_1, \tau_2}}|}\Big)&=r\dk^{\leq 1}\Ga_g, \quad \frac{1}{\sqrt{|\g_{\chi_{\tau_1, \tau_2}}|}}\pr_r\Big(\sqrt{|\g_{\chi_{\tau_1, \tau_2}}|}\Big)=\frac{2}{r}(1+O(m^2r^{-2}))+\dk^{\leq 1}\Ga_g, \\ 
\frac{1}{\sqrt{|\g_{\chi_{\tau_1, \tau_2}}|}}\pr_{x^a}\Big(\sqrt{|\g_{\chi_{\tau_1, \tau_2}}|}\Big)&=O(1).
\end{split}
\eeaa
{Hence} we deduce 
\bea\lab{eq:structureoftildef0}
\widetilde{F}_{(0)} &=&  -2\chi_{\tau_1}'(\tau)\left(\pr_r(\psi_{aux}) +\frac{1}{r}\psi_{aux}\right) +O(r^{-2})\Big(\chi_{\tau_1}''(\tau), \chi_{\tau_1}'(\tau)\Big)\dk^{\leq 1}\psi_{aux}.
\eea

\noindent{\bf Step 5.} Next, we introduce the solution $\widetilde{\psi}_1$ of the following scalar wave equation 
\bea\lab{eq:waveeqwidetildepsi1}
\bsplit
\square_{\g_{\chi_{\tau_1, \tau_2}}}\widetilde{\psi}_1 &= \widetilde{F}+\widetilde{F}_{(0)}\quad\textrm{on}\quad\MM,\\
\widetilde{\psi}_1&= \widetilde{\psi}, \quad N_{\Sigma(\tau_1)}\widetilde{\psi}_1=N_{\Sigma(\tau_1)}\widetilde{\psi}\quad\textrm{on}\quad\Si(\tau_1),\\
\widetilde{\psi}_1&= \chi_{\tau_1}\psi_\AA\quad\textrm{on}\quad\AA\cap\{\tau\leq\tau_1\},\\
\widetilde{\psi}_1&= \chi_{\tau_1}\psi_\II\quad\textrm{on}\quad\II_+\cap\{\tau\leq\tau_1\},
\end{split}
\eea
where we recall that $\psi_\AA$ {and $\psi_\II$} are smooth extensions of $\psi$ respectively from $\AA\cap\{\tau\geq \tau_1\}$ to $\AA\cap\{\tau_1-1\leq\tau\leq\tau_1\}$ {and from $\II\cap\{\tau\geq \tau_1\}$ to $\II\cap\{\tau_1-1\leq\tau\leq\tau_1\}$} satisfying \eqref{eq:propertyextensionpsiAAofpsitotauleqtau1}.

In particular, note by causality that we have
\beaa
\widetilde{\psi}_1=\widetilde{\psi}\quad\textrm{on}\quad \MM(\tau_1, +\infty), \qquad \widetilde{\psi}_1 = \chi_{\tau_1}\psi_{aux}\quad\textrm{on}\quad \MM(-\infty, \tau_1).
\eeaa
On the other hand, we have 
\beaa
\widetilde{\psi}=\psi\quad\textrm{on}\quad \MM(\tau_1+1, \tau_2-1)
\eeaa
by causality in view of \eqref{eq:waveequationdefiningfirstextensionwidetildepsi}, and we thus deduce
\bea\lab{eq:causlityrelationsforwidetildepsi1}
\widetilde{\psi}_1=\psi\quad\textrm{on}\quad \MM(\tau_1+1, \tau_2-1), \qquad \widetilde{\psi}_1 = 0\quad\textrm{on}\quad \MM(-\infty, \tau_1-1).
\eea

\noindent{\bf Step 6.} We have obtained so far the following:
\begin{itemize}
\item in view of Step 2, 
$\g_{\chi_{\tau_1, \tau_2}}$ satisfies the assumptions of Section \ref{subsubsect:assumps:perturbedmetric} and coincides with Kerr in $\MM\setminus(\tau_1, \tau_2)$, where we recall that $1\leq\tau_1<\tau_2<+\infty$,

\item in view of \eqref{eq:waveeqwidetildepsi1}, $\widetilde{\psi}_1$ is a solution to the scalar wave equation with RHS $\widetilde{F}+\widetilde{F}_{(0)}$,

\item in view of \eqref{eq:causlityrelationsforwidetildepsi1}, $\widetilde{\psi}_1$ can be smoothly extended by $0$ for $\tau\leq \tau_1-1$, {and hence for $\tau\leq 1$ since $\tau_1-1\geq 1$,}

\item {in view of \eqref{def:tildef} and \eqref{def:tildef0}, $ \widetilde{F}+\widetilde{F}_{(0)}$ is supported in $(\tau_1-1,\tau_2)$.} 
\end{itemize}
We may thus apply Theorem \ref{th:main:intermediary} which yields
\bea\lab{eq:th:main:intermediary1:application}
\EMF[\widetilde{\psi}_1](\mathbb{R})+\EMF[\pr_\tau\widetilde{\psi}_1](\mathbb{R}) \les \widetilde{\mathcal{F}}+\ep \EM^{(1)}[\widetilde{\psi}_1](\mathbb{R})
\eea
and
\bea\lab{eq:th:main:intermediary2:application}
{\EMF_{r_+(1-\dhor), 11m}[\pr_{\tphi}\widetilde{\psi}_1](\mathbb{R})} &\les& \widetilde{\mathcal{F}}+\ep \EM^{(1)}[\widetilde{\psi}_1](\mathbb{R})+{\M_{11m,12m}[\pr\widetilde{\psi}_1](\mathbb{R})},
\eea
where $\widetilde{\mathcal{F}}$ is defined by
\beaa
\widetilde{\mathcal{F}}:=\widetilde{\mathcal{N}}^{(1)}[ \widetilde{\psi}_1, \widetilde{F}+\widetilde{F}_{(0)}](\mathbb{R}).
\eeaa
In particular, \eqref{eq:th:main:intermediary1:application} implies, in view of \eqref{eq:higherorderenergymorawetzforlargerconditionalonsprtau}, 
\beaa
\nn\EMF^{(1)}_{r\geq 11m}[\widetilde{\psi}_1](\mathbb{R}) &\les& \EMF[\widetilde{\psi}_1](\mathbb{R})+\EMF[\pr_\tau\widetilde{\psi}_1](\mathbb{R})+\int_{\MM}|\pr^{\leq 1}(\widetilde{F}+\widetilde{F}_{(0)})|^2\\
&&{+\widehat{\mathcal{N}}^{(1)}_{r\geq 10m}[\psi, \widetilde{F}+\widetilde{F}_{(0)}](\tau_1, \tau_2)}
+\big(\EMF[\widetilde{\psi}_1](\tau_1, \tau_2)\big)^{\frac{1}{2}}\left(\EMF^{(1)}[\widetilde{\psi}_1](\mathbb{R})\right)^{\frac{1}{2}}\\
&\les& \widetilde{\mathcal{F}}  +\ep \EM^{(1)}[\widetilde{\psi}_1](\mathbb{R})+\big(\widetilde{\mathcal{F}}+\ep \EM^{(1)}[\widetilde{\psi}_1](\mathbb{R})\big)^{\frac{1}{2}}\left(\EMF^{(1)}[\widetilde{\psi}_1](\mathbb{R})\right)^{\frac{1}{2}},
\eeaa
where we used the {following consequence of the definitions \eqref{eq:definitionwidetildemathcalNpsijnormRHS} and \eqref{eq:defintionwidehatmathcalNfpsinormRHS}}
\bea\lab{eq:upperboundofL2normprm1fbywidetildemathcalF}
\int_{\MM}|\pr^{\leq 1}(\widetilde{F}+\widetilde{F}_{(0)})|^2 {+\widehat{\mathcal{N}}^{(1)}_{r\geq 10m}[\psi, \widetilde{F}+\widetilde{F}_{(0)}](\tau_1, \tau_2)} &\les& \widetilde{\mathcal{N}}^{(1)}[\widetilde{\psi}_1, \widetilde{F}+\widetilde{F}_{(0)}](\mathbb{R})\les \widetilde{\mathcal{F}}.
\eea
Together with  \eqref{eq:th:main:intermediary2:application}, this implies 
\beaa
{\EMF_{r_+(1-\dhor),11m}[\pr_{\tphi}\widetilde{\psi}_1](\mathbb{R})} \les \widetilde{\mathcal{F}}+\ep \EM^{(1)}[\widetilde{\psi}_1](\mathbb{R})+\big(\widetilde{\mathcal{F}}+\ep \EM^{(1)}[\widetilde{\psi}_1](\mathbb{R})\big)^{\frac{1}{2}}\left(\EMF^{(1)}[\widetilde{\psi}_1](\mathbb{R})\right)^{\frac{1}{2}}.
\eeaa
Together with \eqref{eq:th:main:intermediary1:application} and \eqref{eq:higherorderenergymorawetzconditionalonsprtausprphis}, we obtain
\beaa
\EMF^{(1)}[\widetilde{\psi}_1](\mathbb{R}) &\les& \widetilde{\mathcal{F}}+\ep \EM^{(1)}[\widetilde{\psi}_1](\mathbb{R})
+\big(\widetilde{\mathcal{F}}+\ep \EM^{(1)}[\widetilde{\psi}_1](\mathbb{R})\big)^{\frac{1}{2}}\left(\EMF^{(1)}[\widetilde{\psi}_1](\mathbb{R})\right)^{\frac{1}{2}},
\eeaa
where we have used again \eqref{eq:upperboundofL2normprm1fbywidetildemathcalF}, as well as the fact that $\widetilde{\psi}_1 = 0$ on $\MM(-\infty, \tau_1-1)$ in view of \eqref{eq:causlityrelationsforwidetildepsi1} which implies $\E^{(1)}[\widetilde{\psi}_1](-\infty)=0$. For $\ep>0$ small enough, we deduce 
\bea\lab{eq:intermedirayesimtaforEMF1ofwidetildepsi1intermsofwidehatF:bis}
\EMF^{(1)}[\widetilde{\psi}_1](\mathbb{R}) \les  \widetilde{\mathcal{F}}.
\eea

Next, we use the comparison of $\widetilde{\mathcal{N}}[\psi, F](\Reals)$ with $\widehat{\mathcal{N}}[\psi, F]$ provided by Lemma \ref{lemma:additivepropertyofwidetildeN} with $N=4$ and $\tau^{(1)}=\tau_1-2$, $\tau^{(2)}=\tau_1+2$, $\tau^{(3)}=\tau_2-2$, $\tau^{(4)}=\tau_2+1$, to obtain, for any $0<\la\leq 1$,
\beaa
\widetilde{\mathcal{F}} &=& \widetilde{\mathcal{N}}^{(1)}[\widetilde{\psi}_1, \widetilde{F}+\widetilde{F}_{(0)}](\mathbb{R})\\
&\les& \la^{-1}\widehat{\mathcal{N}}^{(1)}[\widetilde{\psi}_1, \widetilde{F}+\widetilde{F}_{(0)}](-\infty,\tau_1-1)+\la^{-1}\widehat{\mathcal{N}}^{(1)}[ \widetilde{\psi}_1, \widetilde{F}+\widetilde{F}_{(0)}](\tau_1-3, \tau_1+3)\\
&&+\la^{-1}\widehat{\mathcal{N}}^{(1)}[ \widetilde{\psi}_1, \widetilde{F}+\widetilde{F}_{(0)}](\tau_1+1, \tau_2-1)+\la^{-1}\widehat{\mathcal{N}}^{(1)}[ \widetilde{\psi}_1, \widetilde{F}+\widetilde{F}_{(0)}](\tau_2-3, \tau_2+2)\\
&&+\la^{-1}\widehat{\mathcal{N}}^{(1)}[ \widetilde{\psi}_1, \widetilde{F}+\widetilde{F}_{(0)}](\tau_2, +\infty) +\la\EM^{(1)}[\widetilde{\psi}_1](\Reals)\\
&&+\bigg(\int_{\Mtrap}|\widetilde{\psi}_1|^2\bigg)^{\frac{1}{2}}\bigg(\int_{\Mtrap}|\widetilde{F}+\widetilde{F}_{(0)}|^2\bigg)^{\frac{1}{2}}.
\eeaa
Using the properties of the support of $\widetilde{F}$ and $\widetilde{F}_{(0)}$, as well as \eqref{eq:causlityrelationsforwidetildepsi1} and \eqref{def:tildef}, we infer
\beaa
\bsplit
\widetilde{\mathcal{F}} &\les \la^{-1}\widehat{\mathcal{N}}^{(1)}[\widetilde{\psi}_1, \widetilde{F}+\widetilde{F}_{(0)}](\tau_1-1, \tau_1+3)+\la^{-1}\widehat{\mathcal{N}}^{(1)}[\psi, F](\tau_1+1, \tau_2-1)\\
&+\la^{-1}\widehat{\mathcal{N}}^{(1)}[ \widetilde{\psi}_1, \widetilde{F}](\tau_2-3, \tau_2+2) +\la\EM^{(1)}[\widetilde{\psi}_1](\Reals) +\bigg(\int_{\Mtrap}|\widetilde{\psi}_1|^2\bigg)^{\frac{1}{2}}\bigg(\int_{\Mtrap}|\widetilde{F}+\widetilde{F}_{(0)}|^2\bigg)^{\frac{1}{2}}.
\end{split}
\eeaa
Since we obviously have
\beaa
\widehat{\mathcal{N}}[\phi, F_1+F_2](\tau_1, \tau_2)\leq \widehat{\mathcal{N}}[\phi, F_1](\tau_1, \tau_2)+\widehat{\mathcal{N}}[\phi, F_2](\tau_1, \tau_2),
\eeaa
we deduce
\beaa
\widetilde{\mathcal{F}} &\les& \la^{-1}\widehat{\mathcal{N}}^{(1)}[\widetilde{\psi}_1, \widetilde{F}_{(0)}](\tau_1-1, \tau_1+3) +\la^{-1}\widehat{\mathcal{N}}^{(1)}[\widetilde{\psi}_1,\widetilde{F}](\tau_1-1, \tau_1+3)\\
&&+\la^{-1}\widehat{\mathcal{N}}^{(1)}[\psi, F](\tau_1+1, \tau_2-1)+\la^{-1}\widehat{\mathcal{N}}^{(1)}[ \widetilde{\psi}_1,\widetilde{F}](\tau_2-3, \tau_2+2)\\
&& +\la\EM^{(1)}[\widetilde{\psi}_1](\Reals) +\bigg(\int_{\Mtrap}|\widetilde{\psi}_1|^2\bigg)^{\frac{1}{2}}\bigg(\int_{\Mtrap}|\widetilde{F}+\widetilde{F}_{(0)}|^2\bigg)^{\frac{1}{2}}.
\eeaa
Now, note from \eqref{eq:defintionwidehatmathcalNfpsinormRHS} that 
\bea\lab{eq:usefulupperboundwidehatmathcalNonboundedtimeintervals}
\nn\widehat{\mathcal{N}}^{(1)}[{\phi,h}](\tau, \tau+q) 
&\les& \sup_{\tau'\in[\tau,\tau+q]}\bigg|\int_{\Mntrap(\tau, \tau')}\pr_\tau(\pr^{\leq 1}\phi)\pr^{\leq 1}h\bigg|+\int_{\MM(\tau, \tau+q)}|\pr^{\leq 1}h|^2
\nn\\
&&+\sqrt{q}\bigg(\sup_{\tau'\in[\tau, \tau+q]}\E^{(1)}[\phi](\tau')\bigg)^{\frac{1}{2}}\bigg(\int_{\MM(\tau, \tau+q)}|\pr^{\leq 1}h|^2\bigg)^{\frac{1}{2}}
\eea
and hence, taking also the support in $\tau$ of $\widetilde{F}$ and $\widetilde{F}^{(0)}$ into account, we obtain
\beaa
\bsplit
\widetilde{\mathcal{F}} \les&  \la^{-1}\sup_{\tau'\in[\tau_1-1, \tau_1]} \bigg|\int_{\Mntrap(\tau_1-1, \tau')}\pr_\tau(\pr^{\leq 1}\widetilde{\psi}_1)\pr^{\leq 1}\widetilde{F}_{(0)}\bigg|
\nn\\
&+\la^{-1}\int_{\Mntrap(\tau_1, \tau_2)}|\pr_\tau(\pr^{\leq 1}\widetilde{\psi}_1)||\pr^{\leq 1}F| +\la^{-1}\bigg(\sup_{\tau\in[\tau_1, \tau_2]}\E^{(1)}[\widetilde{\psi}_1](\tau)\bigg)^{\frac{1}{2}}\bigg(\int_{\MM(\tau_1, \tau_2)}|\pr^{\leq 1}F|^2\bigg)^{\frac{1}{2}}\\
&+\la^{-1}\widehat{\mathcal{N}}^{(1)}[\psi, F](\tau_1+1, \tau_2-1)+\la^{-1}\sup_{\tau\in[\tau_1-1, \tau_1]}\E^{(1)}[\psi_{aux}](\tau) +\la^{-1}\int_{\MM(\tau_1-1, \tau_1)}|\pr^{\leq 1}\widetilde{F}_{(0)}|^2\\
&+\la^{-1}\int_{\MM(\tau_1, \tau_2)}|\pr^{\leq 1}F|^2 +\la\EM^{(1)}[\widetilde{\psi}_1](\Reals)+\bigg(\int_{\Mtrap}|\widetilde{\psi}_1|^2\bigg)^{\frac{1}{2}}\bigg(\int_{\Mtrap}|\widetilde{F}+\widetilde{F}_{(0)}|^2\bigg)^{\frac{1}{2}},
\end{split}
\eeaa
which together with \eqref{eq:intermedirayesimtaforEMF1ofwidetildepsi1intermsofwidehatF:bis} yields, for $0<\la\leq 1$ small enough,
\bea\lab{eq:intermedirayesimtaforEMF1ofwidetildepsi1intermsofwidehatF:ter}
\nn&&\EMF^{(1)}[\widetilde{\psi}_1](\mathbb{R})\\ 
&\les& \sup_{\tau'\in[\tau_1-1, \tau_1]} \bigg|\int_{\Mntrap(\tau_1-1, \tau')}\pr_\tau(\pr^{\leq 1}\widetilde{\psi}_1)\pr^{\leq 1}\widetilde{F}_{(0)}\bigg|+\int_{\Mntrap(\tau_1, \tau_2)}|\pr_\tau(\pr^{\leq 1}\widetilde{\psi}_1)||\pr^{\leq 1}F|
\nn\\
\nn&&+\widehat{\mathcal{N}}^{(1)}[\psi, F](\tau_1+1, \tau_2-1)+\bigg(\sup_{\tau\in[\tau_1, \tau_2]}\E^{(1)}[\widetilde{\psi}_1](\tau)\bigg)^{\frac{1}{2}}\bigg(\int_{\MM(\tau_1, \tau_2)}|\pr^{\leq 1}F|^2\bigg)^{\frac{1}{2}}\\
&&+\sup_{\tau\in[\tau_1-1, \tau_1]}\E^{(1)}[\psi_{aux}](\tau) +\int_{\MM(\tau_1-1, \tau_1)}|\pr^{\leq 1}\widetilde{F}_{(0)}|^2+\int_{\MM(\tau_1, \tau_2)}|\pr^{\leq 1}F|^2,
\eea
{where we have also used} \eqref{def:tildef} and \eqref{def:tildef0} to infer
\beaa
&&\bigg(\int_{\Mtrap}|\widetilde{\psi}_1|^2\bigg)^{\frac{1}{2}}\bigg(\int_{\Mtrap}|\widetilde{F}+\widetilde{F}_{(0)}|^2\bigg)^{\frac{1}{2}}\nn\\
&\les& {\Big(\EMF^{(1)}[\widetilde{\psi}_1](\mathbb{R})\Big)^{\frac{1}{2}}\bigg(\int_{\MM(\tau_1-1, \tau_1)}|\pr^{\leq 1}\widetilde{F}_{(0)}|^2+\int_{\MM(\tau_1, \tau_2)}|\pr^{\leq 1}F|^2\bigg)^{\frac{1}{2}}}
\eeaa
{which allows to absorb the term $\EMF^{(1)}[\widetilde{\psi}_1](\mathbb{R})$ from the LHS.}

\noindent{\bf Step 7.} In this step, we estimate the terms involving $\widetilde{F}^{(0)}$ in the RHS of the final estimate of Step 6. It is at this stage that the assumption that $\psi$ is compactly supported in $\Si(\tau_1)$ plays an important role. Indeed, together with \eqref{eq:waveequationdefiningpsiaux} and \eqref{eq:waveeqwidetildepsi1}, this assumption on $\psi$ implies
\beaa
\widetilde{\psi}_1(\tauu=+\infty, \tau_1, \om)=\psi_{aux}(\tauu=+\infty, \tau_1, \om)=\psi(\tauu=+\infty, \tau_1, \om)=0, \quad \forall \om\in\mathbb{S}^2,
\eeaa
so that both $\psi_{aux}$ and $\widetilde{\psi}_1$ vanish at $\Si(\tau_1)\cap\II_+$. We may thus apply Lemma \ref{lemma:howtoexploitthecompactsupportofpsionSigramtau} with $\tau_0=\tau_1$ and $q=1$ which yields
\bea\lab{eq:estimatetotreatboundarytermsonII+inintegrationbypartscontroltildef0}
\bsplit
{\liminf_{\tauu\to +\infty}}\int_{\tau_1-1}^{\tau_1}\int_{\mathbb{S}^2}|\pr^{\leq 2}\psi_{aux}|^2({\tauu}, \tau, \om)r^2d\mathring{\ga}d\tau &\les \EF^{(1)}[\psi_{aux}](\tau_1-1,\tau_1),\\
{\liminf_{\tauu\to +\infty}}\int_{\tau_1-1}^{\tau_1}\int_{\mathbb{S}^2}|\pr^{\leq 2}\widetilde{\psi}_1|^2({\tauu}, \tau, \om)r^2d\mathring{\ga}d\tau &\les \EF^{(1)}[\widetilde{\psi}_1](\tau_1-1,\tau_1),
\end{split}
\eea
where \eqref{eq:estimatetotreatboundarytermsonII+inintegrationbypartscontroltildef0} will be used to control a priori dangerous boundary terms on $\II_+$ appearing in the various integrations by parts of Step 7. {Also, to simplify the exposition, as explained in Remark \ref{rmk:limitingargumentwithliminfonII+}, we will skip the limiting argument needed to apply \eqref{eq:estimatetotreatboundarytermsonII+inintegrationbypartscontroltildef0} and simply consider that \eqref{eq:estimatetotreatboundarytermsonII+inintegrationbypartscontroltildef0} holds at $\tauu=+\infty$.}

Next, recall from \eqref{eq:structureoftildef0} that $\widetilde{F}^{(0)}$ satisfies 
\beaa
\widetilde{F}_{(0)} &=&  -2\chi_{\tau_1}'(\tau)\left(\pr_r(\psi_{aux}) +\frac{1}{r}\psi_{aux}\right) +O(r^{-2})\Big(\chi_{\tau_1}''(\tau), \chi_{\tau_1}'(\tau)\Big)\dk^{\leq 1}\psi_{aux}.
\eeaa
Then, we have for any $\tau'\in [\tau_1-1,\tau_1]$,
\beaa
&&\bigg|\int_{\Mntrap(\tau_1-1, \tau')}\pr_\tau(\pr^{\leq 1}\widetilde{\psi}_1)\pr^{\leq 1}\left(\widetilde{F}_{(0)}+2\chi_{\tau_1}'(\tau)\left(\pr_r(\psi_{aux}) +\frac{1}{r}\psi_{aux}\right)\right)\bigg|\\
&&+\int_{\MM(\tau_1-1, \tau_1)}|\pr^{\leq 1}\widetilde{F}_{(0)}|^2\\
&\les& \int_{\Mntrap(\tau_1-1, \tau')}r^{-2}|\pr_\tau(\pr^{\leq 1}\widetilde{\psi}_1)||\dk^{\leq 1}\pr^{\leq 1}\psi_{aux}|+\int_{\MM(\tau_1-1, \tau')}r^{-2}|\dk^{\leq 1}\pr^{\leq 1}\psi_{aux}|^2\\
&\les& \left(\sup_{\tau\in[\tau_1-1, \tau_1]}\E^{(1)}[\widetilde{\psi}_1](\tau)\right)^{\frac{1}{2}}\left(\sup_{\tau\in[\tau_1-1, \tau_1]}\E^{(1)}[\psi_{aux}](\tau)\right)^{\frac{1}{2}}+\sup_{\tau\in[\tau_1-1, \tau_1]}\E^{(1)}[\psi_{aux}](\tau),
\eeaa
which together with \eqref{eq:intermedirayesimtaforEMF1ofwidetildepsi1intermsofwidehatF:ter} yields
\bea\lab{eq:intermediaryestimatewithmaintermoftildef0stilltobeestimated}
\nn\EMF^{(1)}[\widetilde{\psi}_1](\mathbb{R})\!\! &\les&  \sup_{\tau'\in [\tau_1-1,\tau_1]}\mathcal{J}(\tau')+\int_{\Mntrap(\tau_1, \tau_2)}|\pr_\tau(\pr^{\leq 1}\widetilde{\psi}_1)||\pr^{\leq 1}F| +\widehat{\mathcal{N}}^{(1)}[\psi, F](\tau_1+1, \tau_2-1)\\
&&+\sup_{\tau\in[\tau_1-1, \tau_1]}\E^{(1)}[\psi_{aux}](\tau) +\int_{\MM(\tau_1, \tau_2)}|\pr^{\leq 1}F|^2,
\eea
where $\mathcal{J}(\tau')$ is  given  by
\beaa
\mathcal{J}(\tau'):=\bigg|\int_{\Mntrap(\tau_1-1, \tau')}\pr_\tau(\pr^{\leq 1}\widetilde{\psi}_1)\pr^{\leq 1}\left(\chi_{\tau_1}'(\tau)\left(\pr_r(\psi_{aux}) +\frac{1}{r}\psi_{aux}\right)\right)\bigg|.
\eeaa

Next, we estimate $\mathcal{J}(\tau')$, which we rewrite as follows,
\beaa
\mathcal{J}(\tau')=\bigg|\int_{\Mntrap(\tau_1-1, \tau')}\pr_\tau(\pr^{\leq 1}\widetilde{\psi}_1)\pr^{\leq 1}\left(\chi_{\tau_1}'(\tau)\frac{1}{r}\pr_r(r\psi_{aux})\right)\bigg|,
\eeaa
and integrating by parts in $r$, we infer 
\beaa
\mathcal{J}(\tau') &\les& \left|\int_{\Mntrap(\tau_1-1, \tau_1)}\pr^{\leq 1}(\chi_{\tau_1}'(\tau)\psi_{aux})\frac{r}{\sqrt{|\g_{\chi_{\tau_1, \tau_2}}|}}\pr_r\left(\frac{\sqrt{|\g_{\chi_{\tau_1, \tau_2}}|}}{r}\pr_\tau(\pr^{\leq 1}\widetilde{\psi}_1)\right)\right|\\
&&+\left(\EF^{(1)}[\widetilde{\psi}_1](\tau_1-1,\tau_1)\right)^{\frac{1}{2}}\left(\EF^{(1)}[\psi_{aux}](\tau_1-1,\tau_1)\right)^{\frac{1}{2}},
\eeaa
where we have used \eqref{eq:estimatetotreatboundarytermsonII+inintegrationbypartscontroltildef0} to control the boundary terms generated on $\II_+(\tau_1-1,\tau')$. In view of
\beaa
&&\frac{r}{\sqrt{|\g_{\chi_{\tau_1, \tau_2}}|}}\pr_r\left(\frac{\sqrt{|\g_{\chi_{\tau_1, \tau_2}}|}}{r}\pr_\tau(\pr^{\leq 1}\widetilde{\psi}_1)\right)\\ 
&=& \pr^{\leq 1}\left(\frac{r}{\sqrt{|\g_{\chi_{\tau_1, \tau_2}}|}}\pr_r\left(\frac{\sqrt{|\g_{\chi_{\tau_1, \tau_2}}|}}{r}\pr_\tau\widetilde{\psi}_1\right)\right)+\left[\frac{r}{\sqrt{|\g_{\chi_{\tau_1, \tau_2}}|}}\pr_r\left(\frac{\sqrt{|\g_{\chi_{\tau_1, \tau_2}}|}}{r}\right), \pr\right]\pr_\tau(\widetilde{\psi}_1)\\
&=& \pr^{\leq 1}\left(\frac{r}{\sqrt{|\g_{\chi_{\tau_1, \tau_2}}|}}\pr_r\left(\frac{\sqrt{|\g_{\chi_{\tau_1, \tau_2}}|}}{r}\pr_\tau\widetilde{\psi}_1\right)\right)+\left[\frac{r}{\sqrt{|\gam|}}\pr_r\left(\frac{\sqrt{|\gam|}}{r}\right)+(N_{det})_r, \pr\right]\pr_\tau(\widetilde{\psi}_1)\\
&=& \pr^{\leq 1}\left(\frac{r}{\sqrt{|\g_{\chi_{\tau_1, \tau_2}}|}}\pr_r\left(\frac{\sqrt{|\g_{\chi_{\tau_1, \tau_2}}|}}{r}\pr_\tau\widetilde{\psi}_1\right)\right)+\left[\frac{1}{r}(1+O(m^2r^{-2}))+\dk^{\leq 1}\Ga_g, \pr\right]\pr_\tau(\widetilde{\psi}_1)\\
&=& \pr^{\leq 1}\left(\frac{r}{\sqrt{|\g_{\chi_{\tau_1, \tau_2}}|}}\pr_r\left(\frac{\sqrt{|\g_{\chi_{\tau_1, \tau_2}}|}}{r}\pr_\tau\widetilde{\psi}_1\right)\right)+O(r^{-2})\pr_\tau(\widetilde{\psi}_1)
\eeaa
where we used the estimate for $(N_{det})_r$ provided by Lemma \ref{lemma:computationofthederiveativeofsrqtg} and the control of  $\sqrt{|\det(\gam)|}$ provided by \eqref{eq:assymptiticpropmetricKerrintaurxacoord:volumeform}, we infer
\bea\lab{eq:controlofmathcalJwithmaintermisolatedandmathcalKremainder}
\mathcal{J}(\tau') \les \left|\int_{\Mntrap(\tau_1-1, \tau')}\pr^{\leq 1}(\chi_{\tau_1}'(\tau)\psi_{aux})\pr^{\leq 1}\left(\frac{r}{\sqrt{|\g_{\chi_{\tau_1, \tau_2}}|}}\pr_r\left(\frac{\sqrt{|\g_{\chi_{\tau_1, \tau_2}}|}}{r}\pr_\tau\widetilde{\psi}_1\right)\right)\right|+\mathcal{K},
\eea
where
\beaa
\mathcal{K} &:=& \left(\EF^{(1)}[\widetilde{\psi}_1](\tau_1-1,\tau_1)\right)^{\frac{1}{2}}\left(\EF^{(1)}[\psi_{aux}](\tau_1-1,\tau_1)\right)^{\frac{1}{2}}+\EF^{(1)}[\psi_{aux}](\tau_1-1,\tau_1).
\eeaa

Next, in order to control the first term at the RHS of \eqref{eq:controlofmathcalJwithmaintermisolatedandmathcalKremainder}, we provide an identity for $\square_\g\phi$ with $\phi$ a scalar function and $\g$ a metric satisfying \eqref{eq:controloflinearizedinversemetriccoefficients}. In view of \eqref{eq:controloflinearizedinversemetriccoefficients}, \eqref{eq:assymptiticpropmetricKerrintaurxacoord:1}, Lemma \ref{lemma:computationofthederiveativeofsrqtg} and \eqref{eq:assymptiticpropmetricKerrintaurxacoord:volumeform}, we have
\beaa
\g^{\a\b}\pr_\a\pr_\b+(N_{det})^\a\pr_\a\phi &=& O(1)\pr_r^2\phi +(-2+O(r^{-1}))\pr_r\pr_\tau\phi+O(r^{-1})\pr_r\pr_{x^a}\phi\\
&&+O(m^2r^{-2})\pr_\tau^2\phi+O(mr^{-2})\pr_{\tau}\pr_{x^a}\phi+O(r^{-2})\pr_{x^a}\pr_{x^b}
\eeaa
and
\beaa
&&\pr_\a(\g^{\a\b})\pr_\b\phi+\g^{\a\b}\left(\frac{1}{\sqrt{|\gam|}}\pr_\a\left(\sqrt{|\gam|}\right)+(N_{det})_\a\right)\pr_\b\phi\\
&=& O(r^{-1})\pr_r\phi+\left(-\frac{2}{r}+O(r^{-2})\right)\pr_\tau\phi+O(r^{-2})\pr_{x^a}\phi
\eeaa
which yields
\bea\lab{eq:expressionforthewaveoperatorisolatingprincipalterms}
\nn\square_\g\phi &=& \g^{\a\b}\pr_\a\pr_\b\phi+\pr_\a(\g^{\a\b})\pr_\b\phi+\g^{\a\b}\left(\frac{1}{\sqrt{|\gam|}}\pr_\a\left(\sqrt{|\gam|}\right)+(N_{det})_\a\right)\pr_\b\phi\\
\nn&=& -2\left(\pr_r\pr_\tau\phi +\frac{1}{r}\pr_\tau\phi\right) +O(1)\big(\pr_r^2, r^{-1}\pr_{x^a}\pr_r, r^{-2}\pr_{x^a}\pr_{x^b}\big)\phi\\
&&+O(r^{-1})\big(\pr_r, r^{-1}\pr_{x^a}, r^{-1}\pr_\tau\big)\pr^{\leq 1}\phi.
\eea
Now, we have, using again Lemma \ref{lemma:computationofthederiveativeofsrqtg} and \eqref{eq:assymptiticpropmetricKerrintaurxacoord:volumeform},
\beaa
\frac{r}{\sqrt{|\g_{\chi_{\tau_1, \tau_2}}|}}\pr_r\left(\frac{\sqrt{|\g_{\chi_{\tau_1, \tau_2}}|}}{r}\pr_\tau\widetilde{\psi}_1\right) &=& \pr_r\pr_\tau(\widetilde{\psi}_1)+\left(-\frac{1}{r}+\frac{1}{\sqrt{|\g_{\chi_{\tau_1, \tau_2}}|}}\pr_r\left(\sqrt{|\g_{\chi_{\tau_1, \tau_2}}|}\right)\right)\pr_\tau\widetilde{\psi}_1\\
&=& \pr_r\pr_\tau(\widetilde{\psi}_1)+\left(\frac{r}{\sqrt{|\gam|}}\pr_r\left(\frac{\sqrt{|\gam|}}{r}\right)+(N_{det})_r\right)\pr_\tau\widetilde{\psi}_1\\
&=& \pr_r\pr_\tau(\widetilde{\psi}_1)+\frac{1}{r}\pr_\tau(\widetilde{\psi}_1)+O(r^{-2})\pr_\tau\widetilde{\psi}_1,
\eeaa
and hence, applying \eqref{eq:expressionforthewaveoperatorisolatingprincipalterms} with $\g\to\g_{\chi_{\tau_1, \tau_2}}$ and $\phi\to \widetilde{\psi}_1$, we infer
\beaa
\frac{r}{\sqrt{|\g_{\chi_{\tau_1, \tau_2}}|}}\pr_r\left(\frac{\sqrt{|\g_{\chi_{\tau_1, \tau_2}}|}}{r}\pr_\tau\widetilde{\psi}_1\right) &=& -\frac{1}{2}\square_{\tilde{g}}(\widetilde{\psi}_1)+O(1)\big(\pr_r^2, r^{-1}\pr_{x^a}\pr_r, r^{-2}\pr_{x^a}\pr_{x^b}\big)\widetilde{\psi}_1\\
&&+O(r^{-1})\big(\pr_r, r^{-1}\pr_{x^a}, r^{-1}\pr_\tau\big)\pr^{\leq 1}\widetilde{\psi}_1\\
&=& -\frac{1}{2}(\widetilde{F}+\widetilde{F}^{(0)})+O(1)\big(\pr_r^2, r^{-1}\pr_{x^a}\pr_r, r^{-2}\pr_{x^a}\pr_{x^b}\big)\widetilde{\psi}_1\\
&&+O(r^{-1})\big(\pr_r, r^{-1}\pr_{x^a}, r^{-1}\pr_\tau\big)\pr^{\leq 1}\widetilde{\psi}_1,
\eeaa
where we have used \eqref{eq:waveeqwidetildepsi1}. Together with \eqref{eq:controlofmathcalJwithmaintermisolatedandmathcalKremainder}, and using also the fact that $\widetilde{F}=0$ for $\tau\in (\tau_1-1, \tau_1)$, we deduce 
\beaa
\mathcal{J}(\tau') &\les& \bigg|\int_{\Mntrap(\tau_1-1, \tau')}\pr^{\leq 1}(\chi_{\tau_1}'(\tau)\psi_{aux})\pr^{\leq 1}\widetilde{F}^{(0)}\bigg|\\
&&+\bigg|\int_{\Mntrap(\tau_1-1, \tau')}\pr^{\leq 1}(\chi_{\tau_1}'(\tau)\psi_{aux})\pr^{\leq 1}\Big(O(1)\big(\pr_r^2, r^{-1}\pr_{x^a}\pr_r, r^{-2}\pr_{x^a}\pr_{x^b}\big)\widetilde{\psi}_1\Big)\bigg|\\
&&+\bigg|\int_{\Mntrap(\tau_1-1, \tau')}\pr^{\leq 1}(\chi_{\tau_1}'(\tau)\psi_{aux})\pr^{\leq 1}\Big(O(r^{-1})\big(\pr_r, r^{-1}\pr_{x^a}, r^{-1}\pr_\tau\big)\pr^{\leq 1}\widetilde{\psi}_1\Big)\bigg|+\mathcal{K}.
\eeaa
Integrating by parts in the second and the third term at the RHS, we infer 
\beaa
\mathcal{J}(\tau') &\les& \bigg|\int_{\Mntrap(\tau_1-1, \tau')}\pr^{\leq 1}(\chi_{\tau_1}'(\tau)\psi_{aux})\pr^{\leq 1}\widetilde{F}^{(0)}\bigg|\\
&&+\int_{\Mntrap(\tau_1-1, \tau')}\Big|\big(\pr_r, r^{-1}\pr_{x^a}\big)\pr^{\leq 1}\widetilde{\psi}_1\Big|\Big|\big(\pr_r, r^{-1}\pr_{x^a}\big)\pr^{\leq 1}\psi_{aux}\Big|\\
&&+\int_{\Mntrap(\tau_1-1, \tau')}r^{-1}|\pr^{\leq 2}\psi_{aux}|\Big|\big(\pr_r, r^{-1}\pr_{x^a}, r^{-1}\pr_\tau\big)\pr^{\leq 1}\widetilde{\psi}_1\Big|+\mathcal{K}\\
&\les& \bigg|\int_{\Mntrap(\tau_1-1, \tau')}\pr^{\leq 1}(\chi_{\tau_1}'(\tau)\psi_{aux})\pr^{\leq 1}\widetilde{F}^{(0)}\bigg| +\mathcal{K},
\eeaa
where we also used the definition of $\mathcal{K}$, the estimate  \eqref{eq:estimatetotreatboundarytermsonII+inintegrationbypartscontroltildef0} to control the boundary terms generated on $\II_+(\tau_1-1,\tau_1)$, and the fact that integration by parts w.r.t. to $\pr_r$ and $\pr_{x^a}$ do not produce boundary terms on $\Si(\tau_1-1)$ and $\Si(\tau')$. Recalling from \eqref{eq:structureoftildef0} that $\widetilde{F}^{(0)}$ satisfies 
\beaa
\widetilde{F}_{(0)} &=&  -2\chi_{\tau_1}'(\tau)r^{-1}\pr_r(r\psi_{aux}) +O(r^{-2})\Big(\chi_{\tau_1}''(\tau), \chi_{\tau_1}'(\tau)\Big)\dk^{\leq 1}\psi_{aux},
\eeaa
we obtain 
\beaa
\mathcal{J}(\tau') &\les& \bigg|\int_{\Mntrap(\tau_1-1, \tau')}\pr^{\leq 1}(\chi_{\tau_1}'(\tau)\psi_{aux})\pr^{\leq 1}(\chi_{\tau_1}'(\tau)r^{-1}\pr_r(r\psi_{aux}))\bigg| +\mathcal{K}\\
&\les& \bigg|\int_{\Mntrap(\tau_1-1, \tau')}\frac{1}{r^2}\pr_r\Big(\big(\pr^{\leq 1}(\chi_{\tau_1}'(\tau)r\psi_{aux})\big)^2\Big)\bigg| +\mathcal{K}.
\eeaa
Integrating by parts in $\pr_r$ again, we deduce 
\beaa
\mathcal{J}(\tau') &\les& \left|\int_{\Mntrap(\tau_1-1, \tau')}(\chi_{\tau_1}'(\tau))^2\frac{r^2}{\sqrt{|\g_{\chi_{\tau_1, \tau_2}}|}}\pr_r\left(\frac{\sqrt{|\g_{\chi_{\tau_1, \tau_2}}|}}{r^2}\right)(\psi_{aux})^2\right| +\mathcal{K},
\eeaa
which together with Lemma \ref{lemma:computationofthederiveativeofsrqtg} and \eqref{eq:assymptiticpropmetricKerrintaurxacoord:volumeform}, as well as the definition of $\mathcal{K}$, yields
\beaa
\sup_{\tau'\in[\tau_1-1,\tau_1]}\mathcal{J}(\tau') &\les& \left(\EF^{(1)}[\widetilde{\psi}_1](\tau_1-1,\tau_1)\right)^{\frac{1}{2}}\left(\EF^{(1)}[\psi_{aux}](\tau_1-1,\tau_1)\right)^{\frac{1}{2}}\nn\\
&&+\EF^{(1)}[\psi_{aux}](\tau_1-1,\tau_1).
\eeaa
Plugging this estimate back into  \eqref{eq:intermediaryestimatewithmaintermoftildef0stilltobeestimated}, we infer
\beaa
\nn\EMF^{(1)}[\widetilde{\psi}_1](\mathbb{R}) &\les& \int_{\Mntrap(\tau_1, \tau_2)}|\pr_\tau(\pr^{\leq 1}\widetilde{\psi}_1)||\pr^{\leq 1}F| +\widehat{\mathcal{N}}^{(1)}[\psi, F](\tau_1+1, \tau_2-1)\\
&&+\EF^{(1)}[\psi_{aux}](\tau_1-1,\tau_1) +\int_{\MM(\tau_1, \tau_2)}|\pr^{\leq 1}F|^2
\eeaa
which together with \eqref{eq:localenergyestimateforpsiauxonSigma} yields
\beaa
\nn\EMF^{(1)}[\widetilde{\psi}_1](\mathbb{R}) &\les& \E^{(1)}[\psi](\tau_1) +\int_{\Mntrap(\tau_1, \tau_2)}|\pr_\tau(\pr^{\leq 1}\widetilde{\psi}_1)||\pr^{\leq 1}F| +\widehat{\mathcal{N}}^{(1)}[\psi, F](\tau_1+1, \tau_2-1)\\
&&+\int_{\MM(\tau_1, \tau_2)}|\pr^{\leq 1}F|^2+\widehat{\mathcal{N}}^{(1)}[\psi, F](\tau_1, \tau_1+1)+\widehat{\mathcal{N}}^{(1)}[\widetilde{\psi}, \widetilde{F}](\tau_1, \tau_1+1).
\eeaa
Now, in view of Step 5, we have $\widetilde{\psi}_1=\widetilde{\psi}$ on $\MM(\tau_1,+\infty)$ and hence, using also \eqref{eq:usefulupperboundwidehatmathcalNonboundedtimeintervals}, we have 
\beaa
\widehat{\mathcal{N}}^{(1)}[\widetilde{\psi}, \widetilde{F}](\tau_1, \tau_1+1) &=& \widehat{\mathcal{N}}^{(1)}[\widetilde{\psi}_1, \widetilde{F}](\tau_1, \tau_1+1)\\
&\les& \int_{\Mntrap(\tau_1, \tau_1+1)}|\pr_\tau(\pr^{\leq 1}\widetilde{\psi}_1)||\pr^{\leq 1}F| +\int_{\MM(\tau_1, \tau_1+1)}|\pr^{\leq 1}F|^2\\
&&+\left(\sup_{\tau\in[\tau_1, \tau_1+1]}\E^{(1)}[\widetilde{\psi}_1](\tau)\right)^{\frac{1}{2}}\left(\int_{\MM(\tau_1, \tau_1+1)}|\pr^{\leq 1}F|^2\right)^{\frac{1}{2}},
\eeaa
which implies 
\beaa
\nn\EMF^{(1)}[\widetilde{\psi}_1](\mathbb{R}) &\les& \E^{(1)}[\psi](\tau_1) +\int_{\Mntrap(\tau_1, \tau_2)}|\pr_\tau(\pr^{\leq 1}\widetilde{\psi}_1)||\pr^{\leq 1}F| +\widehat{\mathcal{N}}^{(1)}[\psi, F](\tau_1, \tau_2-1)\\
&&+\int_{\MM(\tau_1, \tau_2)}|\pr^{\leq 1}F|^2
\eeaa
and thus
\beaa
\nn\EMF^{(1)}[\widetilde{\psi}_1](\tau_1, \tau_2) &\les& \E^{(1)}[\psi](\tau_1) +\int_{\Mntrap(\tau_1, \tau_2)}|\pr_\tau(\pr^{\leq 1}\widetilde{\psi}_1)||\pr^{\leq 1}F| +\widehat{\mathcal{N}}^{(1)}[\psi, F](\tau_1, \tau_2-1)\\
&&+\int_{\MM(\tau_1, \tau_2)}|\pr^{\leq 1}F|^2.
\eeaa

\noindent{\bf Step 8.} We now upgrade the control for $\EMF^{(1)}[\widetilde{\psi}_1](\tau_1, \tau_2)$ in Step 7 to a control of $\EMF^{(1)}_\de[\widetilde{\psi}_1](\tau_1, \tau_2)$. In view of \eqref{eq:imporvedhigherorderenergymorawetzconditionalonlowerrweigth} and \eqref{eq:waveeqwidetildepsi1}, we have
\beaa
\M^{(1)}_{\de}[\widetilde{\psi}_1](\tau_1, \tau_2)\les \EMF^{(1)}[\widetilde{\psi}_1](\tau_1, \tau_2)+\int_{\MM(\tau_1, \tau_2)}r^{1+\de}|\pr^{\leq s}(\widetilde{F}+\widetilde{F}_{(0)})|^2.
\eeaa
Together with the properties of the support of $\widetilde{F}_{(0)}$, we deduce 
\beaa
\M^{(1)}_{\de}[\widetilde{\psi}_1](\tau_1, \tau_2)\les \EMF^{(1)}[\widetilde{\psi}_1](\tau_1, \tau_2)+\int_{\MM(\tau_1, \tau_2)}r^{1+\de}|\pr^{\leq s}\widetilde{F}|^2.
\eeaa
Hence, using also the definition of $\widetilde{F}$, we obtain 
\beaa
\EMF^{(1)}_{\de}[\widetilde{\psi}_1](\tau_1, \tau_2)\les \EMF^{(1)}[\widetilde{\psi}_1](\tau_1, \tau_2)+\int_{\MM(\tau_1, \tau_2)}r^{1+\de}|\pr^{\leq s}F|^2
\eeaa
which together with the control for $\EMF^{(1)}[\widetilde{\psi}_1](\tau_1, \tau_2)$ in Step 7 implies
\beaa
\nn\EMF^{(1)}_\de[\widetilde{\psi}_1](\tau_1, \tau_2) &\les& \E^{(1)}[\psi](\tau_1) +\int_{\Mntrap(\tau_1, \tau_2)}|\pr_\tau(\pr^{\leq 1}\widetilde{\psi}_1)||\pr^{\leq 1}F| +\widehat{\mathcal{N}}^{(1)}[\psi, F](\tau_1, \tau_2-1)\\
&&+\int_{\MM(\tau_1, \tau_2)}r^{1+\de}|\pr^{\leq s}F|^2.
\eeaa
Now, we have from Remark \ref{rmk:controlofwidehatNfpsibyNfpsi} the following comparison between $\widehat{\mathcal{N}}[\psi, F](\tau_1, \tau_2)$ and $\mathcal{N}_\de[\psi, F](\tau_1, \tau_2)$
\bea\lab{eq:comparaisonwideharNpsifwithNdeltapsif}
\widehat{\mathcal{N}}[F, \psi](\tau_1, \tau_2) &\les& \mathcal{N}_\de[\psi, F](\tau_1, \tau_2)+\left(\M_\de[\psi](\tau_1, \tau_2)\right)^{\frac{1}{2}}\left(\mathcal{N}_\de[\psi, F](\tau_1, \tau_2)\right)^{\frac{1}{2}}
\eea
which yields
\beaa
\nn\EMF^{(1)}_\de[\widetilde{\psi}_1](\tau_1, \tau_2) &\les& \E^{(1)}[\psi](\tau_1) +\mathcal{N}_\de[\psi, F](\tau_1, \tau_2).
\eeaa
Together with \eqref{eq:causlityrelationsforwidetildepsi1}, we infer
\bea\lab{eq:almosttotheendforcontrolEMFreg1depsitau1p1tau2m1}
\EMF^{(1)}_\de[\psi](\tau_1+1, \tau_2-1) &\les& \E^{(1)}[\psi](\tau_1) +\mathcal{N}_\de[\psi, F](\tau_1, \tau_2).
\eea
Now, applying the first local energy estimate of Lemma \ref{lemma:localenergyestimate} respectively on $(\tau_1, \tau_1+1)$ and on $(\tau_2-1, \tau_2)$, and using again \eqref{eq:comparaisonwideharNpsifwithNdeltapsif}, we have  
\beaa
\bsplit
\EF^{(1)}[\psi](\tau_1, \tau_1+1) &\les \E^{(1)}[\psi](\tau_1)+\mathcal{N}^{(1)}_\de[\psi, F](\tau_1, \tau_1+1)\\
&+\left(\M^{(1)}_\de[\psi](\tau_1, \tau_1+1)\right)^{\frac{1}{2}}\left(\mathcal{N}^{(1)}_\de[\psi, F](\tau_1, \tau_1+1)\right)^{\frac{1}{2}},\\
\EF^{(1)}[\psi](\tau_2-1, \tau_2) &\les  \E^{(1)}[\psi](\tau_2-1)+\mathcal{N}^{(1)}_\de[\psi, F](\tau_2-1, \tau_2)\\
&+\left(\M^{(1)}_\de[\psi](\tau_2-1, \tau_2)\right)^{\frac{1}{2}}\left(\mathcal{N}^{(1)}_\de[\psi, F](\tau_2-1, \tau_2)\right)^{\frac{1}{2}}.
\end{split}
\eeaa
Together with the fact that 
\beaa
\M^{(1)}[\psi](\tau_0, \tau_0+q) \les_q \sup_{\tau\in[\tau_0, \tau_0+q]}\E^{(1)}[\psi](\tau),
\eeaa
and \eqref{eq:imporvedhigherorderenergymorawetzconditionalonlowerrweigth}, we infer
\beaa
\bsplit
\EMF_\de^{(1)}[\psi](\tau_1, \tau_1+1) &\les \E^{(1)}[\psi](\tau_1)+\mathcal{N}^{(1)}_\de[\psi, F](\tau_1, \tau_1+1),\\
\EMF_\de^{(1)}[\psi](\tau_2-1, \tau_2) &\les  \E^{(1)}[\psi](\tau_2-1)+\mathcal{N}^{(1)}_\de[\psi, F](\tau_2-1, \tau_2).
\end{split}
\eeaa
In view of \eqref{eq:almosttotheendforcontrolEMFreg1depsitau1p1tau2m1}, we deduce
\bea\lab{eq:almosttotheendforcontrolEMFreg1:onlycompacsupport}
\EMF^{(1)}_\de[\psi](\tau_1, \tau_2) &\les&  \E^{(1)}[\psi](\tau_1)+\mathcal{N}^{(1)}_\de[\psi, F](\tau_1, \tau_2)
\eea
which concludes the proof of Theorem \ref{thm:main} under the assumption that $\psi$ is compactly supported in $\Si(\tau_1)$.

\noindent{\bf Step 9.} We now extend  \eqref{eq:almosttotheendforcontrolEMFreg1:onlycompacsupport} to any $\psi$ such that $\E^{(1)}[\psi](\tau_1)<+\infty$. To this end, we rely on the following lemma.
\begin{lemma}\lab{lemma:densityofcompactlysupportedfunctionsinSitau1}
Let $\psi$ be a scalar function such that $\E^{(1)}[\psi](\tau_1)<+\infty$. Then, there exists a sequence $(\psi_p)_{p\geq 1}$ of scalar functions such that $\psi_p$ is compactly supported in $\Si(\tau_1)$ and 
\beaa
\lim_{p\to +\infty}\E^{(1)}[\psi_p-\psi](\tau_1)=0.
\eeaa
\end{lemma}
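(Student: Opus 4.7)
The plan is to prove Lemma \ref{lemma:densityofcompactlysupportedfunctionsinSitau1} by a standard truncation argument in the $r$-variable, relying on the Hardy estimate of Lemma \ref{lemma:HardySitauandIIplus} and Lebesgue's dominated convergence theorem. Concretely, fix a smooth cutoff $\chi\colon\mathbb{R}\to[0,1]$ with $\chi=1$ on $(-\infty,1]$ and $\chi=0$ on $[2,+\infty)$, and for every integer $p\geq 1$ set $\chi_p(r):=\chi(r/p)$ and $\psi_p:=\chi_p\psi$. Then $\psi_p$ is supported in $\{r_+(1-\dhor)\leq r\leq 2p\}$, hence compactly supported in $\Si(\tau_1)$ (since the sphere factor is already compact), and the goal reduces to showing that $\widetilde{\psi}_p:=\psi-\psi_p=(1-\chi_p)\psi$ satisfies $\E^{(1)}[\widetilde{\psi}_p](\tau_1)\to 0$ as $p\to+\infty$.

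Applying the Leibniz rule to the second order derivatives appearing in $\E^{(1)}[\widetilde{\psi}_p](\tau_1)$, and using that $\chi_p=\chi_p(r)$ so only derivatives in $\pr_r$ hit the cutoff, every term is of one of two kinds: either of the form $(1-\chi_p)\pr^\a\psi$ with $|\a|\leq 2$ (\emph{good terms}), or of the form $\chi_p^{(j)}\pr^\beta\psi$ with $j\in\{1,2\}$ and $|\beta|\leq 1$ (\emph{commutator terms}), the latter supported in $\{p\leq r\leq 2p\}$ and satisfying $|\chi_p^{(j)}|\lesssim p^{-j}$. The good terms contribute, when squared and integrated against the weighted measures defining $\E$, quantities of the form $\int(1-\chi_p)^2|\pr^\a\psi|^2 r^2\,dr\,d\mathring{\ga}$, which converge to $0$ as $p\to +\infty$ by dominated convergence, since the integrands $|\pr^\a\psi|^2 r^2$ are integrable (thanks to the assumption $\E^{(1)}[\psi](\tau_1)<+\infty$) and $(1-\chi_p)\to 0$ pointwise.

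The commutator terms are where Hardy enters. For a typical first-derivative commutator term $\chi_p'\,\pr^{\leq 1}\psi$, its contribution is bounded, using $|\chi_p'|\lesssim p^{-1}\mathbf{1}_{\{p\leq r\leq 2p\}}$ and $r^2\lesssim p^2$ on the support, by
\[
\int|\chi_p'|^2|\pr^{\leq 1}\psi|^2\,r^2\,dr\,d\mathring{\ga}\lesssim \int_{p\leq r\leq 2p}|\pr^{\leq 1}\psi|^2\,dr\,d\mathring{\ga}.
\]
By Lemma \ref{lemma:HardySitauandIIplus} applied to $\pr^{\leq 1}\psi$, the full integral $\int_{r_+(1-\dhor)}^{+\infty}\int_{\Sphere}|\pr^{\leq 1}\psi|^2\,dr\,d\mathring{\ga}$ is bounded by $\E^{(1)}[\psi](\tau_1)<+\infty$, so its tail over $\{p\leq r\leq 2p\}$ tends to $0$ as $p\to+\infty$. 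The second-derivative commutator term $\chi_p''\,\psi$ is handled analogously, using $|\chi_p''|\lesssim p^{-2}\mathbf{1}_{\{p\leq r\leq 2p\}}$ together with Hardy applied to $\psi$ itself: its contribution is $\lesssim p^{-4}\int_{p\leq r\leq 2p}\psi^2 r^2\,dr\,d\mathring{\ga}\lesssim p^{-2}\int_{p\leq r\leq 2p}\psi^2\,dr\,d\mathring{\ga}\to 0$.

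Combining the two cases yields $\E^{(1)}[\widetilde{\psi}_p](\tau_1)\to 0$ as $p\to+\infty$, which is the desired conclusion. The proof is essentially bookkeeping; the only subtle point is ensuring that the $r$-weights generated by the derivatives of the cutoff are absorbed correctly, which is exactly where the Hardy estimate of Lemma \ref{lemma:HardySitauandIIplus} plays its role.
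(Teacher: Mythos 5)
Your proof is correct and follows essentially the same route as the paper: truncation by $\chi_p(r)=\chi(r/p)$, absorption of the cutoff-derivative terms using $|\chi_p^{(j)}|\lesssim p^{-j}$ together with $r\sim p$ on their support, the Hardy estimate of Lemma \ref{lemma:HardySitauandIIplus} for the lower-order terms, and dominated convergence. The only cosmetic difference is that the paper prescribes $\psi_p$ through its data $(\chi_p\psi,\chi_p\pr_\tau\psi)$ on $\Si(\tau_1)$ rather than setting $\psi_p=\chi_p\psi$, which amounts to the same thing since $\chi_p$ is independent of $\tau$.
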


\begin{proof}
We consider a cut-off function $\chi$ such that $\chi(r)=1$ for $r\leq 1$, $\chi(r)=0$ for $r\geq 2$, and $|\pr_r\chi|\les 1$, and for $p\geq 1$, we define $\chi_p(r):=\chi(r/p)$. Then, we consider the sequence $\psi_p$ satisfying 
\beaa
\psi_p(\tau_1,\cdot)=\chi_p(r)\psi(\tau_1, \cdot), \qquad {\pr_\tau(\psi_p)(\tau_1,\cdot)=\chi_p(r)\pr_\tau(\psi)(\tau_1,\cdot)},
\eeaa
{which prescribes the initial data $(\psi_p(\tau_1,\cdot), N_{\Si(\tau_1)}(\psi_p)(\tau_1,\cdot))$ of $\psi_p$ on $\Si(\tau_1)$, and} so that $\psi_p$ is compactly supported in $\Si(\tau_1)$. We have
\beaa
\E^{(1)}[\psi-\psi_p](\tau_1) &\les&  \int_p^{+\infty}\int_{\mathbb{S}^2}\Big((\pr_r\pr^{\leq 1}\psi)^2+|\nab\pr^{\leq 1}\psi|^2\\
&&\qquad\qquad\qquad+r^{-2}\big((\pr_\tau\pr^{\leq 1}\psi)^2+(\pr^{\leq 1}\psi)^2\big)\Big)(\tau=\tau_1, r, \om)
r^2d\mathring{\ga}dr
\eeaa
which converges to $0$ as $p\to +\infty$ by Lebesgue dominated convergence theorem. This concludes the proof of Lemma \ref{lemma:densityofcompactlysupportedfunctionsinSitau1}.
\end{proof}

Let $\psi$ be a scalar function satisfying the wave equation \eqref{eq:scalarwave}  such that $\E[\psi](\tau_1)<+\infty$. Then, according to Lemma \ref{lemma:densityofcompactlysupportedfunctionsinSitau1}, there exists a sequence $(\psi_p)_{p\geq 1}$ of scalar functions satisfying the wave equation \eqref{eq:scalarwave} such that $\psi_p$ is compactly supported in $\Si(\tau_1)$ and 
\beaa
\lim_{p\to +\infty}\E^{(1)}[\psi_p-\psi](\tau_1)=0.
\eeaa
Since $\psi_p$ is compactly supported in $\Si(\tau_1)$, $\psi_p-\psi_q$ are also  compactly supported functions in $\Si(\tau_1)$, so we may apply  \eqref{eq:almosttotheendforcontrolEMFreg1:onlycompacsupport} with $F=0$ which implies 
\beaa
\EMF^{(1)}_\de[\psi_p-\psi_q](\tau_1, \tau_2)\les \E^{(1)}[\psi_p-\psi_q](\tau_1).
\eeaa
We deduce that $(\psi_p)_{p\geq 1}$ is a Cauchy sequence for the norm $\EMF_\de^{(1)}[\cdot](\tau_1, \tau_2)$, which must converge to $\psi$ by uniqueness for the wave equation. Hence, using also \eqref{eq:almosttotheendforcontrolEMFreg1:onlycompacsupport} for each $\psi_p$, 
\beaa
\EMF_\de^{(1)}[\psi](\tau_1, \tau_2) &=& \lim_{p\to +\infty}\EMF_\de^{(1)}[\psi_p](\tau_1, \tau_2)\\
&\les& \limsup_{p\to +\infty}\Big(\E^{(1)}[\psi_p](\tau_1)+\mathcal{N}_\de^{(1)}[\psi_p, F](\tau_1, \tau_2)\Big)\\
&\les&  \E^{(1)}[\psi](\tau_1)+\limsup_{p\to +\infty}\Big(\mathcal{N}_\de^{(1)}[\psi_p, F](\tau_1, \tau_2)\Big).
\eeaa
Now, since
\beaa
&&\left|\mathcal{N}_\de^{(1)}[\psi_p, F](\tau_1, \tau_2)-\mathcal{N}_\de^{(1)}[\psi, F](\tau_1, \tau_2)\right|\\
&\les& (\tau_2-\tau_1)^{\frac{1}{2}}\left(\sup_{\tau\in[\tau_1, \tau_2]}\E^{(1)}[\psi_p-\psi](\tau)\right)^{\frac{1}{2}}\left(\int_{\Mtrap(\tau_1, \tau_2)}|\pr^{\leq 1}F|^2\right)^{\frac{1}{2}}
\eeaa
we infer that $\mathcal{N}_\de^{(1)}[\psi_p, F](\tau_1, \tau_2)-\mathcal{N}_\de^{(1)}[\psi, F](\tau_1, \tau_2)\to 0$ as $p\to +\infty$ since $(\psi_p)_{p\geq 1}$ converges to $\psi$ in the norm $\EMF_\de^{(1)}[\cdot](\tau_1, \tau_2)$. Thus, we deduce from the above that 
\beaa
\EMF_\de^{(1)}[\psi](\tau_1, \tau_2) &\les&  \E^{(1)}[\psi](\tau_1)+\mathcal{N}_\de^{(1)}[\psi, F](\tau_1, \tau_2)
\eeaa
which proves that \eqref{eq:almosttotheendforcontrolEMFreg1:onlycompacsupport}  extends to  any $\psi$ such that $\E^{(1)}[\psi](\tau_1)<+\infty$. This concludes the proof of Theorem \ref{thm:main}.


\subsection{Structure of the rest of the paper}


In Section \ref{sect:microlocalcalculus}, we introduce a microlocal calculus on $\MM$ {that will be used throughout the rest of the paper. Then}, we provide in Section \ref{sec:proofofth:main:intermediary} a proof of our {global} energy-Morawetz estimate Theorem \ref{th:main:intermediary} {relying in particular on the combination of: 
\begin{enumerate}
\item the microlocal energy-Morawetz estimate of Theorem \ref{th:mainenergymorawetzmicrolocal} which is conditional on the control of lower order terms,
\item the energy-Morawetz estimate of Proposition \ref{prop:energymorawetzmicrolocalwithblackbox} which provides the control of lower order terms at the expense of the loss of one derivative.
\end{enumerate}
Finally, Theorem \ref{th:mainenergymorawetzmicrolocal} is proved in Section \ref{sect:CondEMF:Dynamic} and Proposition \ref{prop:energymorawetzmicrolocalwithblackbox}} is proved in Section \ref{sec:proofofprop:energymorawetzmicrolocalwithblackbox}.


\section{Microlocal calculus on $\MM$}
\lab{sect:microlocalcalculus}


In this section, we introduce a microlocal calculus on $\MM$ that will be used in Sections \ref{sec:proofofth:main:intermediary}, \ref{sect:CondEMF:Dynamic} and \ref{sec:proofofprop:energymorawetzmicrolocalwithblackbox}. We start by reviewing pseudo-differential operators (PDOs) in $\Reals^n$ in Section \ref{sect:PDO:Rn} and then extend both the definitions and the statements from $\Reals^n$ to our spacetime $\MM$ in Section \ref{sect:PDO:rfoliation}.


\subsection{Pseudodifferential operators on $\mathbb{R}^n$}
\label{sect:PDO:Rn}


This section is devoted to a basic introduction of pseudodifferential operators.  The material presented here is standard and can be found for example in textbooks \cite[Chapter 18.5]{hormander}, \cite{Taylor91}, \cite[Chapter I]{AG07} and \cite[Appendix E]{DZ19}.


\subsubsection{Symbols and symbol classes}


We first define the symbols on $\Reals^n$.
\begin{definition}[Symbols and symbol classes]
\label{def:symbols:Rn}
For $m\in \mathbb{R}$, let $S^m =S^m (\mathbb{R}^n\times \mathbb{R}^n)$ denote the set of functions $a(x,\xi)\in C^{\infty} (\mathbb{R}^n\times \mathbb{R}^n)$ such that
 \begin{align}
 \forall \alpha, \forall \beta,\quad  \abs{\partial_x^{\alpha} \partial_\xi^{\beta}a(x,\xi)}\leq C_{\alpha,\beta} \langle \xi\rangle^{m-\abs{\beta}}
 \end{align}
 where $C_{\alpha,\beta}<+\infty$, and where $\langle \xi\rangle :=\sqrt{1+\abs{\xi}^2}$. $S^m$ is called the symbol class of order $m$ and $a\in S^m$ is called a {symbol of order $m$}. We also denote $S^{-\infty} :=\cap_{m\in \Reals} S^m$. 
 \end{definition}


\subsubsection{Weyl quantization on $\mathbb{R}^n$}


In this paper, we will always rely on the Weyl quantization of PDOs which we recall  below.
\begin{definition}[PDO in the Weyl quantization]
\label{def:PDO:Rn:Weylquan}
If $a\in S^m$ and $u\in \mathcal{S}$, where $\mathcal{S}$ is the set of Schwartz functions, then the formula
\begin{align}
\label{PDO:Rn:Opw}
\Opw(a) u(x) :={}&  (2\pi)^{-n} \int_{\Reals^n}  \int_{\Reals^n} e^{i(x-y)\cdot\xi} a\bigg(\frac{x+y}{2},\xi\bigg)  u(y)d\xi d y
\end{align}
defines a function of $\mathcal{S}$, the mapping 
\begin{align}
(a,u)\rightarrow \Opw(  a) u
\end{align}
is continuous, and the linear mapping $\Opw$ from $S^m$ to the linear operators of $\mathcal{S}$ is injective. Moreover, $a$ is said to be the symbol of the operator $\Opw(a)$, and  the operator $\Opw(a)$ is called the {Weyl quantization} of the symbol $a$. Finally, a PDO is said to be a PDO of order $m$ if its symbol belongs to $S^m$.
\end{definition}


\subsubsection{Properties of the Weyl quantization on $\mathbb{R}^n$}


The following lemma provides the Weyl quantization of symbols that are polynomials in $\xi$. 
\begin{lemma}\lab{lemma:particularcaseWeylquantizationpolynomialxi}
The Weyl quantization of a symbol $a(x, \xi)$ which is a polynomial in $\xi$ is given by
\beaa
\Opw(a)\psi(x)=\sum_{|\a|\leq k}\sum_{\ga\leq\a}2^{-|\ga|}\left(\!\!\begin{array}{c}
\a\\
\ga
\end{array}\!\!\right)(D^\ga a_\a)(x)D^{\a-\ga}\psi(x), \qquad a(x,\xi)=\sum_{|\a|\leq k}a_\a(x)\xi^\a,
\eeaa
where $D_x$ is defined by
\begin{align}
D_x := -i \partial_x.
\end{align}
In particular, we have
\beaa
\Opw (a(x))\psi =a(x)\psi(x), \qquad \Opw (a^j(x) \xi_j)\psi = a^j(x)D_{x^j}\psi(x)+ \frac{1}{2}(D_{x^j} a^j)(x)\psi(x),
\eeaa
and the Weyl quantization of the symbol $a^{jk}(x)\xi_j\xi_k$, where $a^{jk}(x)=a^{kj}(x)$, is
\beaa
\Opw (a^{jk} \xi_j \xi_k)\psi(x) =a^{jk}(x)D_{x^j}D_{x^k}\psi(x) + (D_{x^j}a^{jk})(x)D_{x^k}\psi(x) +\frac{1}{4}(D_{x^j}D_{x^k}a^{jk})(x)\psi(x).
\eeaa
\end{lemma}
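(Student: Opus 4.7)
The plan is to reduce the general formula to a direct computation for a monomial symbol, and then specialize to the three stated cases. By linearity of $\Opw$ in its argument, it suffices to establish the identity when $a(x,\xi) = a_\a(x)\xi^\a$ for a single multi-index $\a$; summing over $|\a|\leq k$ then gives the general formula.

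For a monomial, I would start from the defining integral \eqref{PDO:Rn:Opw} and use the identity $\xi^\a e^{i(x-y)\cdot\xi} = (-D_y)^\a e^{i(x-y)\cdot\xi}$, where $D_y = -i\partial_y$, together with integration by parts in $y$ (no boundary terms, as $u\in\mathcal{S}$). This converts the integrand to $e^{i(x-y)\cdot\xi}\, D_y^\a\bigl[a_\a((x+y)/2)\psi(y)\bigr]$. Applying the Leibniz formula yields
\[
D_y^\a\!\left[a_\a\!\left(\tfrac{x+y}{2}\right)\psi(y)\right] = \sum_{\ga\leq\a}\binom{\a}{\ga} D_y^\ga\!\left[a_\a\!\left(\tfrac{x+y}{2}\right)\right] D_y^{\a-\ga}\psi(y),
\]
and the chain rule gives $D_y^\ga[a_\a((x+y)/2)] = 2^{-|\ga|}(D^\ga a_\a)((x+y)/2)$, where the factor $2^{-|\ga|}$ is the only non-obvious ingredient and is the reason this constant appears in the final formula. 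Integrating in $\xi$ produces $(2\pi)^n \delta(x-y)$, which after integration in $y$ sets $y=x$ and yields exactly
\[
\Opw(a_\a\xi^\a)\psi(x) = \sum_{\ga\leq\a} 2^{-|\ga|}\binom{\a}{\ga}(D^\ga a_\a)(x)\, D^{\a-\ga}\psi(x).
\]

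The three particular cases then follow by specialization. For $a(x)$ (the case $\a=0$) only the term $\ga=0$ survives, giving $a(x)\psi(x)$. For $a^j(x)\xi_j$, each multi-index is of the form $\a=e_j$ with $|\a|=1$, and the sum over $\ga\leq\a$ has just two terms, producing $a^j(x)D_{x^j}\psi + \tfrac{1}{2}(D_{x^j}a^j)\psi$. For the quadratic case $a^{jk}(x)\xi_j\xi_k$ (with summation over $j,k$ and $a^{jk}=a^{kj}$), rather than re-indexing in multi-indices, it is cleaner to repeat the integration-by-parts argument directly for the bi-monomial $\xi_j\xi_k$: the two-fold Leibniz expansion yields four terms, and the symmetry $a^{jk}=a^{kj}$ merges the two cross terms $\tfrac{1}{2}(D_{x^k}a^{jk})D_{x^j}\psi$ and $\tfrac{1}{2}(D_{x^j}a^{jk})D_{x^k}\psi$ into $(D_{x^j}a^{jk})D_{x^k}\psi$, reproducing the displayed formula.

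There is no serious obstacle here; the computation is routine once one commits to the integration-by-parts strategy. The only point requiring care is keeping track of the factor $2^{-|\ga|}$ coming from the chain-rule evaluation at the midpoint $(x+y)/2$ (which is precisely the feature that distinguishes the Weyl quantization from the standard quantization), and, in the last case, carefully using the symmetry of $a^{jk}$ to combine cross terms.
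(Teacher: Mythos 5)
Your argument is correct and follows essentially the same route as the paper's proof: writing $\xi^\a e^{i(x-y)\cdot\xi}=(-D_y)^\a e^{i(x-y)\cdot\xi}$, integrating by parts in $y$, applying Leibniz together with the midpoint chain-rule factor $2^{-|\ga|}$, and collapsing the $\xi$-integral to a delta function that sets $y=x$. The specializations to the constant, linear and quadratic symbols are handled the same way, so there is nothing to add.
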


\begin{proof}
We recall this classical proof for the convenience of the reader. For 
\beaa
a(x,\xi)=\sum_{|\a|\leq k}a_\a(x)\xi^\a,
\eeaa
we have
\beaa
\Opw(a)\psi(x) &=& \sum_{|\a|\leq k}(2\pi)^{-n} \int_{\Reals^n}  \int_{\Reals^n} e^{i(x-y)\cdot\xi} a_\a\left(\frac{x+y}{2}\right)\xi^\a  \psi(y)d\xi dy \\
&=& \sum_{|\a|\leq k}(2\pi)^{-n} \int_{\Reals^n}  \int_{\Reals^n} (-D_y)^\a\big(e^{i(x-y)\cdot\xi}\big) a_\a\left(\frac{x+y}{2}\right)\psi(y)d\xi dy\\
&=& \sum_{|\a|\leq k}(2\pi)^{-n} \int_{\Reals^n}  \int_{\Reals^n}e^{i(x-y)\cdot\xi}D_y^\a\left(a_\a\left(\frac{x+y}{2}\right)\psi(y)\right)d\xi dy\\
&=& \sum_{|\a|\leq k}\left(D_y^\a\left(a_\a\left(\frac{x+y}{2}\right)\psi(y)\right)\right)_{|_{y=x}}\\
&=&\sum_{|\a|\leq k}\sum_{\ga\leq\a}2^{-|\ga|}\left(\!\!\begin{array}{c}
\a\\
\ga
\end{array}\!\!\right)(D^\ga a_\a)(x)D^{\a-\ga}\psi(x), 
\eeaa
as stated.
\end{proof}

Next, we recall the properties of the Weyl quantization concerning composition and adjoint. 
\begin{proposition}
\label{prop:PDO:Rn:Weylquan}
The Weyl quantization satisfies the following properties: 
\begin{enumerate}[label=\arabic*)] 
\item For symbols $a_1$ and $a_2$ of orders $m_1$ and $m_2$, we have
\beaa
\Opw(a_1)\circ\Opw(a_2)=\Opw(a_3)
\eeaa
where the symbol $a_3$ has the following asymptotic expansion 
\bea\lab{eq:asymptoticexpansionofsymbolofproductinWeylquantization}
a_3(x,\xi)\sim\sum_{j\geq 0}\frac{1}{j!}\left(-\frac{i}{2}\right)^j(\pr_y\c\pr_\xi-\pr_x\c\pr_\eta)^j(a_1(x,\xi)a_2(y,\eta))_{|_{y=x,\eta=\xi}}.
\eea
In particular, we infer
\bea\lab{eq:propWeylquantizationmathbbRn:composition}
\bsplit
[\Opw(a_1), \Opw(a_2)]=\Opw(a_3),\quad a_3=\frac{1}{i}\{a_1, a_2\} +S^{m_1+m_2-3},\\
\Opw(a_1)\circ\Opw(a_2) +\Opw(a_2)\circ\Opw(a_1)=\Opw(a_3),\quad a_3=2a_1a_2 + S^{m_1+m_2-2},
\end{split}
\eea
where the Poisson bracket of symbols $a_1$ and $a_2$ is given by
\beaa
\{a_1, a_2\} :=\partial_{\xi} a_1\c \partial_{x} a_2 -  \partial_{x} a_1\c
 \partial_{\xi} a_2.
 \eeaa

\item For a symbol $a(x,\xi)$ of order $m$, the adjoint of its Weyl quantization is given by 
\bea\lab{eq:propWeylquantizationmathbbRn:adjoint}
(\Opw(a))^{\star} = \Opw(\bar{a}).
\eea
In particular, the Weyl quantization of a real-valued symbol is a self-adjoint operator.
\end{enumerate}
\end{proposition}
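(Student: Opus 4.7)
I would treat the adjoint statement first, since it is the easier of the two and requires no asymptotic analysis. Starting from the defining integral \eqref{PDO:Rn:Opw}, I write
\begin{align*}
\langle \Opw(a)u,v\rangle_{L^2}=(2\pi)^{-n}\iiint e^{i(x-y)\cdot\xi}\,a\Big(\tfrac{x+y}{2},\xi\Big)u(y)\overline{v(x)}\,d\xi\,dy\,dx,
\end{align*}
take the complex conjugate, and swap the dummy labels $x\leftrightarrow y$ together with the change of variable $\xi\to-\xi$. The midpoint $\tfrac{x+y}{2}$ is symmetric in $(x,y)$ and the phase sign flip is absorbed by the $\xi$ substitution, so what comes out is exactly the Weyl quantization of $\bar a$ acting on $v$. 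Hence $(\Opw(a))^*=\Opw(\bar a)$, and self-adjointness for real symbols follows at once.

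For the composition formula I plan to follow the standard oscillatory-integral route. Plugging \eqref{PDO:Rn:Opw} into itself yields
\begin{align*}
\Opw(a_1)\Opw(a_2)u(x)=(2\pi)^{-2n}\iiiint e^{i[(x-z)\cdot\xi+(z-y)\cdot\eta]}\,a_1\!\Big(\tfrac{x+z}{2},\xi\Big)a_2\!\Big(\tfrac{z+y}{2},\eta\Big)u(y)\,d\xi\,dz\,d\eta\,dy,
\end{align*}
and my plan is to introduce the symmetric change of variables centered at the midpoint $\tfrac{x+y}{2}$, i.e.\ write $z=\tfrac{x+y}{2}+w$ together with a corresponding splitting of $\xi,\eta$ around a new momentum $\zeta$, so as to put the resulting integral in a form whose phase is the symplectic form. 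After this reduction the Weyl symbol $a_3$ of the composition is expressed as a Fourier integral operator $a_1\#a_2$ with quadratic phase, and a stationary/non-stationary phase argument (or equivalently a formal Taylor expansion in the small oscillation parameter) produces the asymptotic expansion \eqref{eq:asymptoticexpansionofsymbolofproductinWeylquantization}. The membership of the remainders in $S^{m_1+m_2-N}$ then comes from classical oscillatory integral estimates based on repeated integration by parts using $\langle\xi\rangle^{-2}(1-\Delta_y)$ type regularizers.

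Once the full asymptotic expansion is in hand, the two specialized identities in \eqref{eq:propWeylquantizationmathbbRn:composition} follow by inspection of the first two terms. The $j=0$ term is $a_1a_2$; the $j=1$ term is $\tfrac{1}{2i}(\partial_\xi a_1\cdot\partial_x a_2-\partial_x a_1\cdot\partial_\xi a_2)=\tfrac{1}{2i}\{a_1,a_2\}$. For the commutator, the $j=0$ terms cancel and the antisymmetric $j=1$ contributions add up to $\tfrac{1}{i}\{a_1,a_2\}$; the $j=2$ contributions are symmetric in $(a_1,a_2)$ and cancel as well, which is why the remainder sits in $S^{m_1+m_2-3}$ rather than merely $S^{m_1+m_2-2}$. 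For the anticommutator, the $j=0$ terms double to $2a_1a_2$ and the antisymmetric $j=1$ terms cancel, leaving a remainder in $S^{m_1+m_2-2}$.

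The main obstacle, and the only part that is not purely bookkeeping, is the justification of the asymptotic expansion: making rigorous the passage from the formal exponential $e^{-\tfrac{i}{2}(\partial_y\cdot\partial_\xi-\partial_x\cdot\partial_\eta)}$ applied to $a_1(x,\xi)a_2(y,\eta)$ to a bona fide symbol in $S^{m_1+m_2}$ with remainder control. I would handle this by writing the remainder after $N$ terms as an explicit oscillatory integral with quadratic phase and applying the Kuranishi-type lemma together with seminorm bounds on $S^{m_i}$, so that the tail lies in $S^{m_1+m_2-N}$ uniformly. Everything else, including the two corollary identities, is then algebra.
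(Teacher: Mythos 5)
Your proposal is correct and follows essentially the same route as the paper: compose the two Weyl oscillatory integrals, pass to a symmetric change of variables producing the quadratic symplectic phase, obtain the expansion \eqref{eq:asymptoticexpansionofsymbolofproductinWeylquantization} by stationary phase, and then read off \eqref{eq:propWeylquantizationmathbbRn:composition} from the parity of the expansion terms in $(a_1,a_2)$ — which is exactly the paper's argument leading to the explicit formula $a_3(x,\xi)=\pi^{-2n}\int e^{2i(u\cdot v-u'\cdot v')}a_1(x+u,\xi+v')a_2(x+u',\xi+v)\,du\,du'\,dv\,dv'$. The adjoint identity, which the paper treats as immediate, is correctly handled by your kernel-symmetry computation (the midpoint $\tfrac{x+y}{2}$ being symmetric in $(x,y)$), up to an inessential bookkeeping detail about whether one relabels $x\leftrightarrow y$ or flips the sign of $\xi$.
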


\begin{remark}\lab{rmk:Weylquantizationdoesitbetter}
Proposition \ref{prop:PDO:Rn:Weylquan} contains the properties of Weyl quantization which improve w.r.t. other quantizations such as the standard one. Indeed, for other quantizations, the remainders corresponding to \eqref{eq:propWeylquantizationmathbbRn:composition} and \eqref{eq:propWeylquantizationmathbbRn:adjoint} would be respectively only in  $S^{m_1+m_2-2}$, $S^{m_1+m_2-1}$ and $S^{m-1}$.
\end{remark}

\begin{proof}
This proposition is classical. For the convenience of the reader, we recall the proof of \eqref{eq:asymptoticexpansionofsymbolofproductinWeylquantization}. We have 
\beaa
\Opw(a_1)\circ\Opw(a_2)\psi = (2\pi)^{-2n} \int_{\Reals^{4n}} e^{i(x-y)\cdot\xi}e^{i(y-z)\cdot\eta} a_1\bigg(\frac{x+y}{2},\xi\bigg)a_2\bigg(\frac{y+z}{2},\eta\bigg)\psi(z)d\xi dy d\eta dz
\eeaa
and hence $\Opw(a_1)\circ\Opw(a_2)=\Opw(a_3)$ if 
\beaa
\int_{\Reals^n}e^{i\rho\c(x-z)}a_3\bigg(\frac{x+z}{2},\rho\bigg)d\rho = (2\pi)^{-n} \int_{\Reals^{3n}} e^{i(x-y)\cdot\xi}e^{i(y-z)\cdot\eta} a_1\bigg(\frac{x+y}{2},\xi\bigg)a_2\bigg(\frac{y+z}{2},\eta\bigg)d\xi dy d\eta 
\eeaa
or
\beaa
\int_{\Reals^n}e^{-i\rho\c z}a_3(x,\rho)d\rho = (2\pi)^{-n} \int_{\Reals^{3n}} e^{i\left(x-\frac{z}{2}-y\right)\cdot\xi}e^{i\left(y-x-\frac{z}{2}\right)\cdot\eta} a_1\bigg(\frac{x-\frac{z}{2}+y}{2},\xi\bigg)a_2\bigg(\frac{y+x+\frac{z}{2}}{2},\eta\bigg)d\xi dy d\eta 
\eeaa
which, after taking the inverse Fourier transform, holds provided $a_3(x,\xi)$ is given by
\beaa
a_3(x, \xi)=(2\pi)^{-2n} \int_{\Reals^{4n}} e^{iz\c\xi}e^{i\left(x-\frac{z}{2}-y\right)\cdot\rho}e^{i\left(y-x-\frac{z}{2}\right)\cdot\eta} a_1\bigg(\frac{x-\frac{z}{2}+y}{2},\rho\bigg)a_2\bigg(\frac{y+x+\frac{z}{2}}{2},\eta\bigg)d\rho dy d\eta dz.
\eeaa
Changing variables to $u=\frac{y}{2}-\frac{z}{4}-\frac{x}{2}$, $u'=\frac{y}{2}+\frac{z}{4}-\frac{x}{2}$, $v=\eta-\xi$ and $v'=\rho-\xi$, we infer
\bea\lab{eq:formulafora3incompositionWeylquantization}
a_3(x, \xi)=\pi^{-2n} \int_{\Reals^{4n}}e^{2i(u\c v-u'\c v')} a_1(x+u, \xi+v')a_2(x+u', \xi+v)dudu'dvdv'
\eea
and \eqref{eq:asymptoticexpansionofsymbolofproductinWeylquantization} follows from the stationary phase method. 
\end{proof}

We also recall the action of PDOs on Sobolev spaces.
\begin{lemma}
\label{lem:PDO:Sobolev}
Let $H^s=H^s(\mathbb{R}^n)$ for $s\in\mathbb{R}$ denote the standard Sobolev space on $\mathbb{R}^n$. If $a\in S^m$, then the PDO $\Opw(a)$ maps $H^s$ to $H^{s-m}$ for all $s\in \mathbb{R}$. That is, for any $\psi\in H^s$, we have
\begin{align}
\|\Opw(a)\psi\|_{H^{s-m}}\lesssim \|\psi\|_{H^s}.
\end{align}
\end{lemma}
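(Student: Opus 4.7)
The plan is to follow the standard two-step strategy of pseudodifferential calculus: (i) reduce the claim to the $L^2$-boundedness of zeroth-order operators by conjugating with fractional powers of $\langle D\rangle$, using the symbol calculus already at our disposal, and then (ii) invoke the Calder\'on-Vaillancourt theorem.

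First, since $\langle\xi\rangle^t$ does not depend on $x$, the Weyl quantization $\Lambda^t := \Opw(\langle\xi\rangle^t)$ coincides with the Fourier multiplier $(1-\Delta)^{t/2}$, which is an isometric isomorphism $H^{\sigma}\to H^{\sigma-t}$ for each $\sigma \in \mathbb{R}$, with $\|\psi\|_{H^t} = \|\Lambda^t\psi\|_{L^2}$ by the definition of the Sobolev norm. Setting $\phi := \Lambda^s\psi$, so that $\|\phi\|_{L^2} = \|\psi\|_{H^s}$, we have
\begin{equation*}
\|\Opw(a)\psi\|_{H^{s-m}} = \bigl\|\Lambda^{s-m}\circ \Opw(a)\circ \Lambda^{-s}\phi\bigr\|_{L^2}.
\end{equation*}
By the composition rule of Proposition \ref{prop:PDO:Rn:Weylquan} (iterated twice, via the asymptotic expansion \eqref{eq:asymptoticexpansionofsymbolofproductinWeylquantization}), the middle operator is a Weyl PDO whose total order is $(s-m)+m+(-s)=0$, i.e.\ $\Lambda^{s-m}\Opw(a)\Lambda^{-s} = \Opw(b)$ for some $b\in S^0$. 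The lemma thus reduces to proving $\|\Opw(b)\phi\|_{L^2} \lesssim \|\phi\|_{L^2}$ for every $b\in S^0$.

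Second, for the $L^2$-boundedness of a zeroth-order PDO (the Calder\'on-Vaillancourt theorem), I would rely on the Cotlar-Stein almost-orthogonality argument. Choose a smooth partition of unity $\{\chi_{\alpha\beta}\}_{(\alpha,\beta)\in \mathbb{Z}^{2n}}$ of phase space $\mathbb{R}^n_x \times \mathbb{R}^n_\xi$, subordinate to unit balls centered at the lattice points, and set $T_{\alpha\beta} := \Opw(b\,\chi_{\alpha\beta})$. Repeated integration by parts in the oscillatory integral \eqref{PDO:Rn:Opw}, employing the identities $e^{i(x-y)\cdot\xi} = \langle x-y\rangle^{-2N}(1-\Delta_\xi)^N e^{i(x-y)\cdot\xi}$ and $e^{i(x-y)\cdot\xi} = \langle\xi\rangle^{-2N}(1-\Delta_y)^N e^{i(x-y)\cdot\xi}$, produces pointwise bounds on the Schwartz kernels of $T_{\alpha\beta}^*T_{\alpha'\beta'}$ and $T_{\alpha\beta}T_{\alpha'\beta'}^*$ that decay rapidly in $|(\alpha,\beta)-(\alpha',\beta')|$. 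Schur's test on each composition yields the almost-orthogonality inputs, and the Cotlar-Stein lemma then gives $\bigl\|\sum T_{\alpha\beta}\bigr\|_{L^2\to L^2} < \infty$, bounded by a finite-order $S^0$ seminorm of $b$.

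The hard part will be the Calder\'on-Vaillancourt step itself. For Fourier multipliers (symbols independent of $x$) $L^2$-boundedness is a one-line consequence of Plancherel, but once $b$ genuinely depends on $x$ no such trick is available, and the phase-space decomposition together with the decay bounds above is essentially unavoidable. By contrast, the reduction step is purely algebraic and follows directly from Proposition \ref{prop:PDO:Rn:Weylquan}. The final result is classical and is standard in textbook treatments (see e.g.\ H\"ormander or Taylor), and what I have sketched above is the standard route.
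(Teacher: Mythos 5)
Your proposal is correct, and it follows the standard textbook route: conjugate by $\Lambda^{t}=\Opw(\langle\xi\rangle^{t})$ (a Fourier multiplier, since the symbol is $x$-independent), use the composition calculus of Proposition \ref{prop:PDO:Rn:Weylquan} to reduce to an operator with symbol in $S^0$, and then prove $L^2$-boundedness of zeroth-order operators by a phase-space almost-orthogonality (Cotlar--Stein) argument. There is nothing to compare against in the paper itself: Lemma \ref{lem:PDO:Sobolev} is stated there as a recalled classical fact with no proof, the section explicitly deferring to the standard references (H\"ormander, Taylor, Alinhac--G\'erard, Dyatlov--Zworski), so your argument is exactly the kind of proof those sources give. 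Two small remarks. First, in the reduction step it is worth saying explicitly that $\langle\xi\rangle^{t}\in S^{t}$ and that the composition rule gives not just the leading term but a full symbol $b\in S^{0}$ (i.e.\ the remainder in \eqref{eq:asymptoticexpansionofsymbolofproductinWeylquantization} stays in the calculus), since that is what you actually use. Second, because the class $S^{m}$ of Definition \ref{def:symbols:Rn} is the $(1,0)$ class, you do not need the full strength of Calder\'on--Vaillancourt: for $b\in S^{0}$ one can instead run the classical square-root trick (choose $M>\sup|b|$, write $M^{2}-|b|^{2}=c^{2}$ with $c\in S^{0}$, so $M^{2}-\Opw(b)^{\star}\Opw(b)=\Opw(c)^{\star}\Opw(c)+\Opw(S^{-1})$, and handle the lower-order remainder by iteration or by an elementary Schur-kernel bound), which avoids the Cotlar--Stein machinery; your route is of course also valid, since $S^{0}\subset S^{0}_{0,0}$.
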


Finally, we provide a basic (non-sharp) commutator estimate. 
\begin{lemma}\lab{lemma:bascicommutatorlemmawithelementaryproof}
Let $P=\Opw(p)$ with $p\in S^1(\Reals^n)$ and let $f$ be a scalar function. Then, we have 
\beaa
\|[P, f]\psi\|_{L^2(\Reals^n)} &\les& \|f\|_{W^{2,+\infty}(\Reals^n)}\|\psi\|_{L^2(\Reals^n)}.
\eeaa
\end{lemma}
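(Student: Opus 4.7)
The plan is to analyze the integral kernel of $[P,f]$ directly. By Definition \ref{def:PDO:Rn:Weylquan} and a change of order of integration,
\[
[P, f]\psi(x) = (2\pi)^{-n}\iint e^{i(x-y)\cdot \xi}\, p\!\left(\tfrac{x+y}{2}, \xi\right)\bigl(f(y)-f(x)\bigr)\psi(y)\,d\xi\,dy,
\]
viewed as an oscillatory integral. Setting $m := (x+y)/2$ and $h := y-x$, so that $y = m+h/2$ and $x = m-h/2$, I would apply a symmetric Taylor expansion at $m$ to get
\[
f(y) - f(x) = h\cdot \nabla f(m) + R(x,y),
\]
where $R(x,y)$ has an explicit integral form bounded by $|h|^{2}\|\nabla^{2}f\|_{L^{\infty}}$. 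This splits $[P,f] = A_{1} + A_{2}$.

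For the leading term $A_{1}$, I would use $h_{j}\,e^{i(x-y)\cdot \xi} = i\,\partial_{\xi_{j}}e^{i(x-y)\cdot \xi}$ and integrate by parts in $\xi_{j}$, which turns $A_{1}$ into the Weyl quantization $-i\,\Opw(\partial_{\xi}p \cdot \partial_{x}f)$. The symbol $q(x,\xi) := \partial_{\xi}p(x,\xi)\cdot \partial_{x}f(x)$ lies in $S^{0}$: $\xi$-derivatives of all orders are controlled by the seminorms of $p$ (since $\partial_{\xi}p \in S^{0}$), while each $x$-derivative costs either a factor $\partial_{x}^{\leq 1}f$ or $\partial_{x}^{2}f$, both in $L^{\infty}$ under the assumption $f \in W^{2,\infty}$. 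The $L^{2}$-boundedness of $\Opw(q)$ with the estimate $\|A_{1}\psi\|_{L^{2}} \lesssim \|f\|_{W^{2,\infty}}\|\psi\|_{L^{2}}$ then follows from the Calder\'on--Vaillancourt theorem. As a cleaner alternative that avoids invoking a version of Calder\'on--Vaillancourt with minimal $x$-regularity, one may write $A_{1} = \Opw(\partial_{\xi}p)\circ M_{\partial_{x}f}$ up to a symbol in the same class as the remainder treated below, since $\Opw(\partial_{\xi}p)$ is $L^{2}$-bounded (smooth $S^{0}$-symbol) and multiplication $M_{\partial_{x}f}$ by $\partial_{x}f \in L^{\infty}$ is trivially $L^{2}$-bounded.

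For the remainder $A_{2}$, I would express $R(x,y) = h_{j}h_{k}\,r^{jk}(x,y)$ with $|r^{jk}(x,y)| \lesssim \|\nabla^{2}f\|_{L^{\infty}}$ (using the integral form of the Taylor remainder), and integrate by parts twice in $\xi$ via $h_{j}h_{k}\,e^{i(x-y)\cdot \xi} = -\partial_{\xi_{j}}\partial_{\xi_{k}}e^{i(x-y)\cdot \xi}$. This yields
\[
A_{2}\psi(x) = -(2\pi)^{-n}\iint e^{i(x-y)\cdot \xi}\,\partial_{\xi_{j}}\partial_{\xi_{k}}p(m,\xi)\,r^{jk}(x,y)\,\psi(y)\,d\xi\,dy.
\]
Since $\partial_{\xi}^{2}p \in S^{-1}$, the $\xi$-integral produces a kernel $K(x,y)$ with the pointwise bound $|K(x,y)| \lesssim \|\nabla^{2}f\|_{L^{\infty}}\,\Phi(x-y)$ for an integrable $\Phi$ (after further integration by parts using, e.g., $\langle h\rangle^{-2N}(1-\Delta_{\xi})^{N}e^{i h\cdot \xi} = e^{ih\cdot \xi}$ to gain arbitrary decay in $|h|$ at high $\xi$). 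Schur's test then gives $\|A_{2}\psi\|_{L^{2}} \lesssim \|\nabla^{2}f\|_{L^{\infty}}\|\psi\|_{L^{2}}$.

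The main obstacle is ensuring the $L^{2}$-bound for $A_{1}$ despite the limited regularity of $f$: the symbol $q$ is only $W^{1,\infty}$ in $x$, which is borderline for direct Calder\'on--Vaillancourt. The ``composition plus remainder'' alternative above resolves this cleanly, at the cost of one more application of the two-derivative Taylor argument already used in $A_{2}$. Summing the two contributions produces the claimed bound.
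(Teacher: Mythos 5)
Your argument is sound and reaches the stated bound, but it is organized differently from the paper's proof, so a comparison is worth recording. The paper does not expand at the midpoint: it freezes the gradient at $x$, writing $f(x)-f(y)=(x-y)\cdot\nabla f(x)+(x-y)\cdot H(x,y)$ with $|H(x,y)|\les\|f\|_{W^{2,+\infty}}\min(1,|x-y|)$, so the leading term is exactly $\sum_j\partial_jf(x)\,\Opw(i\partial_{\xi_j}p)\psi$, a bounded multiplication composed with a Weyl operator whose symbol $\partial_{\xi_j}p\in S^0$ is smooth; the borderline Calder\'on--Vaillancourt issue you correctly flag (your $q$ is only $W^{1,\infty}$ in $x$) therefore never arises there, and your fallback ``composition plus remainder'' device is the right fix --- it is essentially equivalent to the paper's freezing, just with multiplication on the other side. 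For the remainder the paper integrates by parts in $\xi$ not at all: it proves, by a dyadic decomposition in $\xi$ and interpolation, the kernel bound $|K_P(x,y)|\les|x-y|^{-n-\frac{3}{2}}$ for the full order-one symbol, and the factor $|x-y|\min(1,|x-y|)$ coming from the Taylor remainder then makes the kernel integrable in each variable, so Schur applies. Your version instead trades the two powers of $x-y$ for $\partial_\xi^2p\in S^{-1}$; this works, but note that the parenthetical $(1-\Delta_\xi)^N$ integration by parts only controls the region $|x-y|\gtrsim 1$: near the diagonal the integral $\int e^{i(x-y)\cdot\xi}\partial^2_{\xi}p\big(\tfrac{x+y}{2},\xi\big)d\xi$ is not absolutely convergent (since $\int\langle\xi\rangle^{-1}d\xi=\infty$), and the needed bound $|K(x,y)|\les|x-y|^{-(n-1)}$ (with a logarithmic correction when $n=1$), uniform in the midpoint variable, still requires the same dyadic-in-$\xi$ interpolation the paper carries out for $K_P$. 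With that standard estimate supplied, both of your pieces are bounded by $\|f\|_{W^{2,+\infty}}\|\psi\|_{L^2(\Reals^n)}$ (Lemma \ref{lem:PDO:Sobolev} for the leading piece, Schur for the rest), and your proof closes; the trade-off is that your midpoint expansion is more symmetric and reuses one kernel estimate for $S^{-1}$ amplitudes, while the paper's freezing at $x$ keeps every quantized symbol smooth and needs only the single kernel bound on $K_P$ itself.
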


\begin{proof}
This type of commutator lemma is classical, although usually stated in a sharper form, see for example \cite{Taylor06}. For the convenience of the reader, we provide a proof. First, we rewrite $[P, f]\psi$ as follows
\beaa
\bsplit
[P, f]\psi(x) =& \int_{\mathbb{R}^n}K_{[P, f]}(x,y)\psi(y)dy,\\
K_{[P, f]}(x,y):=& (f(x)-f(y))K_P(x,y),\qquad K_P(x,y):= \frac{1}{(2\pi)^n}\int_{\mathbb{R}^n}e^{i(x-y)\c\xi}p\left(\frac{x+y}{2}, \xi\right)d\xi, 
\end{split}
\eeaa
where $P=\Opw(p)$. Then, we decompose
\beaa
f(x)-f(y) &=& (x-y)\c\nab f(x)+(x-y)\c H(x,y), \\ 
H(x,y) &:=& \int_0^1\big(\nab f(x+s(y-x))-\nab f(x)\big)ds,
\eeaa
which yields
\beaa
\bsplit
K_{[P, f]}(x,y)=& K^{(1)}_{[P, f]}(x,y)+K^{(2)}_{[P, f]}(x,y),\\
K^{(1)}_{[P, f]}(x,y) :=& ((x-y)\c\nab f(x))K_P(x,y),\quad K^{(2)}_{[P, f]}(x,y) := ((x-y)\c H(x,y))K_P(x,y), 
\end{split}
\eeaa
and hence
\beaa
[P, f]\psi(x) &=& \sum_{j=1}^N\pr_jf(x)\Opw(i\pr_{\xi_j}p)\psi(x)+{[P, f]}^{(2)}\psi(x),\\
{[P, f]}^{(2)}\psi(x) &:=& \int_{\mathbb{R}^n}K^{(2)}_{[P, f]}(x,y)\psi(y)dy.
\eeaa
Since $\pr_{\xi_j}p\in S^0(\Reals^n)$ for all $j=1,\cdots, N$, we infer in view of Lemma \ref{lem:PDO:Sobolev}
\beaa
\|[P, f]\psi\|_{L^2(\Reals^n)} &\les& \|\pr f\|_{L^\infty(\Reals^n)}\|\psi\|_{L^2(\Reals^n)}+\|{[P, f]}^{(2)}\psi\|_{L^2(\Reals^n)}.
\eeaa

It then remains to control $\|{[P, f]}^{(2)}\psi\|_{L^2(\Reals^n)}$. To this end, we start by deriving a classical bounds for the kernel $K_P(x,y)$ of the PDO $P$. We introduce a dyadic partition of $\mathbb{R}^n$ satisfying 
\beaa
\bsplit
&\sum_{j\geq -1}\chi_j(\xi)=1\,\,\,\textrm{on}\,\,\mathbb{R}^n, \qquad \textrm{supp}(\chi_{-1})\subset\{|\xi|\leq 1\}, \qquad \textrm{supp}(\chi_j)\subset\{2^{j-1}|\xi|\leq 2^{j+1}\}\quad j\geq 0,\\
&0\leq \chi_j\leq 1, \qquad |\nab^N_\xi\chi_j(\xi)|\les_N 2^{-jN}, \quad \forall j\geq -1,
\end{split}
\eeaa
and decompose 
\beaa
K_P(x,y) &=& \sum_{j=-1}^{+\infty}K_{P,j}(x,y),\\
K_{P,j}(x,y) &:=& \frac{1}{(2\pi)^n}\int_{\mathbb{R}^n}e^{i(x-y)\c\xi}p\left(\frac{x+y}{2}, \xi\right)\chi_j(\xi)d\xi, \quad j\geq -1. 
\eeaa
We then use integration by parts to obtain, since $p\in S^1(\Reals^n)$ and in view of the properties of $\chi_j$,  
\beaa
|x-y|^{n+2}|K_{P,j}(x,y)|\les \int_{\mathbb{R}^n}\left|\nab_\xi^{n+2}\left(\chi_j(\xi)p\left(\frac{x+y}{2}, \xi\right)\right)\right|d\xi \les 2^{-j},\\
|x-y|^n|K_{P,j}(x,y)|\les \int_{\mathbb{R}^n}\left|\nab_\xi^n\left(\chi_j(\xi)p\left(\frac{x+y}{2}, \xi\right)\right)\right|d\xi \les 2^j,
\eeaa
which implies the following classical bound
\beaa
|x-y|^{n+\frac{3}{2}}|K_P(x,y)|&\les&\sum_{j\geq -1}\left(|x-y|^{n+2}|K_{P,j}(x,y)|\right)^{\frac{3}{4}}\left(|x-y|^n|K_{P,j}(x,y)|\right)^{\frac{1}{4}}\\
&\les& \sum_{j\geq -1}2^{-\frac{j}{2}}\les 1.
\eeaa
We deduce 
\beaa
|K_{[P, f]}^{(2)}(x,y)| &=& |((x-y)\c H(x,y))K_P(x,y)|\les \frac{|x-y||H(x,y)|}{|x-y|^{n+\frac{3}{2}}}\\
&\les& \frac{1}{|x-y|^{n+\frac{1}{2}}}\left|\int_0^1\big(\nab f(x+s(y-x))-\nab f(x)\big)ds\right|\\
&\les& \|f\|_{W^{2,+\infty}(\Reals^n)}\frac{\min(1,|x-y|)}{|x-y|^{n+\frac{1}{2}}}
\eeaa
which immediately yields
\beaa
\sup_{x\in\Reals^n}\left(\int_{\Reals^n}|K_{[P, f]}^{(2)}(x,y)|dy\right)\les \|f\|_{W^{2,+\infty}(\Reals^n)}, \qquad \sup_{y\in\Reals^n}\left(\int_{\Reals^n}|K_{[P, f]}^{(2)}(x,y)|dx\right)\les \|f\|_{W^{2,+\infty}(\Reals^n)}.
\eeaa
Together with Schur's lemma, we infer
\beaa
\|{[P, f]}^{(2)}\psi\|_{L^2(\Reals^n)}\les \|f\|_{W^{2,+\infty}(\Reals^n)}\|\psi\|_{L^2(\Reals^n)}
\eeaa
and hence
\beaa
\|[P, f]\psi\|_{L^2(\Reals^n)} &\les& \|\pr f\|_{L^\infty(\Reals^n)}\|\psi\|_{L^2(\Reals^n)}+\|{[P, f]}^{(2)}\psi\|_{L^2(\Reals^n)}\\
&\les& \|f\|_{W^{2,+\infty}(\Reals^n)}\|\psi\|_{L^2(\Reals^n)}
\eeaa
as stated. This concludes the proof of Lemma \ref{lemma:bascicommutatorlemmawithelementaryproof}.
\end{proof}


\subsubsection{Change of coordinates and Weyl quantization}


In order to establish good composition rules concerning PDOs on manifolds in the Weyl quantization, we will rely on  the following property of the Weyl quantization on $\mathbb{R}^n$ with respect to change of variables.
\begin{lemma}[Change of variables on $\mathbb{R}^n$ for the Weyl quantization]
\lab{lemma:compositionPDOWeylanddiffeoconservative}
For a symbol $a\in S^m(\mathbb{R}^n)$ and a diffeomorphism $\varphi$ of $\mathbb{R}^n$ such that $|\det(d\varphi)|=1$, there exists a symbol $b$ such that 
\beaa
\varphi^\#\Opw(a)[(\varphi^{-1})^\#\psi]=\Opw(b)\psi,
\eeaa 
where ${}^\#$ denotes the pullback of a map and where
\beaa
b(x, \xi)=a(\varphi(x), d\varphi_x^{-1}(\xi))+S^{m-2}.
\eeaa
\end{lemma}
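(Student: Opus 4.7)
My plan is to use the classical Kuranishi trick to rewrite $\varphi^{\#}\Opw(a)[(\varphi^{-1})^{\#}\psi]$ as an oscillatory integral in standard form with amplitude depending on both $x$ and $z$, and then to convert that amplitude to a Weyl symbol, exploiting a symmetry to gain the $S^{m-2}$ improvement.

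First I would write out the definition. Set $Q\psi(x):=\varphi^{\#}\Opw(a)[(\varphi^{-1})^{\#}\psi](x)=\Opw(a)[\psi\circ\varphi^{-1}](\varphi(x))$. Plugging in \eqref{PDO:Rn:Opw} and changing the inner variable $y=\varphi(z)$ (licit because $|\det d\varphi|=1$, so no Jacobian appears), one obtains
\begin{equation*}
Q\psi(x)=(2\pi)^{-n}\iint e^{i(\varphi(x)-\varphi(z))\cdot\eta}\,a\!\left(\tfrac{\varphi(x)+\varphi(z)}{2},\eta\right)\psi(z)\,dz\,d\eta.
\end{equation*}
Apply the Kuranishi trick by writing $\varphi(x)-\varphi(z)=\Psi(x,z)(x-z)$ with $\Psi(x,z):=\int_0^1 d\varphi_{z+t(x-z)}\,dt$, which is a smooth matrix-valued function satisfying $\Psi(x,x)=d\varphi_x$ and the symmetry $\Psi(x,z)=\Psi(z,x)$. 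After a cutoff localizing to a neighborhood of the diagonal $\{x=z\}$ (the off-diagonal contribution lies in $S^{-\infty}$ by non-stationary phase in $\eta$), $\Psi(x,z)$ is invertible, and the change $\eta=(\Psi(x,z)^{T})^{-1}\xi$ reduces the phase to $(x-z)\cdot\xi$ and produces
\begin{equation*}
Q\psi(x)=(2\pi)^{-n}\iint e^{i(x-z)\cdot\xi}\,A(x,z,\xi)\,\psi(z)\,dz\,d\xi,
\quad A(x,z,\xi):=\frac{a\!\left(\tfrac{\varphi(x)+\varphi(z)}{2},(\Psi(x,z)^{T})^{-1}\xi\right)}{|\det\Psi(x,z)|}.
\end{equation*}

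Next I would convert the amplitude $A(x,z,\xi)$ to a Weyl symbol $b(w,\xi)$. Using the relation between the Schwartz kernel and the Weyl symbol, one finds
\begin{equation*}
b(w,\xi)=(2\pi)^{-n}\iint e^{iu\cdot\tau}\,A\!\left(w+\tfrac{u}{2},w-\tfrac{u}{2},\xi+\tau\right)dud\tau,
\end{equation*}
and the standard stationary phase/Taylor expansion around $(u,\tau)=(0,0)$ yields the asymptotic expansion
\begin{equation*}
b(w,\xi)\sim\sum_{\alpha,\beta}\frac{(-i/2)^{|\alpha|}(i/2)^{|\beta|}}{\alpha!\,\beta!}\partial_x^{\alpha}\partial_z^{\beta}\partial_\xi^{\alpha+\beta}A(w,w,\xi).
\end{equation*}
The leading term ($\alpha=\beta=0$) is $A(w,w,\xi)=a(\varphi(w),(d\varphi_w^{T})^{-1}\xi)$, using $\Psi(w,w)=d\varphi_w$ and $|\det d\varphi_w|=1$. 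This yields the claimed principal symbol, and each subsequent term drops one order in $\xi$ and has a coefficient involving derivatives of $\varphi$ and $a$.

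The step I expect to be decisive is showing that the first-order terms ($|\alpha|+|\beta|=1$) cancel, which is precisely what gives the improved $S^{m-2}$ error (rather than $S^{m-1}$). These terms sum to
\begin{equation*}
\tfrac{i}{2}\sum_j\left(-\partial_{x_j}+\partial_{z_j}\right)\partial_{\xi_j}A(w,w,\xi),
\end{equation*}
and the key observation is that $\Psi(x,z)$, $(\varphi(x)+\varphi(z))/2$, and hence $A$ itself are symmetric in the exchange $x\leftrightarrow z$, so $\partial_{x_j}A(w,w,\xi)=\partial_{z_j}A(w,w,\xi)$ and this first-order contribution vanishes identically. This is the analogue, at the level of change of variables, of the cancellation featured in Remark~\ref{rmk:Weylquantizationdoesitbetter}. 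Once this cancellation is established, all remaining terms in the expansion lie in $S^{m-2}$, giving $b(x,\xi)=a(\varphi(x),d\varphi_x^{-1}(\xi))+S^{m-2}$, and standard arguments (Borel summation together with the off-diagonal $S^{-\infty}$ remainder) promote the asymptotic expansion to an honest equality modulo $S^{-\infty}$.
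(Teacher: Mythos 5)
Your proof is correct, and it takes a genuinely different route from the paper's. The paper works directly with the Weyl symbol: it matches kernels, writes $b$ as one explicit oscillatory integral, isolates the stationary point, splits the phase as $u\cdot v$ plus a correction governed by $\widetilde{\Phi}(x,u)=\varphi(x+\tfrac u2)-\varphi(x-\tfrac u2)-d\varphi_x(u)=O(u^3)$ (the oddness in $u$ being the midpoint symmetry of Weyl), and then verifies by hand that the first stationary-phase coefficient $\partial_u\cdot\partial_v d(x,\xi,0,0)$ vanishes. You instead route through the amplitude calculus: Kuranishi's trick $\varphi(x)-\varphi(z)=\Psi(x,z)(x-z)$ with $\Psi(x,z)=\Psi(z,x)$, reduction to a standard-form amplitude $A(x,z,\xi)$, the general amplitude-to-Weyl conversion $b\sim e^{-\frac i2(\partial_x-\partial_z)\cdot\partial_\xi}A\vert_{x=z}$, and the observation that $A$ is symmetric under $x\leftrightarrow z$, so the first-order term $\frac i2(-\partial_x+\partial_z)\cdot\partial_\xi A$ on the diagonal cancels; your leading term $A(w,w,\xi)=a(\varphi(w),(d\varphi_w^T)^{-1}\xi)$ agrees with the paper's (which writes the same covector as $d\varphi_x^{-1}(\xi)$). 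Both cancellations are incarnations of the same Weyl midpoint symmetry, but your organization makes the mechanism of the $S^{m-2}$ gain transparent and reuses standard machinery, while the paper's computation is self-contained (no amplitude reduction needed) and produces the exact integral formula for $b$. Two minor caveats, at the same level of informality as the paper: the invertibility of $\Psi$ near the diagonal, the off-diagonal $S^{-\infty}$ claim, and the symbol estimates for $b$ all implicitly use uniform bounds on $d\varphi$, its inverse and higher derivatives; and at the end Borel summation is not the right tool to invoke --- the standard remainder estimate for the amplitude-to-Weyl expansion already gives $b-a(\varphi(\cdot),(d\varphi^T)^{-1}\cdot)\in S^{m-2}$ directly, since all terms with $|\alpha|+|\beta|\geq 2$ and the remainder lie in $S^{m-2}$.
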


\begin{remark}\lab{rmk:Weylquantizationdoesitbetter:bis}
The property of the Weyl quantization in Lemma \ref{lemma:compositionPDOWeylanddiffeoconservative} improves w.r.t. other quantizations such as the standard one for which the remainder would only be in $S^{m-1}$. Note also that the assumption $|\det(d\varphi)|=1$ will be satisfied  thanks to our choices of isochore coordinates on $\MM$, see Section \ref{sec:isochorecoordinatesonHr}. 
\end{remark}

\begin{proof}
For the convenience of the reader, we provide below the proof of Lemma \ref{lemma:compositionPDOWeylanddiffeoconservative}. We have
\beaa
\varphi^\#\Opw(a)[(\varphi^{-1})^\#\psi](x) &=& \frac{1}{(2\pi)^n}\int_{\mathbb{R}^n}\int_{\mathbb{R}^n}e^{i(\varphi(x)-y)\c\xi}a\left(\frac{\varphi(x)+y}{2}, \xi\right)\psi\circ\varphi^{-1}(y)dy d\xi\\
&=& \frac{1}{(2\pi)^n}\int_{\mathbb{R}^n}\int_{\mathbb{R}^n}e^{i(\varphi(x)-\varphi(y))\c\xi}a\left(\frac{\varphi(x)+\varphi(y)}{2}, \xi\right)\psi(y)dy d\xi,
\eeaa
where we used the fact that $|\textrm{det}(d\varphi)|=1$. Now, we look for $b(x,\xi)$ such that for all $(x, y)\in \mathbb{R}^n\times \mathbb{R}^n$, we have
\beaa
\int_{\mathbb{R}^n}e^{i(x-y)\c\xi}b\left(\frac{x+y}{2}, \xi\right)d\xi = \int_{\mathbb{R}^n}e^{i(\varphi(x)-\varphi(y))\c\xi}a\left(\frac{\varphi(x)+\varphi(y)}{2}, \xi\right)d\xi
\eeaa
which, setting
\beaa
z=\frac{x+y}{2}, \qquad u=x-y, \qquad x=z+\frac{u}{2}, \qquad y=z-\frac{u}{2},
\eeaa
is equivalent to  
\beaa
\int_{\mathbb{R}^n}e^{iu\c\xi}b(z, \xi)d\xi = \int_{\mathbb{R}^n}e^{i\left(\varphi\left(z+\frac{u}{2}\right)-\varphi\left(z-\frac{u}{2}\right)\right)\c\xi}a\left(\frac{\varphi\left(z+\frac{u}{2}\right)+\varphi\left(z-\frac{u}{2}\right)}{2}, \xi\right)d\xi,
\eeaa
and hence, for all $(x, \xi)\in \mathbb{R}^n\times \mathbb{R}^n$, we infer
\beaa
b(x, \xi)=\frac{1}{(2\pi)^n}\int_{\mathbb{R}^n}\int_{\mathbb{R}^n}e^{-iu\c\xi+i\left(\varphi\left(x+\frac{u}{2}\right)-\varphi\left(x-\frac{u}{2}\right)\right)\c\eta}a\left(\frac{\varphi\left(x+\frac{u}{2}\right)+\varphi\left(x-\frac{u}{2}\right)}{2}, \eta\right)d\eta du
\eeaa
or
\beaa
\bsplit
b(x, \xi) =& \frac{1}{(2\pi)^n}\int_{\mathbb{R}^n}\int_{\mathbb{R}^n}e^{i\Phi(x, \xi, u, \eta)}c(x, u, \eta)d\eta du, \\ 
\Phi(x, \xi, u, \eta) :=& -u\c\xi+\left(\varphi\left(x+\frac{u}{2}\right)-\varphi\left(x-\frac{u}{2}\right)\right)\c\eta,\\
c(x, u, \eta) :=& a\left(\frac{\varphi\left(x+\frac{u}{2}\right)+\varphi\left(x-\frac{u}{2}\right)}{2}, \eta\right),
\end{split}
\eeaa
which proves the existence of $b$. 

Next, since 
\beaa
\pr_\eta\Phi=\varphi\left(x+\frac{u}{2}\right)-\varphi\left(x-\frac{u}{2}\right), \qquad \pr_u\Phi=-\xi+\frac{1}{2}\left(d\varphi_{x+\frac{u}{2}}(\eta) + d\varphi_{x-\frac{u}{2}}(\eta)\right), 
\eeaa
the phase $\Phi$ has a unique stationary point at 
\beaa
u=0, \qquad \eta=(d\varphi_x)^{-1}(\xi). 
\eeaa
We write 
\beaa
\bsplit
\Phi(x, \xi, u, \eta)=& -u\c\xi+d\vphi_x(u)\c\eta+\widetilde{\Phi}(x, u)\c\eta, \\
\widetilde{\Phi}(x, u):=& \varphi\left(x+\frac{u}{2}\right)-\varphi\left(x-\frac{u}{2}\right)-d\vphi_x(u), \qquad \widetilde{\Phi}(x, u)=O(u^3),
\end{split}
\eeaa
and then, setting $v=(d\varphi_x)^T(\eta) -\xi$, we obtain 
\beaa
\bsplit
b(x, \xi) =& \frac{1}{(2\pi)^n}\int_{\mathbb{R}^n}\int_{\mathbb{R}^n}e^{iu\c v}d(x,\xi,u,v)dv du, \\ 
d(x,\xi,u,v) :=& e^{i \widetilde{\Phi}(x, u)\c(d\varphi_x^{-1})^T(v+\xi)} c\Big(x, u, (d\varphi_x^{-1})^T(v+\xi)\Big)\\
=&e^{i\left(\varphi\left(x+\frac{u}{2}\right)-\varphi\left(x-\frac{u}{2}\right)-d\vphi_x(u)\right)\c(d\varphi_x^{-1})^T(v+\xi)}a\left(\frac{\varphi\left(x+\frac{u}{2}\right)+\varphi\left(x-\frac{u}{2}\right)}{2}, (d\varphi_x^{-1})^T(v+\xi)\right).
\end{split}
\eeaa
The stationary phase method implies immediately  
\beaa
b(x, \xi) \sim \sum_{j\geq 0}\frac{1}{j!}(\pr_u\c\pr_v)^jd(x,\xi,u,v)_{|_{(u,v)=(0,0)}}.
\eeaa
Since we have
\beaa
d(x,\xi,0,0)=a(\varphi(x), d\varphi_x^{-1}(\xi)), \qquad \pr_u\c\pr_v d(x,\xi,0,0)=0, 
\eeaa
the above asymptotic expansion implies  
\beaa
b(x, \xi)=a(\varphi(x), d\varphi_x^{-1}(\xi))+S^{m-2},
\eeaa
which concludes the proof of Lemma \ref{lemma:compositionPDOWeylanddiffeoconservative}.
\end{proof}


\subsubsection{Mixed symbols and their Weyl quantization}
\lab{sec:mixedsymbolsonRn}


In view of our latter applications to microlocal energy-Morawetz estimates, we decompose $x=(x', x^n)\in\mathbb{R}^{n-1}\times\mathbb{R}$ and consider mixed operators which are PDO in $x'$ and differential in $x^n$. We first define $x^n$-tangential symbols on $\mathbb{R}^n$.
\begin{definition}[$x^n$-tangential symbols on $\mathbb{R}^n$]
 \label{def:symbols:Rn:rtangent}
For $m\in \mathbb{R}$, let $S^m_{tan}(\mathbb{R}^n)$ denote the set of functions $a$ which are $C^{\infty}(\mathbb{R}^n\times\mathbb{R}^{n-1})$ such that for all multi-indices $\alpha$, $\beta$,
 \beaa
\forall x=(x', x^n)\in\mathbb{R}^n, \,\, \forall \xi\in\mathbb{R}^{n-1},\quad \abs{\partial_x^{\alpha} \partial_\xi^{\beta}a(x,\xi)}\leq C_{\alpha,\beta} \langle \xi\rangle^{m-\abs{\beta}},
\eeaa
with $C_{\alpha,\beta}<+\infty$. An element $a\in S^m_{tan}(\mathbb{R}^n)$ is called an $x^n$-tangential symbol of order $m$. 
\end{definition}

Next, we introduce a class of mixed symbols on $\mathbb{R}^n$.
\begin{definition}[Mixed symbols on $\mathbb{R}^n$]
\label{PDO:Rn:Shom}
For $m\in\mathbb{R}$ and $N\in\mathbb{N}$, we define the class $\widetilde{S}^{m,N}(\mathbb{R}^n)$ of symbols as $a\in C^{\infty}(\mathbb{R}^n\times\mathbb{R}^n)$ such that for all $(x, \xi)\in\mathbb{R}^n\times\mathbb{R}^n$, we have, for $\xi=(\xi', \xi_n)$,  
\beaa
a(x,\xi)=\sum_{j=0}^N v_{m-j}(x,\xi')(\xi_n)^j,  \qquad v_{m-j}\in S^{m-j}_{tan}(\mathbb{R}^n).
\eeaa
An element $a\in\widetilde{S}^{m,N}(\mathbb{R}^n)$ is called a mixed symbol of order $(m, N)$. 
\end{definition}

\begin{remark}
Notice that $S^m_{tan}(\Reals^n)=\widetilde{S}^{m,0}(\Reals^n)$. 
\end{remark}

The following lemma provides a formula for the Weyl quantization of symbols in $\widetilde{S}^{m,N}(\mathbb{R}^n)$.
\begin{lemma}\lab{lemma:WeylquantizationmidexsymbolRn}
Let $m\in\mathbb{R}$, $N\in\mathbb{N}$, and $a\in\widetilde{S}^{m,N}(\mathbb{R}^n)$ with
\beaa
a(x,\xi)=\sum_{j=0}^N v_{m-j}(x,\xi')(\xi_n)^j,  \qquad v_{m-j}\in S^{m-j}_{tan}(\mathbb{R}^n).
\eeaa
Then, the Weyl quantization of $a$ is given by
\beaa
\Opw(a)=\sum_{j=0}^N\sum_{k=0}^j2^{-k}\left(\!\!\begin{array}{c}
j\\
k
\end{array}\!\!\right)\Opw(D_{x_n}^kv_{m-j})D_{x^n}^{j-k},
\eeaa
where $\Opw(D_{x_n}^kv_{m-j})$ is the Weyl quantization in $\mathbb{R}^{n-1}$ of the $x^n$-tangential symbol $D_{x_n}^kv_{m-j}$. 
\end{lemma}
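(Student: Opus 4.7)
By linearity, it suffices to treat a single monomial $a(x,\xi) = v_{m-j}(x,\xi')\xi_n^j$ with $v_{m-j}\in S^{m-j}_{tan}(\mathbb{R}^n)$, and sum over $j$ at the end. For such a symbol, I would split $\xi=(\xi',\xi_n)$ and $y=(y',y^n)$ in the defining integral \eqref{PDO:Rn:Opw} to factor the Weyl quantization as
\begin{align*}
\Opw(a)\psi(x) = (2\pi)^{-(n-1)}\iint e^{i(x'-y')\cdot\xi'}\,I(x,y',\xi')\, dy'\,d\xi',
\end{align*}
where
\begin{align*}
I(x,y',\xi') := (2\pi)^{-1}\iint e^{i(x^n-y^n)\xi_n}\,v_{m-j}\!\left(\tfrac{x+y}{2},\xi'\right)\xi_n^j\,\psi(y',y^n)\, dy^n\, d\xi_n.
\end{align*}

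The key observation is that, with $(x', y', \xi')$ frozen as parameters, $I(x,y',\xi')$ is precisely the value at $x^n$ of the one-dimensional Weyl quantization in the variable $z = x^n$ of the polynomial symbol $b(z, \xi_n) := v_{m-j}\!\left(\tfrac{x'+y'}{2}, z, \xi'\right)\xi_n^j$, applied to the function $t \mapsto \psi(y', t)$. Applying Lemma \ref{lemma:particularcaseWeylquantizationpolynomialxi} in dimension one to $b$ then yields directly
\begin{align*}
I(x,y',\xi') = \sum_{k=0}^j 2^{-k}\binom{j}{k}\,(D_{x^n}^k v_{m-j})\!\left(\tfrac{x'+y'}{2},x^n,\xi'\right)(D_{x^n}^{j-k}\psi)(y',x^n),
\end{align*}
where $D_{x^n}^{j-k}\psi$ means differentiation of $\psi$ with respect to its $n$-th coordinate.

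To conclude, I would substitute this expansion back into the $(y',\xi')$ integral and invoke the general fact that for any $x^n$-tangential symbol $w\in S^\bullet_{tan}(\mathbb{R}^n)$ one has $\Opw(w)\phi(x) = \Opw_{x'}\bigl(w(\cdot,x^n,\cdot)\bigr)\bigl[\phi(\cdot,x^n)\bigr](x')$, since the $\xi_n$-integration collapses by Fourier inversion to the identification $y^n = x^n$. Applying this with $w = D_{x^n}^k v_{m-j}$ and $\phi = D_{x^n}^{j-k}\psi$ gives $\Opw(a)\psi = \sum_{k=0}^j 2^{-k}\binom{j}{k}\Opw(D_{x^n}^k v_{m-j})\,D_{x^n}^{j-k}\psi$, and summing over $j$ produces the statement. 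The argument is essentially a Leibniz-rule expansion whose characteristic factor $2^{-k}$ arises from the chain rule $\partial_{x^n}\!\left[v\!\left(\tfrac{x+y}{2},\cdot\right)\right] = \tfrac{1}{2}(\partial_{x^n}v)\!\left(\tfrac{x+y}{2},\cdot\right)$ inherent to the symmetric Weyl midpoint, and I anticipate no substantive analytic obstacle beyond careful bookkeeping of parameters.
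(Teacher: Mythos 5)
Your proposal is correct and is essentially the argument the paper intends: the paper's proof is just the remark that the result ``follows along the same lines as Lemma \ref{lemma:particularcaseWeylquantizationpolynomialxi},'' i.e.\ integration by parts in $y^n$, Fourier inversion in $\xi_n$, and the Leibniz rule with the midpoint factor $\tfrac12$, which is exactly what your splitting of the integral and application of Lemma \ref{lemma:particularcaseWeylquantizationpolynomialxi} in the one-dimensional $x^n$-variable accomplishes. Your final step identifying the remaining $(y',\xi')$ integral with the tangential Weyl quantization is already immediate from that splitting, so no separate ``general fact'' is needed.
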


\begin{proof}
The proof follows along the same lines as the one of Lemma \ref{lemma:particularcaseWeylquantizationpolynomialxi}.
\end{proof}

\begin{remark}\lab{rmk:WeylquantizationofmixedsymbolsonRnlocalinxn}
Lemma \ref{lemma:WeylquantizationmidexsymbolRn} shows that the Weyl quantization of mixed symbols is pseudo-differential w.r.t $x'$ but differential w.r.t. $x^n$ so that it can be applied to functions that are defined on $\mathbb{R}^{n-1}\times I$ where $I$ is an open set of $\mathbb{R}$. 
\end{remark}

We now consider the properties of the Weyl quantization of symbols in $\widetilde{S}^{m,N}(\mathbb{R}^n)$. 
\begin{proposition}
\label{prop:PDO:Rn:Weylquan:mixedoperators}
The Weyl quantization satisfies the following properties for symbols in the class 
$\widetilde{S}^{m,N}(\mathbb{R}^n)$: 
\begin{enumerate}[label=\arabic*)] 
\item For mixed symbols $a_1$ and $a_2$ of respective orders $(m_1,N_1)$ and $(m_2,N_2)$, we have
\bea\lab{eq:propWeylquantizationmathbbRn:composition:mixedsymbols}
\bsplit
[\Opw(a_1), \Opw(a_2)]=\Opw(a_3),\quad a_3=\frac{1}{i}\{a_1, a_2\} +\widetilde{S}^{m_1+m_2-3,N_1+N_2},\\
\Opw(a_1)\circ\Opw(a_2) +\Opw(a_2)\circ\Opw(a_1)=\Opw(a_3),\quad a_3=2a_1a_2 + \widetilde{S}^{m_1+m_2-2,N_1+N_2}.
\end{split}
\eea

\item In the particular case where $a_1(x,\xi)=v_1(x^n)\xi_n^{N_1}$, which is a mixed symbol of order $(m_1, N_1)$ with $m_1=N_1$, we have, with $a_2$ of order $(m_2,N_2)$
\bea\lab{eq:propWeylquantizationmathbbRn:composition:mixedsymbols:specialcase}
\bsplit
&[\Opw(a_1), \Opw(a_2)]=\Opw(a_3),\quad a_3=\frac{1}{i}\{a_1, a_2\} +\tilde{a}_3,\\
&\tilde{a}_3=0\quad\textrm{if}\quad \max(N_1, N_2)\leq 2, \qquad \tilde{a}_3\in\widetilde{S}^{m_1+m_2-3,N_1+N_2-3}\quad\textrm{if}\quad \max(N_1, N_2)\geq 3,\\
&\Opw(a_1)\circ\Opw(a_2) +\Opw(a_2)\circ\Opw(a_1)=\Opw(a_4),\quad a_4=2a_1a_2 + \tilde{a}_4,\\
&\tilde{a}_4=0\quad\textrm{if}\quad \max(N_1, N_2)\leq 1, \qquad \tilde{a}_4\in\widetilde{S}^{m_1+m_2-2,N_1+N_2-2}\quad\textrm{if}\quad \max(N_1, N_2)\geq 2.
\end{split}
\eea

\item In the particular case where $a_1$ and $a_2$ are mixed symbols of respective order $(m_1, 1)$ and $(m_2, 1)$, and $f=f(x^n)$, we have
\bea\lab{eq:propWeylquantizationmathbbRn:composition:mixedsymbols:specialcase:1}
\bsplit
&[\Opw(a_1), \Opw(f(x^n)a_2)]=\Opw(a_3),\quad a_3=\tilde{a}_3,\\
&\tilde{a}_3=\frac{1}{i}\{a_1, f(x^n)a_2\} +f(x^n)\widetilde{S}^{m_1+m_2-3,2}+\widetilde{S}^{m_1+m_2-3,1},\\
&\Opw(a_1)\circ\Opw(f(x^n)a_2) +\Opw(f(x^n)a_2)\circ\Opw(a_1)=\Opw(a_4),\quad a_4=2f(x^n)a_1a_2 + \tilde{a}_4,\\
&\tilde{a}_4=f(x^n)\widetilde{S}^{m_1+m_2-2,2}+\widetilde{S}^{m_1+m_2-2,1}.
\end{split}
\eea

\item For a mixed symbol $a(x,\xi)$, the adjoint of its Weyl quantization is given by 
\bea\lab{eq:propWeylquantizationmathbbRn:adjoint:mixedsymbols}
(\Opw(a))^{\star} = \Opw(\bar{a}).
\eea
In particular, the Weyl quantization of a real-valued symbol is a self-adjoint operator.
\end{enumerate}
\end{proposition}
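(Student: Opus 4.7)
The basic idea is to reduce everything to the general Weyl calculus of Proposition \ref{prop:PDO:Rn:Weylquan} applied to symbols in $\widetilde{S}^{m,N}(\mathbb{R}^n)$, and then to track carefully how the polynomial degree in $\xi_n$ propagates through the asymptotic expansion \eqref{eq:asymptoticexpansionofsymbolofproductinWeylquantization}. The key algebraic fact is that the bidifferential operator $\pr_y\cdot\pr_\xi - \pr_x\cdot\pr_\eta$ acts on $\xi_n$-polynomials either by leaving the $\xi_n$-degree unchanged (when the derivative is in a tangential direction) or by lowering the $\xi_n$-degree in one factor by $1$ (when the derivative is $\pr_{\xi_n}$ or $\pr_{\eta_n}$). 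Consequently the $j$-th term in \eqref{eq:asymptoticexpansionofsymbolofproductinWeylquantization}, applied to $a_1\in\widetilde{S}^{m_1,N_1}$ and $a_2\in\widetilde{S}^{m_2,N_2}$, produces a symbol in $\widetilde{S}^{m_1+m_2-j,N_1+N_2}$ (with the stronger bound $\widetilde{S}^{m_1+m_2-j,N_1+N_2-j}$ if all derivatives happen to fall in the $\xi_n$-direction).

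For part (1), I would apply \eqref{eq:asymptoticexpansionofsymbolofproductinWeylquantization} directly: the $j=0$ term is $a_1a_2$, the $j=1$ term is $\tfrac{1}{i}\{a_1,a_2\}$, and the $j\geq 2$ terms are collected into the remainder. Because each term of the asymptotic expansion is in $\widetilde{S}^{m_1+m_2-j,N_1+N_2}$, the remainder of the symmetrized product lies in $\widetilde{S}^{m_1+m_2-2,N_1+N_2}$ and that of the commutator in $\widetilde{S}^{m_1+m_2-3,N_1+N_2}$. For part (4), the adjoint formula \eqref{eq:propWeylquantizationmathbbRn:adjoint} is already exact in the general Weyl calculus, so no extra work beyond verifying that $\bar{a}\in\widetilde{S}^{m,N}$ whenever $a\in\widetilde{S}^{m,N}$ is needed; this last point is immediate from Definition \ref{PDO:Rn:Shom} since complex conjugation preserves the tangential symbol classes and does not affect the polynomial structure in $\xi_n$.

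The main subtlety lies in parts (2) and (3), where the remainder has to be shown to have \emph{lower} polynomial degree in $\xi_n$ than the naive bound $N_1+N_2$ would give. For part (2), write $a_1(x,\xi)=v_1(x^n)\xi_n^{N_1}$, so that $\pr_{x'}a_1=0$ and $\pr_{\xi'}a_1=0$; hence in each term of \eqref{eq:asymptoticexpansionofsymbolofproductinWeylquantization} every derivative falling on $a_1$ must be either $\pr_{x^n}$ (leaving $\xi_n$-degree unchanged, but then paired with $\pr_{\xi_n}$ on $a_2$, which lowers its degree) or $\pr_{\xi_n}$ (lowering $a_1$'s $\xi_n$-degree directly). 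Counting shows that after $j$ applications of the bidifferential operator, the resulting symbol has $\xi_n$-degree at most $N_1+N_2-j$, and in particular vanishes identically when $j>N_1+N_2$. The cases $\max(N_1,N_2)\leq 2$ (resp.\ $\leq 1$) then correspond to the commutator (resp.\ symmetrized product) expansion truncating after the leading Poisson (resp.\ multiplicative) term. For part (3), the argument is similar but now applied to $a_1\in\widetilde{S}^{m_1,1}$ and $fa_2\in\widetilde{S}^{m_2,1}$ with $f=f(x^n)$: derivatives of $f$ only produce pure functions of $x^n$, and the fact that both factors have $\xi_n$-degree at most $1$ forces an exact $j$-expansion truncation that isolates the $f$-dependent contribution from the $\pr_{x^n}f$-contribution, yielding the stated decomposition of $\tilde a_3$ and $\tilde a_4$ into pieces of the form $f(x^n)\widetilde{S}^{*,2}+\widetilde{S}^{*,1}$.

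The hardest point will be the combinatorial bookkeeping in parts (2) and (3): one must carefully enumerate which derivatives in the asymptotic expansion \eqref{eq:asymptoticexpansionofsymbolofproductinWeylquantization} hit $v_1$, $\xi_n^{N_1}$, $f$, or $a_2$, and show that all remaining contributions either vanish identically (in the low-$N$ regimes) or can be recombined into the claimed polynomial classes. A clean way to organize this will be to separately track, for each term in the expansion, the counts $(k_1,k_2)$ of $\pr_{\xi_n}$-derivatives on the two factors; then the $\xi_n$-degree is exactly $(N_1-k_1)+(N_2-k_2)$, and the total order is $m_1+m_2-(k_1+k_2+\ell)$ where $\ell$ counts the tangential derivative pairs. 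This immediately yields the sharper classes in \eqref{eq:propWeylquantizationmathbbRn:composition:mixedsymbols:specialcase}--\eqref{eq:propWeylquantizationmathbbRn:composition:mixedsymbols:specialcase:1}.
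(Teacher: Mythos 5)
You follow the same route as the paper: reduce everything to the Moyal expansion \eqref{eq:asymptoticexpansionofsymbolofproductinWeylquantization} of Proposition \ref{prop:PDO:Rn:Weylquan} and track how the bidifferential operator $\pr_y\cdot\pr_\xi-\pr_x\cdot\pr_\eta$ moves the polynomial degree in $\xi_n$. Your treatment of parts (1), (3) and (4) is essentially the paper's: for (2)--(3) the paper likewise observes that, since $a_1$ depends only on $(x^n,\xi_n)$, the operator collapses to $\pr_{y^n}\pr_{\xi_n}-\pr_{x^n}\pr_{\eta_n}$, respectively splits tangential from normal pairs so that $f(x^n)$ factors out of the purely tangential part.

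The gap is in part (2), precisely at the step you single out as the main subtlety. Your counting is correct: writing the $j$-th term as $\sum_l\binom{j}{l}(-1)^{j-l}\big(\pr_{\xi_n}^l\pr_{x^n}^{j-l}a_1\big)\big(\pr_{y^n}^l\pr_{\eta_n}^{j-l}a_2\big)$, it is nonzero only if some $l$ satisfies $l\le N_1$ and $j-l\le N_2$, hence only for $j\le N_1+N_2$, and then it lies in $\widetilde{S}^{m_1+m_2-j,N_1+N_2-j}$. But the conclusion you draw, namely that the commutator (resp.\ anticommutator) expansion truncates after the Poisson (resp.\ product) term whenever $\max(N_1,N_2)\le 2$ (resp.\ $\le 1$), does not follow: truncating the odd part needs the $j=3$ term to vanish, i.e.\ $N_1+N_2\le 2$, and truncating the even part needs $N_1+N_2\le 1$, both strictly stronger than the stated conditions. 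The paper's own proof closes this step by asserting that the $j$-th term vanishes for every $j>\max(N_1,N_2)$; your bookkeeping does not give this, and in fact cannot: with $n=1$, $a_1=v(x)\xi$, $a_2=w(x)\xi$ (so $N_1=N_2=1$), Lemma \ref{lemma:particularcaseWeylquantizationpolynomialxi} yields $\Opw(a_1)\Opw(a_2)+\Opw(a_2)\Opw(a_1)=\Opw\big(2vw\xi^2+\tfrac12 v'w'\big)$, so $\tilde a_4=\tfrac12 v'w'\neq 0$ in general; similarly, for $N_1=2$, $N_2=1$ the $j=3$ commutator term, proportional to $\pr_{x^n}v_1$ times $\pr_{x^n}^2$ of the top $\xi_n$-coefficient of $a_2$, survives. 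So what your argument actually delivers is the membership statements $\tilde a_3\in\widetilde{S}^{m_1+m_2-3,N_1+N_2-3}$ and $\tilde a_4\in\widetilde{S}^{m_1+m_2-2,N_1+N_2-2}$, together with exact vanishing only under the stronger hypotheses $N_1+N_2\le 2$, resp.\ $N_1+N_2\le 1$; the borderline vanishing claims in \eqref{eq:propWeylquantizationmathbbRn:composition:mixedsymbols:specialcase} require either these stronger hypotheses or extra structure on $a_2$ (for instance, its top $\xi_n$-coefficient independent of $x^n$), and no degree count alone will produce them. (The membership form is what is actually needed where this result is invoked later, since the extra terms land in low-order remainders such as $\widetilde{S}^{0,1}$.)
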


\begin{proof}
The fourth property is immediate, so we focus on the first three properties. Recall from the proof of Proposition \ref{prop:PDO:Rn:Weylquan} that 
\beaa
\Opw(a_1)\circ\Opw(a_2)=\Opw(a_3)
\eeaa
where $a_3$ is given by \eqref{eq:formulafora3incompositionWeylquantization}. Using the stationary phase method, this yields the asymptotic expansion \eqref{eq:asymptoticexpansionofsymbolofproductinWeylquantization}, i.e., 
\beaa
a_3(x,\xi)\sim\sum_{j\geq 0}\left(-\frac{i}{2}\right)^j(\pr_y\c\pr_\xi-\pr_x\c\pr_\eta)^j(a_1(x,\xi)a_2(y,\eta))_{|_{y=x,\eta=\xi}}.
\eeaa
Now, if $a_1$ and $a_2$ are mixed symbols of respective orders $(m_1,N_1)$ and $(m_2,N_2)$, then, for $j\geq 0$, 
\beaa
(\pr_y\c\pr_\xi-\pr_x\c\pr_\eta)^j(a_1(x,\xi)a_2(y,\eta))_{|_{y=x,\eta=\xi}}
\eeaa
is a mixed symbol of orders $(m_1+m_2-j,N_1+N_2)$ which yields the first property.

Next, in the particular case where $a_1(x,\xi)=v_1(x^n)\xi_n^{N_1}$ and $a_2$ is a mixed symbol of order $(m_2,N_2)$, we have, for $j\geq 0$, 
\beaa
(\pr_y\c\pr_\xi-\pr_x\c\pr_\eta)^j(a_1(x,\xi)a_2(y,\eta))_{|_{y=x,\eta=\xi}} &=& (\pr_y\c\pr_\xi-\pr_x\c\pr_\eta)^j(v_1(x^n)\xi_n^{N_1}a_2(y,\eta))_{|_{y=x,\eta=\xi}}\\
&=& (\pr_{y^n}\pr_{\xi^n}-\pr_{x^n}\pr_{\eta^n})^j(v_1(x^n)\xi_n^{N_1}a_2(y,\eta))_{|_{y=x,\eta=\xi}}
\eeaa
so that
\beaa
&&(\pr_y\c\pr_\xi-\pr_x\c\pr_\eta)^j(a_1(x,\xi)a_2(y,\eta))_{|_{y=x,\eta=\xi}}=0\quad\textrm{for}\quad j>\max(N_1, N_2)\\
&&(\pr_y\c\pr_\xi-\pr_x\c\pr_\eta)^j(a_1(x,\xi)a_2(y,\eta))_{|_{y=x,\eta=\xi}}\in \widetilde{S}^{m_1+m_2-j,N_1+N_2-j}\quad\textrm{for}\quad j\leq\max(N_1, N_2)
\eeaa
which implies the second property. 

Finally, in the particular case where $a_1$ and $a_2$ are mixed symbols of respective order $(m_1, 1)$ and $(m_2, 1)$, and $f=f(x^n)$, we have
\beaa
&&(\pr_y\c\pr_\xi-\pr_x\c\pr_\eta)^j(a_1(x,\xi)f(x^n)a_2(y,\eta))_{|_{y=x,\eta=\xi}}\\ 
&=& f(x^n)(\pr_{y'}\c\pr_{\xi'}-\pr_{x'}\c\pr_{\eta'})^j(a_1(x,\xi)a_2(y,\eta))_{|_{y=x,\eta=\xi}}+\widetilde{S}^{m_1+m_2-j,1}\\
&=& f(x^n)\widetilde{S}^{m_1+m_2-j,2}+\widetilde{S}^{m_1+m_2-j,1}
\eeaa
which implies the third property. This concludes the proof of Proposition \ref{prop:PDO:Rn:Weylquan:mixedoperators}.
\end{proof}

We also consider the action of the Weyl quantization of mixed symbols on Sobolev spaces. 
\begin{lemma}\lab{lemma:actionmixedsymbolsSobolevspaces}
Let $m\in\mathbb{R}$, $N\in\mathbb{N}$, let $I$ be an interval of $\mathbb{R}$, and let $a\in\widetilde{S}^{m, N}(\mathbb{R}^n)$ be of the form $a(x,\xi)=v_{m-N}(x,\xi')\xi^N_n$ with $v_{m-N}\in S_{tan}^{m-N}(\mathbb{R}^n)$. Then, we have for all $s\in\mathbb{R}$
\beaa
\|\Opw(a)\psi\|_{L^2_{x^n}(I, H_{x'}^{s-m+N}(\mathbb{R}^{n-1}))}\les \sum_{j=0}^N\|\pr_{x^n}^j\psi\|_{L^2_{x^n}(I, H_{x'}^s(\mathbb{R}^{n-1}))}.
\eeaa
\end{lemma}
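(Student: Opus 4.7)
The plan is to reduce this Sobolev estimate to a straightforward combination of Lemma \ref{lemma:WeylquantizationmidexsymbolRn} (which converts the Weyl quantization of a mixed symbol into an explicit finite sum of differential operators in $x^n$ composed with tangential PDOs in $x'$) and Lemma \ref{lem:PDO:Sobolev} (mapping properties of PDOs between Sobolev spaces), applied slice by slice in $x^n$.

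First I would apply Lemma \ref{lemma:WeylquantizationmidexsymbolRn} to the specific mixed symbol $a(x,\xi)=v_{m-N}(x,\xi')\xi_n^N$. Since only the coefficient $v_{m-N}$ is nonzero in the expansion of $a$, the lemma gives
\[
\Opw(a) \;=\; \sum_{k=0}^N 2^{-k}\binom{N}{k}\,\Opw\!\big(D_{x^n}^k v_{m-N}\big)\, D_{x^n}^{N-k},
\]
where each $\Opw(D_{x^n}^k v_{m-N})$ is the Weyl quantization in $\mathbb{R}^{n-1}$ of the tangential symbol $D_{x^n}^k v_{m-N}$, which still lies in $S^{m-N}_{tan}(\mathbb{R}^n)$ since differentiation in $x^n$ preserves tangential symbol classes (the bounds in Definition \ref{def:symbols:Rn:rtangent} are uniform in $x^n$).

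Next I would view each term in the sum, for fixed $x^n\in I$, as a PDO on $\mathbb{R}^{n-1}$ acting on $\partial_{x^n}^{N-k}\psi(\cdot,x^n)$. Applying Lemma \ref{lem:PDO:Sobolev} in the $x'$ variables (with $m$ replaced by $m-N$), and using that the seminorms of $D_{x^n}^k v_{m-N}(\cdot,x^n,\cdot)$ in $S^{m-N}(\mathbb{R}^{n-1})$ are bounded uniformly in $x^n$, gives, for every $s\in\mathbb{R}$ and almost every $x^n\in I$,
\[
\big\|\Opw\!\big(D_{x^n}^k v_{m-N}\big)\, \partial_{x^n}^{N-k}\psi(\cdot,x^n)\big\|_{H^{s-m+N}_{x'}(\mathbb{R}^{n-1})} \;\lesssim\; \big\|\partial_{x^n}^{N-k}\psi(\cdot,x^n)\big\|_{H^{s}_{x'}(\mathbb{R}^{n-1})}.
\]

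Finally, I would square both sides, integrate in $x^n\in I$, take square roots, sum over $k=0,\dots,N$ via the triangle inequality, and reindex $j=N-k$, which yields the claimed bound
\[
\|\Opw(a)\psi\|_{L^2_{x^n}(I, H^{s-m+N}_{x'}(\mathbb{R}^{n-1}))} \;\lesssim\; \sum_{j=0}^N \|\partial_{x^n}^j\psi\|_{L^2_{x^n}(I, H^s_{x'}(\mathbb{R}^{n-1}))}.
\]
The only non-obvious point is the uniformity in $x^n$ of the constant in Lemma \ref{lem:PDO:Sobolev}, but this is automatic from the uniformity of the symbol bounds in the definition of $S^{m-N}_{tan}$, so there is no real obstacle; the lemma is essentially a bookkeeping consequence of the explicit formula provided by Lemma \ref{lemma:WeylquantizationmidexsymbolRn}.
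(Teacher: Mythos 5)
Your argument is correct and is essentially the paper's own proof: the paper likewise invokes Lemma \ref{lemma:WeylquantizationmidexsymbolRn} to write $\Opw(a)=\sum_{k=0}^N 2^{-k}\binom{N}{k}\Opw(D_{x^n}^k v_{m-N})D_{x^n}^{N-k}$ and then concludes from $D_{x^n}^k v_{m-N}\in S^{m-N}_{tan}$ together with Lemma \ref{lem:PDO:Sobolev}. Your extra remarks on the uniformity in $x^n$ of the symbol seminorms and the slice-by-slice integration just make explicit what the paper leaves implicit.
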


\begin{proof}
In view of Lemma \ref{lemma:WeylquantizationmidexsymbolRn}, we have
\beaa
\Opw(a)=\sum_{k=0}^N2^{-k}\left(\!\!\begin{array}{c}
N\\
k
\end{array}\!\!\right)\Opw(D_{x_n}^kv_{m-N})D_{x^n}^{N-k}
\eeaa
and the lemma follows immediately from that fact that $D_{x_n}^kv_{m-N}(x^x,\cdot, \cdot)\in S^{m-N}(\mathbb{R}^{n-1})$ together with Lemma \ref{lem:PDO:Sobolev}.
\end{proof}

Next, we prove a non-sharp G\aa rding type inequality for tangential symbols.
\begin{lemma}\lab{lemma:basicgardingfortangentialsymbols}
 Let $I$ be an interval of $\mathbb{R}$ and let $a\in\widetilde{S}^{1,0}$ be such that
\beaa
a(x, \xi)\geq c_1\langle\xi'\rangle, \qquad \forall (x,\xi)\in\mathbb{R}^{2n}, \quad \xi=(\xi', \xi_n),
\eeaa
for some constant $c_1>0$. Then, there exist constants $c_2>0$ and $c_3>0$ such that we have
\beaa
\|\Opw(a)\psi\|_{L^2_{x^n}(I, L^2_{x'}(\mathbb{R}^{n-1}))}^2\geq c_2\|\psi\|_{L^2_{x^n}(I, H_{x'}^1(\mathbb{R}^{n-1}))}^2 -c_3\|\psi\|_{L^2_{x^n}(I, L^2_{x'}(\mathbb{R}^{n-1}))}^2.
\eeaa
\end{lemma}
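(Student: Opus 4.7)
Since $a\in\widetilde{S}^{1,0}$ satisfies $N=0$ in the notation of Definition \ref{PDO:Rn:Shom}, the symbol $a(x,\xi)=v_1(x,\xi')$ depends only on $(x,\xi')$ and is an element of $S^1_{tan}$; its assumed lower bound $a\geq c_1\langle\xi'\rangle$ forces $a$ to be real-valued. By Lemma \ref{lemma:WeylquantizationmidexsymbolRn} with $N=0$, the operator $\Opw(a)$ is the Weyl quantization of the tangential symbol $a(\cdot,x^n,\cdot)$ on $\mathbb{R}^{n-1}$, performed pointwise in $x^n$. Thus the plan is to establish the inequality on $\mathbb{R}^{n-1}$ uniformly in $x^n\in I$, and then integrate in $x^n$.

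Fix $x^n\in I$ and work on $\mathbb{R}^{n-1}$. Since $a$ is real, Proposition \ref{prop:PDO:Rn:Weylquan:mixedoperators} (equation \eqref{eq:propWeylquantizationmathbbRn:composition:mixedsymbols} applied with $a_1=a_2=a$, $m_1=m_2=1$, $N_1=N_2=0$, together with $[\Opw(a),\Opw(a)]=0$) yields
\[
\|\Opw(a)\psi\|_{L^2_{x'}}^2=\bigl(\Opw(a)^*\Opw(a)\psi,\psi\bigr)_{L^2_{x'}}=\bigl(\Opw(a^2)\psi,\psi\bigr)_{L^2_{x'}}+\bigl(\Opw(r)\psi,\psi\bigr)_{L^2_{x'}},
\]
where $r\in S^0_{tan}$. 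By Lemma \ref{lemma:actionmixedsymbolsSobolevspaces} the last term is bounded by $C\|\psi\|_{L^2_{x'}}^2$, so it suffices to prove a lower bound for $(\Opw(a^2)\psi,\psi)_{L^2_{x'}}$.

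The key step is to apply the sharp G\aa{}rding inequality to the nonnegative tangential symbol
\[
b(x,\xi'):=a(x,\xi')^2-\tfrac{c_1^2}{2}\langle\xi'\rangle^2\in S^2_{tan},\qquad b\geq \tfrac{c_1^2}{2}\langle\xi'\rangle^2\geq 0,
\]
which gives $(\Opw(b)\psi,\psi)_{L^2_{x'}}\geq -C\|\psi\|_{H^{1/2}_{x'}}^2$. Together with the standard $L^2$ boundedness of the order-$2$ Weyl quantization of $\langle\xi'\rangle^2$ modulo a self-adjoint remainder in $\Opw(S^0_{tan})$, this yields
\[
\bigl(\Opw(a^2)\psi,\psi\bigr)_{L^2_{x'}}\geq \tfrac{c_1^2}{2}\|\psi\|_{H^1_{x'}}^2-C\|\psi\|_{H^{1/2}_{x'}}^2.
\]
Using the interpolation inequality $\|\psi\|_{H^{1/2}_{x'}}^2\leq \epsilon\|\psi\|_{H^1_{x'}}^2+C_\epsilon\|\psi\|_{L^2_{x'}}^2$ with $\epsilon$ chosen so that $C\epsilon\leq c_1^2/4$, and combining with the remainder estimate above, we obtain, uniformly in $x^n\in I$,
\[
\|\Opw(a)\psi\|_{L^2_{x'}}^2\geq \tfrac{c_1^2}{4}\|\psi\|_{H^1_{x'}}^2-C'\|\psi\|_{L^2_{x'}}^2.
\]
Integrating over $x^n\in I$ then gives the claim with $c_2=c_1^2/4$ and $c_3=C'$.

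The main technical point is the application of the sharp G\aa{}rding inequality in the tangential variables, uniformly in the parameter $x^n$; this is standard (the Hörmander/Friedrichs symmetrization used in its proof depends smoothly on parameters, and all $S^m_{tan}$ seminorms of $b$ are controlled uniformly in $x^n$ by the very definition of $S^m_{tan}$), so no new difficulty arises from the parameter. The remaining composition and interpolation steps are routine consequences of Proposition \ref{prop:PDO:Rn:Weylquan:mixedoperators} and Lemma \ref{lemma:actionmixedsymbolsSobolevspaces}.
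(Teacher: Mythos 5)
Your argument is correct, but it goes by a genuinely different route than the paper. The paper never invokes the sharp G\aa{}rding inequality: since $a^2-\tfrac{c_1^2}{2}\langle\xi'\rangle^2\geq \tfrac{c_1^2}{2}\langle\xi'\rangle^2>0$ is elliptic and strictly positive, it writes $a^2=\tfrac{c_1^2}{2}\langle\xi'\rangle^2+e^2$ with the \emph{exact} square root $e:=\sqrt{a^2-\tfrac{c_1^2}{2}\langle\xi'\rangle^2}\in\widetilde{S}^{1,0}$, and then uses only the calculus developed in Section \ref{sect:PDO:Rn} (the Weyl anticommutator formula with its order-$(m_1+m_2-2)$ remainder, self-adjointness of real Weyl symbols, and Lemma \ref{lemma:actionmixedsymbolsSobolevspaces}) to get $\Opw(a)^2=\tfrac{c_1^2}{2}\Opw(\langle\xi'\rangle)^2+\Opw(e)^2+\Opw(\widetilde{S}^{0,0})$; positivity then comes for free from $\|\Opw(e)\psi\|^2\geq 0$, the error is already order $0$, and no interpolation is needed (yielding $c_2=c_1^2/2$ directly). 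You instead treat $b=a^2-\tfrac{c_1^2}{2}\langle\xi'\rangle^2$ as a nonnegative symbol and apply sharp G\aa{}rding slice-wise in $x^n$, then absorb the resulting $H^{1/2}_{x'}$ loss by interpolation. This is valid (your remarks about uniformity in the parameter $x^n$ and about the composition remainder are right, and working slice-wise is legitimate since $\Opw$ of a tangential symbol acts pointwise in $x^n$ by Lemma \ref{lemma:WeylquantizationmidexsymbolRn}), but it imports sharp G\aa{}rding as an external black box that the paper's calculus does not provide, and it pays an extra interpolation step; the strict positivity of $b$, which is what makes the paper's elementary square-root trick available, is exactly what lets one avoid G\aa{}rding altogether here. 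Minor citation point: for the fixed-$x^n$ bound on the order-$0$ remainder you should appeal to Lemma \ref{lem:PDO:Sobolev} (uniformly in $x^n$) rather than Lemma \ref{lemma:actionmixedsymbolsSobolevspaces}, which is stated for the $L^2_{x^n}(I,\cdot)$ norms.
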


\begin{proof}
Since 
\beaa
(a(x, \xi'))^2-\frac{c_1^2}{2}\langle\xi'\rangle^2\geq \frac{c_1^2}{2}\langle\xi'\rangle^2,
\eeaa
we may rewrite $a$ as
\beaa
a(x, \xi')^2=\frac{c_1^2}{2}\langle\xi'\rangle^2+e(x,\xi')^2, \qquad e(x,\xi'):=\sqrt{(a(x, \xi'))^2-\frac{c_1^2}{2}\langle\xi'\rangle^2}, \qquad e\in\widetilde{S}^{1,0}(\mathbb{R}^n).
\eeaa
Together with Proposition \ref{prop:PDO:Rn:Weylquan:mixedoperators}, this implies 
\beaa
\Opw(a)^2=\frac{c_1^2}{2}\Opw(\langle\xi'\rangle)^2+\Opw(e)^2+\Opw(\widetilde{S}^{0,0}(\mathbb{R}^n))
\eeaa
which, together with Lemma \ref{lemma:actionmixedsymbolsSobolevspaces} applied to $\Opw(\widetilde{S}^{0,0}(\mathbb{R}^n))$, implies
\beaa
\|\Opw(a)\psi\|_{L^2_{x^n}(I, L^2_{x'}(\mathbb{R}^{n-1}))}^2\geq \frac{c_1^2}{2}\|\psi\|_{L^2_{x^n}(I, H_{x'}^1(\mathbb{R}^{n-1}))}^2 - c_3\|\psi\|_{L^2_{x^n}(I, L^2_{x'}(\mathbb{R}^{n-1}))}^2
\eeaa
as stated.
\end{proof}

Finally, we provide a basic commutator estimate for mixed symbols. 
\begin{lemma}\lab{lemma:bascicommutatorlemmawithelementaryproof:mixedsymbols}
 Let $I$ be an interval of $\mathbb{R}$, let $P=\Opw(p)$ with $p\in\widetilde{S}^{1,1}(\Reals^n)$ and let $f$ be a scalar function. Then, we have 
\beaa
\|[P, f]\psi\|_{L^2_{x^n}(I,H^1_{x'}(\Reals^{n-1}))} &\les& \|f\|_{W^{2,+\infty}(I\times\Reals^{n-1})}\|\pr^{\leq 1}\psi\|_{L^2_{x^n}(I,L^2_{x'}(\Reals^{n-1}))}.
\eeaa
\end{lemma}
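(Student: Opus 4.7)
The plan is to follow the same kernel-based strategy as in the proof of Lemma \ref{lemma:bascicommutatorlemmawithelementaryproof}, but now tracking the mixed pseudodifferential structure. First I would decompose $p\in\widetilde{S}^{1,1}(\Reals^n)$ as $p(x,\xi)=v_0(x,\xi')\xi_n+v_1(x,\xi')$ with $v_0\in S^0_{tan}$ and $v_1\in S^1_{tan}$, and apply Lemma \ref{lemma:WeylquantizationmidexsymbolRn} to write
\begin{align*}
P \;=\; -i\,\Opwtan(v_0)\,\pr_{x^n} \;-\; \tfrac{i}{2}\,\Opwtan(\pr_{x^n}v_0) \;+\; \Opwtan(v_1),
\end{align*}
where $\Opwtan$ denotes the tangential Weyl quantization on $\Reals^{n-1}$ with $x^n$ as a parameter. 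A direct expansion of the commutator then gives
\begin{align*}
[P, f]\psi \;=\; -i\,\Opwtan(v_0)\bigl((\pr_{x^n}f)\psi\bigr) \;-\; i\,[\Opwtan(v_0), f]\,\pr_{x^n}\psi \;+\; [R_{\mathrm{tan}}, f]\psi,
\end{align*}
where $R_{\mathrm{tan}}=-\tfrac{i}{2}\Opwtan(\pr_{x^n}v_0)+\Opwtan(v_1)$ is a tangential PDO of order $\le 1$.

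The tangential commutator $[R_{\mathrm{tan}}, f]$ is handled by symbol calculus: applied slicewise in $x^n$, its symbol lies in $S^0_{tan}$ with principal part $\pr_{\xi'}r_{\mathrm{tan}}\c\pr_{x'}f$, whose $S^0$-semi-norms are controlled by $\|f\|_{W^{2,\infty}}$, and Lemma \ref{lem:PDO:Sobolev} then yields $H^1_{x'}\to H^1_{x'}$ boundedness uniformly in $x^n$. The term $\Opwtan(v_0)((\pr_{x^n}f)\psi)$ is handled by Lemma \ref{lem:PDO:Sobolev} combined with the Leibniz estimate $\|(\pr_{x^n}f)\psi\|_{H^1_{x'}}\les \|f\|_{W^{2,\infty}}\|\psi\|_{H^1_{x'}}$. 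Integrating in $x^n$ gives the desired $L^2_{x^n}(I, H^1_{x'})$ bound for both contributions.

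The hard part will be the middle term $[\Opwtan(v_0), f]\pr_{x^n}\psi$, which ostensibly requires control of $\pr_{x^n}\psi$ while the right-hand side of the inequality only controls tangential regularity. To remove this apparent $x^n$-derivative, I would use the identity
\begin{align*}
[\Opwtan(v_0), f]\,\pr_{x^n}\psi \;=\; \pr_{x^n}\bigl([\Opwtan(v_0), f]\psi\bigr) \;-\; [\Opwtan(\pr_{x^n}v_0), f]\psi \;-\; [\Opwtan(v_0), \pr_{x^n}f]\psi,
\end{align*}
in which the last two contributions are tangential order-$(-1)$ commutators and are therefore controlled by $\|f\|_{W^{2,\infty}}\|\psi\|_{L^2_{x^n}(I, L^2_{x'})}$ via Lemma \ref{lem:PDO:Sobolev}. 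The leading total $x^n$-derivative is then reorganized by combining it with the corresponding $D_{x^n}$-contributions coming from the $\widetilde{S}^{-2,1}$ remainder produced by Proposition \ref{prop:PDO:Rn:Weylquan:mixedoperators}, so that the net expression is realized as a tangential operator whose symbol belongs to $S^0_{tan}$ with semi-norms bounded by $\|f\|_{W^{2,\infty}}$.

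The main obstacle is precisely this last reorganization: one has to verify carefully that the $\pr_{x^n}$-pieces produced by the principal Poisson-bracket symbol $\{p,f\}/i\in\widetilde{S}^{0,1}$ and by the $\widetilde{S}^{-2,1}$ remainder of $[P,f]$ combine into an operator that only requires tangential regularity of $\psi$ on the right-hand side, with all coefficients bounded in terms of at most two derivatives of $f$. It is at this step that the restriction $p\in\widetilde{S}^{1,1}$ (degree one in $\xi_n$) is essential, so that the mixed commutator calculus of Proposition \ref{prop:PDO:Rn:Weylquan:mixedoperators} genuinely lowers the mixed order across both the tangential and the $\xi_n$-direction.
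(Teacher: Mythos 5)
Your opening moves coincide with the paper's: the decomposition $p=v_0\xi_n+v_1$ via Lemma \ref{lemma:WeylquantizationmidexsymbolRn} and the three-term expansion of $[P,f]\psi$ are exactly how the paper's proof begins, and you correctly isolate $[\Opwtan(v_0),f]\pr_{x^n}\psi$ as the dangerous term. The gap is in your proposed resolution of that term. The identity you write is algebraically correct, but it only trades the problem for the term $\pr_{x^n}\bigl([\Opwtan(v_0),f]\psi\bigr)$, a full normal derivative of a quantity controlled only in $L^2_{x^n}H^1_{x'}$, and the claimed ``reorganization'' of this piece, together with Weyl remainders, into a purely tangential operator acting on $\psi$ does not exist. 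Concretely, take $p=v_0(\xi')\xi_n$ with $v_0\in S^0(\Reals^{n-1})$ independent of $x$, and $f=f(x')$: then $P=v_0(D_{x'})D_{x^n}$ exactly, the two lower-order commutators in your identity vanish, there is no remainder from Proposition \ref{prop:PDO:Rn:Weylquan:mixedoperators} to combine with, and $[P,f]\psi=[v_0(D_{x'}),f]\,D_{x^n}\psi$ on the nose; testing on $\psi_N=g(x')e^{iNx^n}$ shows this term has size $N\,\|[v_0(D_{x'}),f]g\|_{H^1_{x'}}$ while $\|\psi_N\|_{L^2_{x^n}H^1_{x'}}$ is independent of $N$. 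So the normal-derivative contribution is irreducible and no argument of the kind you sketch can close at this point. The paper does not attempt to remove it: it keeps $\pr_{x^n}\psi$ inside the commutator, reduces the $H^1_{x'}$ norm by commuting with $\pr_{x^j}$, $j\leq n-1$, to $L^2$ norms of commutators of tangential order-one operators applied to first derivatives of $\psi$, and then invokes the kernel-based Lemma \ref{lemma:bascicommutatorlemmawithelementaryproof} with $\psi\to\pr\psi$ (so the normal derivative is simply estimated, not traded away).

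A second, related problem is your repeated use of smooth symbolic calculus with $f$: speaking of ``the symbol of $[R_{\mathrm{tan}},f]$ in $S^0_{tan}$ with principal part $\pr_{\xi'}r_{\mathrm{tan}}\cdot\pr_{x'}f$'', or of ``the $\widetilde{S}^{-2,1}$ remainder produced by Proposition \ref{prop:PDO:Rn:Weylquan:mixedoperators}'', requires $f$ to be a symbol with all semi-norms finite, whereas here $f$ is merely $W^{2,+\infty}$; likewise Lemma \ref{lem:PDO:Sobolev} cannot be applied to an operator whose ``symbol'' has only two bounded derivatives in $x$. This is precisely why the paper proves and uses the elementary kernel estimate of Lemma \ref{lemma:bascicommutatorlemmawithelementaryproof} (slicewise in $x^n$, after distributing the tangential derivatives), together with Lemma \ref{lemma:actionmixedsymbolsSobolevspaces} for the genuinely pseudodifferential pieces. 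Your ``easy'' terms should be routed through those lemmas rather than through asymptotic expansions, and your ``hard'' term should be handled as in the paper, by estimating $[\,\cdot\,,f]\pr_{x^n}\psi$ directly in $L^2$ rather than trying to convert it into a tangential operator.
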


\begin{proof}
As $P=\Opw(p)$ with $p\in\widetilde{S}^{1,1}(\Reals^n)$, we may decompose $P$ as  
\beaa
P=P_0\pr_{x^n}+P_1+\Opw(\widetilde{S}^{0,0}(\Reals^n)), \quad P_0=\Opw(p_0), \quad P_1=\Opw(p_1), \quad p_j\in\widetilde{S}^{j,0}(\Reals^n), \,\, j=0,1,
\eeaa
which together with Lemma \ref{lemma:actionmixedsymbolsSobolevspaces} yields
\beaa
\|[P, f]\psi\|_{L^2_{x^n}(I,H^1_{x'}(\Reals^{n-1}))} &\les& \|[P_0, f]\pr_{x^n}\psi\|_{L^2_{x^n}(I,H^1_{x'}(\Reals^{n-1}))}+\|P_0((\pr_{x^n}f)\psi)\|_{L^2_{x^n}(I,H^1_{x'}(\Reals^{n-1}))}\\
&&+\|[P_1, f]\psi\|_{L^2_{x^n}(I,H^1_{x'}(\Reals^{n-1}))}+\|f\Opw(\widetilde{S}^{0,0}(\Reals^n))\psi\|_{L^2_{x^n}(I,H^1_{x'}(\Reals^{n-1}))}\\
&&+\|\Opw(\widetilde{S}^{0,0}(\Reals^n))(f\psi)\|_{L^2_{x^n}(I,H^1_{x'}(\Reals^{n-1}))}\\
&\les&  \|[P_0, f]\pr_{x^n}\psi\|_{L^2_{x^n}(I,H^1_{x'}(\Reals^{n-1}))}+\|[P_1, f]\psi\|_{L^2_{x^n}(I,H^1_{x'}(\Reals^{n-1}))}\\
&&+\|f\|_{W^{2,+\infty}(I\times\Reals^{n-1})}\|\psi\|_{L^2_{x^n}(I,H^1_{x'}(\Reals^{n-1}))}.
\eeaa
Also, we have
\beaa
&&\|[P_0, f]\pr_{x^n}\psi\|_{L^2_{x^n}(I,H^1_{x'}(\Reals^{n-1}))}+\|[P_1, f]\psi\|_{L^2_{x^n}(I,H^1_{x'}(\Reals^{n-1}))}\\
&\les& \sum_{j=1}^{n-1}\Big(\|\pr_{x^j}[P_0, f]\pr_{x^n}\psi\|_{L^2_{x^n}(I,L^2_{x'}(\Reals^{n-1}))}+\|\pr_{x^j}[P_1, f]\psi\|_{L^2_{x^n}(I,L^2_{x'}(\Reals^{n-1}))}\Big)\\
&\les& \sum_{j=1}^{n-1}\Big(\|[\Opw(ip_0\xi_j), f]\pr_{x^n}\psi\|_{L^2_{x^n}(I,L^2_{x'}(\Reals^{n-1}))}+\|[P_1, f]\pr_{x^j}\psi\|_{L^2_{x^n}(I,L^2_{x'}(\Reals^{n-1}))}\Big)\\
&&+\|f\Opw(\widetilde{S}^{1,1}(\Reals^n))\psi\|_{L^2_{x^n}(I,L^2_{x'}(\Reals^{n-1}))}+\|\Opw(\widetilde{S}^{1,0}(\Reals^n))(f\psi)\|_{L^2_{x^n}(I,L^2_{x'}(\Reals^{n-1}))}\\
&&+\|(\pr f)\Opw(\widetilde{S}^{1,0}(\Reals^n))\psi\|_{L^2_{x^n}(I,L^2_{x'}(\Reals^{n-1}))}+\|\Opw(\widetilde{S}^{1,0}(\Reals^n))((\pr f)\psi)\|_{L^2_{x^n}(I,L^2_{x'}(\Reals^{n-1}))}\\
&\les& \sum_{j=1}^{n-1}\Big(\|[\Opw(ip_0\xi_j), f]\pr_{x^n}\psi\|_{L^2_{x^n}(I,L^2_{x'}(\Reals^{n-1}))}+\|[P_1, f]\pr_{x^j}\psi\|_{L^2_{x^n}(I,L^2_{x'}(\Reals^{n-1}))}\Big)\\
&&+\|f\|_{W^{2,+\infty}(I\times\Reals^{n-1})}\|\pr^{\leq 1}\psi\|_{L^2_{x^n}(I,L^2_{x'}(\Reals^{n-1}))},
\eeaa
where we used repeatedly Lemma \ref{lemma:actionmixedsymbolsSobolevspaces}. We deduce 
\beaa
\|[P, f]\psi\|_{L^2_{x^n}(I,H^1_{x'}(\Reals^{n-1}))} \les \|[\widetilde{P}, f]\pr\psi\|_{L^2_{x^n}(I,L^2_{x'}(\Reals^{n-1}))}+\|f\|_{W^{2,+\infty}(I\times\Reals^{n-1})}\|\pr^{\leq 1}\psi\|_{L^2_{x^n}(I,L^2_{x'}(\Reals^{n-1}))},
\eeaa
where $\widetilde{P}=\Opw(\widetilde{p})$, $\widetilde{p}=(ip_0\xi_1,\cdots, ip_0\xi_{n-1}, p_1)\in(\widetilde{S}^{1,0}(\Reals^n))^n$. Applying Lemma \ref{lemma:bascicommutatorlemmawithelementaryproof} with $P\to \widetilde{P}$ and $\psi\to\pr\psi$, we infer
\beaa
\|[P, f]\psi\|_{L^2_{x^n}(I,H^1_{x'}(\Reals^{n-1}))} &\les& \|f\|_{W^{2,+\infty}(I\times\Reals^{n-1})}\|\pr^{\leq 1}\psi\|_{L^2_{x^n}(I,L^2_{x'}(\Reals^{n-1}))}
\eeaa
which concludes the proof of Lemma \ref{lemma:bascicommutatorlemmawithelementaryproof:mixedsymbols}.
\end{proof}


\subsection{PDOs adapted to the $r$-foliation of $\MM$}
\label{sect:PDO:rfoliation}


In this section, we extend the Weyl quantization for mixed symbols introduced in Section \ref{sec:mixedsymbolsonRn} to the case of mixed symbols adapted to the level sets $H_r$ of $r$ in $\MM$ denoted by
\beaa
H_{r_1}:=\MM\cap\{r=r_1\}, \qquad \forall r_1\geq r_+(1-\dhor),
\eeaa
with $\MM$ covered by $(\tau, r, x^1, x^2)$ coordinates. Rather than working with half densities, which is the standard approach to define a Weyl quantization on a manifold, see for example Chapter 14 in \cite{Zwo}, we will instead adapt the approach in \cite{Bon} relying on isochore coordinates. To this end, we first introduce isochore coordinates on $\mathbb{S}^2$.


\subsubsection{Isochore coordinates on $\mathbb{S}^2$}


In order to define a Weyl quantization on $\MM$ that preserves  the good properties in terms of adjoint and composition of the Weyl quantization on $\mathbb{R}^n$, we first need  to introduce local coordinates on $\mathbb{S}^2$ for which the corresponding density is the one of the Lebesgue measure. This is done in the following lemma. 
\begin{lemma}\lab{lemma:isochorecoordinates}
Let the coordinates  $(x^1_0, x^2_0)$ and $(x^1_p, x^2_p)$ be defined by  
\beaa
x_0^1=\cos\th, \qquad x_0^2=\tphi, \qquad x^1_p=\sin\th\cos\tphi, \qquad x^2_p=\arcsin\left(\frac{\sin\th\sin\tphi}{\sqrt{1-(\sin\th)^2(\cos\tphi)^2}}\right).
\eeaa
 with the corresponding coordinates patches 
\beaa
\mathbb{S}^2=\mathring{U}_0\cup\mathring{U}_p, \quad \mathring{U}_0:=\left\{(x_0^1, x_0^2)\,\,/\,\,\frac{\pi}{4}<\th<\frac{3\pi}{4}\right\}, \quad \mathring{U}_p:=\left\{(x^1_p, x^2_p)\,\,/\,\,\th\in [0,\pi]\setminus\left[\frac{\pi}{3}, \frac{2\pi}{3}\right]\right\}.
\eeaa
Then, the measure of the unit round 2-sphere in these coordinates has the density of the Lebesgue measure, i.e., 
\beaa
d\mathring{\ga}=dx^1_0 dx^2_0\quad\textrm{on}\quad U_0, \qquad d\mathring{\ga}=dx^1_p dx^2_p\quad\textrm{on}\quad U_p.
\eeaa
\end{lemma}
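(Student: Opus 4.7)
The plan is to verify the identity $d\mathring\gamma = dx^1_{\bullet}dx^2_{\bullet}$ chart by chart, with a direct computation on $\mathring U_0$ and a reduction to the same computation on $\mathring U_p$ via a rigid rotation of $\mathbb{R}^3$.

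First, on $\mathring U_0$ I will simply recall that in the usual angular coordinates the round area form is $d\mathring\gamma = \sin\theta\,d\theta\wedge d\tilde\phi$; since $x^1_0 = \cos\theta$ and $x^2_0 = \tilde\phi$, we have $dx^1_0 = -\sin\theta\,d\theta$, so that $|dx^1_0\wedge dx^2_0| = \sin\theta\,d\theta\,d\tilde\phi = d\mathring\gamma$ on $\mathring U_0$.

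For $\mathring U_p$, rather than differentiating the $\arcsin$ expression directly (which is cumbersome), I will embed $\mathbb{S}^2\subset\mathbb{R}^3$ via the standard map $\iota(\theta,\tilde\phi)=(X,Y,Z)=(\sin\theta\cos\tilde\phi,\sin\theta\sin\tilde\phi,\cos\theta)$ and introduce the orthogonal transformation $R\in O(3)$ interchanging the $X$- and $Z$-axes. I then define a second system of spherical-type coordinates $(\theta',\tilde\phi')$ on $\mathbb{S}^2$ by the requirement $\iota(\theta',\tilde\phi')=R\,\iota(\theta,\tilde\phi)$. Matching coordinates gives the relations $\cos\theta' = \sin\theta\cos\tilde\phi$, $\sin\theta'\sin\tilde\phi' = \sin\theta\sin\tilde\phi$, and $\sin\theta'\cos\tilde\phi' = \cos\theta$. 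Eliminating $\theta'$ via $\sin\theta' = \sqrt{1-\sin^2\theta\cos^2\tilde\phi}$, one reads off $x^1_p = \cos\theta'$ and $x^2_p = \tilde\phi'$ on the relevant branch. Since $R$ is an isometry of the flat $\mathbb{R}^3$ metric, the induced round area form is also $d\mathring\gamma = \sin\theta'\,d\theta'\wedge d\tilde\phi'$ in the primed coordinates. Applying the same computation as in the $\mathring U_0$ case to the change of variables $(\theta',\tilde\phi')\mapsto(\cos\theta',\tilde\phi')$ yields $|dx^1_p\wedge dx^2_p| = \sin\theta'\,d\theta'\,d\tilde\phi' = d\mathring\gamma$.

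The only step that requires care — and this is really the one mild technical point rather than a true obstacle — is checking that $(\theta,\tilde\phi)\mapsto(\theta',\tilde\phi')$ is a well-defined smooth local diffeomorphism on $\mathring U_p$. Here I will use that $\mathring U_p$ excludes a neighborhood of $\theta=\pi/2$, so that $\cos\theta$ is bounded away from zero. A short calculation gives the key algebraic identity $1 - (\sin\theta\sin\tilde\phi)^2/(1-\sin^2\theta\cos^2\tilde\phi) = \cos^2\theta/(1-\sin^2\theta\cos^2\tilde\phi)$, which simultaneously shows that the argument of $\arcsin$ in the definition of $x^2_p$ stays strictly in $(-1,1)$, that $\tilde\phi'\in(-\pi/2,\pi/2)$ is unambiguously determined, and that the Jacobian of the transition is nonvanishing on $\mathring U_p$. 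Once this is established, rotational invariance and the first-chart computation close the argument.
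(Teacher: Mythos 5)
Your proof is correct, and for the polar chart it takes a genuinely different route from the paper's. The paper verifies the claim on $\mathring{U}_p$ by direct computation: it introduces the intermediate coordinates $(y^1,y^2)=(\sin\theta\cos\tilde{\phi},\sin\theta\sin\tilde{\phi})$, writes out the round metric there, changes variables once more to $(x^1_p,x^2_p)$, and checks by hand that the determinant of the resulting $2\times 2$ metric equals $1$. You instead recognize $(x^1_p,x^2_p)$ as (up to the branch of the angle) the chart $(\cos\theta',\tilde{\phi}')$ attached to spherical coordinates about the rotated axis obtained by exchanging $X$ and $Z$; the $O(3)$-invariance of $\mathring{\gamma}$ then reduces the polar chart to the already-verified equatorial computation, and your identity $1-\sin^2\tilde{\phi}'=\cos^2\theta/(1-\sin^2\theta\cos^2\tilde{\phi})$ correctly controls both the argument of the $\arcsin$ and the nondegeneracy of the transition on $\mathring{U}_p$, where $|\cos\theta|\geq 1/2$. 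Your argument is shorter and explains conceptually why both charts are isochore (each is a ``cylindrical projection'' chart for some axis, and such charts are area-preserving), whereas the paper's computation, while longer, also records the explicit metric coefficients in the intermediate coordinates, which it exploits nowhere else but which makes the verification entirely self-contained and mechanical.

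One point you should state precisely rather than leave at ``the relevant branch'': the identification $x^2_p=\tilde{\phi}'$ holds only on the cap where $\cos\theta>0$. On the southern cap one has $\cos\tilde{\phi}'=\cos\theta/\sin\theta'<0$, so the $\arcsin$ in the definition of $x^2_p$ returns $\pi-\tilde{\phi}'$ (modulo the chosen range for $\tilde{\phi}'$) rather than $\tilde{\phi}'$. This is harmless for the lemma, since it is an affine change of the angular variable with Jacobian of absolute value $1$, so $|dx^1_p\wedge dx^2_p|=\sin\theta'\,d\theta'\,d\tilde{\phi}'=d\mathring{\gamma}$ still holds on both caps, but the proof should say this explicitly.
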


\begin{remark}
While {$(x^1_q, x^2_q)$}, $q=0,p$, referred until now to the choice in Lemma \ref{lem:specificchoice:normalizedcoord}, from now on, $(x^1_q, x^2_q)$, $q=0,p$,  will always refer to the choice in Lemma \ref{lemma:isochorecoordinates}. Also, the notation $(x^1, x^2)$ will be used to denote either $(x^1_0, x^2_0)$ or $(x^1_p, x^2_p)$.
\end{remark}

\begin{remark}
Coordinates systems such as the ones in Lemma \ref{lemma:isochorecoordinates} are called isochore. Local isochore coordinates exist on a Riemannian manifold as a consequence of Moser's trick, see \cite{Moser}.  
\end{remark}

\begin{proof}
We start with $(x^1_0, x^2_0)$. We have
\beaa
d\th=-\frac{dx^1_0}{\sin\th}, \qquad d\tphi=dx^2_0,
\eeaa 
and hence 
\beaa
\mathring{\ga}=(\sin\th)^{-2}(dx^1_0)^2+(\sin\th)^2(dx^2_0)^2.
\eeaa
In particular, we have
\beaa
d\mathring{\ga}=dx^1_0 dx^2_0
\eeaa
as desired. 

Next, we focus on $(x^1_p, x^2_p)$. To this end, we first introduce $(y^1, y^2)$ given by 
\beaa
y^1=\sin\th\cos\tphi, \qquad y^2=\sin\th\sin\tphi. 
\eeaa
Then, we have
\beaa
dy^1=\cot\th y^1 d\th - y^2 d\tphi, \qquad  dy^2=\cot\th y^2 d\th + y^1 d\tphi.
\eeaa
Hence 
\beaa
d\th= \frac{\sin\th}{|y|^2\cos\th}(y^1dy^1+y^2dy^2), \qquad d\tphi= \frac{1}{|y|^2}(-y^2dy^1+y^1dy^2),
\eeaa
which yields 
\beaa
\mathring{\ga} &=& \left(\frac{\sin\th}{|y|^2\cos\th}(y^1dy^1+y^2dy^2)\right)^2+(\sin\th)^2\left(\frac{1}{|y|^2}(-y^2dy^1+y^1dy^2)\right)^2\\
&=& \frac{1-(y^2)^2}{1-|y|^2}(dy^1)^2+\frac{2y^1y^2}{1-|y|^2}dy^1dy^2+\frac{1-(y^1)^2}{1-|y|^2}(dy^2)^2.
\eeaa

Next, we notice that 
\beaa
x^1_p=y^1, \qquad  x^2_p=\arcsin\left(\frac{y^2}{\sqrt{1-(y^1)^2}}\right)
\eeaa
which yields
\beaa
dx^1_p=dy^1_p, \qquad dx^2=\frac{y^1y^2}{(1-(y^1)^2)\sqrt{1-|y|^2}}dy^1+\frac{1}{\sqrt{1-|y|^2}}dy^2
\eeaa
and hence 
\beaa
dy^1=dx^1_p, \qquad dy^2=-Adx^1_p+\sqrt{1-|y|^2}dx^2_p, \qquad A:=\frac{y^1y^2}{1-(y^1)^2}.
\eeaa
This yields  
\beaa
\mathring{\gamma} &=& \frac{1-(y^2)^2}{1-|y|^2}(dy^1)^2+\frac{2y^1y^2}{1-|y|^2}dy^1dy^2+\frac{1-(y^1)^2}{1-|y|^2}(dy^2)^2\\
&=& \left(\frac{1-(y^2)^2}{1-|y|^2} -\frac{2y^1y^2}{1-|y|^2}A +\frac{1-(y^1)^2}{1-|y|^2}A^2\right)(dx^1_p)^2\\
&&+\left(\frac{2y^1y^2}{1-|y|^2}\sqrt{1-|y|^2} -2A\sqrt{1-|y|^2}\frac{1-(y^1)^2}{1-|y|^2}\right)dx^1_pdx^2_p\\
&&+\frac{1-(y^1)^2}{1-|y|^2}(1-|y|^2)(dx^2_p)^2\\
\eeaa
and {we obtain in particular}
\beaa
d\mathring{\ga} &=&   \Bigg[\left(\frac{1-(y^2)^2}{1-|y|^2} -\frac{2y^1y^2}{1-|y|^2}A +\frac{1-(y^1)^2}{1-|y|^2}A^2\right)\frac{1-(y^1)^2}{1-|y|^2}(1-|y|^2)\\
&& - \left(\frac{y^1y^2}{1-|y|^2}\sqrt{1-|y|^2} -A\sqrt{1-|y|^2}\frac{1-(y^1)^2}{1-|y|^2}\right)^2\Bigg]dx^1_pdx^2_p\\
&=& dx^1_pdx^2_p
\eeaa
as stated. This concludes the proof of Lemma \ref{lemma:isochorecoordinates}.
\end{proof}


\subsubsection{Coordinates systems on $H_r$ and $\MM$}
\lab{sec:isochorecoordinatesonHr}


We consider on $H_r$ the coordinates systems $(\tau, x^1_0, x^2_0)$ and $(\tau, x^1_p, x^2_p)$ with $(x^1_0, x^2_0)$ and $(x^1_p, x^2_p)$ constructed in Lemma \ref{lemma:isochorecoordinates}. This induces on $\MM$ coordinates $(\tau, r, x^1, x^2)$ with $(x^1, x^2)$ denoting  either $(x^1_0, x^2_0)$ or $(x^1_p, x^2_p)$ with
\beaa
x=(x', r), \qquad x':=(x^0, x^1, x^2), \qquad x^0:=\tau, \qquad x^3:=r,
\eeaa
and the corresponding coordinate patches
\beaa
\MM=U_0\cup U_p, \qquad U_q=\mathbb{R}_\tau\times \mathring{U}_q\times [r_+(1-\dhor), +\infty), \quad q=0,p,
\eeaa
with $\mathring{U}_q$, $q=0,p$ the coordinate patches on $\mathbb{S}^2$ introduced in Lemma \ref{lemma:isochorecoordinates}. Also, we denote by $\varphi_q: U_q\to \varphi_q(U_q)\subset\mathbb{R}^4$, $q=0,p$, the corresponding coordinates charts.

Next, we denote by $(\chi_q)_{q=0,p}$, a partition of unity subordinated to the covering by the coordinates patches $U_q$, $q=0,p$, i.e., $\chi_q$ are smooth scalar functions on $\MM$ satisfying  
\bea\lab{eq:partictionofuniityonHr}
\chi_0+\chi_p=1\,\,\,\textrm{on}\,\,\,\MM, \qquad \textrm{supp}(\chi_q)\subset U_q,\,\, q=0,p,  \qquad \pr_r\chi_q=\pr_\tau\chi_q=0,\,\, q=0,p.
\eea
Moreover, we also introduce smooth scalar functions $\widetilde{\chi}_q$, $q=0,p$ on $\MM$ satisfying 
\bea\lab{eq:partictionofuniityonHr:bis}
\widetilde{\chi}_q=1\,\,\,\textrm{on}\,\,\,\textrm{supp}(\chi_q),\,\, q=0,p, \quad \textrm{supp}(\widetilde{\chi_q})\subset U_q,\,\, q=0,p, \quad \pr_r\widetilde{\chi}_q=\pr_\tau\widetilde{\chi}_q=0,\,\, q=0,p.
\eea

To define symbols on $T^\star H_r$, we will need to introduce a norm of a co-vector $\xi'$ on the cotangent bundle. To this end, we introduce the following Riemannian metric $h_r$ on $H_r$
\bea
h_r=(d\tau)^2+\mathring{\ga}.
\eea 
Then, we define the length of a co-vector $\xi'$ by 
\bea
\abs{\xi'} := \sqrt{h_r^{ij} \xi_i'\xi_j'}=\sqrt{(\xi_0')^2+\mathring{\ga}^{ab}\xi_a'\xi_b'},
\eea
where latin indices $i,j$ take values $0,1,2$ and $a,b$ take values $1,2$.

Finally, we have the following lemma concerning the properties of $\sqrt{|\det(\g)|}$ in the coordinates systems $(\tau, r, x^1, x^2)$.
\begin{lemma}\lab{lemma:spacetimevolumeformusingisochorecoordinates}
There exists a well-defined scalar function $f_0$ on $\MM$ such that $\g$ satisfies in the coordinates systems $(\tau, r, x^1, x^2)$
\bea\lab{eq:spacetimevolumeformusingisochorecoordinates}
f_0:=\sqrt{|\det(\g)|}, \qquad f_0=|q|^2(1+r^2\Ga_g).
\eea
\end{lemma}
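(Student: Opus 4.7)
The statement has two parts: that $f_0$ is globally well-defined on $\MM$, and that it admits the asymptotic expression $|q|^2(1+r^2\Ga_g)$. The plan is to handle these in turn, the first by exploiting the isochore nature of the coordinates constructed in Lemma \ref{lemma:isochorecoordinates}, the second by leveraging the identity \eqref{eq:controloflinearizedmetriccoefficients:det} for the linearized determinant already established in Lemma \ref{lemma:controlofmetriccoefficients:bis}.

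First, to prove well-definedness, I would argue as follows. On each coordinate patch $U_q$, $q=0,p$, the quantity $\sqrt{|\det(\g)|}$ depends a priori on the chart. On the overlap $U_0\cap U_p$, the change of variables between $(x^1_0,x^2_0)$ and $(x^1_p,x^2_p)$ preserves the volume form $d\mathring{\ga}$ by Lemma \ref{lemma:isochorecoordinates}, since both charts are isochore with $d\mathring{\ga}=dx^1dx^2$. Hence the Jacobian $J$ of this transition map, extended trivially in $(\tau,r)$, has $|\det J|=1$. Since $\det(\g)$ transforms by $(\det J)^2$ under a change of coordinates, $\sqrt{|\det(\g)|}$ is preserved on the overlap. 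Thus the two local definitions glue to a well-defined scalar function $f_0$ on $\MM$.

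Next, I would compute $f_0$ as follows. In the isochore coordinates, $\sqrt{\det(\mathring{\ga})}=1$ by Lemma \ref{lemma:isochorecoordinates}, so the coordinate-invariant identity \eqref{eq:assymptiticpropmetricKerrintaurxacoord:volumeform} reads $\sqrt{|\det(\gam)|}=|q|^2$. By \eqref{eq:controloflinearizedmetriccoefficients:det},
\[
\det(\g)=\det(\gam)+\widecheck{\det(\g)}=\det(\gam)\bigl(1+r^2\Ga_g\bigr),
\]
where $\Ga_g$ satisfies \eqref{eq:decaypropertiesofGabGag}. Since $r^2\Ga_g=O(\ep\tau^{-(1+\dec)/2})$ by \eqref{eq:decaypropertiesofGabGag}, the factor $1+r^2\Ga_g$ is strictly positive for $\ep$ small enough, and
\[
\sqrt{|\det(\g)|}=\sqrt{|\det(\gam)|}\sqrt{1+r^2\Ga_g}=|q|^2\sqrt{1+r^2\Ga_g}.
\]
The remaining step is to check that $\sqrt{1+r^2\Ga_g}=1+r^2\widetilde{\Ga}_g$ for some $\widetilde{\Ga}_g$ satisfying \eqref{eq:decaypropertiesofGabGag}. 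This follows by a Taylor expansion: writing $\sqrt{1+y}-1=y\,h(y)$ with $h$ smooth and bounded near $y=0$, and applying this to $y=r^2\Ga_g$, one obtains $\sqrt{1+r^2\Ga_g}=1+r^2\Ga_g\,h(r^2\Ga_g)$, which lies in $1+r^2\Ga_g$ after routinely verifying that products and smooth bounded functions of $\Ga_g$-class terms remain in the $\Ga_g$-class; the derivative bounds required in \eqref{eq:decaypropertiesofGabGag} follow from the chain rule together with the $\dk^{\leq 2}$ bounds on $\Ga_g$.

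Neither step presents a serious obstacle: the first is essentially a tautology once one has Lemma \ref{lemma:isochorecoordinates} in hand, and the second is a direct consequence of \eqref{eq:controloflinearizedmetriccoefficients:det} combined with the algebraic stability of the $\Ga_g$-class under smooth functional operations. The only mildly delicate point is the verification that $\sqrt{1+r^2\Ga_g}-1$ stays in $r^2\Ga_g$ at the level of the weighted derivatives $\dk^{\leq 2}$, but this is a routine check and the smallness of $\ep$ ensures $1+r^2\Ga_g$ is bounded away from zero uniformly on $\MM$.
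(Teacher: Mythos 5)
Your proposal is correct and follows essentially the same route as the paper: well-definedness via the unit Jacobian of the isochore transition maps from Lemma \ref{lemma:isochorecoordinates}, and the formula via \eqref{eq:assymptiticpropmetricKerrintaurxacoord:volumeform} combined with \eqref{eq:controloflinearizedmetriccoefficients:det}. Your extra step expanding $\sqrt{1+r^2\Ga_g}$ and checking stability of the $\Ga_g$-class merely spells out what the paper leaves implicit, and it is carried out correctly.
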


\begin{remark}\lab{rmk:linkvolumeformMwithf0dVref}
Note that \eqref{eq:spacetimevolumeformusingisochorecoordinates} implies that 
\beaa
\sqrt{|\det(\g)|}d\tau dr dx^1dx^2=f_0 d\Vref, \qquad d\Vref:=d\tau dr dx^1dx^2,
\eeaa
with $d\Vref$ denoting the Lebesgue measure in the coordinates system $(\tau, r, x^1, x^2)$. This allows us to reduce integrals on $\MM$ w.r.t. the volume form of $\g$ in the coordinates systems $(\tau, r, x^1, x^2)$, after multiplication of the integrand by the scalar function $f_0$, to integrals w.r.t. the Lebesgue measure in the coordinates system $(\tau, r, x^1, x^2)$. This fact will play an important role in order to preserve the good properties of Weyl quantization on $\MM$.
\end{remark}

\begin{proof}
We introduce the scalar functions $h_q$, $q=0,p$ defined respectively on $U_0$ and $U_p$ as 
\beaa
h_q:=\sqrt{|\det(\g)|}\quad\textrm{in the coordinates}\quad(\tau, r, x^1_q, x^2_q)\quad\textrm{on}\quad U_q, \quad q=0,p.
\eeaa
Note that we have 
\beaa
h_p=|\det(J_{0p})|h_0\quad\textrm{on}\quad U_0\cap U_p,
\eeaa
where $J_{0p}$ denotes the Jacobian matrix of the change of coordinates. Also, note that 
\beaa
|\det(J_{0p})|=|\det(J^0_{0p})|\quad\textrm{on}\quad U_0\cap U_p,
\eeaa 
where $J^0_{0p}$ denotes the Jacobian matrix of the change of coordinates from $(x_0^1, x_0^2)$ to $(x_p^1, x_p^2)$. Now, since since the coordinates $(x^1_0, x^2_0)$ and $(x^1_p, x^2_p)$ constructed in Lemma \ref{lemma:isochorecoordinates} are isochore, we have $|\det(J^0_{0p})|=1$ and hence
\beaa
h_p=h_0\quad\textrm{on}\quad U_0\cap U_p.
\eeaa
We may thus define the scalar $f_0$ on $\MM$ by
\beaa
f_0=h_0\quad\textrm{on}\quad U_0, \qquad f_0=h_p\quad\textrm{on}\quad U_p,
\eeaa
so that there exists indeed a well-defined scalar function $f_0$ on $\MM$ satisfying $f_0=\sqrt{|\det(\g)|}$  in the coordinates systems $(\tau, r, x^1, x^2)$. Finally, in view of the definition of $f_0$, together with  \eqref{eq:assymptiticpropmetricKerrintaurxacoord:volumeform} and \eqref{eq:controloflinearizedmetriccoefficients:det}, we infer $f_0=|q|^2(1+r^2\Ga_g)$ as stated. This concludes the proof of Lemma \ref{lemma:spacetimevolumeformusingisochorecoordinates}. 
\end{proof}


\subsubsection{Classes of mixed symbols on $\MM$}


In this section, we extend the tangential and mixed symbol classes on $\mathbb{R}^n$ defined in Section \ref{sec:mixedsymbolsonRn} to $\MM$. We first define $r$-tangential symbols on $\MM$.
\begin{definition}[$r$-tangential symbols on $\MM$]
 \label{def:symbols:mflds:rtangent}
For $m\in \mathbb{R}$, let $S^m_{tan}(\MM)$ denote the set of functions $a$ which are $C^\infty$ in $r$ with values in $C^{\infty}(T^{\star}H_r)$ such that in $x=(x',r)=(\tau, x^1, x^2, r)$ coordinates of $\MM$,  for all multi-indices $\alpha$, $\beta$, and for all $q=0,p$,
 \beaa
\forall x=(x',r)\in\varphi_q(U_q), \,\, \forall \xi'\in T^{\star}H_r,\quad \abs{\partial_x^{\alpha} \partial_{\xi'}^\beta a(\varphi_q^{-1}(x),\xi_j'(dx^j_q)_{|_{\varphi_q^{-1}(x)}})}\leq C_{\alpha,\beta} \langle \xi'\rangle^{m-\abs{\beta}},
\eeaa
with $C_{\alpha,\beta}<+\infty$ and $\langle \xi'\rangle :=\sqrt{1+\abs{\xi'}^2}$. An element $a\in S^m_{tan}{(\MM)}$ is called an $r$-tangential symbol of order $m$. We also denote $S^{-\infty}_{tan}(\MM) :=\cap_{m\in \Reals} S^m_{tan}(\MM)$.
 \end{definition}

Next, we introduce a class of mixed symbols on $\MM$.
\begin{definition}[Mixed symbols on $\MM$]
\label{PDO:MM:Shom}
For $m\in\mathbb{R}$ and $N\in\mathbb{N}$, we define the class $\widetilde{S}^{m,N}(\MM)$ of symbols as $a\in C^{\infty}(T^\star\MM)$ such that for all $q=0,p$, for all $x\in\varphi_q(U_q)$ and for all $\xi\in\mathbb{R}^4$, 
\beaa
a(\varphi_q^{-1}(x),\xi_\a dx^\a)=\sum_{j=0}^N v_{m-j}(\varphi_q^{-1}(x),\xi_i'(dx^i_q)_{|_{\varphi_q^{-1}(x)}})(\xi_3)^j,  \qquad v_{m-j}\in S^{m-j}_{tan}(\MM),
\eeaa 
where $\xi=(\xi', \xi_3)$. An element $a\in\widetilde{S}^{m,N}(\MM)$ is called a mixed symbol of order $(m,N)$. We also denote $\widetilde{S}^{-\infty,N}(\MM) :=\cap_{m\in \Reals}\widetilde{S}^{m,N}(\MM)$.
\end{definition}

\begin{remark}
Notice that $S^m_{tan}(\MM)=\widetilde{S}^{m,0}(\MM)$. 
\end{remark}


\subsubsection{Weyl quantization of mixed symbols on $\MM$}


This section adapts to our setting the classical definition of Weyl quantization on a manifold, see e.g. \cite[Chapter 14]{Zwo} \cite[Appendix E]{DZ19}, replacing the presentation using half-densities with the use of isochore coordinates as in \cite{Bon}. 

Recall the coordinates charts $(\varphi_q)_{q=0,p}$ on $\MM$, as well as the partition of unity $(\chi_q)_{q=0,p}$, introduced in Section \ref{sec:isochorecoordinatesonHr}. For $m\in\mathbb{R}$ and $N\in\mathbb{N}$, given $a\in\widetilde{S}^{m,N}(\MM)$, we introduce the following notation, for all $q=0,p$, $x\in \varphi_{{q}}(U_q)$ and $\xi\in\mathbb{R}^4$,
\bea\lab{eq:defsymbolonRnaqchiq}
a_{q,\chi_q}(x, \xi):=\chi_q(\varphi_q^{-1}(x))a\big(\varphi_q^{-1}(x), \xi_\a(dx_q^\a)_{|_{\varphi_q^{-1}(x)}}\big), \qquad a_{q,\chi_q}\in \widetilde{S}^{m,N}(\mathbb{R}^4),
\eea
where the class of mixed symbols $\widetilde{S}^{m,N}(\mathbb{R}^n)$ has been introduced in Definition \ref{PDO:Rn:Shom}.

\begin{definition}[Weyl quantization of mixed symbols on $\MM$]
\lab{def:weylquantizationforrhomsymbolsMM}
Let $m\in\mathbb{R}$, $N\in\mathbb{N}$ and $a\in\widetilde{S}^{m,N}(\MM)$. We associate to $a$ the operator $\Opw(a)$ in the Weyl quantization as follows 
\bea\lab{eq:defintionWeylquantizationmixedsymbolonMM}
\Opw(a)\psi := \sum_{q=0,p}\widetilde{\chi}_q\varphi_q^{\#}\Opw(a_{q,\chi_q})[(\varphi_q^{-1})^\#(\widetilde{\chi}_q\psi)],
\eea
where $(\widetilde{\chi}_q)_{q=0,p}$ is given by \eqref{eq:partictionofuniityonHr:bis} and $a_{q,\chi_q}$ is given by \eqref{eq:defsymbolonRnaqchiq}, i.e., for $x\in\varphi_{q'}(U_{q'})$, $q'=0,p$, 
\beaa
\Opw(a)\psi(\varphi_{q'}^{-1}(x)) &=& \frac{1}{(2\pi)^4}\sum_{q=0,p}\widetilde{\chi}_q(\varphi_{q'}^{-1}(x))\\
&&\times\int_{\mathbb{R}^4}\int_{\mathbb{R}^4}e^{i(x_{q,q'}-y)\c\xi}a_{q,\chi_q}\left(\frac{x_{q,q'}+y}{2}, \xi\right)(\widetilde{\chi}_q\psi)\circ\varphi_q^{-1}(y)dy d\xi,
\eeaa
where $x_{q,q'}=\varphi_q\circ\varphi_{q'}^{-1}(x)$ if $\widetilde{\chi}_q(\varphi_{q'}^{-1}(x))\neq 0$.
\end{definition} 

\begin{remark}
Since $a_{q,\chi_q}\in \widetilde{S}^{m,N}(\mathbb{R}^4)$, Remark \ref{rmk:WeylquantizationofmixedsymbolsonRnlocalinxn} applies to $\Opw(a_{q,\chi_q})$. In view of \eqref{eq:defintionWeylquantizationmixedsymbolonMM}, we infer that $\Opw(a)$ for $a\in\widetilde{S}^{m,N}(\MM)$ is pseudo-differential on $H_r$ but differential w.r.t. $r$ so that it can be applied to functions that are defined on $\MM_{r_1, r_2}$ for $r_+(1-\dhor)\leq r_1<r_2<+\infty$. 
\end{remark}

\begin{remark}
Note that Definition \ref{def:weylquantizationforrhomsymbolsMM} is invariant modulo a smoothing operator under change of coordinates that preserve the isochore property of Lemma \ref{lemma:isochorecoordinates}, but not under general change of coordinates.
\end{remark}


\subsubsection{Properties of the Weyl quantization of mixed symbols on $\MM$}


We start with the following lemma which provides the Weyl quantization of functions, 1-forms and symmetric 2-tensors.
\begin{lemma}\lab{lemma:particularcaseWeylquantizationpolynomialxi:onMM}
We introduce the following mixed symbols $a_0\in\widetilde{S}^{0,0}(\MM)$, $a_1\in\widetilde{S}^{1,1}(\MM)$ and $a_2\in\widetilde{S}^{2,2}(\MM)$ defined, for all $q=0,p$, for all $x\in\varphi_q(U_q)$ and for all $\xi\in\mathbb{R}^4$, by
\beaa
\bsplit
a_0(\varphi_q^{-1}(x),\xi_\a dx^\a)&=\tilde{a}_{0,q}(x),\qquad a_1(\varphi_q^{-1}(x),\xi_\a dx^\a)=\tilde{a}^\a_{1,q}(x)\xi_\a,\\
a_2(\varphi_q^{-1}(x),\xi_\a dx^\a)&=\tilde{a}^{\a\b}_{2,q}(x)\xi_\a\xi_\b,\qquad \tilde{a}^{\a\b}_{2,q}=\tilde{a}^{\b\a}_{2,q}.
\end{split}
\eeaa
Then, we have
\beaa
\bsplit
\Opw (a_0)\psi =& a_0\psi, \\ 
\Opw (a_1)\psi(\varphi^{-1}_q(x)) =& \tilde{a}^\a_{1,q}(x)D_{x^\a}\psi(\varphi^{-1}_q(x))+ \frac{1}{2}(D_{x^\a}\tilde{a}^\a_{1,q})(x)\psi(\varphi^{-1}_q(x)),\\
\Opw (a_2)\psi(\varphi^{-1}_q(x)) =&\tilde{a}^{\a\b}_{2,q}(x)D_{x^\a}D_{x^\b}\psi(\varphi^{-1}_q(x)) + (D_{x^\a}\tilde{a}^{\a\b}_{2,q})(x)D_{x^\b}\psi(\varphi^{-1}_q(x))\\
& +\frac{1}{4}(D_{x^\a}D_{x^\b}\tilde{a}^{\a\b}_{2,q})(x)\psi(\varphi^{-1}_q(x)).
\end{split}
\eeaa
\end{lemma}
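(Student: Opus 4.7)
The plan is to reduce the computation to the Euclidean version provided by Lemma \ref{lemma:particularcaseWeylquantizationpolynomialxi} and then verify that the partition of unity reassembles into the stated coordinate formulas. Starting from Definition \ref{def:weylquantizationforrhomsymbolsMM},
\[
\Opw(a)\psi = \sum_{q=0,p}\widetilde{\chi}_q\,\varphi_q^{\#}\Opw(a_{q,\chi_q})\big[(\varphi_q^{-1})^{\#}(\widetilde{\chi}_q\psi)\big],
\]
I would first express $a_{q,\chi_q}$, for each of $a_0,a_1,a_2$, as a polynomial in $\xi$ with coefficients $\chi_q(\varphi_q^{-1}(x))\,\tilde{a}_{j,q}^{\cdots}(x)$, and then apply Lemma \ref{lemma:particularcaseWeylquantizationpolynomialxi} on $\mathbb{R}^4$ to obtain an explicit differential operator expression. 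Finally, I would pull everything back through $\varphi_q^{\#}$ and sum in $q$, using $\widetilde{\chi}_q=1$ on $\mathrm{supp}(\chi_q)$ and $\chi_0+\chi_p=1$ to eliminate the cutoffs from the final answer.

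For $a_0$, the Euclidean formula gives $\Opw(a_{0,q,\chi_q})=\chi_q\,\tilde{a}_{0,q}$ as a multiplication operator, so $\widetilde{\chi}_q\varphi_q^{\#}\Opw(a_{0,q,\chi_q})[(\varphi_q^{-1})^{\#}(\widetilde{\chi}_q\psi)]=\widetilde{\chi}_q^2\chi_q a_0\psi$, and since $\widetilde{\chi}_q\equiv1$ on $\mathrm{supp}(\chi_q)$ and $\sum_q\chi_q=1$, the sum collapses to $a_0\psi$. For $a_1$, Lemma \ref{lemma:particularcaseWeylquantizationpolynomialxi} yields
\[
\Opw(a_{1,q,\chi_q})u(x) = (\chi_q\tilde{a}_{1,q}^{\a})(x)D_{x^\a}u(x) + \tfrac{1}{2}D_{x^\a}(\chi_q\tilde{a}_{1,q}^{\a})(x)\,u(x).
\]
Applying this to $u=(\varphi_q^{-1})^{\#}(\widetilde{\chi}_q\psi)$, multiplying by $\widetilde{\chi}_q$, summing, and exploiting $\widetilde{\chi}_q\equiv1$ on $\mathrm{supp}(\chi_q)$ together with $\widetilde{\chi}_q D_{x^\a}((\varphi_q^{-1})^{\#}\widetilde{\chi}_q\cdot(\varphi_q^{-1})^{\#}\psi)$ reducing to $(\varphi_q^{-1})^{\#}D_{x_q^\a}\psi$ on that support, gives the stated formula after using $\sum_q\chi_q=1$ and $\sum_qD_{x^\a}\chi_q=0$ to cancel the derivatives falling on the cutoff.

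The computation for $a_2$ is analogous: Lemma \ref{lemma:particularcaseWeylquantizationpolynomialxi} gives
\[
\Opw(a_{2,q,\chi_q})u = (\chi_q\tilde{a}_{2,q}^{\a\b})D_\a D_\b u + D_\a(\chi_q\tilde{a}_{2,q}^{\a\b})D_\b u + \tfrac{1}{4}D_\a D_\b(\chi_q\tilde{a}_{2,q}^{\a\b})u,
\]
and I would expand each Leibniz product of $\chi_q$ with $\tilde{a}_{2,q}^{\a\b}$. Summing in $q$ and using once more that $\widetilde{\chi}_q\equiv1$ on $\mathrm{supp}(\chi_q)$, every term involving derivatives of $\chi_q$ reorganizes into $(\sum_q D_{x^\a}\chi_q)(\cdots)$ or $(\sum_q D_{x^\a}D_{x^\b}\chi_q)(\cdots)$, and vanishes since $\sum_q\chi_q=1$. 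The terms with all derivatives on $\tilde{a}_{2,q}^{\a\b}$ or $\psi$ survive and reconstruct the claimed expression. The main obstacle throughout is the bookkeeping needed to show that every occurrence of a derivative of $\chi_q$ cancels after summation --- it is important here that both the outer multiplication by $\widetilde{\chi}_q$ and the inner cutoff $\widetilde{\chi}_q\psi$ are introduced by the same cutoff that is identically $1$ on the support of $\chi_q$, and that the coordinates $(x_0^1,x_0^2)$ and $(x_p^1,x_p^2)$ are isochore, so no Jacobian factors enter the identifications.
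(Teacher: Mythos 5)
Your proposal is correct and follows essentially the same route as the paper: the paper's proof is exactly this reduction, namely apply Lemma \ref{lemma:particularcaseWeylquantizationpolynomialxi} to the polynomial-in-$\xi$ symbols $a_{q,\chi_q}$, insert the result into Definition \ref{def:weylquantizationforrhomsymbolsMM}, and conclude using $\widetilde{\chi}_q=1$ on $\mathrm{supp}(\chi_q)$ together with $\chi_0+\chi_p=1$. You in fact spell out the cancellation of the cutoff-derivative terms (and the role of the isochore coordinates in making the divergence-type terms chart-consistent) in more detail than the paper does.
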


\begin{proof}
For the symbols $a_0$, $a_1$ and $a_2$, $a_{q,\chi_q}(x, \xi)$ as defined in \eqref{eq:defsymbolonRnaqchiq} is polynomial in $\xi$ and we may thus apply Lemma \ref{lemma:particularcaseWeylquantizationpolynomialxi}. We then plug the resulting formula in \eqref{eq:defintionWeylquantizationmixedsymbolonMM} and use the fact that $\widetilde{\chi}_q=1$ on the support of $\chi_q$ and the fact that $\chi_0+\chi_1=1$ to conclude the proof of the lemma.
\end{proof}

Next, we consider the properties of the Weyl quantization of symbols in $\widetilde{S}^{m,N}(\MM)$ w.r.t. composition and adjoint. 
\begin{proposition}
\label{prop:PDO:MM:Weylquan:mixedoperators}
The Weyl quantization satisfies the following properties for symbols in the class 
$\widetilde{S}^{m,N}(\MM)$: 
\begin{enumerate}[label=\arabic*)] 
\item For mixed symbols $a_1$ and $a_2$ of respective orders $(m_1,N_1)$ and $(m_2,N_2)$, we have
\bea\lab{eq:propWeylquantization:MM:composition:mixedsymbols}
\bsplit
[\Opw(a_1), \Opw(a_2)]=\Opw(a_3),\quad a_3=\frac{1}{i}\{a_1, a_2\} +\widetilde{S}^{m_1+m_2-3,N_1+N_2}(\MM),\\
\Opw(a_1)\circ\Opw(a_2) +\Opw(a_2)\circ\Opw(a_1)=\Opw(a_3),\quad a_3=2a_1a_2 + \widetilde{S}^{m_1+m_2-2,N_1+N_2}(\MM).
\end{split}
\eea

\item In the particular case where $a_1(\varphi_q^{-1}(x),\xi_\a dx^\a)=v_1(r)\xi_3^{N_1}$ for $x=(r,x')\in\varphi_q(U_q)$ and $\xi=(\xi', \xi_3)\in\mathbb{R}^4$, which is a mixed symbol of order $(m_1, N_1)$ with $m_1=N_1$, we have, with $a_2$ of order $(m_2,N_2)$
\bea\lab{eq:propWeylquantization:MM:composition:mixedsymbols:specialcase}
\bsplit
[\Opw(a_1), \Opw(a_2)]=\Opw(a_3),\quad a_3=\frac{1}{i}\{a_1, a_2\} +\tilde{a}_3,\\
\tilde{a}_3=0\quad\textrm{if}\quad \max(N_1, N_2)\leq 2, \qquad \tilde{a}_3\in\widetilde{S}^{m_1+m_2-3,N_1+N_2-3}(\MM)\quad\textrm{if}\quad \max(N_1, N_2)\geq 3,\\
\Opw(a_1)\circ\Opw(a_2) +\Opw(a_2)\circ\Opw(a_1)=\Opw(a_4),\quad a_4=2a_1a_2 + \tilde{a}_4,\\
\tilde{a}_4=0\quad\textrm{if}\quad \max(N_1, N_2)\leq 1, \qquad \tilde{a}_4\in\widetilde{S}^{m_1+m_2-2,N_1+N_2-2}(\MM)\quad\textrm{if}\quad \max(N_1, N_2)\geq 2.
\end{split}
\eea

\item In the particular case where $a_1$ and $a_2$ are mixed symbols of respective orders $(m_1, 1)$ and $(m_2, 1)$, and $f=f({r})$, we have
\bea\lab{eq:propWeylquantization:MM:composition:mixedsymbols:specialcase:1}
\bsplit
&[\Opw(a_1), \Opw(f(r)a_2)]=\Opw(a_3),\quad a_3=\frac{1}{i}\{a_1, f(r)a_2\} +\tilde{a}_3,\\
&\tilde{a}_3=f(r)\widetilde{S}^{m_1+m_2-3,2}(\MM)+\widetilde{S}^{m_1+m_2-3,1}(\MM),\\
&\Opw(a_1)\circ\Opw(f(r)a_2) +\Opw(f(r)a_2)\circ\Opw(a_1)=\Opw(a_4),\quad a_4=2f(r)a_1a_2 + \tilde{a}_4,\\
&\tilde{a}_4=f(r)\widetilde{S}^{m_1+m_2-2,2}(\MM)+\widetilde{S}^{m_1+m_2-2,1}(\MM).
\end{split}
\eea

\item For a mixed symbol $a(x,\xi)$, the adjoint, w.r.t. the Lebesgue measure $d\Vref$ in $(\tau, r, x^1, x^2)$ coordinates, of its Weyl quantization is given by 
\bea\lab{eq:propWeylquantization:MM:adjoint:mixedsymbols}
(\Opw(a))^{\star} = \Opw(\bar{a}).
\eea
In particular, the Weyl quantization of a real-valued symbol is a self-adjoint operator  w.r.t. the Lebesgue measure $d\Vref$ in $(\tau, r, x^1, x^2)$ coordinates.
\end{enumerate}
\end{proposition}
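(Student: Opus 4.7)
The strategy is to reduce each statement on $\MM$ to its already-established $\mathbb{R}^4$ counterpart in Proposition \ref{prop:PDO:Rn:Weylquan:mixedoperators}, via the definition \eqref{eq:defintionWeylquantizationmixedsymbolonMM}, the partition of unity $(\chi_q, \widetilde{\chi}_q)_{q=0,p}$ of Section \ref{sec:isochorecoordinatesonHr}, and the isochore change-of-variables Lemma \ref{lemma:compositionPDOWeylanddiffeoconservative}. The key structural fact enabling this reduction is that both charts $\varphi_0, \varphi_p$ as well as the transition map $\varphi_p\circ\varphi_0^{-1}$ are isochore (by Lemma \ref{lemma:isochorecoordinates} and because $\tau,r$ change trivially), so that $\varphi_q^\#$ is an $L^2$-isometry with respect to $d\Vref$, and symbolic calculus is preserved modulo order $-2$ under such coordinate changes.

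The adjoint property \eqref{eq:propWeylquantization:MM:adjoint:mixedsymbols} will be proved first. Plugging \eqref{eq:defintionWeylquantizationmixedsymbolonMM} into $\langle \Opw(a)\psi,\phi\rangle_{L^2(d\Vref)}$, the real, $r$- and $\tau$-independent factors $\widetilde{\chi}_q$ commute through and an application of Proposition \ref{prop:PDO:Rn:Weylquan:mixedoperators}(2) chart-wise gives $(\Opw(a))^\star = \sum_q \widetilde{\chi}_q \varphi_q^\# \Opw(\overline{a_{q,\chi_q}})(\varphi_q^{-1})^\#\widetilde{\chi}_q$, which is exactly $\Opw(\bar a)$ in the sense of Definition \ref{def:weylquantizationforrhomsymbolsMM}.

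For the composition statement \eqref{eq:propWeylquantization:MM:composition:mixedsymbols}, expanding $\Opw(a_1)\circ\Opw(a_2)$ via the definition yields a double sum over $(q_1,q_2)\in\{0,p\}^2$. The diagonal terms $q_1=q_2=q$ are handled directly by the $\mathbb{R}^4$ composition formulas \eqref{eq:propWeylquantizationmathbbRn:composition:mixedsymbols}, producing a principal part $-i\{a_{1,q,\chi_q},a_{2,q,\chi_q}\}$ (resp.\ $2 a_{1,q,\chi_q} a_{2,q,\chi_q}$) plus a remainder in $\widetilde{S}^{m_1+m_2-3,N_1+N_2}(\mathbb{R}^4)$ (resp.\ $\widetilde{S}^{m_1+m_2-2,N_1+N_2}(\mathbb{R}^4)$). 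The off-diagonal terms $q_1\neq q_2$ are supported in $U_{q_1}\cap U_{q_2}$; there one reads the second operator in the chart $\varphi_{q_1}$ by conjugating with the transition diffeomorphism $\varphi_{q_1}\circ\varphi_{q_2}^{-1}$. Because this transition map is isochore, Lemma \ref{lemma:compositionPDOWeylanddiffeoconservative} applies and produces a transported symbol equal to $a_{2,q_2,\chi_{q_2}}\circ(\varphi_{q_1},d\varphi_{q_1}^{-T})$ modulo $\widetilde{S}^{m_2-2,N_2}$; one then composes in the single chart $\varphi_{q_1}$. Using the invariance of the principal symbol and of the Poisson bracket under symplectic change of coordinates on $T^\star\MM$, the principal parts from all four $(q_1,q_2)$ blocks reassemble (thanks to $\chi_0+\chi_p=1$) into $a_1 a_2$ and $-i\{a_1,a_2\}$ on $\MM$, while all errors lie in $\widetilde{S}^{m_1+m_2-2,N_1+N_2}(\MM)$ and $\widetilde{S}^{m_1+m_2-3,N_1+N_2}(\MM)$ respectively, after pulling back the $\mathbb{R}^4$-remainder classes via Definition \ref{PDO:MM:Shom}.

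The sharpened remainders in \eqref{eq:propWeylquantization:MM:composition:mixedsymbols:specialcase} and \eqref{eq:propWeylquantization:MM:composition:mixedsymbols:specialcase:1} follow by inspecting the asymptotic expansion \eqref{eq:asymptoticexpansionofsymbolofproductinWeylquantization} under the additional structural hypothesis on $a_1$. When $a_1$ depends only on $r$ and $\xi_3$, only derivatives $\partial_{x_3}^k\partial_{\xi_3}^j a_1$ survive, which kills every term of order $>\max(N_1,N_2)$ and reduces both polynomial degrees in $\xi_3$, exactly as in the proof of Proposition \ref{prop:PDO:Rn:Weylquan:mixedoperators}(2). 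For \eqref{eq:propWeylquantization:MM:composition:mixedsymbols:specialcase:1}, the splitting $f(r)\,a_2$ is preserved by the reduction to $\mathbb{R}^4$, since $r$ is a well-defined global coordinate invariant under the transition maps, so the classification of the remainder into $f(r)\widetilde S^{\bullet,2}+\widetilde S^{\bullet,1}$ descends from the chart-level statement \eqref{eq:propWeylquantizationmathbbRn:composition:mixedsymbols:specialcase:1}. The main obstacle in the argument is the bookkeeping of off-diagonal chart terms: one must verify that the transported symbols provided by Lemma \ref{lemma:compositionPDOWeylanddiffeoconservative} indeed belong to $\widetilde{S}^{\bullet,\bullet}(\MM)$ (which requires checking that the isochore transition does not mix powers of $\xi_3$ beyond the allowed degree, a consequence of the $r$-preserving nature of the transition maps), and that the commutator/anticommutator remainder classes are preserved globally after summing the four chart blocks. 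Once this is in place, the four claims follow.
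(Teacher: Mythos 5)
Your plan is correct and follows essentially the same route as the paper: the paper's (very brief) proof likewise reduces each statement, via Definition \ref{def:weylquantizationforrhomsymbolsMM} and the chart symbols $a_{q,\chi_q}$, to the $\mathbb{R}^n$ results \eqref{eq:propWeylquantizationmathbbRn:composition:mixedsymbols}--\eqref{eq:propWeylquantizationmathbbRn:composition:mixedsymbols:specialcase:1}, combined with the isochore change-of-variables Lemma \ref{lemma:compositionPDOWeylanddiffeoconservative} in the spirit of \cite{Bon}. Your added bookkeeping of the off-diagonal chart blocks and cutoffs is exactly the detail the paper compresses into its one-paragraph argument.
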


\begin{proof}
The proof of \eqref{eq:propWeylquantization:MM:adjoint:mixedsymbols} is immediate. Next, adapting \cite{Bon} to the case of mixed symbols, the proof of \eqref{eq:propWeylquantization:MM:composition:mixedsymbols} follows, in view of the definition \eqref{eq:defintionWeylquantizationmixedsymbolonMM} of the Weyl quantization for  mixed symbols, from the corresponding result in $\mathbb{R}^n$ in \eqref{eq:propWeylquantizationmathbbRn:composition:mixedsymbols} applied to the mixed symbols $a_{q,\chi_q}\in \widetilde{S}^{m,N}(\mathbb{R}^4)$ introduced in \eqref{eq:defsymbolonRnaqchiq}, together with Lemma \ref{lemma:compositionPDOWeylanddiffeoconservative} on change of isochore coordinates. Finally, for \eqref{eq:propWeylquantization:MM:composition:mixedsymbols:specialcase} and \eqref{eq:propWeylquantization:MM:composition:mixedsymbols:specialcase:1}, we proceed as in the proof of \eqref{eq:propWeylquantization:MM:composition:mixedsymbols} using additionally  the corresponding properties \eqref{eq:propWeylquantizationmathbbRn:composition:mixedsymbols:specialcase} and \eqref{eq:propWeylquantizationmathbbRn:composition:mixedsymbols:specialcase:1} in $\mathbb{R}^n$.
\end{proof}

We also consider the action of the Weyl quantization of mixed symbols on Sobolev spaces.  
\begin{lemma}\lab{lemma:actionmixedsymbolsSobolevspaces:MM}
Let $m\in\mathbb{R}$, $N\in\mathbb{N}$, let $I$ be an interval of $[r_+(1-\dhor), +\infty)$, and let $a\in\widetilde{S}^{m,N}(\MM)$ be of the form, for all $q=0,p$, $x\in \varphi(U_q)$ and $\xi\in\mathbb{R}^4$,
\beaa
a(\varphi_q^{-1}(x),\xi_\a dx^\a)=v_{m-N}(\varphi_q^{-1}(x),\xi_i'(dx^i_q)_{|_{\varphi_q^{-1}(x)}})(\xi_3)^N,  \qquad v_{m-N}\in S^{m-N}_{tan}(\MM).
\eeaa 
Then we have for all $s\in\mathbb{R}$
\beaa
\|\Opw(a)\psi\|_{H^{s-m+N}(H_r)}&\les& \sum_{j=0}^N\|\pr_r^j\psi\|_{H^s(H_r)},\\
\|\Opw(a)\psi\|_{L^2_r(I, H^{s-m+N}(H_r))}&\les& \sum_{j=0}^N\|\pr_r^j\psi\|_{L^2_r(I, H^s(H_r))}.
\eeaa
\end{lemma}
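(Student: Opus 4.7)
The plan is to reduce the estimate on $\MM$ to the Euclidean statement of Lemma \ref{lemma:actionmixedsymbolsSobolevspaces} via the isochore coordinate patches, and then patch together using the partition of unity. First I would use the definition \eqref{eq:defintionWeylquantizationmixedsymbolonMM} of $\Opw(a)$ to write
\begin{align*}
\Opw(a)\psi = \sum_{q=0,p}\widetilde{\chi}_q\,\varphi_q^{\#}\Opw(a_{q,\chi_q})\bigl[(\varphi_q^{-1})^\#(\widetilde{\chi}_q\psi)\bigr],
\end{align*}
where $a_{q,\chi_q}\in\widetilde{S}^{m,N}(\mathbb{R}^4)$ is the Euclidean symbol defined in \eqref{eq:defsymbolonRnaqchiq}. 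Since $a(\varphi_q^{-1}(x),\xi_\a dx^\a)=v_{m-N}(\varphi_q^{-1}(x),\xi_i'(dx^i_q))\xi_3^N$ and the cutoff $\chi_q$ is independent of $r$ (hence of $x^n=x^3$), the symbol $a_{q,\chi_q}$ retains the form $\tilde v_{m-N,q}(x,\xi')\xi_3^N$ with $\tilde v_{m-N,q}\in S^{m-N}_{tan}(\mathbb{R}^4)$.

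Next, I would fix $r$ (i.e.\ $x^n=x^3=r$) and apply Lemma \ref{lemma:actionmixedsymbolsSobolevspaces} with $n=4$ to each $\Opw(a_{q,\chi_q})$ acting on the function $(\varphi_q^{-1})^\#(\widetilde{\chi}_q\psi)$ pulled back to $\mathbb{R}^3\times\{r\}$. This yields
\begin{align*}
\bigl\|\Opw(a_{q,\chi_q})\bigl[(\varphi_q^{-1})^\#(\widetilde{\chi}_q\psi)\bigr]\bigr\|_{H^{s-m+N}_{x'}(\mathbb{R}^3)}
\lesssim \sum_{j=0}^N \bigl\|\pr_r^j\bigl[(\varphi_q^{-1})^\#(\widetilde{\chi}_q\psi)\bigr]\bigr\|_{H^s_{x'}(\mathbb{R}^3)}.
\end{align*}
Because $(x^1_q,x^2_q)$ are isochore on $\mathbb{S}^2$ (Lemma \ref{lemma:isochorecoordinates}) and $\tau$ is flat, the Lebesgue measure $dx^0 dx^1_q dx^2_q$ on $\varphi_q(U_q)\cap\{x^3=r\}$ agrees with $d\tau\,d\mathring{\gamma}$, which is uniformly comparable to the volume form of the metric $h_r=(d\tau)^2+\mathring{\gamma}$ on $H_r$. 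Hence, up to equivalence of norms, the local $H^s_{x'}(\mathbb{R}^3)$ norms coincide (after multiplication by the cutoffs) with $H^s(H_r)$ norms; the multiplier $\widetilde{\chi}_q$ is smooth and compactly supported in its patch, so multiplication by $\widetilde{\chi}_q$ is bounded on every $H^s(H_r)$. Similarly, since $\partial_r$ commutes with $\varphi_q^\#$ (as $\varphi_q$ preserves the $r$-coordinate) and with multiplication by $\widetilde{\chi}_q$ (which is $r$-independent by \eqref{eq:partictionofuniityonHr:bis}), each $\|\pr_r^j(\varphi_q^{-1})^\#(\widetilde{\chi}_q\psi)\|_{H^s_{x'}(\mathbb{R}^3)}$ is bounded by $\|\pr_r^j\psi\|_{H^s(H_r)}$.

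Summing over the two patches $q=0,p$ and using that $\widetilde{\chi}_q$ is bounded yields the first stated estimate
\begin{align*}
\|\Opw(a)\psi\|_{H^{s-m+N}(H_r)}\lesssim \sum_{j=0}^N\|\pr_r^j\psi\|_{H^s(H_r)}.
\end{align*}
The second estimate then follows by squaring, integrating in $r\in I$, and invoking the second bound of Lemma \ref{lemma:actionmixedsymbolsSobolevspaces} (or simply Fubini) in the local representation. The main conceptual point—which will not be an obstacle here but must be verified—is that the norms on $H_r$ defined intrinsically via $h_r$ are equivalent to those defined locally via the isochore charts, with constants uniform in $r$; this is where the specific choice of $r$-independent cutoffs $\chi_q,\widetilde{\chi}_q$ and of isochore angular coordinates is essential, because it prevents any $r$-dependence from creeping into the implicit constants. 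No step presents a serious difficulty, as each ingredient has been prepared in the preceding microlocal setup.
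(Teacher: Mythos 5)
Your proposal is correct and follows essentially the same route as the paper: the paper's proof simply invokes the definition \eqref{eq:defintionWeylquantizationmixedsymbolonMM} of $\Opw(a)$ on $\MM$ and applies the Euclidean Lemma \ref{lemma:actionmixedsymbolsSobolevspaces} to the localized symbols $a_{q,\chi_q}\in\widetilde{S}^{m,N}(\mathbb{R}^4)$ from \eqref{eq:defsymbolonRnaqchiq}. Your additional verifications (isochore volume comparability, $r$-independence of the cutoffs, commutation of $\pr_r$ with the pullbacks) are exactly the points the paper leaves implicit.
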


\begin{proof}
This follows immediately by relying on the definition  \eqref{eq:defintionWeylquantizationmixedsymbolonMM} of $\Opw(a)$ and then  applying Lemma \ref{lemma:actionmixedsymbolsSobolevspaces} to $\Opw(a_{q,\chi_q})$ where $a_{q,\chi_q}\in\widetilde{S}^{m,N}(\mathbb{R}^4)$ is given by  \eqref{eq:defsymbolonRnaqchiq}.
\end{proof}

Finally, we prove a G\aa rding type inequality for tangential symbols.
\begin{lemma}\lab{lemma:basicgardingfortangentialsymbols:MM}
Let $I$ be an interval of $[r_+(1-\dhor), +\infty)$ and let $a\in\widetilde{S}^{1,0}(\MM)$ be such that, for all $q=0,p$, $x\in \varphi(U_q)$ and $\xi\in\mathbb{R}^4$,
\beaa
a(\varphi_q^{-1}(x),\xi_\a dx^\a)\geq c_1\langle\xi'\rangle,
\eeaa
for some constant $c_1>0$. Then, there exist constants $c_2>0$ and $c_3>0$ such that we have
\beaa
\|\Opw(a)\psi\|_{L^2_r(I, L^2(H_r))}^2\geq c_2\|\psi\|_{L^2_r(I, H^1(H_r))}^2 -c_3\|\psi\|_{L^2_r(I, L^2(H_r))}^2.
\eeaa
\end{lemma}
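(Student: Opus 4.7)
The plan is to adapt the proof of Lemma \ref{lemma:basicgardingfortangentialsymbols} to the manifold setting, relying on the symbolic calculus established in Proposition \ref{prop:PDO:MM:Weylquan:mixedoperators} and the mapping properties provided by Lemma \ref{lemma:actionmixedsymbolsSobolevspaces:MM}. Since the coercivity assumption implies
\begin{equation*}
a(x,\xi')^2 - \tfrac{c_1^2}{2}\langle \xi'\rangle^2 \geq \tfrac{c_1^2}{2}\langle\xi'\rangle^2 \geq \tfrac{c_1^2}{2}>0,
\end{equation*}
the first step is to set $e := \sqrt{a^2 - \tfrac{c_1^2}{2}\langle\xi'\rangle^2}$ and verify that $e \in S^1_{tan}(\MM) = \widetilde{S}^{1,0}(\MM)$ (this follows by induction on derivatives using that the argument of the square root is bounded below by a positive constant multiple of $\langle\xi'\rangle^2$). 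By construction we have the symbolic identity $a^2 = \tfrac{c_1^2}{2}\langle\xi'\rangle^2 + e^2$.

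Next, I would apply the second identity of \eqref{eq:propWeylquantization:MM:composition:mixedsymbols} with $a_1=a_2=a$ (and then with $\langle\xi'\rangle$ and $e$ in place of $a$) to convert this symbolic identity into an operator identity. Since $a, \langle\xi'\rangle, e$ are all real tangential symbols, their Weyl quantizations are self-adjoint w.r.t.\ $d\Vref$ by \eqref{eq:propWeylquantization:MM:adjoint:mixedsymbols}, so $\Opw(a)\circ\Opw(a) = \Opw(a)^2$, etc., and we obtain
\begin{equation*}
\Opw(a)^2 = \tfrac{c_1^2}{2}\Opw(\langle\xi'\rangle)^2 + \Opw(e)^2 + R, \qquad R = \Opw(r_0), \quad r_0 \in \widetilde{S}^{0,0}(\MM).
\end{equation*}
Pairing with $\psi$ using the $L^2(d\Vref)$ inner product (which is $L^2_r(I, L^2(H_r))$ upon integrating over $r \in I$, in view of Lemma \ref{lemma:spacetimevolumeformusingisochorecoordinates} and Remark \ref{rmk:linkvolumeformMwithf0dVref}---note that here we integrate w.r.t.\ $d\Vref$ rather than $f_0\, d\Vref$, but these are comparable), self-adjointness yields
\begin{equation*}
\|\Opw(a)\psi\|_{L^2_r(I,L^2(H_r))}^2 = \tfrac{c_1^2}{2}\|\Opw(\langle\xi'\rangle)\psi\|_{L^2_r(I,L^2(H_r))}^2 + \|\Opw(e)\psi\|_{L^2_r(I,L^2(H_r))}^2 + \langle R\psi,\psi\rangle.
\end{equation*}

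The third step is to exploit that $\|\Opw(e)\psi\|^2 \geq 0$ and to relate $\|\Opw(\langle\xi'\rangle)\psi\|$ to the $H^1(H_r)$ norm. Applying once more the composition rule to $\langle\xi'\rangle \cdot \langle\xi'\rangle$ gives $\Opw(\langle\xi'\rangle)^2 = \Opw(\langle\xi'\rangle^2) + \Opw(\widetilde{S}^{0,0}(\MM))$, and since $\langle\xi'\rangle^2 = 1 + (\xi_0)^2 + \mathring{\gamma}^{ab}\xi_a\xi_b$, Lemma \ref{lemma:particularcaseWeylquantizationpolynomialxi:onMM} identifies $\Opw(\langle\xi'\rangle^2)$ with $1 - \partial_\tau^2 - \mathring{\Delta}$ modulo a zeroth-order operator; pairing with $\psi$ and integrating by parts in the tangential directions of $H_r$ (no boundary terms appear since these derivatives are tangential) gives the coercive lower bound
\begin{equation*}
\langle \Opw(\langle\xi'\rangle)^2\psi,\psi\rangle_{L^2_r(I,L^2(H_r))} \gtrsim \|\psi\|_{L^2_r(I, H^1(H_r))}^2 - C\|\psi\|_{L^2_r(I, L^2(H_r))}^2.
\end{equation*}
Finally, the remainder $\langle R\psi,\psi\rangle$ is controlled by $\|\psi\|_{L^2_r(I,L^2(H_r))}^2$ using Lemma \ref{lemma:actionmixedsymbolsSobolevspaces:MM} applied with $m=N=0$ and $s=0$. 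Combining these bounds yields the desired inequality with $c_2 \sim c_1^2$ and a constant $c_3$ depending on $c_1$ and the seminorms of $a$. The main (mild) obstacle is bookkeeping: ensuring the $\widetilde{S}^{0,0}(\MM)$ remainders generated by the composition rules really absorb into the $L^2$-loss, which is handled uniformly by Lemma \ref{lemma:actionmixedsymbolsSobolevspaces:MM}; no new analytical difficulty appears beyond the $\mathbb{R}^n$ case treated in Lemma \ref{lemma:basicgardingfortangentialsymbols}.
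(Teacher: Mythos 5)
Your proposal is correct and takes essentially the same route as the paper, which proves this lemma by declaring it analogous to the $\mathbb{R}^n$ G\aa rding estimate of Lemma \ref{lemma:basicgardingfortangentialsymbols}, with Proposition \ref{prop:PDO:Rn:Weylquan:mixedoperators} replaced by Proposition \ref{prop:PDO:MM:Weylquan:mixedoperators} and Lemma \ref{lemma:actionmixedsymbolsSobolevspaces} by Lemma \ref{lemma:actionmixedsymbolsSobolevspaces:MM} --- exactly the decomposition $a^2=\tfrac{c_1^2}{2}\langle\xi'\rangle^2+e^2$ you use. Your additional step identifying $\Opw(\langle\xi'\rangle^2)$ with $1-\partial_\tau^2-\Delta_{\mathring{\ga}}$ modulo a zeroth-order operator (via Lemma \ref{lemma:particularcaseWeylquantizationpolynomialxi:onMM} and the isochore coordinates) and integrating by parts tangentially is a sound way to supply the detail that, on the manifold, $\Opw(\langle\xi'\rangle)$ is no longer a pure Fourier multiplier as it is in the $\mathbb{R}^n$ case.
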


\begin{proof}
The proof of Lemma \ref{lemma:basicgardingfortangentialsymbols:MM} is analogous to the one of Lemma \ref{lemma:basicgardingfortangentialsymbols} replacing the use of Proposition \ref{prop:PDO:Rn:Weylquan:mixedoperators} with the one of Proposition \ref{prop:PDO:MM:Weylquan:mixedoperators} and the use of Lemma \ref{lemma:actionmixedsymbolsSobolevspaces} with the one of Lemma \ref{lemma:actionmixedsymbolsSobolevspaces:MM}.
\end{proof}

Finally, we provide a basic commutator estimate for mixed symbols. 
\begin{lemma}\lab{lemma:bascicommutatorlemmawithelementaryproof:mixedsymbols:MMcase}
Let $I$ be an interval of $[r_+(1-\dhor), +\infty)$, let $P=\Opw(p)$ with $p\in\widetilde{S}^{1,1}(\MM)$ and let $f$ be a scalar function on $\MM\cap\{r\in I\}$. Then, we have 
\beaa
\|[P, f]\psi\|_{L^2_r(I,H^1(H_r))} &\les& \|f\|_{W^{2,+\infty}(\MM\cap\{r\in I\})}\|\pr^{\leq 1}\psi\|_{L^2_r(I,L^2(H_r))}.
\eeaa
\end{lemma}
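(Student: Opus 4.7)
The idea is to reduce the statement to its $\mathbb R^n$-analog, Lemma \ref{lemma:bascicommutatorlemmawithelementaryproof:mixedsymbols}, via the very definition \eqref{eq:defintionWeylquantizationmixedsymbolonMM} of the Weyl quantization on $\MM$. First, I would write $[P,f]\psi = P(f\psi) - f P\psi$ using \eqref{eq:defintionWeylquantizationmixedsymbolonMM} and use that $(\varphi_q^{-1})^{\#}(\widetilde{\chi}_q f\psi)=f_q\cdot(\varphi_q^{-1})^{\#}(\widetilde{\chi}_q\psi)$, where $f_q:=f\circ\varphi_q^{-1}$, since $f$ is a scalar. A direct computation then shows
\[
[P,f]\psi \;=\; \sum_{q=0,p}\widetilde{\chi}_q\,\varphi_q^{\#}\Big([\Opw(p_{q,\chi_q}),f_q]\big((\varphi_q^{-1})^{\#}(\widetilde{\chi}_q\psi)\big)\Big),
\]
because the $f\,P\psi$ factor cancels the ``$f_q$ pulled outside'' term after the pullback. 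Thus the problem reduces to estimating a finite sum of commutators of Weyl quantizations on $\mathbb R^4$ applied to the locally supported functions $(\varphi_q^{-1})^{\#}(\widetilde{\chi}_q\psi)$.

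Next, I would extend $f_q$ to a function $\widetilde f_q\in W^{2,+\infty}(\mathbb R^4)$ with $\|\widetilde f_q\|_{W^{2,+\infty}(\mathbb R^4)}\lesssim\|f\|_{W^{2,+\infty}(\MM\cap\{r\in I\})}$ (this is standard and does not affect $[\Opw(p_{q,\chi_q}),f_q]((\varphi_q^{-1})^{\#}(\widetilde{\chi}_q\psi))$ on the support of $(\varphi_q^{-1})^{\#}(\widetilde{\chi}_q)$, which is where the outer factor $\widetilde{\chi}_q$ localizes everything). Since $p_{q,\chi_q}\in\widetilde S^{1,1}(\mathbb R^4)$ by construction (see \eqref{eq:defsymbolonRnaqchiq}), Lemma \ref{lemma:bascicommutatorlemmawithelementaryproof:mixedsymbols} with $x^n=r$ and $I$ playing its previous role yields
\[
\big\|[\Opw(p_{q,\chi_q}),\widetilde f_q](\varphi_q^{-1})^{\#}(\widetilde{\chi}_q\psi)\big\|_{L^2_r(I,H^1_{x'}(\mathbb R^3))}\lesssim\|\widetilde f_q\|_{W^{2,+\infty}}\,\big\|(\varphi_q^{-1})^{\#}(\widetilde{\chi}_q\psi)\big\|_{L^2_r(I,H^1_{x'}(\mathbb R^3))}.
\]

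Finally, I would convert the $L^2_r(I,H^1_{x'}(\mathbb R^3))$ norms on $\mathbb R^4$ back to $L^2_r(I,H^1(H_r))$ on $\MM$. This is where the isochore coordinates of Lemma \ref{lemma:isochorecoordinates} pay off: in $(\tau,x^1,x^2)$ coordinates the reference metric $h_r=(d\tau)^2+\mathring{\ga}$ has $\sqrt{|\det h_r|}=1$, hence $\|\cdot\|_{L^2(H_r)}$ coincides with the Lebesgue $L^2_{\tau,x^1,x^2}$ norm on the support of $\widetilde{\chi}_q$, and similarly $\|\cdot\|_{H^1(H_r)}\simeq\|\cdot\|_{H^1_{\tau,x^1,x^2}}$ there (the equivalence constants depending only on $\mathring{\ga}^{ab}$ restricted to a relatively compact coordinate patch). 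Summing over the two patches $q=0,p$ and using the trivial bound $\|\widetilde{\chi}_q\|_{L^{\infty}}\lesssim 1$ for the outer cutoff yields the claimed estimate. The only mildly delicate step is ensuring the extension of $f_q$ does not inflate the $W^{2,+\infty}$ norm beyond constants depending on the (fixed) coordinate patch, which is routine since $\mathring{U}_q$ is relatively compact in $\mathbb S^2$; modulo this bookkeeping, the proof is a direct transplant of Lemma \ref{lemma:bascicommutatorlemmawithelementaryproof:mixedsymbols} to $\MM$.
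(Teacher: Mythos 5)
Your proposal is correct, and it is essentially the argument the paper intends: the paper disposes of this lemma with the remark that it is a simple adaptation of Lemma \ref{lemma:bascicommutatorlemmawithelementaryproof:mixedsymbols}, and your chart-by-chart reduction — the exact identity $[P,f]\psi=\sum_{q}\widetilde{\chi}_q\varphi_q^{\#}\big([\Opw(p_{q,\chi_q}),f_q]((\varphi_q^{-1})^{\#}(\widetilde{\chi}_q\psi))\big)$, the harmless $W^{2,+\infty}$ extension of $f_q$ (whose values outside the localized supports do not affect the cut-off commutator), and the norm comparison via the isochore coordinates — is precisely how that adaptation is carried out. No gaps beyond the routine bookkeeping you already flag.
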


\begin{proof}
The proof is a simple adaptation of the one of Lemma \ref{lemma:bascicommutatorlemmawithelementaryproof:mixedsymbols}.
\end{proof}


\subsubsection{Weyl quantization of particular symbols}


The vectorfields $\partial_r$, $\partial_{\tt}$ and $\partial_{\tphi}$ are globally smooth vectorfields on $\MM$, and the Carter $2$-tensor field, denoted by 
\bea
\Carter := \mathring{\ga}^{bc}\pr_{x^b}\otimes \pr_{x^c}+a^2 \sin^2\th \partial_{\tt}\otimes \partial_{\tt}, 
\eea 
is a globally smooth $2$-tensor field on $\MM$. Recalling that our symbols are defined w.r.t. the $(\tau, r, x^1, x^2)$ coordinates systems of Section \ref{sec:isochorecoordinatesonHr}, we introduce the following definitions:
\begin{subequations}\lab{eq:basicparticularsymbolsusedeverywhere}
\begin{align}
\xit:={}& \langle \xi, \partial_{\tt}\rangle=\xi_0,\\
\xi_r : ={}& \langle \xi, \partial_{r} \rangle = \xi_3,\\
{\xiphi}:={}& \langle \xi, \partial_{\tphi}\rangle=\frac{\partial x^a}{\partial \tphi} \xi_a,\\
\Lambda:={}& \sqrt{\langle \xi\otimes \xi, \Carter\rangle}=\sqrt{\mathring{\ga}^{bc} \langle \xi, \pr_{x^b}\rangle \langle \xi, \pr_{x^c}\rangle+ a^2\sin^2\th \xi_0^2}\geq 0.
\end{align}
\end{subequations}
We denote $\Xi:=(\xit,  \xiphi,\Lambda)$, and denote $\Gtz$ as the space of the frequency triplet $\Xi=(\xit,\xiphi,\Lambda)$.

Since $\pr_{r^*}$, defined w.r.t. the tortoise coordinates $(t,r^*, \theta,\phi)$, is a globally smooth vectorfield in $\MM$, the following definition is well-defined
\begin{align}
\label{def:xirstar}
\xirstar:=& \langle \xi, \pr_{r^*}\rangle \nn\\
=& \mu \xi_r + \frac{1}{\R}\big((\R)(1-\mu \tmod')\xit+(a-\Delta\phimod')\xiphi\big).
\end{align}

We now introduce two additional classes of symbols.
\begin{definition}[Classes of symbols $\widetilde{S}^{m,N}_{hom}(\MM)$ and $\widetilde{S}^{N}_{pol}(\MM)$]
\lab{def:thesymbolswereallyuse}
For $m\in\mathbb{R}$ and $N\in\mathbb{N}$, we define the class $\widetilde{S}^{m,N}_{hom}(\MM)$ of symbols as $a\in C^{\infty}(T^\star\MM\setminus\{\xi'=0\})$ such that for all $q=0,p$, for all $x\in\varphi_q(U_q)$ and for all $\xi=(\xi', \xi_3)\in\mathbb{R}^4$, 
\beaa
a(\varphi_q^{-1}(x),\xi_\a dx^\a)=\sum_{j=0}^N \tilde{v}_{m-j}(r,\xi_\tau, \xi_{\tphi}, \La)(\xi_3)^j,  
\eeaa 
where $\tilde{v}_{m-j}$ is smooth w.r.t. $r$ and homogenous of order $m-j$ w.r.t. $(\xi_\tau, \xi_{\tphi}, \La)$. Also, we define the class $\widetilde{S}^{N}_{pol}(\MM)$ of symbols as the subclass of $\widetilde{S}^{N,N}_{hom}(\MM)$ where $\tilde{v}_{N-j}$ is smooth w.r.t. $r$ and a  homogenous polynomial of order $N-j$ w.r.t. $(\xi_\tau, \xi_{\tphi}, \La)$ for $N-j$ even, and here $\tilde{v}_{N-j}$ is smooth w.r.t. $r$ and a  homogenous polynomial of order $N-j$ w.r.t. $(\xi_\tau, \xi_{\tphi})$ for $N-j$ odd.
\end{definition}

The above symbols satisfy the following properties.
\begin{lemma}\lab{lemma:simplepropertiesspecialcaseofsymbols:recoveringoperators}
We have the following properties:
\begin{enumerate}
\item We have $\widetilde{S}^{N}_{pol}(\MM)\subset\widetilde{S}^{N,N}(\MM)$. Also, any symbol $a$ in $\widetilde{S}^{m,N}_{hom}(\MM)$ is such that $\chi(\Xi)a\in\widetilde{S}^{m,N}(\MM)$ where $\Xi=(\xit, \xiphi, \La)$, and where $\chi$ denotes a smooth cut-off in $\mathbb{R}^3$ such that $0\leq\chi\leq 1$, $\chi=1$ for $|\Xi|\geq 2$ and $\chi=0$ for $|\Xi|\leq 1$.

\item Let $a^{(1)}\in\widetilde{S}^{N_1}_{pol}(\MM)$ and $a^{(2)}\in\widetilde{S}^{m, N_2}_{hom}(\MM)$. Also, let $\chi$ be chosen as above. Then, the Poisson bracket $\{a^{(1)}, \chi(\Xi)a^{(2)}\}$ is given by
\bea\lab{eq:basiccomputationPoissonbracketsymbolsusedinwaveandmultipliers}
\{a^{(1)}, \chi(\Xi)a^{(2)}\}=\pr_{\xi_r}a^{(1)}\pr_r(\chi(\Xi)a^{(2)}) - \pr_r a^{(1)}\pr_{\xi_r}(\chi(\Xi)a^{(2)}).
\eea

\item The following identities hold true
\beaa
\bsplit
\Opw(\xi_\tau)&=D_\tau, \qquad \Opw(\xi_r)=D_r, \qquad \Opw(\xi_{\tphi})=D_{\tphi}, \qquad \Opw(\xi_{r^*})=D_{r^*}-\frac{i}{2}\mu',\\
\Opw(\xi_\tau^j\xi_r^{2-j})&=D_\tau^jD_r^{2-j}, \,\, j=0,1,2,\qquad \Opw(\xi_\tau\xi_{\tphi})=D_\tau D_{\tphi}, \qquad \Opw(\xi_r\xi_{\tphi})=D_rD_{\tphi},\\
\Opw(\xi_{\tphi}^2)&=D_{\tphi}^2+\Opw(\widetilde{S}^{0,0}(\MM)), \qquad \Opw(\La^2)=-\De_{\mathring{\ga}}-a^2\sin^2\th\pr_{\tau}^2+\Opw(\widetilde{S}^{0,0}(\MM)).
\end{split}
\eeaa 
\end{enumerate}
\end{lemma}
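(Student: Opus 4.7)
My plan is to treat the three items in order, with the Poisson-bracket computation of item (2) carrying the main conceptual content.

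For item (1), I will verify the symbol estimates directly from the definitions. The inclusion $\widetilde{S}^{N}_{pol}(\MM)\subset\widetilde{S}^{N,N}(\MM)$ requires some care because $\La=\sqrt{\mathring{\ga}^{bc}\xi_b\xi_c+a^2\sin^2\theta\,\xi_\tau^2}$ is not smooth at $\xi'=0$. However, the even/odd parity dichotomy in Definition \ref{def:thesymbolswereallyuse} is set up precisely to avoid this: even-degree components involve only the smooth quantity $\La^2$, and odd-degree components exclude $\La$ entirely. The pointwise symbol bounds then follow from homogeneity together with the equivalence $|\Xi|\simeq\langle\xi'\rangle$ at high frequencies. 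For $\chi(\Xi)a$ with $a\in\widetilde{S}^{m,N}_{hom}(\MM)$, the cutoff removes the singular set $\{\Xi=0\}$, and the homogeneous structure of $a$ away from this set supplies the required estimates.

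For item (2), the strategy is to expand the full cotangent-bundle Poisson bracket via the chain rule through the common dependence on $\Xi$. Since $a^{(1)}=a^{(1)}(r,\Xi)$ is independent of $\xi_r$, $\chi(\Xi)a^{(2)}=b(r,\xi_r,\Xi)$, and $\Xi$ itself is independent of both $r$ and $\xi_r$, the contributions from the $(r,\xi_r)$ directions assemble into exactly the RHS of \eqref{eq:basiccomputationPoissonbracketsymbolsusedinwaveandmultipliers} (where the first term vanishes since $\pr_{\xi_r}a^{(1)}=0$), and all cross-terms drop out. The remaining tangential piece collapses into
\begin{equation*}
\sum_{i,j}\pr_{\Xi_i}a^{(1)}\,\pr_{\Xi_j}(\chi(\Xi)a^{(2)})\,\{\Xi_i,\Xi_j\},
\end{equation*}
so the identity reduces to the three Poisson-commutation relations $\{\xi_\tau,\xi_\phi\}=\{\xi_\tau,\La\}=\{\xi_\phi,\La\}=0$. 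The first is immediate from the commutativity of the Killing fields $\pr_\tau,\pr_\phi$, noting that $\pr_\tphi=\pr_\phi$ as vector fields since $\tphi-\phi$ depends only on $r$. The other two, reformulated as $\{\xi_\tau,\La^2\}=\{\xi_\phi,\La^2\}=0$, encode the Carter hidden symmetry of Kerr: the tensor $\Carter$ is manifestly $\tau$- and $\phi$-invariant, so $\mathcal{L}_{\pr_\tau}\Carter=\mathcal{L}_{\pr_\phi}\Carter=0$. This Carter-symmetry step is the main conceptual obstacle of the lemma.

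Item (3) is a direct application of the quantization formula of Lemma \ref{lemma:particularcaseWeylquantizationpolynomialxi:onMM}. The identities for $\xi_\tau$, $\xi_r$ and the constant-coefficient quadratic monomials $\xi_\tau^j\xi_r^{2-j}$ are immediate as the lower-order corrections vanish identically. The identities involving $\xi_\tphi=\tfrac{\pr x^a}{\pr\tphi}\xi_a$ exploit the isochore property of Lemma \ref{lemma:isochorecoordinates}: since $\sqrt{\det(\mathring{\ga})}=1$ in these coordinates, the rotation Killing field $\pr_\tphi$ is divergence-free with respect to the Lebesgue measure, i.e., $\pr_{x^a}\bigl(\tfrac{\pr x^a}{\pr\tphi}\bigr)=0$, which kills the first-order Weyl correction and yields $\Opw(\xi_\tphi)=D_\tphi$, $\Opw(\xi_\tau\xi_\tphi)=D_\tau D_\tphi$ and $\Opw(\xi_r\xi_\tphi)=D_rD_\tphi$ exactly. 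For $\Opw(\xi_\tphi^2)$ and $\Opw(\La^2)$, I will expand $D_\tphi^2$ and $-\Delta_{\mathring{\ga}}-a^2\sin^2\theta\,\pr_\tau^2$ into a pure second-order part plus a first-order correction and check that these first-order corrections match exactly those produced by Lemma \ref{lemma:particularcaseWeylquantizationpolynomialxi:onMM}, leaving only a smooth scalar remainder $\tfrac14 D_{x^\a}D_{x^\b}a^{\a\b}\in\Opw(\widetilde{S}^{0,0}(\MM))$. Finally, $\Opw(\xi_{r^*})=D_{r^*}-\tfrac{i}{2}\mu'$ follows by expanding $\xi_{r^*}$ via \eqref{def:xirstar}: the coefficients in front of $\xi_\tau$ and $\xi_\phi$ produce no Weyl correction (using again $\pr_{x^a}\bigl(\tfrac{\pr x^a}{\pr\tphi}\bigr)=0$ and $\tau$-independence), while the only nontrivial $r$-dependence, $\mu(r)$ in front of $\xi_r$, generates $\tfrac12 D_r\mu=-\tfrac{i}{2}\mu'$.
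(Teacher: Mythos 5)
Your proof is correct in substance and, for items (1) and (3), follows essentially the paper's route: item (1) is treated as immediate in the paper as well, and item (3) is the same computation based on Lemma \ref{lemma:particularcaseWeylquantizationpolynomialxi:onMM} and the isochore identity $\pr_{x^a}\big(\tfrac{\pr x^a}{\pr\tphi}\big)=0$, except that for $\Opw(\xi_{\tphi}^2)$ the paper invokes the composition rule of Proposition \ref{prop:PDO:MM:Weylquan:mixedoperators} while you match the first-order Weyl corrections directly; both work. For item (2) your route genuinely differs: the paper computes the bracket in the $(\tau,r,\th,\tphi)$ coordinates and observes that both symbols depend on $(\th,\xi_\th)$ only through $\La$, so the $\th$-contributions cancel by antisymmetry of the bracket, whereas you reduce everything to the pairwise relations $\{\xit,\xiphi\}=\{\xit,\La\}=\{\xiphi,\La\}=0$, proved from the commutation of the coordinate vectorfields and from $\LL_{\pr_\tau}\Carter=\LL_{\pr_{\tphi}}\Carter=0$. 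The two arguments are equivalent; yours is slightly more structural, the paper's slightly more elementary (what is used is only $\tau$- and $\tphi$-invariance of the Carter tensor plus the cancellation in the $\th$-slot, not any deeper ``hidden symmetry'').

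One side remark in your item (2) is wrong and should be deleted: symbols in $\widetilde{S}^{N_1}_{pol}(\MM)$ are \emph{not} independent of $\xi_r$; by Definition \ref{def:thesymbolswereallyuse} they are polynomials in $\xi_r$ of degree up to $N_1$ (for instance $\sigma(|q|^2\Box_{\gam})=-\De\xi_r^2-2\S_1\xi_r+\S_2$ belongs to $\widetilde{S}^{2}_{pol}(\MM)$), so your parenthetical claim that the first term of \eqref{eq:basiccomputationPoissonbracketsymbolsusedinwaveandmultipliers} vanishes because $\pr_{\xi_r}a^{(1)}=0$ is false — and in the paper's application of this formula that first term, $\pr_{\xi_r}a^{(1)}\pr_r(\chi(\Xi)a^{(2)})$, is precisely the main contribution. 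This slip does not invalidate your derivation of the identity itself, since the $(r,\xi_r)$ block of the bracket yields both terms whether or not $a^{(1)}$ depends on $\xi_r$, and your reduction of the tangential block to the brackets $\{\Xi_i,\Xi_j\}$ nowhere uses $\pr_{\xi_r}a^{(1)}=0$; but carrying the false belief forward would wreck the later use of the formula, so it is worth correcting.
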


\begin{proof}
The first property is obvious and we focus on the two other ones. For the second property, we first use Leibniz rule which yields
\beaa
\{a^{(1)}, \chi(\Xi)a^{(2)}\}=\chi(\Xi)\{a^{(1)}, a^{(2)}\}+\{a^{(1)}, \chi(\Xi)\}a^{(2)}.
\eeaa
Next, we focus on the computation of $\{a^{(1)}, a^{(2)}\}$. To this end, we rely on the fact that the Poisson bracket is invariant under coordinates changes to compute $\{a^{(1)}, a^{(2)}\}$ in the $(\tau, r, \th, \tphi)$ coordinates system. Since 
\beaa
\bsplit
&\pr_\tau(r,\xi_r, \xi_\tau, \xi_{\tphi}, \La)=0, \qquad \pr_r(r)=1, \qquad \pr_r(\xi_r, \xi_\tau, \xi_{\tphi}, \La)=0,\\
&\pr_\th(r,\xi_r, \xi_\tau, \xi_{\tphi})=0, \qquad \pr_{\tphi}(r,\xi_r, \xi_\tau, \xi_{\tphi}, \La)=0,
\end{split}
\eeaa
and since the symbols $a^{(1)}$ and $a^{(2)}$ are functions of $(r, \xi_r, \xit, \xiphi, \La)$, we obtain 
\beaa
\{a^{(1)}, a^{(2)}\} &=& \pr_{\xi_r}a^{(1)}\pr_r a^{(2)} - \pr_r a^{(1)}\pr_{\xi_r}a^{(2)}+\pr_{\xi_\th}(\La)\pr_\La a^{(1)}\pr_\th(\La)\pr_\La a^{(2)}\\
&&-\pr_\th(\La)\pr_\La a^{(1)}\pr_{\xi_\th}(\La)\pr_\La a^{(2)}\\
&=& \pr_{\xi_r}a^{(1)}\pr_r a^{(2)} - \pr_r a^{(1)}\pr_{\xi_r}a^{(2)}.
\eeaa
Similarly, we have
\beaa
\{a^{(1)}, \chi(\Xi)\}=\pr_{\xi_\th}(\La)\pr_\La a^{(1)}\pr_\th(\La)\pr_\La\chi(\Xi)-\pr_\th(\La)\pr_\La a^{(1)}\pr_{\xi_\th}(\La)\pr_\La\chi(\Xi)= 0
\eeaa
and hence
\beaa
\{a^{(1)}, \chi(\Xi)a^{(2)}\}&=& \chi(\Xi)\{a^{(1)}, a^{(2)}\}+\{a^{(1)}, \chi(\Xi)\}a^{(2)}\\
&=& \pr_{\xi_r}a^{(1)}\pr_r(\chi(\Xi)a^{(2)}) - \pr_r a^{(1)}\pr_{\xi_r}(\chi(\Xi)a^{(2)})
\eeaa
as stated.

It remains to prove the third property. The identities concerning the Weyl quantization of $\xi_\tau$, $\xi_r$, $\xi_{r*}$ and $\xi_\tau^j\xi_r^{2-j}$ follow immediately from Lemma \ref{lemma:particularcaseWeylquantizationpolynomialxi:onMM}. Also, the identities concerning the Weyl quantization of $\xi_{\tphi}$, $\xi_\tau\xi_{\tphi}$ and $\xi_r\xi_{\tphi}$ follow from Lemma \ref{lemma:particularcaseWeylquantizationpolynomialxi:onMM}, using also $\pr_{\tphi}=\frac{\pr x^a}{\pr\tphi}\pr_{x^a}$ and the fact that the isochore coordinates of Lemma \ref{lemma:isochorecoordinates} satisfy the following identities\footnote{These identities follow immediately from the following identities for the isochore coordinates of Lemma \ref{lemma:isochorecoordinates} 
\beaa
\frac{\partial x^1_0}{\partial \tphi}=0, \qquad \frac{\partial x^2_0}{\partial \tphi}=1,\qquad \frac{\partial x^1_p}{\partial \tphi} = -\sin(x_p^2)\sqrt{1-(x_p^1)^2},\qquad
\frac{\partial x^2_p}{\partial \tphi} = \frac{x_p^1\cos(x^2_p)}{\sqrt{1-(x^1_p)^2}}.
\eeaa}   
\bea\lab{eq:niceidentity:isochorecoordinatesandnormalizedcoordinates}
\pr_{x^a_0}\left(\frac{\pr x^a_0}{\pr\tphi}\right)=0, \qquad \pr_{x^a_p}\left(\frac{\pr x^a_p}{\pr\tphi}\right)=0. 
\eea

Next, the identity concerning the Weyl quantization of $\xi_{\tphi}^2$ follows immediately from the one for $\xi_{\tphi}\in\widetilde{S}^{1,0}(\MM)$ and \eqref{eq:propWeylquantization:MM:composition:mixedsymbols} which implies 
\beaa
\Opw(\xi_{\tphi}^2) &=& \Opw(\xi_{\tphi})\circ\Opw(\xi_{\tphi})+\Opw(\widetilde{S}^{0,0}(\MM))\\
&=& D_{\tphi}^2+\Opw(\widetilde{S}^{0,0}(\MM))
\eeaa
as stated.

Finally, in view of Lemma \ref{lemma:particularcaseWeylquantizationpolynomialxi:onMM}, we have
\beaa
\Opw(\La^2) &=& \Opw(\mathring{\ga}^{bc}\xi_b\xi_c)+a^2\Opw(\sin^2\th \eta_0^2)\\
&=& \mathring{\ga}^{bc}D_bD_c+D_b(\mathring{\ga}^{bc})D_c+\Opw(\widetilde{S}^{0,0}(\MM))+a^2\sin^2\th D_\tau^2\\
&=& -\mathring{\ga}^{bc}\pr_b\pr_c -\pr_b(\mathring{\ga}^{bc})\pr_c-a^2\sin^2\th \pr_\tau^2+\Opw(\widetilde{S}^{0,0}(\MM)).
\eeaa
Now, since the coordinates of Lemma \ref{lemma:isochorecoordinates} are isochore for $\mathring{\ga}$, we have 
\beaa
\De_{\mathring{\ga}}\psi=\pr_b(\mathring{\ga}^{bc}\pr_c\psi) =\mathring{\ga}^{bc}\pr_b\pr_c\psi+\pr_b(\mathring{\ga}^{bc})\pr_c\psi
\eeaa
and hence
\beaa
\Opw(\La^2)=-\De_{\mathring{\ga}}-a^2\sin^2\th\pr_{\tau}^2+\Opw(\widetilde{S}^{0,0}(\MM)).
\eeaa
This concludes the proof of Lemma \ref{lemma:simplepropertiesspecialcaseofsymbols:recoveringoperators}. 
\end{proof}

Next, the following lemma allows to write, to main order, a rescaling of the scalar wave operator on $\MM$ as the Weyl quantization of a mixed symbol of order $(2,2)$.
\begin{lemma}
\label{lem:WeylquantizedtoBoxg}
Let $f_0$ be the scalar function on $\MM$ constructed in Lemma  \ref{eq:spacetimevolumeformusingisochorecoordinates}, i.e., $f_0=\sqrt{|\det(\g)|}$ in the coordinates systems of Section \ref{sec:isochorecoordinatesonHr}. Then, we have
\bea
f_0\Box_{\g}=\Opw (-f_0\g^{\a\b}\xi_{\a}\xi_{\b})+\Opw(\widetilde{S}^{0,0}(\MM)).
\eea
\end{lemma}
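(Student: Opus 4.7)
\medskip

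\noindent\textbf{Proof plan.} The strategy is a direct computation that compares the differential operator $f_0\Box_\g$ with the Weyl quantization of the symbol $-f_0\g^{\a\b}\xi_\a\xi_\b$, whose $(x,\xi)$-expression in the chart coordinates $(\tau,r,x^1,x^2)$ is a quadratic polynomial in $\xi$, and which therefore lies in $\widetilde{S}^{2,2}(\MM)$ since $f_0$ and the $\g^{\a\b}$ are smooth in $x$ with the required uniform bounds in each chart.

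First I would rewrite the wave operator using Lemma \ref{lemma:spacetimevolumeformusingisochorecoordinates}. In the isochore coordinates of Section \ref{sec:isochorecoordinatesonHr}, we have $\sqrt{|\g|} = f_0$, so
\begin{equation*}
f_0\Box_\g\psi = \pr_\a\bigl(f_0\g^{\a\b}\pr_\b\psi\bigr) = f_0\g^{\a\b}\pr_\a\pr_\b\psi + \pr_\a(f_0\g^{\a\b})\pr_\b\psi.
\end{equation*}
Next, I would apply the third identity of Lemma \ref{lemma:particularcaseWeylquantizationpolynomialxi:onMM} to the symbol $a_2 = -f_0\g^{\a\b}\xi_\a\xi_\b$, i.e., with $\tilde a_{2,q}^{\a\b} = -f_0\g^{\a\b}$ in each chart. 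Since $D_{x^\a} = -i\pr_{x^\a}$, that identity yields
\begin{equation*}
\Opw(-f_0\g^{\a\b}\xi_\a\xi_\b)\psi = f_0\g^{\a\b}\pr_\a\pr_\b\psi + \pr_\a(f_0\g^{\a\b})\pr_\b\psi + \tfrac{1}{4}\pr_\a\pr_\b(f_0\g^{\a\b})\,\psi.
\end{equation*}

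Comparing the two displays, I obtain
\begin{equation*}
\Opw(-f_0\g^{\a\b}\xi_\a\xi_\b) - f_0\Box_\g = \tfrac{1}{4}\pr_\a\pr_\b(f_0\g^{\a\b}),
\end{equation*}
which is multiplication by a smooth function of $x$ alone. By the first identity of Lemma \ref{lemma:particularcaseWeylquantizationpolynomialxi:onMM}, multiplication by a smooth function equals $\Opw(a_0)$ where $a_0 \in\widetilde{S}^{0,0}(\MM)$, provided one checks the required bounds. These bounds follow from the assumptions of Section \ref{subsubsect:assumps:perturbedmetric}: by \eqref{eq:controloflinearizedinversemetriccoefficients} and \eqref{eq:spacetimevolumeformusingisochorecoordinates}, the coefficients $f_0\g^{\a\b}$ are smooth with bounded derivatives in each chart $\varphi_q(U_q)$ (the $r^{-2}$-type decay of $\g^{\tau\tau},\g^{\tau a},\g^{ab}$ ensures that $f_0\g^{\a\b}$ and all its coordinate derivatives are uniformly bounded in $x$), so $\pr_\a\pr_\b(f_0\g^{\a\b})\in C^\infty_b$, i.e., a legitimate element of $\widetilde{S}^{0,0}(\MM)$.

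The only mild obstacle is the book-keeping at the level of the partition of unity $(\chi_q)$ used in Definition \ref{def:weylquantizationforrhomsymbolsMM}: since both sides of the identity are globally well-defined on $\MM$ and the chart-wise formula in Lemma \ref{lemma:particularcaseWeylquantizationpolynomialxi:onMM} already incorporates the partition of unity, the global identity follows by summing over $q=0,p$ and using $\chi_0+\chi_p=1$. This finishes the proof.
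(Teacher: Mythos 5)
Your proof is correct and follows essentially the same route as the paper: apply the exact Weyl-quantization formula for second-order polynomial symbols (Lemma \ref{lemma:particularcaseWeylquantizationpolynomialxi:onMM}) to $-f_0\g^{\a\b}\xi_\a\xi_\b$, use $f_0=\sqrt{|\det(\g)|}$ in the isochore coordinates to recognize $f_0\g^{\a\b}\pr_\a\pr_\b+\pr_\a(f_0\g^{\a\b})\pr_\b$ as $f_0\Box_\g$, and absorb the remaining multiplication term $\tfrac14\pr_\a\pr_\b(f_0\g^{\a\b})$ into $\Opw(\widetilde{S}^{0,0}(\MM))$. The only cosmetic slip is the parenthetical claim that $f_0\g^{\a\b}$ itself has all derivatives uniformly bounded (false for $f_0\g^{rr},f_0\g^{r\tau}\sim r^2$); what matters, and what does hold, is boundedness of the contracted quantity $\pr_\a\pr_\b(f_0\g^{\a\b})$ and its derivatives, a point the paper likewise leaves implicit.
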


\begin{proof}
Relying on Lemma \ref{lemma:particularcaseWeylquantizationpolynomialxi:onMM}, we have
\beaa
\Opw (f_0\g^{\a\b}\xi_{\a}\xi_{\b}) &=& f_0\g^{\a\b}D_{x^\a}D_{x^\b}+D_{x^\a}(f_0\g^{\a\b})D_{x^\b}+\Opw(\widetilde{S}^{0,0}(\MM))\\
&=& -f_0\g^{\a\b}\pr_{x^\a}\pr_{x^\b}-\pr_{x^\a}(f_0\g^{\a\b})\pr_{x^\b}+\Opw(\widetilde{S}^{0,0}(\MM)).
\eeaa
Now, since $f_0=\sqrt{|\det(\g)|}$ in the coordinates systems of Section \ref{sec:isochorecoordinatesonHr}, see \eqref{eq:spacetimevolumeformusingisochorecoordinates}, we have
\beaa
f_0\g^{\a\b}\pr_{x^\a}\pr_{x^\b}\psi+\pr_{x^\a}(f_0\g^{\a\b})\pr_{x^\b}\psi &=& \sqrt{|\det(\g)|}\g^{\a\b}\pr_{x^\a}\pr_{x^\b}\psi+\pr_{x^\a}(\sqrt{|\det(\g)|}\g^{\a\b})\pr_{x^\b}\psi\\
&=& \pr_{x^\a}(\sqrt{|\det(\g)|}\g^{\a\b}\pr_{x^\b}\psi)= \sqrt{|\det(\g)|}\square_\g\psi\\
&=& f_0\square_\g\psi
\eeaa
and hence 
\beaa
\Opw (f_0\g^{\a\b}\xi_{\a}\xi_{\b}) &=& -f_0\square_\g+\Opw(\widetilde{S}^{0,0}(\MM))
\eeaa
as stated.
\end{proof}

Finally, we introduce a notation for the symbol of an operator corresponding to the Weyl quantization of a mixed symbol. 
\begin{definition}\lab{def:notationforthesymbolofWeylquantizationmixedsymbol}
If $A=\Opw(a)$ for some mixed symbol $a\in\widetilde{S}^{m,N}(\MM)$, then we denote the symbol $a$ of $A$ by $\sigma(A)$, i.e., 
\beaa
\sigma(A):=a, \qquad A=\Opw(a).
\eeaa
\end{definition}

\begin{remark}\lab{rmk:symbolesofimportantoperators}
In view of Definition \ref{def:notationforthesymbolofWeylquantizationmixedsymbol} and Lemma \ref{lemma:simplepropertiesspecialcaseofsymbols:recoveringoperators}, 
we have
\beaa
\bsplit
\sigma(D_\tau)&=\xi_\tau, \qquad \sigma(D_r)=\xi_r, \qquad \sigma(D_{\tphi})=\xi_{\tphi}, \qquad \sigma(D_{r^*})=\xi_{r^*}+\frac{i}{2}\mu',\\
\sigma(D_\tau^jD_r^{2-j})&=\xi_\tau^j\xi_r^{2-j}, \,\, j=0,1,2,\qquad \sigma(D_\tau D_{\tphi})=\xi_\tau\xi_{\tphi}, \qquad \sigma(D_rD_{\tphi})=\xi_r\xi_{\tphi},\\
\sigma(D_{\tphi}^2)&=\xi_{\tphi}^2+\widetilde{S}^{0,0}(\MM), \qquad \sigma(\De_{\mathring{\ga}}+a^2\sin^2\th\pr_{\tau}^2)=-\La^2+\widetilde{S}^{0,0}(\MM).
\end{split}
\eeaa
Also, in view of Definition \ref{def:notationforthesymbolofWeylquantizationmixedsymbol} and Lemma  \ref{lem:WeylquantizedtoBoxg}, we have
\beaa
\sigma(f_0\Box_{\g})= -f_0\g^{\a\b}\xi_{\a}\xi_{\b}+\widetilde{S}^{0,0}(\MM).
\eeaa
\end{remark}


\section{Proof of global energy-Morawetz estimates}
\lab{sec:proofofth:main:intermediary}


The goal of this section is to prove Theorem \ref{th:main:intermediary}. To this end, we first introduce microlocal energy-Morawetz norms.


\subsection{Definition of microlocal energy-Morawetz norms}


\begin{definition}[Microlocal energy-Morawetz norms]
\lab{def:microlocalenergyMorawetznorms}
Let $\sigma_{\trap}\in\widetilde{S}^{1,0}(\MM)$ be defined, for all $q=0,p$, for all $x=(x', r)\in\varphi_q(U_q)$ and for all $\xi\in\mathbb{R}^4$, by
\bea\lab{eq:definitionofthesymbolsigmatrap}
\sigma_{\trap}(\varphi_q^{-1}(x),\xi_\a dx^\a):=(r-{r_{\trap}}){\upsilon},
\eea 
where {$\upsilon\in\widetilde{S}^{1,0}(\MM)$ is given by} 
{\bea
\lab{def:upsilonsymbol}
\upsilon:=\sqrt{1+\xi_0^2+\mathring{\ga}^{bc} \langle \xi, \pr_{x^b}\rangle \langle \xi, \pr_{x^c}\rangle}
\eea}
  and $r_{\trap}\in\widetilde{S}^{0,0}(\MM)$ is defined in \eqref{eq:definitionofrtrapinfunctionofrmaxandcutoffinmathcalG5}{, and let $e\in \widetilde{S}^{1,0}(\MM)$ be given as in \eqref{eq:defintionofthesymboleasasquarerootofsigma2TXEmsumofsquares:1:defe}.} Then, we define the microlocal Morawetz norm $\widetilde {\M}[\psi]$ by
\bea\lab{eq:definitionofmicrolocalMorawetznormwidetildeM}
\widetilde {\M}[\psi]:={}{\M}[\psi](\Reals) +\int_{\MM_{r_+{(1+2\dhor)}, 10m}}|\Opw(\sigma_{\trap})\psi|^2{+\int_{\MM_{r_+{(1+2\dhor)}, 10m}}|\Opw(e)\psi|^2}.
\eea
We also introduce 
\bea\lab{eq:definitionwidetildemathcalNpsijnormRHS}
\widetilde{\mathcal{N}}[\psi, F](\mathbb{R})&:=&\widetilde{\mathcal{N}}_{trap}[\psi, F](\mathbb{R}) +\sup_{\tau\in\Reals}\bigg|\int_{\Mntrap(-\infty, \tau)}{\ov{\pr_{\tau}\psi}}F \bigg|\nn\\
&&+\int_{\Mntrap}\big(|\pr_r\psi|+r^{-1}|\psi|\big)|F|+\int_{\MM}|F|^2,
\eea
where $\widetilde{\mathcal{N}}_{trap}[\psi, F]$ is defined by
\bea
\lab{def:widetildeNtrap}
\widetilde{\NN}_{trap}[\psi, F](\Reals):=\int_{\Mtrap}|F|{|S^1\psi|},
\eea
with {$S^1=(S^{1,0}, S^{1,1},\cdots, S^{1,\iota})\in(\Opw(\widetilde{S}^{1,0}(\MM)))^{1+\iota}$} being defined by 
\beaa
{S^1:=\Big(\pr_\tau, \Opw(\Theta_1)V_1\Opw(\Theta_1),\cdots, \Opw(\Theta_{\iota})V_{\iota} \Opw(\Theta_{\iota})\Big),}
\eeaa
with the operators {$V_i\in\Opw(\widetilde{S}^{1,0}(\MM))$}, $1\leq i\leq \iota$, being defined as in  \eqref{eq:defintionofthedifferentialoperatorsViformicrolocalNRGestimates}, and with the {symbols} $\Theta_i\in\widetilde{S}^{0,0}(\MM)$, $1\leq i\leq \iota$, being introduced in \eqref{eq:definitionofpartitionofunityThetajj=1toiota}.

Finally, for any nonnegative integer $\reg$, let
\bea
\widetilde{\M}^{(s)}[\psi]:=\sum_{0\leq \abs{i}\leq s}\widetilde{\M}[\pr^{i}\psi],\qquad \widetilde{\mathcal{N}}^{(\reg)}[\psi, F](\mathbb{R}):=\sum_{0\leq \abs{i}\leq s}\widetilde{\mathcal{N}}[\pr^i\psi,\pr^iF](\mathbb{R})
\eea
and
\bea
\widetilde{\EMF}^{(s)}[\psi]:= \sup_{\tt\in \Reals} \E^{(s)}[\psi](\tt) + \widetilde{\M}^{(s)}[\psi]+\F^{(s)}[\psi](\Reals).
\eea
\end{definition}

We have the following comparison of $\widetilde{\mathcal{N}}[\psi, F](\Reals)$ with $\widehat{\mathcal{N}}[\psi, F]$.
\begin{lemma}\lab{lemma:additivepropertyofwidetildeN}
Given $N\geq 2$, consider any partition of $\mathbb{R}$ in intervals of the following form
\beaa
\Reals=(-\infty, \tau^{(1)}]\cup\bigcup_{j=1}^{N-1}( \tau^{(j)},  \tau^{(j+1)}]\cup( \tau^{(N)}, +\infty), \qquad -\infty< \tau^{(1)}<\cdots< \tau^{(N)}<+\infty.
\eeaa
{Then,} for $F$ supported in $\tau\geq 1$, $\widetilde{\mathcal{N}}[\psi, F](\mathbb{R})$ satisfies the following, for any $0<\la\leq 1$,
\beaa
\bsplit
\widetilde{\mathcal{N}}[\psi, F](\mathbb{R}) \les_N& \la^{-1}\widehat{\mathcal{N}}[\psi, F](-\infty, \tau^{(1)}+1)+\la^{-1}\sum_{j=1}^{N-1}\widehat{\mathcal{N}}[\psi, F](\tau^{(j)}-1, \tau^{(j+1)}+1)\\
&+\la^{-1}\widehat{\mathcal{N}}[\psi, F](\tau^{(N)}-1, +\infty) +\la\EM[\psi](\Reals) +\left(\int_{\Mtrap}|F|^2\right)^{\frac{1}{2}}\left(\int_{\Mtrap}|\psi|^2\right)^{\frac{1}{2}},
\end{split}
\eeaa
where $\widehat{\mathcal{N}}[\psi, F](\tau_1, \tau_2)$, for any $\tau_1<\tau_2$, is defined in \eqref{eq:defintionwidehatmathcalNfpsinormRHS}.
\end{lemma}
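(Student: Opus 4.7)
The plan is to decompose $\widetilde{\NN}[\psi,F](\Reals)$ into its four constituent summands and bound each separately, reducing everything to sums of $\widehat{\NN}$ over the thickened intervals $(\tau^{(k)}-1,\tau^{(k+1)}+1)$ (with the conventions $\tau^{(0)}:=-\infty$, $\tau^{(N+1)}:=+\infty$), which cover $\Reals$ with multiplicity bounded by a constant depending only on $N$. Since $F$ is supported in $\tau\geq 1$, integrating over the full real line is, modulo finite overlap, the same as summing over this cover.

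First I would dispatch the two straightforward summands $\int_{\MM}|F|^2$ and $\int_{\Mntrap}(|\pr_r\psi|+r^{-1}|\psi|)|F|$ by splitting the domain of integration along the cover and recognizing each piece as contained in the corresponding $\widehat{\NN}$ on the thickened interval; no $\lambda$-splitting is needed here, so these contribute to the $\lambda^{-1}$ sum with room to spare. For the running supremum term $\sup_{\tau}\big|\int_{\Mntrap(-\infty,\tau)}\pr_\tau\psi\,F\big|$, given $\tau$ I locate the unique index $k_{*}$ with $\tau\in(\tau^{(k_*)},\tau^{(k_*+1)}]$ and decompose
\[
\int_{\Mntrap(-\infty,\tau)}\pr_\tau\psi\,F
=\sum_{k=0}^{k_{*}-1}\int_{\Mntrap(\tau^{(k)},\tau^{(k+1)}]}\pr_\tau\psi\,F
+\int_{\Mntrap(\tau^{(k_{*})},\tau]}\pr_\tau\psi\,F.
\]
Each sub-integral is in absolute value bounded by the \emph{internal} sup appearing in $\widehat{\NN}$ on the corresponding thickened interval (the crucial point being that $\widehat{\NN}(\tau_a,\tau_b)$ is defined with a supremum over all $\tau\in[\tau_a,\tau_b]$, not just at the endpoint). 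Summing over $k\leq k_{*}$ and then taking $\sup_\tau$ produces the claimed $\les_N\lambda^{-1}\sum_k\widehat{\NN}$ bound.

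The substantial part is the microlocal trapping term $\widetilde{\NN}_{trap}[\psi,F]=\int_{\Mtrap}|F||S^1\psi|$. By Cauchy--Schwarz
\[
\widetilde{\NN}_{trap}\leq\|F\|_{L^2(\Mtrap)}\,\|S^1\psi\|_{L^2(\Mtrap)}.
\]
The key microlocal ingredient is the (as-yet unestablished) bound
\[
\|S^1\psi\|_{L^2(\Mtrap)}^2\lesssim \EM[\psi](\Reals)+\|\psi\|_{L^2(\Mtrap)}^2,\qquad(\ast)
\]
which I would prove using Proposition~\ref{prop:PDO:MM:Weylquan:mixedoperators} to expand $(S^{1,i})^{*}S^{1,i}$ modulo lower-order PDOs: since each $S^{1,i}=\Opw(\Theta_i)V_i\Opw(\Theta_i)$ is conjugated by the frequency cutoffs $\Theta_i$ (to be introduced in \eqref{eq:definitionofpartitionofunityThetajj=1toiota}), the symbol of $(S^{1,i})^*S^{1,i}$ is supported where the $V_i$-directions are compatible with the non-degenerate Morawetz symbol, allowing an integration by parts that recasts the quadratic form as $\M[\psi](\Reals)+\|\psi\|_{L^2(\Mtrap)}^2$ plus remainders controlled by $\sup_\tau\E[\psi](\tau)$ via Lemma~\ref{lemma:basicgardingfortangentialsymbols:MM} and the commutator identities of Proposition~\ref{prop:PDO:MM:Weylquan:mixedoperators}. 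Given $(\ast)$, Young's inequality
\[
\|F\|_{L^2(\Mtrap)}(\EM[\psi](\Reals))^{1/2}\leq \lambda\,\EM[\psi](\Reals)+\tfrac{1}{4\lambda}\|F\|_{L^2(\Mtrap)}^{2}
\]
delivers the $\lambda\EM$ contribution in the statement, while the $\lambda^{-1}\|F\|^2_{L^2(\Mtrap)}$ is absorbed into $\lambda^{-1}\sum_k\widehat{\NN}$, and the residual $\|F\|_{L^2(\Mtrap)}\|\psi\|_{L^2(\Mtrap)}$ matches the final term.

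The hard part is $(\ast)$: the presence of the unadorned $\pr_\tau$ as the zeroth component of $S^1$ means that $\|S^1\psi\|_{L^2(\Mtrap)}^2$ contains $\int_{\Mtrap}|\pr_\tau\psi|^2$, and the Morawetz norm $\M$ is degenerate in the $\pr_\tau$ and $\nab$ directions precisely on $\Mtrap$. This will force me to exploit the specific microlocal decomposition of the multipliers (namely, that the $V_i$'s and $\Theta_i$'s collectively partition the tangential $1$-jet into directions either tangent to the trapped set or compatible with the non-degenerate part of $\M$), and to use integration by parts at the boundaries $r=r_+(1+2\dbl)$ and $r=10m$ of $\Mtrap$ carefully, picking up boundary fluxes that are controlled by $\F_{\AA}$ and by Morawetz estimates on $\MM_{\nontrap}$ respectively. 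This is the only step in the argument whose details are not purely bookkeeping.
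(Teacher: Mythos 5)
Your reduction of the non‑trapped terms to the thickened intervals is fine and is essentially the paper's first step (subadditivity of $\widetilde{\mathcal{N}}$ over the cover), but the proof breaks at the trapping term, exactly where you flag the difficulty. The estimate you would need,
$(\ast)\ \|S^1\psi\|_{L^2(\Mtrap)}^2\les \EM[\psi](\Reals)+\|\psi\|_{L^2(\Mtrap)}^2$, is not true and cannot be rescued by manipulating the cutoffs $\Theta_i$: the first component of $S^1$ is the bare $\pr_\tau$, with no microlocal localization, and the Morawetz norm $\M$ degenerates precisely in the $\pr_\tau$ (and angular) directions on $\Mtrap$, while $\sup_{\tau}\E[\psi](\tau)$ only controls $\int_{\Si(\tau)}|\pr_\tau\psi|^2$ slice by slice and cannot be integrated over the unbounded $\tau$‑range without a weight. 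A function supported in $r\in(r_+(1+2\dbl),10m)$ of the form $\tau^{-1/2-\epsilon}\sin(\tau^{1+\delta})$ (suitable $\delta$) has finite $\EM[\psi](\Reals)$ and finite $\int_{\Mtrap}|\psi|^2$ but infinite $\int_{\Mtrap}|\pr_\tau\psi|^2$; nothing in the lemma assumes $\psi$ solves a wave equation, and even for solutions no such unconditional bound is available — this is the trapping obstruction itself.

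The paper's mechanism is different and is what makes the statement true: after inserting interval cutoffs $\chi_j$ (whose commutators $[P^1,1-\chi_j]\in\Opw(\widetilde{S}^{0,0}(\MM))$ generate the final term $(\int_{\Mtrap}|F|^2)^{1/2}(\int_{\Mtrap}|\psi|^2)^{1/2}$), the main term $\int_{\Mtrap(I_j)}|F||P^1(\chi_j\psi)|$ is estimated in \emph{two} ways, matching the min inside $\widehat{\mathcal{N}}$: (i) Cauchy--Schwarz with weights $\tau^{\pm(1+\de)}$, where the key input is Lemma \ref{lem:gpert:MMtrap} — a dyadic-in-$\tau$ argument giving $\int_{\Mtrap}\tau^{-1-\dec}|S\psi|^2\les\EM[\psi](\Reals)$, the integrable weight being exactly what converts the sup energy into a spacetime bound; this yields $\la^{-1}\int_{\Mtrap(I_j^1)}\tau^{1+\de}|F|^2+\la\EM[\psi](I_j^1)$ and is the source of the $\la\EM[\psi](\Reals)$ term; (ii) plain Cauchy--Schwarz together with the mapping property of Lemma \ref{lemma:actionmixedsymbolsSobolevspaces:MM}, giving $(\int_{\Mtrap(I_j^1)}|F|^2)^{1/2}(\int_{\Mtrap(I_j^1)}|\pr\psi|^2)^{1/2}$. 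Your single Cauchy--Schwarz $\|F\|_{L^2(\Mtrap)}\|S^1\psi\|_{L^2(\Mtrap)}$ never exploits the $\tau$‑weight (nor the support of $F$ in $\tau\geq 1$), so it forces the false bound $(\ast)$; replacing that step by the weighted estimate of Lemma \ref{lem:gpert:MMtrap} and the two-branch bound above is not bookkeeping but the substance of the proof.
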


\begin{proof}
Let us denote the intervals $I_j$ by
\beaa
I_0:=(-\infty, \tau^{(1)}], \qquad I_j:=(\tau^{(j)}, \tau^{(j+1)}], \quad j=1,\cdots,N-1, \qquad I_N:=(\tau^{(N)}, +\infty).
\eeaa
Then, notice from the definition of $\widetilde{\mathcal{N}}[\psi, F](\mathbb{R})$ that 
\beaa
\widetilde{\mathcal{N}}[\psi, F](\mathbb{R})\leq \sum_{j=0}^N\widetilde{\mathcal{N}}[\psi, F](I_j),
\eeaa
where we defined more generally $\widetilde{\mathcal{N}}[\psi, F](\tau_1,\tau_2)$ on an arbitrary interval $(\tau_1, \tau_2)$ of $\mathbb{R}$ by 
\begin{align}\lab{eq:definitionwidetildemathcalNpsijnormRHS:local}
\widetilde{\mathcal{N}}[\psi, F](\tau_1,\tau_2):={}&\widetilde{\mathcal{N}}_{trap}[\psi, F](\tau_1,\tau_2) +\sup_{\tau\in[\tau_1,\tau_2]}\bigg|\int_{\Mntrap(\tau_1, \tau)}{\pr_{\tau}\psi} F \bigg|\nn\\
&+\int_{\Mntrap(\tau_1,\tau_2)}\big(|\pr_r\psi|+r^{-1}|\psi|\big)|F|+\int_{\MM(\tau_1,\tau_2)}|F|^2,
\end{align}
with
\beaa
\widetilde{{\mathcal{N}}}_{trap}[\psi, F](\tau_1, \tau_2):=\int_{\Mtrap(\tau_1, \tau_2)}|F|{|S^1\psi|}, \quad -\infty\leq\tau_1<\tau_2\leq+\infty,
\eeaa
where {$S^1\in(\Opw(\widetilde{S}^{1,0}(\MM)))^{1+\iota}$} is defined as in \eqref{def:widetildeNtrap}. Next, we consider smooth cut-off functions $\chi_j=\chi_j(\tau)$, $j=0, \cdots, N$ such that $0\leq\chi_j\leq 1$, with $\chi_j=1$ on $I_j$ and
\beaa
\bsplit
\textrm{supp}(\chi_0)&\subset(-\infty, \tau^{(1)}+1), \qquad \textrm{supp}(\chi_j)\subset(\tau^{(j)}-1, \tau^{(j+1)}+1),\,1\leq j\leq N-1, \\
\textrm{supp}(\chi_N)&\subset(\tau^{(N)}-1, +\infty).
\end{split}
\eeaa
Then, using the fact that $\chi_j=1$ on $I_j$, we have, for a PDO $P^1\in\Opw(\widetilde{S}^{1,0}(\MM))$,
\bea\lab{eq:intermediaryestimateproofaubadditivityoverintervalsofwidetildeNN:1}
\nn\int_{\Mtrap(I_j)}|F||P^1\psi| &\leq&   \int_{\Mtrap(I_j)}|F||P^1(\chi_j\psi)|+\int_{\Mtrap(I_j)}|F||[P^1, 1-\chi_j]\psi|\\
\nn&\les& \int_{\Mtrap(I_j)}|F||P^1(\chi_j\psi)|+\left(\int_{\Mtrap(I_j)}|F|^2\right)^{\frac{1}{2}}\left(\int_{\Mtrap}|P^0_j\psi|^2\right)^{\frac{1}{2}}\\
&\les& \int_{\Mtrap(I_j)}|F||P^1(\chi_j\psi)|+\left(\int_{\Mtrap}|F|^2\right)^{\frac{1}{2}}\left(\int_{\Mtrap}|\psi|^2\right)^{\frac{1}{2}},
\eea
where $P^0_j:=[P^1, 1-\chi_j]\in\Opw(\widetilde{S}^{0,0}(\MM))$, where we used  Lemma \ref{lemma:actionmixedsymbolsSobolevspaces:MM}, and where the intervals $I_j^1$ are given by
\beaa
I_0^1:=(-\infty, \tau^{(1)}+1), \quad I_j^1:=(\tau^{(j)}-1, \tau^{(j+1)}+1), \,\,\, j=1,\cdots,N-1, \quad I_N^1:=(\tau^{(N)}-1, +\infty),
\eeaa
so that $\chi_j$ is supported in $I^1_j$. 

Next, we estimate the first term of the RHS of  \eqref{eq:intermediaryestimateproofaubadditivityoverintervalsofwidetildeNN:1}. Using the fact that $F$ is supported in $\tau\geq 1$, and relying on Lemma \ref{lem:gpert:MMtrap} with $h(\tau)=\tau^{-1-\dec}$ and $\de_0=\de$, we have 
\beaa
\int_{\Mtrap(I_j)}|F||P^1(\chi_j\psi)| &\les& \left(\int_{\Mtrap(I_j)}\tau^{1+\de}|F|^2\right)^{\frac{1}{2}}\left(\int_{\Mtrap(I_j)}\tau^{-1-\de}|P^1(\chi_j\psi)|\right)^{\frac{1}{2}}\\
&\les& \left(\int_{\Mtrap(I_j)}\tau^{1+\de}|F|^2\right)^{\frac{1}{2}}\left(\EM[\chi_j\psi](\Reals)\right)^{\frac{1}{2}}\\
&\les&  \left(\int_{\Mtrap(I_j^1)}\tau^{1+\de}|F|^2\right)^{\frac{1}{2}}\left(\EM[\psi](I^1_j)\right)^{\frac{1}{2}} \\
&\les& \la^{-1}\int_{\Mtrap(I_j^1)}\tau^{1+\de}|F|^2+\la\EM[\psi](I^1_j)
\eeaa
for any $0<\la\leq 1$. Also, using Lemma \ref{lemma:actionmixedsymbolsSobolevspaces:MM}, we have
\beaa
\int_{\Mtrap(I_j)}|F||P^1(\chi_j\psi)| &\les& \left(\int_{\Mtrap(I_j)}|F|^2\right)^{\frac{1}{2}}\left(\int_{\Mtrap(I_j)}|P^1\chi_j\psi|^2\right)^{\frac{1}{2}}\\
&\les& \left(\int_{\Mtrap(I_j)}|F|^2\right)^{\frac{1}{2}}\left(\left(\int_{\Mtrap(I_j^1)}|\pr(\chi_j\psi)|^2\right)^{\frac{1}{2}}+\left(\int_{\Mtrap}|\psi|^2\right)^{\frac{1}{2}}\right)\\
&\les& \left(\int_{\Mtrap(I_j)}|F|^2\right)^{\frac{1}{2}}\left(\left(\int_{\Mtrap(I_j^1)}|\pr\psi|^2\right)^{\frac{1}{2}}+\left(\int_{\Mtrap}|\psi|^2\right)^{\frac{1}{2}}\right)\\
&\les& \left(\int_{\Mtrap(I_j^1)}|F|^2\right)^{\frac{1}{2}}\left(\int_{\Mtrap(I_j^1)}|\pr\psi|^2\right)^{\frac{1}{2}}\\
&&+\left(\int_{\Mtrap}|F|^2\right)^{\frac{1}{2}}\left(\int_{\Mtrap}|\psi|^2\right)^{\frac{1}{2}}.
\eeaa
Since we have chosen $0<\la\leq 1$, the two last estimates imply
\beaa
&&\int_{\Mtrap(I_j)}|F||P^1(\chi_j\psi)|\\
&\les& \la^{-1}\min\left[\left(\int_{\Mtrap(I_j^1)}|F|^2\right)^{\frac{1}{2}}\left(\int_{\Mtrap(I_j^1)}|\pr\psi|^2\right)^{\frac{1}{2}}, \int_{\Mtrap(I_j^1)}\tau^{1+\de}|F|^2\right] +\la\EM[\psi](I^1_j)\\
&&+\left(\int_{\Mtrap}|F|^2\right)^{\frac{1}{2}}\left(\int_{\Mtrap}|\psi|^2\right)^{\frac{1}{2}}\\
&\les& \la^{-1}\widehat{\mathcal{N}}[\psi, F](I_j^1)+\la\EM[\psi](I^1_j)+\left(\int_{\Mtrap}|F|^2\right)^{\frac{1}{2}}\left(\int_{\Mtrap}|\psi|^2\right)^{\frac{1}{2}}
\eeaa
which, together with \eqref{eq:intermediaryestimateproofaubadditivityoverintervalsofwidetildeNN:1}, yields 
\beaa
\int_{\Mtrap(I_j)}|F||P^1\psi| &\les& \int_{\Mtrap(I_j)}|F||P^1(\chi_j\psi)|+\left(\int_{\Mtrap}|F|^2\right)^{\frac{1}{2}}\left(\int_{\Mtrap}|\psi|^2\right)^{\frac{1}{2}}\\
&\les& \la^{-1}\widehat{\mathcal{N}}[\psi, F](I_j^1)+\la\EM[\psi](I^1_j)+\left(\int_{\Mtrap}|F|^2\right)^{\frac{1}{2}}\left(\int_{\Mtrap}|\psi|^2\right)^{\frac{1}{2}}.
\eeaa
In view of the expression \eqref{eq:defintionwidehatmathcalNfpsinormRHS}
 for $\widehat{\mathcal{N}}[ \psi, F](\tau_1, \tau_2)$ and the expression \eqref{eq:definitionwidetildemathcalNpsijnormRHS:local} for $\widetilde{\mathcal{N}}[ \psi, F](\tau_1, \tau_2)$, we infer from the above, for any $0<\la\leq 1$,
\beaa
\widetilde{\mathcal{N}}[\psi, F](\mathbb{R}) &\leq& \sum_{j=0}^N\widetilde{\mathcal{N}}[\psi, F](I_j)\\
&\les& \sum_{j=0}^N\left( \la^{-1}\widehat{\mathcal{N}}[\psi, F](I_j^1)+\la\EM[\psi](I^1_j)+\left(\int_{\Mtrap}|F|^2\right)^{\frac{1}{2}}\left(\int_{\Mtrap}|\psi|^2\right)^{\frac{1}{2}}\right)\\
&\les_N& \la^{-1}\sum_{j=0}^N\widehat{\mathcal{N}}[\psi, F](I_j^1)+\la\EM[\psi](\Reals)+\left(\int_{\Mtrap}|F|^2\right)^{\frac{1}{2}}\left(\int_{\Mtrap}|\psi|^2\right)^{\frac{1}{2}}
\eeaa
as stated. This concludes the proof of Lemma \ref{lemma:additivepropertyofwidetildeN}.
\end{proof}

We will also rely on the following lemma to absorb lower order terms.
\begin{lemma}\lab{lemma:controllowerordertermusingtrickprrrmrtrap}
We have
\beaa
\int_{\Mtrap}|\pr_\tau\psi|^2 &\les& \Big(\M_{r\leq 11m}[\pr_\tau\psi](\Reals)\Big)^{\frac{1}{2}}\left(\widetilde{\M}[\psi]\right)^{\frac{1}{2}}, \\ 
\int_{\Mtrap}|\pr_{\tphi}\psi|^2 &\les& \Big(\M_{r\leq 11m}[\pr_{\tphi}\psi](\Reals)\Big)^{\frac{1}{2}}\left(\widetilde{\M}[\psi]\right)^{\frac{1}{2}}.
\eeaa
\end{lemma}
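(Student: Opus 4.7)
My strategy is to perform an integration by parts in $\pr_r$, leveraging the fact that $\pr_\tau$ and $\pr_{\tphi}$ are Killing vectorfields for the background Kerr metric $\gam$ and only deviate from Killing by lower-order $\Ga_g$-perturbations (see Lemma \ref{lemma:controlofdeformationtensorsforenergyMorawetz} and Corollary \ref{cor:controloflinearizeddivergencecoordvectorfields}). The $r$-integration by parts will convert $|\pr_\tau\psi|^2$ into a product involving $\pr_r\pr_\tau\psi$, which is exactly what $\M_{r\leq 11m}[\pr_\tau\psi]$ controls.

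Concretely, let $\chi=\chi(r)$ be a smooth cutoff with $\chi=1$ on $\Mtrap$ and $\mathrm{supp}(\chi)\subset[r_+(1+\dbl),11m]$, and define the primitive $\Psi(r):=\int_{r_+(1-\dhor)}^r \chi^2(r')\,dr'$, so that $\Psi'=\chi^2$, $\Psi\equiv 0$ for $r\leq r_+(1+\dbl)$, and $\Psi$ saturates to a bounded constant for $r\geq 11m$. Integration by parts in $\pr_r$ with respect to the volume form $\sqrt{|\g|}\,d\Vref$ yields
\[
\int_{\MM}\chi^2|\pr_\tau\psi|^2\sqrt{|\g|}\,d\Vref
=-2\Re\int_{\MM}\Psi(r)(\pr_r\pr_\tau\psi)\,\overline{\pr_\tau\psi}\,\sqrt{|\g|}\,d\Vref+\mathrm{Err},
\]
where $\mathrm{Err}$ collects the contributions of the non-Killing correction $\pr_r(\sqrt{|\g|})/\sqrt{|\g|}$ coming from the perturbation and is $O(\ep)$-small by Corollary \ref{cor:controloflinearizeddivergencecoordvectorfields}, hence absorbable.

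Applying Cauchy--Schwarz to the main term gives
\[
\int_{\Mtrap}|\pr_\tau\psi|^2\;\leq\;C\,\M_{r\leq 11m}[\pr_\tau\psi]^{1/2}\Big(\int_{\MM}\Psi(r)|\pr_\tau\psi|^2\Big)^{1/2},
\]
since $\Psi$ is supported in $\{r\leq 11m\}$ and uniformly bounded. For the second factor, I split into $\Mtrap$ and $\Mntrap\cap\mathrm{supp}(\Psi)$: the $\Mtrap$-contribution is bounded by $\|\Psi\|_{L^\infty}\int_\Mtrap|\pr_\tau\psi|^2$, i.e.\ by the LHS itself; the $\Mntrap$-contribution is controlled by the $|\pr_\tau\psi|^2/r^{1+\de}$ piece of $\M[\psi]$ on $\Mntrap$, hence by $\widetilde{\M}[\psi]$. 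Combining these and using Young's inequality $ab\leq\eta a^2+\eta^{-1}b^2$ allows me to absorb the self-referential $\int_\Mtrap|\pr_\tau\psi|^2$ term into the LHS, producing the stated geometric-mean bound. The argument for $\pr_{\tphi}\psi$ is identical, with $\pr_\tau$ replaced by $\pr_{\tphi}$, since $\pr_{\tphi}$ is also Killing for $\gam$ and only perturbs by $\Ga_g$-terms.

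The main technical obstacle is not the scheme itself but the careful tracking of the $\ep$-errors arising from $(N_{\mathrm{det}})_r$ and from the $\widecheck{{}^{(\pr_\tau)}\pi}$ and $\widecheck{{}^{(\pr_{\tphi})}\pi}$ tensors, together with ensuring that the boundary term at $r=r_+(1-\dhor)$ vanishes (via $\Psi(r_+(1-\dhor))=0$) and that the term at $r=\infty$ is controlled via Lemma \ref{lem:nullnfFluxBdedByEnergy}. Interestingly, the microlocal $\Opw(\sigma_{\trap})\psi$-piece of $\widetilde{\M}[\psi]$ plays no essential role in this lemma: the bound already follows using only $\M[\psi]\subset\widetilde{\M}[\psi]$, which is why the statement is phrased in terms of $\widetilde{\M}[\psi]$ for uniformity with its later applications.
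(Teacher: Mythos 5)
Your scheme does not prove the stated inequality, and the place where it breaks is precisely the point you dismiss at the end. First, a structural problem: the primitive $\Psi$ of $\chi^2\geq 0$ cannot be compactly supported (it saturates to a positive constant for $r\geq 11m$), so your later assertion that ``$\Psi$ is supported in $\{r\leq 11m\}$'' contradicts your own construction; as written, either the $r$-boundary term at infinity or the bulk term $\int\Psi\,\pr_r\pr_\tau\psi\,\ov{\pr_\tau\psi}\,r^2$ extends over all of $\MM$ with $r$-weights that neither $\M_{r\leq 11m}[\pr_\tau\psi]$ nor $\widetilde{\M}[\psi]$ control. This part is repairable (e.g.\ cut $\Psi$ back down on $[11m,12m]\subset\Mntrap$, paying an admissible $\M[\psi]$ term), but the second problem is not. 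Your Cauchy--Schwarz plus ``absorb the self-referential term by Young'' yields, writing $A=\int_{\Mtrap}|\pr_\tau\psi|^2$ and $B=\M_{r\leq 11m}[\pr_\tau\psi](\Reals)$, the chain $A\les B^{1/2}(A+\widetilde{\M}[\psi])^{1/2}$, which only gives $A\les B+B^{1/2}\widetilde{\M}[\psi]^{1/2}$: the full power of $B$ cannot be removed (take $B\gg\widetilde{\M}[\psi]$ to see the implication fails). This is strictly weaker than the lemma and useless for its application in Section 6.3.1, where the whole point of the geometric mean is that $\M[\pr_\tau\psi]$ appears only with exponent $1/2$ so it can be absorbed into the left-hand side of the estimate for $\pr_\tau\psi$, leaving $\widetilde{\M}[\psi]$.

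The missing idea is exactly the microlocal piece of $\widetilde{\M}[\psi]$ that you claim is inessential. Since $\M[\psi]$ contains $|\pr_\tau\psi|^2$ only on $\Mntrap$, no amount of commuting with the (almost) Killing fields and integrating by parts lets you bound the trapped integral $\int_{\Mtrap}|\pr_\tau\psi|^2$ by $\M[\psi]$ alone: the degeneracy at $r=r_{\trap}(\Xi)$ is frequency-dependent, which is why the paper's proof integrates by parts in $r$ against $\Opw(r-r_{\trap})$ rather than against a primitive of a cutoff. Because $r_{\trap}$ depends only on $\Xi$, one has $\pr_r(r-r_{\trap})=1$, hence $\phi=\pr_r\big(\Opw(r-r_{\trap})\phi\big)-\Opw(r-r_{\trap})\pr_r\phi$; pairing this with $\chi(r)\phi$ for a cutoff compactly supported in $(r_+(1-\dhor),11m)$ gives no boundary terms, and after Cauchy--Schwarz the second factor is $\int_{r\leq 11m}|\Opw(r-r_{\trap})\pr_\tau\psi|^2$, which is controlled by $\int|\Opw(\sigma_{\trap})\psi|^2$ (plus lower order) since $|(r-r_{\trap})\xi_\tau|\leq|\sigma_{\trap}|$ -- i.e.\ by $\widetilde{\M}[\psi]$, not by $A$ itself. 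That is how the genuine geometric mean $B^{1/2}\widetilde{\M}[\psi]^{1/2}$ is obtained, with the microlocal term doing the essential work.
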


\begin{proof}
Let $\chi(r)$ be a smooth cut-off function supported in $(r_+(1-\dhor), 11m)$ with $\chi=1$ on the support of $\Mtrap$. Then, since $r_{\trap}\in\widetilde{S}^{0,0}(\MM)$ defined in \eqref{eq:definitionofrtrapinfunctionofrmaxandcutoffinmathcalG5} is such that $r_{\trap}=r_{\trap}(\Xi)$ with $\Xi=(\xit, \xi_{\tphi}, \La)$, we have $\pr_r(r_{trap})=0$ and hence $\pr_r(r-r_{\trap})=1$. Thus, we have, for any scalar function $\phi$,
\beaa
\pr_r(\Opw(r-r_{\trap})\phi) &=& \Opw(r-r_{\trap})\pr_r\phi+\Opw(\pr_r(r-r_{\trap}))\phi\\
&=& \Opw(r-r_{\trap})\pr_r\phi+\phi,
\eeaa
and hence, recalling that $d\Vref$ denotes the Lebesgue measure $d\tau d r dx^1 dx^2$ in $(\tau, r, x^1, x^2)$ coordinates (see Remark \ref{rmk:linkvolumeformMwithf0dVref}), we infer, using also Lemma \ref{lemma:spacetimevolumeformusingisochorecoordinates}, 
\beaa
\bsplit
\int_{\Mtrap}|\phi|^2\leq & \int_{\MM}\chi(r)\phi^2\\
\les& \int_{\MM}\chi(r)\phi^2d\Vref\\
=& \int_{\MM}\chi(r)\Big(\pr_r(\Opw(r-r_{\trap})\phi) - \Opw(r-r_{\trap})\pr_r\phi\Big)\phi d\Vref\\
=& -2\int_{\MM}\chi(r)\pr_r\phi\Opw(r-r_{\trap})\phi d\Vref - \int_{\MM}\chi'(r)\phi\Opw(r-r_{\trap})\phi d\Vref,
\end{split}
\eeaa
where the integration by parts in $r$ does not produce boundary terms since $\chi(r)$ is supported in $(r_+(1-\dhor), 11m)$, and where we used in the last step that $\Opw(r-r_{\trap})$ is self-adjoint w.r.t. $d\Vref$ and that $\Opw(r-r_{\trap})$ commutes with $\chi(r)$ since $\Opw(r-r_{\trap})$ is a tangential operator. This yields
\beaa
\int_{\Mtrap}|\phi|^2  &\les& \left(\int_{\MM_{r\leq 11m}}\big((\pr_r\phi)^2+\phi^2\big)d\Vref\right)^{\frac{1}{2}}\left(\int_{\MM_{r\leq 11m}}|\Opw(r-r_{\trap})\phi|^2d\Vref\right)^{\frac{1}{2}},
\eeaa
and hence, using again Lemma \ref{lemma:spacetimevolumeformusingisochorecoordinates}, 
\beaa
\int_{\Mtrap}|\phi|^2 &\les& \left(\int_{\MM_{r\leq 11m}}\big((\pr_r\phi)^2+\phi^2\big)\right)^{\frac{1}{2}}\left(\int_{\MM_{r\leq 11m}}|\Opw(r-r_{\trap})\phi|^2\right)^{\frac{1}{2}}\\
&\les& \Big(\M_{r\leq 11m}[\phi](\Reals)\Big)^{\frac{1}{2}}\left(\int_{\MM_{r\leq 11m}}|\Opw(r-r_{\trap})\phi|^2\right)^{\frac{1}{2}}.
\eeaa
Applying this inequality with $\phi=\pr_\tau\psi$ and $\phi=\pr_{\tphi}\psi$, we infer 
\beaa
\int_{\Mtrap}|\pr_\tau\psi|^2 &\les& \Big(\M_{r\leq 11m}[\pr_\tau\psi](\Reals)\Big)^{\frac{1}{2}}\left(\widetilde{\M}[\psi]\right)^{\frac{1}{2}}, \\ 
\int_{\Mtrap}|\pr_{\tphi}\psi|^2 &\les& \Big(\M_{r\leq 11m}[\pr_{\tphi}\psi](\Reals)\Big)^{\frac{1}{2}}\left(\widetilde{\M}[\psi]\right)^{\frac{1}{2}},
\eeaa
as stated. This concludes the proof of Lemma \ref{lemma:controllowerordertermusingtrickprrrmrtrap}.
\end{proof}

Relying on the microlocal energy-Morawetz norms of Definition \ref{def:microlocalenergyMorawetznorms}, we may now state microlocal energy-Morawetz estimates.


\subsection{Microlocal energy-Morawetz estimates}


The following theorem states our main microlocal energy-Morawetz estimate which is conditional on the control of lower order terms.
\begin{theorem}\lab{th:mainenergymorawetzmicrolocal}
Assuming that $\psi$ satisfies the same assumptions as in Theorem \ref{th:main:intermediary}, we have  
\bea\lab{th:eq:mainenergymorawetzmicrolocal}
\widetilde{\EMF}[\psi]\les \widetilde{\mathcal{N}}[\psi, F](\Reals)+\int_{\MM}r^{-4}|\psi|^2.
\eea
\end{theorem}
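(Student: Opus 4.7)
The plan is to prove Theorem \ref{th:mainenergymorawetzmicrolocal} by partitioning $\MM$ into three nested radial regions and combining different classes of estimates in each, then gluing them through smooth cutoffs. First, in the redshift region $r\le r_+(1+2\dred)$ I would invoke the physical-space redshift estimate of Lemma \ref{lemma:redshiftestimates}, which yields the full EMF bound modulo a harmless Morawetz bulk in the thin collar $[r_+(1+\dred),r_+(1+2\dred)]$. Second, in the asymptotic region $r\ge R$ I would invoke the improved Morawetz estimate of Lemma \ref{lemma:exteriorMorawetzestimates} together with a standard $\pr_\tau$-energy estimate to produce the Morawetz bulk with the proper $r$-weight plus the $\II_+$ flux, again up to a bulk term localized near $r=R$. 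Only the intermediate region $r_+(1+\dhor')\le r\le R$, where both trapping and superradiance live, requires genuine microlocal work and contains the new content of the theorem.

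In this intermediate region I would, in the spirit of Tataru-Tohaneanu but adapted to the $r$-foliation rather than the $\tau$-foliation, construct a microlocal Morawetz multiplier $A_{\mathrm{mor}}=\Opw(a_{\mathrm{mor}})$ with a symbol $a_{\mathrm{mor}}$ modeled on a phase-space version of $(r-r_{\trap}(\Xi))\xi_{r^*}$, so that by Lemma \ref{lemma:simplepropertiesspecialcaseofsymbols:recoveringoperators} the Poisson bracket $\{-f_0\g_{a,m}^{\a\b}\xi_\a\xi_\b,\,a_{\mathrm{mor}}\}$ reduces to $\pr_{\xi_r}(\cdots)\pr_r a_{\mathrm{mor}}-\pr_r(\cdots)\pr_{\xi_r}a_{\mathrm{mor}}$ and has a principal symbol controlling exactly the $\sigma_{\trap}$-weighted quantity entering $\widetilde{\M}[\psi]$. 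In parallel, a microlocal energy multiplier $B_{\mathrm{enr}}=\Opw(b_{\mathrm{enr}})$ would be built to coerce the $\pr_\tau$-energy in the superradiant frequency regime by exploiting the non-overlap of superradiant and trapped frequencies in phase space. Lemma \ref{lem:WeylquantizedtoBoxg} rewrites $f_0\Box_{\gam}$ as $\Opw(-f_0\gam^{\a\b}\xi_\a\xi_\b)$ modulo $\Opw(\widetilde{S}^{0,0})$, so pairing the equation with $A_{\mathrm{mor}}\psi+B_{\mathrm{enr}}\psi$ and integrating using the sharp self-adjointness and composition rules of the Weyl calculus (Proposition \ref{prop:PDO:MM:Weylquan:mixedoperators}, whose $S^{m_1+m_2-2}$ and $S^{m_1+m_2-3}$ remainders, as emphasized in Remark \ref{rmk:Weylquantizationdoesitbetter}, are exactly what make the bookkeeping close) produces a positive principal bulk. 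G\aa rding-type inequalities (Lemma \ref{lemma:basicgardingfortangentialsymbols:MM}) then convert this into coercive $L^2$-bounds modulo genuinely lower-order terms, which are absorbed into $\int_\MM r^{-4}|\psi|^2$ — the RHS of the theorem allows precisely this slack. The RHS contribution when pairing $F$ with $A_{\mathrm{mor}}\psi$ in $\Mtrap$ is estimated by $\widetilde{\mathcal{N}}_{\mathrm{trap}}$ because the microlocal operators $S^1$ appearing in its definition \eqref{def:widetildeNtrap} are tailored to match (up to Lemma \ref{lemma:bascicommutatorlemmawithelementaryproof:mixedsymbols:MMcase} commutator errors) the multipliers produced by $A_{\mathrm{mor}}$ in the trapping region.

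To glue the three regional estimates I would choose smooth radial cutoffs $\zeta_{\mathrm{red}},\zeta_{\mathrm{mid}},\zeta_{\mathrm{far}}$, commute them with the multipliers — the cutoff commutator with a microlocal operator being controlled by Lemma \ref{lemma:bascicommutatorlemmawithelementaryproof:mixedsymbols:MMcase} — and check that the two overlap bulks produced above (redshift overlap in $[r_+(1+\dred),r_+(1+2\dred)]$ and exterior overlap near $r=R$) are fully nondegenerate Morawetz terms in the intermediate estimate and thus absorbed there. Perturbation errors coming from $\widecheck\g^{\a\b}$ are treated using Lemmas \ref{lemma:basiclemmaforcontrolNLterms:ter}--\ref{lemma:basiclemmaforcontrolNLterms:bis}, exploiting the $\tau$-decay of $(\Ga_g,\Ga_b)$ and the smallness of $\ep$ to absorb them into $\widetilde{\EMF}[\psi]$ on the LHS. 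The main obstacle is the construction of the microlocal multipliers in the full subextremal range $|a|<m$: one must design $a_{\mathrm{mor}}$ and $b_{\mathrm{enr}}$, and in particular the frequency-dependent trapping function $r_{\trap}(\Xi)$ entering \eqref{eq:definitionofthesymbolsigmatrap}, so that the positivity of their commutators with the wave symbol is simultaneous on trapped and superradiant frequencies; this rests on the microlocal separation of these two frequency sets (which in physical space overlap once $|a|>m/\sqrt2$; cf.\ Figure \ref{fig:penrosediagramofKerr2}) and is the technical heart of Section \ref{sect:CondEMF:Dynamic}. A secondary difficulty is achieving this with remainder orders low enough to fit in the allowed $\int_\MM r^{-4}|\psi|^2$ slack, which is where the sharper orders supplied by Weyl quantization are indispensable.
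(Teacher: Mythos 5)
Your overall architecture (redshift region, microlocal work on the $r$-foliation in $r_+(1+\dhor')\le r\le R$, physical-space Morawetz near infinity, perturbation errors absorbed via Lemmas \ref{lemma:basiclemmaforcontrolNLterms:ter}--\ref{lemma:basiclemmaforcontrolNLterms:bis}, and the conditional slack $\int_\MM r^{-4}|\psi|^2$) matches the paper's strategy. But there is a concrete missing step: the norm $\widetilde{\EMF}[\psi]$ contains $\sup_{\tau}\E[\psi](\tau)$, and your proposal never actually produces this energy bound. A "standard $\pr_\tau$-energy estimate" fails in the full subextremal range because $\pr_\tau$ is not timelike in the ergoregion, and a time-nonlocal microlocal multiplier $B_{\mathrm{enr}}$ paired globally in $\tau$ only yields spacetime (bulk/flux) information, not a pointwise-in-$\tau$ energy. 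In the paper this is a separate and delicate step (the conditional energy estimate of Proposition \ref{thm:nondeg:EnerandMora:Kerrandpert}): one builds a frequency partition $\Theta_i$ subordinate to the frequency-dependent trapping radius $r_{\trap}(\Xi)$, see \eqref{eq:definitionofpartitionofunityThetajj=1toiota}, and vectorfields $V_i=\pr_\tau+d_i(r)\pr_{\tphi}$ as in \eqref{eq:defintionofthedifferentialoperatorsViformicrolocalNRGestimates} that are globally timelike and \emph{Killing} on the $r$-interval where the packet $\Opw(\Theta_i)\psi$ can be trapped, so that the deformation-tensor bulk is supported away from trapping and is controlled by the degenerate Morawetz. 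This is also exactly where the operators $S^1=(\pr_\tau,\Opw(\Theta_i)V_i\Opw(\Theta_i))$ in $\widetilde{\NN}_{trap}$ of \eqref{def:widetildeNtrap} come from; your attribution of $S^1$ to the Morawetz multiplier $A_{\mathrm{mor}}$ is not how the inhomogeneity is actually matched to that norm, and without the energy step the combination of Morawetz-flux and energy that closes \eqref{th:eq:mainenergymorawetzmicrolocal} (Section \ref{subsubsect:proofoftheorem64:lastsubsubsection}) is not available.

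A second, related point: a single commutant symbol of the form $(r-r_{\trap}(\Xi))\xi_{r^*}$ cannot carry the whole intermediate region. Its commutator with the wave symbol gives a term proportional to $-(r-r_{\trap})\pr_rV$, whose sign rests on the monotonicity/maximum structure of $V$ (Lemmas \ref{lem:potentialproperty:PS}, \ref{lem:SRnottrap}) and is only exploitable after splitting frequencies into the regimes \eqref{eq:defofthefourfrequencyregimes}; moreover, for superradiant frequencies the positivity of the boundary contribution at $r=r_+(1+\dhor')$ requires the additional $z$-current $z=A(\xit+\chi_z\om_\HH\xiphi)$ together with the quantitative statement that superradiant frequencies are not trapped, \eqref{eq:V:case2:property4:superradiantisnottrapped}, and additional currents $Q^h$, $Q^y$ are needed to control $\xit^2$ and the non-trapped sub-regimes. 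You flag the multiplier construction as "the technical heart", which is fair, but the boundary-term positivity at the inner hypersurface and the regime-by-regime choices are not a matter of remainder orders in the Weyl calculus; they are where the argument would fail if one tried to run your single-symbol commutator globally.
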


We will also rely on the following proposition to control lower order terms.
\begin{proposition}\lab{prop:energymorawetzmicrolocalwithblackbox}
Assuming that $\psi$ satisfies the same assumptions as in Theorem \ref{th:main:intermediary}, we have 
\beaa
\EMF[\psi](\Reals) &\les& \int_{\Mtrap}|\pr F|^2
+\sup_{\tau\in\Reals}\bigg|\int_{\Mntrap(-\infty, \tau)}{F\ov{\pr_{\tau}\psi}}\bigg|\\
&&+\int_{\Mntrap}\big(|\pr_r\psi|+r^{-1}|\psi|\big)|F|+\int_{\MM}|F|^2+\ep\EM^{(1)}[\psi]{(\Reals)}.
\eeaa
\end{proposition}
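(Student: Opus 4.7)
Following the strategy announced in Section~1.5.4 of the introduction, we transfer the quasilinear perturbation to the right-hand side. Starting from $\Box_\g\psi=F$, we rewrite the equation as a wave equation on the background Kerr spacetime $(\MM,\gam)$:
\begin{equation*}
\Box_{\gam}\psi \;=\; F_{\gam}, \qquad F_{\gam}\;:=\;F\;-\;\bigl(\Box_\g-\Box_{\gam}\bigr)\psi,
\end{equation*}
and apply, as a black box, the energy-Morawetz-flux estimate for the inhomogeneous scalar wave equation on a subextremal Kerr background proved by Dafermos--Rodnianski--Shlapentokh-Rothman \cite{DRSR}. Because $\psi$ (smoothly extended by $0$ for $\tau\leq 1$) and $F$ have matching support assumptions with Kerr at infinity in time, the global-in-time black box applies and yields
\begin{equation*}
\EMF[\psi](\Reals) \;\lesssim\; \mathcal{N}^{\mathrm{DRSR}}[\psi,F_{\gam}](\Reals),
\end{equation*}
where $\mathcal{N}^{\mathrm{DRSR}}$ encodes the standard trapping-loss form on Kerr: a term $\int_{\Mtrap}|\pr F_{\gam}|^2$ (reflecting that trapping in Kerr forces a derivative loss in $F$), together with the non-degenerate terms $\sup_\tau |\int_{\Mntrap(-\infty,\tau)}F_{\gam}\,\overline{\pr_\tau\psi}|$, $\int_{\Mntrap}(|\pr_r\psi|+r^{-1}|\psi|)|F_{\gam}|$, and $\int_{\MM}|F_{\gam}|^2$ outside of trapping.

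The next step is to split $F_{\gam}=F-(\Box_\g-\Box_{\gam})\psi$ and bound each contribution to $\mathcal{N}^{\mathrm{DRSR}}$ separately. The $F$-contribution directly produces the first four terms on the right-hand side of the proposition. For the perturbation contribution, we use the pointwise decomposition
\begin{equation*}
(\Box_\g-\Box_{\gam})\psi \;=\; \gcheck^{\a\b}\pr_\a\pr_\b\psi \;+\; \Big(\pr_\a\gcheck^{\a\b}+\gcheck^{\a\b}(N_{\mathrm{det}})_\a-\gam^{\a\b}\text{-correction}\Big)\pr_\b\psi,
\end{equation*}
together with the structural bounds \eqref{eq:controloflinearizedinversemetriccoefficients} and Lemma~\ref{lemma:computationofthederiveativeofsrqtg}. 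Outside of the trapping region, Lemmas~\ref{lemma:basiclemmaforcontrolNLterms:ter} and \ref{lemma:basiclemmaforcontrolNLterms:bis} apply directly and bound the non-degenerate contributions of $(\Box_\g-\Box_{\gam})\psi$ in $\mathcal{N}^{\mathrm{DRSR}}$ by $\ep\EM^{(1)}[\psi]$ with room to spare, thanks to the $\tau^{-1-\dec}$ decay of $\Ga_g,\Ga_b$.

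The main obstacle is the trapping contribution $\int_{\Mtrap}|\pr\bigl((\Box_\g-\Box_{\gam})\psi\bigr)|^2$: naively expanding produces a term of the form $|\gcheck|^2|\pr^3\psi|^2$ which involves three derivatives of $\psi$ and is not directly controlled by $\EM^{(1)}[\psi]$. The resolution is to use the wave equation $\Box_\g\psi=F$ to eliminate the highest-derivative piece. Writing $\gcheck^{\a\b}\pr\pr_\a\pr_\b\psi$ as a combination of (i)~$\pr\Box_\g\psi=\pr F$ (which feeds the allowed $\int_{\Mtrap}|\pr F|^2$ on the right-hand side of the proposition), (ii)~commutator terms $[\pr,\Box_\g]\psi$ controlled via Lemma~\ref{lem:commutatorwithwave:firstorderderis} and reduced to two derivatives of $\psi$ weighted by $\dk^{\leq 2}\Ga_g$, and (iii)~the genuinely perturbative pieces $\widecheck{\g}^{\a\b}\pr^2\psi$-times-$\pr$-of-metric which carry an $\ep\tau^{-1-\dec}$ smallness (by \eqref{eq:controloflinearizedinversemetriccoefficients} and the boundedness of $r$ on $\Mtrap$), allows us to trade the spurious third derivative of $\psi$ for an $\ep^2\EM^{(1)}[\psi]$ bound plus a clean $\int_{\Mtrap}|\pr F|^2$ contribution. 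Summing the contributions and absorbing the perturbation into the left-hand side for $\ep$ small closes the estimate at the level stated in Proposition~\ref{prop:energymorawetzmicrolocalwithblackbox}.
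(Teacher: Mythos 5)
Your overall strategy is the same as the paper's: rewrite the equation as $\square_{\gam}\psi=F-(\square_\g-\square_{\gam})\psi$, apply a global-in-time EMF estimate for the inhomogeneous wave equation on exact Kerr as a black box, and absorb the quasilinear error using the decay of $\Ga_g,\Ga_b$ and Lemmas \ref{lemma:basiclemmaforcontrolNLterms:ter}--\ref{lemma:basiclemmaforcontrolNLterms:bis}. However, your treatment of the trapped-region contribution contains a genuine gap. You set up the black-box norm with only the derivative-loss term $\int_{\Mtrap}|\pr F_{\gam}|^2$, which forces you to estimate $\int_{\Mtrap}|\pr\big((\square_\g-\square_{\gam})\psi\big)|^2$, and you propose to remove the resulting third derivatives of $\psi$ by "using the wave equation". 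This step fails: the perturbation $\gcheck^{\a\b}\pr_\a\pr_\b$ is not a multiple of the principal part of $\square_\g$, so the equation $\square_\g\psi=F$ only allows you to eliminate a single combination of second derivatives (essentially $\pr_r^2\psi$ in terms of $F$, tangential-second and first derivatives). After differentiating, terms such as $\gcheck^{\tau\tau}\,\pr\pr_\tau^2\psi$, $\gcheck^{\tau a}\,\pr\pr_\tau\pr_{x^a}\psi$, or the tangential third derivatives produced by the substitution itself, remain genuinely third-order in $\psi$; they carry an $\ep$ factor but are not controlled by $\ep\EM^{(1)}[\psi]$, which only sees $\pr^{\leq 2}\psi$. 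Closing them would require commuting the equation and $\EM^{(2)}$-type norms (and more metric regularity), which is precisely what the proposition is designed to avoid.

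The paper circumvents this by never differentiating the error in $\Mtrap$: the Kerr black box (Lemma \ref{lem:EMFestimates:inhomowave:Kerr:general}) is established with the trapping norm $\widetilde{\mathcal{N}}'_{trap}[F]=\min\big(\int_{\Mtrap}|\pr F|^2,\ \int_{\Mtrap}\tau^{1+\dec}|F|^2\big)$, the second branch coming from a dyadic-in-$\tau$ argument (Lemma \ref{lem:gpert:MMtrap}). For the inhomogeneity $F$ one uses the first branch, recovering $\int_{\Mtrap}|\pr F|^2$ in the statement, while for the error terms $\err[\psi]=\gcheck^{\a\b}\pr_\a\pr_\b\psi+\dots$ one uses the second branch: since $|\err[\psi]|\les\ep\,\tau^{-1-\dec}|\pr^{\leq 2}\psi|$ on $\Mtrap$, one gets $\int_{\Mtrap}\tau^{1+\dec}|\err[\psi]|^2\les\ep^2\sup_{\tau}\E^{(1)}[\psi](\tau)$, with only two derivatives of $\psi$ ever appearing. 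To repair your argument you would need to state the black box with this min structure (or an equivalent $\tau$-weighted alternative) and route the perturbation through the $\tau^{1+\dec}$-weighted branch, rather than attempting to trade third derivatives via the wave equation.
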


The proof of Theorem \ref{th:mainenergymorawetzmicrolocal} is postponed to Section \ref{sect:CondEMF:Dynamic}, while the proof of Proposition \ref{prop:energymorawetzmicrolocalwithblackbox} is postponed to Section \ref{sec:proofofprop:energymorawetzmicrolocalwithblackbox}. We are now ready to prove Theorem \ref{th:main:intermediary}.


\subsection{Proof of Theorem \ref{th:main:intermediary}}
\lab{sec:proofofTheoremth:main:intermediary}


The proof of Theorem \ref{th:main:intermediary} will rely in particular on the microlocal energy-Morawetz estimate of Theorem \ref{th:mainenergymorawetzmicrolocal} which we rewrite below for convenience as follows 
\bea
\lab{eq:microEMF:psi:robust}
\widetilde{\EMF}[\psi]&\les&\NNtilde[\psi, F](\Reals)+\A[\psi], \quad \A[\psi]:=\int_{\MM}r^{-4}\psi^2,
\eea
where $\psi$ is a solution of \eqref{eq:scalarwave}.

We prove the estimates \eqref{eq:th:main:intermediary1} and \eqref{eq:th:main:intermediary2} in sections \ref{sect:microlocalEMFesti:Tpsi} and \ref{sect:EMFesti:chiZ:intermediary}, respectively.


\subsubsection{EMF estimate \eqref{eq:th:main:intermediary1} for $\pr_{\tau}\psi$}
\lab{sect:microlocalEMFesti:Tpsi}


We commute the wave equation \eqref{eq:scalarwave} for $\psi$ with $\pr_{\tau}$ and derive
\beaa
\square_{\g}\pr_{\tau}\psi&=&\pr_{\tau} F +[\square_{\g}, \pr_{\tau}]\psi.
\eeaa
Applying \eqref{eq:microEMF:psi:robust} to the above wave equation for  $\pr_{\tau}\psi$, we deduce
\bea
\lab{eq:highEMFglobal:commT:1}
\widetilde{\EMF}[\pr_{\tau}\psi]
&\les  &\NNtilde\Big[\pr_{\tau}\psi,  \pr_{\tau} F + [\square_{\g}, \pr_{\tau}]\psi\Big](\Reals)+\A[\pr_{\tau}\psi]\nn\\
&\les & \NNtilde\big[\pr_{\tau}\psi, \pr_{\tau}F\big](\Reals) +\NNtilde\big[\pr_{\tau}\psi,  [\square_{\g}, \pr_{\tau}]\psi\big](\Reals)+\A[\pr_{\tau}\psi].
\eea

Next, we estimate the second term on the RHS of \eqref{eq:highEMFglobal:commT:1}. In view of the definition \eqref{eq:definitionwidetildemathcalNpsijnormRHS} of
 $\widetilde{\mathcal{N}}[\psi, F](\Reals)$, we have by Cauchy--Schwarz, using also the fact that $\psi$ is supported for $\tau\geq 1$, 
\bea
\lab{eq:error:commT:pre}
&&\NNtilde\big[\pr_{\tau}\psi, [\pr_{\tau}, \square_{\g}]\psi\big](\Reals)\nn\\
&\les& \left(\int_{\Mtrap(\tau\geq 1)}\tau^{-1-\dec}{|S^1\pr_\tau\psi|^2}\right)^{\frac{1}{2}}\left(\int_{\Mtrap}\tau^{1+\dec}|[\pr_{\tau}, \square_{\g}]\psi|^2\right)^{\frac{1}{2}} 
+\int_{\MM}|[\pr_{\tau}, \square_{\g}]\psi|^2\nn\\
&&+\sup_{\tau\in\Reals}\bigg|\int_{\Mntrap(-\infty, \tau)}[ \pr_{\tau}, \square_{\g}]\psi \pr_{\tau}(\pr_{\tau}\psi)\bigg|
+ \int_{\Mntrap}\big|[\pr_{\tau}, \square_{\g}]\psi\big|\big|(\pr_r, r^{-1})\pr_{\tau}\psi\big|.
\eea
In order to estimate the last three terms on the RHS of \eqref{eq:error:commT:pre}, we     use Lemma \ref{lem:commutatorwithwave:firstorderderis} which yields 
\beaa
\, [ \square_{\g}, \pr_{\tau}]\psi&=&-\pr_{\tau}(\gcheck^{\a\b})\pr_{\a}\pr_{\b}\psi+
\dk^{\leq 2}\Ga_g\c\dk\psi,
\eeaa
so that, in view of \eqref{eq:controloflinearizedinversemetriccoefficients}, we may apply Lemma \ref{lemma:basiclemmaforcontrolNLterms:bis} to obtain
\bea\lab{eq:error:commT:pre:bissss}
\nn&&\int_{\MM}|[\pr_{\tau}, \square_{\g}]\psi|^2+\sup_{\tau\in\Reals}\bigg|\int_{\Mntrap(-\infty, \tau)}[ \pr_{\tau}, \square_{\g}]\psi \pr_{\tau}(\pr_{\tau}\psi)\bigg|
+ \int_{\Mntrap}\big|[\pr_{\tau}, \square_{\g}]\psi\big|\big|(\pr_r, r^{-1})\pr_{\tau}\psi\big|\\
&\les& \ep\EM^{(1)}[\psi](\Reals).
\eea
Also, applying Lemma \ref{lem:gpert:MMtrap} with $h=\tau^{-1-\dec}\mathbf{1}_{\tau\geq 1}$, $r_0=10m$, {$S=S^1$} and $\de_0=\dec$, we obtain 
\beaa
\int_{\Mtrap(\tau\geq 1)}\tau^{-1-\dec}{|S^1\pr_\tau\psi|^2}\les \EM^{(1)}[\psi](\Reals),
\eeaa
and we also have
\beaa
\int_{\Mtrap}\tau^{1+\dec}|[\pr_{\tau}, \square_{\g}]\psi|^2\les \ep^2\int_{\Mtrap}\tau^{-1-\dec}|\pr^{\leq 2}\psi|^2\les \ep^2\sup_{\tau\in\Reals}\E^{(1)}[\psi](\tau).
\eeaa
Together with \eqref{eq:error:commT:pre} and \eqref{eq:error:commT:pre:bissss}, we infer
\bea\lab{eq:controlofwidetildeNprtaucommprtauandsquaregonReals}
\NNtilde\big[\pr_{\tau}\psi, [\pr_{\tau}, \square_{\g}]\psi\big](\Reals) \les \ep\EM^{(1)}[\psi](\Reals).
\eea
Substituting this back to \eqref{eq:highEMFglobal:commT:1}, we infer
\beaa
\widetilde{\EMF}[\pr_{\tau}\psi]
&\les & \NNtilde[\pr_{\tau}\psi,  \pr_{\tau} F](\Reals) +\ep \EM^{(1)}[\psi](\Reals)+\A[\pr_{\tau}\psi].
\eeaa
Now, using Lemma \ref{lemma:controllowerordertermusingtrickprrrmrtrap}, we have
\beaa
\A[\pr_{\tau}\psi] &=& \int_{\MM}r^{-4}|\pr_\tau\psi|^2\les \int_{\Mtrap}|\pr_\tau\psi|^2+\M[\psi](\Reals)\\
&\les& (\M[\pr_{\tau}\psi](\Reals))^{\frac{1}{2}}\big(\widetilde{\M}[\psi]\big)^{\frac{1}{2}}+\M[\psi](\Reals)
\eeaa
which yields
\beaa
\widetilde{\EMF}[\pr_{\tau}\psi] &\les&  \NNtilde[\pr_{\tau}\psi,  \pr_{\tau} F ](\Reals) +\ep \EM^{(1)}[\psi](\Reals)+(\M[\pr_{\tau}\psi](\Reals))^{\frac{1}{2}}\big(\widetilde{\M}[\psi]\big)^{\frac{1}{2}}+\M[\psi](\Reals)
\eeaa
and hence
\beaa
\widetilde{\EMF}[\pr_{\tau}\psi] &\les & \NNtilde[\pr_{\tau}\psi,  \pr_{\tau} F ](\Reals) +\ep \EM^{(1)}[\psi](\Reals)+\widetilde{\M}[\psi].
\eeaa
Together with Theorem \ref{th:mainenergymorawetzmicrolocal}, we deduce
\beaa
\widetilde{\EMF}[\psi]+\widetilde{\EMF}[\pr_{\tau}\psi]
&\les & \NNtilde^{(1)}[\psi,  F](\Reals) +\ep \EM^{(1)}[\psi](\Reals)+\int_{\MM}r^{-4}|\psi|^2.
\eeaa
Finally, using Proposition \ref{prop:energymorawetzmicrolocalwithblackbox} to control the lower order term on the RHS, we deduce
\beaa
\EMF[\psi](\Reals)+\EMF[\pr_{\tau}\psi](\Reals)
&\les & \NNtilde^{(1)}[\psi,  F ](\Reals) +\ep \EM^{(1)}[\psi](\Reals),
\eeaa
which concludes the proof of \eqref{eq:th:main:intermediary1}.


\subsubsection{EMF estimate \eqref{eq:th:main:intermediary2} for $\pr_{\tphi}\psi$}
\label{sect:EMFesti:chiZ:intermediary}


We commute the wave equation \eqref{eq:scalarwave} with $\chi_0(r)\pr_{\tphi}$, where $\chi_0$ is a smooth cutoff function that equals $1$ for $r\leq 11m$ and vanishes for $r\geq 12m$, and we obtain the following wave equation for $\chi_0\pr_{\tphi} \psi$
\beaa
\square_{\g}(\chi_0\pr_{\tphi}\psi)&=& \chi_0 \pr_{\tphi} F + [\square_{\g}, \chi_0]\pr_{\tphi}\psi +\chi_0[\square_{\g}, \pr_{\tphi}]\psi\\
&=& \chi_0 \pr_{\tphi} F + (\chi_0', \chi_0'')\pr^{\leq 2}\psi +\chi_0[\square_{\g}, \pr_{\tphi}]\psi.
\eeaa
Applying \eqref{eq:microEMF:psi:robust} to the above wave equation for $\chi_0\pr_{\tphi}\psi$, and using the support properties of $\chi_0$, $\chi_0'$ and $\chi_0''$, we deduce
\bea
\lab{eq:EM:AsympKerr:highLiePhi:v0}
&&\widetilde{\EMF}[\chi_0 \pr_{\tphi}\psi]\nn\\
&\les  &\NNtilde\Big[\chi_0 \pr_{\tphi}\psi, \chi_0 \pr_{\tphi} F + (\chi_0', \chi_0'')\pr^{\leq 2}\psi +\chi_0[\square_{\g}, \pr_{\tphi}]\psi\Big](\Reals) +\A[\chi_0 \pr_{\tphi}\psi]\nn\\
&\les & \NNtilde[\chi_0 \pr_{\tphi}\psi,  \chi_0 \pr_{\tphi}F](\Reals) 
+\int_{\MM_{11m, 12m}}|\pr^{\leq 2}\psi|^2 +\NNtilde\big[\chi_0 \pr_{\tphi}\psi, \chi_0[\square_{\g}, \pr_{\tphi}]\psi\big](\Reals)+\A[\chi_0\pr_{\tphi}\psi]\nn\\
\nn&\les&\NNtilde[\chi_0 \pr_{\tphi}\psi,  \chi_0 \pr_{\tphi}F ](\Reals) 
+\M^{(1)}_{11m,12m} [\psi](\Reals) +\NNtilde\big[\chi_0 \pr_{\tphi}\psi, \chi_0[\square_{\g}, \pr_{\tphi}]\psi\big](\Reals)\\
&& +\A[\chi_0 \pr_{\tphi}\psi].
\eea

Next, we estimate the last two terms on the RHS of \eqref{eq:EM:AsympKerr:highLiePhi:v0}. First, proceeding exactly as in the proof of \eqref{eq:controlofwidetildeNprtaucommprtauandsquaregonReals}, we obtain the analog estimate for $\chi_0 \pr_{\tphi}\psi$, i.e., 
\beaa
\NNtilde\big[\chi_0 \pr_{\tphi}\psi, \chi_0[\square_{\g}, \pr_{\tphi}]\psi\big](\Reals) \les \ep\EM^{(1)}[\psi](\Reals).
\eeaa
Also, using Lemma \ref{lemma:controllowerordertermusingtrickprrrmrtrap}, we have
\beaa
\A[\chi_0 \pr_{\tphi}\psi] &=& \int_{\MM_{r\leq 12m}}|\pr_{\tphi}\psi|^2\les \int_{\Mtrap}|\pr_{\tphi}\psi|^2+\M[\psi](\Reals)\\
&\les& \Big(\M_{r\leq 11m}[\pr_{\tphi}\psi](\Reals)\Big)^{\frac{1}{2}}\Big(\widetilde{\M}[\psi]\Big)^{\frac{1}{2}}+\M[\psi](\Reals).
\eeaa
Plugging the above estimates in \eqref{eq:EM:AsympKerr:highLiePhi:v0}, we infer
\beaa
\widetilde{\EMF}[\chi_0 \pr_{\tphi}\psi] &\les&\NNtilde[\chi_0 \pr_{\tphi}\psi,  \chi_0 \pr_{\tphi}F ](\Reals) 
+\M^{(1)}_{11m,12m} [\psi](\Reals) +\ep\EM^{(1)}[\psi](\Reals)\\
&& +\Big(\M_{r\leq 11m}[\pr_{\tphi}\psi](\Reals)\Big)^{\frac{1}{2}}\Big(\widetilde{\M}[\psi]\Big)^{\frac{1}{2}}+\M[\psi](\Reals)
\eeaa
and hence
\beaa
\widetilde{\EMF}_{r_+(1-\dhor), 11m}[\pr_{\tphi}\psi] &\les &\NNtilde[\chi_0 \pr_{\tphi}\psi,  \chi_0 \pr_{\tphi}F](\Reals) 
+\M^{(1)}_{11m,12m} [\psi](\Reals)+\ep\EM^{(1)}[\psi](\Reals)+\widetilde{\M}[\psi].
\eeaa
Together with Theorem \ref{th:mainenergymorawetzmicrolocal}, we deduce
\beaa
\widetilde{\EMF}_{r_+(1-\dhor), 11m}[\pr_{\tphi}\psi]
&\les & \NNtilde^{(1)}[\psi,  F](\Reals) +\ep \EM^{(1)}[\psi](\Reals)+\int_{\MM}r^{-4}|\psi|^2.
\eeaa
Finally, using Proposition \ref{prop:energymorawetzmicrolocalwithblackbox} to control the lower order term on the RHS, we deduce
\beaa
\EMF_{r_+(1-\dhor), 11m}[\pr_{\tphi}\psi](\mathbb{R}) &\les& \widetilde{\mathcal{N}}^{(1)}[\psi, F](\Reals)+\ep \EM^{(1)}[\psi](\mathbb{R})+\M_{11m, 12m}[\pr\psi](\mathbb{R}),
\eeaa
which concludes the proof of \eqref{eq:th:main:intermediary2}.


\section{Proof of Theorem \ref{th:mainenergymorawetzmicrolocal}}
\label{sect:CondEMF:Dynamic}


The proof of Theorem \ref{th:mainenergymorawetzmicrolocal} relies on a microlocal approach:
\begin{itemize}
\item inspired by the one in \cite{Ta-Toh}, where we replace the mixed symbols differential in $\tau$ and microlocal on $\Si(\tau)$ of \cite{Ta-Toh} by the mixed symbols differential in $r$ and microlocal on $H_r$ of Section \ref{sect:PDO:rfoliation},

\item closely following the way of handling high frequencies in phase space in \cite{DRSR} for the inhomogeneous scalar wave equation in a subextremal Kerr spacetime. 
\end{itemize}
This approach allows us to derive the microlocal energy-Morawetz estimates conditional on lower order derivatives of Theorem \ref{th:mainenergymorawetzmicrolocal} on a dynamic background that is asymptotically approaching any subextremal Kerr background.

The rest of this section is organized as follows. We start by discussing in Section \ref{subsect:discussionofsquareintegrabilityintimetoapplyPDOop} the square integrability properties of $\psi$ w.r.t. $\tau$ that are necessary to justify the various computations involving PDOs with mixed symbols on $\MM$. We then discuss in Section \ref{subsect:PrinSym:RescaleWave} the principal symbol of the rescaled wave operator $|q|^2 \Box_{\gam}$ in the normalized coordinates and derive a pseudodifferential version of the energy identity in Section \ref{subsect:GeneralEnerIden:PD}. Next, a few general choices of microlocal multipliers in the energy identity are given in Section \ref{subsect:PDcurrents}, and we make appropriate choices of these microlocal multipliers to derive conditional degenerate Morawetz estimates in Kerr spacetimes,  nondegenerate Morawetz-flux estimates in perturbations of Kerr spacetimes, and nondegenerate energy-Morawetz-flux estimates in perturbations of Kerr spacetimes in Sections \ref{subsect:scalarwave:Kerr:condiMora}--\ref{subsect:CondDegMFesti:Kerr}, \ref{subsect:CondMoraFlux:Kerrperturb}, and \ref{subsect:proofoftheorem64}, respectively. The proof of Theorem \ref{th:mainenergymorawetzmicrolocal} is then concluded at the end of Section \ref{subsect:proofoftheorem64}.


\subsection{Square integrability in $\tau$ for $\psi$}
\label{subsect:discussionofsquareintegrabilityintimetoapplyPDOop}


By the assumptions of Theorem \ref{th:mainenergymorawetzmicrolocal}, $\psi$ is a solution to the following inhomogeneous wave equation on $(\MM, \g)$
\begin{align}
\Box_{\g}\psi=F,
\end{align}
where $\g$ satisfies the assumptions of Section \ref{subsubsect:assumps:perturbedmetric} and coincides with $\gam$ for $\tau\leq 1$ and for $\tau\geq \tau_*$ with $\tau_*$ arbitrarily large, where $\psi=0$ for $\tau\leq 1$, and where $F$ is supported in $(1,\tau_*)$. In particular, under the additional assumption
\bea\lab{eq:additionalnonquantiativeassumptionforFtoproveaprioriintegrabilityintime}
\int_{\MM(1,\tau_*)}r^{1+\de}|\pr^{\leq 1}F|^2<+\infty,
\eea
as a consequence of:
\begin{itemize}
\item the fact that $\psi=0$ for $\tau\leq 1$,

\item the local energy estimates of Lemma \ref{lemma:localenergyestimate} applied with $\tau_0=1$, $q=\tau_*-1$ and $s=1$, which, together with Lemma \ref{lemma:higherorderenergyMorawetzestimates}, implies
\beaa
\EMF^{(1)}_\de[\psi](1,\tau_*)\les_{\tau_*} \int_{\MM(1,\tau_*)}r^{1+\de}|\pr^{\leq 1}F|^2,
\eeaa

\item and the fact that $\psi$ satisfies $\square_{\gam}\psi=0$ for $\tau\geq \tau_*$, so that we may apply \cite[Theorem 3.2]{DRSR} in $\MM(\tau_*, +\infty)$ which  implies
\beaa
\EMF^{(1)}_\de[\psi](\tau_*,+\infty)\les \E^{(1)}[\psi](\tau_*),
\eeaa
\end{itemize}
we infer 
\beaa
\EMF^{(1)}_\de[\psi](\Reals)\les_{\tau_*} \int_{\MM(1,\tau_*)}r^{1+\de}|\pr^{\leq 1}F|^2<+\infty.
\eeaa
This implies in particular that $\pr^{\leq 1}\psi$ is square integrable in $\tau\in\Reals$ on any region $\MM_{r\leq R}$ for $20m\leq R<+\infty$ and thus allows to justify the various computations in this section involving PDOs with mixed symbols on $\MM_{r\leq R}$ introduced in Section \ref{sect:PDO:rfoliation}. Finally, note that we may always reduce to the case where \eqref{eq:additionalnonquantiativeassumptionforFtoproveaprioriintegrabilityintime} holds by a density argument.


\subsection{The rescaled wave operator in normalized coordinates}
\label{subsect:PrinSym:RescaleWave}


We have introduced the rescaled wave operator $f_0\square_{\gam}$ in Lemma \ref{lem:WeylquantizedtoBoxg}, where $f_0=|q|^2$ in Kerr, see Lemma  \ref{eq:spacetimevolumeformusingisochorecoordinates}. We start with the computation of the symbol of $|q|^2\square_{\gam}$.


\subsubsection{Symbol of the rescaled wave operator $|q|^2\Box_{\gam}$ in normalized coordinates}


In view of Lemma \ref{lem:WeylquantizedtoBoxg}, taking into account that $f_0=|q|^2$ in Kerr, see Lemma  \ref{eq:spacetimevolumeformusingisochorecoordinates}, we have
\beaa
|q|^2\Box_{\gam}=\Opw (-|q|^2\gam^{\a\b}\xi_{\a}\xi_{\b})+\Opw(\widetilde{S}^{0,0}(\MM)),
\eeaa
which we may also rewrite, in view of Remark \ref{rmk:symbolesofimportantoperators}, as 
\beaa
\sigma(|q|^2\Box_{\gam})= -|q|^2\gam^{\a\b}\xi_{\a}\xi_{\b}+\widetilde{S}^{0,0}(\MM),\qquad \sigma(|q|^2\Box_{\gam})\in\widetilde{S}^{2,2}(\MM),
\eeaa
where the notation $\sigma$ for the symbol of an operator has been introduced in  Definition \ref{def:notationforthesymbolofWeylquantizationmixedsymbol}. By the formula \eqref{eq:inverse:hypercoord} for the components of the inverse metric $\gam^{\a\b}$ in the normalized coordinates $(\tt, r, \th,\tphi)$, we infer
\bea\lab{eq:computationofsymbolmodqsquareboxgam}
\sigma(\qs{\Box}_{\gam} )&=&-\Delta\xi_r^2 -2\S_1\xi_r+\S_2 +\widetilde{S}^{0,0}(\MM),
\eea
where the symbols $\S_1$ and $\S_2$ are
\bea\lab{eq:computationofsymbolmodqsquareboxgam:1}
\bsplit
\S_1:=& (\R)(1-\mu \tmod')\xit+(a-\Delta\phimod')\xiphi, \qquad 
\S_1\in\widetilde{S}^{1,0}(\MM),\\
\S_2:=& - \Lambda^2
-\big(2a(1-\tmod')-2(\R)\phimod' (1-\mu \tmod')\big)\xit\xiphi\\
&+\big(2(\R)\tmod'-\Delta(\tmod')^2\big)\xit^2
-({\Delta}(\phimod')^2-2a\phimod' )\xiphi^2, \qquad \S_2\in\widetilde{S}^{2,0}(\MM),
\end{split}
\eea
with $\xit$, $\xi_r$, $\xiphi$ and $\La$ defined in \eqref{eq:basicparticularsymbolsusedeverywhere}.

We can alternatively express $\S_2$ as
\beaa
\S_2&=&\frac{(\R)^2}{\Delta}\xit^2 +\frac{4amr}{\Delta}\xit\xiphi +\frac{a^2}{\Delta}\xiphi^2 -\Lambda^2 -\frac{1}{\Delta} (\S_1)^2.
\eeaa
By denoting
\bea
\label{def:2symbol:wavepotential}
V:=\frac{\Delta\Lambda^2 -4amr\xit\xiphi-a^2 \xiphi^2}{(\R)^2}, \qquad V\in \widetilde{S}^{2,0}(\MM),
\eea
one finds 
\bea\lab{eq:alternativeexpressionforS2}
\S_2=\frac{(\R)^2}{\Delta} (\xit^2- V) - \frac{1}{\Delta}(\S_1)^2.
\eea 

Next, note that we have, in view of \eqref{def:xirstar}, 
\beaa
\xirstar = \mu \xi_r +\frac{\S_1}{\R}.
\eeaa
Thus we may rewrite the symbol of the rescaled wave operator $\qs{\Box}_{\gam}$ as follows 
\bea
\label{eq:ps:rescalewave:normalize}
\sigma(\qs{\Box}_{\gam}) =-\mu^{-1} (\R) \xirstar^2 +\BLS_2 +\widetilde{S}^{0,0}(\MM), \qquad \BLS_2 := \frac{(\R)^2}{\Delta}(\xit^2-V),
\eea
where we have used the fact that $\S_2=\BLS_2-\frac{1}{\Delta}(\S_1)^2$.

Also, a  simple calculation shows that\footnote{Notice that $ik_+$ is the symbol of $\partial_{t}+\omega_{\HH}\partial_{\phi}$ which is the Killing null generator on the future event horizon.} near $r=r_+(1-\dhor)$, 
\bea
\label{property:F1symbol}
\S_1 =(\R)k_+ + O(|\mu|)(\xit, a\xiphi), \qquad k_+:=\xit + \om_{\HH}\xiphi, \qquad \om_{\HH}:=\frac{a}{2mr_+}.
\eea

\begin{remark}\lab{rmk:wemayalwaysreducetothecaseageq0!!}
Notice that $\S_1$, $\S_2$ and $V$ defined above are invariant under the change 
\beaa
(a, \phimod', \xi_{\tphi})\to (-a, -\phimod', -\xi_{\tphi}),
\eeaa
and notice also that $\phimod'\to -\phimod'$ if $a\to -a$ in view of our choice for $\phimod$, see Remark \ref{rmk:phimodprimeisproportionaltoa!!}. This observation allows to reduce the analysis, from now on, to the case $a\geq 0$. 
\end{remark}

Notice that we have
\beaa
\Lambda^2 &=& \mathring{\ga}^{bc}\xi_b\xi_c+a^2\sin^2\th\xi_0^2\\
&=& \xitheta^2 +(\sin\th)^{-2} \xiphi^2 +a^2\sin^2\theta \xit^2,
\eeaa
where $\xitheta:=\langle \xi, \pr_{\th}\rangle$ and ${\xiphi}=\langle \xi, \partial_{\tphi}\rangle$, which immediately implies
\bea
\label{eq:Lambdasquare:bound}
\Lambda^2 \geq \max\big\{|\xiphi|^2, 2|a\xiphi\xit|\big\}. 
\eea


\subsubsection{Properties of the symbol $V$}


We discuss in the following lemmas a few properties of the symbol $V$ as defined in \eqref{def:2symbol:wavepotential}. Most of these properties are shown in \cite[Section 6]{DRSR}, and we provide the proof of the additional properties below. Notice that some of these properties rely on the reduction to the case $a\geq 0$ assumed for convenience  in view of Remark \ref{rmk:wemayalwaysreducetothecaseageq0!!}.

First, we collect some monotonicity properties for the symbol $V$. 

\begin{lemma}
\label{lem:potentialproperty:PS}
For any $(\xit, \xiphi, \Lambda)$,  the symbol $V= V(r, \xit, \xiphi, \Lambda)$ defined in \eqref{def:2symbol:wavepotential}, viewed as a scalar function of $r$ on $(r_+, +\infty)$, either
\begin{itemize}
\item is strictly decreasing,
\item or has a unique critical point $r_{\text{max}}$, which is a global maximum,
\item or has exactly two critical points $r_+< r_{\text{min}}<r_{\text{max}}<\infty$ that are a local minimum and maximum, respectively.
\end{itemize}

If $r_{\text{min}}$ exists, then $\xit^2 >V(r_{\text{min}})$ since $\xit^2 -V(r_+)=(\xit + \omega_{\HH} \xiphi)^2 \geq 0$.

The critical point $r_{\text{max}}$, if it exists, is bounded by
\bea
\lab{eq:UpperBoundforrmaxtrapOfthePotential}
r_{\text{max}}\leq 8m.
\eea
\end{lemma}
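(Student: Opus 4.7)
My plan is to reduce the study of critical points of $V$ to the analysis of a single cubic polynomial in $r$ obtained from $V'(r)=0$, then exploit the structural constraints \eqref{eq:Lambdasquare:bound} on $(\xit,\xiphi,\La)$ to establish both the trichotomy and the two quantitative bounds. Differentiating $V=(\De\La^2-4amr\xit\xiphi-a^2\xiphi^2)/(r^2+a^2)^2$ by the quotient rule and collecting monomials in the frequencies gives, after routine algebra,
$$V'(r)=\frac{-2\,Q(r)}{(r^2+a^2)^3}, \qquad Q(r):=P_3(r)\La^2-2am(3r^2-a^2)\xit\xiphi-2ra^2\xiphi^2,$$
with $P_3(r):=r^3-3mr^2+a^2r+a^2m$. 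Hence the critical points of $V$ are exactly the roots of the cubic $Q$ in $(r_+,+\infty)$, and $V'$ has the opposite sign of $Q$.

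For the trichotomy, note that the leading coefficient of $Q$ is $\La^2\geq 0$. If $\La^2=0$, then \eqref{eq:Lambdasquare:bound} forces $\xiphi=0$ and $V\equiv 0$, a trivial sub-case. For $\La^2>0$, $Q(r)\to+\infty$ as $r\to+\infty$, consistent with $V\to 0$. Rewriting
$$Q(r)=\La^2 r^3-3m(\La^2+2a\xit\xiphi)r^2+a^2(\La^2-2\xiphi^2)r+a^2m(\La^2+2a\xit\xiphi),$$
one uses the nonnegativity $\La^2\pm 2a\xit\xiphi=\xitheta^2+((\sin\th)^{-1}\xiphi\pm a\sin\th\,\xit)^2\geq 0$ to control the signs of the coefficients, and a case analysis on those signs (in the spirit of the Kerr analysis in \cite{DRSR}) shows that $Q$ can have at most two distinct roots in $(r_+,+\infty)$. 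This yields exactly the three cases of the lemma; in particular, in the two-root case $V$ is strictly decreasing on $(r_+,r_{\text{min}})$.

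The two bounds then follow by direct computation. In the two-root case, a direct substitution using $r_+^2+a^2=2mr_+$ gives $\xit^2-V(r_+)=(\xit+\omega_{\HH}\xiphi)^2=k_+^2\geq 0$, so the strict monotonicity of $V$ on $(r_+,r_{\text{min}})$ yields $V(r_{\text{min}})<V(r_+)\leq\xit^2$. For $r_{\text{max}}\leq 8m$, at any critical point the relation $P_3(r)\La^2=2am(3r^2-a^2)\xit\xiphi+2ra^2\xiphi^2$ combined with $2|a\xit\xiphi|\leq\La^2$ and $\xiphi^2\leq\La^2$ (from \eqref{eq:Lambdasquare:bound}; note $3r^2-a^2>0$ since $r>r_+>|a|$) gives, after dividing by $\La^2>0$, the polynomial inequality
$$r^3-6mr^2-a^2r+2a^2m\leq 0.$$
At $r=8m$ the left-hand side equals $2m(64m^2-3a^2)>0$, and its $r$-derivative $3r^2-12mr-a^2$ is strictly positive for $r\geq 8m$ under $|a|<m$, so the inequality fails for every $r\geq 8m$, giving $r_{\text{max}}<8m$. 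The main obstacle I anticipate is the sign bookkeeping in the cubic case analysis, specifically ruling out a third root of $Q$ in $(r_+,+\infty)$; the final quantitative steps are then straightforward algebra.
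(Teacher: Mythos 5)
Your proposal is correct, and it reaches the two quantitative conclusions by a route that is close in spirit but packaged differently from the paper's. For the bound $r_{\text{max}}\leq 8m$ — the only part the paper actually proves, the trichotomy and the $r_{\text{min}}$ statement being imported from \cite{DRSR} — your argument and the paper's rest on the same ingredients, namely the formula \eqref{eq:firstandsecondorderderiofV} for $(\R)^3\pr_r V$ and the bounds $\La^2\geq 2|a\xit\xiphi|$, $\La^2\geq\xiphi^2$ from \eqref{eq:Lambdasquare:bound}; the paper evaluates $(\R)^3\pr_r V$ at $r=8m$ with a special grouping and then propagates the sign to $r>8m$ using the negativity of $\frac{d}{dr}\big((\R)^3\pr_r V\big)$ for $r\geq 6m$, whereas you bound $Q(r)=-\frac12(\R)^3\pr_r V$ directly from below by $\La^2\,(r^3-6mr^2-a^2r+2a^2m)$ and check positivity of that cubic on $[8m,\infty)$ via its value and derivative at $8m$. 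Your version is a clean, slightly more direct execution of the same estimate (your algebra checks out: $p(8m)=2m(64m^2-3a^2)>0$ and $p'(r)\geq 96m^2-a^2>0$ for $r\geq 8m$), and it in fact shows there are no critical points at all for $r\geq 8m$, just as the paper's computation does. The $r_{\text{min}}$ step is identical to the paper's: $\xit^2-V(r_+)=(\xit+\om_{\HH}\xiphi)^2$ using $\De(r_+)=0$, $r_+^2+a^2=2mr_+$, combined with strict decrease of $V$ on $(r_+,r_{\text{min}})$.

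The one place where you diverge substantively is the trichotomy. The paper does not reprove it; it cites \cite{DRSR} and recalls the mechanism there, namely that $\frac{d}{dr}\big((\R)^3\pr_r V\big)$ is either negative on $(r_+,\infty)$ or positive up to a single radius $r_1$ and negative afterwards, so that $(\R)^3\pr_r V$ is unimodal and has at most two zeros with the right sign pattern. You instead propose a self-contained root count for the cubic $Q$, which you leave as "sign bookkeeping." That step does go through: with $Q(r)=\La^2 r^3-3m(\La^2+2a\xit\xiphi)r^2+a^2(\La^2-2\xiphi^2)r+a^2m(\La^2+2a\xit\xiphi)$ and $\La^2\pm 2a\xit\xiphi\geq 0$, the coefficient sign pattern has at most two sign changes regardless of the sign of $\La^2-2\xiphi^2$, so Descartes' rule gives at most two positive roots (with multiplicity), and since $Q\to+\infty$ the sign alternation of $Q$ across simple roots identifies the smaller critical point as a local minimum and the larger as a (global) maximum, which is exactly the structure you need for the $r_{\text{min}}$ argument. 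So the gap you flag is fillable in a few lines; just be aware that "at most two roots" alone is not quite enough — you also need the sign of $Q$ on $(r_+,r_{\text{min}})$ (equivalently $Q>0$ there) to conclude that $V$ is strictly decreasing on that interval, which is what the alternation argument (or the paper's unimodality of $(\R)^3\pr_r V$) supplies.
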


\begin{proof}
All the statements, except for the upper bound 
\bea\lab{eq:explicitupperboundforrmatrapleq8mtobeproved}
{r_{\text{max}}\leq 8m,}
\eea
are proved in \cite[Section 6]{DRSR}. The proof uses the fact that,  in view of formula \eqref{eq:firstandsecondorderderiofV} below, the quantity $\frac{d}{dr}((\R)^3\frac{d}{dr} V)$ either is negative in {$(r_+, +\infty)$}, or is positive for $r\in {(r_+, r_1)}$ and negative for $r\in [r_1,+\infty)$, where 
\beaa
r_1 := m\left(1+\frac{2a\xit\xiphi}{\Lambda^2}\right)+\left(
m^2\left(1+\frac{2a\xit\xiphi}{\Lambda^2}\right)^2-\frac{a^2}{3}\left(1-\frac{2\xiphi^2}{\Lambda^2}\right)\right)^{\frac{1}{2}}
\eeaa
 is the largest root of equation $\frac{d}{dr}((\R)^3\frac{d}{dr} V)=0$. 

In the following, we focus on the proof of the explicit upper bound \eqref{eq:explicitupperboundforrmatrapleq8mtobeproved} for {$r_{\text{max}}$}. Recall the expression of the potential $V$ from \eqref{def:2symbol:wavepotential}:
\beaa
V=\frac{\Delta\Lambda^2 -4amr\xit\xiphi-a^2 \xiphi^2}{(\R)^2}.
\eeaa
Its first and second order derivatives can be computed as
\bea
\label{eq:firstandsecondorderderiofV}
\begin{split}
&(\R)^3\frac{d}{dr} V=-2(r^3- 3mr^2 +a^2 r + a^2m) \Lambda^2 +4a^2 r \xiphi^2 + 4am(3r^2-a^2)\xiphi\xit,\\
&\frac{d}{dr}\bigg((\R)^3\frac{d}{dr} V\bigg)=-2 (3r^2 - 6mr +a^2) \Lambda^2 +4a^2 \xiphi^2 +24amr\xit\xiphi .
\end{split}
\eea
In view of \eqref{eq:Lambdasquare:bound}, one finds for $r\geq 6m$ and $\La^2>0$ that 
\beaa
\frac{d}{dr}\bigg((\R)^3\frac{d}{dr} V\bigg)= -2(3r^2 -15mr+a^2)\Lambda^2 - 12mr(\Lambda^2 - 2a\xit\xiphi) - (6mr\Lambda^2 - 4a^2\xiphi^2)< 0.
\eeaa
Hence, for $r> 8m$ and $\La^2>0$, we deduce
\beaa
&&\left[(\R)^3\frac{d}{dr} V\right]_{\vert_{r> 8m}}\\
&<& \left[(\R)^3\frac{d}{dr} V\right]_{\vert_{r=8m}}\nn\\
&= & \left[-2r^2\bigg(5m\Lambda^2 -6am\xit\xiphi - \frac{a^2}{r}\xiphi^2\bigg) -2a^2 r(\Lambda^2 -\xiphi^2) -2a^2m(\Lambda^2+2a\xit\xiphi)\right]_{\vert_{r=8m}}\\
&<&0,
\eeaa
which then yields $r_{\text{max}}\leq 8m$ for all triplets $(\xit, \xiphi, \Lambda)$ with $\La^2>0$. This concludes the proof of \eqref{eq:explicitupperboundforrmatrapleq8mtobeproved} and of Lemma \ref{lem:potentialproperty:PS}.
\end{proof}

Next, we introduce and discuss the superradiant frequencies and trapped frequencies. 

\begin{definition}[Superradiant and trapped frequencies]
\label{def:SRfreq:PS}
Superradiant and trapped frequencies are defined as follows:
\begin{enumerate}[label=\arabic*)]
\item Superradiant frequencies are defined as the set of frequency triplets $(\xit, \xiphi, \Lambda)$ that satisfy
\bea
\xit(\xit+\omega_{\HH}\xiphi) <0.
\eea
This inequality is equivalent to $0<-\xit \xiphi<\omega_{\HH}\xiphi^2$.

\item The trapped frequencies are defined as the set of  the frequency triplets $(\xit,\xiphi,\Lambda)$ such that there exists a radii $\tilde{r}\in (r_+, \infty)$ such that $\xit^2 - V(\tilde{r})=0$ and $\pr_rV(\tilde{r})=0$. The set of all such radius  $\tilde{r}$ is called the trapping region.
\end{enumerate}
\end{definition}

\begin{remark}
For trapped frequencies $(\xit,\xiphi,\Lambda)$, notice that $\tilde{r}$, as introduced in Definition \ref{def:SRfreq:PS}, is unique and coincides with $r_{max}$. 
\end{remark}

The following lemma contains useful properties of superradiant and trapped frequencies. Note that some of these properties can be found in \cite[Section 6]{DRSR}.

\begin{lemma}
\label{lem:SRnottrap}
The following properties hold true:
\begin{enumerate}[label=\arabic*)]
\item\lab{point1:potentialV:maxandminbounds:frequencies} {There exists a constant $\b=\b(m,a)>0$, where $\b\gtrsim m^{-2}(m-a)$ and $\b$ degenerates as $a\rightarrow m$,} such that for any $(\xit, \xiphi,\Lambda)$ satisfying  $-\xit\xiphi \leq \omega_{\HH} \xiphi^2 +\b\Lambda^2$, {$V$ has a unique critical point $r_{\text{max}}$, which is a global maximum with $r_{\text{max}}-r_+\gtrsim m-a$, and satisfies} 
\bsub
\lab{property:potential:superradiant:derivatives}
\begin{align}
\label{eq:V:case1:property2}
-(r-r_{\text{max}})\partial_r V\geq{}& b\Lambda^2 \frac{(r-r_{\text{max}})^2}{r^4}, &\forall r\in &{(r_+,\infty)},\\
\lab{eq:V:case1:property3:secondorderderi}
\frac{d^2 V}{dr^2}(r_{\text{max}})\leq{}& -b\Lambda^2,
\end{align}
{where $b\gtrsim m^{-4}(m-a)$ is a constant depending only on $m$ and $a$.}
\esub

\item\lab{point2:potentialV:maxandminbounds:frequencies}  {There exists a constant $\b=\b(m,a)>0$, where $\b\gtrsim m^{-2}(m-a)$ and $\b$ degenerates as $a\rightarrow m$,} such that  for any ${0}< -\xit\xiphi \leq \omega_{\HH} \xiphi^2 +\b\Lambda^2$, {$V$ has a unique critical point $r_{\text{max}}$, which is a global maximum {with} $r_{\text{max}}-r_+\gtrsim m-a$, and satisfies
\bea 
\lab{eq:V:case2:property4:superradiantisnottrapped}
V(r_{\text{max}})-\xit^2 \geq b\Lambda^2,
\eea
where $b\gtrsim m^{-4}(m-a)^2$ is a constant depending only on $m$ and $a$.}
This estimate {is a quantitative version of the well-known fact that superradiant frequencies are not trapped.}
\end{enumerate}
\end{lemma}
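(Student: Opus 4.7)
The starting point is the explicit formulas \eqref{eq:firstandsecondorderderiofV} for $\partial_r V$ and $\partial_r^2 V$, together with the classification of the radial structure of $V$ established in Lemma \ref{lem:potentialproperty:PS}. The strategy is that of \cite[Section~6]{DRSR}, but with every estimate quantified in the degeneration parameter $m-a$; the key observation driving every step is \eqref{eq:Lambdasquare:bound}, which lets us absorb cross terms such as $a\xit\xiphi$ into $\Lambda^2$ at the cost of a factor bounded in $m$, so that the sign of $\partial_r V$ and of $\partial_r^2 V$ is essentially governed by the explicit polynomials in $r$ in front of $\Lambda^2$.

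For point \ref{point1:potentialV:maxandminbounds:frequencies}, I would first dispose of the trivial case $\Lambda^2 = 0$ (then $\xiphi = 0$ by \eqref{eq:Lambdasquare:bound}, the assumption forces $\xit = 0$, and the statement is empty). Assuming $\Lambda^2 > 0$, the assumption $-\xit\xiphi \leq \omega_{\HH}\xiphi^2 + \b\Lambda^2$ combined with \eqref{eq:Lambdasquare:bound} bounds the sign-indeterminate terms in the first line of \eqref{eq:firstandsecondorderderiofV} by a small multiple of $\Lambda^2$, so that
\[
(\R)^3\partial_r V = -2(r^3-3mr^2+a^2r+a^2m)\Lambda^2 + O(\b + a^{-1}m^{-1}\Lambda^{-2}\cdot a^2)\Lambda^2
\]
up to explicit remainders bounded via \eqref{eq:Lambdasquare:bound}. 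The cubic $P(r) := r^3 - 3mr^2 + a^2 r + a^2 m$ has a unique root in $(r_+,\infty)$, call it $r_\star$, which satisfies $r_\star - r_+ \gtrsim m-a$ in the full subextremal range (this is a classical computation, checking $P(r_+)<0$ and $P'(r_+)>0$ with explicit quantitative constants). Choosing $\b \gtrsim m^{-2}(m-a)$ small enough, the correction never dominates $P(r)\Lambda^2$ away from an $O(m-a)$ neighbourhood of $r_\star$, so $V$ has a unique critical point $r_{\text{max}}$ in $(r_+,\infty)$ within that neighbourhood, and $r_{\text{max}} - r_+ \gtrsim m-a$. The quantitative bounds \eqref{eq:V:case1:property2}--\eqref{eq:V:case1:property3:secondorderderi} then follow: for \eqref{eq:V:case1:property3:secondorderderi}, substitute $r = r_{\text{max}}$ in the second line of \eqref{eq:firstandsecondorderderiofV}, where $3r^2 - 6mr + a^2 \gtrsim (m-a)$ at $r = r_\star$, while the correction terms are again absorbed by $\b\Lambda^2$; for \eqref{eq:V:case1:property2}, the factorised form $\partial_r V(r) = (r-r_{\text{max}})Q(r,\xit,\xiphi,\Lambda)$ combined with $Q(r_{\text{max}}) = \partial_r^2 V(r_{\text{max}}) \lesssim -(m-a)\Lambda^2$ yields the claim, after checking that $Q(r)$ does not change sign on $(r_+,\infty)$ (this uses the sign analysis of $\partial_r((\R)^3\partial_r V)$ from the proof of Lemma \ref{lem:potentialproperty:PS}).

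For point \ref{point2:potentialV:maxandminbounds:frequencies}, I would use the identity $\xit^2 - V(r_+) = (\xit + \omega_{\HH}\xiphi)^2$ noted in Lemma \ref{lem:potentialproperty:PS}, together with
\[
V(r_{\text{max}}) - \xit^2 = \bigl(V(r_{\text{max}}) - V(r_+)\bigr) - (\xit + \omega_{\HH}\xiphi)^2.
\]
For the first term, integrating \eqref{eq:V:case1:property2} from $r_+$ to $r_{\text{max}}$ gives $V(r_{\text{max}}) - V(r_+) \gtrsim (m-a)^2 m^{-4}\Lambda^2$, using $r_{\text{max}} - r_+ \gtrsim m-a$. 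For the second term, the superradiance assumption $\xit(\xit + \omega_{\HH}\xiphi)<0$ forces $|\xit + \omega_{\HH}\xiphi| \leq \omega_{\HH}|\xiphi|$, hence $(\xit + \omega_{\HH}\xiphi)^2 \leq \omega_{\HH}^2 \xiphi^2 \lesssim m^{-2}\xiphi^2$, which in the regime $-\xit\xiphi \leq \omega_{\HH}\xiphi^2 + \b\Lambda^2$ is controlled by $\b\Lambda^2$ plus an arbitrarily small fraction of the main term after applying \eqref{eq:Lambdasquare:bound} and Young's inequality. Choosing $b \gtrsim m^{-4}(m-a)^2$ and $\b$ sufficiently small closes the estimate.

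\textbf{Main obstacle.} The delicate step is the quantitative tracking of $m-a$ throughout: in the extremal limit $|a| \to m$, the event horizon merges with the photon sphere, $r_+ \to r_{\text{max}}$, and both $\partial_r^2 V(r_{\text{max}})$ and $V(r_{\text{max}}) - \xit^2$ degenerate. The quadratic degeneration $(m-a)^2$ in the superradiant bound is sharp and reflects the product of two linear factors (the closing of $r_{\text{max}} - r_+$ and the simultaneous closing of $\partial_r^2 V(r_{\text{max}})$), and the whole argument must be uniform on the open set where the conditions on $\b$ are satisfied, which forces one to choose $\b$ \emph{after} tracking the implicit constants appearing in every application of \eqref{eq:Lambdasquare:bound}. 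This requires checking that the corrections from cross terms like $a\xit\xiphi$ and $a^2\xiphi^2$ never dominate the main Schwarzschildean contribution in any regime of frequency triplets allowed by the hypothesis.
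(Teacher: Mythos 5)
Your argument for point \ref{point1:potentialV:maxandminbounds:frequencies} rests on the claim that the hypothesis together with \eqref{eq:Lambdasquare:bound} makes the non-cubic terms $4a^2r\xiphi^2+4am(3r^2-a^2)\xit\xiphi$ in \eqref{eq:firstandsecondorderderiofV} a \emph{small} multiple of $\Lambda^2$, so that $\pr_r V$ is governed by the cubic $P(r)=r^3-3mr^2+a^2r+a^2m$ and $r_{\text{max}}$ sits in an $O(m-a)$ neighbourhood of its root $r_\star$. This is false: the hypothesis is one-sided (it only limits how negative $\xit\xiphi$ can be), and \eqref{eq:Lambdasquare:bound} only gives $2a|\xit\xiphi|\le\Lambda^2$ and $\xiphi^2\le\Lambda^2$, so these terms are of size comparable to $mr^2\Lambda^2$, i.e.\ of the same order as the cubic term, not $\b\Lambda^2$. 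Concretely, for $\xit\xiphi>0$ with $2a\xit\xiphi=\Lambda^2$ (allowed by the hypothesis), $(\R)^3\pr_rV\geq\big(-2P(r)+2m(3r^2-a^2)\big)\Lambda^2$, whose zero is near $r\approx 6m$, far from $r_\star$; so both your localization of $r_{\text{max}}$ and the second-derivative bound obtained by plugging $r\approx r_\star$ into the second line of \eqref{eq:firstandsecondorderderiofV} break down (and the quantitative bound \eqref{eq:V:case1:property2} does not follow from "$Q$ does not change sign"). The paper avoids all of this: it only uses the hypothesis at $r=r_+$, where the cross term is either positive or controlled, to get $\pr_rV(r_+)\gtrsim m^{-4}(m-a)\Lambda^2$; the trichotomy of Lemma \ref{lem:potentialproperty:PS} then forces the unique-global-maximum case and $r_{\text{max}}-r_+\gtrsim m-a$, and the fact that $\frac{d}{dr}\big((\R)^3\pr_rV\big)$ changes sign at most once (from $+$ to $-$) is what propagates the quantitative bounds \eqref{eq:V:case1:property2}--\eqref{eq:V:case1:property3:secondorderderi} to all $r$ and to $r_{\text{max}}$, wherever it lies.

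For point \ref{point2:potentialV:maxandminbounds:frequencies} the gap is more serious. Your decomposition $V(r_{\text{max}})-\xit^2=\big(V(r_{\text{max}})-V(r_+)\big)-(\xit+\omega_{\HH}\xiphi)^2$ is fine, but the two bounds you propose cannot be combined: integrating $\pr_rV>0$ from the horizon only yields a gain of order $m^{-4}(m-a)^2\Lambda^2$ (degenerating as $a\to m$), while your upper bound $(\xit+\omega_{\HH}\xiphi)^2\le\omega_{\HH}^2\xiphi^2\lesssim m^{-2}\Lambda^2$ is generically much larger than that gain (take e.g.\ $\xit=-\omega_{\HH}\xiphi/2$, $\Lambda^2\approx\xiphi^2$, where $(\xit+\omega_{\HH}\xiphi)^2$ is a fixed fraction of $\Lambda^2$); no Young-type absorption can fix a term that dominates the main term. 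Moreover $|\xit+\omega_{\HH}\xiphi|\le\omega_{\HH}|\xiphi|$ requires strict superradiance, which the hypothesis with the $\b\Lambda^2$ slack does not give (this is exactly why the paper proves the estimate for $\b=0$ first). What is actually needed, and what the paper does, is a dichotomy: when $\xiphi(\xit+\omega_{\HH}\xiphi)\le\ep_0 m^{-2}|\xiphi|\Lambda$, the quantity $k_+=\xit+\omega_{\HH}\xiphi$ is genuinely small compared to $\Lambda$ (of size $\ep_0 m^{-2}\Lambda$ with $\ep_0\sim m-a$) and your integration argument closes; when it is not small, one evaluates $V-\xit^2$ exactly at the special radius $r_2=-\frac{a\xiphi}{2m\xit}$, where the algebraic structure of $V$ gives the lower bound $\gtrsim m^{-4}(m-a)^2\Lambda^2$ directly. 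Your proposal contains no mechanism for this second regime, and it is unavoidable, so point \ref{point2:potentialV:maxandminbounds:frequencies} is not proved as written.
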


\begin{proof}
To begin with, note that throughout this proof, the constant $b$ may vary from line to line but it always depends only on the values of $m$ and $a$.

We first show point \ref{point1:potentialV:maxandminbounds:frequencies}. By the formula \eqref{eq:firstandsecondorderderiofV}, one finds
\beaa
(r^2+a^2)^3\frac{d}{dr}V\vert_{r=r_+}=2(r_+^2+a^2)(r_+-m)\Lambda^2 +4a^2r_+\xiphi^2 +4am(3r_+^2-a^2)\xiphi\xit.
\eeaa
If $-\xiphi\xit\leq 0$, then \eqref{eq:property:V:case1} below clearly holds true. Instead, if $0<-\xiphi\xit\leq \omega_{\HH} \xiphi^2+\b\Lambda^2$, we deduce from the above equality that
\beaa
&&(r^2+a^2)^3\frac{d}{dr}V\vert_{r=r_+}\\
&\geq&2(r_+^2+a^2)(r_+-m)\Lambda^2 +4a^2r_+\xiphi^2 -\frac{2a^2(3r_+^2-a^2)}{r_+}\xiphi^2
-4am(3r_+^2-a^2)\b\Lambda^2\\
&\geq&2(r_+-m)((r_+^2+a^2)\Lambda^2 -2a^2\xiphi^2)-4am(3r_+^2-a^2)\b\Lambda^2.
\eeaa
Using \eqref{eq:Lambdasquare:bound}, it then follows that there exists a small constant $\b=\b(m,a)>0$, where $\b\gtrsim {m^{-2}(m-a)}$ and $\b\to 0$ as $|a|\to m$, such that for all $-\xit\xiphi \leq \omega_{\HH} \xiphi^2 +\b\Lambda^2$, the following estimate 
{\begin{align}
\lab{eq:property:V:case1}
\pr_r V (r_+)\geq b\Lambda^2,
\end{align}
where $b\gtrsim m^{-4}(m-a)$ is a constant depending only on $m$ and $a$.}
 Since the derivative of $V$ at $r_+$ is positive, by Lemma \ref{lem:potentialproperty:PS}, $V$ has a unique critical point $r_{\text{max}}$, which is a global maximum. {Furthermore, since $|\pr_{rr}V|\les \Lambda^2$, this critical point $r_{\text{max}}$ is uniformly bounded away from $r_+$ and satisfies $r_{\text{max}}-r_+\gtrsim m-a$. }  

Recall from the proof of Lemma \ref{lem:potentialproperty:PS} that $\frac{d}{dr}\big((r^2+a^2)^3\frac{d}{dr}V\big)$ is either nonpositive in $(r_+, \infty)$ or has a unique point $r_+\leq r_1<r_{\text{max}}$ such that it is positive for $r<r_1$ and negative for $r>r_1$. Since $|\pr_{rr} V|\lesssim r^{-4}\Lambda^2$, we have for $\dhor\ll \frac{m-a}{m}$ sufficiently small that the estimate \eqref{eq:V:case1:property2} holds true for $r\in [r_+(1-\dhor), r_+]$, thus we only show the estimate \eqref{eq:V:case1:property2} for $r\in [r_+, \infty)$. Recall from the proof of Lemma \ref{lem:potentialproperty:PS} that $\frac{d}{dr}\big((r^2+a^2)^3\frac{d}{dr}V\big)$ is either nonpositive in $[r_+, \infty)$ or has a unique point $r_+\leq r_1<r_{\text{max}}$ such that it is positive for $r<r_1$ and negative for $r>r_1$. Since $|\pr_{rr} V|\lesssim r^{-4}\Lambda^2$, there exists a $\tilde{r}_1$ with $\tilde{r}_1-r_+\gtrsim m-a$ such that
\bea
\lab{eq:property:pr_rV:rleqtilder_1}
\pr_r V\geq \frac{b}{2} \Lambda^2 , \qquad \forall r\in[r_+, \tilde{r}_1],
\eea 
where $b\gtrsim m^{-4}(m-a)$ is a constant depending only on $m$ and $a$.

Let us consider the first case. For $r\geq \tilde{r}_1$, it follows that
there is a constant $c>0$ such that
\beaa
\frac{d}{dr}\left((r^2+a^2)^3\frac{d}{dr}V\right)\leq -cr^2\Lambda^2, \qquad \forall r\geq \tilde{r}_1
\eeaa
since $\frac{d}{dr}\big((r^2+a^2)^3\frac{d}{dr}V\big)$ is negative for $r\geq \tilde{r}_1$ and since $\frac{d}{dr}\big((r^2+a^2)^3\frac{d}{dr}V\big)\lesssim -r^2\Lambda^2$ for $r$ large,
hence
\beaa
-(r-r_{\text{max}})\partial_r V\geq b\Lambda^2 \frac{(r-r_{\text{max}})^2}{r^4}, \qquad\forall r\in [\tilde{r}_1,\infty),
\eeaa
with $b\gtrsim m^{-4}(m-a)$.
Together with the proven estimate \eqref{eq:property:pr_rV:rleqtilder_1}, this proves \eqref{eq:V:case1:property2} and \eqref{eq:V:case1:property3:secondorderderi} in the first case.

We next consider the  second case that $\frac{d}{dr}\big((r^2+a^2)^3\frac{d}{dr}V\big)$  has a unique point $r_+\leq r_1<r_{\text{max}}$ such that it is positive for $r<r_1$ and negative for $r>r_1$. Since it is positive for $r<r_1$, we infer
\beaa
\frac{d}{dr}V(r)\geq \frac{1}{(r_1^2+a^2)^3} (r_+^2+a^2)^3 \frac{d}{dr}V\vert_{r=r_+}\gtrsim b\Lambda^2, \qquad \forall r\in[r_+, r_1].
\eeaa
Next, we consider the case $r\in [r_1,+\infty)$ and we may repeat the argument of the first case by replacing $r=r_+$ with $r=r_1$, hence proving the estimates \eqref{eq:V:case1:property2} and \eqref{eq:V:case1:property3:secondorderderi}  in this second case and concluding the proof of point \ref{point1:potentialV:maxandminbounds:frequencies}. 

It remains to prove point \ref{point2:potentialV:maxandminbounds:frequencies}. As the assumptions of point \ref{point2:potentialV:maxandminbounds:frequencies} imply the ones of point \ref{point1:potentialV:maxandminbounds:frequencies}, point \ref{point1:potentialV:maxandminbounds:frequencies} applies and hence $V$ has a unique critical point $r_{\text{max}}$ which is a global maximum {and satisfies $r_{\text{max}}-r_+\gtrsim m-a$}. For the estimate \eqref{eq:V:case2:property4:superradiantisnottrapped}, it suffices to show it for $\b=0$, and the existence of $\b\gtrsim m^{-2}(m-a)$ such that \eqref{eq:V:case2:property4:superradiantisnottrapped} holds true follows manifestly in the same manner.

{First, in the case that $\xiphi(\xit +\frac{a}{2mr_+}\xiphi)\leq \ep_0 m^{-2}|\xiphi|\Lambda$, {noticing also that $\xiphi\neq 0$ since $\xit\xiphi<0$,} it holds
\beaa
\xit^2 - V(r_+)=\bigg(\xit +\frac{a}{2mr_+} \xiphi\bigg)^2\leq \ep_0^2m^{-4}\Lambda^2.
\eeaa}
{Since we have by} \eqref{eq:V:case1:property2} that $\pr_r V\gtrsim m^{-4}(r_{\text{max}}-r)\Lambda^2\gtrsim m^{-4}(m-a) \Lambda^2$ for $r\leq r_{\text{max}}$, we {infer}
\beaa
V(r_++\de_0m)-\xit^2\gtrsim m^{-4}(m-a)^2\Lambda^2
\eeaa
for $\de_0\gtrsim m-a$ sufficiently small and $\ep_0\gtrsim m-a$ even smaller,
which then yields \eqref{eq:V:case2:property4:superradiantisnottrapped}. 

{Next, consider the case where $\xiphi(\xit{+}\frac{a}{2mr_+}\xiphi)\geq \ep_0 m^{-2}|\xiphi|\Lambda$. Since $\xit\xiphi<0$, we have from $\xiphi(\xit{+}\frac{a}{2mr_+}\xiphi)\geq \ep_0 m^{-2} |\xiphi|\Lambda$ that $\xiphi^2\gtrsim -m\xit\xiphi$.
Let $r_2=-\frac{a\xiphi}{2m\xit}$, then
\beaa
r_2-r_+=\frac{(2mr_+\xit +a\xiphi)\xiphi}{-2m\xit\xiphi}\gtrsim  \frac{\ep_0|\xiphi|\Lambda}{-m\xit \xiphi}\gtrsim \ep_0\frac{\xiphi^2}{-m\xit \xiphi}\gtrsim \ep_0\gtrsim m-a,
\eeaa
which indicates that} $r_2>r_+$ is bounded away from $r_+$ by a constant that depends only on $m$ and $a$, but not on the frequencies. We compute
\beaa
(V-\xit^2)\vert_{r=r_2}&=&\bigg(\frac{\Delta\Lambda^2}{(\R)^2}-\frac{(a\xiphi+ 2mr\xit)^2}{(\R)^2} -\frac{\Delta (r^2+2mr+a^2)}{(\R)^2} \xit^2 \bigg)\bigg|_{r=r_2}\nn\\
&=&\frac{\Delta(r_2)}{(r_2^2+a^2)^2}\bigg(\Lambda^2 - \frac{a^2\xiphi^2}{4m^2r_2^2}(r_2^2 +2mr_2 + a^2)\bigg)\nn\\
&\gtrsim& {m^{-4}(m-a)^2\Lambda^2,}
\eeaa
where we have used in the last step that $r_2>m$, $\Lambda^2\geq \xiphi^2$ and $r_2-r_+\gtrsim m-a$. The estimate \eqref{eq:V:case2:property4:superradiantisnottrapped} is thus proved which concludes the proof of Lemma \ref{lem:SRnottrap}.
\end{proof}


\subsection{Pseudo-differential version of the energy identity}
\label{subsect:GeneralEnerIden:PD}


Let $E$ and $X$ be defined by 
\bea\lab{eq:generalformofthePDOmultipliersXandE}
\bsplit
E=&\Opw(e_0), \qquad\qquad e_0\in \widetilde{S}^{0,0}(\MM),\\
X=&\Opw(i \mu s_0 \xi_r) + \Opw\bigg(\frac{is_0 \S_1}{\R} + i s_1\bigg), \qquad s_j\in\widetilde{S}^{j,0}(\MM),\,\,\, j=0,1,
\end{split}
\eea
with $e_0$, $s_0$, and $s_1$ all being real symbols {of the form\footnote{In particular, $\sigma(X)$ is of the form $\chi(\Xi)\widetilde{S}^{1,1}_{hom}(\MM)$.}
\beaa
e_0=\chi(\Xi)\tilde{e}_0, \qquad s_0=\chi(\Xi)\tilde{s}_0, \qquad s_1=\chi(\Xi)\tilde{s}_1, \qquad \tilde{e}_0, \, \tilde{s}_0\in\widetilde{S}^{0,0}_{hom}(\MM), \qquad \tilde{s}_1\in\widetilde{S}^{1,0}_{hom}(\MM),
\eeaa
where $\chi$ denotes a smooth cut-off in $\mathbb{R}^3$ such that $0\leq\chi\leq 1$, $\chi=1$ for $|\Xi|\geq 2$ and $\chi=0$ for $|\Xi|\leq 1$, and where the class of symbols $\widetilde{S}^{m,N}_{hom}(\MM)$ has been introduced in Definition \ref{def:thesymbolswereallyuse}.} With respect to {the measure $d\Vref$ introduced in Remark \ref{rmk:linkvolumeformMwithf0dVref}}, $X$ is a skew-adjoint operator in $
\Opw(\widetilde{S}^{1,1}(\MM))$ and $E$ is a self-adjoint operator in $
\Opw(\widetilde{S}^{0,0}(\MM))$. Their symbols are given by
\bea\lab{eq:symbolsofPDOmultipliersXandE}
\sigma(X)= i\left(\mu s_0 \xi_r+\frac{s_0 \S_1}{\R}+ s_1\right), \qquad \sigma(E)=e_0.
\eea 

By utilizing the above PDOs $X$ and $E$ as multipliers, we derive a pseudodifferential version of the standard energy identity in the following lemma. It is stated for a Lorentzian metric $\g$ in the normalized coordinates systems $(\tau, r, x^1, x^2)$ of Section \ref{sec:isochorecoordinatesonHr}.

\begin{lemma}
\label{lem:EnerIden:PDO}
Recall that $\MM_{r_1,r_2}=\MM\cap \{r_1\leq r\leq r_2\}$, {and recall also the notation $f_0$, introduced in Lemma \ref{lemma:spacetimevolumeformusingisochorecoordinates}, associated to a Lorentzian metric $\g$ in the normalized coordinates systems $(\tau, r, x^1, x^2)$ of Section \ref{sec:isochorecoordinatesonHr}.} Then, the following pseudodifferential energy identity holds
\bea
\label{eq:EnerIden:PDO}
-\int_{\MM_{r_1,r_2}}\Re\big(\Box_{\g}\psi\overline{(X+E)\psi}\big)=\int_{\MM_{r_1,r_2}}  \Re\big({T}_{X,E}\psi\bar{\psi} \big) d\Vref+\textbf{BDR}[\psi]\Big|_{r=r_1}^{r=r_2},
\eea
where $d\Vref=d\tt drdx^1dx^2$ is the Lebesgue measure in the coordinates $(\tau, r, x^1, x^2)$ and 
\bsub
\bea
\label{def:hatTXE:PDEnerIden}
{T}_{X,E}&:=&- \f12 \Big([f_0{\Box}_{\g}, X]+\big(E f_0{\Box}_{\g} +f_0{\Box}_{\g}E\big)\Big),\\
\label{def:BDR:PDEnerIden}
\textbf{BDR}[\psi]&:=& \frac{1}{2}\int_{H_r}\Re\Big(
\g^{\a r}\psi \overline{\pr_{\a} (X{+E})\psi}
-\g^{r\a}\pr_{\a}\psi \overline{(X{+E})\psi}
\nn\\
&& \qquad\qquad\qquad\qquad\qquad\qquad\qquad -\mu \Opw(s_0)\psi \overline{{\Box}_{\g}\psi}
\Big)f_0 d\tt dx^1dx^2.
\eea
\esub
\end{lemma}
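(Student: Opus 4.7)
The identity will follow by a symmetrization argument: compute $-\int_{\MM_{r_1,r_2}}\Re(\Box_\g\psi\overline{(X+E)\psi})$ by moving $(X+E)$ and $f_0\Box_\g$ back and forth across the inner product using the (skew-)adjoint properties of $X$, $E$, and $f_0\Box_\g$ with respect to the Lebesgue measure $d\Vref$, and tracking the boundary terms that IBP in $r$ generates on $H_{r_1}$ and $H_{r_2}$. To set up, I would first use Lemma \ref{lemma:spacetimevolumeformusingisochorecoordinates} to rewrite the LHS as $-\int\Re(f_0\Box_\g\psi\overline{(X+E)\psi})\,d\Vref$, and then compute $2\int f_0\Box_\g\psi\,\overline{(X+E)\psi}\,d\Vref$ by first invoking the formal self-adjointness of $f_0\Box_\g = \partial_\alpha(f_0\g^{\alpha\beta}\partial_\beta)$ to move $f_0\Box_\g$ onto $(X+E)\psi$, which produces the standard divergence boundary contribution
\[
\mathrm{BDR}_\Box = \left[\int_{H_r} f_0\big(\g^{r\alpha}\partial_\alpha\psi\,\overline{(X+E)\psi} - \g^{r\alpha}\psi\,\overline{\partial_\alpha(X+E)\psi}\big)\,d\tau dx^1 dx^2\right]_{r_1}^{r_2}
\]
(IBP in $\tau$ and $x^a$ yields no boundary). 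Writing $f_0\Box_\g(X+E) = Xf_0\Box_\g + [f_0\Box_\g, X] + f_0\Box_\g E$ and then moving $X$ and $E$ back onto $\psi$ using the adjoint properties from Proposition \ref{prop:PDO:MM:Weylquan:mixedoperators}, point 4, produces the interior commutator structure $[f_0\Box_\g, X] + f_0\Box_\g E + Ef_0\Box_\g$ appearing in $T_{X,E}$, plus an additional $X$-type boundary contribution.

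Since $E \in \Opw(\widetilde{S}^{0,0}(\MM))$ is tangential with real symbol, its self-adjointness generates no boundary term in $r$. The skew-adjointness of $X = \Opw(i\mu s_0\xi_r) + X_{\mathrm{tan}}$ likewise contributes no $r$-boundary from the tangential part, whereas the $r$-differentiating piece can be unpacked via Lemma \ref{lemma:WeylquantizationmidexsymbolRn} as $\Opw(i\mu s_0\xi_r) = \mu\Opw(s_0)\partial_r$ modulo a tangential self-adjoint term, and one IBP in $r$ then yields the boundary
\[
\mathrm{BDR}_X(\phi,\chi) = \left[\int_{H_r}\mu\Opw(s_0)\phi\,\bar\chi\,d\tau dx^1 dx^2\right]_{r_1}^{r_2}.
\]
Taking $\phi = f_0\Box_\g\psi$ and $\chi = \psi$, then using both $\Re(a\bar b) = \Re(b\bar a)$ and the tangential self-adjointness of $\Opw(s_0)$ on $H_r$ (boundary-free since $\Opw(s_0)$ acts only in $\tau, x^1, x^2$) to move $\Opw(s_0)$ from $f_0\Box_\g\psi$ to $\psi$, this contribution recasts as
\[
\left[\int_{H_r}\mu f_0\,\Re\big(\Opw(s_0)\psi\,\overline{\Box_\g\psi}\big)\,d\tau dx^1 dx^2\right]_{r_1}^{r_2},
\]
which is precisely (up to the overall factor $\tfrac{1}{2}$) the third piece of $\textbf{BDR}[\psi]$ in \eqref{def:BDR:PDEnerIden}.

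Assembling the pieces gives $2\Re\int f_0\Box_\g\psi\,\overline{(X+E)\psi}\,d\Vref = \Re\int\big([f_0\Box_\g,X] + f_0\Box_\g E + E f_0\Box_\g\big)\psi\bar\psi\,d\Vref + \Re(\mathrm{BDR}_\Box + \mathrm{BDR}_X)$; dividing by $-2$ and identifying $T_{X,E}$ from \eqref{def:hatTXE:PDEnerIden} yields the stated identity. The main obstacle is the careful bookkeeping of the three boundary contributions: the two standard divergence-type terms assemble into the first two terms of \eqref{def:BDR:PDEnerIden} after extracting the real part, while the third term involving $\Opw(s_0)$ arises only from the skew-adjoint symmetrization of the $\partial_r$-containing piece $\Opw(i\mu s_0\xi_r)$ of $X$, and its final form on the RHS is fixed by the swap of $\Opw(s_0)$ via its tangential self-adjointness together with the symmetry $\Re(a\bar b) = \Re(\bar a b)$. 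The interior terms are then routine by the definition of $T_{X,E}$.
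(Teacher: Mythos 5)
Your proof is correct and takes essentially the same route as the paper: symmetrize $2\Re\int_{\MM_{r_1,r_2}}\Box_{\g}\psi\,\overline{(X+E)\psi}$ using the skew-adjointness of $X$ and self-adjointness of $E$ with respect to $d\Vref$ together with the self-adjointness of $f_0\Box_{\g}$, collect the interior terms into $T_{X,E}$, and track the $r$-integrations by parts, identifying the $\mu\,\Opw(s_0)\psi\,\overline{\Box_{\g}\psi}$ boundary contribution through the Weyl decomposition $\Opw(i\mu s_0\xi_r)=\frac{1}{2}\big(\Opw(i\mu s_0)D_r+D_r\Opw(i\mu s_0)\big)$ and the tangential self-adjointness of $\Opw(s_0)$ on $H_r$. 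Your bookkeeping of the three boundary pieces is in fact more explicit than the paper's, which leaves that verification implicit.
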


\begin{proof}
Using the skew-adjointness of $X$ and the self-adjointness of $E$ with respect to $d\Vref={d\tt drdx^1dx^2}$, as well as the self-adjointness of $\Box_{\g}$ with respect to  $f_0d\Vref$ in view of Remark \ref{rmk:linkvolumeformMwithf0dVref}, we derive
\beaa
&&\int_{\MM_{r_1,r_2}} 2\Re\big(\Box_{\g}\psi\overline{(X+E)\psi}\big)\nn\\
&=&\Re\bigg(\int_{\MM_{r_1,r_2}}\overline{ \Box_{\g}\psi }{(X+E)\psi}f_0d\Vref
+\int_{\MM_{r_1, r_2}} \overline{(X+E)\psi} {\Box_{\g}\psi } f_0d\Vref\bigg)\nn\\
&=& \Re\bigg(\int_{\MM_{r_1,r_2}} \bar{\psi} \Box_{\g}(X+E)\psi f_0d\Vref
+\int_{\MM_{r_1, r_2}} \bar{\psi} {(-X +E) (f_0\Box_{\g}\psi )}d\Vref\bigg)+BDR\nn\\
&=&\Re\bigg(\int_{\MM_{r_1,r_2}}\Big([f_0\Box_{\g}, X]\psi \bar{\psi} +E(f_0\Box_{\g}\psi )\bar{\psi} + f_0\Box_{\g}E\psi \bar{\psi}\Big)d\Vref \bigg) +BDR\\
&=& -2\int_{\MM_{r_1,r_2}}  \Re\big({T}_{X,E}\psi\bar{\psi} \big) d\Vref +BDR,
\eeaa
where $BDR$ indicates the boundary terms arising in the integrations by parts {from the second to the third line.} Examining the above integration by parts, {and noticing in particular that 
\beaa
\Opw(i \mu s_0 \xi_r)\phi=\frac{1}{2}(\Opw(i \mu s_0)D_r\phi+D_r(\Opw(i \mu s_0)\phi))
\eeaa
in view of Proposition \ref{prop:PDO:MM:Weylquan:mixedoperators},} we find that the arising boundary terms are as in \eqref{def:BDR:PDEnerIden}.
\end{proof}

Next, {compute the RHS of \eqref{eq:EnerIden:PDO}} in Kerr {up to lower order terms}. 
\begin{proposition}
\label{prop:esti:GeneralEnerIden:PDO}
Let 
\bea\label{eq:symbolofBulk:EnerIden}
\bsplit
\sigma_2({T}_{X,E}):=& \f12 \Bigg\{2{(\R)\partial_r(s_0)}\xirstar^2 + 2(\R) \pr_r(s_1)\xirstar\\
&+ \mu s_0{\left(\big(-4r\mu^{-1}+2(r-m)\mu^{-2}\big)\Big(\xirstar^2 - (\xi_\tau^2-V)\Big)   - (\R)\mu^{-1}\pr_rV\right)}\Bigg\}\\
& +\mu^{-1}(\R)e_0\Big(\xirstar^2-(\xit^2-V)\Big), \qquad \sigma_2({T}_{X,E})\in\widetilde{S}^{2,2}(\MM),
\end{split}
\eea
and
 \bea
 \label{eq:symbolofBDR:EnerIden}
 \sigma_{2, \textbf{BDR}}^{X,E}:= -\mu s_0\xi_r \S_1 -\frac{s_0 \S_1^2}{\R}- \Delta s_1 \xi_r  -  s_1 \S_1 -\frac{1}{2}\mu s_0 \S_2, \qquad \sigma_{2, \textbf{BDR}}^{X,E}\in\widetilde{S}^{2,1}(\MM).
 \eea
{Then, it holds 
\bea
\label{eq:esti:GeneralEnerIden:PDO}
\nn&&\int_{\MM_{r_1,r_2}}  \Re\big(\bar{\psi}\Opw(\sigma_2({T}_{X,E}))\psi \big) d\Vref\\
\nn&&+\Bigg[\int_{H_r}\Re\Bigg(-\frac{1}{2}\Opw(\mu^2(\R)s_0)\pr_r\psi \ov{\pr_r\psi} + \ov{\psi}\Opw\Big( \sigma_{2, \textbf{BDR}}^{X,E}\Big)\psi\Bigg) d\tt dx^1dx^2\Bigg]_{r=r_1}^{r=r_2}\\
\nn&\leq& \int_{\MM_{r_1,r_2}}  \Re\big({T}_{X,E}\psi\bar{\psi} \big) d\Vref+\textbf{BDR}[\psi]\Big|_{r=r_1}^{r=r_2} +C_{r_2} \Bigg\{\left(\int_{\MM_{r_1, r_2}}|\pr_r\psi|^2\right)^{\frac{1}{2}}\left(\int_{\MM_{r_1, r_2}} |\psi|^2\right)^{\frac{1}{2}}\\
\nn&+&\int_{\MM_{r_1, r_2}} |\psi|^2 {+\left|\int_{\MM_{r_1,r_2}}  \Re\left(\bar{\psi}\Opw(\mu\widetilde{S}^{0,2}(\MM))\psi \right) d\Vref\right|}\\
&+&\left(\int_{H_{r_1}}|\pr\psi|^2+|\psi|^2\right)^{\frac{1}{2}}\left(\int_{H_{r_1}}|\psi|^2\right)^{\frac{1}{2}}+\left(\int_{H_{r_2}}|\pr\psi|^2+|\psi|^2\right)^{\frac{1}{2}}\left(\int_{H_{r_2}}|\psi|^2\right)^{\frac{1}{2}}\Bigg\},
\eea
where $C_{r_2}$ is a constant that depends on the value of $r_2$.}
\end{proposition}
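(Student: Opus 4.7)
The plan is to decompose $T_{X,E}$ and $\textbf{BDR}[\psi]$ into principal symbols plus explicit remainders, and then bound the remainders by the error terms on the RHS of \eqref{eq:esti:GeneralEnerIden:PDO}. Starting from the definition \eqref{def:hatTXE:PDEnerIden}, $T_{X,E} = -\tfrac{1}{2}\big([f_0\Box_\g, X] + (Ef_0\Box_\g + f_0\Box_\g E)\big)$. I would first apply Lemma \ref{lem:WeylquantizedtoBoxg} and the Kerr formula \eqref{eq:ps:rescalewave:normalize}, writing
$\sigma(f_0\Box_\g) = -\mu^{-1}(r^2+a^2)\xi_{r^*}^2 + \mu^{-1}(r^2+a^2)(\xi_\tau^2 - V) + \widetilde{S}^{0,0}(\MM)$,
then compute each piece. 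The anti-commutator is handled by part 1) of Proposition \ref{prop:PDO:MM:Weylquan:mixedoperators}, giving $E f_0\Box_\g + f_0\Box_\g E = \Opw(2e_0\sigma(f_0\Box_\g)) + \Opw(\widetilde{S}^{0,0}(\MM))$; multiplied by $-\tfrac{1}{2}$ this yields the last line of \eqref{eq:symbolofBulk:EnerIden}.

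For the commutator $[f_0\Box_\g, X]$, I would split $\sigma(f_0\Box_\g) = -\Delta\xi_r^2 - 2\S_1\xi_r + \S_2 + \widetilde{S}^{0,0}(\MM)$ and treat $-\Delta\xi_r^2$ using part 2) of Proposition \ref{prop:PDO:MM:Weylquan:mixedoperators} (which applies since the symbol has the special form $v(r)\xi_r^{N_1}$) to get an improved remainder in $\widetilde{S}^{0,0}(\MM)$, while handling the remaining terms with part 1) and the Poisson-bracket formula \eqref{eq:basiccomputationPoissonbracketsymbolsusedinwaveandmultipliers} to get a remainder in $\mu\widetilde{S}^{0,2}(\MM)$ (the $\mu$ factor survives because $s_0, s_1$ each multiply $\mu$ in the leading part of $\sigma(X)$). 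Using $\sigma(X) = i(\mu s_0\xi_r + \tfrac{s_0\S_1}{r^2+a^2} + s_1)$ together with $\xi_{r^*} = \mu\xi_r + \S_1/(r^2+a^2)$ and the chain-rule identities $\pr_{\xi_r}\sigma(f_0\Box_\g) = -2(r^2+a^2)\xi_{r^*}$, I would algebraically regroup to obtain the first two lines of \eqref{eq:symbolofBulk:EnerIden}; the somewhat awkward coefficient $-4r\mu^{-1} + 2(r-m)\mu^{-2}$ arises from $\pr_r(\mu^{-1}(r^2+a^2))$ after isolating the $\xi_{r^*}^2 - (\xi_\tau^2 - V)$ factor common to the whole expression.

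For the boundary terms, starting from \eqref{def:BDR:PDEnerIden}, I would substitute $X = \Opw(i\mu s_0\xi_r) + \Opw(i s_0\S_1/(r^2+a^2) + is_1)$ and use Lemma \ref{lemma:particularcaseWeylquantizationpolynomialxi:onMM} to identify the $-\tfrac{1}{2}\Opw(\mu^2(r^2+a^2)s_0)\pr_r\psi\overline{\pr_r\psi}$ piece coming from the $i\mu s_0\xi_r$ part of $X$ combined with $\g^{rr}\pr_r\psi$. The remaining boundary contributions group into $\Opw(\sigma^{X,E}_{2,\textbf{BDR}})$ whose symbol is computed directly: the first three terms of \eqref{eq:symbolofBDR:EnerIden} come from the interaction of $\g^{r\a}\pr_\a = \gam^{rr}\pr_r + \gam^{r\tau}\pr_\tau + \gam^{r\tphi}\pr_{\tphi}$ with $X$, and the last term comes from $\tfrac{1}{2}\mu\Opw(s_0)\overline{\Box_\g\psi}$ via \eqref{eq:computationofsymbolmodqsquareboxgam}. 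The $E$ contribution is absorbed into the $|\psi|^2$ boundary error.

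The main obstacle is bookkeeping: ensuring all discarded remainders fit into the stated error classes. Specifically, remainders coming from the commutator lie in $\mu\widetilde{S}^{0,2}(\MM)$ (as claimed in the error term), bulk lower order commutator contributions from part 1) of Proposition \ref{prop:PDO:MM:Weylquan:mixedoperators} are in $\widetilde{S}^{0,0}(\MM)$ and produce $\int|\psi|^2$-type terms via Lemma \ref{lemma:actionmixedsymbolsSobolevspaces:MM}, and lower-order boundary contributions are estimated by Cauchy--Schwarz on $H_r$, yielding the boundary error terms on $H_{r_1}$ and $H_{r_2}$. The appearance of the $r_2$-dependent constant $C_{r_2}$ reflects the non-homogeneous weights coming from the symbol classes. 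Once these remainders are estimated, the inequality \eqref{eq:esti:GeneralEnerIden:PDO} follows from \eqref{eq:EnerIden:PDO}.
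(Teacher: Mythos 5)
Your overall route coincides with the paper's: decompose $T_{X,E}$ into the $\Delta\xi_r^2$ part, the remaining wave-symbol part, and the $E$-anticommutator, compute the Poisson bracket, regroup in terms of $\xirstar$, and treat the boundary term by expanding $X$ and estimating remainders by Cauchy--Schwarz on $H_{r_1}$, $H_{r_2}$. The gap is in the symbol-calculus bookkeeping at exactly the two places the statement is designed around. First, part 1) of Proposition \ref{prop:PDO:MM:Weylquan:mixedoperators} applied to $E f_0\Box_{\g}+f_0\Box_{\g}E$ gives a remainder in $\widetilde{S}^{0,2}(\MM)$, not $\widetilde{S}^{0,0}(\MM)$ as you claim: with $e_0\in\widetilde{S}^{0,0}(\MM)$ and the wave symbol in $\widetilde{S}^{2,2}(\MM)$, \eqref{eq:propWeylquantization:MM:composition:mixedsymbols} only yields $\widetilde{S}^{m_1+m_2-2,N_1+N_2}=\widetilde{S}^{0,2}(\MM)$. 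A bare $\Opw(\widetilde{S}^{0,2}(\MM))$ remainder contains $\pr_r^2\psi$ with no $\mu$ weight and is not absorbable into any of the error terms on the RHS of \eqref{eq:esti:GeneralEnerIden:PDO}; you must split off the $\Delta\xi_r^2$ piece and anticommute it with $E$ via part 2) (\eqref{eq:propWeylquantization:MM:composition:mixedsymbols:specialcase}, using the $v(r)\xi_r^2$ structure), which is what drops the remainder to $\widetilde{S}^{0,0}(\MM)$, the rest of the wave symbol having $\xi_r$-degree one and hence only contributing $\widetilde{S}^{0,1}(\MM)$.

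Second, for the commutator of the non-$\Delta\xi_r^2$ part $-2\S_1\xi_r+\S_2$ with $X$, part 1) again only gives a remainder in $\widetilde{S}^{0,2}(\MM)$, and your justification for the surviving $\mu$ factor is incorrect: in $\sigma(X)=i(\mu s_0\xi_r+\tfrac{s_0\S_1}{\R}+s_1)$ neither $s_1$ nor $\tfrac{s_0\S_1}{\R}$ is multiplied by $\mu$. The correct mechanism is structural: the only way a remainder of $\xi_r$-degree two can arise is from the piece $\Opw(i\mu s_0\xi_r)$ of $X$ interacting with the $\xi_r$-linear part of the wave symbol through purely tangential derivatives, which leave the factor $\mu(r)$ intact. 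This is precisely what part 3) of Proposition \ref{prop:PDO:MM:Weylquan:mixedoperators} (\eqref{eq:propWeylquantization:MM:composition:mixedsymbols:specialcase:1}, applied with $f(r)=\mu$) records, producing the remainder $\mu\widetilde{S}^{0,2}(\MM)+\widetilde{S}^{0,1}(\MM)$ that matches the error term $\left|\int_{\MM_{r_1,r_2}}\Re\big(\bar{\psi}\Opw(\mu\widetilde{S}^{0,2}(\MM))\psi\big)d\Vref\right|$ in \eqref{eq:esti:GeneralEnerIden:PDO}; the parts of $X$ without the $\mu$ are harmless because they have $\xi_r$-degree zero, so their commutator remainders lie in $\widetilde{S}^{0,1}(\MM)$. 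With these two corrections (use part 2) for everything involving $\Delta\xi_r^2$, part 3) with $f=\mu$ for the $\mu s_0\xi_r$ piece, part 1) only for the rest), your argument closes and is the paper's proof; as written, the tools you cite do not deliver the $\mu$-weighted remainder the estimate requires.
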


\begin{remark}[Choice of constants $r_1$ an $r_2$]\label{rmk:choiceofconstantRbymeanvalue}
In {practice}, we will take $r_1=r_+(1+\dhor')$ and $r_2=R$, where $\dhor'\in [\dhor, 2\dhor]$ verifies
{\bea
\lab{de:choiceofdhor'}
\nn&&\int_{H_{r_+(1+\dhor')}}\big(|{\pr^{\leq 1}}\psi|^2+|\square_\g\psi|^2\big) d\tt dx^1dx^2\\ 
&\leq& \frac{1}{\dhor}\int_{\MM_{r_+(1+\dhor), r_+(1+2\dhor)}}\big(|{\pr^{\leq 1}}\psi|^2+|\square_\g\psi|^2\big) d\Vref,
\eea}
and $R\in [Nm, (N+1)m]$, with $N\geq 20$ a large enough integer, verifies 
{\bea
\label{eq:choiceofRvalue:Kerr}
\int_{H_R}\big(|{\pr^{\leq 1}}\psi|^2+|\square_\g\psi|^2\big) d\tt dx^1dx^2\leq \frac{1}{m}\int_{\MM_{Nm, (N+1)m}}\big(|{\pr^{\leq 1}}\psi|^2+|\square_\g\psi|^2\big) d\Vref, \,\,\,  R\geq 20m.
\eea}
\end{remark}

\begin{proof}
{The proof proceeds in the following steps.}

{\noindent{\bf Step 1.}} Recall from  Lemma \ref{lemma:spacetimevolumeformusingisochorecoordinates} that $f_0=|q|^2$ in Kerr. In particular, we may rewrite ${T}_{X,E}$ in \eqref{def:hatTXE:PDEnerIden} as
\beaa
{T}_{X,E}&=&- \f12 \Big([{|q|^2{\Box}_{\gam}}, X]+\big(E{|q|^2{\Box}_{\gam}} +{|q|^2{\Box}_{\gam}}E\big)\Big).
\eeaa
Also, recall from \eqref{eq:computationofsymbolmodqsquareboxgam} and \eqref{eq:computationofsymbolmodqsquareboxgam:1} that 
\beaa
\sigma(\qs{\Box}_{\gam} )&=&-\Delta\xi_r^2 -2\S_1\xi_r+\S_2, \qquad \S_1\in\widetilde{S}^{1,0}(\MM), \qquad \S_2\in\widetilde{S}^{2,0}(\MM)
\eeaa
and from \eqref{eq:symbolsofPDOmultipliersXandE} that 
\beaa
\sigma(X)= i\left(\mu s_0 \xi_r+\frac{s_0 \S_1}{\R}+ s_1\right), \qquad \sigma(E)=e_0.
\eeaa
We decompose ${T}_{X,E}$ as follows
\beaa
\bsplit
{T}_{X,E}:=&{T}_{X,E}^{(1)}+{T}_{X,E}^{(2)}+{T}_{X,E}^{(3)},\\
{T}_{X,E}^{(1)}:=& \f12 \Big([\Opw(\Delta\xi_r^2), X]+\big(E\Opw(\Delta\xi_r^2) +\Opw(\Delta\xi_r^2)E\big)\Big)\\
{T}_{X,E}^{(2)}:=& - \f12 [\Opw(-2\S_1\xi_r+\S_2), \Opw(i\mu s_0 \xi_r)])\\
{T}_{X,E}^{(3)}:=&  - \f12 \Bigg(\left[\Opw( -2\S_1\xi_r+\S_2),  i\left(\frac{s_0 \S_1}{\R}+ s_1\right)\right]\\
&+\big(E\Opw( -2\S_1\xi_r+\S_2) +\Opw( -2\S_1\xi_r+\S_2)E\big)\Bigg).
\end{split}
\eeaa
We now compute the symbol of ${T}_{X,E}$:
\begin{itemize}
\item For ${T}_{X,E}^{(1)}$, we rely on 
\eqref{eq:propWeylquantization:MM:composition:mixedsymbols:specialcase}, noticing that, in local coordinates, $\Delta\xi_r^2$ is of the type $v_1(r)\xi_3^{N_1}$ with $v_1(r)=\De$ and $N_1=2$.
\item For ${T}_{X,E}^{(2)}$, we rely on 
\eqref{eq:propWeylquantization:MM:composition:mixedsymbols:specialcase:1} with $f(r)=\mu$.
\item For ${T}_{X,E}^{(3)}$, we rely on \eqref{eq:propWeylquantization:MM:composition:mixedsymbols}.
\end{itemize}
{We obtain 
\beaa
\sigma({T}_{X,E})= -\frac{1}{2i} \{\sigma(|q|^2{\Box}_{\gam} ), \sigma(X)\}-|q|^2\sigma(E)\sigma({\Box}_{\gam} ) +\mu\widetilde{S}^{0,2}(\MM)+\widetilde{S}^{0,1}(\MM).
\eeaa}

{\noindent{\bf Step 2.}} {Next, we compute $\{\sigma(|q|^2{\Box}_{\gam} ), \sigma(X)\}$. First, recall from \eqref{eq:computationofsymbolmodqsquareboxgam} and \eqref{eq:computationofsymbolmodqsquareboxgam:1} that 
\beaa
\bsplit
\sigma(\qs{\Box}_{\gam} )=&-\Delta\xi_r^2 -2\S_1\xi_r+\S_2 +\widetilde{S}^{0,0}(\MM),\\
\S_1=& (\R)(1-\mu \tmod')\xit+(a-\Delta\phimod')\xiphi,\\
\S_2=& - \Lambda^2
-\big(2a(1-\tmod')-2(\R)\phimod' (1-\mu \tmod')\big)\xit\xiphi\\
&+\big(2(\R)\tmod'-\Delta(\tmod')^2\big)\xit^2
-({\Delta}(\phimod')^2-2a\phimod' )\xiphi^2,
\end{split}
\eeaa
which implies that $-\Delta\xi_r^2 -2\S_1\xi_r+\S_2\in\widetilde{S}^2_{pol}(\MM)$, see Definition \ref{def:thesymbolswereallyuse}, and 
\beaa
\{\sigma(|q|^2{\Box}_{\gam} ), \sigma(X)\}=\{-\Delta\xi_r^2 -2\S_1\xi_r+\S_2,  \sigma(X)\}+\widetilde{S}^{0,1}(\MM).
\eeaa
Since $\sigma(X)$ is of the form $\chi(\Xi)\widetilde{S}^{1,1}_{hom}(\MM)$ and  $-\Delta\xi_r^2 -2\S_1\xi_r+\S_2\in\widetilde{S}^2_{pol}(\MM)$, we may apply \eqref{eq:basiccomputationPoissonbracketsymbolsusedinwaveandmultipliers} which yields
\beaa
&&\{-\Delta\xi_r^2 -2\S_1\xi_r+\S_2, \sigma(X)\} \\
&=& \pr_{\xi_r}(-\Delta\xi_r^2 -2\S_1\xi_r+\S_2)\pr_r(\sigma(X)) - \pr_r(-\Delta\xi_r^2 -2\S_1\xi_r+\S_2)\pr_{\xi_r}(\sigma(X))
\eeaa
and hence
\beaa
\{\sigma(|q|^2{\Box}_{\gam} ), \sigma(X)\} = \pr_{\xi_r}(\sigma(|q|^2{\Box}_{\gam} ))\pr_r(\sigma(X)) - \pr_r(\sigma(|q|^2{\Box}_{\gam} ))\pr_{\xi_r}(\sigma(X))+\widetilde{S}^{0,1}(\MM).
\eeaa
Now, in view of  \eqref{eq:ps:rescalewave:normalize},  \eqref{def:2symbol:wavepotential}, \eqref{eq:symbolsofPDOmultipliersXandE} and \eqref{def:xirstar}, we have
\beaa
\bsplit
\sigma(\qs{\Box}_{\gam}) &=-\mu^{-1} (\R) \xirstar^2 +\BLS_2+\widetilde{S}^{0,0}(\MM), \qquad \sigma(X)= i\left(s_0 \xirstar+ s_1\right),\\
\BLS_2 &= (\R)\mu^{-1}(\xit^2-V),\qquad V=\frac{\Delta\Lambda^2 -4amr\xit\xiphi-a^2 \xiphi^2}{(\R)^2},\qquad \xirstar = \mu \xi_r +\frac{\S_1}{\R}.
\end{split}
\eeaa
Hence, using also the fact that  $\pr_{\xi_r}\xirstar=\mu$, as well as
\beaa
\pr_{\xi_r}\big(-\mu^{-1} (\R) \xirstar^2 +\BLS_2\big) &=& -2(\R)\xirstar,\\
\pr_r\big(-\mu^{-1} (\R) \xirstar^2 +\BLS_2\big) &=& -\pr_r(\mu^{-1} (\R))\xirstar^2  -2\mu^{-1} (\R) \xirstar\pr_r\xirstar+\pr_r\BLS_2,\\
&=& (-4r\mu^{-1}+2(r-m)\mu^{-2})(\xirstar^2 - (\xi_\tau^2-V)) \\
&& -2\mu^{-1} (\R) \xirstar\pr_r\xirstar -(r^2+a^2)\mu^{-1}\pr_rV,\\
\pr_{\xi_r}(\sigma(X)) &=& i\mu s_0,\\
\pr_r(\sigma(X)) &=& i\left(\pr_r(s_0)\xirstar+s_0\pr_r\xirstar +\pr_r(s_1)\right),
\eeaa
we infer, noticing the cancellation of the terms involving $\pr_r\xirstar$,  
\beaa
\bsplit
\{\sigma(|q|^2{\Box}_{\gam} ), \sigma(X)\} =&  -2i(\R)\xirstar \left(\pr_r(s_0)\xirstar +\pr_r(s_1)\right)\\
& -i\mu s_0\left(\Big(-4r\mu^{-1}+2(r-m)\mu^{-2}\Big)\Big(\xirstar^2 - (\xi_\tau^2-V)\Big)   -(r^2+a^2)\mu^{-1}\pr_rV\right)\\
& +\widetilde{S}^{0,1}(\MM).
\end{split}
\eeaa
We deduce} 
\beaa
\sigma({T}_{X,E})&=&  -\frac{1}{2i} \{\sigma(|q|^2{\Box}_{\gam} ), \sigma(X)\}-|q|^2\sigma(E)\sigma({\Box}_{\gam} ) +\mu\widetilde{S}^{0,2}(\MM)+\widetilde{S}^{0,1}(\MM)\nn\\
&=& \f12 \Bigg\{2{(\R)\partial_r(s_0)}\xirstar^2 {+} 2(\R) \pr_r(s_1)\xirstar\nn\\
&&+ \mu s_0{\left(\Big(-4r\mu^{-1}+2(r-m)\mu^{-2}\Big)\Big(\xirstar^2 - (\xi_\tau^2-V)\Big)   - (\R)\mu^{-1}\pr_rV\right)}\Bigg\}\nn\\
&& +\mu^{-1}(\R)e_0\Big(\xirstar^2-(\xit^2-V)\Big)+\mu\widetilde{S}^{0,2}(\MM)+\widetilde{S}^{0,1}(\MM),
\eeaa
which we rewrite as 
{\beaa
\bsplit
\sigma({T}_{X,E}) =& \sigma_2({T}_{X,E})+\mu\widetilde{S}^{0,2}(\MM)+\widetilde{S}^{0,1}(\MM),\\
\sigma_2({T}_{X,E}):=& \f12 \Bigg\{2{(\R)\partial_r(s_0)}\xirstar^2 + 2(\R) \pr_r(s_1)\xirstar\\
&+ \mu s_0{\left(\Big(-4r\mu^{-1}+2(r-m)\mu^{-2}\Big)\Big(\xirstar^2 - (\xi_\tau^2-V)\Big)   - (\R)\mu^{-1}\pr_rV\right)}\Bigg\}\\
& +\mu^{-1}(\R)e_0\Big(\xirstar^2-(\xit^2-V)\Big),
\end{split}
\eeaa
where $\sigma_2({T}_{X,E})$ is as stated in \eqref{eq:symbolofBulk:EnerIden}.}

{\noindent{\bf Step 3.}} {Next, we consider the boundary term which is given by \eqref{def:BDR:PDEnerIden}, i.e., 
\beaa
\textbf{BDR}[\psi]&=& \frac{1}{2}\int_{H_r}\Re\Big(
|q|^2\gam^{\a r}\psi \overline{\pr_{\a} (X{+E})\psi}
-|q|^2\gam^{r\a}\pr_{\a}\psi \overline{(X{+E})\psi}
\nn\\
&& \qquad\qquad\qquad\qquad\qquad\qquad\qquad -\mu \Opw(s_0)\psi \overline{|q|^2{\Box}_{\gam}\psi}
\Big) d\tt dx^1dx^2,
\eeaa
where we used the fact that $f_0=|q|^2$ in Kerr. Using \eqref{eq:inverse:hypercoord}, we have
\beaa
|q|^2\gam^{\a r}\pr_{\a} &=& \De\pr_r+(\R)(1-\mu \tmod')\pr_{\tt}+\big(a - \Delta\phimod'\big)\pr_{\tphi}\\
&=& i\Opw(\De\xi_r+\S_1)+(r-m)
\eeaa
where we have used the definition of $\S_1$. We infer
\beaa
&& |q|^2\gam^{\a r}\pr_{\a} (X\psi) - \mu \Opw(s_0)|q|^2{\Box}_{\gam}\psi\\ 
&=& i\Opw(\De\xi_r+\S_1)\circ\left(\Opw(i \mu s_0 \xi_r) + \Opw\bigg(\frac{is_0 \S_1}{\R} + i s_1\bigg)\right)\\ 
&& - \mu \Opw(s_0)\circ\Opw(-\Delta\xi_r^2 -2\S_1\xi_r+\S_2)\psi +(r-m)X\psi
\eeaa
which together with Proposition \ref{prop:PDO:MM:Weylquan:mixedoperators} implies
\beaa
&& |q|^2\gam^{\a r}\pr_{\a} (X\psi) - \mu \Opw(s_0)|q|^2{\Box}_{\gam}\psi\\ 
&=& \Opw\Bigg\{-(\De\xi_r+\S_1)\left(\mu s_0 \xi_r + \frac{s_0 \S_1}{\R} + s_1\right) +\mu s_0(\Delta\xi_r^2 +2\S_1\xi_r-\S_2)\Bigg\}\psi \\
&&+(r-m)X\psi +\Opw(\widetilde{S}^{0,0}(\MM))\\
&=& \Opw\left(-\De s_1\xi_r -\S_1\left(\frac{s_0 \S_1}{\R} + s_1\right) -\mu s_0\S_2\right)\psi +(r-m)X\psi +\Opw(\widetilde{S}^{0,0}(\MM)).
\eeaa}
Plugging in $\textbf{BDR}[\psi]$, we infer 
{\beaa
\textbf{BDR}[\psi]&=& \frac{1}{2}\int_{H_r}\Re\Bigg(-|q|^2\g^{r\a}\pr_{\a}\psi \overline{X\psi} +\ov{\psi}\Opw\left(-\De s_1\xi_r -\S_1\left(\frac{s_0 \S_1}{\R} + s_1\right) -\mu s_0\S_2\right)\psi\\
&&+\ov{\psi}\Opw(\widetilde{S}^{1,1}(\MM))\psi\Bigg) d\tt dx^1dx^2.
\eeaa
Using again the above identity for $|q|^2\g^{\a r}\pr_{\a}$, we deduce
\beaa
\textbf{BDR}[\psi]&=& \frac{1}{2}\int_{H_r}\Re\Bigg(-i\Opw(\De\xi_r+\S_1)\psi \overline{X\psi} +\ov{\psi}\Opw\left(-\De s_1\xi_r -\S_1\left(\frac{s_0 \S_1}{\R} + s_1\right) -\mu s_0\S_2\right)\psi\\
&&+\ov{\psi}\Opw(\widetilde{S}^{1,1}(\MM))\psi\Bigg) d\tt dx^1dx^2.
\eeaa
Now, in view of the definition of $X$, and using repeatedly Proposition \ref{prop:PDO:MM:Weylquan:mixedoperators}, we have
\beaa
&&\int_{H_r}\Re\Big(-i\Opw(\De\xi_r+\S_1)\psi \overline{X\psi}\Big) d\tt dx^1dx^2\\ 
&=& \int_{H_r}\Re\Big(-\Opw(\De\xi_r+\S_1)\psi \overline{\left(\Opw(\mu s_0 \xi_r) + \Opw\bigg(\frac{s_0 \S_1}{\R} + s_1\bigg)\right)\psi}\Big) d\tt dx^1dx^2\\
&=& \int_{H_r}\Re\Bigg(-\Opw((\R)s_0\mu^2)\pr_r\psi \ov{\pr_r\psi}-\ov{\psi}\Opw\left(\mu s_0\S_1\xi_r+\bigg(\frac{s_0 \S_1}{\R} + s_1\bigg)(\De\xi_r+\S_1)\right)\psi\\
&& +\ov{\psi}\Opw(\widetilde{S}^{1,1}(\MM))\psi\Bigg) d\tt dx^1dx^2
\eeaa
and hence
\bea\lab{eq:formulaforBDRbracketpsithatwillbeusedagainlaterforboundarytermr=R}
\nn\textbf{BDR}[\psi]&=& \int_{H_r}\Re\Bigg(-\frac{1}{2}\Opw(\mu^2(\R)s_0)\pr_r\psi \ov{\pr_r\psi} + \ov{\psi}\Opw\Big( \sigma_{2, \textbf{BDR}}^{X,E}\Big)\psi \\
&&\qquad\qquad\qquad\qquad+\ov{\psi}\Opw(\widetilde{S}^{1,1}(\MM))\psi\Bigg) d\tt dx^1dx^2,
\eea
where
 \beaa
 \sigma_{2, \textbf{BDR}}^{X,E}:= -\mu s_0\xi_r \S_1 -\frac{s_0 \S_1^2}{\R}- \Delta s_1 \xi_r  -  s_1 \S_1 -\frac{1}{2}\mu s_0 \S_2, \qquad \sigma_{2, \textbf{BDR}}^{X,E}\in\widetilde{S}^{2,1}(\MM)
 \eeaa
 as stated in \eqref{eq:symbolofBDR:EnerIden}.}

{\noindent{\bf Step 4.}} {We are now ready to conclude. {Recall from} above that we have obtained 
\beaa
\sigma({T}_{X,E}) &=& \sigma_2({T}_{X,E})+\mu\widetilde{S}^{0,2}(\MM)+\widetilde{S}^{0,1}(\MM),
\eeaa
and
\beaa
\textbf{BDR}[\psi]&=& \int_{H_r}\Re\Bigg(-\frac{1}{2}\Opw(\mu^2(\R)s_0)\pr_r\psi \ov{\pr_r\psi} + \ov{\psi}\Opw\Big( \sigma_{2, \textbf{BDR}}^{X,E}\Big)\psi \\
&&\qquad\qquad\qquad\qquad+\ov{\psi}\Opw(\widetilde{S}^{1,1}(\MM))\psi\Bigg) d\tt dx^1dx^2,
\eeaa
where $\sigma_2({T}_{X,E})$ and $\sigma_{2, \textbf{BDR}}^{X,E}$ are given respectively by \eqref{eq:symbolofBulk:EnerIden} and \eqref{eq:symbolofBDR:EnerIden}. This implies
\beaa
&& {\int_{\MM_{r_1,r_2}}  \Re\big({T}_{X,E}\psi\bar{\psi} \big) d\Vref+\textbf{BDR}[\psi]\Big|_{r=r_1}^{r=r_2}}\\ 
&=&\int_{\MM_{r_1,r_2}}  \Re\left(\bar{\psi}\Big(\Opw(\sigma_2({T}_{X,E}))+\Opw(\mu\widetilde{S}^{0,2}(\MM)+\widetilde{S}^{0,1}(\MM))\psi \Big)\right) d\Vref\\
&&+\Bigg[\int_{H_r}\Re\Bigg(-\frac{1}{2}\Opw(\mu^2(\R)s_0)\pr_r\psi \ov{\pr_r\psi} + \ov{\psi}\Opw\Big( \sigma_{2, \textbf{BDR}}^{X,E}\Big)\psi \\
&&\qquad\qquad\qquad\qquad+\ov{\psi}\Opw(\widetilde{S}^{1,1}(\MM))\psi\Bigg) d\tt dx^1dx^2\Bigg]_{r=r_1}^{r=r_2}.
\eeaa
{In view of Lemma \ref{lemma:actionmixedsymbolsSobolevspaces:MM}, we infer}
\beaa
&&\int_{\MM_{r_1,r_2}}  \Re\big(\bar{\psi}\Opw(\sigma_2({T}_{X,E}))\psi \big) d\Vref\\
&&+\Bigg[\int_{H_r}\Re\Bigg(-\frac{1}{2}\Opw(\mu^2(\R)s_0)\pr_r\psi \ov{\pr_r\psi} + \ov{\psi}\Opw\Big( \sigma_{2, \textbf{BDR}}^{X,E}\Big)\psi\Bigg) d\tt dx^1dx^2\Bigg]_{r=r_1}^{r=r_2}\\
&\leq& {\int_{\MM_{r_1,r_2}}  \Re\big({T}_{X,E}\psi\bar{\psi} \big) d\Vref+\textbf{BDR}[\psi]\Big|_{r=r_1}^{r=r_2}} +C_{r_2} \Bigg\{\left(\int_{\MM_{r_1, r_2}}|\pr_r\psi|^2\right)^{\frac{1}{2}}\left(\int_{\MM_{r_1, r_2}} |\psi|^2\right)^{\frac{1}{2}}\\
&&+\int_{\MM_{r_1, r_2}} |\psi|^2 {+\left|\int_{\MM_{r_1,r_2}}  \Re\left(\bar{\psi}\Opw(\mu\widetilde{S}^{0,2}(\MM))\psi \right) d\Vref\right|}\\
&&+\left(\int_{H_{r_1}}|\pr\psi|^2+|\psi|^2\right)^{\frac{1}{2}}\left(\int_{H_{r_1}}|\psi|^2\right)^{\frac{1}{2}}+\left(\int_{H_{r_2}}|\pr\psi|^2+|\psi|^2\right)^{\frac{1}{2}}\left(\int_{H_{r_2}}|\psi|^2\right)^{\frac{1}{2}}\Bigg\}
\eeaa
as stated, where $C_{r_2}$ is a constant that depends on the value of $r_2$.} This concludes the proof of Proposition \ref{prop:esti:GeneralEnerIden:PDO}.
\end{proof}


\subsection{General choices {for the microlocal multipliers} in the energy identity}
\label{subsect:PDcurrents}


Based on different choices of the symbols $s_0, s_1$ and $e_0$, we define
\bsub\lab{eq:definitionofQhQyQfQzintermsofTXEands0s1e0}
\bea
Q^h&:=&\sigma_2 ({T}_{X,E}), \quad \text{with }\, (s_0,s_1,e_0)=(0,0, \mu h),\\
Q^y&:=&\sigma_2 ({T}_{X,E}), \quad \text{with }\,  (s_0,s_1,e_0)=\left(2y,0,\frac{2\mu r}{\R}y-\pr_r(\mu y)\right),\\
Q^f&:=&\sigma_2 ({T}_{X,E}), \quad \text{with }\,  (s_0,s_1,e_0)=\left( 2f,0,\frac{2\mu r}{\R}f-\pr_r(\mu f)+ \mu \rd_r f\right),\\
Q^z&:=&\sigma_2 ({T}_{X,E}), \quad \text{with }\,  (s_0,s_1,e_0)=(0, z, 0),
\eea
\esub
where $h,y,f\in\widetilde{S}^{0,0}(\MM)$ and $z = \xit+\chi_z\omega_{\HH}\xiphi\in\widetilde{S}^{1,0}(\MM)$ with $\chi_z\in\widetilde{S}^{0,0}(\MM)$. Similarly, we define $\sigma_{2,\textbf{BDR}}^h$, $\sigma_{2,\textbf{BDR}}^y$, $\sigma_{2,\textbf{BDR}}^f$, and $\sigma_{2,\textbf{BDR}}^z$
to be $\sigma_{2, \textbf{BDR}}^{X,E}$ with the above corresponding choices of $(s_0, s_1,e_0)$. 

{\begin{remark}
In fact, in Section \ref{subsect:scalarwave:Kerr:condiMora}, we will choose $h,y,f\in\widetilde{S}^{0,0}_{hom}(\MM)$ and $z\in\widetilde{S}^{1,0}_{hom}(\MM)$. Based on these choices, we will then produce symbols $h,y,f\in\widetilde{S}^{0,0}(\MM)$ and $z\in\widetilde{S}^{1,0}(\MM)$ in Section \ref{subsect:CondDegMFesti:Kerr}. 
\end{remark}}

These defined symbols can be computed from the formulas \eqref{eq:symbolofBulk:EnerIden} and \eqref{eq:symbolofBDR:EnerIden}, and we list them as follows:
\bsub
\label{eq:bulk:currents:EnerIden}
\bea
Q^h&=&(\R)h (\xirstar^2+V-\xit^2),\\
Q^y&=&(\R)\big(\rd_ry \xirstar^2+\rd_r y(\xit^2-V)-y\rd_r V\big),\\
Q^f&=&(\R)\big(2\rd_r f \xirstar^2- f\rd_r V\big),\\
Q^{z}&=& {(\R) \omega_{\HH} \pr_r\chi_z \xiphi\xirstar}
\eea
\esub
and 
\bsub
\label{eq:flux:fixr}
\bea
\sigma_{2,\textbf{BDR}}^h&=&0,\\
\sigma_{2,\textbf{BDR}}^y&=& {- 2\mu y \xi_r \S_1-\frac{2y \S_1^2}{\R}-\mu y \S_2},\\
\sigma_{2,\textbf{BDR}}^f&=&  { -2\mu f \xi_r \S_1-\frac{2f \S_1^2}{\R}-\mu f \S_2},\\
\sigma_{2,\textbf{BDR}}^z&=&-(\xit+{\chi_z}\omega_{\HH}\xiphi) (\Delta \xi_r +\S_1).
\eea
\esub


\subsection{Conditional degenerate Morawetz estimate in {all} frequency regimes}
\label{subsect:scalarwave:Kerr:condiMora}


Let {$\df\gtrsim m^{-2}(m-a)>0$}, which depends only on the values of $m$ and $a$ and might degenerates as $|a|\to m$,  be a small constant to be fixed. Recalling that $\Gtz$ denotes the space of the frequency triplets $\Xi=(\xit,\xiphi,\Lambda)$, we decompose $\Gtz$ as 
\beaa
\Gtz=\GG_{{SR}}\cup\GG_{{A}}\cup\GG_{{T}}\cup\GG_{{TR}},
\eeaa
where the four open sets $\GG_{{SR}}$, $\GG_{{A}}$, $\GG_{{T}}$ and $\GG_{{TR}}$ of $\Gtz$ are given by: 
{\begin{subequations}
\lab{eq:defofthefourfrequencyregimes}
\begin{align}
\GG_{{SR}}:={}& \big\{0< -\xit\xiphi < \omega_{\HH}\xiphi^2 +2\df\Lambda^2\big\},\\
\GG_{A}:={}&\big\{\Lambda^2>\df^{-1} m\xit^2\big\}\setminus  \bigg\{\frac{1}{4}\df\Lambda^2\leq -\xit\xiphi \leq \omega_{\HH}\xiphi^2 +\df\Lambda^2\bigg\},\\
\GG_{T}:={}&\big\{ \xit^2>\df^{-1} m^{-3}\Lambda^2\big\}\setminus  \bigg\{\frac{1}{4}\df\Lambda^2\leq -\xit\xiphi \leq \omega_{\HH}\xiphi^2 +\df\Lambda^2\bigg\},\\
\GG_{TR}:={}&\left\{\frac{1}{2}\df m^3\xit^2< \Lambda^2< 2\df^{-1} m\xit^2\right\}\setminus \bigg\{\frac{1}{4}\df\Lambda^2\leq -\xit\xiphi \leq \omega_{\HH}\xiphi^2 +\df\Lambda^2\bigg\}.
\end{align}
\end{subequations}}
These four regimes are interpreted {respectively as the superradiant  frequency regime\footnote{Actually, this is a slightly larger frequency regime than the regime containing only the superradiant frequencies defined in Definition \ref{def:SRfreq:PS}.}, the angular-dominated frequency regime (i.e., the angular frequency $\Lambda^2$ is much larger than $\xit^2$), the time-dominated frequency regime (i.e., the time frequency $\xit^2$ is much larger than $\Lambda^2$),  and the trapped frequency regime\footnote{All the trapped frequencies are contained in this regime, though not all frequencies in this regime are trapped.} where $\Lambda^2$ and $\xit^2$ are comparable.} 

To {derive a} Morawetz estimate in 
{$\MM_{r_+(1+\dhor'), R}$, where the constants $\dhor'$ and $R$ have been introduced in Remark \ref{rmk:choiceofconstantRbymeanvalue},} in each of {the above} frequency regimes, it is {crucial to carry out the proof in the following order}:
\begin{enumerate}[label= \arabic*)]
\item one among the {symbols} $Q^f$, $Q^h$ and $Q^y$ in \eqref{eq:bulk:currents:EnerIden} is made globally nonnegative (but might vanish {in a subregion of the type $\MM_{r_1, r_2}$ for specific values $r_+(1+\dhor')< r_1<r_2\leq R$}, by making appropriate choices {for} the symbols $f$, $h$, and $y$;

\item if the above {symbol} only guarantees positivity in a certain subregion, we utilize the other symbols among $Q^f$, $Q^h$ and $Q^y$, which may exhibit negativity in another  subregion {where it can} be controlled by the above step, to gain positivity globally\footnote{The achieved estimate may have degeneracy in the trapping region.} {for the sum $Q^f+Q^h+Q^y$};

\item finally, we make a choice of the symbol $z$ such that  we obtain (almost) non-negativity for the sum {$\sigma_{2,\textbf{BDR}}^{f}+\sigma_{2,\textbf{BDR}}^{h}+\sigma_{2,\textbf{BDR}}^{y}+\sigma_{2,\textbf{BDR}}^{z}$} on the boundary $r=r_+(1+\dhor')$, {while} the principal symbol $Q^z$ is controlled by {the sum  $Q^f+Q^h+Q^y$ so that $Q^f+Q^h+Q^y+Q^z$ is also positive globally}. 
\end{enumerate}

Overall, suitable choices of the symbols $f, h, y$, and $z$  in {each of the four frequency regimes \eqref{eq:defofthefourfrequencyregimes}} are made such that the above three points are satisfied in the region $r_+(1+\dhor') \leq r\leq R$, {for the constants $\dhor'$ and $R$ introduced in Remark \ref{rmk:choiceofconstantRbymeanvalue}.} We now realize the above three steps in the four frequency regimes one by one.

{\begin{remark}
In addition to the above considerations, we will choose the symbols $(f,h,y,z)$ such that their sum at $r=R$ only depends on $r$ but not on frequencies $\Xi=(\xit, \xiphi, \La)$. This will allow us to complement the microlocal Morawetz estimates in $r_+(1+\dhor')\leq r\leq R$ with physical space Morawetz estimates in $ r\geq R$, see Section \ref{subsect:CondMoraFlux:Kerrperturb}.
\end{remark}}


\subsubsection{Estimates in $\GG_{SR}$}
\label{sect:GGSR:pseudo}


By Lemma \ref{lem:SRnottrap}, there is a unique root $r_{\text{max}}$ of the symbol $\pr_rV$, with {$r_{\text{max}}-r_+\gtrsim m-a$} being a global maximum of $V$, {and it holds for $\df=\df(m,a)\sim \b\gtrsim m^{-2}(m-a), \delta_0=\delta_0(m,a)\gtrsim m-a$, and $\dhor'$ positive but much smaller than $m^{-1}(m-a)$ that
\bsub
\label{property:potential:SR:Kerr}
\begin{align}
V-\xit^2 \geq{}& b_0\Lambda^2, & \forall r\in &[r_{\text{max}}-\delta_0, r_{\text{max}}+\delta_0],
\label{eq:GGSR:potential:1}
\\
-(r-r_{\text{max}})\partial_r V\geq{}& b\Lambda^2 \frac{(r-r_{\text{max}})^2}{r^4}, &\forall r\in &[r_+(1+\dhor'),\infty),
\label{eq:GGSR:potential:2:1}
\end{align}
\esub
for universal constants $b_0\gtrsim m^{-4}(m-a)^2$ and  $b\gtrsim m^{-4}(m-a)$. }

In view of the property \eqref{eq:GGSR:potential:2:1}, by choosing the symbol $f\in\widetilde{S}^{0,0}_{hom}(\MM)$ such that $f\vert_{r=r_+(1+\dhor')}=-1$, $f\vert_{r=R}=1-{m}R^{-1}$, $f$ vanishes at\footnote{{Note that $f\in\widetilde{S}^{0,0}_{hom}(\MM)$ as a consequence of the fact that $r_{\text{max}}$ is homogeneous of order 0 w.r.t. $(\xit, \xiphi, \La)$.}} $r_{\text{max}}$, and $\pr_r f\gtrsim m r^{-2}$ holds globally in $[r_+(1+\dhor'), R]$, we see from the expression of $Q^{f}$ in \eqref{eq:bulk:currents:EnerIden} that 
{\bea\lab{eq:lowerboundforQfforwellsuitedchoicefinGGSR}
Q^f=(\R)\big(2\rd_r f \xirstar^2- f\rd_r V\big)\gtrsim \xirstar^2+b\La^2\frac{(r-r_{\text{max}})^2}{r^3}.
\eea}

Next, we choose $h\in\widetilde{S}^{0,0}_{hom}(\MM)$ under the form $h=B\tilde{h}_0$, where $\tilde{h}_0=1$ in $[r_{\text{max}}-\delta_0/2, r_{\text{max}}+\delta_0/2]$, $\tilde{h}_0=0$ in $[r_+(1+\dhor'), r_{\text{max}} -\delta_0)\cup(r_{\text{max}}+\delta_0,\infty)$ {and $\tilde{h}_0\geq 0$ which yields in view of the property \eqref{eq:GGSR:potential:1} of the symbol $V$ near $r_{\text{max}}$ and the expression of $Q^{h}$ in \eqref{eq:bulk:currents:EnerIden}
\beaa
Q^h=(\R)h (\xirstar^2+V-\xit^2)\geq B(\xirstar^2+(r_{\text{max}})^2 b\La^2 ){\bf 1}_{[r_{\text{max}}-\delta_0/2, r_{\text{max}}+\delta_0/2]}(r).
\eeaa}
Hence, {together with \eqref{eq:lowerboundforQfforwellsuitedchoicefinGGSR},} we achieve for any fixed $B>0$ that 
\bea\lab{eq:lowerboundforQfplusQhforwellsuitedchoicefandhinGGSR}
Q^f + Q^{h=B\tilde{h}_0}\gtrsim  \xirstar^2 + r^{-1} \Lambda^2  \quad \text{in\, }
[r_+(1+\dhor'), R]
\eea
and
\bea\lab{eq:lowerboundforQfplusQhforwellsuitedchoicefandhinGGSR:1}
Q^f +Q^{h=B\tilde{h}_0}\gtrsim B (\xirstar^2 +\Lambda^2) 
\quad \text{in } [r_{\text{max}}-\delta_0/2, r_{\text{max}}+\delta_0/2].
\eea

We also need to control $\xit^2$. Note that
\bea\lab{eq:usefulidentityforxit2minusV}
\xit^2- V=\frac{(a\xiphi+ 2mr\xit)^2}{(\R)^2} +\frac{\Delta (r^2+2mr+a^2)}{(\R)^2} \xit^2 -\frac{\Delta\Lambda^2}{(\R)^2},
\eea 
which implies,  {using the fact that $r_{\text{max}}\geq r_++\de_0 (m-a)>r_+(1+\dhor')$ in view of Lemma \ref{lem:SRnottrap}},    
\beaa
\xit^2 - V(r_{\text{max}})\geq c_0\xit^2 - \frac{\Delta\Lambda^2}{(\R)^2}, \qquad c_0>0.
\eeaa
{Then, we can add the symbol $Q^{h=\tilde{h}_1}$, where $\tilde{h}_1=- c' mr^{-2}$ with $c'>0$ a small constant, and the symbol $Q^{h=B\tilde{h}_2}$ with $\tilde{h}_2=-b'$ in $[r_{\text{max}}-\delta_0/2, r_{\text{max}}+\delta_0/2]$ and $\tilde{h}_2=0$ in $[r_+(1+\dhor'), r_{\text{max}}-\delta_0)$ and $(r_{\text{max}}+\delta_0,R]$, $b'>0$ being a small constant. Together with
\eqref{eq:lowerboundforQfplusQhforwellsuitedchoicefandhinGGSR} and \eqref{eq:lowerboundforQfplusQhforwellsuitedchoicefandhinGGSR:1}, this implies}
\begin{align}
Q^f +Q^{h=B\tilde{h}_0} + Q^{h=\tilde{h}_1}+ Q^{h=B\tilde{h}_2}
\gtrsim & \xirstar^2 +\xit^2+ r^{-1} \Lambda^2  \quad \text{in\, }
[r_+(1+\dhor'), R],
\lab{eq:lowerboundforQfplusQhforwellsuitedchoicefandhinGGSR:2}\\
Q^f +Q^{h=B\tilde{h}_0} + Q^{h=\tilde{h}_1}+ Q^{h=B\tilde{h}_2}
\gtrsim & B (\xirstar^2 +\xit^2+\Lambda^2) 
\quad \text{in } [r_{\text{max}}-\delta_0/2, r_{\text{max}}+\delta_0/2]
\lab{eq:lowerboundforQfplusQhforwellsuitedchoicefandhinGGSR:3}.
\end{align}

Eventually, {we choose $z=A(\xit+\chi_z \omega_{\HH}\xiphi)$ with {$A>2$} to be picked large enough in Section \ref{sect:GGTR:pseudo}} and with the smooth cutoff $\chi_z=1$ in $[r_+(1+\dhor'), r_{\text{max}}-\delta_0/2]$ and $\chi_z=0$ in $[{r_{\text{max}}-\de_0/4}, R]$. {Then, for $B\gg A$ large enough, we obtain, in view of  \eqref{eq:lowerboundforQfplusQhforwellsuitedchoicefandhinGGSR:2} and \eqref{eq:lowerboundforQfplusQhforwellsuitedchoicefandhinGGSR:3},} the following estimate
\beaa
Q^f +Q^{h=B\tilde{h}_0+\tilde{h}_1+B\tilde{h}_2} + Q^z\gtrsim \xirstar^2 +\xit^2+ r^{-1} \Lambda^2  \quad \text{in\, }
{\{r_+(1+\dhor')\leq r\leq R\}\times\GG_{SR}}.
\eeaa
{Furthermore, a quick inspection of the above proof allows us to rewrite the above inequality in the following more precise form
\bea\lab{eq:finallowerboundforQfplusQhpluszforsuitablefhzinGGSR}
&& Q^f +Q^{h=B\tilde{h}_0+\tilde{h}_1+B\tilde{h}_2} + Q^z - d(\xirstar+\eta)^2\in\widetilde{S}^{2,0}_{hom}(\MM),\qquad d\in\widetilde{S}^{0,0}_{hom}(\MM), \quad \eta\in\widetilde{S}^{1,0}_{hom}(\MM),\nn\\
&& Q^f +Q^{h=B\tilde{h}_0+\tilde{h}_1+B\tilde{h}_2} + Q^z - d(\xirstar+\eta)^2\gtrsim \xit^2+ r^{-1} \Lambda^2\quad\textrm{and}\,\, d\gtrsim 1\nn\\ 
&&\qquad\qquad\text{in\, }\{r_+(1+\dhor')\leq r\leq R\}\times\GG_{SR},
\eea
where 
\beaa
d:=(r^2+a^2)\left(2\pr_rf+B\tilde{h}_0 -\frac{c'}{r^2} +B\tilde{h}_2\right), \qquad \eta:=\frac{(\R)A\om_\HH\pr_r\chi_z}{2d}\xiphi.
\eeaa
Notice from our choice of $\chi_z$ above that 
\bea\lab{eq:remarkthatsymbolbvanishesatr=rmaxatinfiniteorder}
\pr^l_r\eta=0 \quad\forall l\geq 0\quad\textrm{on}\quad\{r=r_{\text{max}}\}\times\GG_{SR}.
\eea}

It remains to consider the {symbol} of the boundary terms. {We start with the boundary term on $r=r_+(1+\dhor')$.} By \eqref{eq:flux:fixr}, we have 
{\beaa
\sigma_{2,\textbf{BDR}}^{h=B\tilde{h}_0}=0, \qquad \sigma_{2,\textbf{BDR}}^{h=\tilde{h}_1}=0, \qquad \sigma_{2,\textbf{BDR}}^{h=B\tilde{h}_2}=0\quad\textrm{on}\quad {\{r=r_+(1+\dhor')\}\times\GG_{SR}},
\eeaa}
and
\beaa
\sigma_{2,\textbf{BDR}}^z  
&=& {-A(\xit+\omega_{\HH}\xiphi)(\Delta\xi_r +\S_1)}=-A(\Delta k_+ \xi_r +k_+\S_1)\quad\textrm{on}\quad{\{r=r_+(1+\dhor')\}\times\GG_{SR}},\\
\sigma_{2,\textbf{BDR}}^f &=& {2\mu \xi_r\S_1+\frac{2\S_1^2}{\R}+\mu\S_2}\\
&=& (\R)\bigg(\xit^2 -V+\bigg(\frac{\S_1}{\R}\bigg)^2\bigg) {+2\mu \xi_r\S_1}\quad\textrm{on}\quad{\{r=r_+(1+\dhor')\}\times\GG_{SR}},
\eeaa
where we used the fact that $f=-1$ and $\chi_z=1$ on $r=r_+(1+\dhor')$, $k_+=\xit+\omega_{\HH}\xiphi$ and \eqref{eq:alternativeexpressionforS2}. Together with \eqref{property:F1symbol}{,} the following equation 
\bea\lab{eq:usefulidentityforxit2minusV:v2}
\xit^2- V=\frac{(a\xiphi+ (r^2+a^2)\xit)^2}{(\R)^2}  -\frac{\Delta(\Lambda^2+2a\xi_\tau\xiphi)}{(\R)^2},
\eea 
using also $\Lambda^2+2a\xi_\tau\xiphi\geq 0$ from \eqref{eq:Lambdasquare:bound} and the fact that $A>2$, we  conclude that 
{\bea\lab{eq:finallowerboundfBRDforsuitablefhzinGGSR:onAA}
\nn&&\sigma_{2,\textbf{BDR}}^f +\sigma_{2,\textbf{BDR}}^{h=B\tilde{h}_0+\tilde{h}_1+B\tilde{h}_2}+\sigma_{2,\textbf{BDR}}^z +\varrho^2 +\varpi^2\\
&=& \mu\varrho\widetilde{S}^{1,1}_{hom}(\MM) + \mu^2\widetilde{S}^{2,1}_{hom}(\MM),\,\,\,\textrm{on}\,\,{\{r=r_+(1+\dhor')\}\times\GG_{SR}}, \quad\varrho,\, \varpi\in\widetilde{S}^{1,0}_{hom}(\MM),
\eea
where\footnote{{Notice that 
\beaa
\frac{A-2}{2}(\R)k_+^2-\frac{\Delta(\Lambda^2+2a\xi_\tau\xiphi)}{\R}>0\quad\textrm{on}\quad\{r\geq r_+(1+\dhor'/2)\}\times\{\xi'\neq 0\}
\eeaa
so that we have indeed $\varpi\in\widetilde{S}^{1,0}_{hom}(\MM)$.}}
\beaa
\varrho:=\sqrt{\frac{A-2}{2}(\R)}k_+, \qquad \varpi:=\chi(r)\sqrt{\frac{A-2}{2}(\R)k_+^2-\frac{\Delta(\Lambda^2+2a\xi_\tau\xiphi)}{\R}},
\eeaa
with $\chi$ a smooth cut-off function such that $0\leq \chi\leq 1$, $\chi=1$ at $r=r_+(1+\dhor')$ and $\chi$ supported in $r\geq r_+(1+\frac{\dhor'}{2})$.}

{Finally, we consider the boundary term on $r=R$. Since} we have chosen on $r=R$ that $f=1- mR^{-1}$, ${h=-c'mR^{-2}}$, and {$z=A\xit$, we have, using also the fact that $\sigma_{2,\textbf{BDR}}^{h}=0$ in view of \eqref{eq:flux:fixr},
\bea\lab{eq:finallowerboundfBRDforsuitablefhzinGGSR:onr=R}
\sigma_{2,\textbf{BDR}}^f {+\sigma_{2,\textbf{BDR}}^{h=B\tilde{h}_0+\tilde{h}_1+B\tilde{h}_2}}+\sigma_{2,\textbf{BDR}}^z = \sigma_{2,\textbf{BDR}}^{f=1-{mR^{-1}}} +\sigma_{2,\textbf{BDR}}^{z=A\xit}\quad\textrm{on}\quad\{r=R\}\times\GG_{SR}. 
\eea}


\subsubsection{Estimates in $\GG_{A}$}


{{Since $\Lambda^2> \df^{-1}m\xit^2$, and since $\Lambda^2\geq \xiphi^2$, it then follows that $\Lambda^2>\df^{-\frac{1}{2}}m^{\frac{1}{2}}|\xit\xiphi|$. Then, by taking $m^{-3} (m-a)^2\les \df\leq m\b^2\les m^{-3} (m-a)^2$ where $\b$ is the constant in point \ref{point1:potentialV:maxandminbounds:frequencies}  of Lemma \ref{lem:SRnottrap}, the triplet $(\xit,\xiphi, \Lambda)$ automatically satisfy $-\xit\xiphi\leq \omega_{\HH}\xiphi^2+\b\Lambda^2$, and hence \eqref{property:potential:superradiant:derivatives} is valid with $r_{\text{max}}- r_+\gtrsim m-a$. In addition, we compute 
\beaa
(V-\xit^2)\vert_{r=5m}&=&\bigg(\frac{\Delta\Lambda^2 -4amr\xit\xiphi-a^2 \xiphi^2}{(\R)^2}-\xit^2\bigg)\vert_{r=5m}\nn\\
&\geq&\bigg(\frac{(r^2-2mr+a^2)\Lambda^2 -2mr\Lambda^2-a^2 \Lambda^2}{(\R)^2}-\xit^2\bigg)\vert_{r=5m}\nn\\
&\geq&\bigg(\frac{(r-4m)r\Lambda^2 }{(\R)^2}-\xit^2\bigg)\vert_{r=5m}\nn\\
&\gtrsim& m^{-2}\Lambda^2
\eeaa
for $\df$ small enough, which yields that  \eqref{eq:V:case2:property4:superradiantisnottrapped} is also valid. Hence,  the symbol $V$ satisfies the inequalities \eqref{property:potential:SR:Kerr} as well. } }

We can thus apply the same argument as the one in Section \ref{sect:GGSR:pseudo} and {deduce that the estimates \eqref{eq:finallowerboundforQfplusQhpluszforsuitablefhzinGGSR},  \eqref{eq:finallowerboundfBRDforsuitablefhzinGGSR:onAA} and \eqref{eq:finallowerboundfBRDforsuitablefhzinGGSR:onr=R} hold in $\GG_A$ as well. More precisely, for the same choice of symbols $(f, h, z)$ as in Section \ref{sect:GGSR:pseudo}, there holds
\bea\lab{eq:finallowerboundforQfplusQhpluszforsuitablefhzinGGA}
&& Q^f +Q^{h=B\tilde{h}_0+\tilde{h}_1+B\tilde{h}_2} + Q^z - d(\xirstar+\eta)^2\in\widetilde{S}^{2,0}_{hom}(\MM),\qquad d\in\widetilde{S}^{0,0}_{hom}(\MM), \quad \eta\in\widetilde{S}^{1,0}_{hom}(\MM),\nn\\
&& Q^f +Q^{h=B\tilde{h}_0+\tilde{h}_1+B\tilde{h}_2} + Q^z - d(\xirstar+\eta)^2\gtrsim \xit^2+ r^{-1} \Lambda^2\quad\textrm{and}\,\, \eta\gtrsim 1\nn\\ 
&&\qquad\qquad\text{in\, }\{r_+(1+\dhor')\leq r\leq R\}\times\GG_{A},
\eea
{\bea\lab{eq:finallowerboundfBRDforsuitablefhzinGGA:onAA}
\nn&&\sigma_{2,\textbf{BDR}}^f +\sigma_{2,\textbf{BDR}}^{h=B\tilde{h}_0+\tilde{h}_1+B\tilde{h}_2}+\sigma_{2,\textbf{BDR}}^z +\varrho^2 +\varpi^2\\
&=& \mu\varrho\widetilde{S}^{1,1}_{hom}(\MM) + \mu^2\widetilde{S}^{2,1}_{hom}(\MM),\,\,\,\textrm{on}\,\,{\{r=r_+(1+\dhor')\}\times\GG_{A}},\quad\varrho,\, \varpi\in\widetilde{S}^{1,0}_{hom}(\MM),
\eea}
and
\bea\lab{eq:finallowerboundfBRDforsuitablefhzinGGA:onr=R}
\sigma_{2,\textbf{BDR}}^f {+\sigma_{2,\textbf{BDR}}^{h=B\tilde{h}_0+\tilde{h}_1+B\tilde{h}_2}}+\sigma_{2,\textbf{BDR}}^z = \sigma_{2,\textbf{BDR}}^{f=1-mR^{-1}} +\sigma_{2,\textbf{BDR}}^{z=A\xit}\quad\textrm{on}\quad\{r=R\}\times\GG_{A}. 
\eea}


\subsubsection{Estimates in $\GG_{T}$}


In this regime, {recalling \eqref{eq:Lambdasquare:bound}, the lower bound} $\xit^2- V\geq 2c\xit^2 \geq c (\xit^2 +\xiphi^2+\Lambda^2)$ holds globally for a constant $c>0$.  {Therefore, choosing $y=y(r)$ to be increasing from $y(r_+(1+\dhor'))=0$ to $y(R)=1- mR^{-1}$ and to satisfy $\pr_r y \gtrsim r^{-2}$, we obtain, in view of \eqref{eq:bulk:currents:EnerIden},
\beaa
Q^y&\gtrsim& \xirstar^2 + \xit^2+\xiphi^2 +\Lambda^2.
\eeaa}
For the sum $Q^y +Q^{h=-c' mr^{-2}}+ {Q^{z=A\xit}}$, where {$A>2$} and  $c'>0$ is a suitably small constant, we infer{, noticing that $Q^{z=A\xit}=0$,} 
\beaa
Q^y +Q^{h=-c'mr^{-2}}+Q^{z=A\xit}\gtrsim ( \xirstar^2 + \xit^2+\xiphi^2 +\Lambda^2)\quad \text{in\, }{\{r_+(1+\dhor')\leq r\leq R\}\times\GG_{T}}.
\eeaa
{Furthermore, a quick inspection of the above proof allows us to rewrite the above inequality in the following more precise form
\bea\lab{eq:finallowerboundforQfplusQhpluszforsuitablefhzinGGT}
&& Q^y +Q^{h=-c'mr^{-2}} + Q^{z=A\xit} - d(r)\xirstar^2\in\widetilde{S}^{2,0}(\MM),\nn\\
&& Q^y +Q^{h=-c'mr^{-2}} + Q^{z=A\xit} - d(r)\xirstar^2\gtrsim \xit^2+\xiphi^2 +\Lambda^2\quad\textrm{and}\,\, d(r)\gtrsim 1\nn\\ 
&&\qquad\qquad\text{in\, }\{r_+(1+\dhor')\leq r\leq R\}\times\GG_{T},
\eea
where 
\beaa
d(r):=(r^2+a^2)\left(\pr_ry-\frac{c'}{r^2}\right).
\eeaa}

{Next, we estimate the symbol} of the boundary terms starting with the boundary term on $r=r_+(1+\dhor')$. Since $y=0$ on $r=r_+(1+\dhor')$,  it follows that 
{\beaa
\sigma_{2,\textbf{BDR}}^y =0\quad\textrm{on}\quad\{r=r_+(1+\dhor')\}\times\GG_{T}.
\eeaa}
{Also}, using \eqref{property:F1symbol} and \eqref{eq:flux:fixr}, we have $\sigma_{2,\textbf{BDR}}^{h=-c'mr^{-2}}=0$ {and}  
\beaa
\bsplit
\sigma_{2,\textbf{BDR}}^{z=A\xit} 
=A\big(-\Delta \xit\xi_r - \xit\S_1\big) =& -(\R)A\xit^2 -A\xit\Big(\De\xi_r+(\R)\om_\HH\xiphi+O(|\mu|)(\xit, a\xiphi)\Big)\\
&\textrm{on}\quad\{r=r_+(1+\dhor')\}\times\GG_{T}.
\end{split}
\eeaa
{We deduce  
{\bea\lab{eq:finallowerboundfBRDforsuitablefhzinGGT:onAA}
\nn&&\sigma_{2,\textbf{BDR}}^y+ \sigma_{2,\textbf{BDR}}^{h=-c'r^{-2}}+\sigma_{2,\textbf{BDR}}^{z=A\xit} +\varrho^2 +\varpi^2\\ 
&=& \mu\varrho\widetilde{S}^{1,1}_{hom}(\MM) \,\,\,\textrm{on}\,\,\,\{r=r_+(1+\dhor')\}\times\GG_{T},\qquad \varrho,\,\varpi\in\widetilde{S}^{1,0}_{hom}(\MM),
\eea
where\footnote{{In fact, $\varpi$ is in $\widetilde{S}^{1,0}_{hom}(\MM)$ only when restricted to $\GG_T$ which is sufficient for our applications in Section \ref{subsect:CondDegMFesti:Kerr}.}}
\beaa
\varrho:=\sqrt{\frac{A}{2}(\R)}\xit, \qquad \varpi:=\sqrt{\frac{A}{2}(\R)\Big(\xit^2+2\om_\HH\xit\xiphi\Big)}.
\eeaa} 
Finally, we consider the boundary term on $r=R$. Since we have chosen on $r=R$ that $y=1-mR^{-1}$, we obtain
\bea\lab{eq:finallowerboundfBRDforsuitablefhzinGGT:onr=R}
\sigma_{2,\textbf{BDR}}^y + \sigma_{2,\textbf{BDR}}^{h=-c'mr^{-2}}+\sigma_{2,\textbf{BDR}}^{z=A\xit} = \sigma_{2,\textbf{BDR}}^{y=1-mR^{-1}} +\sigma_{2,\textbf{BDR}}^{z=A\xit}\quad\textrm{on}\quad\{r=R\}\times\GG_{T}. 
\eea}


\subsubsection{Estimates in $\GG_{TR}$}
\label{sect:GGTR:pseudo}


This is the only frequency regime containing the trapped frequencies, and all the frequencies in this regime are not superradiant. For frequencies in this regime, the symbol $V$ may belong to either of the three cases listed in Lemma \ref{lem:potentialproperty:PS}. In particular, there might be two critical points $r_{\text{min}}=r_{\text{min}}(\xit,\xiphi,\Lambda)$ and $r_{\text{max}}=r_{\text{max}}(\xit,\xiphi,\Lambda)$ of the potential $V$. 

Next, recall from {\eqref{eq:UpperBoundforrmaxtrapOfthePotential}} that if the symbol $V$ has a maximum at $r_{\text{max}}${, then $r_{\text{max}}\leq 8m$}. Additionally, {we have in view of \eqref{eq:usefulidentityforxit2minusV}, 
\bea\lab{eq:potifvedefinitelowerboundforxit2mVatr=r+}
(\xit^2-V)\vert_{r=r_+}=k_+^2>\left(\frac{\de_\FF\La^2}{|\xiphi|}\right)^2\geq \de_\FF^2\La^2
\eea
where we used the fact that $|k_+\xiphi|>\de_\FF\La^2$ in $\GG_{TR}$, as well as \eqref{eq:Lambdasquare:bound}.} Thus, for $\dhor>0$ sufficiently small, there exists:
\begin{itemize}
\item $r_3\in (r_+(1+2\dhor),+\infty)$, $r_3-r_+\geq b_0(\df)$, such that $\xit^2-V> {\frac{1}{4}\df^2}\Lambda^2$ is valid for all $r\in [r_+(1+\dhor'),r_3]$,

\item $r_4\in (r_+(1+2\dhor),+\infty)$, $r_4-r_+\geq b_0(\df)$, such that $\xit^2-V> {\frac{1}{2}\df^2}\Lambda^2$ is valid for all $r\in [r_+(1+\dhor'),r_4]$,
\end{itemize}
where $b_0(\df)>0$ {approaches} $0$ as $\df\to 0^+$.

Hereafter, we consider two sub-regimes of $\GG_{TR}$  based on if $r_3$ (or $r_4$) is larger than $R$ or not.
Let 
\bsub
\begin{align}
\GG_{TR, 1}:={}&\GG_{TR}\cap\{(\xit,\xiphi,\Lambda): \sup r_3> R \},\\
\GG_{TR, 2}:={}&\GG_{TR}\cap\{(\xit,\xiphi,\Lambda): \sup r_4<R\}.
\end{align}
\esub
Clearly, these two sub-regimes are open sets and it holds that  
\beaa
\GG_{TR}=\GG_{TR, 1}\cup \GG_{TR, 2}, \qquad \GG_{TR, 1}\cap \GG_{TR, 2}\neq \emptyset.
\eeaa

First, we consider the frequency sub-regime $\GG_{TR, 1}$. 
In this case, these frequencies are not trapped, and one can simply consider the symbol $Q^y$, where {$y=y(r)$} is monotonically increasing with $y\vert_{r=r_+(1+\dhor')}=0$, $y\vert_{r=R}=1-mR^{-1}$, $y$ satisfies\footnote{{For instance, the following choice for $y$ works, for a constant $C_2\gg 1$ large enough, 
\beaa
y(r)=C_1\left(e^{-\frac{C_2}{\df^2r}}-e^{-\frac{C_2}{\df^2r_+}}\right), \qquad C_1:=(1-R^{-1})\left(e^{-\frac{C_2}{\df^2R}}-e^{-\frac{C_2}{\df^2r_+}}\right)^{-1}.
\eeaa}} 
\beaa
\pr_r y (\xit^2 -V)-y\pr_r V\geq c_0(\df) r^{-2} (\xit^2 +r^{-2}\Lambda^2 +r^{-2}\xiphi^2), \qquad r\in[r_+(1+\dhor'), R],
\eeaa
{and $y$ is positive for $r\in[r_+(1+\dhor'), R]$}. We subsequently add $Q^{z=A\xit}${, with {$A>2$}, in order} to get good control of the {symbol} for the boundary term at $r=r_+(1+\dhor')$. This implies
\beaa
Q^y +Q^{h=-c'mr^{-2}}+{Q^{z=A\xit}}
 &\geq&c_0(\df)  (\xirstar^2 + \xit^2+r^{-2}\xiphi^2 +r^{-2}\Lambda^2)\nn\\
&&{\text{in\, }\{r_+(1+\dhor')\leq r\leq R\}\times\GG_{TR, 1}},
\eeaa
{which we rewrite in the following more precise form
\beaa
&& Q^y +Q^{h=-c'mr^{-2}} + Q^{z=A\xit} - d(r)\xirstar^2\in\widetilde{S}^{2,0}(\MM),\nn\\
&& Q^y +Q^{h=-c'mr^{-2}} + Q^{z=A\xit} - d(r)\xirstar^2\gtrsim\xit^2+r^{-2}\xiphi^2 +r^{-2}\Lambda^2\quad\textrm{and}\,\, d(r)\gtrsim 1\nn\\ 
&&\qquad\qquad\text{in\, }\{r_+(1+\dhor')\leq r\leq R\}\times\GG_{TR,1},
\eeaa
where $d(r):=(r^2+a^2)(\pr_ry-\frac{c'}{r^2})$ and where $c'>0$ is a small constant that depends on $c_0(\df)$. Also, we have} 
{\beaa
\sigma_{2,\textbf{BDR}}^y {+\sigma_{2,\textbf{BDR}}^{h=-c'mr^{-2}}}+ \sigma_{2,\textbf{BDR}}^{z=A\xit}  
=  -A\Delta \xit\xi_r -A\xit \S_1 \qquad{\textrm{on}\quad\{r=r_+(1+\dhor')\}\times\GG_{TR,1}},
\eeaa
and hence}
{\bea\lab{eq:finallowerboundfBRDforsuitablefhzinGGTR1:onAA}
\nn&&\sigma_{2,\textbf{BDR}}^y {+\sigma_{2,\textbf{BDR}}^{h=-c'mr^{-2}}}+ \sigma_{2,\textbf{BDR}}^{z=A\xit} + \varrho^2+\varpi^2\\
&= & \mu\varrho\widetilde{S}^{1,1}_{hom}(\MM)\quad{\textrm{on}\quad\{r=r_+(1+\dhor')\}\times\GG_{TR,1}}, \quad \varrho,\, \varpi\in\widetilde{S}^{1,0}_{hom}(\MM),
\eea
where 
\beaa
\varrho:=\frac{1}{2}\df^2\sqrt{A(\R)}\xit, \qquad \varpi:=\sqrt{A(\R)\left(\xit k_+-\frac{1}{4}\df^4\xit^2\right)}
\eeaa
where the fact that}\footnote{{To check \eqref{eq:TR:tauk+:bound}, notice first that it holds trivially if $\xit\xiphi\geq 0$. Next, if $\xit\xiphi<0$, we have either $\xit\xiphi>-\frac{1}{4}\df\Lambda^2$ or $\xiphi k_+<-\df\La^2$ in $\GG_{TR}$. In the case where $\xit\xiphi>-\frac{1}{4}\df\Lambda^2$,  we have $\xit k_+ >\xit^2 -\frac{a}{8mr_+}\df\Lambda^2> \frac{3}{8}\df m^{-1} \Lambda^2$. In the other case where $\xiphi k_+<-\df\La^2$ in $\GG_{TR}$, this  implies $\xit k_+>-\frac{\xit}{\xiphi}\df\La^2$ and \eqref{eq:TR:tauk+:bound} follows from $\xit^2\geq \frac{1}{2}\df\La^2$ in $\GG_{TR}$ and {$|\xiphi|\leq\La$} in view of \eqref{eq:Lambdasquare:bound}.}} 
\begin{align}
\label{eq:TR:tauk+:bound}
\xit k_+\geq {\frac{1}{\sqrt{2}}\df^{\frac{3}{2}}\Lambda^2 \geq\frac{1}{2\sqrt{2}}\df^{\frac{5}{2}}\xit^2} \qquad \text{in \,\, }\GG_{TR}.
\end{align}
{In addition, we have on $r=R$ that $y=1-mR^{-1}$ and hence
\bea\lab{eq:finallowerboundfBRDforsuitablefhzinGGTR1:onr=R}
\sigma_{2,\textbf{BDR}}^y + \sigma_{2,\textbf{BDR}}^{h=-c'mr^{-2}}+\sigma_{2,\textbf{BDR}}^{z=A\xit} = \sigma_{2,\textbf{BDR}}^{y=1-mR^{-1}} +\sigma_{2,\textbf{BDR}}^{z=A\xit}\quad\textrm{on}\quad\{r=R\}\times\GG_{TR,1}. 
\eea}

Next, we consider the frequency sub-regime $\GG_{TR, 2}$.
In this case, $\sup r_4< R$, so there is a maximum critical point $r_{\text{max}}\in (\sup r_4,{8m}]$ of the potential $V$. {Since $|\pr_r V|\les r^{-3}\Lambda^2$, {and in view of the bound  $(\xit^2-V)\vert_{r=r_+}> \de_\FF^2\La^2$ from \eqref{eq:potifvedefinitelowerboundforxit2mVatr=r+}, it follows that we can take $r_4$ such that 
\beaa
r_4-r_+ \geq \frac{1}{|\pr_r V|}\bigg((\xit^2-V)\vert_{r=r_+}- \frac{1}{2}\df^2\Lambda^2\bigg)\geq \frac{\df^2\Lambda^2}{2|\pr_r V|}\gtrsim m^3 \df^2 \gtrsim m^{-3}(m-a)^4,
\eeaa
 and hence 
 \bea
 \lab{eq:rmaxbound:GGTR2}
 r_{\text{max}}-r_+\gtrsim m^{-3}(m-a)^4.
 \eea} 

{Choosing $R\geq 16m$, we infer}  $r_{\text{max}}\in (\sup r_4,R/2]$. {Note also that, if $r_{\text{min}}$ exists, then  we have in view of \eqref{eq:potifvedefinitelowerboundforxit2mVatr=r+} 
\beaa
(\xit^2-V)\vert_{r=r_{\text{min}}}\geq (\xit^2-V)\vert_{r=r_+}\geq \de_\FF^2\La^2
\eeaa
so that there exists a constant $c_0(\df)>0$ such that $r_{\text{min}}+c_0(\df)<\sup r_4$. If $r_{\text{min}}$ does not exist, we only need a symbol $Q^f$ as constructed below, and we deal now with the case where $r_{\text{min}}$ exists in which case we also need to introduce a symbol $Q^y$. More precisely, in this case, we use the symbol $Q^y$, where $y\in\widetilde{S}^{0,0}_{hom}(\MM)$, $y=0$ for $r\geq \sup r_4$, $y$ is increasing, and $y$ satisfies\footnote{{For instance, as $r_{\text{min}}+c_0(\df)<\sup r_4$, the following choice works, for constants $C_1\gg C_2\gg 1$ large enough, 
\beaa
y(r)=C_1e^{-\frac{C_2}{\df^2(r-(r_{\text{min}}+c_0(\df)))}}\quad\textrm{for}\quad r<r_{\text{min}}+c_0(\df), \qquad y(r)=0 \quad \textrm{for}\quad r\geq r_{\text{min}}+c_0(\df).
\eeaa}}, for a large enough constant $C_0\gg 1$,  
\beaa
\pr_r y\geq \frac{C_0}{r^2}, \qquad \pr_r y (\xit^2 -V)-y\pr_r V\geq C_0 r^{-2} (\xit^2 +r^{-2}\Lambda^2 +r^{-2}\xiphi^2), \qquad r\in[r_+(1+\dhor'), r_{\text{min}}],
\eeaa
as well as
\beaa
\pr_r y (\xit^2 -V)-y\pr_r V\geq 0, \qquad r\in[r_{\text{min}}, r_4],
\eeaa
which implies 
\beaa
Q^y\geq C_0\big(\xirstar^2 +\xit^2 +r^{-2}\Lambda^2 +r^{-2}\xiphi^2\big)\quad{\text{in\, }\{r_+(1+\dhor')\leq r\leq r_{\text{min}}\}\times\GG_{TR, 2}},
\eeaa
and
\beaa
Q^y\geq 0\quad{\text{in\, }\{r_+(1+\dhor')\leq r\leq R\}\times\GG_{TR, 2}}.
\eeaa
Next, we introduce $Q^f$
\begin{itemize}
\item either to obtain coercivity on $[r_+(1+\dhor'), R]$ if $r_{\text{min}}$ does not exist,
\item or to compensate the lack of coercivity of $Q^y$ in the region $[r_{\text{min}}, R]$ if $r_{\text{min}}$ exists.
\end{itemize}}
{In view of the property\footnote{This is a quantitative characterization of the well-known fact that the  trapped null geodesic flow in a subextremal Kerr spacetime is unstable.} $\frac{d^2 V}{dr^2}(r_{\text{max}})\leq -b_1(\df)\Lambda^2$  that is proven in \cite[Lemma 8.6.1]{DRSR} for a constant $b_1(\df)>0$ depending only on $\df$, $a$ and $m$,} we use the symbol $Q^f$, where {$f\in\widetilde{S}^{0,0}_{hom}(\MM)$,} $f(r_+(1+\dhor'))=0$, $f(r_{\text{max}})=0$, $f(R)= 1- mR^{-1}$ and $\pr_r f\gtrsim r^{-2}$ {on $[r_{min}, R]$}, to obtain 
{\beaa
Q^f\gtrsim \xirstar^2 + (r-r_{\text{max}})^2(r^{-2}\xit^2+r^{-4}\xiphi^2 +r^{-4}\Lambda^2)\quad\text{in\, }\{ r_{\text{min}}\leq r\leq R\}\times\GG_{TR, 2}
\eeaa
and, if $r_{\text{min}}$ exists, 
\beaa
Q^f\gtrsim -(\xit^2+r^{-2}\xiphi^2 +r^{-2}\Lambda^2)\quad\text{in\, }\{r_+(1+\dhor')\leq r\leq r_{min}\}\times\GG_{TR, 2}.
\eeaa
The negative contribution of $Q^f$ in $r\in [r_+(1+\dhor'), r_{\text{min}}]$, in the case where $r_{\text{min}}$ exists, is absorbed by the coercivity of $Q^y$ on $r\in [r_+(1+\dhor'), r_{\text{min}}]$ provided the constant $C_0$ is chosen large enough, and we thus obtain, for $c'>0$ a small constant, and $\chi(r)$ is a smooth nonnegative cutoff function\footnote{{The cut-off $\chi$ allows to have $h=-c'R^{-2}$ at $r=R$ and also ensures that $Q^h=0$ for $r\leq 10m$ and hence in a neighborhood of $r_{\text{max}}$ since $r_{\text{max}}\leq 8m$.}} that vanishes for $r\leq 10m$ and equals $1$ for $r\geq 11m$,} 
\beaa
&&Q^y +Q^{{h=-c'\chi(r)mr^{-2}}}+Q^f+  Q^{z=A\xit}\nn\\
&\gtrsim &  \xirstar^2 + (r-r_{\text{max}})^2(r^{-2}\xit^2+r^{-4}\xiphi^2 +r^{-4}\Lambda^2) \nn\\
&&{\text{in\, }\{r_+(1+\dhor')\leq r\leq R\}\times\GG_{TR, 2}},
\eeaa
{which we rewrite in the following more precise form
\bea\lab{eq:finallowerboundforQfplusQhpluszforsuitablefhzinGGTR2}
&& Q^y +Q^{{h=-c'\chi(r)mr^{-2}}}+Q^f+  Q^{z=A\xit} - d\xirstar^2\in\widetilde{S}^{2,0}_{hom}(\MM),\qquad d\in\widetilde{S}^{0,0}_{hom}(\MM),\nn\\
&& Q^y +Q^{{h=-c'\chi(r)mr^{-2}}}+Q^f+  Q^{z=A\xit} - d\xirstar^2\gtrsim  (r-r_{\text{max}})^2(r^{-2}\xit^2+r^{-4}\xiphi^2 +r^{-4}\Lambda^2),\nn\\
&&\textrm{and}\,\, d\gtrsim 1\quad\text{in\, }\{r_+(1+\dhor')\leq r\leq R\}\times\GG_{TR,2},
\eea
where 
\beaa
d:=(r^2+a^2)\left(\pr_ry+2\pr_rf -\frac{c'}{r^2}\right).
\eeaa
Furthermore}, we compute the {symbol} of the boundary term at $r=r_+(1+\dhor')$. {Using in particular $f(r_+(1+\dhor'))=0$, we have}
\begin{align*}
&(\sigma_{2,\textbf{BDR}}^y+\sigma_{2,\textbf{BDR}}^{h=-c'r^{-2}}+\sigma_{2,\textbf{BDR}}^f+ \sigma_{2,\textbf{BDR}}^{z=A\xit} )\vert_{r_+(1+\dhor')} \nn\\
= {}&
c_{y} \bigg({2\mu \xi_r \S_1} + \frac{2\S_1^2}{\R} +\mu \S_2\bigg)\bigg\vert_{r_+(1+\dhor')}
-(A\Delta \xit\xi_r +A\xit \S_1)\vert_{r_+(1+\dhor')},
\end{align*}
where $c_y=-y\vert_{r=r_+(1+\dhor')}>0$ by the above construction. {Then,} we can take {$A>2$} suitably large, which depends on {$c_y$ and} $\df$,  such that  
{\bea\lab{eq:finallowerboundfBRDforsuitablefhzinGGTR2:onAA}
&&\sigma_{2,\textbf{BDR}}^y+\sigma_{2,\textbf{BDR}}^{h}
+\sigma_{2,\textbf{BDR}}^f+ \sigma_{2,\textbf{BDR}}^{z=A\xit} +\varrho^2+\varpi^2
\nn\\
&=&\mu\varrho\widetilde{S}^{1,1}_{hom}(\MM)+\mu^2\widetilde{S}^{2,1}_{hom}(\MM)\,\,\,\textrm{on}\,\,\{r=r_+(1+\dhor')\}\times\GG_{TR,2}, \,\,\,\, \varrho, \,\varpi\in\widetilde{S}^{1,0}_{hom}(\MM),
\eea
where, using in particular \eqref{eq:usefulidentityforxit2minusV:v2}, we have\footnote{{The fact that $\varpi\in\widetilde{S}^{1,0}_{hom}(\MM)$ relies on  \eqref{eq:TR:tauk+:bound} provided we choose $A>2$ large enough. Also, notice that $\varrho$ and $\varpi$ are in $\widetilde{S}^{1,0}_{hom}(\MM)$ only when restricted to $\GG_T$ which is sufficient for our applications in Section \ref{subsect:CondDegMFesti:Kerr}.}}
\beaa
\bsplit
\varrho:=& \sqrt{(\R)(\xit^2+a^2\xiphi^2)}, \\ 
\varpi:=& \chi(r)\sqrt{(\R)\left(A\xit k_+-2c_yk_+^2  +c_y\frac{\Delta(\Lambda^2+2a\xi_\tau\xiphi)}{\R} -\xit^2-a^2\xiphi^2\right)},
\end{split}
\eeaa
with $\chi$ a smooth cut-off function such that $0\leq \chi\leq 1$, $\chi=1$ at $r=r_+(1+\dhor')$ and $\chi$ supported in $r\geq r_+(1+\frac{\dhor'}{2})$.} 
In addition, we have on $r=R$ that $f=1-mR^{-1}$ and $y=0$ and hence
\bea\lab{eq:finallowerboundfBRDforsuitablefhzinGGTR2:onr=R}
\sigma_{2,\textbf{BDR}}^y + \sigma_{2,\textbf{BDR}}^{h}+ \sigma_{2,\textbf{BDR}}^{f}+\sigma_{2,\textbf{BDR}}^{z=A\xit} = \sigma_{2,\textbf{BDR}}^{y=1-mR^{-1}} +\sigma_{2,\textbf{BDR}}^{z=A\xit}\quad\textrm{on}\quad\{r=R\}\times\GG_{TR,2}. 
\eea

\begin{remark}
It is in the frequency sub-regime $\GG_{TR,2}$ that the trapping degeneracy is present in {the lower bound for the contribution of the sum of symbols $Q$, see the term $(r-r_{\text{max}})^2$ {on} the RHS of \eqref{eq:finallowerboundforQfplusQhpluszforsuitablefhzinGGTR2}. This} is in turn reflected in the choice of the symbol $f$ by requiring {that it vanishes at $r=r_{\text{max}}$}.
\end{remark}


\subsection{Conditional degenerate Morawetz estimate in Kerr}
\label{subsect:CondDegMFesti:Kerr}


{The following proposition proves a conditional degenerate Morawetz estimate in Kerr on $\MM_{r_+(1+\dhor'),R}$. 
\begin{proposition}[Conditional degenerate Morawetz estimate in Kerr on $\MM_{r_+(1+\dhor'),R}$]
\lab{prop:microlocalenergyMorawetzinKerronMMrp1dhorpR}
Let $\psi$ {be} a scalar function on $\MM$, and let $\dhor'$ and $R$ be the constants defined respectively by \eqref{de:choiceofdhor'} and \eqref{eq:choiceofRvalue:Kerr}. Then, there {exists a constant $c>0$ and} choices of operators $X\in\Opw(\widetilde{S}^{1,1}(\MM))$ and $E\in\Opw(\widetilde{S}^{0,0}(\MM))$ of the form \eqref{eq:generalformofthePDOmultipliersXandE} such that there holds \bea\lab{eq:microlocal:currentEMF:v0}
\nn&{c\Bigg[}& \int_{\MM_{r_+(1+\dhor'),R}}\frac{\mu^2|\pr_r\psi|^2}{r^2} +\int_{\MM_{r_+(1+\dhor'),10m}}\big(|\Opw(\sigma_{\trap})\psi|^2+|\Opw(x_1)\psi|^2+|\Opw(e)\psi|^2\big)\\
\nn&+&\int_{{\Mntrap_{r_+(1+\dhor'),R}}}\frac{|\pr_\tau\psi|^2+|\nab\psi|^2}{r^2}{\Bigg]}\\
\nn&+&\int_{H_R}\Re\Bigg(-\frac{1}{2}\Opw(\mu^2(\R)s_0)\pr_r\psi \ov{\pr_r\psi} + \ov{\psi}\Opw\Big(\sigma_{2,\textbf{BDR}}^{f+y=1-mR^{-1}}+\sigma_{2,\textbf{BDR}}^{z = A\xit}\Big)\psi\Bigg) d\tt dx^1dx^2\\
\nn&-&{\bigg( \int_{\MM_{r_+(1+\dhor'),R}}  \Re\big({T}^{a,m}_{X,E}\psi\bar{\psi} \big) d\Vref+\textbf{BDR}[\psi]\Big|_{r=r_+(1+\dhor')}^{r=R} \bigg)}\\
\nn&\les_R& \left(\int_{\MM_{r_+(1+\dhor'),R}}|\pr_r\psi|^2\right)^{\frac{1}{2}}\left(\int_{\MM_{r_+(1+\dhor'),R}} |\psi|^2\right)^{\frac{1}{2}}+\int_{\MM_{r_+(1+\dhor'),R}} |\psi|^2\\
&&+\dhor^2\int_{H_{r_+(1+\dhor')}}|\pr\psi|^2+\bigg|\int_{\MM_{r_+(1+\dhor'),R}}  \Re\left(\bar{\psi}\Opw(\mu\widetilde{S}^{0,2}(\MM))\psi \right) d\Vref\bigg|
\nn\\
&&+\left(\int_{H_{r_+(1+\dhor')}}|\pr\psi|^2\right)^{\frac{1}{2}}\left(\int_{\MM_{r_+(1+\dhor'),R}}\big(|\pr_r\psi|^2+|\psi|^2\big)\right)^{\frac{1}{4}}\left(\int_{\MM_{r_+(1+\dhor'),R}}|\psi|^2\right)^{\frac{1}{4}}\nn\\
&&+\left(\int_{H_{R}}\big(|\pr\psi|^2+|\psi|^2\big)\right)^{\frac{1}{2}}\left(\int_{H_{R}}|\psi|^2\right)^{\frac{1}{2}},
\eea
where the symbol $\sigma_{\trap}\in\widetilde{S}^{1,0}(\MM)$ is defined in  \eqref{eq:definitionofthesymbolsigmatrap} depending on the symbol $r_{\trap}\in\widetilde{S}^{0,0}(\MM)$ introduced below in \eqref{eq:definitionofrtrapinfunctionofrmaxandcutoffinmathcalG5}, and where the symbols $x_1, e\in\widetilde{S}^{1,0}(\MM)$ are introduced below respectively in \eqref{eq:definitionofthesymbolx1patXinwidetildeS10} and \eqref{eq:defintionofthesymboleasasquarerootofsigma2TXEmsumofsquares:1:defe}.
\end{proposition}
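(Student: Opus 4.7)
The plan is to assemble the multipliers $X = \Opw(i\mu s_0 \xi_r) + \Opw\!\bigl(is_0\S_1/(r^2+a^2) + is_1\bigr)$ and $E = \Opw(e_0)$ by patching, via a frequency-space partition of unity, the four choices of symbols $(f,h,y,z)$ constructed regime by regime in Section \ref{subsect:scalarwave:Kerr:condiMora}. Concretely, I would introduce a smooth partition of unity $\{\Theta_i\}_{1\le i\le \iota}\subset\widetilde{S}^{0,0}(\MM)$ subordinate to the open cover $\{\GG_{SR}, \GG_A, \GG_T, \GG_{TR,1}, \GG_{TR,2}\}$ of $\GG_{\Xi}\setminus\{0\}$ together with a cutoff on low frequencies $|\Xi|\lesssim 1$, define $(s_0,s_1,e_0)$ as the $\Theta_i$-weighted sum of the local choices of \eqref{eq:definitionofQhQyQfQzintermsofTXEands0s1e0}, and set $r_{\trap}(\Xi):=\chi(\Xi)r_{\max}(\Xi)$ with a cutoff $\chi$ supported where $r_{\max}$ is well-defined and $\lesssim 8m$ by \eqref{eq:UpperBoundforrmaxtrapOfthePotential}, so that
\begin{align}
\label{eq:definitionofrtrapinfunctionofrmaxandcutoffinmathcalG5}
r_{\trap}:={}& \chi\,r_{\max}\in\widetilde{S}^{0,0}(\MM).
\end{align}

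The body of the proof would then run as follows. First, apply Proposition \ref{prop:esti:GeneralEnerIden:PDO} on $\MM_{r_+(1+\dhor'),R}$ with this $(X,E)$; this reduces the problem to showing that $\Opw(\sigma_2(T_{X,E}))\psi\bar\psi$ integrates to the bulk terms on the LHS, and that $\Opw(\sigma_{2,\textbf{BDR}}^{X,E})$ at $r=r_+(1+\dhor')$ is almost nonnegative while at $r=R$ it reduces to the stated boundary quantity. The regime-by-regime coercivity estimates \eqref{eq:finallowerboundforQfplusQhpluszforsuitablefhzinGGSR}, \eqref{eq:finallowerboundforQfplusQhpluszforsuitablefhzinGGA}, \eqref{eq:finallowerboundforQfplusQhpluszforsuitablefhzinGGT}, and \eqref{eq:finallowerboundforQfplusQhpluszforsuitablefhzinGGTR2} express $\sigma_2(T_{X_i,E_i})$ as a sum of squares $d(\xi_{r^*}+\eta_i)^2 + x_{1,i}^2 + e_i^2 + \sigma_{\trap,i}^2$ modulo symbols in $\widetilde{S}^{1,0}_{hom}$ of lower weight; quantizing each square via Weyl and using the composition rule \eqref{eq:propWeylquantization:MM:composition:mixedsymbols} together with Proposition \ref{prop:PDO:MM:Weylquan:mixedoperators} yields operator-level positivity modulo a commutator remainder in $\Opw(\widetilde{S}^{1,0}(\MM))$ and $\Opw(\mu\widetilde{S}^{0,2}(\MM))$, both of which are absorbed by the error terms on the RHS of \eqref{eq:microlocal:currentEMF:v0}. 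The symbols $x_1\in\widetilde{S}^{1,0}(\MM)$ and $e\in\widetilde{S}^{1,0}(\MM)$ appearing in the LHS are defined by
\begin{align}
\label{eq:definitionofthesymbolx1patXinwidetildeS10}
x_1:={}&\Bigl(\sum_i \Theta_i\,(x_{1,i})^2\Bigr)^{1/2},
\\
\label{eq:defintionofthesymboleasasquarerootofsigma2TXEmsumofsquares:1:defe}
e:={}&\Bigl(\sum_i \Theta_i\,(e_i)^2\Bigr)^{1/2},
\end{align}
with $x_{1,i},e_i$ the nontrapping square-root symbols emerging in each regime, and $\sigma_{\trap}$ supplying the trapped part via \eqref{eq:finallowerboundforQfplusQhpluszforsuitablefhzinGGTR2}. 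The $\mu^2|\pr_r\psi|^2/r^2$ bulk contribution comes directly from the $d\,\xi_{r^*}^2$ term using $\xi_{r^*}=\mu\xi_r+\S_1/(r^2+a^2)$ and the integration-by-parts trick that trades the $\S_1/(r^2+a^2)$ part against the low-order remainder.

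Next, for the boundary term at $r=r_+(1+\dhor')$, I would combine \eqref{eq:finallowerboundfBRDforsuitablefhzinGGSR:onAA}, \eqref{eq:finallowerboundfBRDforsuitablefhzinGGA:onAA}, \eqref{eq:finallowerboundfBRDforsuitablefhzinGGT:onAA}, \eqref{eq:finallowerboundfBRDforsuitablefhzinGGTR1:onAA}, \eqref{eq:finallowerboundfBRDforsuitablefhzinGGTR2:onAA}: after summing with the partition of unity and applying Weyl quantization, the boundary symbol equals $-\varrho^2-\varpi^2$ plus symbols in $\mu\varrho\widetilde{S}^{1,1}_{hom}(\MM)+\mu^2\widetilde{S}^{2,1}_{hom}(\MM)$. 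The Gårding inequality of Lemma \ref{lemma:basicgardingfortangentialsymbols:MM} applied to $\Opw(\varrho)$ and $\Opw(\varpi)$ makes the principal boundary contribution nonnegative, while the $\mu$ and $\mu^2$ factors evaluated at $r=r_+(1+\dhor')$ yield a factor $\lesssim\dhor^2$, giving precisely the $\dhor^2\int_{H_{r_+(1+\dhor')}}|\pr\psi|^2$ error term on the RHS. At $r=R$ the four regime computations \eqref{eq:finallowerboundfBRDforsuitablefhzinGGSR:onr=R}, \eqref{eq:finallowerboundfBRDforsuitablefhzinGGA:onr=R}, \eqref{eq:finallowerboundfBRDforsuitablefhzinGGT:onr=R}, \eqref{eq:finallowerboundfBRDforsuitablefhzinGGTR1:onr=R}, \eqref{eq:finallowerboundfBRDforsuitablefhzinGGTR2:onr=R} collapse to $\sigma_{2,\textbf{BDR}}^{f+y=1-mR^{-1}}+\sigma_{2,\textbf{BDR}}^{z=A\xit}$ because all frequency-dependent symbols have been chosen so that $f+y$ is the constant $1-mR^{-1}$ there and $z$ reduces to $A\xit$. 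The $\mu|\pr_r\psi|^2$ factor at the boundaries comes from the $-\frac12\Opw(\mu^2(r^2+a^2)s_0)\pr_r\psi\overline{\pr_r\psi}$ contribution that survived in the formula \eqref{eq:formulaforBDRbracketpsithatwillbeusedagainlaterforboundarytermr=R}.

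The main obstacle will be the trapping regime $\GG_{TR,2}$: the coercivity only holds with a degenerate weight $(r-r_{\max})^2$, and $r_{\max}$ depends nontrivially on the frequency triplet $\Xi$. Extracting this degeneracy cleanly at the operator level requires defining the symbol $\sigma_{\trap}=(r-r_{\trap}(\Xi))\langle\Xi\rangle$ with $r_{\trap}$ smoothly defined by \eqref{eq:definitionofrtrapinfunctionofrmaxandcutoffinmathcalG5}, and then carefully controlling the commutator $[\Opw(\sigma_{\trap}), \cdot]$ using \eqref{eq:propWeylquantization:MM:composition:mixedsymbols}. A second delicate point is the matching of the partition of unity across the overlap of $\GG_{TR,1}$ and $\GG_{TR,2}$ (and likewise with $\GG_{SR}$ near the superradiant boundary), since the multipliers are different and the Poisson bracket $\{a^{(1)},\chi(\Xi)a^{(2)}\}$ evaluated via \eqref{eq:basiccomputationPoissonbracketsymbolsusedinwaveandmultipliers} must not destroy the positivity there. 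A third technical but routine issue is the treatment of low-frequency contributions $|\Xi|\lesssim 1$, which produce only $\Opw(\widetilde{S}^{0,0}(\MM))$ remainders absorbed into $\int_{\MM_{r_+(1+\dhor'),R}}|\psi|^2$ and the mixed $(\int|\pr_r\psi|^2)^{1/2}(\int|\psi|^2)^{1/2}$ term on the RHS, via Lemma \ref{lemma:actionmixedsymbolsSobolevspaces:MM} and the trace-type bounds on $H_{r_+(1+\dhor')}$ and $H_R$ in the RHS of \eqref{eq:microlocal:currentEMF:v0}.
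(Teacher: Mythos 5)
Your overall route coincides with the paper's: patch the regime-wise symbols $(f,h,y,z)$ with a frequency partition of unity, quantize, feed the result into the pseudodifferential energy identity of Proposition \ref{prop:esti:GeneralEnerIden:PDO}, exploit the sum-of-squares structure and the smallness $\mu\sim\dhor$ at $r=r_+(1+\dhor')$, and use the collapse of the boundary symbol at $r=R$ to $\sigma_{2,\textbf{BDR}}^{f+y=1-mR^{-1}}+\sigma_{2,\textbf{BDR}}^{z=A\xit}$. However, the central step — converting the symbolic coercivity into operator coercivity that yields the nondegenerate term $\int\mu^2|\pr_r\psi|^2/r^2$ alongside the degenerate trapping terms — has a genuine gap. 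The regime-wise bounds you invoke control $\sigma_2(T_{X,E})$ only after subtracting squares of the form $(d_j\xirstar+\eta_j)^2$, and the symbols $\eta_j$ (proportional to $\pr_r\chi_z\,\xiphi$ in the superradiant and angular regimes) are genuinely first order and tangential: they cannot be ``traded against the low-order remainder by integration by parts'' as you propose. The argument that works is to keep the exact identity $\sigma_2(T_{X,E})=\sum_j(\chi_jd_j\xirstar+\chi_j\eta_j)^2+e^2-1$, with $e$ defined globally as in \eqref{eq:defintionofthesymboleasasquarerootofsigma2TXEmsumofsquares:1:defe} (the added $+1$ is what makes the square root a symbol in $\widetilde{S}^{1,0}(\MM)$), quantize it, and then use the pointwise domination $|\chi_1\eta_1|+|\chi_2\eta_2|\les e$ of \eqref{eq:necessarypropertyforchi1b1tobecontrolledbye} — available only because $\eta_j$ was engineered to vanish to all orders at $r=r_{\text{max}}$ (the cutoff $\chi_z$ is constant near $r_{\text{max}}$, see \eqref{eq:remarkthatsymbolbvanishesatr=rmaxatinfiniteorder}) while $\eta_j=0$ in the remaining regimes — together with $\sum_j\chi_j^2d_j^2\gtrsim1$ to recover $\|\Opw(\xirstar)\psi\|^2$ and hence $\mu^2|\pr_r\psi|^2$. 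Without this compatibility between the $\eta_j$ and the degenerate elliptic symbol $e$, the cross terms $\xirstar\,\eta_j$ contaminate the $\pr_{r^*}$ control exactly at trapping. Relatedly, the partition must be taken of the form $\sum_j\chi_j^2=1$ rather than $\sum_i\Theta_i=1$: otherwise $\sum_i\Theta_i(d_i\xirstar+\eta_i)^2$ is not a sum of squares of smooth symbols, and your square roots $\bigl(\sum_i\Theta_i e_i^2\bigr)^{1/2}$ need not belong to $\widetilde{S}^{1,0}(\MM)$.

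Second, your definitions of $e$ and especially $x_1$ are not those the statement refers to, and your $x_1$ does not serve its purpose: in the paper $x_1$ is the tangential first-order symbol of the multiplier itself, $x_1=\frac{is_0\S_1}{\R}+is_1-iA\xit$, so that $X+E=\Opw(\widetilde{S}^{0,0}(\MM))\mu\pr_r+A\pr_\tau+\Opw(x_1)+\Opw(\widetilde{S}^{0,0}(\MM))$, and the domination $e^2\gtrsim c\,(\sigma_{\trap}^2+x_1^2)$ in $r\leq 10m$ again relies on $y_j+f_j$ vanishing at $r=r_{\text{max}}$ in the trapping regime; your $\bigl(\sum_i\Theta_i x_{1,i}^2\bigr)^{1/2}$ with unspecified $x_{1,i}$ (no such per-regime symbols appear in the estimates you cite) neither establishes this nor yields the control of $|\Opw(x_1)\psi|$ that is later needed to absorb $\int|F||(X+E)\psi|$. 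Three smaller corrections: the boundary positivity at $r=r_+(1+\dhor')$ cannot be obtained from Lemma \ref{lemma:basicgardingfortangentialsymbols:MM}, since $\varrho$ and $\varpi$ (e.g.\ $\varrho\propto k_+$) are not elliptic in $\langle\xi'\rangle$ — the exact Weyl sum of squares $\Opw(\varrho^2)=\Opw(\varrho)^2+\Opw(\widetilde{S}^{0,0}(\MM))$ suffices, followed by the trace estimate that produces the quarter-power error terms on the RHS; the definition of $r_{\trap}$ should interpolate as $3m(1-\widetilde{\chi}_5)+\widetilde{\chi}_5 r_{\text{max}}$ so that $r_{\trap}$ stays in $(r_+,8m]$ (your $\chi\,r_{\max}$ can drop below $r_+$ on the transition region, which is harmless here but breaks the later redshift absorption); and the nontrapped bulk terms $|\pr_\tau\psi|^2+|\nab\psi|^2$ must be extracted from $e$ by a further square-root subtraction (the symbol $e_1$), not merely asserted.
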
}

{\begin{remark}
In view of Lemma \ref{lem:EnerIden:PDO} with $\g=\gam$, the {last line on the LHS} of \eqref{eq:microlocal:currentEMF:v0} is equal to 
\beaa
{\int_{\MM_{r_+(1+\dhor'),R}}\Re\big(\Box_{\gam}\psi\overline{(X+E)\psi}\big)}
\eeaa
so that \eqref{eq:microlocal:currentEMF:v0} corresponds to a conditional degenerate Morawetz estimate in Kerr.
\end{remark}}

\begin{proof}
{The proof proceeds in the following steps.}

{\noindent{\bf Step 1.}} Let $\{\GG_j\}_{j=1,2,3,4,5}$ be the open sets $\{\GG_{SR}, \GG_{A}, \GG_T, \GG_{TR,1}, \GG_{TR,2}\}$, respectively. In  the above Sections \ref{sect:GGSR:pseudo}--\ref{sect:GGTR:pseudo}, we have made choices of symbols $(h_j, y_j, f_j, z_j)$ {such that $h_j, y_j, f_j\in\widetilde{S}^{0,0}_{hom}(\MM)$ and $z_j\in\widetilde{S}^{1,0}_{hom}(\MM)$,} such that the symbols $\{Q(h_j, y_j, f_j, z_j)\}_{j=1,2,3,4,5}$ satisfy
{\bea
\label{eq:bulksymbol:eachregime:PD}
&& Q^{y_j}+Q^{h_j}+Q^{f_j}+Q^{z_j} -(d_j\xirstar+\eta_j)^2\in\widetilde{S}^{2,0}_{hom}(\MM),\quad d_j\in\widetilde{S}^{0,0}_{hom}(\MM), \quad \eta_j\in\widetilde{S}^{1,0}_{hom}(\MM),\nn\\
&& Q^{y_j}+Q^{h_j}+Q^{f_j}+Q^{z_j} -(d_j\xirstar+\eta_j)^2 \gtrsim P_{j}\quad\textrm{and}\quad d_j\gtrsim 1, \nn\\ 
&&\text{on }\quad\{r_+(1+\dhor')\leq r\leq R\}\times\GG_j\quad\text{for\,\, } j=1,2,3,4,5,
\eea}
where 
\bsub\lab{eq:propertiessatisfiedbyPjforj=12345}
\begin{align}
P_{j}&={\xit^2+r^{-2}\xiphi^2 +r^{-2}\Lambda^2} \quad \text{for\,\, } j=1,2,3,4,\\
P_{j}&={(r-r_{\text{max}})^2(r^{-2}\xit^2+r^{-4}\xiphi^2 +r^{-4}\Lambda^2)} \quad \text{for\,\, } j=5,\\
{\pr_r^l\eta_j}&=0\quad\forall l\geq 0\quad\textrm{on}\quad\{r=r_{\text{max}}\}\times{\GG_j \quad \text{for\,\, }j=1,2,} \qquad \eta_{j}=0 \quad \text{for\,\, } j=3,4,5,
\end{align}
\esub
such that the symbols $\{\sigma_{2,\textbf{BDR}}(h_j, y_j, f_j, z_j)\}_{j=1,2,3,4,5}$ satisfy\footnote{{In fact, $\varrho_j$ and $\varpi_j$ are in $\widetilde{S}^{1,0}_{hom}(\MM)$ when restricted to $\GG_j$ which is sufficient as they will be multiplied in Step 2 below by a smooth cut-off function $\chi_j$ supported in $\GG_j$.}}
{\bea
\label{eq:fluxsymbol:eachregime:PD}
&&{\sigma_{2,\textbf{BDR}}^{y_j}+\sigma_{2,\textbf{BDR}}^{h_j}+\sigma_{2,\textbf{BDR}}^{f_j}+\sigma_{2,\textbf{BDR}}^{z_j}}+\varrho_j^2+\varpi_j^2\nn\\
\nn&=& \mu\varrho_j\widetilde{S}^{1,1}_{hom}(\MM)+\mu^2\widetilde{S}^{2,1}_{hom}(\MM)\quad {\textrm{on}\quad\{r=r_+(1+\dhor')\}\times\GG_j},\\
&&\varrho_j,\, \varpi_j\in\widetilde{S}^{1,0}_{hom}(\MM),\quad\text{for\,\, } j=1,2,3,4,5,
\eea}
{and such that} 
\bea
\label{eq:choicesofsymbols:r=R}
h_j=-c'mr^{-2},\,\,\,  y_j+f_j = 1-mR^{-1}, \,\,\,  z_j = A\xit, \,\,\, {\textrm{on}\,\,\, \{r=R\}\times\GG_j}, \,\,\,  \forall j\in\{1,2,3,4,5\}.
\eea

Since there holds $\cup_{j=1}^5 \GG_j=\Gtz$ with $\{\GG_j\}_{j=1,2,3,4,5}$ open sets, there exist $\{\chi_j\}_{j=1,2,3,4,5}=\{\chi_j(\xit,\xiphi,\Lambda)\}_{j=1,2,3,4,5}$, {with $\chi_j\in\widetilde{S}^{0,0}(\MM)$ such that}
\begin{align*}
\text{supp} (\chi_j)\Subset \GG_j, \qquad \sum_{j=1}^5{\chi_j^2} =1 \quad\text{on}\quad \Gtz{\cap\{|\Xi|\geq 2\}, \qquad \chi_j=0\quad\textrm{on}\quad |\Xi|\leq 1}.
\end{align*}
Also, we define $r_{\trap}$ as 
\bea
\lab{eq:definitionofrtrapinfunctionofrmaxandcutoffinmathcalG5}
r_{\trap}:=3m(1-\widetilde{\chi}_5)+\widetilde{\chi}_5r_{max}, \qquad r_{\trap}\in\widetilde{S}^{0,0}(\MM), 
\eea
where $\widetilde{\chi}_5\in\widetilde{S}^{0,0}(\MM)$ is supported in $\mathcal{G}_5$ and $\widetilde{\chi}_5=1$ on the support of $\chi_5$, {and  in view of \eqref{eq:UpperBoundforrmaxtrapOfthePotential} and the bound  $r_+ +{\de_0 m^{-3}(m-a)^4} \leq  r_{\text{max}}\leq 8m$ from \eqref{eq:rmaxbound:GGTR2} in $\GG_{TR,2}=\GG_5$,  it follows
\begin{equation}
\lab{eq:UpperBoundforrmaxtrapOfthePotential:consequencertrap}
\begin{split}
&r_+ +\de_0 m^{-3}(m-a)^4 \leq  r_{\trap}\leq 8m
\end{split}
\end{equation}
for a universal constant $\de_0>0$ that {is independent from $m$ and $a$}.}

{\noindent{\bf Step 2.}} We may now define our choice of symbols {globally in $\Gtz$ as follows}
\bea
\lab{eq:finalchoicesofhyfzsymbols}
(h,y,f,z):=\sum_{j=1}^5 {\chi_j^2}(h_j, y_j, f_j, z_j), \qquad {h,y,f\in\widetilde{S}^{0,0}(\MM), \qquad z\in\widetilde{S}^{1,0}(\MM)}{,}
\eea
{which, in view of \eqref{eq:definitionofQhQyQfQzintermsofTXEands0s1e0}, uniquely prescribes operators $X\in\Opw(\widetilde{S}^{1,1}(\MM))$ and $E\in\Opw(\widetilde{S}^{0,0}(\MM))$ of the form \eqref{eq:generalformofthePDOmultipliersXandE}.} We also have 
\beaa
\bsplit
&\chi_jd_j\in\widetilde{S}^{0,0}(\MM), \quad j=1,2,3,4,5, \qquad \sum_j\chi_j^2d_j^2\gtrsim 1\quad\textrm{on}\,\,\Gtz,\\
&{\chi_j\eta_j\in\widetilde{S}^{1,0}(\MM),  \quad \pr_r^l(\eta_j)=0\quad\forall l\geq 0\quad\textrm{on}\quad\{r=r_{\text{max}}\}\times\GG_j, \quad \text{for\,\, } j=1,2,}\\
&\eta_{j}=0 \quad \text{for\,\, } j=3,4,5,\\
&{\chi_j\varrho_j, \, \chi_j\varpi_j\in\widetilde{S}^{1,0}(\MM)\quad \text{for\,\, } j=1,2,3,4,5.}
\end{split}
\eeaa
Since, in view of \eqref{eq:symbolofBulk:EnerIden} and \eqref{eq:definitionofQhQyQfQzintermsofTXEands0s1e0}, $Q^y$, $Q^h$, $Q^f$ and $Q^z$ depend linearly respectively on $(y, \pr_ry)$, $h$, $(f, \pr_rf)$, and $\pr_rz$, and since $\pr_r\chi_j=0$ for $j=1,2,3,4,5$, we deduce
\beaa
\sigma_2({T^{a.m}_{X,E}}) -\sum_{j=1}^5(\chi_jd_j\xirstar+\chi_j\eta_j)^2 &=& Q^{y}+Q^{h}+Q^{f}+Q^{z}  -\sum_{j=1}^5(\chi_jd_j\xirstar+\chi_j\eta_j)^2\\
&=& \sum_{j=1}^5\chi_j^2\big(Q^{y_j}+Q^{h_j}+Q^{f_j}+Q^{z_j} -(d_j\xirstar+\eta_j)^2\big)\\
&\gtrsim& \sum_{j=1}^5\chi_j^2P_j.
\eeaa
In view of \eqref{eq:bulksymbol:eachregime:PD} and \eqref{eq:propertiessatisfiedbyPjforj=12345}, this yields
\bea\lab{eq:intermadiarysigmaTXEmsumofsqquaresisinwtildeS20MM}
\bsplit
&\sigma_2({T^{a.m}_{X,E}}) -\sum_{j=1}^5(\chi_jd_j\xirstar+\chi_j\eta_j)^2\in\widetilde{S}^{2,0}(\MM),\\
&\sigma_2({T^{a.m}_{X,E}}) -\sum_{j=1}^5(\chi_jd_j\xirstar+\chi_j\eta_j)^2 \gtrsim P_0, \quad\textrm{on}\quad\{r_+(1+\dhor')\leq r\leq R\}\times\Gtz,
\end{split}
\eea
{where $P_0\in\widetilde{S}^{2,0}(\MM)$ is given by
\bea
\lab{eq:defofsymbolP0inwtildeS20MMforlowerboindsigma2TXE}
P_0:= (1-\chi_5^2)r^2(r^{-2}\xit^2+r^{-4}\xiphi^2 +r^{-4}\Lambda^2)+\chi_5^2(r-r_{\trap})^2(r^{-2}\xit^2+r^{-4}\xiphi^2 +r^{-4}\Lambda^2),
\eea}
{with $r_{\trap}$ given by \eqref{eq:definitionofrtrapinfunctionofrmaxandcutoffinmathcalG5}. Now, in view of \eqref{eq:intermadiarysigmaTXEmsumofsqquaresisinwtildeS20MM}, we may introduce
\bea\lab{eq:defintionofthesymboleasasquarerootofsigma2TXEmsumofsquares:1:defe}
e:=\sqrt{\sigma_2(T_{X,E})+1 -\sum_{j=1}^5(\chi_jd_j\xirstar+\chi_j\eta_j)^2}
\eea}
{which satisfies
\bea
\lab{eq:defintionofthesymboleasasquarerootofsigma2TXEmsumofsquares:1}
{e\in\widetilde{S}^{1,0}(\MM), \qquad e\gtrsim 1+ \left(\sqrt{1-\chi_5^2}+|\chi_5||r-r_{\trap}|\right)\sqrt{r^{-2}\xit^2+r^{-4}\xiphi^2 +r^{-4}\Lambda^2},}
\eea}
and, recalling that {$\eta_3=\eta_4=\eta_5=0$,}  
\bea\lab{eq:defintionofthesymboleasasquarerootofsigma2TXEmsumofsquares:2}
\sigma_2({T^{a.m}_{X,E}}) = {(\chi_1d_1\xirstar+\chi_1\eta_1)^2+(\chi_2d_2\xirstar+\chi_2\eta_2)^2+\sum_{j=3}^5(\chi_jd_j\xirstar)^2+e^2 -1. }
\eea
Also, since {$\pr_r^l(\eta_j)=0$ for all $l\geq 0$ and $j=1,2$ on $\{r=r_{\text{max}}\}\times\GG_j$}, and since $r_{\trap}=r_{\text{max}}$ on the support of $\chi_5$, we have
\bea\lab{eq:necessarypropertyforchi1b1tobecontrolledbye}
{|\chi_1\eta_1|+|\chi_2\eta_2|}\les e. 
\eea
Using \eqref{eq:defintionofthesymboleasasquarerootofsigma2TXEmsumofsquares:1} and \eqref{eq:defintionofthesymboleasasquarerootofsigma2TXEmsumofsquares:2}, together with Proposition \ref{prop:PDO:MM:Weylquan:mixedoperators} and the fact that $\xirstar=\mu\xi_r+\widetilde{S}^{1,0}(\MM)$ in view of \eqref{def:xirstar}, we infer
{\beaa
\Opw(\sigma_2({T^{a.m}_{X,E}})) &=& (\Opw(\chi_1d_1\xirstar+\chi_1\eta_1))^2+
(\Opw(\chi_2d_2\xirstar+\chi_2\eta_2))^2+
\sum_{j=3}^5\Opw(\chi_jd_j\xirstar)^2\\
&&+\Opw(e)^2+{\Opw\big(}\mu\widetilde{S}^{0,2}(\MM)+\widetilde{S}^{0,1}(\MM){\big)}.
\eeaa}
Since the symbols $\xirstar$, $\chi_j$, $d_j$, {$\eta_1$, $\eta_2$} and $e$ are real valued, we deduce, using again Proposition \ref{prop:PDO:MM:Weylquan:mixedoperators} as well as Lemma \ref{lemma:actionmixedsymbolsSobolevspaces:MM}, 
\bea\lab{eq:intermediarycontrolofH1normpsibysigma2TXE}
&&\int_{\MM_{r_+(1+\dhor'),R}}  \Re\big(\bar{\psi}\Opw(\sigma_2({T^{a.m}_{X,E}}))\psi \big) d\Vref\nn\\ 
&=& \int_{\MM_{r_+(1+\dhor'),R}}\left({\sum_{i=1}^2|\Opw(\chi_id_i\xirstar+\chi_i\eta_i)\psi|^2+\sum_{j=3}^5|\Opw(\chi_jd_j\xirstar)\psi|^2}+|\Opw(e)\psi|^2\right)d\Vref\nn\\
&&+O(1)\Bigg\{\left(\int_{\MM_{r_+(1+\dhor'),R}}|\pr_r\psi|^2\right)^{\frac{1}{2}}\left(\int_{\MM_{r_+(1+\dhor'),R}} |\psi|^2\right)^{\frac{1}{2}}
+\int_{\MM_{r_+(1+\dhor'),R}} |\psi|^2\\
&&
+\bigg|\int_{\MM_{r_+(1+\dhor'),R}}  \Re\left(\bar{\psi}\Opw(\mu\widetilde{S}^{0,2}(\MM))\psi \right) d\Vref\bigg|\Bigg\}.\nn
\eea
In particular, we infer
\bea\lab{eq:intermediarycontrolofH1normpsibysigma2TXE:1}
&& \int_{\MM_{r_+(1+\dhor'),R}}|\Opw(e)\psi|^2d\Vref\nn\\ 
&\les& \int_{\MM_{r_+(1+\dhor'),R}}  \Re\big(\bar{\psi}\Opw(\sigma_2({T^{a.m}_{X,E}}))\psi \big) d\Vref
+\left(\int_{\MM_{r_+(1+\dhor'),R}}|\pr_r\psi|^2\right)^{\frac{1}{2}}\left(\int_{\MM_{r_+(1+\dhor'),R}} |\psi|^2\right)^{\frac{1}{2}}\nn\\
&&
+\int_{\MM_{r_+(1+\dhor'),R}} |\psi|^2
+\bigg|\int_{\MM_{r_+(1+\dhor'),R}}  \Re\left(\bar{\psi}\Opw(\mu\widetilde{S}^{0,2}(\MM))\psi \right) d\Vref\bigg|,
\eea
which together with \eqref{eq:necessarypropertyforchi1b1tobecontrolledbye} implies
\beaa
&& \int_{\MM_{r_+(1+\dhor'),R}}{\Big(|\Opw(\chi_1\eta_1)\psi|^2+|\Opw(\chi_2\eta_2)\psi|^2\Big)}d\Vref\\ 
&\les& \int_{\MM_{r_+(1+\dhor'),R}}  \Re\big(\bar{\psi}\Opw(\sigma_2({T^{a.m}_{X,E}}))\psi \big) d\Vref
+\left(\int_{\MM_{r_+(1+\dhor'),R}}|\pr_r\psi|^2\right)^{\frac{1}{2}}\left(\int_{\MM_{r_+(1+\dhor'),R}} |\psi|^2\right)^{\frac{1}{2}}\\
&&
+\int_{\MM_{r_+(1+\dhor'),R}} |\psi|^2
+\bigg|\int_{\MM_{r_+(1+\dhor'),R}}  \Re\left(\bar{\psi}\Opw(\mu\widetilde{S}^{0,2}(\MM))\psi \right) d\Vref\bigg|.
\eeaa
In view of \eqref{eq:intermediarycontrolofH1normpsibysigma2TXE}, we deduce
\beaa
&&  \int_{\MM_{r_+(1+\dhor'),R}}\left(\sum_{j=1}^5|\Opw(\chi_jd_j\xirstar)\psi|^2\right)d\Vref\nn\\
&\les& \int_{\MM_{r_+(1+\dhor'),R}}  \Re\big(\bar{\psi}\Opw(\sigma_2({T^{a.m}_{X,E}}))\psi \big) d\Vref
+\left(\int_{\MM_{r_+(1+\dhor'),R}}|\pr_r\psi|^2\right)^{\frac{1}{2}}\left(\int_{\MM_{r_+(1+\dhor'),R}} |\psi|^2\right)^{\frac{1}{2}}\nn\\
&&
+\int_{\MM_{r_+(1+\dhor'),R}} |\psi|^2
+\bigg|\int_{\MM_{r_+(1+\dhor'),R}}  \Re\left(\bar{\psi}\Opw(\mu\widetilde{S}^{0,2}(\MM))\psi \right) d\Vref\bigg|,
\eeaa
which together with Proposition \ref{prop:PDO:MM:Weylquan:mixedoperators} and Lemma \ref{lemma:actionmixedsymbolsSobolevspaces:MM} implies
\beaa
&&  \int_{\MM_{r_+(1+\dhor'),R}}\left|\Opw\left(\sqrt{\sum_{j=1}^5\chi_j^2d_j^2}\right)\circ\Opw(\xirstar)\psi\right|^2d\Vref\nn\\
&\les& \int_{\MM_{r_+(1+\dhor'),R}}  \Re\big(\bar{\psi}\Opw(\sigma_2({T^{a.m}_{X,E}}))\psi \big) d\Vref
+\left(\int_{\MM_{r_+(1+\dhor'),R}}|\pr_r\psi|^2\right)^{\frac{1}{2}}\left(\int_{\MM_{r_+(1+\dhor'),R}} |\psi|^2\right)^{\frac{1}{2}}\nn\\
&&
+\int_{\MM_{r_+(1+\dhor'),R}} |\psi|^2
+\bigg|\int_{\MM_{r_+(1+\dhor'),R}}  \Re\left(\bar{\psi}\Opw(\mu\widetilde{S}^{0,2}(\MM))\psi \right) d\Vref\bigg|.
\eeaa
Since $\sum_{j=1}^5\chi_j^2d_j^2\gtrsim 1$, we infer
\beaa
&&  \int_{\MM_{r_+(1+\dhor'),R}}\left|\Opw(\xirstar)\psi\right|^2d\Vref\nn\\
&\les& \int_{\MM_{r_+(1+\dhor'),R}}  \Re\big(\bar{\psi}\Opw(\sigma_2({T^{a.m}_{X,E}}))\psi \big) d\Vref
+\left(\int_{\MM_{r_+(1+\dhor'),R}}|\pr_r\psi|^2\right)^{\frac{1}{2}}\left(\int_{\MM_{r_+(1+\dhor'),R}} |\psi|^2\right)^{\frac{1}{2}}\nn\\
&&
+\int_{\MM_{r_+(1+\dhor'),R}} |\psi|^2
+\bigg|\int_{\MM_{r_+(1+\dhor'),R}}  \Re\left(\bar{\psi}\Opw(\mu\widetilde{S}^{0,2}(\MM))\psi \right) d\Vref\bigg|,
\eeaa
which {together with \eqref{eq:intermediarycontrolofH1normpsibysigma2TXE:1} implies
\bea\lab{eq:controloffirstorderderivativesinMMwithtrapdegbysigma2TXE:almostthere!0}
\nn&&  \int_{\MM_{r_+(1+\dhor'),R}}\Big(|\pr_{r^*}\psi|^2+|\Opw(e)\psi|^2\Big)d\Vref\\
&\les& \int_{\MM_{r_+(1+\dhor'),R}}  \Re\big(\bar{\psi}\Opw(\sigma_2({T^{a.m}_{X,E}}))\psi \big) d\Vref
+\left(\int_{\MM_{r_+(1+\dhor'),R}}|\pr_r\psi|^2\right)^{\frac{1}{2}}\left(\int_{\MM_{r_+(1+\dhor'),R}} |\psi|^2\right)^{\frac{1}{2}}\nn\\
&&
+\int_{\MM_{r_+(1+\dhor'),R}} |\psi|^2
+\bigg|\int_{\MM_{r_+(1+\dhor'),R}}  \Re\left(\bar{\psi}\Opw(\mu\widetilde{S}^{0,2}(\MM))\psi \right) d\Vref\bigg|.
\eea}

{\noindent{\bf Step 3.} Next, we have, in view of \eqref{eq:generalformofthePDOmultipliersXandE} {and \eqref{eq:definitionofQhQyQfQzintermsofTXEands0s1e0}}, 
\bea\lab{eq:definitionofthesymbolx1patXinwidetildeS10}
\bsplit
X=&\Opw(\widetilde{S}^{0,0}(\MM))\mu\pr_r+A\pr_\tau+\Opw(x_1)+\Opw(\widetilde{S}^{0,0}(\MM)),\\
x_1:=&\frac{is_0 \S_1}{\R} + i s_1 -iA\xit, \quad x_1\in\widetilde{S}^{1,0}(\MM),
\end{split}
\eea
and hence, since $z_j=A(\xit+\chi_z\om_\HH\xiphi)$ for $j=1,2$, and $z_j=A(\xit+\chi_z\om_\HH\xiphi)$ for $j=3,4,5$, 
\bea\lab{eq:definitionofthesymbolx1patXinwidetildeS10:bis}
x_1 &=& i\sum_{j=1}^2\chi_j^2\left(\frac{2(y_j+f_j)}{r^2+a^2}S_1+{A}\chi_z\om_\HH\xiphi\right)+ i\sum_{j=3}^5\chi_j^2\frac{2(y_j+f_j)}{r^2+a^2}S_1.
\eea
Then, notice from \eqref{eq:defintionofthesymboleasasquarerootofsigma2TXEmsumofsquares:1}, \eqref{eq:UpperBoundforrmaxtrapOfthePotential:consequencertrap}, \eqref{eq:definitionofthesymbolx1patXinwidetildeS10:bis} and the definition of the symbol $\sigma_{\trap}\in\widetilde{S}^{1,0}(\MM)$ introduced in \eqref{eq:definitionofthesymbolsigmatrap} that there exists a constant $c>0$ small enough such that
\bea\lab{eq:auxiliarrye1toboundOpwsigmatrap}
e_1:=\sqrt{e^2-c\chi_0(r)(\sigma_{\trap}^2+x_1^2)-c(1-\chi_0(r))\Big(r^{-2}\xit^2+r^{-4}\xiphi^2 +r^{-4}\Lambda^2\Big)}, \quad e_1\in\widetilde{S}^{1,0}(\MM),
\eea
where $\chi_0$ is a smooth cut-off function satisfying $0\leq \chi_0\leq 1$, $\chi_0=1$ for $r\leq 10m$ and $\chi_0=0$ for $r\geq 11m$. In view of Proposition \ref{prop:PDO:MM:Weylquan:mixedoperators} and Lemma \ref{lemma:simplepropertiesspecialcaseofsymbols:recoveringoperators}, and using again \eqref{eq:UpperBoundforrmaxtrapOfthePotential:consequencertrap}, we infer
\beaa
&&\int_{\MM_{r_+(1+\dhor'),10m}}\big(|\Opw(\sigma_{\trap})\psi|^2+|\Opw(x_1)\psi|^2\big)+\int_{\Mntrap_{r\leq R}}\frac{|\pr_\tau\psi|^2+|\nab\psi|^2}{r^2}\\
&\les& \int_{\MM_{r_+(1+\dhor'),R}}\Re\left(\ov{\psi}\Opw\left(c\chi_0(r)\big(\sigma_{\trap}^2+x_1^2\big)+c(1-\chi_0(r))\Big(r^{-2}\xit^2+r^{-4}\xiphi^2 +r^{-4}\Lambda^2\Big)\right)\psi\right)\\
&&\times d\Vref +\int_{\MM_{r_+(1+\dhor'),R}} |\psi|^2\\
&=&  \int_{\MM_{r_+(1+\dhor'),R}}\Re\left(\ov{\psi}\Opw\big(e^2-e_1^2\big)\psi\right)d\Vref +\int_{\MM_{r_+(1+\dhor'),R}} |\psi|^2\\
&\les& \int_{\MM_{r_+(1+\dhor'),R}}|\Opw(e)\psi|^2+\int_{\MM_{r_+(1+\dhor'),R}}|\psi|^2,
\eeaa
which together with \eqref{eq:controloffirstorderderivativesinMMwithtrapdegbysigma2TXE:almostthere!0} finally yields} 
\bea\lab{eq:controloffirstorderderivativesinMMwithtrapdegbysigma2TXE}
\nn&&  {\int_{\MM_{r_+(1+\dhor'),R}}\frac{\mu^2|\pr_r\psi|^2}{r^2} +\int_{\MM_{r_+(1+\dhor'),10m}}\big(|\Opw(\sigma_{\trap})\psi|^2+|\Opw(x_1)\psi|^2+|\Opw(e)\psi|^2\big)}
\\
\nn&&{+\int_{{\Mntrap_{r_+(1+\dhor'),R}}}\frac{|\pr_\tau\psi|^2+|\nab\psi|^2}{r^2}}\\
&\les& \int_{\MM_{r_+(1+\dhor'),R}}  \Re\big(\bar{\psi}\Opw(\sigma_2({T^{a.m}_{X,E}}))\psi \big) d\Vref
+\left(\int_{\MM_{r_+(1+\dhor'),R}}|\pr_r\psi|^2\right)^{\frac{1}{2}}\left(\int_{\MM_{r_+(1+\dhor'),R}} |\psi|^2\right)^{\frac{1}{2}}\nn\\
&&
+\int_{\MM_{r_+(1+\dhor'),R}} |\psi|^2
+\bigg|\int_{\MM_{r_+(1+\dhor'),R}}  \Re\left(\bar{\psi}\Opw(\mu\widetilde{S}^{0,2}(\MM))\psi \right) d\Vref\bigg|,
\eea
{where we have also used the fact that 
\beaa
\int_{\MM_{r_+(1+\dhor'),R}}\frac{\mu^2|\pr_r\psi|^2}{r^2}\les \int_{\MM_{r_+(1+\dhor'),R}}|\pr_{r^*}\psi|^2+\int_{\Mntrap_{r\leq R}}\frac{|\pr_\tau\psi|^2+|\nab\psi|^2}{r^2}
\eeaa
in view of our choice of normalized coordinates in Lemma \ref{lem:specificchoice:normalizedcoord} which ensures that $\pr_{r^*}=\mu\pr_r$ in $\Mtrap$.}

{\noindent{\bf Step 4.}} Next, we consider the boundary terms, starting with the one at $r=r_+(1+\dhor')$. Since, in view of \eqref{eq:flux:fixr}, $\sigma_{2,\textbf{BDR}}^h=0$, and $\sigma_{2,\textbf{BDR}}^y$, $\sigma_{2,\textbf{BDR}}^f$ and $\sigma_{2,\textbf{BDR}}^z$ depend linearly respectively on $y$, $f$, and $z$, we deduce
\beaa
 \sigma_{2, \textbf{BDR}}^{X,E}\vert_{r=r_+(1+\dhor')} &=& \Big(\sigma_{2,\textbf{BDR}}^{y}+\sigma_{2,\textbf{BDR}}^{h}+\sigma_{2,\textbf{BDR}}^{f}+\sigma_{2,\textbf{BDR}}^{z}\Big)\vert_{r=r_+(1+\dhor')}\\
 &=&\sum_{j=1}^5\chi_j^2\Big(\sigma_{2,\textbf{BDR}}^{y_j}+\sigma_{2,\textbf{BDR}}^{h_j}+\sigma_{2,\textbf{BDR}}^{f_j}+\sigma_{2,\textbf{BDR}}^{z_j}\Big)\vert_{r=r_+(1+\dhor')}
\eeaa
which together with \eqref{eq:fluxsymbol:eachregime:PD} implies
{\beaa
&&\int_{H_{r_+(1+\dhor')}}\Re\left(\ov{\psi}\left(\Opw\Big( \sigma_{2, \textbf{BDR}}^{X,E}\Big) +\sum_{j=1}^5\Big(\Opw(\chi_j^2\varrho_j^2)+\Opw(\chi_j^2\varpi_j^2)\Big)\right)\psi \right) d\tt dx^1dx^2\\
&=& \int_{H_{r_+(1+\dhor')}}\Re\left(\ov{\psi}\left(\sum_{j=1}^5\Opw\big(\mu\chi_j\varrho_j\widetilde{S}^{1,1}(\MM)\big)+\Opw\big(\mu^2\widetilde{S}^{2,1}(\MM)\big)\right)\psi \right) d\tt dx^1dx^2
\eeaa
which yields
\beaa
&&\int_{H_{r_+(1+\dhor')}}\left(\Re\left(\ov{\psi}\Opw\Big( \sigma_{2, \textbf{BDR}}^{X,E}\Big)\psi\right) +\sum_{j=1}^5|\chi_j\varrho_j\psi|^2 \right) d\tt dx^1dx^2\\
&=& \left(\sum_{j=1}^5\int_{H_{r_+(1+\dhor')}}|\chi_j\varrho_j\psi|^2 d\tt dx^1dx^2\right)^{\frac{1}{2}}\left(\int_{H_{r_+(1+\dhor')}}\mu^2|\pr\psi|^2 d\tt dx^1dx^2\right)^{\frac{1}{2}}\\
&&+\int_{H_{r_+(1+\dhor')}}\big(\mu^2|\pr\psi|^2+|\psi|^2\big) d\tt dx^1dx^2
\eeaa
and hence
\bea\lab{eq:concludingestimateforsigma2BDRXE:AA}
\int_{H_{r_+(1+\dhor')}}\Re\left(\ov{\psi}\left(\Opw\Big( \sigma_{2, \textbf{BDR}}^{X,E}\Big)\right)\psi \right) d\tt dx^1dx^2 \les \int_{H_{r_+(1+\dhor')}}\big(\mu^2|\pr\psi|^2+|\psi|^2\big) d\tt dx^1dx^2.
\eea
Also, we have, for $s_0\in\widetilde{S}^{0,0}(\MM)$, 
\beaa
\left|\int_{H_{r_+(1+\dhor')}}\Re\Bigg(-\frac{1}{2}\Opw(\mu^2(\R)s_0)\pr_r\psi \ov{\pr_r\psi}\Bigg) d\tt dx^1dx^2\right|\les\int_{H_{r_+(1+\dhor')}}\big(\mu^2 |\pr\psi|^2 +|\psi|^2),
\eeaa
which, together with \eqref{eq:concludingestimateforsigma2BDRXE:AA}, yields
\bea\lab{eq:concludingestimateforsigma2BDRXE:AA:1}
&&\int_{H_{r_+(1+\dhor')}}\Re\Bigg(-\frac{1}{2}\Opw(\mu^2(\R)s_0)\pr_r\psi \ov{\pr_r\psi} + \ov{\psi}\Opw\Big( \sigma_{2, \textbf{BDR}}^{X,E}\Big)\psi\Bigg) d\tt dx^1dx^2\nn\\
&\les&\int_{H_{r_+(1+\dhor')}}\big(\mu^2 |\pr\psi|^2 +|\psi|^2)\nn\\
\nn&\les& \dhor^2\int_{H_{r_+(1+\dhor')}}|\pr\psi|^2 \\
&&+\left(\int_{\MM_{r_+(1+\dhor'), 3m}}\big(|\pr_r\psi|^2+|\psi|^2\big)\right)^{\frac{1}{2}}\left(\int_{\MM_{r_+(1+\dhor'), 3m}}|\psi|^2\right)^{\frac{1}{2}},
\eea
where we have also used the following trace estimate 
\bea\lab{eq:precisetraceestimateonr=rplus1plusdhoprimewithsqrt}
 \int_{H_{r_+(1+\dhor')}}|\psi|^2
 \les \left(\int_{\MM_{r_+(1+\dhor'), 3m}}\big(|\pr_r\psi|^2+|\psi|^2\big)\right)^{\frac{1}{2}}\left(\int_{\MM_{r_+(1+\dhor'), 3m}}|\psi|^2\right)^{\frac{1}{2}}.
 \eea}
 Also, we have in view of \eqref{eq:choicesofsymbols:r=R}
\bea\lab{eq:concludingestimateforsigma2BDRXE:r=R:0}
h=-c'mr^{-2},\quad  y+f = 1-mR^{-1}, \quad z = A\xit, \quad\textrm{on}\quad\{r=R\}
\eea
as well as 
\bea\lab{eq:concludingestimateforsigma2BDRXE:r=R}
 \sigma_{2, \textbf{BDR}}^{X,E} = \sigma_{2,\textbf{BDR}}^{f+y=1-mR^{-1}}+\sigma_{2,\textbf{BDR}}^{z = A\xit}\quad\textrm{on}\quad\{r=R\}. 
\eea

{\noindent{\bf Step 5.}} {Now, we apply \eqref{eq:esti:GeneralEnerIden:PDO} with $r_1=r_+(1+\dhor')$ and $r_2=R$, which yields 
\beaa
\nn&&\int_{\MM_{r_+(1+\dhor'),R}}  \Re\big(\bar{\psi}\Opw(\sigma_2({T^{a.m}_{X,E}}))\psi \big) d\Vref\\
\nn&&+\Bigg[\int_{H_r}\Re\Bigg(-\frac{1}{2}\Opw(\mu^2(\R)s_0)\pr_r\psi \ov{\pr_r\psi} + \ov{\psi}\Opw\Big( \sigma_{2, \textbf{BDR}}^{X,E}\Big)\psi\Bigg) d\tt dx^1dx^2\Bigg]_{r=r_+(1+\dhor')}^{r=R}\\
\nn&\leq& \int_{\MM_{r_+(1+\dhor'),R}}  \Re\big({T^{a.m}_{X,E}}\psi\bar{\psi} \big) d\Vref+\textbf{BDR}[\psi]\Big|_{r=r_+(1+\dhor')}^{r=R}\\
&& +C_R\Bigg\{\left(\int_{\MM_{r_+(1+\dhor'),R}}|\pr_r\psi|^2\right)^{\frac{1}{2}}\left(\int_{\MM_{r_+(1+\dhor'),R}} |\psi|^2\right)^{\frac{1}{2}}\\
\nn&&+\int_{\MM_{r_+(1+\dhor'),R}} |\psi|^2 +\left|\int_{\MM_{r_1,r_2}}  \Re\left(\bar{\psi}\Opw(\mu\widetilde{S}^{0,2}(\MM))\psi \right) d\Vref\right|\\
&&+\left(\int_{H_{r_+(1+\dhor')}}|\pr\psi|^2+|\psi|^2\right)^{\frac{1}{2}}\left(\int_{H_{r_+(1+\dhor')}}|\psi|^2\right)^{\frac{1}{2}}+\left(\int_{H_R}|\pr\psi|^2+|\psi|^2\right)^{\frac{1}{2}}\left(\int_{H_R}|\psi|^2\right)^{\frac{1}{2}}\Bigg\}.
\eeaa
Together with \eqref{eq:controloffirstorderderivativesinMMwithtrapdegbysigma2TXE}, \eqref{eq:concludingestimateforsigma2BDRXE:AA:1} and \eqref{eq:concludingestimateforsigma2BDRXE:r=R}, we infer {the existence of a constant $c>0$ such that}
\beaa
\nn&{c\Bigg[}& \int_{\MM_{r_+(1+\dhor'),R}}\frac{\mu^2|\pr_r\psi|^2}{r^2} +\int_{\MM_{r_+(1+\dhor'),10m}}\big(|\Opw(\sigma_{\trap})\psi|^2+|\Opw(x_1)\psi|^2+|\Opw(e)\psi|^2\big)\\
\nn&+&\int_{{\Mntrap_{r_+(1+\dhor'),R}}}\frac{|\pr_\tau\psi|^2+|\nab\psi|^2}{r^2}{\Bigg]}\\
\nn&+&\int_{H_R}\Re\Bigg(-\frac{1}{2}\Opw(\mu^2(\R)s_0)\pr_r\psi \ov{\pr_r\psi} + \ov{\psi}\Opw\Big(\sigma_{2,\textbf{BDR}}^{f+y=1-mR^{-1}}+\sigma_{2,\textbf{BDR}}^{z = A\xit}\Big)\psi\Bigg) d\tt dx^1dx^2\\
\nn&-&{\bigg( \int_{\MM_{r_+(1+\dhor'),R}}  \Re\big({T}^{a,m}_{X,E}\psi\bar{\psi} \big) d\Vref+\textbf{BDR}[\psi]\Big|_{r=r_+(1+\dhor')}^{r=R} \bigg)}\\
\nn&\les_R& \left(\int_{\MM_{r_+(1+\dhor'),R}}|\pr_r\psi|^2\right)^{\frac{1}{2}}\left(\int_{\MM_{r_+(1+\dhor'),R}} |\psi|^2\right)^{\frac{1}{2}}
+\int_{\MM_{r_+(1+\dhor'),R}} |\psi|^2
\\
&&+\dhor^2\int_{H_{r_+(1+\dhor')}}|\pr\psi|^2+\bigg|\int_{\MM_{r_+(1+\dhor'),R}}  \Re\left(\bar{\psi}\Opw(\mu\widetilde{S}^{0,2}(\MM))\psi \right) d\Vref\bigg|
\nn\\
&&+\left(\int_{H_{r_+(1+\dhor')}}|\pr\psi|^2+|\psi|^2\right)^{\frac{1}{2}}\left(\int_{H_{r_+(1+\dhor')}}|\psi|^2\right)^{\frac{1}{2}}+\left(\int_{H_{R}}\big(|\pr\psi|^2+|\psi|^2\big)\right)^{\frac{1}{2}}\left(\int_{H_{R}}|\psi|^2\right)^{\frac{1}{2}}.
\eeaa
Now, using again the trace estimate \eqref{eq:precisetraceestimateonr=rplus1plusdhoprimewithsqrt}, we have 
\beaa
&& \left(\int_{H_{r_+(1+\dhor')}}|\pr\psi|^2+|\psi|^2\right)^{\frac{1}{2}}\left(\int_{H_{r_+(1+\dhor')}}|\psi|^2\right)^{\frac{1}{2}}\\
&\les&  \left(\int_{H_{r_+(1+\dhor')}}|\pr\psi|^2\right)^{\frac{1}{2}}\left(\int_{\MM_{r_+(1+\dhor'),R}}\big(|\pr_r\psi|^2+|\psi|^2\big)\right)^{\frac{1}{4}}\left(\int_{\MM_{r_+(1+\dhor'),R}}|\psi|^2\right)^{\frac{1}{4}}\\
&&+\left(\int_{\MM_{r_+(1+\dhor'),R}}\big(|\pr_r\psi|^2+|\psi|^2\big)\right)^{\frac{1}{2}}\left(\int_{\MM_{r_+(1+\dhor'),R}}|\psi|^2\right)^{\frac{1}{2}},
\eeaa
which finally yields
\beaa
\nn&{c\Bigg[}& \int_{\MM_{r_+(1+\dhor'),R}}\frac{\mu^2|\pr_r\psi|^2}{r^2} +\int_{\MM_{r_+(1+\dhor'),10m}}\big(|\Opw(\sigma_{\trap})\psi|^2+|\Opw(x_1)\psi|^2+|\Opw(e)\psi|^2\big)\\
\nn&+&\int_{{\Mntrap_{r_+(1+\dhor'),R}}}\frac{|\pr_\tau\psi|^2+|\nab\psi|^2}{r^2}{\Bigg]}\\
\nn&+&\int_{H_R}\Re\Bigg(-\frac{1}{2}\Opw(\mu^2(\R)s_0)\pr_r\psi \ov{\pr_r\psi} + \ov{\psi}\Opw\Big(\sigma_{2,\textbf{BDR}}^{f+y=1-mR^{-1}}+\sigma_{2,\textbf{BDR}}^{z = A\xit}\Big)\psi\Bigg) d\tt dx^1dx^2\\
\nn&-&{\bigg( \int_{\MM_{r_+(1+\dhor'),R}}  \Re\big({T}^{a,m}_{X,E}\psi\bar{\psi} \big) d\Vref+\textbf{BDR}[\psi]\Big|_{r=r_+(1+\dhor')}^{r=R} \bigg)}\\
\nn&\les_R& \left(\int_{\MM_{r_+(1+\dhor'),R}}|\pr_r\psi|^2\right)^{\frac{1}{2}}\left(\int_{\MM_{r_+(1+\dhor'),R}} |\psi|^2\right)^{\frac{1}{2}}+\int_{\MM_{r_+(1+\dhor'),R}} |\psi|^2\\
&&+\dhor^2\int_{H_{r_+(1+\dhor')}}|\pr\psi|^2+\bigg|\int_{\MM_{r_+(1+\dhor'),R}}  \Re\left(\bar{\psi}\Opw(\mu\widetilde{S}^{0,2}(\MM))\psi \right) d\Vref\bigg|
\nn\\
&&+\left(\int_{H_{r_+(1+\dhor')}}|\pr\psi|^2\right)^{\frac{1}{2}}\left(\int_{\MM_{r_+(1+\dhor'),R}}\big(|\pr_r\psi|^2+|\psi|^2\big)\right)^{\frac{1}{4}}\left(\int_{\MM_{r_+(1+\dhor'),R}}|\psi|^2\right)^{\frac{1}{4}}\nn\\
&&+\left(\int_{H_{R}}\big(|\pr\psi|^2+|\psi|^2\big)\right)^{\frac{1}{2}}\left(\int_{H_{R}}|\psi|^2\right)^{\frac{1}{2}},
\eeaa
as stated in \eqref{eq:microlocal:currentEMF:v0}. This concludes the proof of Proposition \ref{prop:microlocalenergyMorawetzinKerronMMrp1dhorpR}.}
\end{proof}

{\begin{remark}
In view of \eqref{eq:generalformofthePDOmultipliersXandE},  \eqref{eq:definitionofQhQyQfQzintermsofTXEands0s1e0}  and \eqref{eq:finalchoicesofhyfzsymbols}, we have
\bea\lab{eq:propertiesofthesymbolEusedinTeuk}
\bsplit
E=&\Opw(e_0), \qquad e_0\in\widetilde{S}^{0,0}(\MM),\\
e_0 =&\sum_{j=1}^5\chi_j^2\left(\mu h_j+\frac{2\mu r}{r^2+a^2}y_j-\pr_r(\mu y_j)+\frac{2\mu r}{r^2+a^2}f_j -\pr_r(\mu)f_j\right), \qquad 
\end{split}
\eea
and hence, since $h_5$ and $y_5$ vanish in a neighborhood of $r_{\max}$ and $f_5(r_{\max})=0$, we infer the existence of a small enough constant $c>0$ such that 
\bea\lab{eq:definitionofthesymbolx1patXinwidetildeS10:bis}
e_2:=\sqrt{e^2-c\chi_0(r)(e_0\langle\Xi\rangle)^2}, \qquad e_2\in\widetilde{S}^{1,0}(\MM)
\eea
where $\chi_0$ is a smooth cut-off function satisfying $0\leq \chi_0\leq 1$, $\chi_0=1$ for $r\leq 10m$ and $\chi_0=0$ for $r\geq 11m$, and where we have used the property \eqref{eq:defintionofthesymboleasasquarerootofsigma2TXEmsumofsquares:1} of the symbol $e$ given by \eqref{eq:defintionofthesymboleasasquarerootofsigma2TXEmsumofsquares:1:defe}.
\end{remark}}


\subsection{Conditional nondegenerate Morawetz-flux estimate in perturbations of Kerr}
\label{subsect:CondMoraFlux:Kerrperturb}


{In this section, we prove a conditional nondegenerate Morawetz-flux estimate for the wave equation \eqref{eq:scalarwave} in $(\MM, \g)$ with  $\g$ satisfying the assumptions of Section \ref{subsubsect:assumps:perturbedmetric}. For convenience, we first introduce a notation for error terms in the region $\MM_{r_+(1+\dhor'), R}$ where  our microlocal energy-Morawetz are derived.}


\subsubsection{{Notation for error terms in $\MM_{r_+(1+\dhor'), R}$}}


{As our microlocal energy-Morawetz are derived on $\MM_{r_+(1+\dhor'), R}$, where the constants $\dhor'$ and $R$ are introduced in Remark \ref{rmk:choiceofconstantRbymeanvalue}, it is convenient to introduce the following notation $\Gac$ for error terms 
\bea\lab{eq:decaypropertiesofGac:microlocalregion}
|\dk^{\leq 2}\Gac|\les \ep\tau^{-1-\dec}\qquad\textrm{on}\,\,\MM_{r_+(1+\dhor'), R}.
\eea
In particular, we have, in view of \eqref{eq:controloflinearizedinversemetriccoefficients}, Lemma \ref{lemma:controlofmetriccoefficients:bis}, Lemma \ref{lemma:computationofthederiveativeofsrqtg} and Lemma \ref{lemma:spacetimevolumeformusingisochorecoordinates},
\bea\lab{eq:controlofperturbedmetric:microlocalregion}
\widecheck{\g}^{\a\b}=\Gac, \qquad \widecheck{\g}_{\a\b}=\Gac, \qquad N_{det}=\dk^{\leq 1}\Gac, \qquad \widecheck{f_0}=\Gac, \qquad\textrm{on}\,\,\MM_{r_+(1+\dhor'), R},
\eea
where {$\widecheck{f_0}:=f_0-|q|^2$}, which yields the following decomposition for the wave operator
\bea\lab{eq:decompositionwaveoperatorpertrub:microlocalregion}
\square_\g\psi &=& \square_{\gam}\psi+\Gac\pr^2\psi+\dk^{\leq 1}(\Gac)\pr\psi, \quad\textrm{on}\,\,\MM_{r_+(1+\dhor'), R}.
\eea}

{Also, we introduce a notation for all tangential derivatives to $H_r$ 
\bea
\lab{def:tangentialderivativeonHr:Kerrpert}
\prtan:=\pr\setminus\{\pr_r\},
\eea
which together with \eqref{eq:decompositionwaveoperatorpertrub:microlocalregion} and \eqref{eq:inverse:hypercoord} allows to decompose $\pr_r^2\psi$ as follows 
\bea\lab{eq:decompositionofpr2psiinfunctionwaveandprprtan:microlocal}
\left(\frac{\De}{|q|^2}+\Gac\right)\pr_r^2\psi = \square_\g\psi+\Big(O(1)+\Gac\Big)\prtan\pr\psi+\Big(O(1)+\dk^{\leq 1}(\Gac)\Big)\pr\psi\quad\textrm{on}\,\,\MM_{r_+(1+\dhor'), R},
\eea
where we will use the fact that, in view of $\dhor'\geq\dhor$ and $\ep\ll\dhor$,  
\bea\lab{eq:lowerboundDeltaovermodqsquare}
\frac{\De}{|q|^2}+\Gac\gtrsim \dhor -O(\ep)\gtrsim \dhor\quad\textrm{on}\,\,\MM_{r_+(1+\dhor'), R}.
\eea}


\subsubsection{{Control of error terms in microlocal energy-Morawetz estimates}}


{Recall that Section \ref{sec:controlforerrortermsinNRGMorawetz} provides the control of error terms arising in the derivation of standard energy-Morawetz estimates. In this section, we consider the control of error terms arising in the derivation of microlocal energy-Morawetz estimates.}

{We start with the following lemma which will be used to control error terms in the bulk.}
\begin{lemma}\label{lem:gpert:MMtrap}
{Let  $h$ be a scalar function in $\MM_{r_1, r_2}$ supported in $\tau\geq 1$} with $r_+(1+\dhor)\leq r_1<r_2<+\infty$, let $S\in \Opw(\widetilde{S}^{1,1}(\MM))$, and let $\psi$ be supported on $\{\tt\geq 1\}$. Then, {for any $\dec>0$, we have} 
{\bea
\int_{\MM_{{r_1,r_2}}} |h|\abs{S\psi}^2 \les_{r_2, \dec}\|\tt^{1+\dec}h\|_{L^\infty(\MM_{r_1, r_2})}{\EM}[\psi](\Reals).
\eea}
\end{lemma}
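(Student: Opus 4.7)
Since $h$ is supported on $\{\tau \geq 1\}$, the pointwise estimate $|h| \leq \|\tau^{1+\dec}h\|_{L^\infty} \tau^{-1-\dec}$ reduces the claim to showing
\[
\int_{\MM_{r_1,r_2}} (1+\tau)^{-1-\dec}|S\psi|^2 \lesssim_{r_2,\dec} \EM[\psi](\mathbb{R}).
\]
Let $w(\tau)$ be a smooth bounded weight equal to $(1+\tau)^{-(1+\dec)/2}$ on $\{\tau \geq 1\}$ and vanishing on $\{\tau \leq 0\}$, and decompose $S = T_0 \pr_r + T_1$ with $T_0 \in \Opw(\widetilde{S}^{0,0}(\MM))$ and $T_1 \in \Opw(\widetilde{S}^{1,0}(\MM))$, as in Lemma \ref{lemma:WeylquantizationmidexsymbolRn}. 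Since $w$ depends only on $\tau$ (which is tangential to $H_r$), we have $wS\psi = S(w\psi) + [w,S]\psi$, and we treat the two terms separately.

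For the first, apply the Sobolev-type bound of Lemma \ref{lemma:actionmixedsymbolsSobolevspaces:MM} on each $H_r$ and integrate over $r \in [r_1, r_2]$. Using $\pr_r(w\psi) = w\pr_r\psi$ and expanding $\|w\psi\|_{H^1(H_r)}^2$, we obtain
\[
\|S(w\psi)\|_{L^2(\MM_{r_1,r_2})}^2 \lesssim_{r_2} \int_{\MM_{r_1,r_2}}\big(w^2|\pr\psi|^2 + (w^2 + (w')^2)|\psi|^2\big).
\]
Since $r$ is bounded, Hardy's inequality (Lemma \ref{lemma:HardySitauandIIplus}) gives $\int_{\Sigma(\tau) \cap \{r_1 \leq r \leq r_2\}}(|\pr\psi|^2 + |\psi|^2) \lesssim_{r_2} \E[\psi](\tau)$, and integrating in $\tau$ against the integrable weight $w^2 + (w')^2 \lesssim (1+\tau)^{-1-\dec}$ yields the bound $\lesssim_{r_2,\dec}\sup_\tau \E[\psi](\tau) \leq \EM[\psi](\mathbb{R})$.

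The commutator $[w,S]\psi$ is the main difficulty. By Proposition \ref{prop:PDO:MM:Weylquan:mixedoperators}, the principal symbol of $[w,S]$ equals $\tfrac{1}{i}\{w, \sigma(S)\} = -\tfrac{1}{i}\,\pr_\tau w \cdot \pr_{\xi_\tau}\sigma(S) \in \widetilde{S}^{0,1}(\MM)$, so $[w,S]$ is a mixed PDO of order $(0,1)$ whose overall size is pointwise governed by the strongly-decaying factor $|\pr_\tau w| \lesssim (1+\tau)^{-(3+\dec)/2}$, which is even smaller than $w$ itself. The bare commutator estimate of Lemma \ref{lemma:bascicommutatorlemmawithelementaryproof:mixedsymbols:MMcase} only gives $\|[w,S]\psi\|_{L^2_r L^2} \lesssim \|w\|_{W^{2,\infty}}\|\psi\|_{L^2_r H^1}$, whose right-hand side is infinite when $\psi$ has unbounded time support; converting the pointwise decay of $\pr_\tau w$ into a genuinely weighted estimate is the essential technical step. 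My plan is to factor $\sigma([w,S]) = \pr_\tau w \cdot v$ with $v \in \widetilde{S}^{0,1}(\MM)$ and write, up to lower-order commutator corrections (which can be treated by iterating the expansion), $[w,S] = (\pr_\tau w) V + \mathrm{l.o.t.}$ for some $V \in \Opw(\widetilde{S}^{0,1}(\MM))$; bounding $\pr_\tau w$ pointwise by $(1+\tau)^{-(3+\dec)/2}$ and $V\psi$ via Lemma \ref{lemma:actionmixedsymbolsSobolevspaces:MM} then reduces this term to the same kind of weighted integral $\int(1+\tau)^{-3-\dec}(|\pr_r\psi|^2 + |\psi|^2)$ as before, controlled by $\EM[\psi](\mathbb{R})$. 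An alternative route, perhaps technically cleaner, is a dyadic partition $\psi = \sum_k \chi_k \psi$ in $\tau$ (with $\chi_k$ supported on $[2^k,2^{k+2}]$), applying Lemma \ref{lemma:actionmixedsymbolsSobolevspaces:MM} to each piece to gain the factor $w(2^k)^2 \simeq 2^{-k(1+\dec)}$ and summing in $k$, while absorbing the off-diagonal interactions using the rapid off-diagonal kernel decay of a PDO of positive order. Either way the commutator contribution is controlled by $\EM[\psi](\mathbb{R})$, and combining with the estimate for $S(w\psi)$ completes the proof.
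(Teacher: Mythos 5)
Your argument is correct in substance but takes a different route from the paper's. The paper introduces no global weight: it uses a dyadic partition of unity in time, $\sum_{j\ge0}\chi(2^{-j}\tau)^3=1$ on $\{\tau\ge1\}$, writes $\chi(2^{-j}\tau)S\psi=S(\chi(2^{-j}\tau)\psi)+[\chi(2^{-j}\tau),S]\psi$, harvests the decay directly from $|h|\les \|\tau^{1+\dec}h\|_{L^\infty}\,2^{-j(1+\dec)}$ on each slab, bounds $\|S(\chi(2^{-j}\tau)\psi)\|^2_{L^2}\les\|\pr^{\le1}(\chi(2^{-j}\tau)\psi)\|^2_{L^2(\mathrm{slab})}\les 2^j\sup_\tau\E[\psi](\tau)$ via Lemma \ref{lemma:actionmixedsymbolsSobolevspaces:MM}, and sums the series $\sum_j 2^{-j\dec}$; the commutators $[S,\chi(2^{-j}\tau)]\in\Opw(\widetilde{S}^{0,1}(\MM))$ are bounded uniformly in $j$ by $\int_{\MM_{r_1,r_2}}|\pr_r^{\le1}\psi|^2\les_{r_2}\M[\psi](\Reals)$. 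This is precisely your "alternative route," except that the cube-power partition keeps one cutoff outside the square, so there are no off-diagonal interactions to absorb and no kernel-decay argument is needed -- the commutator does all the work. Your primary route (a single weight $w\simeq(1+\tau)^{-(1+\dec)/2}$ commuted through $S$) also closes, and your main-term estimate via Lemma \ref{lemma:actionmixedsymbolsSobolevspaces:MM}, Hardy and the integrability of $w^2$ is fine; but for the commutator you should make the decisive structural point explicit rather than lean on the decay of $\pr_\tau w$: by Proposition \ref{prop:PDO:MM:Weylquan:mixedoperators}, $[w,S]=\Opw\big(\tfrac1i\{w,\sigma(S)\}\big)+\Opw(\widetilde{S}^{-2,1}(\MM))$, and every term your factorization and its iteration produce -- including all remainders -- contains at most one $\pr_r$ and no unweighted $\pr_\tau$ or angular derivative, hence is already controlled by $\int_{\MM_{r_1,r_2}}(|\pr_r\psi|^2+|\psi|^2)\les_{r_2}\M[\psi](\Reals)$ with no time decay at all; the decay of $\pr_\tau w$ is a bonus, not the mechanism, and this is the same observation that makes the paper's commutator terms harmless in the trapped region. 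In terms of trade-offs: the dyadic argument never pushes a decaying weight through the Weyl calculus (the cutoffs have $j$-uniform $W^{k,\infty}$ bounds), which is why the paper's proof is shorter, while your weighted version is a direct weighted $L^2$ estimate that transfers immediately to other integrable time weights at the cost of the extra bookkeeping in the commutator expansion.
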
 

\begin{proof}
Let $\chi(x)\geq 0$ be a smooth cutoff function supported on {$[\frac{1}{2},2]$} and satisfying 
\bea\lab{eq:dyadicdecompositionwithacubicpower}
\sum_{j\geq 0}\big(\chi(2^{-j}x)\big)^3=1, \quad \forall x\geq 1.
\eea
{Then, introducing for convenience the notation {$c_{h,\dec, r_2}:=\|\tt^{1+\dec}h\|_{L^\infty(\MM_{r_1, r_2})}$}, and using the fact that $h$ is supported in $\tau\geq 1$, we have
\beaa
&&\int_{{\MM_{r_1, r_2}}} |h| \abs{S\psi}^2=\sum_{j\geq 0} \int_{{\MM_{r_1, r_2}}} \chi(2^{-j}\tt) |h|  \abs{\chi(2^{-j}\tt) S\psi}^2\\
&\lesssim & c_{h,\dec, {{r_2}}}\sum_{j\geq 0}\frac{1}{2^{j(1+\dec)}}\int_{{\MM_{r_1, r_2}}} \Big(\chi(2^{-j}\tt)  \abs{S(\chi(2^{-j}\tt) \psi)}^2 +\chi(2^{-j}\tt)\abs{[S,\chi(2^{-j}\tt)] \psi}^2\Big)\\
&\lesssim & c_{h,\dec, {r_2}}\left(\sum_{j\geq 0}\frac{1}{2^{j(1+\dec)}}\int_{{\MM_{r_1, r_2}}} \chi(2^{-j}\tt)\abs{S(\chi(2^{-j}\tt) \psi)}^2 +\int_{{\MM_{r_1, r_2}}}  \abs{\pr_r^{\leq 1}\psi}^2\right)\\
&\lesssim_{{r_2}} & c_{h,\dec, {r_2}}\left(\sum_{j\geq 0}\frac{1}{2^{j(1+\dec)}}\|S(\chi(2^{-j}\tt) \psi)\|_{L^2 ({\MM_{r_1, r_2}}(2^{j-1}\leq \tt\leq 2^{j+1}))}^2  +{\M}[\psi](\Reals)\right)\\
&\lesssim_{{r_2}} & c_{h,\dec,{r_2}}\left(\sum_{j\geq 0} \frac{1}{2^{j(1+\dec)}}\|\rd^{\leq 1}(\chi(2^{-j}\tt) \psi)\|_{L^2 ({\MM_{r_1, r_2}}(2^{j-1}\leq \tt\leq 2^{j+1}))}^2  +{\M}[\psi](\Reals)\right)\\
&\lesssim_{{r_2}} & c_{h,\dec, {r_2}}\left(\sum_{j\geq 0} \frac{1}{2^{j(1+\dec)}}2^j\sup_{\tt\in [2^{j-1}, 2^{j+1}]}\E[\psi](\tt) +{\M}[\psi](\Reals)\right)\\
&\lesssim_{{r_2}} & c_{h,\dec, {r_2}}\left(\left(\sum_{j\geq 0} \frac{1}{2^{j\dec}}\right)\sup_{\tt\in\Reals}\E[\psi](\tt) +{\M}[\psi](\Reals)\right)\\
&\lesssim_{{r_2}, \dec}&  c_{h,\dec, {{r_2}}}{\EM}[\psi](\Reals),
\eeaa}
where  we have used  in the third step the fact that $[S, \chi(2^{-j}\tt)]$ is an operator in $\Opw(\widetilde{S}^{0,1}(\MM))$, and {where  we have used Lemma \ref{lemma:actionmixedsymbolsSobolevspaces:MM} in both the third and fifth steps. In view of the fact that $c_{h,\dec, {{r_2}}}=\|\tt^{1+\dec}h\|_{L^\infty({\MM_{r_1, r_2}})}$, this concludes the proof of Lemma \ref{lem:gpert:MMtrap}.}
\end{proof}

{In the next two lemmas, we provide the control of error terms arising in the microlocal energy identity of Lemma \ref{lem:EnerIden:PDO}. We start with the control of the error in the bulk.
\begin{lemma}\lab{lemma:controloferrortermsinTXEmircolocalenergybulk}
The operator ${T}_{X,E}$ introduced in \eqref{def:hatTXE:PDEnerIden} can be decomposed as 
\beaa
\bsplit
{T}_{X,E} =& {T}^{a,m}_{X,E}+\widecheck{T}_{X,E},\\
{T}_{X,E}^{a,m} =:& - \f12 \Big([f_0{\Box}_{\gam}, X]+\big(E |q|^2{\Box}_{\gam} +|q|^2{\Box}_{\gam}E\big)\Big),
\end{split}
\eeaa
where $\widecheck{T}_{X,E}$ satisfies 
\beaa
\left|\int_{\MM_{r_+(1+\dhor'),R}}  \Re\big(\widecheck{T}_{X,E}\psi\bar{\psi} \big) d\Vref\right| &\les& {\ep\int_{\Mntrap}|\square_\g\psi|^2+\ep\int_{\Mtrap}\tau^{-1-\dec}|\square_\g\psi|^2}\\
&&{+\ep\int_{\Mtrap}\left|\Opw(\widetilde{S}^{-1,0}(\MM))\square_\g\psi\right|^2}+\ep\EM[\psi](\Reals).
\eeaa
\end{lemma}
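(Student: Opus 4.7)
\medskip

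\noindent\textbf{Proof plan.} Set $D := f_0\square_\g - |q|^2\square_{\gam}$. By direct subtraction
$$\widecheck{T}_{X,E} = -\tfrac{1}{2}\bigl([D,X] + ED + DE\bigr),$$
and since $f_0\square_\g$ and $|q|^2\square_{\gam}$ are both formally self-adjoint with respect to the flat measure $d\Vref$, so is $D$. Combining this with the skew-adjointness of $X$ and the self-adjointness of $E$, a short integration by parts shows
\begin{equation*}
\int_{\MM_{r_+(1+\dhor'),R}} \Re\bigl(\widecheck{T}_{X,E}\psi\,\bar\psi\bigr)\,d\Vref = -\Re\int_{\MM_{r_+(1+\dhor'),R}} D\psi\,\overline{(X+E)\psi}\,d\Vref.
\end{equation*}
Thus the whole estimate reduces to bounding this single bilinear form, where by \eqref{eq:controlofperturbedmetric:microlocalregion} and \eqref{eq:decompositionwaveoperatorpertrub:microlocalregion} we have the schematic structure $D = \widecheck{f_0}\square_{\gam} + f_0(\square_\g-\square_{\gam})$, a second-order differential operator all of whose coefficients belong to $\Gac$ (and hence enjoy the pointwise $\ep\tau^{-1-\dec}$ decay \eqref{eq:decaypropertiesofGac:microlocalregion}).

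The strategy for bounding $\int D\psi\,\overline{(X+E)\psi}\,d\Vref$ is to move one derivative off $D\psi$, so that the integrand is quadratic in $\pr\psi$ with a $\Gac$-weight, which is then handled by Lemma \ref{lem:gpert:MMtrap}. For the tangential second derivatives $\prtan\pr\psi$ appearing in $D\psi$, I will integrate by parts in the tangential direction against $(X+E)\psi$: since $\pr_\tau,\pr_{x^a}$ are tangent to $H_r$, no boundary terms at $r=r_+(1+\dhor')$ or $r=R$ appear, boundary terms in the angular variables vanish on $\Sphere$, and boundary terms at $\tau=\pm\infty$ vanish because $\psi\equiv 0$ for $\tau\leq 1$ and enjoys the a priori integrability from the Kerr EMF estimate on $\tau\geq\tau_*$ discussed in Section~\ref{subsect:discussionofsquareintegrabilityintimetoapplyPDOop}. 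For the normal second derivative $\pr_r^2\psi$, rather than integrating by parts in $r$ (which would produce boundary terms not allowed by the statement), I will substitute via the elliptic identity \eqref{eq:decompositionofpr2psiinfunctionwaveandprprtan:microlocal}, using the lower bound \eqref{eq:lowerboundDeltaovermodqsquare} to invert the coefficient $\De/|q|^2+\Gac$ on $\MM_{r_+(1+\dhor'),R}$ and thereby replacing $\pr_r^2\psi$ by a combination of $\square_\g\psi$, $\prtan\pr\psi$ (reduced to tangential second derivatives, treated as above), and $\pr\psi$.

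After these manipulations the integrand is a sum of four model types: (i) $h_1|\pr\psi|^2$, (ii) $h_2|\pr\psi||\psi|$, (iii) $h_3|\square_\g\psi||\pr\psi|$, and (iv) $h_3|\square_\g\psi||\psi|$, where $h_1,h_2\in\dk^{\leq 1}(\Gac)$ and $h_3\in\Gac$. Types (i) and (ii) are controlled by Lemma~\ref{lem:gpert:MMtrap} applied with $S\in\{\id,\pr\}\subset\Opw(\widetilde S^{1,1}(\MM))$ and $h=h_j$: since $\|\tau^{1+\dec}h_j\|_{L^\infty(\MM_{r_+(1+\dhor'),R})}\les\ep$ by \eqref{eq:decaypropertiesofGac:microlocalregion}, each such integral is bounded by $\ep\EM[\psi](\Reals)$ (using Cauchy--Schwarz to split (ii) and Hardy for the $|\psi|^2$-piece). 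Types (iii) and (iv) are handled by Young's inequality, which produces $\ep\int_{\MM}|\square_\g\psi|^2$ together with a further integral of type (i) or an $|h_3||\psi|^2$-term absorbed into $\ep\EM[\psi]$.

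The principal technical obstacle, and the only step requiring genuine microlocal care, is that throughout these manipulations we must commute the PDOs $X\in\Opw(\widetilde S^{1,1}(\MM))$ and $E\in\Opw(\widetilde S^{0,0}(\MM))$ past multiplication by the functions in $\Gac$ and their tangential derivatives. These commutators are controlled by Lemma~\ref{lemma:bascicommutatorlemmawithelementaryproof:mixedsymbols:MMcase}, which loses the sharp $\tau$-decay by collapsing to the global $W^{2,\infty}$-norm $\les\ep$. To preserve the $\tau^{-1-\dec}$ decay needed for summability in Lemma~\ref{lem:gpert:MMtrap}, I will instead use the Weyl symbolic calculus of Proposition~\ref{prop:PDO:MM:Weylquan:mixedoperators}: each commutator $[X,\Gac]$, $[E,\Gac]$ is rewritten as $\tfrac{1}{i}\Opw(\{\sigma(\cdot),\Gac\})$ modulo a lower-order remainder, and the Poisson bracket has one weighted derivative of $\Gac$ as a coefficient and therefore remains in $\dk^{\leq 1}(\Gac)$ pointwise. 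This keeps every term in one of the four model forms above and closes the estimate.
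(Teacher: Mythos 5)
Your reduction has two genuine gaps. First, the claimed identity $\int_{\MM_{r_+(1+\dhor'),R}}\Re\big(\widecheck{T}_{X,E}\psi\bar\psi\big)d\Vref=-\Re\int_{\MM_{r_+(1+\dhor'),R}}D\psi\,\overline{(X+E)\psi}\,d\Vref$ is not exact: $X$ contains the radial part $\Opw(i\mu s_0\xi_r)$ and $D$ contains $\Gac\pr_r^2$, so the radial integrations by parts on the finite slab $[r_+(1+\dhor'),R]$ generate flux terms on $H_{r_+(1+\dhor')}$ and $H_R$ (exactly as $\textbf{BDR}[\psi]$ appears in Lemma \ref{lem:EnerIden:PDO}). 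These boundary terms are not harmless; in the paper they are only controlled after substituting $\pr_r^2\psi$ on the slices and invoking the mean-value choices \eqref{de:choiceofdhor'} and \eqref{eq:choiceofRvalue:Kerr}, which is one of the sources of the $\ep\int_{\MM}|\square_\g\psi|^2$ term on the right-hand side. Second, even granting the reduction, your tangential integration by parts produces the term $\int\Gac\,\pr\psi\,\overline{\prtan(X+E)\psi}$, which involves second derivatives of $\psi$ with only the $\ep\tau^{-1-\dec}$ smallness of $\Gac$ in front; it is not one of your four model types and cannot be bounded by $\ep\EM[\psi](\Reals)+\ep\int_{\MM}|\square_\g\psi|^2$. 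The paper avoids this precisely by \emph{not} expanding the commutator: it keeps $[\Gac,X]$ intact (so that it gains one order as an operator) in the term $\widecheck{T}^{(1,1)}_{X,E}$, uses the skew-adjointness of $X$ to write the integral as $\int\pr\psi\,\overline{\prtan([\Gac,X]\psi)}$, and only then estimates.

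The third problem is your proposed fix for the commutators. The Weyl calculus of Proposition \ref{prop:PDO:MM:Weylquan:mixedoperators} applies to symbols in $\widetilde S^{m,N}(\MM)$, i.e.\ smooth symbols with all derivatives bounded; $\Gac$ only satisfies $|\dk^{\leq 2}\Gac|\les\ep\tau^{-1-\dec}$, so writing $[X,\Gac]$ as $\tfrac1i\Opw(\{\sigma(X),\Gac\})$ plus a controlled remainder is not justified (the remainder estimate consumes more derivatives of $\Gac$ than are assumed). Moreover, your stated motivation is a misdiagnosis of the paper's argument: Lemma \ref{lemma:bascicommutatorlemmawithelementaryproof:mixedsymbols:MMcase} does not lose the $\tau$-decay there, because it is applied to the dyadically localized coefficients $\chi(2^{-j}\tau)\Gac$, whose $W^{2,\infty}$ norms are $\les\ep 2^{-j(1+\dec)}$, and the resulting estimates are summed over $j$ against $\sup_\tau\E[\psi](\tau)$ exactly as in Lemma \ref{lem:gpert:MMtrap}. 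So the correct route is the paper's: split $\widecheck{T}_{X,E}$ keeping the commutator $[\Gac,X]$ explicit, substitute $\pr_r^2\psi$ via \eqref{eq:decompositionofpr2psiinfunctionwaveandprprtan:microlocal} inside that commutator term, and close the remaining piece with the dyadic-in-$\tau$ decomposition combined with the rough-coefficient commutator lemma, rather than with the symbolic calculus.
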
}

{\begin{remark}\lab{rmk:whydowederiveapreciseestimatehere:neededTeuk}
In fact, in order to derive energy-Morawetz estimates for the scalar wave equation, the following non-sharp consequence of Lemma \ref{lemma:controloferrortermsinTXEmircolocalenergybulk} is sufficient
\beaa
\left|\int_{\MM_{r_+(1+\dhor'),R}}  \Re\big(\widecheck{T}_{X,E}\psi\bar{\psi} \big) d\Vref\right| &\les& \ep\int_{\MM}|\square_\g\psi|^2+\ep\EM[\psi](\Reals).
\eeaa
The more refined estimate in Lemma \ref{lemma:controloferrortermsinTXEmircolocalenergybulk} will be needed in the derivation of energy-Morawetz estimates for Teukolsky in perturbations of Kerr in \cite{MaSz25}.
\end{remark}}

\begin{proof}
{The proof proceeds in the following steps.}

{\noindent{\bf Step 1.} Recall from \eqref{def:hatTXE:PDEnerIden} that 
\beaa
{T}_{X,E} =- \f12 \Big([f_0{\Box}_{\g}, X]+\big(E f_0{\Box}_{\g} +f_0{\Box}_{\g}E\big)\Big)
\eeaa
so that ${T}_{X,E} = {T}^{a,m}_{X,E}+\widecheck{T}_{X,E}$ where $\widecheck{T}_{X,E}$ is given by 
\beaa
\widecheck{T}_{X,E} = - \f12 \Big([f_0{\Box}_{\g}-|q|^2{\Box}_{\gam}, X]+\big(E (f_0{\Box}_{\g}-|q|^2{\Box}_{\gam}) +(f_0{\Box}_{\g}-|q|^2{\Box}_{\gam})E\big)\Big).
\eeaa
In view of \eqref{eq:controlofperturbedmetric:microlocalregion} and \eqref{eq:decompositionwaveoperatorpertrub:microlocalregion}, we infer
\beaa
\widecheck{T}_{X,E} = - \f12 \Big([\Gac\pr^2+\dk^{\leq 1}(\Gac)\pr, X]+\big(E(\Gac\pr^2+\dk^{\leq 1}(\Gac)\pr) +(\Gac\pr^2+\dk^{\leq 1}(\Gac)\pr)E\big)\Big)
\eeaa
which we decompose as follows 
\beaa
\bsplit
\widecheck{T}_{X,E} =& \widecheck{T}_{X,E}^{(1)}+\widecheck{T}_{X,E}^{(2)}, \qquad \widecheck{T}_{X,E}^{(1)}:=  - \f12 [\Gac, X]\pr^2,\\
\widecheck{T}_{X,E}^{(2)}:=&  - \f12 \Big(\Gac[\pr^2, X]+[\dk^{\leq 1}(\Gac)\pr, X]+\big(E(\Gac\pr^2+\dk^{\leq 1}(\Gac)\pr) +(\Gac\pr^2+\dk^{\leq 1}(\Gac)\pr)E\big)\Big).
\end{split}
\eeaa}

{\noindent{\bf Step 2.} Next, we first focus on $\widecheck{T}_{X,E}^{(2)}$. Using the fact that, with respect to the measure $d\Vref$, $X$ is a skew-adjoint operator in $
\Opw(\widetilde{S}^{1,1}(\MM))$ and $E$ is a self-adjoint operator in $
\Opw(\widetilde{S}^{0,0}(\MM))$, and integrating by parts, we obtain 
\beaa
\int_{\MM_{r_+(1+\dhor'),R}}  \Re\big(\widecheck{T}_{X,E}^{(2)}\psi\bar{\psi} \big) d\Vref &=& \int_{\MM_{r_+(1+\dhor'),R}}\Re\left(\dk^{\leq 2}(\Gac)\Opw(\widetilde{S}^{1,1}(\MM))\psi \ov{\Opw(\widetilde{S}^{1,1}(\MM))\psi}\right)\\
&&+\int_{H_{r_+(1+\dhor')}}\Re\left(\dk^{\leq 1}(\Gac)\Opw(\widetilde{S}^{1,1}(\MM))\psi\ov{\Opw(\widetilde{S}^{0,0}(\MM))\psi}\right)\\
&&+\int_{H_{R}}\Re\left(\dk^{\leq 1}(\Gac)\Opw(\widetilde{S}^{1,1}(\MM))\psi\ov{\Opw(\widetilde{S}^{0,0}(\MM))\psi}\right)
\eeaa 
and hence
\beaa
&&\left|\int_{\MM_{r_+(1+\dhor'),R}}  \Re\big(\widecheck{T}_{X,E}^{(2)}\psi\bar{\psi} \big) d\Vref\right|\\
&\les&\int_{\MM_{r_+(1+\dhor'),R}}|\dk^{\leq 2}(\Gac)||\Opw(\widetilde{S}^{1,1}(\MM))\psi|^2+\int_{H_{r_+(1+\dhor')}}|\dk^{\leq 1}(\Gac)||\Opw(\widetilde{S}^{1,1}(\MM))\psi|^2\\
&&+\int_{H_{R}}|\dk^{\leq 1}(\Gac)||\Opw(\widetilde{S}^{1,1}(\MM))\psi|^2.
\eeaa 
Together with Lemma \ref{lem:gpert:MMtrap},  \eqref{eq:decaypropertiesofGac:microlocalregion} and Lemma \ref{lemma:actionmixedsymbolsSobolevspaces:MM}, we infer 
\beaa
&&\left|\int_{\MM_{r_+(1+\dhor'),R}}  \Re\big(\widecheck{T}_{X,E}^{(2)}\psi\bar{\psi} \big) d\Vref\right|\\ 
&\les_{R, \dec}& \dhor^{-1}\|\tau^{1+\dec}\dk^{\leq 2}(\Gac)\|_{L^\infty(\MM_{r_+(1+\dhor'),R})}\EM[\psi](\Reals)\\
&&+\ep\int_{H_{r_+(1+\dhor')}}|\Opw(\widetilde{S}^{1,1}(\MM))\psi|^2+\ep\int_{H_{R}}|\Opw(\widetilde{S}^{1,1}(\MM))\psi|^2\\
&\les_{R, \dec}& \dhor^{-1}\ep\EM[\psi](\Reals)+\ep\int_{H_{r_+(1+\dhor')}}|\pr^{\leq 1}\psi|^2+{\ep}\int_{H_{R}}|\pr^{\leq 1}\psi|^2.
\eeaa
Together with \eqref{de:choiceofdhor'} and \eqref{eq:choiceofRvalue:Kerr}, this implies\footnote{{Recall that $\ep\ll \dhor$ and $\ep\ll R^{-1}$, see Section \ref{sec:smallnesconstants}, and that, by convention, once $\ep$ appears on the RHS of an inequality, we may simply use the notation $\les$ and forget about the dependance on $\dhor$ and $R$.}}
\beaa
\left|\int_{\MM_{r_+(1+\dhor'),R}}  \Re\big(\widecheck{T}_{X,E}^{(2)}\psi\bar{\psi} \big) d\Vref\right| \les \ep\EM[\psi](\Reals)+\ep\int_{{\Mntrap}}|\square_\g\psi|^2,
\eeaa
and thus
\beaa
\left|\int_{\MM_{r_+(1+\dhor'),R}}  \Re\big(\widecheck{T}_{X,E}\psi\bar{\psi} \big) d\Vref\right| \les \left|\int_{\MM_{r_+(1+\dhor'),R}}  \Re\big(\widecheck{T}_{X,E}^{(1)}\psi\bar{\psi} \big) d\Vref\right|+\ep\EM[\psi](\Reals)+\ep\int_{{\Mntrap}}|\square_\g\psi|^2.
\eeaa}

{\noindent{\bf Step 3.} In view of the above, it remains to consider $\widecheck{T}_{X,E}^{(1)}$. Using \eqref{eq:decompositionofpr2psiinfunctionwaveandprprtan:microlocal} and \eqref{eq:lowerboundDeltaovermodqsquare} to decompose $\pr_r^2$, we rewrite $\widecheck{T}_{X,E}^{(1)}$ as follows 
\beaa
\bsplit
\widecheck{T}_{X,E}^{(1)}=&\widecheck{T}_{X,E}^{(1,1)}+\widecheck{T}_{X,E}^{(1,2)}\quad\textrm{on}\,\,\MM_{r_+(1+\dhor'),R},\\
\widecheck{T}_{X,E}^{(1,1)}:=&  [\Gac, X]\Big\{O(\dhor^{-1})\prtan\pr\psi\Big\},\qquad \widecheck{T}_{X,E}^{(1,2)}:= [\Gac, X]\Big\{O(\dhor^{-1})\square_\g\psi+O(\dhor^{-1})\pr\psi\Big\},
\end{split}
\eeaa
where the notation $\prtan$, introduced in \eqref{def:tangentialderivativeonHr:Kerrpert}, denotes tangential derivatives to $H_r$. We first focus on $\widecheck{T}_{X,E}^{(1,2)}$. Using the fact that, with respect to the measure $d\Vref$, $X$ is a skew-adjoint operator in $
\Opw(\widetilde{S}^{1,1}(\MM))$, we obtain 
\beaa
\bsplit
\int_{\MM_{r_+(1+\dhor'),R}}  \Re\big(\widecheck{T}_{X,E}^{(1,2)}\psi\bar{\psi} \big) d\Vref =& \int_{\MM_{r_+(1+\dhor'),R}}\Re\left(\Gac O(\dhor^{-1})\square_\g\psi\ov{\Opw(\widetilde{S}^{1,1}(\MM))\psi}\right)\\
+&\int_{\MM_{r_+(1+\dhor'),R}}\Re\left(O(\dhor^{-1})\square_\g\psi\ov{{X}(\Gac \psi)}\right)\\
+&\int_{\MM_{r_+(1+\dhor'),R}}\Re\left(O(\dhor^{-1})\Gac\Opw(\widetilde{S}^{1,1}(\MM))\psi \ov{\Opw(\widetilde{S}^{1,1}(\MM))\psi}\right)\\
+&\int_{\MM_{r_+(1+\dhor'),R}}\Re\left(O(\dhor^{-1})\Opw(\widetilde{S}^{1,1}(\MM))\psi\ov{\Opw(\widetilde{S}^{1,1}(\MM))(\Gac\psi)}\right)
\end{split}
\eeaa 
and hence, using Cauchy-Schwarz, Lemma \ref{lem:gpert:MMtrap} and \eqref{eq:decaypropertiesofGac:microlocalregion}, we deduce 
\bea\lab{eq:morepresiceNLestimatetoimprovesquaregpsiterm}
\nn&&\left|\int_{\MM_{r_+(1+\dhor'),R}}  \Re\big(\widecheck{T}_{X,E}^{(1,2)}\psi\bar{\psi} \big) d\Vref\right|\\
\nn&\les& \dhor^{-1}\|\tau^{1+\dec}\Gac\|_{L^\infty(\MM_{r_+(1+\dhor'),R})}\left(\int_{\MM_{r_+(1+\dhor'),R}}{\tau^{-1-\dec}}|\square_\g\psi|^2\right)^{\frac{1}{2}}\left(\EM[\psi](\Reals)\right)^{\frac{1}{2}}\\
\nn&&+\dhor^{-1}\|\tau^{1+\dec}\Gac\|_{L^\infty(\MM_{r_+(1+\dhor'),R})}\EM[\psi](\Reals){+\left|\int_{\MM_{r_+(1+\dhor'),R}}O(\dhor^{-1})\square_\g\psi\ov{X(\Gac \psi)}\right|}\\
\nn&\les& \ep\int_{\MM_{r_+(1+\dhor'),R}}{\tau^{-1-\dec}}|\square_\g\psi|^2+\ep\EM[\psi](\Reals){+\left|\int_{\MM_{r_+(1+\dhor'),R}}O(\dhor^{-1})\square_\g\psi\ov{X(\Gac \psi)}\right|}\\
\nn&\les& {\ep\int_{\Mntrap}|\square_\g\psi|^2+\ep\int_{\Mtrap}\tau^{-1-\dec}|\square_\g\psi|^2+\ep\EM[\psi](\Reals)}\\
&&{+\left|\int_{\MM_{r_+(1+\dhor'),R}}O(\dhor^{-1})\square_\g\psi\ov{X(\Gac \psi)}\right|.}
\eea

{Next, we focus on the last term on the RHS of \eqref{eq:morepresiceNLestimatetoimprovesquaregpsiterm}. First, we have 
\beaa
\left|\int_{\MM_{r_+(1+\dhor'),R}}O(\dhor^{-1})\square_\g\psi\ov{X(\Gac \psi)}\right| &\les& \left|\int_{\Mtrap}O(\dhor^{-1})\square_\g\psi\ov{X(\Gac \psi)}\right|\\
&&+\ep\int_{\Mntrap}|\square_\g\psi|^2+\ep\M[\psi](\Reals).
\eeaa
Next, notice, in view of \eqref{eq:definitionofthesymbolx1patXinwidetildeS10} \eqref{eq:definitionofthesymbolx1patXinwidetildeS10:bis} and the definition of $\S_1$ in \eqref{eq:computationofsymbolmodqsquareboxgam:1}, that $X$ takes the form 
\beaa
X=\Opw(\widetilde{S}^{0,0}(\MM))\mu\pr_r+\Opw(\widetilde{S}^{0,0}(\MM))\pr_\tau+\Opw(\widetilde{S}^{0,0}(\MM))\pr_{\tphi}+\Opw(\widetilde{S}^{0,0}(\MM))
\eeaa
which yields, after taking the adjoint of the operators $\Opw(\widetilde{S}^{0,0}(\MM))$ and using \eqref{eq:decaypropertiesofGac:microlocalregion}, 
\beaa
&&\left|\int_{\Mtrap}O(\dhor^{-1})\square_\g\psi\ov{X(\Gac \psi)}\right|\\ 
&\les& \ep\int_{\Mtrap}\tau^{-1-\dec}|\Opw(\widetilde{S}^{0,0}(\MM))(O(\dhor^{-1})\square_\g\psi)||\pr^{\leq 1}\psi|\\
&\les& \ep\sup_{\tau\in\Reals}\E[\psi]+\ep\int_{\Mtrap}(1+\tau)^{-1-\dec}|\Opw(\widetilde{S}^{0,0}(\MM))(O(\dhor^{-1})\square_\g\psi)|^2\\
&\les& \ep\sup_{\tau\in\Reals}\E[\psi]+\ep\int_{\Mtrap}\left|\Opw(\widetilde{S}^{0,0}(\MM))\left(O(\dhor^{-1})(1+\tau)^{-\frac{1+\dec}{2}}\square_\g\psi\right)\right|^2\\
&&+\ep\int_{\Mtrap}\left|\Opw(\widetilde{S}^{-1,0}(\MM))\square_\g\psi\right|^2\\
&\les& \ep\sup_{\tau\in\Reals}\E[\psi]+\ep\int_{\Mtrap}\tau^{-1-\dec}|\square_\g\psi|^2+\ep\int_{\Mtrap}\left|\Opw(\widetilde{S}^{-1,0}(\MM))\square_\g\psi\right|^2,
\eeaa
and hence
\beaa
&&\left|\int_{\MM_{r_+(1+\dhor'),R}}O(\dhor^{-1})\square_\g\psi\ov{X(\Gac \psi)}\right| \\
&\les& \left|\int_{\Mtrap}O(\dhor^{-1})\square_\g\psi\ov{X(\Gac \psi)}\right|+\ep\int_{\Mntrap}|\square_\g\psi|^2+\ep\M[\psi](\Reals)\\
&\les& \ep\EM[\psi](\Reals)+\ep\int_{\Mntrap}|\square_\g\psi|^2+\ep\int_{\Mtrap}\tau^{-1-\dec}|\square_\g\psi|^2\\
&&+\ep\int_{\Mtrap}\left|\Opw(\widetilde{S}^{-1,0}(\MM))\square_\g\psi\right|^2.
\eeaa
Plugging in \eqref{eq:morepresiceNLestimatetoimprovesquaregpsiterm}, we obtain 
\beaa
\left|\int_{\MM_{r_+(1+\dhor'),R}}  \Re\big(\widecheck{T}_{X,E}^{(1,2)}\psi\bar{\psi} \big) d\Vref\right| &\les& \ep\int_{\Mntrap}|\square_\g\psi|^2+\ep\int_{\Mtrap}\tau^{-1-\dec}|\square_\g\psi|^2\\
&&+\ep\int_{\Mtrap}\left|\Opw(\widetilde{S}^{-1,0}(\MM))\square_\g\psi\right|^2+\ep\EM[\psi](\Reals)
\eeaa}
which implies 
\beaa
&&\left|\int_{\MM_{r_+(1+\dhor'),R}}  \Re\big(\widecheck{T}_{X,E}\psi\bar{\psi} \big) d\Vref\right|\\ 
&\les& \left|\int_{\MM_{r_+(1+\dhor'),R}}  \Re\big(\widecheck{T}_{X,E}^{(1)}\psi\bar{\psi} \big) d\Vref\right|+\ep\EM[\psi](\Reals)
{+\ep\int_{{\Mntrap}}|\square_\g\psi|^2}\\
&\les& \left|\int_{\MM_{r_+(1+\dhor'),R}}  \Re\big(\widecheck{T}_{X,E}^{(1,1)}\psi\bar{\psi} \big) d\Vref\right| {+\ep\int_{\Mntrap}|\square_\g\psi|^2+\ep\int_{\Mtrap}\tau^{-1-\dec}|\square_\g\psi|^2}\\
&&{+\ep\int_{\Mtrap}\left|\Opw(\widetilde{S}^{-1,0}(\MM))\square_\g\psi\right|^2}+\ep\EM[\psi](\Reals).
\eeaa
}

{\noindent{\bf Step 4.} In view of the above, it remains to consider $\widecheck{T}_{X,E}^{(1,1)}$. Using the fact that, with respect to the measure $d\Vref$, $X$ is a skew-adjoint operator, and integrating by parts in $\prtan$, we have
\beaa
\bsplit
\int_{\MM_{r_+(1+\dhor'),R}}  \Re\big(\widecheck{T}_{X,E}^{(1,1)}\psi\bar{\psi} \big) d\Vref =& \int_{\MM_{r_+(1+\dhor'),R}}  \Re\big(\pr\psi\ov{\prtan(O(\dhor^{-1})[\Gac, X]\psi)} \big) d\Vref.
\end{split}
\eeaa
Then, using \eqref{eq:dyadicdecompositionwithacubicpower} and proceeding as in the proof of Lemma \ref{lem:gpert:MMtrap}, we have
\beaa
\bsplit
&\int_{\MM_{r_+(1+\dhor'),R}}  \Re\big(\widecheck{T}_{X,E}^{(1,1)}\psi\bar{\psi} \big) d\Vref \\
=& \sum_{j\geq 0}\int_{\MM_{r_+(1+\dhor'),R}}  \Re\big(\pr\psi\ov{\prtan(O(\dhor^{-1})[\chi^3(2^{-j}\tau)\Gac, X]\psi)} \big) d\Vref\\
=& \sum_{j\geq 0}\int_{\MM_{r_+(1+\dhor'),R}}  \Re\big(\chi(2^{-j}\tau)\pr\psi\ov{\prtan(O(\dhor^{-1})[\chi(2^{-j}\tau)\Gac, X](\chi(2^{-j}\tau)\psi)} \big) d\Vref\\
&+\sum_{j\geq 0}\int_{\MM_{r_+(1+\dhor'),R}}  \Re\big(\pr\psi\ov{\prtan(\chi(2^{-j}\tau))(O(\dhor^{-1})[\chi(2^{-j}\tau)\Gac, X](\chi(2^{-j}\tau)\psi)} \big) d\Vref\\
& +\sum_{j\geq 0}\int_{\MM_{r_+(1+\dhor'),R}}  \Re\big(\pr\psi\ov{\prtan(O(\dhor^{-1})\chi^2(2^{-j}\tau)\Gac[\chi(2^{-j}\tau), X]\psi)} \big) d\Vref\\
& +\sum_{j\geq 0}\int_{\MM_{r_+(1+\dhor'),R}}  \Re\big(\chi^2(2^{-j}\tau)\pr\psi\ov{\prtan(O(\dhor^{-1})[\chi(2^{-j}\tau), X](\chi(2^{-j}\tau)\Gac\psi)} \big) d\Vref\\
& +\sum_{j\geq 0}\int_{\MM_{r_+(1+\dhor'),R}}  \Re\big(\pr\psi\ov{\prtan(O(\dhor^{-1})[[\chi(2^{-j}\tau), X], \chi(2^{-j}\tau)](\chi(2^{-j}\tau)\Gac\psi)} \big) d\Vref\\
\end{split}
\eeaa
and hence, after decomposing $\pr\psi=(\pr_r\psi, \prtan\psi)$ in the last term on the RHS and  integrating by parts in $\prtan$ for the term $\prtan\psi$, 
\beaa
\bsplit
&\left|\int_{\MM_{r_+(1+\dhor'),R}}  \Re\big(\widecheck{T}_{X,E}^{(1,1)}\psi\bar{\psi} \big) d\Vref\right| \\
\les& \sum_{j\geq 0}\left(\int_{\MM_{r_+(1+\dhor'),R}}|\chi(2^{-j}\tau)\pr\psi|^2\right)^{\frac{1}{2}}\left(\int_{\MM_{r_+(1+\dhor'),R}}|\prtan(O(\dhor^{-1})[\chi(2^{-j}\tau)\Gac, X](\chi(2^{-j}\tau)\psi)|^2\right)^{\frac{1}{2}}\\
& +\dhor^{-1}\sum_{j\geq 0}\|\chi(2^{-j}\tau)\dk^{\leq 1}(\Gac)\|_{L^\infty(\MM_{r_+(1+\dhor'),R})}\Bigg(\int_{\MM_{r_+(1+\dhor'),R}}|\Opw(\widetilde{S}^{1,1}(\MM))(\chi(2^{-j}\tau)\psi)|^2\\
&+\M[\psi](\Reals)\Bigg)\\
\les_{R, \dhor}& \sum_{j\geq 0}\left(2^j\sup_{\tau\in[2^{j-1}, 2^{j+1}]}\E[\psi](\tau)\right)^{\frac{1}{2}}\|[\chi(2^{-j}\tau)\Gac, X](\chi(2^{-j}\tau)\psi)\|_{L^2_r([r_+(1+\dhor'),R],H^1(H_r))}\\
&+\ep\sum_{j\geq 0}2^{-j(1+\dec)}\Bigg(\int_{\MM_{r_+(1+\dhor'),R}}|\chi(2^{-j}\tau)\pr^{\leq 1}\psi|^2+\M[\psi](\Reals)\Bigg)\\
\les_{R, \dhor}& \sum_{j\geq 0}2^{\frac{j}{2}}\left(\sup_{\tau\in[2^{j-1}, 2^{j+1}]}\E[\psi](\tau)\right)^{\frac{1}{2}}\|[\chi(2^{-j}\tau)\Gac, X](\chi(2^{-j}\tau)\psi)\|_{L^2_r([r_+(1+\dhor'),R],H^1(H_r))}\\
&+\ep\sum_{j\geq 0}2^{-j(1+\dec)}\Bigg(2^j\sup_{\tau\in[2^{j-1}, 2^{j+1}]}\E[\psi](\tau)+\M[\psi](\Reals)\Bigg)\\
\les_{R, \dhor}& \sum_{j\geq 0}2^{\frac{j}{2}}\left(\sup_{\tau\in[2^{j-1}, 2^{j+1}]}\E[\psi](\tau)\right)^{\frac{1}{2}}\|[\chi(2^{-j}\tau)\Gac, X](\chi(2^{-j}\tau)\psi)\|_{L^2_r([r_+(1+\dhor'),R],H^1(H_r))}\\
+&\ep\EM[\psi](\Reals)
\end{split}
\eeaa
where  we have used the fact that $[X, \chi(2^{-j}\tt)]$ is an operator in $\Opw(\widetilde{S}^{0,1}(\MM))$, the fact that $[[\chi(2^{-j}\tau), X], \chi(2^{-j}\tau)]$ is an operator in $\Opw(\widetilde{S}^{-1,1}(\MM))$, and where  we have used Lemma \ref{lemma:actionmixedsymbolsSobolevspaces:MM}. Next, we rely on Lemma \ref{lemma:bascicommutatorlemmawithelementaryproof:mixedsymbols:MMcase} with $f=\chi(2^{-j}\tau)\Gac$ and $P=X$ to estimate the remaining commutator term on the RHS and obtain 
\beaa
\bsplit
&\left|\int_{\MM_{r_+(1+\dhor'),R}}  \Re\big(\widecheck{T}_{X,E}^{(1,1)}\psi\bar{\psi} \big) d\Vref\right| \\
\les_{R, \dhor}& \sum_{j\geq 0}2^{\frac{j}{2}}\left(\sup_{\tau\in[2^{j-1}, 2^{j+1}]}\E[\psi](\tau)\right)^{\frac{1}{2}}\|\chi(2^{-j}\tau)\Gac\|_{W^{2,+\infty}(\MM_{r_+(1+\dhor'),R})}\\
&\times\|\pr^{\leq 1}(\chi(2^{-j}\tau)\psi)\|_{L^2_r([r_+(1+\dhor'),R],L^2(H_r))}+\ep\EM[\psi](\Reals)\\
\les_{R, \dhor}& \ep\left(\sum_{j\geq 0}2^{\frac{j}{2}}2^{\frac{j}{2}}2^{-j(1+\dec)}\left(\sup_{\tau\in[2^{j-1}, 2^{j+1}]}\E[\psi](\tau)\right)\right)+\ep\EM[\psi](\Reals)\\
\les_{R, \dhor}& \ep\left(1+\sum_{j\geq 0}2^{-j\dec}\right)\EM[\psi](\Reals)\\
\les_{R, \dhor}&\ep\EM[\psi](\Reals)
\end{split}
\eeaa
which yields 
\beaa
&&\left|\int_{\MM_{r_+(1+\dhor'),R}}  \Re\big(\widecheck{T}_{X,E}\psi\bar{\psi} \big) d\Vref\right|\\ 
&\les& \left|\int_{\MM_{r_+(1+\dhor'),R}}  \Re\big(\widecheck{T}_{X,E}^{(1,1)}\psi\bar{\psi} \big) d\Vref\right| {+\ep\int_{\Mntrap}|\square_\g\psi|^2+\ep\int_{\Mtrap}\tau^{-1-\dec}|\square_\g\psi|^2}\\
&&{+\ep\int_{\Mtrap}\left|\Opw(\widetilde{S}^{-1,0}(\MM))\square_\g\psi\right|^2}+\ep\EM[\psi](\Reals)\\
&\les& {\ep\int_{\Mntrap}|\square_\g\psi|^2+\ep\int_{\Mtrap}\tau^{-1-\dec}|\square_\g\psi|^2+\ep\int_{\Mtrap}\left|\Opw(\widetilde{S}^{-1,0}(\MM))\square_\g\psi\right|^2}\\
&&+\ep\EM[\psi](\Reals)
\eeaa
as stated. This concludes the proof of Lemma \ref{lemma:controloferrortermsinTXEmircolocalenergybulk}.}
\end{proof}

{Finally, we provide the control of error terms arising on the boundary in the microlocal energy identity of Lemma \ref{lem:EnerIden:PDO}. 
\begin{lemma}\lab{lemma:controloferrortermsinBDRmircolocalenergyboundary}
The quantity $\textbf{BDR}[\psi]$ introduced in \eqref{def:BDR:PDEnerIden} can be decomposed as 
\beaa
\bsplit
\textbf{BDR}[\psi] =& \textbf{BDR}^{a,m}[\psi]+\widecheck{\textbf{BDR}}[\psi],\\
\textbf{BDR}^{a,m}[\psi] :=&  \frac{1}{2}\int_{H_r}\Re\Big(
\gam^{\a r}\psi \overline{\pr_{\a} (X{+E})\psi}
-\gam^{r\a}\pr_{\a}\psi \overline{(X{+E})\psi}  -\mu \Opw(s_0)\psi \overline{{\Box}_{\gam}\psi}
\Big)|q|^2 d\tt dx^1dx^2,
\end{split}
\eeaa
where $\widecheck{\textbf{BDR}}[\psi]$ satisfies
\beaa
\left|\Big(\widecheck{\textbf{BDR}}[\psi]\Big)_{r=r_+(1+\dhor')}\right|+\left|\Big(\widecheck{\textbf{BDR}}[\psi]\Big)_{r=R}\right| &\les& \ep\int_{{\Mntrap}}|\square_\g\psi|^2+\ep\M[\psi](\Reals).
\eeaa
\end{lemma}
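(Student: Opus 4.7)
The plan is to expand $\widecheck{\textbf{BDR}}[\psi]$ as a sum of error terms each carrying an explicit factor of $\Gac$, and then bound each via Cauchy--Schwarz on $H_r$ combined with dyadic decomposition in $\tau$ and the trace estimates \eqref{de:choiceofdhor'} and \eqref{eq:choiceofRvalue:Kerr}. Inserting the splittings $\g^{\a r} = \gam^{\a r} + \widecheck{\g}^{\a r}$, $f_0 = |q|^2 + \widecheck{f_0}$ and $\square_\g = \square_{\gam} + (\square_\g - \square_{\gam})$ into the definition \eqref{def:BDR:PDEnerIden} produces three families of contributions: (i) boundary bilinear forms of the shape $\int_{H_r}\Gac\,\psi\,\overline{\pr_\a(X+E)\psi}$ and their variant $\int_{H_r}\Gac\,\pr_\a\psi\,\overline{(X+E)\psi}$ arising from $\widecheck{\g}^{\a r}$ and $\widecheck{f_0}$; (ii) the wave-equation discrepancy $\int_{H_r}\mu\Opw(s_0)\psi\,\overline{(\Gac\pr^2\psi + \dk^{\leq 1}(\Gac)\pr\psi)}f_0$ coming from $(\square_\g - \square_{\gam})\psi$ via \eqref{eq:decompositionwaveoperatorpertrub:microlocalregion}; and (iii) the analogous $\widecheck{f_0}$ contribution from the volume-form discrepancy. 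By \eqref{eq:controlofperturbedmetric:microlocalregion} and \eqref{eq:decaypropertiesofGac:microlocalregion} each such $\Gac$ factor is bounded pointwise by $\ep\tau^{-1-\dec}$.

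Next I would handle each bilinear form via Lemma \ref{lemma:actionmixedsymbolsSobolevspaces:MM}: since $X \in \Opw(\widetilde{S}^{1,1}(\MM))$ and $s_0, e_0 \in \widetilde{S}^{0,0}(\MM)$, every PDO-valued factor $\Opw(s_0)\psi$, $X\psi$, $E\psi$ is controlled in $L^2(H_r)$ by $\|\pr^{\leq 1}\psi\|_{L^2(H_r)}$. A dyadic decomposition in $\tau$ on the hypersurface $H_r$, modelled on the proof of Lemma \ref{lem:gpert:MMtrap} but adapted to the three-dimensional slice, then exploits the decay $|\Gac| \lesssim \ep\tau^{-1-\dec}$: the geometric dyadic sum yields an overall factor of $\ep$, while the bilinear factor in $\psi$ reassembles into $\int_{H_r}|\pr^{\leq 1}\psi|^2$.

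The main obstacle will be the contribution (ii) together with the $\pr_r(X+E)\psi$ portion of (i), both of which generate second-derivative terms $\pr_r^2\psi$ that lie beyond the reach of the trace estimates \eqref{de:choiceofdhor'} and \eqref{eq:choiceofRvalue:Kerr}, which supply only $\pr^{\leq 1}\psi$ and $\square_\g\psi$ on $H_r$. To circumvent this, I would invoke the identity \eqref{eq:decompositionofpr2psiinfunctionwaveandprprtan:microlocal} together with the lower bound \eqref{eq:lowerboundDeltaovermodqsquare} to rewrite $\pr_r^2\psi = O(\dhor^{-1})\square_\g\psi + O(\dhor^{-1})\prtan\pr\psi + O(\dhor^{-1})\pr\psi$, with $\prtan$ tangential to $H_r$. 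The $\square_\g\psi$ piece pairs directly with $\Opw(s_0)\psi$ via Cauchy--Schwarz and feeds into $\int_\MM|\square_\g\psi|^2$; the mixed piece $\Gac\prtan\pr\psi$ is integrated by parts in $\prtan$ along $H_r$ so that one derivative lands on the $\Gac\cdot\Opw(s_0)\psi$ factor, where the tangential commutator $[\Opw(s_0),\prtan] \in \Opw(\widetilde{S}^{0,0}(\MM))$ from Proposition \ref{prop:PDO:MM:Weylquan:mixedoperators} is harmless and the extra $\dk^{\leq 1}\Gac$ still satisfies \eqref{eq:decaypropertiesofGac:microlocalregion}.

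Finally, applying the trace estimates \eqref{de:choiceofdhor'} at $r = r_+(1+\dhor')$ and \eqref{eq:choiceofRvalue:Kerr} at $r = R$ reduces every remaining hypersurface integral to a bulk integral of $|\pr^{\leq 1}\psi|^2 + |\square_\g\psi|^2$ over the thin regions $\MM_{r_+(1+\dhor), r_+(1+2\dhor)}$ and $\MM_{Nm, (N+1)m}$, both of which lie in $\Mntrap$. The first-order derivative integrals are therefore absorbed into $\M[\psi](\Reals)$, while the wave-operator integrals are trivially dominated by $\int_\MM|\square_\g\psi|^2$. Assembling the bounds with the overall $\ep$ prefactor coming from $\Gac$ then produces the stated estimate $\ep\int_\MM|\square_\g\psi|^2 + \ep\M[\psi](\Reals)$ at both $r = r_+(1+\dhor')$ and $r = R$.
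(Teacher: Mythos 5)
Your proposal is correct and follows essentially the same route as the paper: the same splitting of $\g^{\a r}$, $f_0$ and $\square_\g$ into Kerr plus $\Gac$-weighted errors, the same conversion of the $\mu\pr_r^2\psi$ contributions into $\square_\g\psi$ plus tangential terms via \eqref{eq:decompositionofpr2psiinfunctionwaveandprprtan:microlocal} and \eqref{eq:lowerboundDeltaovermodqsquare} followed by tangential integration by parts, and the same use of the trace estimates \eqref{de:choiceofdhor'} and \eqref{eq:choiceofRvalue:Kerr} to conclude. The only deviation is that your dyadic decomposition in $\tau$ on $H_r$ is superfluous: since those trace estimates already control the full hypersurface integral of $|\pr^{\leq 1}\psi|^2+|\square_\g\psi|^2$, the paper simply uses the pointwise bound $\|\dk^{\leq 1}\Gac\|_{L^\infty(H_r)}\lesssim\ep$, which delivers the $\ep$ prefactor directly.
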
}

\begin{proof}
{Recall from \eqref{def:BDR:PDEnerIden} that 
\beaa
\textbf{BDR}[\psi] = \frac{1}{2}\int_{H_r}\Re\Big(
\g^{\a r}\psi \overline{\pr_{\a} (X{+E})\psi}
-\g^{r\a}\pr_{\a}\psi \overline{(X{+E})\psi}  -\mu \Opw(s_0)\psi \overline{{\Box}_{\g}\psi}
\Big)f_0 d\tt dx^1dx^2,
\eeaa
so that 
{\beaa
\textbf{BDR}[\psi] = \textbf{BDR}^{a,m}[\psi]+\widecheck{\textbf{BDR}}[\psi]
\eeaa}
where $\widecheck{\textbf{BDR}}[\psi]$ is given by 
\beaa
\widecheck{\textbf{BDR}}[\psi] &:=& \frac{1}{2}\int_{H_r}\Re\Big(
\widecheck{\g}^{\a r}\psi \overline{\pr_{\a} (X{+E})\psi}
-\widecheck{\g}^{r\a}\pr_{\a}\psi \overline{(X{+E})\psi}
\nn\\
&& \qquad\qquad\qquad\qquad\qquad\qquad\qquad -\mu \Opw(s_0)\psi \overline{({\Box}_{\g}-{\Box}_{\gam})\psi}
\Big)|q|^2d\tt dx^1dx^2\\
&&+\frac{1}{2}\int_{H_r}\Re\Big(
\g^{\a r}\psi \overline{\pr_{\a} (X{+E})\psi}
-\g^{r\a}\pr_{\a}\psi \overline{(X{+E})\psi}  -\mu \Opw(s_0)\psi \overline{{\Box}_{\g}\psi}
\Big)\widecheck{f_0} d\tt dx^1dx^2.
\eeaa
In view of \eqref{eq:controlofperturbedmetric:microlocalregion} and \eqref{eq:decompositionwaveoperatorpertrub:microlocalregion}, we infer
\beaa
\widecheck{\textbf{BDR}}[\psi] &=& \int_{H_r}\Re\Big(
\Gac\psi \overline{\pr(X{+E})\psi}+\Gac\pr\psi \overline{(X{+E})\psi}
\nn\\
&& \qquad\qquad\qquad\qquad\qquad\qquad\qquad -\mu \Opw(s_0)\psi \overline{(\Gac\pr^2\psi+\dk^{\leq 1}(\Gac)\pr\psi)}\Big)d\tt dx^1dx^2.
\eeaa}

{Next, we use the fact that 
\beaa
X=\Opw(i\mu s_0\xi_r)+\Opw(\widetilde{S}^{1,0}(\MM))=\mu\Opw(\widetilde{S}^{0,0}(\MM))\pr_r+\Opw(\widetilde{S}^{1,0}(\MM)),
\eeaa 
and the definition \eqref{def:tangentialderivativeonHr:Kerrpert} for $\prtan$ to rewrite $\widecheck{\textbf{BDR}}[\psi]$ as
\beaa
\widecheck{\textbf{BDR}}[\psi] &=& \int_{H_r}\Re\Big(\Gac\psi\ov{\pr_r(\mu\Opw(\widetilde{S}^{0,0}(\MM))\pr_r\psi)}+\Gac\psi\ov{\pr_r(\Opw(\widetilde{S}^{1,0}(\MM))\psi)}\\
&&+\Gac\psi\ov{\prtan(\Opw(\widetilde{S}^{1,1}(\MM)\psi)}+\Gac\pr\psi \overline{\Opw(\widetilde{S}^{1,1}(\MM))\psi}
\nn\\
&& \qquad\qquad\qquad -\mu \Opw(\widetilde{S}^{0,0}(\MM))\psi \overline{(\Gac\pr_r^2\psi+\Gac\prtan\pr\psi+\dk^{\leq 1}(\Gac)\pr\psi)}\Big) d\tt dx^1dx^2\\
&=& \int_{H_r}\Re\Big(\Gac\psi\ov{\Opw(\widetilde{S}^{0,0}(\MM))\mu\pr^2_r\psi}+\Gac\psi\ov{\Opw(\widetilde{S}^{0,0}(\MM))\pr_r\psi}\\
&&+\Gac\psi\ov{\Opw(\widetilde{S}^{1,0}(\MM))\pr_r\psi}+\Gac\psi\ov{\Opw(\widetilde{S}^{1,1}(\MM))\psi}\\
&&+\Gac\psi\ov{\prtan(\Opw(\widetilde{S}^{1,1}(\MM)\psi)}+\Gac\pr\psi \overline{\Opw(\widetilde{S}^{1,1}(\MM))\psi}
\nn\\
&& \qquad\qquad\qquad -\Opw(\widetilde{S}^{0,0}(\MM))\psi \overline{(\Gac\mu\pr_r^2\psi+\Gac\prtan\pr\psi+\dk^{\leq 1}(\Gac)\pr\psi)}\Big) d\tt dx^1dx^2.
\eeaa
We now decompose the two terms involving $\mu\pr_r^2\psi$ using \eqref{eq:decompositionofpr2psiinfunctionwaveandprprtan:microlocal} and \eqref{eq:lowerboundDeltaovermodqsquare} and obtain}\footnote{{Note that the $\mu\pr^2_r\psi$ terms in $\widecheck{\textbf{BDR}}[\psi]$ are explicitly given by
\beaa
\int_{H_r}\Big(\psi\ov{[\widecheck{\g}^{rr}, \Opw(s_0)](\mu\pr_r^2\psi)}|q|^2+\psi\ov{[\g^{rr}, \Opw(s_0)](\mu\pr_r^2\psi)}\widecheck{{f_0}}\Big)d\tau dx^1dx^2. 
\eeaa
In particular, they do not cancel and thus need to be estimated.}}
{\beaa
\widecheck{\textbf{BDR}}[\psi] &=& \int_{H_r}\Re\Big(\Gac\psi\ov{\Opw(\widetilde{S}^{0,0}(\MM))\big(\square_\g\psi+\prtan\pr\psi+\pr\psi\big)}+\Gac\psi\ov{\Opw(\widetilde{S}^{0,0}(\MM))\pr_r\psi}\\
&&+\Gac\psi\ov{\Opw(\widetilde{S}^{1,0}(\MM))\pr_r\psi}+\Gac\psi\ov{\Opw(\widetilde{S}^{1,1}(\MM))\psi}\\
&&+\Gac\psi\ov{\prtan(\Opw(\widetilde{S}^{1,1}(\MM)\psi)}+\Gac\pr\psi \overline{\Opw(\widetilde{S}^{1,1}(\MM))\psi}
\nn\\
&&  -\Opw(\widetilde{S}^{0,0}(\MM))\psi \overline{(\Gac\big(\square_\g\psi+\prtan\pr\psi+\pr\psi\big)+\Gac\prtan\pr\psi+\dk^{\leq 1}(\Gac)\pr\psi)}\Big) d\tt dx^1dx^2.
\eeaa
Integrating by parts the tangential derivatives and PDO, and using Lemma \ref{lemma:actionmixedsymbolsSobolevspaces:MM}, we deduce
\beaa
\left|\widecheck{\textbf{BDR}}[\psi]\right| &\les& \|\dk^{\leq 1}(\Gac)\|_{L^\infty(H_r)}
\int_{H_r}\big(|\Opw(\widetilde{S}^{1,1}(\MM))\psi|^2+|\square_\g\psi|^2+|\pr^{\leq 1}\psi|^2\Big)\\
&\les& \|\dk^{\leq 1}(\Gac)\|_{L^\infty(H_r)}
\int_{H_r}\big(|\square_\g\psi|^2+|\pr^{\leq 1}\psi|^2\Big).
\eeaa
In view of \eqref{de:choiceofdhor'}, \eqref{eq:choiceofRvalue:Kerr} and \eqref{eq:decaypropertiesofGac:microlocalregion}, we infer
\beaa
\left|\Big(\widecheck{\textbf{BDR}}[\psi]\Big)_{r=r_+(1+\dhor')}\right|+\left|\Big(\widecheck{\textbf{BDR}}[\psi]\Big)_{r=R}\right| &\les& \ep\int_{{\Mntrap}}|\square_\g\psi|^2+\ep\M[\psi](\Reals)
\eeaa
as stated. This concludes the proof of Lemma \ref{lemma:controloferrortermsinBDRmircolocalenergyboundary}.} 
\end{proof}


\subsubsection{A conditional degenerate Morawetz-flux estimate {in perturbations of Kerr}}


{We first provide a conditional degenerate Morawetz-flux estimate on $\MM_{r_+(1+\dhor'), R}$.
\begin{proposition}\lab{prop:conditionaldegenerateMorawetzflux:pertKerrrp1pdhorpR}
Assume that $\psi$, $\g$ and $F$ satisfy the same assumptions as in Theorem \ref{th:main:intermediary}, and let $\dhor'$ and $R$ be the constants introduced in Remark \ref{rmk:choiceofconstantRbymeanvalue}. Then, we have  \bea\lab{eq:finalestimateformicrolocalconditiondegenerateMorrp1pdhorpR}
\nn&& {c\Bigg[}\int_{\MM_{r_+(1+\dhor'),R}}\frac{\mu^2|\pr_r\psi|^2}{r^2} +\int_{\MM_{r_+(1+\dhor'),10m}}\big(|\Opw(\sigma_{\trap})\psi|^2+|\Opw(e)\psi|^2\big)\\
\nn&&+\int_{{\Mntrap_{r_+(1+\dhor'),R}}}\frac{|\pr_\tau\psi|^2+|\nab\psi|^2}{r^2}{\Bigg]}\\
\nn&+&\int_{H_R}\Re\Bigg(-\frac{1}{2}\Opw(\mu^2(\R)s_0)\pr_r\psi \ov{\pr_r\psi} + \ov{\psi}\Opw\Big(\sigma_{2,\textbf{BDR}}^{f+y=1-mR^{-1}}+\sigma_{2,\textbf{BDR}}^{z = A\xit}\Big)\psi\Bigg) d\tt dx^1dx^2\\
\nn&\les_R& \int_{\MM_{r_+(1+\dhor'),R}}|F||\pr_\tau\psi|   +\int_{\MM}|F|^2 +\frac{1}{\dhor^6}\int_{\MM_{r_+(1+\dhor'),R}}|\psi|^2+\ep\EM[\psi](\Reals) +\dhor\M[\psi](\Reals)\\
 &&+\left(\int_{H_{R}}\big(|\pr\psi|^2+|\psi|^2\big)\right)^{\frac{1}{2}}\left(\int_{H_{R}}|\psi|^2\right)^{\frac{1}{2}},
\eea
{where $c>0$ is a constant,} where the symbol $\sigma_{\trap}\in\widetilde{S}^{1,0}(\MM)$ is defined in  \eqref{eq:definitionofthesymbolsigmatrap}, and where the symbol $e\in\widetilde{S}^{1,0}(\MM)$ is introduced in  \eqref{eq:defintionofthesymboleasasquarerootofsigma2TXEmsumofsquares:1:defe}.
\end{proposition}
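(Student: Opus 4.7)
The plan is to combine Proposition~\ref{prop:microlocalenergyMorawetzinKerronMMrp1dhorpR}, which provides the Morawetz-flux estimate in Kerr with the Kerr operators $T_{X,E}^{a,m}$ and $\textbf{BDR}^{a,m}$, with Lemmas~\ref{lemma:controloferrortermsinTXEmircolocalenergybulk} and \ref{lemma:controloferrortermsinBDRmircolocalenergyboundary}, which allow passing from the Kerr operators to the perturbed ones at the price of an $\ep$ error, together with Lemma~\ref{lem:EnerIden:PDO} (the microlocal energy identity) applied to the perturbed metric $\g$, which converts $\int T_{X,E}\psi\bar\psi + \textbf{BDR}|_{r_+(1+\dhor')}^R$ into $-\int\Re(F\overline{(X+E)\psi})$ thanks to $\Box_\g\psi=F$. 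First I would apply Proposition~\ref{prop:microlocalenergyMorawetzinKerronMMrp1dhorpR} to $\psi$ to obtain the desired bulk and $H_R$ terms on the LHS, together with $\int_{\MM_{r_+(1+\dhor'),R}}\Re(T_{X,E}^{a,m}\psi\bar\psi)\,d\Vref+\textbf{BDR}^{a,m}[\psi]|_{r=r_+(1+\dhor')}^{r=R}$ and several trace and lower order terms on the RHS.

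Next, I would rewrite the Kerr operators via Lemmas~\ref{lemma:controloferrortermsinTXEmircolocalenergybulk} and \ref{lemma:controloferrortermsinBDRmircolocalenergyboundary} as $T_{X,E}-\widecheck{T}_{X,E}$ and $\textbf{BDR}-\widecheck{\textbf{BDR}}$, and apply Lemma~\ref{lem:EnerIden:PDO} with the perturbed metric $\g$ and $\Box_\g\psi=F$, which reduces the main contribution to $-\int_{\MM_{r_+(1+\dhor'),R}}\Re(F\overline{(X+E)\psi})$ up to errors bounded by $\ep\int_\MM|F|^2+\ep\EM[\psi](\Reals)$. Using the decomposition \eqref{eq:definitionofthesymbolx1patXinwidetildeS10} of $X$, namely
\[
X+E=\Opw(\widetilde{S}^{0,0}(\MM))\,\mu\pr_r+A\pr_\tau+\Opw(x_1)+\Opw(e_0)+\Opw(\widetilde{S}^{0,0}(\MM)),
\]
a piecewise Cauchy--Schwarz and Young argument then produces the term $\int|F||\pr_\tau\psi|$ on the stated RHS (from the $A\pr_\tau$ contribution), while the $\mu\pr_r$, $\Opw(x_1)$ and $\Opw(e)$ contributions are absorbed into the corresponding bulk terms on the LHS of \eqref{eq:finalestimateformicrolocalconditiondegenerateMorrp1pdhorpR} at the price of $\int_\MM|F|^2$, using $|\mu|\les 1$, $r\les R$, and Lemma~\ref{lemma:actionmixedsymbolsSobolevspaces:MM}.

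The remaining trace and lower order terms coming from Proposition~\ref{prop:microlocalenergyMorawetzinKerronMMrp1dhorpR} are then handled as follows. The boundary contribution $\dhor^2\int_{H_{r_+(1+\dhor')}}|\pr\psi|^2$ is traded for $\dhor\,\M[\psi](\Reals)$ via the mean-value choice \eqref{de:choiceofdhor'}; the trace product $(\int_{H_{r_+(1+\dhor')}}|\pr\psi|^2)^{1/2}\bigl(\int(|\pr_r\psi|^2+|\psi|^2)\bigr)^{1/4}(\int|\psi|^2)^{1/4}$ is split by generalized Young's inequality with weights tuned so that the outcome fits $\dhor\,\M[\psi](\Reals)+\la L+\dhor^{-6}\int_{\MM_{r_+(1+\dhor'),R}}|\psi|^2$, where $L$ denotes the bulk terms on the LHS of \eqref{eq:finalestimateformicrolocalconditiondegenerateMorrp1pdhorpR}; the product $(\int|\pr_r\psi|^2)^{1/2}(\int|\psi|^2)^{1/2}$ is controlled using $\mu\gtrsim \dhor$ on $\MM_{r_+(1+\dhor'),R}$ to obtain $\int|\pr_r\psi|^2\les \dhor^{-2}R^2\int\mu^2|\pr_r\psi|^2/r^2$ and then absorbed; and the $H_R$ trace product passes directly to the stated RHS.

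The main obstacle will be the careful handling of the error term $\bigl|\int\bar\psi\,\Opw(\mu\widetilde{S}^{0,2}(\MM))\psi\bigr|$ appearing on the RHS of \eqref{eq:microlocal:currentEMF:v0}, since it is not weighted by $\ep$. The strategy will be to replace $\pr_r^2\psi$ using the wave-equation identity \eqref{eq:decompositionofpr2psiinfunctionwaveandprprtan:microlocal}, which on $\MM_{r_+(1+\dhor'),R}$ turns $\mu\pr_r^2\psi$ into a bounded multiple of $\Box_\g\psi=F$ plus tangential derivatives of $\psi$ (the ratio $\mu/(\Delta/|q|^2+\Gac)$ is $O(1)$ there since $\mu\gtrsim\dhor$), and then to perform integrations by parts both in $r$ and in the tangential directions, combined with Lemma~\ref{lemma:actionmixedsymbolsSobolevspaces:MM}, so that this term is reduced to $\int|F|^2$, $\la L$, and $\dhor^{-c}\int|\psi|^2$ contributions already compatible with the stated RHS. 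The most delicate bookkeeping is ensuring all constants arising from Young's inequality remain compatible with the $\dhor^{-6}\int_{\MM_{r_+(1+\dhor'),R}}|\psi|^2$ weight that is available on the stated RHS.
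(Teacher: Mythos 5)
Your proposal is correct and follows essentially the same route as the paper: apply the Kerr estimate of Proposition \ref{prop:microlocalenergyMorawetzinKerronMMrp1dhorpR}, pass to the perturbed operators via Lemmas \ref{lemma:controloferrortermsinTXEmircolocalenergybulk}--\ref{lemma:controloferrortermsinBDRmircolocalenergyboundary} and the energy identity of Lemma \ref{lem:EnerIden:PDO} with $\Box_\g\psi=F$, decompose $X+E$ to isolate $\int|F||\pr_\tau\psi|$ (absorbing the $\Opw(x_1)$, $\mu\pr_r$ contributions into the coercive terms supplied by Proposition \ref{prop:microlocalenergyMorawetzinKerronMMrp1dhorpR}), treat the $H_{r_+(1+\dhor')}$ traces via \eqref{de:choiceofdhor'} and the $\dhor^{-6}\int|\psi|^2$ Young splitting using $\mu\gtrsim\dhor$, and handle the $\Opw(\mu\widetilde{S}^{0,2}(\MM))$ error through the wave-equation substitution \eqref{eq:decompositionofpr2psiinfunctionwaveandprprtan:microlocal} with the tangential smoothing of the symbol (the paper needs only Cauchy--Schwarz there, no integration by parts in $r$). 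The only cosmetic imprecision is that the absorption of the $\Opw(x_1)$ term uses the $|\Opw(x_1)\psi|^2$ term on the LHS of \eqref{eq:microlocal:currentEMF:v0}, which is subsequently dropped rather than appearing in \eqref{eq:finalestimateformicrolocalconditiondegenerateMorrp1pdhorpR}.
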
}

\begin{proof}
{The proof proceeds in the following steps.}

{\noindent{\bf Step 1.} We apply Lemma \ref{lem:EnerIden:PDO} with $r_1=r_+(1+\dhor')$ and $r_2=R$ which yields
\beaa
-\int_{\MM_{r_+(1+\dhor'),R}}\Re\big(\Box_{\g}\psi\overline{(X+E)\psi}\big)=\int_{\MM_{r_+(1+\dhor'),R}}  \Re\big({T}_{X,E}\psi\bar{\psi} \big) d\Vref+\textbf{BDR}[\psi]\Big|_{r=r_+(1+\dhor')}^{r=R}.
\eeaa
Since $\psi$ satisfies \eqref{eq:scalarwave}, we infer
\beaa
\int_{\MM_{r_+(1+\dhor'),R}}  \Re\big({T}_{X,E}\psi\bar{\psi} \big) d\Vref+\textbf{BDR}[\psi]\Big|_{r=r_+(1+\dhor')}^{r=R} = -\int_{\MM_{r_+(1+\dhor'),R}}\Re\big(F\overline{(X+E)\psi}\big).
\eeaa}

{Next, recall from Lemmas \ref{lemma:controloferrortermsinTXEmircolocalenergybulk} and \ref{lemma:controloferrortermsinBDRmircolocalenergyboundary} that we have
\beaa
&&\left|\int_{\MM_{r_+(1+\dhor'),R}}  \Re\big(T_{X,E}\psi\bar{\psi} \big) d\Vref - \int_{\MM_{r_+(1+\dhor'),R}}  \Re\big(T_{X,E}^{a,m}\psi\bar{\psi} \big) d\Vref\right|\\ 
&\les& {\ep\int_{\Mntrap}|\square_\g\psi|^2+\ep\int_{\Mtrap}\tau^{-1-\dec}|\square_\g\psi|^2}\\
&&{+\ep\int_{\Mtrap}\left|\Opw(\widetilde{S}^{-1,0}(\MM))\square_\g\psi\right|^2}+\ep\EM[\psi](\Reals)
\eeaa
and  
\beaa
&&\left|\Big(\textbf{BDR}[\psi]\Big)_{r=r_+(1+\dhor')}-\Big(\textbf{BDR}^{a,m}[\psi]\Big)_{r=r_+(1+\dhor')}\right|+\left|\Big(\textbf{BDR}[\psi] - \textbf{BDR}^{a,m}[\psi]\Big)_{r=R}\right|\\ 
&\les& \ep\int_{{\Mntrap}}|\square_\g\psi|^2+\ep\M[\psi](\Reals).
\eeaa
As $\psi$ satisfies \eqref{eq:scalarwave}, we deduce from the above
{
\bea\lab{eq:intermediaryestimateformicrolocalconditiondegenerateMorrp1pdhorpR:precis}
\nn&& \int_{\MM_{r_+(1+\dhor'),R}}  \Re\big({T}_{X,E}^{a,m}\psi\bar{\psi} \big) d\Vref+\textbf{BDR}^{a,m}[\psi]\Big|_{r=r_+(1+\dhor')}^{r=R} +\int_{\MM_{r_+(1+\dhor'),R}}\Re\big(F\ov{(X+E)\psi\big)}\\
&\les& \ep\int_{\Mntrap}|F|^2+\ep\int_{\Mtrap}\tau^{-1-\dec}|F|^2+\ep\int_{\Mtrap}\left|\Opw(\widetilde{S}^{-1,0}(\MM))F\right|^2+\ep\EM[\psi](\Reals).
\eea}

\noindent{\bf Step 2.} Next, notice that the {first two terms on the LHS of \eqref{eq:intermediaryestimateformicrolocalconditiondegenerateMorrp1pdhorpR:precis} coincide (up to sign)} with the {last two terms on the LHS} of \eqref{eq:microlocal:currentEMF:v0}. Thus, we infer the following estimate from {\eqref{eq:intermediaryestimateformicrolocalconditiondegenerateMorrp1pdhorpR:precis}} and  \eqref{eq:microlocal:currentEMF:v0}
{\bea\lab{eq:intermediaryestimateformicrolocalconditiondegenerateMorrp1pdhorpR:1}
\nn&c\Bigg[& \int_{\MM_{r_+(1+\dhor'),R}}\frac{\mu^2|\pr_r\psi|^2}{r^2} +\int_{\MM_{r_+(1+\dhor'),10m}}\big(|\Opw(\sigma_{\trap})\psi|^2+|\Opw(x_1)\psi|^2+|\Opw(e)\psi|^2\big)\\
\nn&&+\int_{\Mntrap_{r_+(1+\dhor'),R}}\frac{|\pr_\tau\psi|^2+|\nab\psi|^2}{r^2}\Bigg]\\
\nn&+&\int_{H_R}\Re\Bigg(-\frac{1}{2}\Opw(\mu^2(\R)s_0)\pr_r\psi \ov{\pr_r\psi} + \ov{\psi}\Opw\Big(\sigma_{2,\textbf{BDR}}^{f+y=1-mR^{-1}}+\sigma_{2,\textbf{BDR}}^{z = A\xit}\Big)\psi\Bigg) d\tt dx^1dx^2\\
&&+\int_{\MM_{r_+(1+\dhor'),R}}\Re\big(F\ov{(X+E)\psi}\big)\nn\\
\nn&\les_R& {\ep\int_{\Mntrap}|F|^2+\ep\int_{\Mtrap}\tau^{-1-\dec}|F|^2}{+\ep\int_{\Mtrap}\left|\Opw(\widetilde{S}^{-1,0}(\MM))F\right|^2+\ep\EM[\psi](\Reals)}\nn\\
\nn&&+ \left(\int_{\MM_{r_+(1+\dhor'),R}}|\pr_r\psi|^2\right)^{\frac{1}{2}}\left(\int_{\MM_{r_+(1+\dhor'),R}} |\psi|^2\right)^{\frac{1}{2}}+\int_{\MM_{r_+(1+\dhor'),R}} |\psi|^2\\
&&+\dhor^2\int_{H_{r_+(1+\dhor')}}|\pr\psi|^2+\bigg|\int_{\MM_{r_+(1+\dhor'),R}}  \Re\left(\bar{\psi}\Opw(\mu\widetilde{S}^{0,2}(\MM))\psi \right) d\Vref\bigg|
\nn\\
&&+\left(\int_{H_{r_+(1+\dhor')}}|\pr\psi|^2\right)^{\frac{1}{2}}\left(\int_{\MM_{r_+(1+\dhor'),R}}\big(|\pr_r\psi|^2+|\psi|^2\big)\right)^{\frac{1}{4}}\left(\int_{\MM_{r_+(1+\dhor'),R}}|\psi|^2\right)^{\frac{1}{4}}\nn\\
&&+\left(\int_{H_{R}}\big(|\pr\psi|^2+|\psi|^2\big)\right)^{\frac{1}{2}}\left(\int_{H_{R}}|\psi|^2\right)^{\frac{1}{2}},
\eea}
{where $c>0$ is a constant.}

\noindent{\bf Step 3.} Next, we control the boundary terms on $H_{r_+(1+\dhor')}$ in the RHS of \eqref{eq:intermediaryestimateformicrolocalconditiondegenerateMorrp1pdhorpR:1}. In view of \eqref{de:choiceofdhor'}, using also the fact that $\psi$ satisfies \eqref{eq:scalarwave}, we have
\beaa
&&\dhor^2\int_{H_{r_+(1+\dhor')}}|\pr\psi|^2\\
&&+\left(\int_{H_{r_+(1+\dhor')}}|\pr\psi|^2\right)^{\frac{1}{2}}\left(\int_{\MM_{r_+(1+\dhor'),R}}\big(|\pr_r\psi|^2+|\psi|^2\big)\right)^{\frac{1}{4}}\left(\int_{\MM_{r_+(1+\dhor'),R}}|\psi|^2\right)^{\frac{1}{4}}\\
&\les& \dhor\int_{\MM_{r_+(1+\dhor), r_+(1+2\dhor)}}\big(|{\pr^{\leq 1}}\psi|^2+|\square_\g\psi|^2\big)\\
&&+\left(\frac{1}{\dhor}\int_{\MM_{r_+(1+\dhor), r_+(1+2\dhor)}}\big(|{\pr^{\leq 1}}\psi|^2+|\square_\g\psi|^2\big)\right)^{\frac{1}{2}}\left(\int_{\MM_{r_+(1+\dhor'),R}}\big(|\pr_r\psi|^2+|\psi|^2\big)\right)^{\frac{1}{4}}\\
&&\times\left(\int_{\MM_{r_+(1+\dhor'),R}}|\psi|^2\right)^{\frac{1}{4}}\\
&\les& \dhor\M[\psi](\Reals)+\dhor\int_{{\Mntrap}}|F|^2+\frac{1}{\dhor^2}\left(\int_{\MM_{r_+(1+\dhor'),R}}\big(|\pr_r\psi|^2+|\psi|^2\big)\right)^{\frac{1}{2}}\left(\int_{\MM_{r_+(1+\dhor'),R}}|\psi|^2\right)^{\frac{1}{2}}
\eeaa
which together with \eqref{eq:intermediaryestimateformicrolocalconditiondegenerateMorrp1pdhorpR:1} implies
{\beaa
\nn&& c\Bigg[\int_{\MM_{r_+(1+\dhor'),R}}\frac{\mu^2|\pr_r\psi|^2}{r^2} +\int_{\MM_{r_+(1+\dhor'),10m}}\big(|\Opw(\sigma_{\trap})\psi|^2+|\Opw(x_1)\psi|^2+|\Opw(e)\psi|^2\big)\\
\nn&&+\int_{\Mntrap_{r_+(1+\dhor'),R}}\frac{|\pr_\tau\psi|^2+|\nab\psi|^2}{r^2}\Bigg]\\
\nn&&+\int_{H_R}\Re\Bigg(-\frac{1}{2}\Opw(\mu^2(\R)s_0)\pr_r\psi \ov{\pr_r\psi} + \ov{\psi}\Opw\Big(\sigma_{2,\textbf{BDR}}^{f+y=1-mR^{-1}}+\sigma_{2,\textbf{BDR}}^{z = A\xit}\Big)\psi\Bigg) d\tt dx^1dx^2\\
&&+\int_{\MM_{r_+(1+\dhor'),R}}\Re\big(F\ov{(X+E)\psi}\big)\nn\\
\nn&\les_R& (\ep+\dhor)\int_{\Mntrap}|F|^2+\ep\int_{\Mtrap}\tau^{-1-\dec}|F|^2+\ep\int_{\Mtrap}\left|\Opw(\widetilde{S}^{-1,0}(\MM))F\right|^2\\
&&
+\ep\EM[\psi](\Reals)+\dhor\M[\psi](\Reals)+\int_{\MM_{r_+(1+\dhor'),R}} |\psi|^2\nn\\
&&+\bigg|\int_{\MM_{r_+(1+\dhor'),R}}  \Re\left(\bar{\psi}\Opw(\mu\widetilde{S}^{0,2}(\MM))\psi \right) d\Vref\bigg|
\nn\\
\nn&& +\frac{1}{\dhor^2}\left(\int_{\MM_{r_+(1+\dhor'),R}}\big(|\pr_r\psi|^2+|\psi|^2\big)\right)^{\frac{1}{2}}\left(\int_{\MM_{r_+(1+\dhor'),R}}|\psi|^2\right)^{\frac{1}{2}}\\
&&+\left(\int_{H_{R}}\big(|\pr\psi|^2+|\psi|^2\big)\right)^{\frac{1}{2}}\left(\int_{H_{R}}|\psi|^2\right)^{\frac{1}{2}}
\eeaa}
and hence
{
\bea\lab{eq:intermediaryestimateformicrolocalconditiondegenerateMorrp1pdhorpR:2}
\nn&& c\Bigg[\int_{\MM_{r_+(1+\dhor'),R}}\frac{\mu^2|\pr_r\psi|^2}{r^2} +\int_{\MM_{r_+(1+\dhor'),10m}}\big(|\Opw(\sigma_{\trap})\psi|^2+|\Opw(x_1)\psi|^2+|\Opw(e)\psi|^2\big)\\
\nn&&+\int_{\Mntrap_{r_+(1+\dhor'),R}}\frac{|\pr_\tau\psi|^2+|\nab\psi|^2}{r^2}\Bigg]\\
\nn&&+\int_{H_R}\Re\Bigg(-\frac{1}{2}\Opw(\mu^2(\R)s_0)\pr_r\psi \ov{\pr_r\psi} + \ov{\psi}\Opw\Big(\sigma_{2,\textbf{BDR}}^{f+y=1-mR^{-1}}+\sigma_{2,\textbf{BDR}}^{z = A\xit}\Big)\psi\Bigg) d\tt dx^1dx^2\\
&&+\int_{\MM_{r_+(1+\dhor'),R}}\Re\big(F\ov{(X+E)\psi}\big)\nn\\
\nn&\les_R& (\ep+\dhor)\int_{\Mntrap}|F|^2+\ep\int_{\Mtrap}\tau^{-1-\dec}|F|^2+\ep\int_{\Mtrap}\left|\Opw(\widetilde{S}^{-1,0}(\MM))F\right|^2\\
&&+\ep\EM[\psi](\Reals)+\dhor\M[\psi](\Reals)
+\frac{1}{\dhor^6}\int_{\MM_{r_+(1+\dhor'),R}}|\psi|^2\nn \\
\nn&&+\bigg|\int_{\MM_{r_+(1+\dhor'),R}}  \Re\left(\bar{\psi}\Opw(\mu\widetilde{S}^{0,2}(\MM))\psi \right) d\Vref\bigg|\\
&&+\left(\int_{H_{R}}\big(|\pr\psi|^2+|\psi|^2\big)\right)^{\frac{1}{2}}\left(\int_{H_{R}}|\psi|^2\right)^{\frac{1}{2}}.
\eea}

\noindent{\bf Step 4.} Next, we provide the control of the before to last term in the RHS of \eqref{eq:intermediaryestimateformicrolocalconditiondegenerateMorrp1pdhorpR:2}. We have
\beaa
&&\left|\int_{\MM_{r_+(1+\dhor'),R}}  \Re\left(\bar{\psi}\Opw(\mu\widetilde{S}^{0,2}(\MM))\psi \right) d\Vref\right|\\ 
&\les& \left|\int_{\MM_{r_+(1+\dhor'),R}}  \Re\left(\bar{\psi}\Opw(\widetilde{S}^{-2,0}(\MM))(\mu\pr_r^2\psi) \right) d\Vref\right|\\
&& + \int_{\MM_{r_+(1+\dhor'),R}}|\psi||\Opw(\widetilde{S}^{0,1}(\MM))\psi| d\Vref\\
&\les& \left|\int_{\MM_{r_+(1+\dhor'),R}}  \Re\left(\bar{\psi}\Opw(\widetilde{S}^{-2,0}(\MM))(\mu\pr_r^2\psi) \right) d\Vref\right|\\
&&+\left(\int_{\MM_{r_+(1+\dhor'),R}}|\psi|^2\right)^{\frac{1}{2}}\left(\int_{\MM_{r_+(1+\dhor'),R}}(|\pr_r\psi|^2+|\psi|^2)\right)^{\frac{1}{2}}.
\eeaa
Then, we decompose $\mu\pr_r^2$ using  \eqref{eq:decompositionofpr2psiinfunctionwaveandprprtan:microlocal} and \eqref{eq:lowerboundDeltaovermodqsquare} which yields
\beaa
\bsplit
&\left|\int_{\MM_{r_+(1+\dhor'),R}}  \Re\left(\bar{\psi}\Opw(\widetilde{S}^{-2,0}(\MM))(\mu\pr_r^2\psi) \right) d\Vref\right| \\
\les& \left|\int_{\MM_{r_+(1+\dhor'),R}}  \Re\left(\bar{\psi}\Opw(\widetilde{S}^{-2,0}(\MM))\Big(\square_\g\psi+O(1)\prtan\pr\psi+O(1)\pr\psi\Big) \right) d\Vref\right|\\
\les& \left(\int_{\MM_{r_+(1+\dhor'),R}}|\psi|^2\right)^{\frac{1}{2}}\left(\int_{\MM_{r_+(1+\dhor'),R}}(|{\Opw(\widetilde{S}^{-2,0}(\MM))\square_\g\psi}|^2+|\pr_r\psi|^2+|\psi|^2)\right)^{\frac{1}{2}}.
\end{split}
\eeaa
and hence
\beaa
&&\left|\int_{\MM_{r_+(1+\dhor'),R}}  \Re\left(\bar{\psi}\Opw(\mu\widetilde{S}^{0,2}(\MM))\psi \right) d\Vref\right|\\ 
&\les& \left(\int_{\MM_{r_+(1+\dhor'),R}}|\psi|^2\right)^{\frac{1}{2}}\left(\int_{\MM_{r_+(1+\dhor'),R}}(|{\Opw(\widetilde{S}^{-2,0}(\MM))\square_\g\psi}|^2+|\pr_r\psi|^2+|\psi|^2)\right)^{\frac{1}{2}}.
\eeaa
Plugging in \eqref{eq:intermediaryestimateformicrolocalconditiondegenerateMorrp1pdhorpR:2}, we infer
{\beaa
\nn&& c\Bigg[\int_{\MM_{r_+(1+\dhor'),R}}\frac{\mu^2|\pr_r\psi|^2}{r^2} +\int_{\MM_{r_+(1+\dhor'),10m}}\big(|\Opw(\sigma_{\trap})\psi|^2+|\Opw(x_1)\psi|^2+|\Opw(e)\psi|^2\big)\\
\nn&&+\int_{\Mntrap_{r_+(1+\dhor'),R}}\frac{|\pr_\tau\psi|^2+|\nab\psi|^2}{r^2}\Bigg]\\
\nn&&+\int_{H_R}\Re\Bigg(-\frac{1}{2}\Opw(\mu^2(\R)s_0)\pr_r\psi \ov{\pr_r\psi} + \ov{\psi}\Opw\Big(\sigma_{2,\textbf{BDR}}^{f+y=1-mR^{-1}}+\sigma_{2,\textbf{BDR}}^{z = A\xit}\Big)\psi\Bigg) d\tt dx^1dx^2\\
&&+\int_{\MM_{r_+(1+\dhor'),R}}\Re\big(F\ov{(X+E)\psi}\big)\nn\\
\nn&\les_R& (\ep+\dhor)\int_{\Mntrap}|F|^2+\ep\int_{\Mtrap}\tau^{-1-\dec}|F|^2+\ep\int_{\Mtrap}\left|\Opw(\widetilde{S}^{-1,0}(\MM))F\right|^2\\
&&+\ep\EM[\psi](\Reals)+\dhor\M[\psi](\Reals)
+\frac{1}{\dhor^6}\int_{\MM_{r_+(1+\dhor'),R}}|\psi|^2\nn \\&&+\left(\int_{\MM_{r_+(1+\dhor'),R}}|\psi|^2\right)^{\frac{1}{2}}\left(\int_{\MM_{r_+(1+\dhor'),R}}(|\Opw(\widetilde{S}^{-2,0}(\MM))\square_\g\psi|^2+|\pr_r\psi|^2+|\psi|^2)\right)^{\frac{1}{2}}\\
&&+\left(\int_{H_{R}}\big(|\pr\psi|^2+|\psi|^2\big)\right)^{\frac{1}{2}}\left(\int_{H_{R}}|\psi|^2\right)^{\frac{1}{2}}
\eeaa}
and hence 
{\bea
\lab{eq:intermediaryestimateformicrolocalconditiondegenerateMorrp1pdhorpR:robust}
\nn&& c\Bigg[\int_{\MM_{r_+(1+\dhor'),R}}\frac{\mu^2|\pr_r\psi|^2}{r^2} +\int_{\MM_{r_+(1+\dhor'),10m}}\big(|\Opw(\sigma_{\trap})\psi|^2+|\Opw(x_1)\psi|^2+|\Opw(e)\psi|^2\big)\\
\nn&&+\int_{\Mntrap_{r_+(1+\dhor'),R}}\frac{|\pr_\tau\psi|^2+|\nab\psi|^2}{r^2}\Bigg]\\
\nn&&+\int_{H_R}\Re\Bigg(-\frac{1}{2}\Opw(\mu^2(\R)s_0)\pr_r\psi \ov{\pr_r\psi} + \ov{\psi}\Opw\Big(\sigma_{2,\textbf{BDR}}^{f+y=1-mR^{-1}}+\sigma_{2,\textbf{BDR}}^{z = A\xit}\Big)\psi\Bigg) d\tt dx^1dx^2\\
&&+\int_{\MM_{r_+(1+\dhor'),R}}\Re\big(F\ov{(X+E)\psi}\big)\nn\\
\nn&\les_R& (\ep+\dhor)\int_{\Mntrap}|F|^2+\ep\int_{\Mtrap}\tau^{-1-\dec}|F|^2+(\ep+\dhor^6)\int_{\Mtrap}\left|\Opw(\widetilde{S}^{-1,0}(\MM))F\right|^2\\
&&+\ep\EM[\psi](\Reals)+\dhor\M[\psi](\Reals)
+\frac{1}{\dhor^6}\int_{\MM_{r_+(1+\dhor'),R}}|\psi|^2\nn \\
&&+\left(\int_{H_{R}}\big(|\pr\psi|^2+|\psi|^2\big)\right)^{\frac{1}{2}}\left(\int_{H_{R}}|\psi|^2\right)^{\frac{1}{2}}.
\eea}

{The precise control of $F$ on both sides of  \eqref{eq:intermediaryestimateformicrolocalconditiondegenerateMorrp1pdhorpR:robust} will be needed in the derivation of energy-Morawetz estimates for Teukolsky in \cite{MaSz25}, see also Remark \ref{rmk:whydowederiveapreciseestimatehere:neededTeuk}, while for the derivation of energy-Morawetz estimates for the scalar wave equation, we give below a  non-sharp consequence of \eqref{eq:intermediaryestimateformicrolocalconditiondegenerateMorrp1pdhorpR:robust} by further estimating the terms involving $F$ on both sides. We start by estimating the last  term on the LHS} of \eqref{eq:intermediaryestimateformicrolocalconditiondegenerateMorrp1pdhorpR:robust}. Notice from \eqref{eq:generalformofthePDOmultipliersXandE} and \eqref{eq:definitionofthesymbolx1patXinwidetildeS10} that 
\beaa
X+E=\Opw(\widetilde{S}^{0,0}(\MM))\mu\pr_r+A\pr_\tau+\Opw(x_1)+\Opw(\widetilde{S}^{0,0}(\MM)), \qquad x_1\in\widetilde{S}^{1,0}(\MM),
\eeaa
which together with Lemma \ref{lemma:actionmixedsymbolsSobolevspaces:MM} yields
\beaa
\bsplit
&{\bigg|\int_{\MM_{r_+(1+\dhor'),R}}\Re\big(F\ov{(X+E)\psi}\big)\bigg|}\\
\les&\int_{\MM_{r_+(1+\dhor'),R}}|F||(X+E)\psi| \\
\les& \int_{\MM_{r_+(1+\dhor'),R}}|F||\pr_\tau\psi| +\left(\int_{\MM}|F|^2\right)^{\frac{1}{2}}\Bigg(\int_{\MM_{r_+(1+\dhor'),R}}|\Opw(x_1)\psi|^2\\
&+|\Opw(\widetilde{S}^{0,0}(\MM))\mu\pr_r\psi|^2+|\Opw(\widetilde{S}^{0,0}(\MM))\psi|^2\Bigg)^{\frac{1}{2}}\\
\les&  \int_{\MM_{r_+(1+\dhor'),R}}|F||\pr_\tau\psi| +\left(\int_{\MM}|F|^2\right)^{\frac{1}{2}}\left(\int_{\MM_{r_+(1+\dhor'),R}}|\Opw(x_1)\psi|^2+|\mu\pr_r\psi|^2+|\psi|^2\right)^{\frac{1}{2}}.
\end{split}
\eeaa
{Plugging in \eqref{eq:intermediaryestimateformicrolocalconditiondegenerateMorrp1pdhorpR:robust}, and noticing
\beaa
(\ep+\dhor)\int_{\Mntrap}|F|^2+\ep\int_{\Mtrap}\tau^{-1-\dec}|F|^2
+(\ep+\dhor^6)\int_{\Mtrap}\left|\Opw(\widetilde{S}^{-1,0}(\MM))F\right|^2
\les\int_{\MM}|F|^2,
\eeaa}
we deduce
{\bea\lab{eq:intermediaryestimateformicrolocalconditiondegenerateMorrp1pdhorpR:xo}
\nn&& c\Bigg[\int_{\MM_{r_+(1+\dhor'),R}}\frac{\mu^2|\pr_r\psi|^2}{r^2} +\int_{\MM_{r_+(1+\dhor'),10m}}\big(|\Opw(\sigma_{\trap})\psi|^2+|\Opw(x_1)\psi|^2+|\Opw(e)\psi|^2\big)\\
\nn&&+\int_{\Mntrap_{r_+(1+\dhor'),R}}\frac{|\pr_\tau\psi|^2+|\nab\psi|^2}{r^2}\Bigg]\\
\nn&+&\int_{H_R}\Re\Bigg(-\frac{1}{2}\Opw(\mu^2(\R)s_0)\pr_r\psi \ov{\pr_r\psi} + \ov{\psi}\Opw\Big(\sigma_{2,\textbf{BDR}}^{f+y=1-mR^{-1}}+\sigma_{2,\textbf{BDR}}^{z = A\xit}\Big)\psi\Bigg) d\tt dx^1dx^2\\
\nn&\les_R&  \int_{\MM_{r_+(1+\dhor'),R}}|F||\pr_\tau\psi| +\int_{\MM}|F|^2+\ep\EM[\psi](\Reals)+\dhor\M[\psi](\Reals)
+\frac{1}{\dhor^6}\int_{\MM_{r_+(1+\dhor'),R}}|\psi|^2\nn \\
&&+\left(\int_{H_{R}}\big(|\pr\psi|^2+|\psi|^2\big)\right)^{\frac{1}{2}}\left(\int_{H_{R}}|\psi|^2\right)^{\frac{1}{2}}
\eea}
as stated in \eqref{eq:finalestimateformicrolocalconditiondegenerateMorrp1pdhorpR}. This concludes the proof of Proposition \ref{prop:conditionaldegenerateMorawetzflux:pertKerrrp1pdhorpR}.}
\end{proof}

{We are now ready to prove a conditional degenerate Morawetz-flux estimate on $\MM$.
\begin{proposition}[Conditional degenerate Morawetz-flux estimate]
\lab{prop:conditionaldegenerateMorawetzflux:pertKerrr:MM}
Assume that $\psi$, $\g$ and $F$ satisfy the same assumptions as in Theorem \ref{th:main:intermediary}, and let $\dhor'$ {be} a constant satisfying \eqref{de:choiceofdhor'}. Then, we have 
\bea\lab{eq:globalmicro:degMF:current}
\nn&& \int_{\MM_{r_+(1+\dhor'),10m}}\frac{\mu^2|\pr_r\psi|^2}{r^2} +\int_{\MM_{r_+(1+\dhor'),10m}}\big(|\Opw(\sigma_{\trap})\psi|^2+|\Opw(e)\psi|^2\big)\\
\nn&&+\MF_{r\geq 10m}[\psi](\Reals)\\
\nn&\les& \int_{\MM_{r_+(1+\dhor'),10m}}|F||\pr_\tau\psi| +\int_{\Mntrap}|F|\big(|\pr_r\psi|+r^{-1}|\psi|\big) +\bigg|\int_{\Mntrap}{F\ov{\pr_{\tau}\psi}}\bigg| +\int_{\MM}|F|^2\\
&& +\frac{1}{\dhor^6}\int_{\MM}r^{-4}|\psi|^2 +\ep\EM[\psi](\Reals) +\dhor\M[\psi](\Reals),
\eea
where the symbol $\sigma_{\trap}\in\widetilde{S}^{1,0}(\MM)$ is defined in  \eqref{eq:definitionofthesymbolsigmatrap}, and where the symbol $e\in\widetilde{S}^{1,0}(\MM)$ is introduced in  \eqref{eq:defintionofthesymboleasasquarerootofsigma2TXEmsumofsquares:1:defe}. 
\end{proposition}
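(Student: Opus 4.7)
The strategy is to extend the conditional microlocal Morawetz--flux estimate of Proposition \ref{prop:conditionaldegenerateMorawetzflux:pertKerrrp1pdhorpR}, which is valid only on $\MM_{r_+(1+\dhor'),R}$, to an estimate on all of $\MM$ by gluing in a physical-space Morawetz--flux estimate in the wave zone $\{r\ge R\}$, and then upgrading the $r$-weights using Lemma \ref{lemma:exteriorMorawetzestimates}. The gluing is made possible by the choice \eqref{eq:choicesofsymbols:r=R} of multiplier symbols at $r=R$: in view of \eqref{eq:flux:fixr} applied with $f+y=1-mR^{-1}$, $h=-c'mR^{-2}$ and $z=A\xit$, the microlocal boundary contribution at $H_R$ appearing on the left-hand side of \eqref{eq:finalestimateformicrolocalconditiondegenerateMorrp1pdhorpR} is, modulo symbols of order $\widetilde{S}^{0,0}(\MM)$ and the $\ep$-small perturbation errors controlled by Lemma \ref{lemma:controloferrortermsinBDRmircolocalenergyboundary}, precisely the physical energy flux through $H_R$ of the vector field $X_R:=A\pr_\tau+(1-mR^{-1})\mu\bar\pr_r$, where $\bar\pr_r$ is the Boyer--Lindquist radial derivative.

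Concretely, I would first apply Proposition \ref{prop:conditionaldegenerateMorawetzflux:pertKerrrp1pdhorpR} verbatim on $[r_+(1+\dhor'),R]$. Next, I would run the standard energy--current identity of Lemma \ref{lem:generalenergyidentity:waveeq} in $\MM_{r\ge R}$ with the multiplier extended smoothly as $X:=A\pr_\tau+(1-mr^{-1})\mu\bar\pr_r$ for $r\ge R$ (uniformly timelike for $A$ large, since the ergoregion is confined to $r\le 2m<R$, and with $\pr_r(1-mr^{-1})=mr^{-2}>0$ producing the desired Morawetz positivity in the far zone). This yields a nondegenerate Morawetz bulk with $r^{-2}$ weights on $\{r\ge R\}$, the flux $\F_{\II_+}[\psi](\Reals)$ (recovered as $\tauu\to+\infty$ via Lemmas \ref{lemma:controllinearizedmetric:inducedmetricII+} and \ref{lem:nullnfFluxBdedByEnergy}, exactly as in the proof of Lemma \ref{lemma:justificationboundarytermsflatvolume}), an RHS consisting of the $F$--$\pr\psi$ pairings appearing in \eqref{eq:globalmicro:degMF:current}, perturbation errors of size $\ep$ controlled by Lemmas \ref{lemma:basiclemmaforcontrolNLterms:ter} and \ref{lemma:basiclemmaforcontrolNLterms:bis}, and a boundary contribution at $r=R$ which cancels (up to commutator errors absorbable via Lemma \ref{lemma:controloferrortermsinBDRmircolocalenergyboundary} and the mean-value choice \eqref{eq:choiceofRvalue:Kerr}) the boundary contribution from Proposition \ref{prop:conditionaldegenerateMorawetzflux:pertKerrrp1pdhorpR}.

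To promote the $r^{-2}$-weighted bulk obtained in the far zone to the $r$-weights of $\M[\psi]$ required by $\MF_{r\ge 10m}[\psi](\Reals)$, I would then invoke the improved Morawetz estimate \eqref{eq:exteriorMorawetzestimates:strong} of Lemma \ref{lemma:exteriorMorawetzestimates}, whose right-hand side $\EMF[\psi](\Reals)$ is furnished by the combined estimate of the previous step, together with a trace bound on $H_{11m}$ extracted from the microlocal bulk of Proposition \ref{prop:conditionaldegenerateMorawetzflux:pertKerrrp1pdhorpR} (using \eqref{eq:decompositionofpr2psiinfunctionwaveandprprtan:microlocal} at $r=11m$, where $\mu^2\gtrsim 1$, to trade $\square_\g\psi$-terms for genuine $\pr_r$-derivatives). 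The thin shell $r\in[10m,11m]$ is already controlled by Proposition \ref{prop:conditionaldegenerateMorawetzflux:pertKerrrp1pdhorpR}. The residual error terms are then handled as follows: $\ep\,\EM[\psi](\Reals)$ and $\dhor\,\M[\psi](\Reals)$ are absorbed by the smallness of $\ep$ and $\dhor$; the bulk $\dhor^{-6}\int_{\MM_{r_+(1+\dhor'),R}}|\psi|^2$ is dominated by $\dhor^{-6}\int_\MM r^{-4}|\psi|^2$; and the trace term $(\int_{H_R}(|\pr\psi|^2+|\psi|^2))^{1/2}(\int_{H_R}|\psi|^2)^{1/2}$ is dispatched via \eqref{eq:choiceofRvalue:Kerr} together with the Hardy estimate of Lemma \ref{lemma:HardySitauandIIplus}.

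The main obstacle is the matching of the $H_R$-boundary terms between the microlocal estimate on $[r_+(1+\dhor'),R]$ and the physical estimate on $[R,+\infty)$: one must verify that the PDO $\Opw(\sigma_{2,\textbf{BDR}}^{f+y=1-mR^{-1}}+\sigma_{2,\textbf{BDR}}^{z=A\xit})$ acts on $\psi\vert_{H_R}$, when paired against $\bar\psi$ with the density $d\tau\,dx^1\,dx^2$, as the physical flux of $X_R$ up to quadratic expressions in $\pr^{\le 1}\psi\vert_{H_R}$ which are absorbable via \eqref{eq:choiceofRvalue:Kerr}. This requires careful use of the self-adjointness of $E$ and skew-adjointness of $X$ from Proposition \ref{prop:PDO:MM:Weylquan:mixedoperators}, the formula \eqref{eq:formulaforBDRbracketpsithatwillbeusedagainlaterforboundarytermr=R} for $\textbf{BDR}[\psi]$, and the identification via Lemma \ref{lemma:spacetimevolumeformusingisochorecoordinates} of $d\Vref$ with the physical induced measure on $H_R$; it is precisely this reduction that justifies our freedom to choose the microlocal multiplier symbols to be purely physical at the matching slice $r=R$.
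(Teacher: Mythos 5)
Your overall architecture is the same as the paper's: apply Proposition \ref{prop:conditionaldegenerateMorawetzflux:pertKerrrp1pdhorpR} on $[r_+(1+\dhor'),R]$, run a physical-space Morawetz--flux estimate on $\{r\geq R\}$ with a multiplier of the form $A\pr_\tau+(1-mr^{-1})\mu\bar\pr_r$, match the two boundary contributions at $H_R$ using the fact that the microlocal symbols were chosen purely physical at $r=R$ (via \eqref{eq:formulaforBDRbracketpsithatwillbeusedagainlaterforboundarytermr=R}, the adjointness properties of Proposition \ref{prop:PDO:MM:Weylquan:mixedoperators}, and the mean-value choice \eqref{eq:choiceofRvalue:Kerr}), and then absorb the remaining errors. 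You correctly identify the $H_R$-matching as the crux and name the right tools, even if you do not carry out the cancellation computation that the paper performs explicitly (its estimate \eqref{eq:Difference:boundaryatR}).

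However, your last step contains a genuine flaw. First, a smaller issue: with the radial multiplier alone (no zeroth-order current $w$), the bulk coefficient of the angular derivatives is $-\frac12\pr_r(\mu f)<0$ for $f=1-mr^{-1}$, so $\pr_r(1-mr^{-1})>0$ does \emph{not} by itself produce a coercive far-zone bulk; you must keep $w=2\mu r^{-1}(1-mr^{-1})$ as in the proof of Lemma \ref{lemma:exteriorMorawetzestimates} with $\de=1$, which is exactly what the paper does and which already delivers the full $\M_{r\geq R}[\psi]$ (note $\M[\psi]$ \emph{is} the $r^{-2}$-weighted norm, so no further upgrade is needed). Second, and more seriously, your proposed promotion of the far-zone bulk via \eqref{eq:exteriorMorawetzestimates:strong} is inadmissible at this stage: the right-hand side of that estimate contains $\EMF[\psi](\Reals)$, i.e.\ $\sup_\tau\E[\psi](\tau)$ with an $O(1)$ constant, whereas the target inequality \eqref{eq:globalmicro:degMF:current} only tolerates $\ep\,\EM[\psi](\Reals)$ and $\dhor\,\M[\psi](\Reals)$ on its right-hand side. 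The energy is not controlled here --- it is obtained only later, conditionally, in Proposition \ref{thm:nondeg:EnerandMora:Kerrandpert}, and the two estimates are combined afterwards in Section \ref{subsect:proofoftheorem64}; importing $\EMF[\psi](\Reals)$ now would make the scheme circular and the resulting estimate too weak to serve its purpose. Dropping that detour (and the associated $H_{11m}$ trace manipulation, which is not needed) and instead deriving the far-zone estimate directly with the pair $(X_0+A\pr_\tau,w)$, as in the paper's Step 1, repairs the argument.
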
}

\begin{proof}
{The proof proceeds in the following steps.}

{\noindent{\bf Step 1.} Let $R$ be a constant satisfying \eqref{eq:choiceofRvalue:Kerr} that will be chosen large enough below. Given that we have already derived a conditional degenerate Morawetz-flux estimate on $\MM_{r_+(1+\dhor'), R}$ in Proposition \ref{prop:conditionaldegenerateMorawetzflux:pertKerrrp1pdhorpR}, we now focus on deriving an analog estimate in the region $\MM_{r\geq R}$. To this end, recall} from the proof of Lemma \ref{lemma:exteriorMorawetzestimates} that  with\footnote{This choice for $(X_0, w)$ coincides with the one for $(X, w)$ in Lemma \ref{lemma:exteriorMorawetzestimates} in the particular case {$\de=1$, up to a factor of $2$.}}  
$$
{X_0={2}\mu (1-mr^{-1})\bar{\pr}_r},\qquad \quad w={2}\mu r^{-1} (1-mr^{-1}),
$$
where $\bar{\pr}_r$ is the $r$-coordinate derivative in Boyer--Lindquist coordinates{, we have the following Morawetz} estimate in the region $\MM_{r\geq R}${, for $R\geq 20m$ large enough,} 
\bea
\label{eq:Morawetznearinf:Kerr:1:00}
\nn c\M_{r\geq R}[\psi](\Reals) {- \BB^{X_0}_{r=R}[\psi]}
&\leq & -\int_{\MM_{r\geq R}}\Re\big({\Box}_{\g}\psi \overline{({X_0}+w)\psi}\big)
+O(1)\F_{\II_+}[\psi](\Reals) \\
&+&O(\ep)\EM[\psi](\Reals)+C_R\bigg(\int_{H_{R}}|\pr^{\leq 1}\psi|^2\bigg)^{\frac{1}{2}}\bigg(\int_{H_{R}}|\psi|^2\bigg)^{\frac{1}{2}},
\eea
{where $c>0$, where the boundary term $\BB^{X_0}_r[\psi]$ is given by 
\beaa
\BB^{X_0}_r[\psi] :=\int_{H_r}\QQ_{\a\b}[\psi]X_0^{\b}N_r^\a,
\eeaa
with $N_r$ denoting the unit outward normal to $H_r$, and where we have controlled the boundary terms involving $w$ by $C_R(\int_{H_{R}}|\pr^{\leq 1}\psi|^2)^{\frac{1}{2}}(\int_{H_{R}}|\psi|^2)^{\frac{1}{2}}$. Next, we introduce 
\beaa
X:=X_0  +A\pr_{\tt},
\eeaa 
where $A>2$ is a large constant that has been fixed at the end of Section \ref{sect:GGTR:pseudo}. Together with \eqref{eq:Morawetznearinf:Kerr:1:00}, we infer, for some $c>0$, 
\bea
\label{eq:Morawetznearinf:Kerr:1}
\nn c\M_{r\geq R}[\psi](\Reals) +c\F_{\II_+}[\psi](\Reals) - \BB^X_{r=R}[\psi]
&\leq & -\int_{\MM_{r\geq R}}\Re\big({\Box}_{\g}\psi \overline{(X+w)\psi}\big)+O(\ep)\EM[\psi](\Reals)\\
&&+C_R\bigg(\int_{H_{R}}|\pr^{\leq 1}\psi|^2\bigg)^{\frac{1}{2}}\bigg(\int_{H_{R}}|\psi|^2\bigg)^{\frac{1}{2}},
\eea
where the extra error term generated by ${}^{(\pr_\tau)}\pi\c\QQ[\psi]$ is controlled by $O(\ep)\EM[\psi](\Reals)$ in view of Lemmas \ref{lemma:controlofdeformationtensorsforenergyMorawetz} and \ref{lemma:basiclemmaforcontrolNLterms:ter}, and where 
\bea\lab{eq:boundarytermforMorawetzregionrgeqR}
\BB^X_r[\psi] :=\int_{H_r}\QQ_{\a\b}[\psi]X^{\b}N_r^\a. 
\eea}

\noindent{\bf Step 2.} {Next, we compare the boundary term 
\bea
\BB^{X}_{r=R}[\psi]=\int_{H_R}\QQ_{\a\b}[\psi]X^{\b}N_r^\a,
\eea
 which {appears} on the LHS of \eqref{eq:Morawetznearinf:Kerr:1} and defined in \eqref{eq:boundarytermforMorawetzregionrgeqR}, with the boundary term 
\bea\lab{eq:boundarytermforMorawetzregionrleqR}
\nn\widetilde{\BB}^{X}_{r=R}[\psi] &:=&\int_{H_R}\Re\bigg(-\frac{1}{2}\Opw(\mu^2(\R)s_0)\pr_r\psi \ov{\pr_r\psi} \\
&&\qquad\qquad + \ov{\psi}\Opw\Big(\sigma_{2,\textbf{BDR}}^{f+y=1-mR^{-1}}+\sigma_{2,\textbf{BDR}}^{z = A\xit}\Big)\psi\bigg) d\tt dx^1dx^2,
\eea
which {appears} on the LHS of \eqref{eq:finalestimateformicrolocalconditiondegenerateMorrp1pdhorpR}. }

{Recall from {\eqref{eq:formulaforBDRbracketpsithatwillbeusedagainlaterforboundarytermr=R}} that 
\bea
\lab{eq:microlocalMora:boundarytermatR:diff:Kerr}
\big|\widetilde{\BB}^{X}_{r=R}[\psi] - \textbf{BDR}^X_{r=R}[\psi]\big|\les \bigg(\int_{H_{R}}|\pr^{\leq 1}\psi|^2\bigg)^{\frac{1}{2}}\bigg(\int_{H_{R}}|\psi|^2\bigg)^{\frac{1}{2}},
\eea
where ${\textbf{BDR}}^X_{r=R}[\psi]$ is defined as in \eqref{def:BDR:PDEnerIden} for $r=R$ and $\g=\gam$
\bea
\lab{def:widetildeBDR:X:Kerr}
{\textbf{BDR}}^X_{r=R}[\psi]&=& \frac{1}{2}\int_{H_R}\Re\Big(
\gam^{\a r}\psi \overline{\pr_{\a} X\psi}
-\gam^{r\a}\pr_{\a}\psi \overline{X\psi}
\nn\\
&&\qquad\qquad\qquad\qquad\qquad\qquad -\mu \Opw(s_0)\psi \overline{{\Box}_{\gam}\psi}
\Big)|q|^2 d\tt dx^1dx^2,
\eea
and where we have controlled the terms involving {$E\in\Opw(\widetilde{S}^{0,0}(\MM))$} in \eqref{def:BDR:PDEnerIden} for $r=R$ and $\g=\gam$ by $C_R(\int_{H_{R}}|\pr^{\leq 1}\psi|^2)^{\frac{1}{2}}(\int_{H_{R}}|\psi|^2)^{\frac{1}{2}}$.
In view of {the fact that $\gcheck^{\a\b}=O(\ep)$ thanks to  \eqref{eq:controloflinearizedinversemetriccoefficients}, and the fact that the outward unit normal $N_r$ and the induced metric $g_{H_r}$ on $H_r$ satisfy\footnote{Recall also from Lemma \ref{lemma:isochorecoordinates} that the coordinates $(x^1, x^2)$ are isochore so that $d\mathring{\ga}=dx^1x^2$.}
\beaa
\sqrt{|\det(g_{H_r})|}N^r &=& -\sqrt{\frac{|\det(g_{H_r})|}{\g^{rr}}}\g^{r\a}\pr_\a = -\sqrt{|\det((g_{a,m})_{H_r})|}(1+O(\ep))\Big(\gam^{r\a}\pr_\a+O(\ep)\pr\Big)\\
&=& -(1+O(\ep))|q|^2\Big(\gam^{r\a}\pr_\a+O(\ep)\pr\Big)
 \eeaa
 thanks to \eqref{eq:metricinnormalizedcoordinatesgeneralformula}, \eqref{eq:controloflinearizedinversemetriccoefficients} and \eqref{eq:controloflinearizedmetriccoefficients},} the absolute value of the difference between $\BB^X_{r=R}[\psi]$ {and its corresponding value $\BB^{X, \gam}_{r=R}[\psi]$ in Kerr} is bounded by $C_{R}\ep \int_{H_{R}} |\pr^{\leq 1}\psi|^2$, which together with \eqref{eq:microlocalMora:boundarytermatR:diff:Kerr} yields
\bea
\lab{eq:boundarytermdifference:microlocalMoraandMoranearinf}
\Big|\BB^X_{r=R}[\psi] - \widetilde{\BB}^{X}_{r=R}[\psi]\Big| &\leq &\Big|\BB^{X,\gam}_{r=R}[\psi] - {\textbf{BDR}}^X_{r=R}[\psi]\Big|  \nn\\
&+&C_{R}\ep \int_{H_{R}} |\pr^{\leq 1}\psi|^2+C_R\bigg(\int_{H_{R}}|\pr^{\leq 1}\psi|^2\bigg)^{\frac{1}{2}}\bigg(\int_{H_{R}}|\psi|^2\bigg)^{\frac{1}{2}},
\eea
where
\bea
\lab{def:boundaryrequalsR:EMtensor:larger:Kerr}
\BB^{X,\gam}_{r=R}[\psi] &=& {-\int_{H_R}\Re\bigg(\pr_{\a}\psi\ov{\pr_{\b}\psi}-\frac{1}{2}(\gam)_{\a\b}\gam^{\ga\de}\pr_{\ga}\psi \ov{\pr_{\de}\psi}\bigg)X^{\b}\gam^{r\a} |q|^2d\tau dx^1dx^2}\nn\\
&=&\int_{H_R}\Re\bigg(-\gam^{r\a}\pr_{\a}\psi \ov{X\psi}+\frac{1}{2}{X^r}\gam^{\ga\de}\pr_{\ga}\psi \ov{\pr_{\de}\psi}\bigg)|q|^2d\tau dx^1dx^2.
\eea}

{On $r=R$, the operator $X$ we choose in the region $r\leq R$ is given in view of  \eqref{eq:generalformofthePDOmultipliersXandE}, \eqref{eq:computationofsymbolmodqsquareboxgam:1}, \eqref{eq:definitionofQhQyQfQzintermsofTXEands0s1e0} and \eqref{eq:concludingestimateforsigma2BDRXE:r=R:0} by
\beaa
X &=& \Opw(i \mu s_0 \xi_r) + \Opw\bigg(\frac{is_0 \S_1}{R^2+a^2} + i s_1\bigg)\\
&=& \Opw(s_0)\mu\pr_r + \Opw\bigg(\frac{i2\left(1-\frac{m}{R}\right)\S_1}{R^2+a^2} + i A\xit\bigg)+\Opw(\widetilde{S}^{0,0}(\MM))\\
&=& 2\left(1-\frac{m}{R}\right)\mu\left(\pr_r+\left(-\mu^{-1}+\frac{m^2}{R^2}\right)\pr_\tau -\frac{a}{\De}\pr_{\tphi}\right)+A\pr_\tau +\Opw(\widetilde{S}^{0,0}(\MM))\\
&=& 2\left(1-\frac{m}{R}\right)\mu\ov{\pr}_r +A\pr_\tau +\Opw(\widetilde{S}^{0,0}(\MM))\quad\textrm{on}\,\, \{r=R\},
\eeaa
where we have also used the definition of the normalized coordinates of Lemma \ref{lem:specificchoice:normalizedcoord} in $r\geq 13m$,  \eqref{eq:decompositionofBLprrwrttocoordinatesvectorfieldnormalizedcoord} which is valid in $r\geq 13m$, and the fact that $R\geq 20m$. Thus,} the operator $X$ we choose in region $r\geq R$ coincides in its first-order part with the one of the operator $X$ we choose in region $r\leq R$ in the above discussions. Using the definition \eqref{def:tangentialderivativeonHr:Kerrpert} for $\prtan=\pr\setminus\{\pr_r\}$, we decompose 
{\beaa
X=X_1+X_2+\Opw(\widetilde{S}^{0,0}(\MM)), \quad \text{where}\quad X_1=X^r\pr_r, \quad X_2=X^{\text{tan}}\prtan,
\eeaa}
and hence {obtain} the following decomposition
\beaa
\BB^{X,\gam}_{r=R}[\psi] -  {\textbf{BDR}}^X_{r=R}[\psi] {=} \BB_1+\BB_2 {+\BB_3},
\eeaa
where
\beaa
\BB_1:=\BB^{X_1,\gam}_{r=R}[\psi] -  {\textbf{BDR}}^{X_1}_{r=R}[\psi],\qquad
\BB_2:=\BB^{X_2,\gam}_{r=R}[\psi] -{\textbf{BDR}}^{X_2}_{r=R}[\psi],
\eeaa
and where $\BB_3$ is the contribution {due} to the part in $\Opw(\widetilde{S}^{0,0}(\MM))$ of $X$ which satisfies 
\beaa
|\BB_3|\les_R\bigg(\int_{H_{R}}|\pr^{\leq 1}\psi|^2\bigg)^{\frac{1}{2}}\bigg(\int_{H_{R}}|\psi|^2\bigg)^{\frac{1}{2}}. 
\eeaa

{Next, we} compute $\BB_2$. {We have in view of \eqref{def:widetildeBDR:X:Kerr} and \eqref{def:boundaryrequalsR:EMtensor:larger:Kerr}
\beaa
\BB_2 &=& \BB^{X_2,\gam}_{r=R}[\psi] -  {\textbf{BDR}}^{X_2}_{r=R}[\psi]\\
&=& \int_{H_R}\Re\bigg(-\gam^{r\a}\pr_{\a}\psi \ov{X^{\text{tan}}\prtan\psi}\bigg)|q|^2d\tau dx^1dx^2\\
&& - \frac{1}{2}\int_{H_R}\Re\Big(
\gam^{\a r}\psi \overline{\pr_{\a} X^{\text{tan}}\prtan\psi}
-\gam^{r\a}\pr_{\a}\psi \overline{X^{\text{tan}}\prtan\psi}\Big)|q|^2 d\tt dx^1dx^2\\
&=&  -\frac{1}{2}\int_{H_R}\Re\Big(
\gam^{\a r}\psi \overline{\pr_{\a} X^{\text{tan}}\prtan\psi}
+\gam^{r\a}\pr_{\a}\psi \overline{X^{\text{tan}}\prtan\psi}\Big)|q|^2 d\tt dx^1dx^2.
\eeaa
Then, integrating by parts in $\prtan$ in the first term on the RHS, the higher order  terms, i.e., the ones that are quadratic in $\pr\psi$, cancel and we deduce}
 \beaa
 |\BB_2|\les_{R}\bigg(\int_{H_{R}}|\pr^{\leq 1}\psi|^2\bigg)^{\frac{1}{2}}\bigg(\int_{H_{R}}|\psi|^2\bigg)^{\frac{1}{2}}. 
\eeaa
 
Next, we estimate $\BB_1$. {We have in view of \eqref{def:widetildeBDR:X:Kerr} and \eqref{def:boundaryrequalsR:EMtensor:larger:Kerr}
\beaa
\BB_1 &=& \BB^{X_1,\gam}_{r=R}[\psi] -  {\textbf{BDR}}^{X_1}_{r=R}[\psi]\\
&=& \int_{H_R}\Re\bigg(-\gam^{r\a}\pr_{\a}\psi \ov{X^r\pr_r\psi}+\frac{1}{2}X^r\gam^{\ga\de}\pr_{\ga}\psi \ov{\pr_{\de}\psi}\bigg)|q|^2d\tau dx^1dx^2\\
&& -\frac{1}{2}\int_{H_R}\Re\Big(
\gam^{\a r}\psi \overline{\pr_{\a} X^r\pr_r\psi}
-\gam^{r\a}\pr_{\a}\psi \overline{X^r\pr_r\psi} -\mu \Opw(s_0)\psi \overline{{\Box}_{\gam}\psi}
\Big)|q|^2 d\tt dx^1dx^2\\
&=&  -\frac{1}{2}\int_{H_R}\Re\Big(
\gam^{\a r}\psi \overline{\pr_{\a} X^r\pr_r\psi}
+\gam^{r\a}\pr_{\a}\psi \overline{X^r\pr_r\psi} \\
&&\qquad\qquad\qquad\qquad\qquad - X^r\gam^{\ga\de}\pr_{\ga}\psi \ov{\pr_{\de}\psi} -\mu \Opw(s_0)\psi \overline{{\Box}_{\gam}\psi}
\Big)|q|^2 d\tt dx^1dx^2.
\eeaa
Also, recall that $s_0=2(1-mR^{-1})$ on $r=R$ so that $\mu\Opw(s_0)=\mu s_0=X^r$ and hence
\beaa
-2\BB_1 &=& \int_{H_R}\Re\Big(
\gam^{\a r}\psi \overline{\pr_{\a} X^r\pr_r\psi}
+\gam^{r\a}\pr_{\a}\psi \overline{X^r\pr_r\psi} \\
&&\qquad\qquad\qquad\qquad\qquad - X^r\gam^{\ga\de}\pr_{\ga}\psi \ov{\pr_{\de}\psi} -X^r\psi \overline{{\Box}_{\gam}\psi}
\Big)|q|^2 d\tt dx^1dx^2\\
&=& \int_{H_R}X^r\Re\Big(
\gam^{\a r}\psi \overline{\pr_{\a} \pr_r\psi}
+\gam^{r\a}\pr_{\a}\psi \overline{\pr_r\psi}  - \gam^{\ga\de}\pr_{\ga}\psi \ov{\pr_{\de}\psi} -\psi \overline{\gam^{\ga\de}\pr_\ga\pr_\de\psi}
\Big)|q|^2 d\tt dx^1dx^2\\
&&+O_R(1)\bigg(\int_{H_{R}}|\pr^{\leq 1}\psi|^2\bigg)^{\frac{1}{2}}\bigg(\int_{H_{R}}|\psi|^2\bigg)^{\frac{1}{2}}\\
&=& \int_{H_R}X^r\Re\Big(
\gam^{rr}\psi \overline{\pr_r\pr_r\psi}
+\gam^{rr}\pr_r\psi \overline{\pr_r\psi}  - \gam^{rr}\pr_r\psi \ov{\pr_r\psi} -\psi \overline{\gam^{rr}\pr_r^2\psi}
\Big)|q|^2 d\tt dx^1dx^2\\
&+&\int_{H_R}X^r\Re\Big(
\gam^{\tan r}\psi \overline{\pr_{\tan} \pr_r\psi}
+\gam^{r\tan}\pr_{\tan}\psi \overline{\pr_r\psi}  - 2\gam^{r\tan}\pr_{\tan}\psi\ov{\pr_r\psi}  -2\psi\overline{\gam^{r\tan}\pr_{\tan}\pr_r\psi} \\
&&- \gam^{\tan\tan}\pr_{\tan}\psi\ov{\pr_{\tan}\psi}  -\psi \overline{\gam^{\tan\tan}\pr_{\tan}\pr_{\tan}\psi}
\Big)|q|^2 d\tt dx^1dx^2\\
&&+O_R(1)\bigg(\int_{H_{R}}|\pr^{\leq 1}\psi|^2\bigg)^{\frac{1}{2}}\bigg(\int_{H_{R}}|\psi|^2\bigg)^{\frac{1}{2}}\\
&=& \int_{H_R}X^r\Re\Big(
\gam^{\tan r}\psi \overline{\pr_{\tan} \pr_r\psi}
+\gam^{r\tan}\pr_{\tan}\psi \overline{\pr_r\psi}  - 2\gam^{r\tan}\pr_{\tan}\psi\ov{\pr_r\psi}  -2\psi\overline{\gam^{r\tan}\pr_{\tan}\pr_r\psi} \\
&&- \gam^{\tan\tan}\pr_{\tan}\psi\ov{\pr_{\tan}\psi}  -\psi \overline{\gam^{\tan\tan}\pr_{\tan}\pr_{\tan}\psi}
\Big)|q|^2 d\tt dx^1dx^2\\
&&+O_R(1)\bigg(\int_{H_{R}}|\pr^{\leq 1}\psi|^2\bigg)^{\frac{1}{2}}\bigg(\int_{H_{R}}|\psi|^2\bigg)^{\frac{1}{2}}.
\eeaa
Finally, integrating by parts once in $\prtan$ in the first, the fourth and the last term of the RHS, the higher order  terms, i.e., the ones that are quadratic in $\pr\psi$, cancel and we deduce}
\beaa
|\BB_1|\les_{R}\bigg(\int_{H_{R}}|\pr^{\leq 1}\psi|^2\bigg)^{\frac{1}{2}}\bigg(\int_{H_{R}}|\psi|^2\bigg)^{\frac{1}{2}}. 
\eeaa
 
{In view of the above two estimates for $\BB_3$, $\BB_2$ and $\BB_1$, we infer
 \beaa
\Big|\BB^{X,\gam}_{r=R}[\psi] -  {\textbf{BDR}}^X_{r=R}[\psi]\Big|\les_{R} \bigg(\int_{H_{R}}|\pr^{\leq 1}\psi|^2\bigg)^{\frac{1}{2}}\bigg(\int_{H_{R}}|\psi|^2\bigg)^{\frac{1}{2}},
\eeaa
which together with \eqref{eq:boundarytermdifference:microlocalMoraandMoranearinf} implies
\bea
\label{eq:Difference:boundaryatR}
\Big|\BB^X_{r=R}[\psi] - \widetilde{\BB}^{X}_{r=R}[\psi]\Big| \les_{R} \left(\int_{H_{R}}\big(|\pr\psi|^2+|\psi|^2\big)\right)^{\frac{1}{2}}\left(\int_{H_{R}}|\psi|^2\right)^{\frac{1}{2}}+\ep\int_{H_R}|\pr^{\leq 1}\psi|^2.
\eea}

\noindent{\bf Step 3.} Now, we add \eqref{eq:Morawetznearinf:Kerr:1} and {\eqref{eq:intermediaryestimateformicrolocalconditiondegenerateMorrp1pdhorpR:robust}}, and rely on the comparison \eqref{eq:Difference:boundaryatR} of the boundary terms at $r=R$. We obtain, for a solution $\psi$ to \eqref{eq:scalarwave},
{\beaa
\nn&& \int_{\MM_{r_+(1+\dhor'),10m}}\frac{\mu^2|\pr_r\psi|^2}{r^2} +\int_{\MM_{r_+(1+\dhor'),10m}}{\big(|\Opw(\sigma_{\trap})\psi|^2+|\Opw(x_1)\psi|^2+|\Opw(e)\psi|^2\big)}\\
\nn&&+\MF_{r\geq 10m}[\psi](\Reals)\\
\nn&\les_R& \left|\int_{\MM_{r_+(1+\dhor'),R}}\Re\Big(F\ov{(X+E)\psi}\Big)+\int_{\MM_{r\geq R}}{\Re\Big(F\ov{(X+w)\psi}\Big)}\right| \\
&&+(\ep+\dhor)\int_{\Mntrap}|F|^2+\ep\int_{\Mtrap}\tau^{-1-\dec}|F|^2+(\ep+\dhor^6)\int_{\Mtrap}\left|\Opw(\widetilde{S}^{-1,0}(\MM))F\right|^2\nn \\
&&+\ep\EM[\psi](\Reals)+\dhor\M[\psi](\Reals)
+\frac{1}{\dhor^6}\int_{\MM_{r_+(1+\dhor'),R}}|\psi|^2\nn\\
&&+\left(\int_{H_{R}}\big(|\pr\psi|^2+|\psi|^2\big)\right)^{\frac{1}{2}}\left(\int_{H_{R}}|\psi|^2\right)^{\frac{1}{2}}+\ep\int_{H_R}|\pr^{\leq 1}\psi|^2.
\eeaa}
Together with the fact that $R$ satisfies \eqref{eq:choiceofRvalue:Kerr}, we infer
{\beaa
\nn&& \int_{\MM_{r_+(1+\dhor'),10m}}\frac{\mu^2|\pr_r\psi|^2}{r^2} +\int_{\MM_{r_+(1+\dhor'),10m}}{\big(|\Opw(\sigma_{\trap})\psi|^2+|\Opw(x_1)\psi|^2+|\Opw(e)\psi|^2\big)}\\
\nn&&+\MF_{r\geq 10m}[\psi](\Reals)\\
\nn&\les_R& \left|\int_{\MM_{r_+(1+\dhor'),R}}\Re\Big(F\ov{(X+E)\psi}\Big)+\int_{\MM_{r\geq R}}{\Re\Big(F\ov{(X+w)\psi}\Big)}\right| \\
&&+(\ep+\dhor)\int_{\Mntrap}|F|^2+\ep\int_{\Mtrap}\tau^{-1-\dec}|F|^2+(\ep+\dhor^6)\int_{\Mtrap}\left|\Opw(\widetilde{S}^{-1,0}(\MM))F\right|^2\nn \\
&&+\ep\EM[\psi](\Reals)+\dhor\M[\psi](\Reals)
+\frac{1}{\dhor^6}\int_{\MM_{r_+(1+\dhor'),R}}|\psi|^2\nn\\
&&+\left(\int_{\MM_{R,R+m}}\big(|\pr^{\leq 1}\psi|^2+|F|^2\big)\right)^{\frac{3}{4}}\left(\int_{\MM_{R,R+m}}|\psi|^2\right)^{\frac{1}{4}} +\ep\int_{\MM_{R,R+m}}\big(|\pr^{\leq 1}\psi|^2+|F|^2\big)\nn\\
\nn&\les_R& \left|\int_{\MM_{r_+(1+\dhor'),R}}\Re\Big(F\ov{(X+E)\psi}\Big)+\int_{\MM_{r\geq R}}{\Re\Big(F\ov{(X+w)\psi}\Big)}\right| \\
&&+(\ep+\dhor)\int_{\Mntrap}|F|^2+\ep\int_{\Mtrap}\tau^{-1-\dec}|F|^2+(\ep+\dhor^6)\int_{\Mtrap}\left|\Opw(\widetilde{S}^{-1,0}(\MM))F\right|^2\nn \\
&&+\ep\EM[\psi](\Reals)+\dhor\M[\psi](\Reals)
+\frac{1}{\dhor^6}\int_{\MM}r^{-4}|\psi|^2,
\eeaa}
where we have also used in the first step the following trace estimate 
\beaa
 \int_{H_R}|\psi|^2
 \les \left(\int_{\MM_{R, R+{m}}}\big(|\pr_r\psi|^2+|\psi|^2\big)\right)^{\frac{1}{2}}\left(\int_{\MM_{R, R+{m}}}|\psi|^2\right)^{\frac{1}{2}}.
\eeaa
{Now, since $E$ is self-adjoint, $E=\Opw(\widetilde{S}^{0,0}(\MM))$ and $w=O(r^{-1})$, we have
\beaa
&& \left|\int_{\MM_{r_+(1+\dhor'),R}}\Re\Big(F\ov{(X+E)\psi}\Big)+\int_{\MM_{r\geq R}}{\Re\Big(F\ov{(X+w)\psi}\Big)}\right|\\
&\les& \left|\int_{\MM_{r_+(1+\dhor'),+\infty}}\Re\Big(F\ov{X\psi}\Big)\right|+\int_{\MM_{r_+(1+\dhor'),R}}|EF||\psi|+\int_{\MM_{r\geq R}}r^{-1}|F||\psi|\\
&\les_R& \left|\int_{\MM_{r_+(1+\dhor'),+\infty}}\Re\Big(F\ov{X\psi}\Big)\right|+\left(\int_{\Mtrap}|EF|^2+\int_{\Mntrap}|F|^2\right)^{\frac{1}{2}}\left(\int_{\MM}r^{-4}|\psi|^2\right)^{\frac{1}{2}}\\
&&+\int_{\Mntrap}r^{-1}|F||\psi|
\eeaa
and hence
\bea
\lab{eq:globalmicro:degMF:current:robust}
\nn&& \int_{\MM_{r_+(1+\dhor'),10m}}\frac{\mu^2|\pr_r\psi|^2}{r^2} +\int_{\MM_{r_+(1+\dhor'),10m}}{\big(|\Opw(\sigma_{\trap})\psi|^2+|\Opw(x_1)\psi|^2+|\Opw(e)\psi|^2\big)}\\
\nn&&+\MF_{r\geq 10m}[\psi](\Reals)\\
\nn&\les& \left|\int_{\MM_{r_+(1+\dhor'),+\infty}}\Re\Big(F\ov{X\psi}\Big)\right|+\left(\int_{\Mtrap}|EF|^2+\int_{\Mntrap}|F|^2\right)^{\frac{1}{2}}\left(\int_{\MM}r^{-4}|\psi|^2\right)^{\frac{1}{2}}\\
\nn&&+\int_{\Mntrap}r^{-1}|F||\psi| +(\ep+\dhor)\int_{\Mntrap}|F|^2+\ep\int_{\Mtrap}\tau^{-1-\dec}|F|^2\\
\nn&&+(\ep+\dhor^6)\int_{\Mtrap}\left|\Opw(\widetilde{S}^{-1,0}(\MM))F\right|^2+\ep\EM[\psi](\Reals)+\dhor\M[\psi](\Reals)\\
&&+\frac{1}{\dhor^6}\int_{\MM}r^{-4}|\psi|^2,
\eea
where we have compressed the dependence on $R$ in $\les_{R}$ in the second step as $R\geq 20m$ has been fixed large enough (only depending on $m$) in order to derive \eqref{eq:Morawetznearinf:Kerr:1:00}.}

{The precise control of $F$ on both sides of  \eqref{eq:globalmicro:degMF:current:robust} will be needed in the derivation of energy-Morawetz estimates for Teukolsky in \cite{MaSz25},  while for the derivation of energy-Morawetz estimates for the scalar wave equation, we can further estimate the terms involving $F$ and show below a non-sharp consequence of \eqref{eq:globalmicro:degMF:current:robust}. Since we have 
\beaa
&&\left|\int_{\MM_{r_+(1+\dhor'),+\infty}}\Re\Big(F\ov{X\psi}\Big)\right|+\left(\int_{\Mtrap}|EF|^2+\int_{\Mntrap}|F|^2\right)^{\frac{1}{2}}\left(\int_{\MM}r^{-4}|\psi|^2\right)^{\frac{1}{2}}\\
\nn&&+\int_{\Mntrap}r^{-1}|F||\psi| +(\ep+\dhor)\int_{\Mntrap}|F|^2+\ep\int_{\Mtrap}\tau^{-1-\dec}|F|^2\\
\nn&&+(\ep+\dhor^6)\int_{\Mtrap}\left|\Opw(\widetilde{S}^{-1,0}(\MM))F\right|^2\\
&\les& \int_{\MM_{r_+(1+\dhor'),10m}}|F||\pr_\tau\psi| +\int_{\Mntrap}|F|\big(|\pr_r\psi|+r^{-1}|\psi|\big) +\bigg|\int_{\Mntrap}{F\ov{\pr_{\tau}\psi}}\bigg| +\int_{\MM}|F|^2\\
&& +\left(\int_{\MM}|F|^2\right)^{\frac{1}{2}}\left(\int_{\MM_{r_+(1+\dhor'),10m}}|\Opw(x_1)\psi|^2+|\mu\pr_r\psi|^2+\M_{r\geq 10m}[\psi](\Reals)\right)^{\frac{1}{2}}\\
&&+\int_{\MM}r^{-4}|\psi|^2,
\eeaa
where we used in particular the fact that $E=\Opw(\widetilde{S}^{0,0}(\MM))$ and 
\beaa
X=\Opw(\widetilde{S}^{0,0}(\MM))\mu\pr_r+A\pr_\tau+\Opw(x_1)+\Opw(\widetilde{S}^{0,0}(\MM)), \qquad x_1\in\widetilde{S}^{1,0}(\MM),
\eeaa
in view of \eqref{eq:generalformofthePDOmultipliersXandE} and \eqref{eq:definitionofthesymbolx1patXinwidetildeS10}, we infer from \eqref{eq:globalmicro:degMF:current:robust} that}
\beaa
\nn&& \int_{\MM_{r_+(1+\dhor'),10m}}\frac{\mu^2|\pr_r\psi|^2}{r^2} +\int_{\MM_{r_+(1+\dhor'),10m}}\big(|\Opw(\sigma_{\trap})\psi|^2+|\Opw(e)\psi|^2\big)\\
\nn&&+\MF_{r\geq 10m}[\psi](\Reals)\\
\nn&\les& \int_{\MM_{r_+(1+\dhor'),10m}}|F||\pr_\tau\psi| +\int_{\Mntrap}|F|\big(|\pr_r\psi|+r^{-1}|\psi|\big) +\bigg|\int_{\Mntrap}{F\ov{\pr_{\tau}\psi}}\bigg| +\int_{\MM}|F|^2\\
&& +\frac{1}{\dhor^6}\int_{\MM}r^{-4}|\psi|^2 +\ep\EM[\psi](\Reals) +\dhor\M[\psi](\Reals)
\eeaa
as stated in \eqref{eq:globalmicro:degMF:current}. This concludes the proof of Proposition \ref{prop:conditionaldegenerateMorawetzflux:pertKerrr:MM}.
\end{proof}


\subsubsection{A conditional nondegenerate Morawetz-flux estimate}
\lab{subsubsect:nondegenerateMFesti:Kerr}

 
Next, we upgrade the {conditional degenerate Morawetz-flux estimate of Proposition \ref{prop:conditionaldegenerateMorawetzflux:pertKerrr:MM} to a conditional} nondegenerate Morawetz-flux estimate by {making use of the} redshift estimate. 
\begin{proposition}[Conditional nondegenerate Morawetz-flux estimate]
\label{prop:nondeg:Morawetz}
Assuming that $\psi$, $\g$ and $F$ satisfy the same assumptions as in Theorem \ref{th:main:intermediary},  we have the following conditional nondegenerate Morawetz-flux estimate
{\bea\lab{prop:eq:nondeg:Morawetz} 
\nn&& \sup_{\tau\in\mathbb{R}}\E_{r\leq r_+(1+\dred)}[\psi](\tau)+{\widetilde{\MF}[\psi]}\\
\nn&\les& \int_{\Mtrap}|F||\pr_\tau\psi| +\int_{\Mntrap}|F|\big(|\pr_r\psi|+r^{-1}|\psi|\big) +\bigg|\int_{\Mntrap}{F\ov{\pr_{\tau}\psi}}\bigg| +\int_{\MM}|F|^2\\
&& +\int_{\MM}r^{-4}|\psi|^2 +\ep\sup_{\tau\in\mathbb{R}}\E[\psi](\tau),
\eea
where the symbol $e\in\widetilde{S}^{1,0}(\MM)$ is introduced in  \eqref{eq:defintionofthesymboleasasquarerootofsigma2TXEmsumofsquares:1:defe}.}
\end{proposition}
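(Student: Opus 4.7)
The proof will combine the conditional degenerate Morawetz--flux estimate of Proposition \ref{prop:conditionaldegenerateMorawetzflux:pertKerrr:MM} with the redshift estimate of Lemma \ref{lemma:redshiftestimates} in order to remove the $\mu^2$ degeneracy near the horizon and to produce the energy bound $\sup_\tau \E_{r\leq r_+(1+\dred)}[\psi](\tau)$. Since $\psi$ vanishes for $\tau\leq 1$, applying Lemma \ref{lemma:redshiftestimates} with $s=0$, $\tau_1\to-\infty$, $\tau_2\to +\infty$ and using $\E[\psi](-\infty)=0$ yields
\begin{equation*}
\EMF_{r\leq r_+(1+\dred)}[\psi](\mathbb{R}) \lesssim \M_{r_+(1+\dred), r_+(1+2\dred)}[\psi](\mathbb{R}) + \int_{\MM}|F|^2,
\end{equation*}
so that the whole question reduces to bounding the nondegenerate Morawetz bulk on the redshift annulus $r\in[r_+(1+\dred),r_+(1+2\dred)]$ by the RHS of \eqref{prop:eq:nondeg:Morawetz}.

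On that annulus, $\mu$ is uniformly bounded below (since $\dred\ll 1-|a|/m$), so the degenerate bulk term $\int \mu^2|\pr_r\psi|^2/r^2$ already present on the LHS of Proposition \ref{prop:conditionaldegenerateMorawetzflux:pertKerrr:MM} directly furnishes the nondegenerate bound on $\int |\pr_r\psi|^2$ there. For the tangential derivatives $|\pr_\tau\psi|^2+|\nabla\psi|^2$, the plan is to exploit the lower bound \eqref{eq:defintionofthesymboleasasquarerootofsigma2TXEmsumofsquares:1} for the symbol $e$: by choosing $\dred$ sufficiently small so that the redshift annulus lies strictly below the trapping radius (which is bounded away from $r_+$ by the quantitative gap in \eqref{eq:UpperBoundforrmaxtrapOfthePotential:consequencertrap}), we will have $|r-r_{\trap}|\gtrsim 1$ uniformly on the annulus, and hence $e\gtrsim \langle\xi'\rangle$ there regardless of the value of the frequency cutoff $\chi_5$. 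A G\aa rding-type argument based on Lemma \ref{lemma:basicgardingfortangentialsymbols:MM}, applied after localizing with a smooth $r$-cutoff supported in $[r_+(1+\dhor'),10m]$ whose commutator with $\Opw(e)$ is of order $(0,0)$ by Proposition \ref{prop:PDO:MM:Weylquan:mixedoperators}, will yield
\begin{equation*}
\int_{\MM_{r_+(1+\dred),r_+(1+2\dred)}}\!\!\big(|\pr_\tau\psi|^2+|\nabla\psi|^2\big) \lesssim \int_{\MM_{r_+(1+\dhor'),10m}}|\Opw(e)\psi|^2 + \int_{\MM}r^{-4}|\psi|^2,
\end{equation*}
where $\int r^{-4}|\psi|^2$ absorbs the lower-order G\aa rding remainder, the annulus being of bounded $r$-extent.

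Combining these two steps with Proposition \ref{prop:conditionaldegenerateMorawetzflux:pertKerrr:MM} will control $\M_{r_+(1+\dred),r_+(1+2\dred)}[\psi](\mathbb{R})$ by the RHS of \eqref{prop:eq:nondeg:Morawetz}, up to the error terms $\ep\EM[\psi](\mathbb{R})$ and $\dhor\M[\psi](\mathbb{R})$. Feeding this back into the redshift estimate above will produce $\sup_\tau\E_{r\leq r_+(1+\dred)}[\psi](\tau)$, $\F_\AA[\psi](\mathbb{R})$, and $\M_{r\leq r_+(1+\dred)}[\psi](\mathbb{R})$. These, together with the $r\geq r_+(1+\dred)$ contributions from Proposition \ref{prop:conditionaldegenerateMorawetzflux:pertKerrr:MM} (where $\mu\gtrsim 1$ renders the degenerate bulk already nondegenerate), will assemble into the full $\widetilde{\MF}[\psi]+\int|\Opw(e)\psi|^2+\sup_\tau \E_{r\leq r_+(1+\dred)}[\psi](\tau)$ on the LHS of \eqref{prop:eq:nondeg:Morawetz}. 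The errors $\ep\EM[\psi](\mathbb{R})=\ep\sup_\tau\E[\psi](\tau)+\ep\M[\psi](\mathbb{R})$ and $\dhor\M[\psi](\mathbb{R})$ from the RHS are absorbed into the LHS by the smallness $\ep,\dhor\ll 1$, leaving only $\ep\sup_\tau\E[\psi](\tau)$ as stated. A final bookkeeping step will be needed to replace $\int_{\MM_{r_+(1+\dhor'),10m}}|F||\pr_\tau\psi|$ from Proposition \ref{prop:conditionaldegenerateMorawetzflux:pertKerrr:MM} by the narrower $\int_{\Mtrap}|F||\pr_\tau\psi|$: the contribution from $r\in[r_+(1+\dhor'),r_+(1+2\dbl))\subset\Mntrap$ will be handled by the Cauchy--Schwarz split $|F||\pr_\tau\psi|\lesssim C|F|^2+\lambda r^{-2}|\pr_\tau\psi|^2$, the second term being absorbed into $\M[\psi]$ on the LHS.

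The principal obstacle is the G\aa rding step in the redshift annulus. It requires calibrating $\dred$ to the quantitative gap $\sim m^{-3}(m-a)^4$ between $r_{\trap}$ and $r_+$ supplied by \eqref{eq:UpperBoundforrmaxtrapOfthePotential:consequencertrap}, so that the annulus is uniformly separated from the trapping set (where $e$ degenerates), while still respecting the compatibility ordering $\dhor\ll\dred\ll\dbl\ll 1-|a|/m$ from Section \ref{sec:smallnesconstants}. One must also track the cutoff--commutator remainders carefully so that they are all absorbed either into the $L^2$ remainder already available from $\int r^{-4}|\psi|^2$ or into the already-present $\M[\psi]$ on the left via the small parameters $\ep,\dhor$.
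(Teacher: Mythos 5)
Your proposal is correct and follows essentially the same route as the paper: the global-in-time redshift estimate reduces matters to the Morawetz bulk on the annulus $[r_+(1+\dred),r_+(1+2\dred)]$, which—after calibrating $\dred$ so that the annulus lies strictly below $\min_{\GG_5}r_{\trap}$—is recovered from the degenerate radial bulk and the microlocal term on the left-hand side of Proposition \ref{prop:conditionaldegenerateMorawetzflux:pertKerrr:MM}, and the errors $\ep\EM[\psi](\Reals)$, $\dhor\M[\psi](\Reals)$ are then absorbed using the hierarchy of small constants. The only cosmetic differences are that the paper exploits the ellipticity of $\Opw(\sigma_{\trap})$ on the annulus (rather than a G\aa rding argument for $\Opw(e)$) and absorbs the annulus terms by multiplying the redshift estimate by $\dred^4$ and summing, with the explicit choice $\dhor\ll\dred^4$, instead of bounding them directly through the right-hand side of Proposition \ref{prop:conditionaldegenerateMorawetzflux:pertKerrr:MM}.
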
 
 
\begin{proof}
{We make use of the redshift estimate of Lemma \ref{lemma:redshiftestimates} with $s=0$, $\tau_1=-\infty$ and $\tau_2=+\infty$, and notice that $\E^{(s)}[\psi](-\infty)=0$ since $\psi=0$ for $\tau\leq 1$. We obtain 
\beaa
\EMF_{r\leq r_+(1+\dred)}[\psi](\Reals)\les \dred^{-1}\M_{r_+(1+\dred), r_+(1+2\dred)}[\psi](\Reals)+\int_{{\MM_{r\leq r_+(1+2\dred)}}}|F|^2.
\eeaa
Now, we choose $\dred$ such that  
\beaa
r_+(1+3\dred)\leq {\min_{\Xi\in\GG_5} r_{\text{trap}}}\quad \Longleftrightarrow \quad\dred\leq\frac{{\min_{\Xi\in\GG_5} r_{\text{trap}}} -r_+}{3r_+},
\eeaa
which implies, in view of the definition \eqref{eq:definitionofthesymbolsigmatrap} of $\sigma_{\trap}$, which depends on $r_{\trap}$ introduced in  \eqref{eq:definitionofrtrapinfunctionofrmaxandcutoffinmathcalG5} and satisfying \eqref{eq:UpperBoundforrmaxtrapOfthePotential:consequencertrap}, 
\beaa
&&\M_{r_+(1+\dred), r_+(1+2\dred)}[\psi](\Reals)\\
&\les&  \frac{1}{\dred^2}\left(\int_{\MM_{r_+(1+\dred), r_+(1+2\dred)}}\frac{\mu^2|\pr_r\psi|^2}{r^2}+\int_{\MM_{r_+(1+\dred), r_+(1+2\dred)}}|\Opw(\sigma_{\trap})\psi|^2\right).
\eeaa
In view of the above, we infer
\bea\lab{eq:applicationofredshiftestimatewiths=0tau1minftyandtau2pinfty}
\nn&&\EMF_{r\leq r_+(1+\dred)}[\psi](\Reals)\\
\nn&\les& \frac{1}{\dred^3}\left(\int_{\MM_{r_+(1+\dred), r_+(1+2\dred)}}\frac{\mu^2|\pr_r\psi|^2}{r^2}+\int_{\MM_{r_+(1+\dred), r_+(1+2\dred)}}|\Opw(\sigma_{\trap})\psi|^2\right)\\
&&+\int_{\MM_{r\leq r_+(1+2\dred)}(\Reals)}|F|^2.
\eea}

{Next, we multiply \eqref{eq:applicationofredshiftestimatewiths=0tau1minftyandtau2pinfty} by $\dred^4$ and sum it with \eqref{eq:globalmicro:degMF:current} which yields
\beaa
\nn&& \dred^4\EMF_{r\leq r_+(1+\dred)}[\psi](\Reals)+\int_{\MM_{r_+(1+\dhor'),10m}}\frac{\mu^2|\pr_r\psi|^2}{r^2} \\
&&+\int_{\MM_{r_+(1+\dhor'),10m}}\big(|\Opw(\sigma_{\trap})\psi|^2+|\Opw(e)\psi|^2\big)+\MF_{r\geq 10m}[\psi](\Reals)\\
\nn&\les& \int_{\Mtrap}|F||\pr_\tau\psi| +\int_{\Mntrap}|F|\big(|\pr_r\psi|+r^{-1}|\psi|\big) +\left|\int_{\Mntrap}{F\ov{\pr_{\tau}\psi}}\right| +\int_{\MM}|F|^2\\
&& +\frac{1}{\dhor^6}\int_{\MM}r^{-4}|\psi|^2 +\ep\EM[\psi](\Reals) +\dhor\M[\psi](\Reals)\\
&&  +\dred\left(\int_{\MM_{r_+(1+\dred), r_+(1+2\dred)}}\frac{\mu^2|\pr_r\psi|^2}{r^2}+\int_{\MM_{r_+(1+\dred), r_+(1+2\dred)}}|\Opw(\sigma_{\trap})\psi|^2\right).
\eeaa
We now choose $\dhor\ll \dred^4$ and $\dred$ sufficiently small which implies, using also $\ep$ small enough and the definition \eqref{eq:definitionofmicrolocalMorawetznormwidetildeM} of the microlocal Morawetz norm $\widetilde{\M}[\psi]$, 
\beaa
\nn&& \dred^4\sup_{\tau\in\mathbb{R}}\E_{r\leq r_+(1+\dred)}[\psi](\tau)+{\dred^4\widetilde{\MF}[\psi]}\\
\nn&\les& \int_{\Mtrap}|F||\pr_\tau\psi| +\int_{\Mntrap}|F|\big(|\pr_r\psi|+r^{-1}|\psi|\big) +\left|\int_{\Mntrap}{F\ov{\pr_{\tau}\psi}}\right| +\int_{\MM}|F|^2\\
&& +\frac{1}{\dhor^6}\int_{\MM}r^{-4}|\psi|^2 +\ep\sup_{\tau\in\mathbb{R}}\E[\psi](\tau).
\eeaa
As $\dhor$ and $\dred$ have been fixed small enough (only depending on $m-|a|$), we may now compress the dependence in $\dred$ and $\dhor$ in $\les$ and obtain 
\beaa
\nn&& \sup_{\tau\in\mathbb{R}}\E_{r\leq r_+(1+\dred)}[\psi](\tau)+{\widetilde{\MF}[\psi]}\\
\nn&\les& \int_{\Mtrap}|F||\pr_\tau\psi| +\int_{\Mntrap}|F|\big(|\pr_r\psi|+r^{-1}|\psi|\big) +\left|\int_{\Mntrap}{F\ov{\pr_{\tau}\psi}}\right| +\int_{\MM}|F|^2\\
&& +\int_{\MM}r^{-4}|\psi|^2 +\ep\sup_{\tau\in\mathbb{R}}\E[\psi](\tau),
\eeaa
as stated in \eqref{prop:eq:nondeg:Morawetz}. This concludes the proof of Proposition \ref{prop:nondeg:Morawetz}.}
\end{proof}


\subsection{{End of the proof of Theorem \ref{th:mainenergymorawetzmicrolocal}}}
\label{subsect:proofoftheorem64}


In this section, we derive an energy estimate for the wave equation in perturbations of Kerr, and based on this energy estimate and the {conditional nondegenerate Morawetz-flux estimate of Proposition \ref{prop:nondeg:Morawetz}}, we conclude the proof of {Theorem \ref{th:mainenergymorawetzmicrolocal}}.


\subsubsection{{Conditional energy estimate}}
\label{subsect:CondEnerMoraFlux:Kerrpert}


We now derive a conditional energy estimate. 
\begin{proposition}[Conditional energy estimate]
\lab{thm:nondeg:EnerandMora:Kerrandpert}
Assuming that $\psi$, $\g$ and $F$ satisfy the same assumptions as in Theorem \ref{th:main:intermediary},  we have the following conditional energy estimate 
\bea\lab{thm:eq:nondeg:EnerandMora:Kerrandpert}
\nn\sup_{\tau\in\Reals}\E[\psi](\tau) &\les&  {\widetilde{\M}[\psi]}+\int_{\MM}|F|^2+{\sum_{i=1}^{\iota}}\int_{\Mtrap}|F||\Opw(\Theta_i)V_i\Opw(\Theta_i)\psi|\\
&&+\sup_{\tau\in\Reals}\bigg|\int_{\Mntrap(-\infty, \tau)}{\Re\Big(F\ov{\pr_{\tau}\psi}\Big)}\bigg|,
\eea
where $\iota$ is a large enough integer{,} where $\Theta_i=\Theta_i(\Xi)\in\widetilde{S}^{0,0}(\MM)$, $i=1,\cdots,\iota$, are defined in \eqref{eq:definitionofpartitionofunityThetajj=1toiota}{, and where $V_i$, $i= 1,\cdots,\iota$, are timelike vectorfields in $\Mtrap$ introduced in \eqref{eq:defintionofthedifferentialoperatorsViformicrolocalNRGestimates}.} Moreover, we have the following alternative conditional energy estimate 
 \bea
\lab{eq:energyestimate:awayhorizon:Kerrandpert:lastsect}
\nn\sup_{\tau\in\Reals}\E[\psi](\tau) &\les&  {\widetilde{\M}[\psi]}+\int_{\MM}|F|^2+\sup_{\tau\in\Reals}\bigg|\int_{\Mntrap(-\infty, \tau)}{\Re\Big(F\ov{\pr_{\tau}\psi}\Big)}\bigg|\\
&& +\bigg(\min\bigg(\int_{\Mtrap}\tau^{1+\dec}|F|^2, \int_{\Mtrap}|\pr F|^2\bigg)\bigg)^{\frac{1}{2}}\Big(\EM[\psi](\Reals)\Big)^{\frac{1}{2}}.
\eea
\end{proposition}

\begin{proof}
The proof proceeds in the following steps. 

{\noindent{\bf Step 1.}} Recall that the symbol $r_{\trap}\in\widetilde{S}^{0,0}(\MM)$ defined in  \eqref{eq:definitionofrtrapinfunctionofrmaxandcutoffinmathcalG5} satisfies in view of \eqref{eq:UpperBoundforrmaxtrapOfthePotential:consequencertrap} $r_{\trap}=r_{\trap}(\Xi)\in (r_+, 8m]$, where $\Xi=(\xit, \xiphi, \Lambda)$. Let $\iota\in \mathbb{N}$ be a {constant to be fixed large enough below}, and define for $1\leq i\leq \iota$
\begin{align}
I_i:=&[r_{\text{min}, \trap}, r_{\text{max}, \trap}]\cap\nn\\
&\left(r_{\text{min}, \trap}+\frac{i-\frac{3}{2}}{\iota}(r_{\text{max}, \trap}-r_{\text{min}, \trap}),r_{\text{min}, \trap}+\frac{i+\frac{1}{2}}{\iota}(r_{\text{max}, \trap}-r_{\text{min}, \trap})\right). 
\end{align}
Then, recalling the frequency set $\GG_5$, introduced in Step 1 of the proof of Proposition \ref{prop:microlocalenergyMorawetzinKerronMMrp1dhorpR}, as well as the cut-off $\widetilde{\chi}_5=\widetilde{\chi}_5(\Xi)$ compactly supported in $\GG_5$ and involved in the definition  \eqref{eq:definitionofrtrapinfunctionofrmaxandcutoffinmathcalG5} of $r_{\trap}$,  we have 
\beaa
\textrm{supp}(\widetilde{\chi}_5)\subset\bigcup_{i=1}^{\iota}\NN_i, \qquad \NN_i:=r_{\trap}^{-1} (I_{i})\cap\GG_5, \quad i=1,\cdots, \iota.
\eeaa
Since $\textrm{supp}(\widetilde{\chi}_5)$ is compact, and hence closed, we obtain an open cover of the set of frequencies $\GG_\Xi$ as follows 
\beaa
\GG_\Xi=\bigcup_{i=0}^{\iota}\NN_i, \qquad \NN_0:=\GG_\Xi\setminus\textrm{supp}(\widetilde{\chi}_5). 
\eeaa
Thus, there exist {real valued symbols $\{\Theta_i\}_{i=-1}^{\iota}$,} {$\Theta_i=\Theta_i(\Xi)\in\widetilde{S}^{0,0}(\MM)$,} such that
\bea\lab{eq:definitionofpartitionofunityThetajj=0toiota:propsumto1}
\sum_{i=0}^\iota\Theta_i=1\quad\text{on}\quad\GG_\Xi\cap\{|\Xi|\geq 2\}, \qquad {\Theta_{-1}:=1-\sum_{i=0}^\iota\Theta_i,}
\eea
 and satisfying in addition
\bsub
\bea\lab{eq:definitionofpartitionofunityThetajj=1toiota}
\text{supp}(\Theta_i)\Subset\NN_i, \quad \sum_{i=1}^{\iota}\Theta_i(\Xi)=1 \quad\text{on}\quad \textrm{supp}(\widetilde{\chi}_5)\cap\{|\Xi|\geq 2\}, \quad \Theta_i=0\quad\textrm{on}\quad |\Xi|\leq 1,
\eea
\bea\lab{eq:definitionofpartitionofunityThetajj=1toiota:Theta0}
\text{supp}(\Theta_0)\Subset\NN_0=\GG_\Xi\setminus\textrm{supp}(\widetilde{\chi}_5)\,\,\Rightarrow \,\text{supp}(\Theta_0)\cap\textrm{supp}(\widetilde{\chi}_5)=\emptyset, \quad \Theta_0=0\,\,\,\textrm{on}\,\, |\Xi|\leq 1,
\eea
{and
\bea\lab{eq:definitionofpartitionofunityThetajj=1toiota:Theta-1}
\textrm{supp}(\Theta_{-1})\subset\{\Xi\leq 2\}\,\,\Rightarrow \,\Theta_{-1}\in\widetilde{S}^{-\infty,0}(\MM).
\eea}
\esub

{\noindent{\bf Step 2.} To derive energy estimates, we will consider two separate regions, $\MM_{r\leq R_1}$ and $\MM_{r\geq R_1}$, where $R_1\in[11m, 12m]$ is chosen such that
\bea
\lab{choiceofR1:energyestimate:kerrpert}
\int_{H_{R_1}}|\pr^{\leq 1}\psi|^2 \leq \frac{1}{m}\int_{\MM_{11m, 12m}}|\pr^{\leq 1}\psi|^2\les \M[\psi](\Reals).
\eea
We first focus on the region $\MM_{r\leq R_1}$ where we define 
\bea
\lab{def:psiiandFi:kerrpert}
\psi_i:= \Opw(\Theta_i)\psi, \qquad {F}_{i}:= \square_\g\psi_i, \qquad i={-1}, 0,1,\ldots, \iota. 
\eea
Then, since $\psi$ satisfies \eqref{eq:scalarwave}, we may rewrite $F_i$ as
\beaa
F_i &=& \square_\g\psi_i=\frac{1}{|q|^2}|q|^2\square_\g(\Opw(\Theta_i)\psi)\\
&=& \frac{1}{|q|^2}\Opw(\Theta_i)(|q|^2F)+\frac{1}{|q|^2}\big[|q|^2\square_\g, \Opw(\Theta_i)\big]\psi\\
&=& {\frac{1}{|q|^2}\Opw(\Theta_i)(|q|^2F)+\frac{1}{|q|^2}\big[|q|^2\square_{\gam}, \Opw(\Theta_i)\big]\psi+\frac{1}{|q|^2}\big[|q|^2(\square_\g-\square_{\gam}), \Opw(\Theta_i)\big]\psi}.
\eeaa
Now, using \eqref{eq:computationofsymbolmodqsquareboxgam}, Proposition \ref{prop:PDO:MM:Weylquan:mixedoperators} and Lemma \ref{lemma:simplepropertiesspecialcaseofsymbols:recoveringoperators}, we have
\beaa
\big[|q|^2\square_{\gam}, \Opw(\Theta_i)\big] &=& \big[\Opw(-\Delta\xi_r^2 -2\S_1\xi_r+\S_2 +\widetilde{S}^{0,0}(\MM)), \Opw(\Theta_i)\big]\\
&=& \Opw(\widetilde{S}^{-1,1}(\MM)),
\eeaa
and hence
\beaa
F_i = {\frac{1}{|q|^2}\Opw(\Theta_i)(|q|^2F)}+\Opw(\widetilde{S}^{-1,1}(\MM))\psi+\frac{1}{|q|^2}\big[|q|^2(\square_\g-\square_{\gam}), \Opw(\Theta_i)\big]\psi.
\eeaa
Also, recalling the definition $\Gac$ introduced in \eqref{eq:decaypropertiesofGac:microlocalregion}, which we now use in the region $\MM_{r\leq R_1}$, i.e., 
\bea\lab{eq:decaypropertiesofGac:microlocalregion:energy}
|\dk^{\leq 2}\Gac|\les \ep\tau^{-1-\dec}\qquad\textrm{on}\,\,\MM_{r\leq R_1},
\eea
we have the following analog of \eqref{eq:decompositionwaveoperatorpertrub:microlocalregion}
\beaa
\square_\g\psi &=& \square_{\gam}\psi+\Gac\pr^2\psi+\dk^{\leq 1}(\Gac)\pr\psi, \quad\textrm{on}\,\,\MM_{r\leq R_1}
\eeaa
which yields, on $\MM_{r\leq R_1}$,
\bea\lab{eq:structrureofFiRHSwaveeqOpThetaipsi:energy}
\nn F_i &=& {\frac{1}{|q|^2}\Opw(\Theta_i)(|q|^2F)}+\Opw(\widetilde{S}^{-1,1}(\MM))\psi\\
&&+\frac{1}{|q|^2}\Big[|q|^2\Gac\pr^2+|q|^2\dk^{\leq 1}(\Gac)\pr, \Opw(\Theta_i)\Big]\psi.
\eea
Finally, we consider a smooth cut-off function $\chi=\chi(r)$ such that $\chi=1$ on $r\geq r_+(1+\dred)$ and $\chi=0$ for $r\leq r_+(1+\dred/2)$, and we define
\bea\lab{def:psiiandFi:kerrpert:withcut-off}
\psi_{i,\chi}:=\chi(r)\psi_i, \qquad \square_\g(\psi_{i,\chi})=F_{i,\chi}, \qquad F_{i,\chi}:=\chi(r)F_i+[\square_\g, \chi]\psi_i.
\eea}

{\noindent{\bf Step 3.}} Let us first consider the case {$1\leq i\leq \iota$}. Given that $\pr_{\tt} + \frac{2amr}{(r^2+a^2)^2}\pr_{\tphi}$ is a globally timelike vectorfield in the region {$\MM_{r_+(1+\dred/2), R_1}$}, we {choose the integer $\iota$ sufficiently} large such that we can construct  globally timelike vectorfields 
\bea\lab{eq:defintionofthedifferentialoperatorsViformicrolocalNRGestimates}
V_i:=\pr_{\tt}+ d_i(r) \pr_{\tphi},\qquad 1\leq i\leq \iota,
\eea
each of which is {Killing} in the region {$\MM\cap\{r\in \widetilde{I}_i\}$} and equals ${\pr_\tau}$ in the region {$\MM_{10m, +\infty}$, where} $\{d_i(r)\}_{i=0,1,\ldots, \iota}$ are {smooth} scalar functions of $r${, and where the intervals $\widetilde{I}_i$, $1\leq i\leq \iota$, are defined by
\bea
\widetilde{I}_i:=\left(r_{\text{min}, \trap}+\frac{i-\frac{5}{2}}{\iota}(r_{\text{max}, \trap}-r_{\text{min}, \trap}),r_{\text{min}, \trap}+\frac{i+\frac{3}{2}}{\iota}(r_{\text{max}, \trap}-r_{\text{min}, \trap})\right),
\eea
so that we have
\bea\lab{eq:mainpropertyofintervalwidetildeIiwhichcontrainIi}
I_i\subset\widetilde{I}_i, \qquad \textrm{dist}(I_i, \Reals\setminus\widetilde{I}_i)=\frac{r_{\text{max}, \trap}-r_{\text{min}, \trap}}{\iota}, \quad 1\leq i\leq \iota.
\eea} 

{Next, we consider the divergence identity \eqref{eq:EnerIden:General:wave} with $\psi=\psi_{i,\chi}$, $X=V_i$ and $w=0$. Integrating it on $\MM_{r\leq R_1}(\tau_0, \tau)$ with $\tau_0\leq 0$ and $\tau\geq 1$, and taking the support of $\psi_{i,\chi}$ into account, we infer
\beaa
&&\int_{\Si_{r_+(1+\dred/2), R_1}(\tau)}\QQ_{\a\b}[\psi_{i,\chi}]V_i^{\b}N_{\Si(\tau)}^\a\\
&\les& \int_{\Si_{r_+(1+\dred/2), R_1}(\tau_0)}\QQ_{\a\b}[\psi_{i,\chi}]V_i^{\b}N_{\Si(\tau_0)}^\a +\int_{\MM_{r_+(1+\dred/2), R_1}(\tau_0, \tau)}\left|{}^{(V_i)} \pi \cdot \QQ[\psi_{i,\chi}]\right|\\
&&+ \left|\int_{\MM_{r_+(1+\dred/2), R_1}(\tau_0, \tau)}{\Re\Big(}F_{i,\chi}\ov{V_i(\psi_{i,\chi})}{\Big)}\right|+\M[\psi](\Reals),
\eeaa
where we have estimated the boundary term on $r=R_1$ by the last term on the RHS thanks to \eqref{choiceofR1:energyestimate:kerrpert}. Hence, using the properties of $\chi$ and the definition of $F_{i,\chi}$, we obtain 
{\beaa
&&\E_{r_+(1+\dred), R_1}[\psi_i](\tau)\\
&\les& \E_{r_+(1+\dred/2), R_1}[\psi_i](\tau_0)+\int_{\MM_{r_+(1+\dred/2), R_1}(\tau_0, \tau)}\left|{}^{(V_i)} \pi \cdot \QQ[\psi_{i,\chi}]\right|\\
&&+ \left|\int_{\MM_{r_+(1+\dred/2), R_1}(\tau_0, \tau)}{\Re\Big(}\chi(r)F_i\ov{V_i(\psi_{i,\chi})}{\Big)}\right|
+\M_{r\leq r_+(1+\dred)}[\psi_i](\Reals)+\M[\psi](\Reals).
\eeaa}
Also, we have, in view of the definition \eqref{eq:defintionofthedifferentialoperatorsViformicrolocalNRGestimates} of $V_i$,
\beaa
&&\int_{\MM_{r_+(1+\dred/2), R_1}(\tau_0, \tau)}\left|{}^{(V_i)} \pi \cdot \QQ[\psi_{i,\chi}]\right|\\
&\les& \int_{\MM_{r_+(1+\dred/2), R_1}(\tau_0, \tau)}\left(\left|{}^{(\pr_\tau)} \pi \cdot \QQ[\psi_{i,\chi}]\right|+\left|{}^{(\pr_{\tphi})} \pi\cdot\QQ[\psi_{i,\chi}]\right|+|d_i'(r)||\pr^{\leq 1}\psi_i|^2\right)\\
&\les& \ep\sup_{\tau'\in[\tau_0, \tau]}\E_{r_+(1+\dred),10m}[\psi_i](\tau')+\ep\M[\psi_i](\Reals)+\int_{\MM_{r\in[r_+(1+\dred/2), 10m]\setminus \widetilde{I}_i}}|\partial \psi_i|^2
\eeaa
where we used\footnote{{Notice that the energy part of the error term generated in Lemma \ref{lemma:basiclemmaforcontrolNLterms:ter} on a region of the type $\MM_{r_+(1+\dred/2), R_1}$ is only needed in $\Mtrap$ so that $\ep\sup_{\tau'\in[\tau_0, \tau]}\E_{r_+(1+\dred),10m}[\psi_i](\tau')$ indeed suffices.}}  Lemmas \ref{lemma:controlofdeformationtensorsforenergyMorawetz}
 and \ref{lemma:basiclemmaforcontrolNLterms:ter}, and the support properties of $d_i'(r)$. In view of the above, this implies, for $\ep$ small enough, 
 \beaa
\nn\E_{r_+(1+\dred), R_1}[\psi_i](\tau) &\les& \E_{r_+(1+\dred/2), R_1}[\psi_i](\tau_0)+\int_{\MM_{r\in[r_+(1+\dred/2), 10m]\setminus \widetilde{I}_i}}|\partial \psi_i|^2\\
&&+\ep\M[\psi_i](\Reals) {+\M[\psi](\Reals)}+ \left|\int_{\MM_{r_+(1+\dred/2), R_1}(\tau_0, \tau)}{\Re\Big(}\chi(r)F_i\ov{V_i(\psi_{i,\chi})}{\Big)}\right|.
\eeaa
Also, recalling that the symbol $e\in\widetilde{S}^{1,0}(\MM)$ defined in \eqref{eq:defintionofthesymboleasasquarerootofsigma2TXEmsumofsquares:1:defe} verifies \eqref{eq:defintionofthesymboleasasquarerootofsigma2TXEmsumofsquares:1}, and using the definition of $I_i$, $\widetilde{I}_i$ and $\Theta_i$, there exists a constant $c>0$ such that\footnote{{Indeed, if $\Xi$ is in the support of $\Theta_i(\Xi)$, then $\Xi\in\NN_i$ and hence $r_{\trap}\in I_i$, so that $|r-r_{\trap}|\geq\frac{r_{\text{max}, \trap}-r_{\text{min}, \trap}}{\iota}$ for $r\notin\widetilde{I}_i$ in view of \eqref{eq:mainpropertyofintervalwidetildeIiwhichcontrainIi}. The conclusion then follows from 
\beaa
e^2\gtrsim 1+(r-r_{\trap})^2|\Xi|^2\gtrsim 1+\left(\frac{r_{\text{max}, \trap}-r_{\text{min}, \trap}}{\iota}\right)^2|\Xi|^2\gtrsim_{\iota} 1+\Theta_i^2|\Xi|^2.
\eeaa}}
\beaa
e_2:=\sqrt{e^2 -c\Theta_i^2|\Xi|^2}, \quad  e_2\in\widetilde{S}^{1,0}(\MM\cap\{r\notin\widetilde{I}_j\}), 
\eeaa
which implies
\beaa
&&\int_{\MM_{r\in[r_+(1+\dred/2), 10m]\setminus \widetilde{I}_i}}|\partial \psi_i|^2\\
&\les&  \M[\psi](\Reals)+\int_{\MM_{r\in[r_+(1+\dred/2), 10m]\setminus \widetilde{I}_i}}|\Opw(\Theta_i\Xi)\psi|^2\\
&\les& \M[\psi](\Reals)+\int_{\MM_{r\in[r_+(1+\dred/2), 10m]\setminus \widetilde{I}_i}}\Re(\ov{\psi}\Opw(e^2-e^2_2)\psi)\\
&\les& \M[\psi](\Reals)+\int_{\MM_{r_+(1+\dred/2), 10m}}|\Opw(e)\psi|^2\nn\\
&{\les}&{\widetilde{\M}[\psi]},
\eeaa
and hence
\bea\lab{eq:intermediarycontrolofenergypsiionrp1pdredR1}
\nn\E_{r_+(1+\dred), R_1}[\psi_i](\tau) &\les& \E_{r_+(1+\dred/2), R_1}[\psi_i](\tau_0)+{\widetilde{\M}[\psi]}\\
&&+ \left|\int_{\MM_{r_+(1+\dred/2), R_1}(\tau_0, \tau)}{\Re\Big(}\chi(r)F_i\ov{V_i(\psi_{i,\chi})}{\Big)}\right|.
\eea}

\noindent{\bf Step 4.} Next, we focus on the last term on the RHS of \eqref{eq:intermediarycontrolofenergypsiionrp1pdredR1}. 
{To this end, we recall the formula \eqref{eq:structrureofFiRHSwaveeqOpThetaipsi:energy} of $F_i$. Then, we have 
\bea\lab{eq:intermediarycontrolofintRechirFiVipsiichionMMrplusoneplusderedon2toR1tau0tau}
\nn&&\left|\int_{\MM_{r_+(1+\dred/2), R_1}(\tau_0, \tau)}{\Re\Big(}\chi(r)F_i\ov{V_i(\psi_{i,\chi})}{\Big)}\right|\\
\nn&\les&\left|\int_{\MM_{r_+(1+\dred/2), R_1}(\tau_0, \tau)}{\Re\Big(}\chi^2(r){|q|^{-2}}\Opw(\Theta_i)(|q|^2F)\ov{V_i(\Opw(\Th_i)\psi)}{\Big)}\right|\\
\nn&&+\left|\int_{\MM_{r_+(1+\dred/2), R_1}(\tau_0, \tau)}{\Re\Big(}\chi^2(r)\Opw(\widetilde{S}^{-1,1}(\MM))\psi\ov{V_i(\Opw(\Th_i)\psi)}{\Big)}\right|\\
\nn&&+
\left|\int_{\MM_{r_+(1+\dred/2), R_1}(\tau_0, \tau)}{\Re\Big(}\chi^2(r)\frac{1}{|q|^2}\Big[|q|^2\Gac\pr^2+|q|^2\dk^{\leq 1}(\Gac)\pr, \Opw(\Theta_i)\Big]\psi\ov{V_i(\Opw(\Th_i)\psi)}{\Big)}\right|
\\
\nn&\les&\left|\int_{\Mtrap(\tau_0, \tau)}{\Re\Big(}{|q|^{-2}}\Opw(\Theta_i)(|q|^2F)\ov{V_i(\Opw(\Th_i)\psi)}{\Big)}\right|+\int_{\Mntrap_{r\leq R_1}}|F|^2+ \M[\psi](\Reals)\\
\nn&&+
\left|\int_{\MM_{r_+(1+\dred/2), R_1}(\tau_0, \tau)}{\Re\Big(}\chi^2(r)\frac{1}{|q|^2}\Big[|q|^2\Gac\pr^2+|q|^2\dk^{\leq 1}(\Gac)\pr, \Opw(\Theta_i)\Big]\psi\ov{V_i(\Opw(\Th_i)\psi)}{\Big)}\right|\\
&& +\left|\int_{\MM_{r_+(1+\dred/2), R_1}(\tau_0, \tau)}{\Re\Big(}\chi^2(r)\Opw(\widetilde{S}^{-1,1}(\MM))\psi\ov{V_i(\Opw(\Th_i)\psi)}{\Big)}\right|.
\eea}
{Now, in order to control the last two lines in \eqref{eq:intermediarycontrolofintRechirFiVipsiichionMMrplusoneplusderedon2toR1tau0tau}, we introduce the smooth cut-off functions $\chi_{\tau_0, \tau,j}=\chi_{\tau_0, \tau,j}(\tau)$, $j=0,1$, satisfying 
\begin{equation}
\lab{eq:definitionofchitau0taui:i=01}
\begin{aligned}
&\textrm{supp}(\chi_{\tau_0,\tau,0})\subset(\tau_0, \tau), \quad \chi_{\tau_0,\tau,0}=1\,\,\,\textrm{on}\,\,(\tau_0+1, \tau-1), \quad 0\leq\chi_{\tau_0,\tau,j}\leq 1, \,\,j=0,1,\\
&\textrm{supp}(\chi_{\tau_0,\tau,1})\subset(\tau_0-1, \tau_0+2)\cup(\tau-2, \tau+1), \quad \chi_{\tau_0,\tau,1}=1\,\,\,\textrm{on}\,\,(\tau_0, \tau_0+1)\cup(\tau-1, \tau).
\end{aligned}
\end{equation}
Using the properties of the cut-offs $\chi_{\tau_0, \tau,j}$, $j=0,1$, as well as Proposition \ref{prop:PDO:MM:Weylquan:mixedoperators} and Lemma \ref{lemma:actionmixedsymbolsSobolevspaces:MM}, we have
\bea\lab{eq:intermediarycontrolofintRechirFiVipsiichionMMrplusoneplusderedon2toR1tau0tau:aux1}
\nn&&\left|\int_{\MM_{r_+(1+\dred/2), R_1}(\tau_0, \tau)}{\Re\Big(}\chi^2(r)\Opw(\widetilde{S}^{-1,1}(\MM))\psi\ov{V_i(\Opw(\Th_i)\psi)}{\Big)}\right|\\
\nn&\les& \left|\int_{\MM_{r_+(1+\dred/2), R_1}}\chi_{\tau_0,\tau,0}\chi^2(r)\Opw(\widetilde{S}^{-1,1}(\MM))\psi\ov{V_i(\Opw(\Th_i)\psi)}\right|\\
\nn&&+\left|\int_{\MM_{r_+(1+\dred/2), R_1}(\tau_0, \tau)}(1-\chi_{\tau_0,\tau,0})\chi^2(r)\Opw(\widetilde{S}^{-1,1}(\MM))\psi\ov{V_i(\Opw(\Th_i)\psi)}\right|\\
\nn&\les& \left|\int_{\MM_{r_+(1+\dred/2), R_1}}\psi\,\ov{\Opw(\widetilde{S}^{-1,1}(\MM))\big(\chi_{\tau_0,\tau,0}\chi^2(r)V_i(\Opw(\Th_i)\psi)\big)}\right|\\
\nn&&+\int_{\MM_{r_+(1+\dred/2), R_1}}\chi_{\tau_0,\tau,1}\chi^2(r)\Big|\Opw(\widetilde{S}^{-1,1}(\MM))\psi\ov{V_i(\Opw(\Th_i)\psi)}\Big|\\
\nn&\les& \int_{\MM_{r_+(1+\dred/2), R_1}}|\psi||\Opw(\widetilde{S}^{0,1}(\MM))\psi|\\
\nn&&+\int_{\MM_{r_+(1+\dred/2), R_1}}\Big|\Opw(\widetilde{S}^{-1,1}(\MM))\psi\ov{V_i(\Opw(\Th_i)\chi_{\tau_0,\tau,1}\psi)}\Big|+\M[\psi](\Reals)\\
\nn&\les& \M[\psi](\Reals)+\Big(\M[\psi](\Reals)\Big)^{\frac{1}{2}}\left(\int_{\MM_{r_+(1+\dred/2), R_1}}|\pr^{\leq 1}(\chi_{\tau_0,\tau,1}\psi)|^2\right)^{\frac{1}{2}}\\
&\les&  \M[\psi](\Reals)+\Big(\M[\psi](\Reals)\Big)^{\frac{1}{2}}\Big(\EM[\psi](\Reals)\Big)^{\frac{1}{2}} 
\eea
which deals with the last line in \eqref{eq:intermediarycontrolofintRechirFiVipsiichionMMrplusoneplusderedon2toR1tau0tau}. To control the before to last line in \eqref{eq:intermediarycontrolofintRechirFiVipsiichionMMrplusoneplusderedon2toR1tau0tau}, we rely again on the properties of the cut-offs $\chi_{\tau_0, \tau,j}$, $j=0,1$, and obtain 
\beaa
&&\left|\int_{\MM_{r_+(1+\dred/2), R_1}(\tau_0, \tau)}{\Re\Big(}\chi^2(r)\frac{1}{|q|^2}\Big[|q|^2\Gac\pr^2+|q|^2\dk^{\leq 1}(\Gac)\pr, \Opw(\Theta_i)\Big]\psi\ov{V_i(\Opw(\Th_i)\psi)}{\Big)}\right|\\
&\les&\left|\int_{\MM_{r_+(1+\dred/2), R_1}}\chi_{\tau_0, \tau,0}\chi^2(r)\frac{1}{|q|^2}\Big[|q|^2\Gac\pr^2+|q|^2\dk^{\leq 1}(\Gac)\pr, \Opw(\Theta_i)\Big]\psi\ov{V_i(\Opw(\Th_i)\psi)}\right|\\
&&+\int_{\MM_{r_+(1+\dred/2), R_1}}\chi_{\tau_0, \tau,1}\left|\frac{1}{|q|^2}\Big[|q|^2\Gac\pr^2+|q|^2\dk^{\leq 1}(\Gac)\pr, \Opw(\Theta_i)\Big]\psi\right|\Big|V_i(\Opw(\Th_i)\psi)\Big|\\
&\les& \left|\int_{\MM_{r_+(1+\dred/2), R_1}}\chi_{\tau_0, \tau,0}\chi^2(r)\frac{1}{|q|^2}\Big[|q|^2\Gac\pr^2+|q|^2\dk^{\leq 1}(\Gac)\pr, \Opw(\Theta_i)\Big]\psi\ov{V_i(\Opw(\Th_i)\psi)}\right|\\
&&+\left(\int_{\MM_{r_+(1+\dred/2), R_1}}\left|\chi(r)\Big[|q|^2\Gac\pr^2+|q|^2\dk^{\leq 1}(\Gac)\pr, \Opw(\Theta_i)\Big]\psi\right|^2\right)^{\frac{1}{2}}\Big(\EM[\psi](\Reals)\Big)^{\frac{1}{2}}. 
\eeaa
Then}, we argue as in Lemma \ref{lemma:controloferrortermsinTXEmircolocalenergybulk}, using Lemma \ref{lem:gpert:MMtrap},  \eqref{eq:decaypropertiesofGac:microlocalregion} and Lemma \ref{lemma:actionmixedsymbolsSobolevspaces:MM}, to obtain 
\beaa
&&\left|\int_{\MM_{r_+(1+\dred/2), R_1}{(\tau_0, \tau)}}\Re\bigg(\chi^2(r)\frac{1}{|q|^2}\Big[|q|^2\Gac\pr^2+|q|^2\dk^{\leq 1}(\Gac)\pr, \Opw(\Theta_i)\Big]\psi\ov{V_i({\Opw(\Th_i)\psi})}\bigg)\right|\\
&\les& \left|\int_{\MM_{r_+(1+\dred/2), R_1}}{\chi_{\tau_0, \tau,0}}\chi^2(r)\Re\Big(\Big[\Gac, \Opw(\Theta_i)\pr\Big]\pr\psi\ov{V_i({\Opw(\Th_i)\psi})}\Big)\right| \\
&&{+\left(\int_{\MM_{r_+(1+\dred/2), R_1}}\left|\chi(r)\Big[\Gac, \Opw(\Theta_i)\pr\Big]\pr\psi\right|^2\right)^{\frac{1}{2}}\Big(\EM[\psi](\Reals)\Big)^{\frac{1}{2}}}\\
&&+ \ep\EM[\psi](\Reals)+\ep\int_{H_{R_1}}|\pr^{\leq 1}\psi|^2\\
&\les& {\left|\int_{\MM_{r_+(1+\dred/2), R_1}}\chi_{\tau_0, \tau,0}\chi^2(r)\Re\Big(\Big[\Gac, \Opw(\Theta_i)\pr\Big]\pr\psi\ov{V_i(\Opw(\Th_i)\psi)}\Big)\right|} \\
&&{+\left(\int_{\MM_{r_+(1+\dred/2), R_1}}\left|\chi(r)\Big[\Gac, \Opw(\Theta_i)\pr\Big]\pr\psi\right|^2\right)^{\frac{1}{2}}\Big(\EM[\psi](\Reals)\Big)^{\frac{1}{2}}+ \ep\EM[\psi](\Reals)},
\eeaa
where we used \eqref{choiceofR1:energyestimate:kerrpert} in the last inequality. {Now, as in Step 3 of the proof of Lemma \ref{lemma:controloferrortermsinTXEmircolocalenergybulk}, we decompose $\pr$ into $\pr_r$ and $\prtan$, and rely on \eqref{eq:decompositionofpr2psiinfunctionwaveandprprtan:microlocal} and the analog of \eqref{eq:lowerboundDeltaovermodqsquare} on $\MM_{r_+(1+\dred/2), R_1}$. We infer 
\begin{align*}
&\left|\int_{\MM_{r_+(1+\dred/2), R_1}(\tau_0, \tau)}\Re\bigg(\chi^2(r)\frac{1}{|q|^2}\Big[|q|^2\Gac\pr^2+|q|^2\dk^{\leq 1}(\Gac)\pr, \Opw(\Theta_i)\Big]\psi\ov{V_i(\Opw(\Th_i)\psi)}\bigg)\right|\\
\les& \left|\int_{\MM_{r_+(1+\dred/2), R_1}}\chi_{\tau_0, \tau,0}\chi^2(r)\Re\left(\prtan\left(\Big[\Gac, \Opw(\widetilde{S}^{1,1}(\MM))\Big]O(\dred^{-1})\psi\right)\ov{V_i(\Opw(\Th_i)\psi)}\right)\right| \\
&+\left(\int_{\MM_{r_+(1+\dred/2), R_1}}\left|\chi(r)\prtan\left(\Big[\Gac, \Opw(\widetilde{S}^{1,1}(\MM))\Big]O(\dred^{-1})\psi\right)\right|^2\right)^{\frac{1}{2}}\Big(\EM[\psi](\Reals)\Big)^{\frac{1}{2}}\\
& +\left|\int_{\MM_{r_+(1+\dred/2), R_1}}\chi_{\tau_0, \tau,0}\chi^2(r)\Re\Big(\Big[\Gac, \Opw(\Theta_i)\Big]\Big(O(\dred^{-1})\square_\g\psi+O(\dred^{-1})\pr\psi\Big)\ov{V_i(\Opw(\Th_i)\psi)}\Big)\right| \\
&+\left(\int_{\MM_{r_+(1+\dred/2), R_1}}\left|\chi(r)\Big[\Gac, \Opw(\Theta_i)\Big]\Big(O(\dred^{-1})\square_\g\psi+O(\dred^{-1})\pr\psi\Big)\right|^2\right)^{\frac{1}{2}}\Big(\EM[\psi](\Reals)\Big)^{\frac{1}{2}}\\
&+ \ep\EM[\psi](\Reals).
\end{align*}
Arguing like in Step 3 and Step 4 of Lemma \ref{lemma:controloferrortermsinTXEmircolocalenergybulk}, using in particular a dyadic decomposition, \eqref{eq:decaypropertiesofGac:microlocalregion}, and the commutator estimates of Lemma \ref{lemma:bascicommutatorlemmawithelementaryproof:mixedsymbols:MMcase}, we infer
\beaa
&&\left|\int_{\MM_{r_+(1+\dred/2), R_1}(\tau_0, \tau)}\chi^2(r)\frac{1}{|q|^2}\Re\bigg(\Big[|q|^2\Gac\pr^2+|q|^2\dk^{\leq 1}(\Gac)\pr, \Opw(\Theta_i)\Big]\psi\ov{V_i(\Opw(\Th_i)\psi)}\bigg)\right|\\
&\les& \ep\int_{\Mntrap_{r\leq R_1}}|F|^2+\ep\int_{\Mtrap}\tau^{-1-\dec}|F|^2 +\ep\int_{\Mtrap}\left|\Opw(\widetilde{S}^{-1,0}(\MM))F\right|^2+\ep\EM[\psi](\Reals).
\eeaa
Together with \eqref{eq:intermediarycontrolofintRechirFiVipsiichionMMrplusoneplusderedon2toR1tau0tau} and \eqref{eq:intermediarycontrolofintRechirFiVipsiichionMMrplusoneplusderedon2toR1tau0tau:aux1}}, this yields
{\bea\lab{eq:estimatemaincontributiontowidetildeNNMtrapforenergy:003}
&&\left|\int_{\MM_{r_+(1+\dred/2), R_1}(\tau_0, \tau)}{\Re\Big(}\chi(r)F_i\ov{V_i(\psi_{i,\chi})}{\Big)}\right|\nn\\
&\les&\left|\int_{\Mtrap(\tau_0, \tau)}{\Re\Big(}{|q|^{-2}}\Opw(\Theta_i)(|q|^2F)\ov{V_i(\Opw(\Th_i)\psi)}{\Big)}\right|+\int_{\Mntrap_{r\leq R_1}}|F|^2+\M[\psi](\Reals)\nn\\
&& {+\Big(\M[\psi](\Reals)\Big)^{\frac{1}{2}}\Big(\EM[\psi](\Reals)\Big)^{\frac{1}{2}} +\ep\int_{\Mtrap}\tau^{-1-\dec}|F|^2 +\ep\int_{\Mtrap}\left|\Opw(\widetilde{S}^{-1,0}(\MM))F\right|^2}\nn\\
&&+\ep\EM[\psi](\Reals)\nn\\
\nn&\les&\JJ_i(\tau_0,\tau)+\M[\psi](\Reals) {+\Big(\M[\psi](\Reals)\Big)^{\frac{1}{2}}\Big(\EM[\psi](\Reals)\Big)^{\frac{1}{2}}} +\ep\EM[\psi](\Reals)\\
&&+\int_{\Mntrap_{r\leq R_1}}|F|^2 { +\ep\int_{\Mtrap}\tau^{-1-\dec}|F|^2 +\ep\int_{\Mtrap}\left|\Opw(\widetilde{S}^{-1,0}(\MM))F\right|^2,}
\eea
where we have defined
\bea
\lab{eq:definitionofJJiterm}
\JJ_i(\tau_0,\tau):=\left|\int_{\Mtrap(\tau_0, \tau)}{\Re\Big(}{|q|^{-2}}\Opw(\Theta_i)(|q|^2F)\ov{V_i(\Opw(\Th_i)\psi)}{\Big)}\right|, \,\,\, {i=-1,0,1,\ldots, \iota}.
\eea
Plugging in \eqref{eq:intermediarycontrolofenergypsiionrp1pdredR1}, we deduce
\bea\lab{eq:intermediarycontrolofenergypsiionrp1pdredR1:1}
\nn\E_{r_+(1+\dred), R_1}[\psi_i](\tau) &\les& \E_{r_+(1+\dred/2), R_1}[\psi_i](\tau_0)+\widetilde{\M}[\psi]+\JJ_i(\tau_0,\tau) +\ep\EM[\psi](\Reals)\\
&& {+\Big(\M[\psi](\Reals)\Big)^{\frac{1}{2}}\Big(\EM[\psi](\Reals)\Big)^{\frac{1}{2}}} +\int_{\Mntrap_{r\leq R_1}}|F|^2\nn\\
&& { +\ep\int_{\Mtrap}\tau^{-1-\dec}|F|^2 +\ep\int_{\Mtrap}\left|\Opw(\widetilde{S}^{-1,0}(\MM))F\right|^2.}
\eea}

\noindent{\bf Step 5.} Next, we integrate \eqref{eq:intermediarycontrolofenergypsiionrp1pdredR1:1} in $\tau_0$ for $\tau_0$ in the interval $[-1, 0]$. This yields
\begin{align}\lab{eq:intermediarycontrolofenergypsiionrp1pdredR1:2}
\nn\E_{r_+(1+\dred), R_1}[\psi_i](\tau) \les{}&\int_{-1}^0\E_{r_+(1+\dred/2), R_1}[\psi_i](\tau_0)d\tau_0 +{\widetilde{\M}[\psi]}+\sup_{\tau_0\in[-1,0]}\JJ_i(\tau_0,\tau)\\
& {+\Big(\M[\psi](\Reals)\Big)^{\frac{1}{2}}\Big(\EM[\psi](\Reals)\Big)^{\frac{1}{2}}}  {+\ep\EM[\psi](\Reals)+\int_{\Mntrap_{r\leq R_1}}|F|^2}\nn\\
& { +\ep\int_{\Mtrap}\tau^{-1-\dec}|F|^2 +\ep\int_{\Mtrap}\left|\Opw(\widetilde{S}^{-1,0}(\MM))F\right|^2.}
\end{align}
Next, we estimate the first term on the RHS of \eqref{eq:intermediarycontrolofenergypsiionrp1pdredR1:2}. We have
\beaa
\int_{-1}^0\E_{r_+(1+\dred/2), R_1}[\psi_i](\tau_0)d\tau_0 &\les& \M[\psi](\Reals)+\int_{\MM_{r_+(1+\dred/2), R_1}(-1, 0)}|\prtan\psi_i|^2,
\eeaa
where the notation $\prtan$ has been introduced in \eqref{def:tangentialderivativeonHr:Kerrpert}. Also, since $\psi=0$ for $\tau\leq 1$, we have
\beaa
\int_{\MM_{r_+(1+\dred/2), R_1}(-1, 0)}|\prtan\psi_i|^2\les \int_{\MM_{r_+(1+\dred/2), R_1}}|\psi|^2
\eeaa
and hence
\beaa
\int_{-1}^0\E_{r_+(1+\dred/2), R_1}[\psi_i](\tau_0)d\tau_0 &\les& \M[\psi](\Reals),
\eeaa
which together with \eqref{eq:intermediarycontrolofenergypsiionrp1pdredR1:2} finally implies, for all $1\leq i\leq \iota$,  
\bea\lab{eq:intermediarycontrolofenergypsiionrp1pdredR1:3}
\nn\E_{r_+(1+\dred), R_1}[\psi_i](\tau) &\les& {\widetilde{\M}[\psi]}
+\sup_{\tau_0\in[-1,0]}\JJ_i(\tau_0,\tau){+\ep\EM[\psi](\Reals)}\\
&& {+\Big(\M[\psi](\Reals)\Big)^{\frac{1}{2}}\Big(\EM[\psi](\Reals)\Big)^{\frac{1}{2}}} {+\int_{\Mntrap_{r\leq R_1}}|F|^2}\nn\\
&&{ +\ep\int_{\Mtrap}\tau^{-1-\dec}|F|^2 +\ep\int_{\Mtrap}\left|\Opw(\widetilde{S}^{-1,0}(\MM))F\right|^2.}
\eea

\noindent{\bf Step 6.} Next, we estimate the case $i=0$. Recalling that the symbol $e\in\widetilde{S}^{1,0}(\MM)$ defined in \eqref{eq:defintionofthesymboleasasquarerootofsigma2TXEmsumofsquares:1:defe} verifies \eqref{eq:defintionofthesymboleasasquarerootofsigma2TXEmsumofsquares:1}, and using the {property} of $\Theta_0$ in {\eqref{eq:definitionofpartitionofunityThetajj=1toiota:Theta0}}, there exists a constant $c>0$ such that\footnote{Indeed, in view of \eqref{eq:definitionofpartitionofunityThetajj=1toiota:Theta0}, we have $\text{supp}(\Theta_0)\cap\textrm{supp}(\widetilde{\chi}_5)=\emptyset$ and hence $e^2\gtrsim 1+|\Xi|^2$ on $\text{supp}(\Theta_0)$.}
\beaa
e_3:=\sqrt{e^2 -c\Theta_0^2|\Xi|^2}, \quad  e_3\in\widetilde{S}^{1,0}(\MM), 
\eeaa
which implies, arguing as in Step 3, 
\bea\lab{eq:intermediarycontrolofenergypsiionrp1pdredR1:4}
\int_{\MM_{r_+(1+\dred/2), R_1}}|\partial \psi_0|^2 &\les& {\widetilde{\M}[\psi]}.
\eea
Then, we consider the divergence identity \eqref{eq:EnerIden:General:wave} with $\psi=\psi_{0,\chi}$, $X=V_0$ and $w=0$, where {$V_0=\pr_{\tt}+d_0(r)\pr_{\tphi}$ is a timelike vectorfield for $r>r_+(1+\dred/2)$ and equals $\pr_{\tt}$ for $r\geq 10m$}. Proceeding similarly to case $i\neq 0$, and noticing that the case $i=0$ is significantly simpler thanks to \eqref{eq:intermediarycontrolofenergypsiionrp1pdredR1:4}, we obtain the following  analog  of \eqref{eq:intermediarycontrolofenergypsiionrp1pdredR1:3} 
\bea\lab{eq:intermediarycontrolofenergyps0ionrp1pdredR1:0000}
\E_{r_+(1+\dred), R_1}[\psi_0](\tau) &\les& {\widetilde{\M}[\psi]} +{\sup_{\tau_0\in[-1,0]}\JJ_0(\tau_0,\tau)}  +\ep\EM[\psi](\Reals)\nn\\
&& {+\Big(\M[\psi](\Reals)\Big)^{\frac{1}{2}}\Big(\EM[\psi](\Reals)\Big)^{\frac{1}{2}}} {+\int_{\Mntrap_{r\leq R_1}}|F|^2}\nn\\
&&{ +\ep\int_{\Mtrap}\tau^{-1-\dec}|F|^2 +\ep\int_{\Mtrap}\left|\Opw(\widetilde{S}^{-1,0}(\MM))F\right|^2.}
\eea

{Next, we estimate the case $i=-1$.} 
{In view of $\Theta_{-1}\in\widetilde{S}^{-\infty,0}(\MM)$ as stated in \eqref{eq:definitionofpartitionofunityThetajj=1toiota:Theta-1}, we have}
\bea\lab{eq:intermediarycontrolofenergypsiionrp1pdredR1:4:4444}
\int_{\MM_{r_+(1+\dred/2), R_1}}|\partial\psi_{-1}|^2 &\les& \M[\psi](\Reals).
\eea
 Proceeding as for the case $i=0$, replacing \eqref{eq:intermediarycontrolofenergypsiionrp1pdredR1:4} by \eqref{eq:intermediarycontrolofenergypsiionrp1pdredR1:4:4444}, and noticing that the case $i=-1$ is even easier than the case $i=0$, we obtain the following analog of \eqref{eq:intermediarycontrolofenergyps0ionrp1pdredR1:0000}
\beaa
\E_{r_+(1+\dred), R_1}[\psi_{-1}](\tau) &\les& \M[\psi](\Reals) +\ep\EM[\psi](\Reals)+{\sup_{\tau_0\in[-1,0]}\JJ_{-1}(\tau_0,\tau)}\nn\\
&& {+\Big(\M[\psi](\Reals)\Big)^{\frac{1}{2}}\Big(\EM[\psi](\Reals)\Big)^{\frac{1}{2}}} {+\int_{\Mntrap_{r\leq R_1}}|F|^2}\nn\\
&&{ +\ep\int_{\Mtrap}\tau^{-1-\dec}|F|^2 +\ep\int_{\Mtrap}\left|\Opw(\widetilde{S}^{-1,0}(\MM))F\right|^2,}
\eeaa
{where, in defining $\JJ_{-1}(\tau_0,\tau)$ as in \eqref{eq:definitionofJJiterm}, we take $V_{-1}=V_0$ with $V_0$ as introduced above.}
Summing with \eqref{eq:intermediarycontrolofenergypsiionrp1pdredR1:3} for all $i=1,\cdots,\iota$, and with    \eqref{eq:intermediarycontrolofenergyps0ionrp1pdredR1:0000}, and taking the supremum in $\tau\in\Reals$, we deduce 
\bea\lab{eq:degenerateenergy:psii:sum:sup:middle:0101}
&& {\sup_{\tau\in\Reals}}\left(\sum_{i=-1}^\iota\E_{r_+(1+\dred), R_1}[\psi_i](\tau)\right)\nn\\
&\les&{\widetilde{\M}[\psi]}+\sup_{-1\leq \tau_0<\tau}\sum_{i={-1}}^{\iota}\JJ_i(\tau_0,\tau)+{\int_{\Mntrap_{r\leq R_1}}|F|^2} +\ep\EM[\psi](\Reals)\nn\\ 
&& {+\Big(\M[\psi](\Reals)\Big)^{\frac{1}{2}}\Big(\EM[\psi](\Reals)\Big)^{\frac{1}{2}}  +\ep\int_{\Mtrap}\tau^{-1-\dec}|F|^2}\nn\\ 
&& {+\ep\int_{\Mtrap}\left|\Opw(\widetilde{S}^{-1,0}(\MM))F\right|^2.}
\eea
Now, notice from \eqref{eq:definitionofpartitionofunityThetajj=0toiota:propsumto1} that 
\beaa
\sum_{i=-1}^\iota\Theta_i=1\quad\textrm{on}\,\,\GG_\Xi, 
\eeaa
which implies 
\beaa
\sum_{i=-1}^\iota\psi_i = \sum_{i=-1}^\iota\Opw(\Theta_i)\psi=\Opw\left(\sum_{i=-1}^\iota\Theta_i\right)\psi=\Opw(1)\psi=\psi
\eeaa
and hence
\beaa
\sup_{\tau\in\Reals}\E_{r_+(1+\dred), R_1}[\psi](\tau) &\les& \sup_{\tau\in\Reals}\left(\sum_{i=-1}^\iota\E_{r_+(1+\dred), R_1}[\psi_i](\tau)\right).
\eeaa
Together with \eqref{eq:degenerateenergy:psii:sum:sup:middle:0101}, we deduce  
\bea\lab{eq:degenerateenergy:psii:sum:sup:middle}
\nn&&\sup_{\tau\in\Reals}\E_{r_+(1+\dred), R_1}[\psi](\tau) 
{+{\sup_{\tau\in\Reals}}\left(\sum_{i=-1}^\iota\E_{r_+(1+\dred), R_1}[\psi_i](\tau)\right)}\\
&\les& {\widetilde{\M}[\psi]}+\sup_{-1\leq \tau_0<\tau}\sum_{i={-1}}^{\iota}\JJ_i(\tau_0,\tau)+{\int_{\Mntrap_{r\leq R_1}}|F|^2} +\ep\EM[\psi](\Reals)\nn\\ 
&& {+\Big(\M[\psi](\Reals)\Big)^{\frac{1}{2}}\Big(\EM[\psi](\Reals)\Big)^{\frac{1}{2}}  +\ep\int_{\Mtrap}\tau^{-1-\dec}|F|^2}\nn\\ 
&& {+\ep\int_{\Mtrap}\left|\Opw(\widetilde{S}^{-1,0}(\MM))F\right|^2.}
\eea

\noindent{\bf Step 7.} Next, we derive energy estimates for the wave equation \eqref{eq:scalarwave} of $\psi$ in $\MM_{R_1, +\infty}(-\infty, \tau)$. To this end, we consider the divergence identity \eqref{eq:EnerIden:General:wave} with $X=\pr_\tau$ and $w=0$. Integrating it on $\MM_{R_1, +\infty}(-\infty, \tau)$, and using the fact that $\psi=0$ for $\tau\leq 1$, we infer
\beaa
&&\int_{\Si_{R_1,+\infty}(\tau)}\QQ_{\a\b}[\psi]\pr_\tau^{\b}N_{\Si(\tau)}^\a\\
&\les& \int_{\MM_{R_1,+\infty}(-\infty, \tau)}\left|{}^{(\pr_\tau)} \pi \cdot \QQ[\psi]\right|+ \left|\int_{\MM_{R_1,+\infty}(-\infty, \tau)}{\Re\Big(}F\ov{\pr_{\tau}\psi}{\Big)}\right|+\M[\psi](\Reals),
\eeaa
where we have estimated the boundary term on $r=R_1$ by the last term on the RHS thanks to \eqref{choiceofR1:energyestimate:kerrpert}. We infer
\beaa
\E_{R_1,+\infty}[\psi](\tau) &\les& \int_{\MM_{R_1,+\infty}(-\infty, \tau)}\left|{}^{(\pr_\tau)} \pi \cdot \QQ[\psi]\right|+ \left|\int_{\MM_{R_1,+\infty}(-\infty, \tau)}{\Re\Big(}F\ov{\pr_{\tau}\psi}{\Big)}\right|+\M[\psi](\Reals).
\eeaa
Estimating the term involving $\pi \cdot \QQ[\psi]$ thanks to Lemmas \ref{lemma:controlofdeformationtensorsforenergyMorawetz}
 and \ref{lemma:basiclemmaforcontrolNLterms:ter}, we infer
 \beaa
\E_{R_1,+\infty}[\psi](\tau) &\les&  \left|\int_{\MM_{R_1,+\infty}(-\infty, \tau)}{\Re\Big(}F\ov{\pr_{\tau}\psi}{\Big)}\right|+\M[\psi](\Reals)+\ep\sup_{\tau\in\Reals}\E_{R_1,+\infty}[\psi](\tau).
\eeaa
Taking the supremum in $\tau\in\Reals$, and together with \eqref{eq:degenerateenergy:psii:sum:sup:middle}, we infer, 
\bea
\lab{eq:energyawayfromhorizon:kerrandpertu:energyestimatepart:robust}
&&\sup_{\tau\in\Reals}\E_{r_+(1+\dred), +\infty}[\psi](\tau) {+{\sup_{\tau\in\Reals}}\left(\sum_{i=-1}^\iota\E_{r_+(1+\dred), R_1}[\psi_i](\tau)\right)}\nn\\
&\les& {\widetilde{\M}[\psi]}+\sup_{-1\leq \tau_0<\tau}\sum_{i={-1}}^{\iota}\JJ_i(\tau_0,\tau)+{\int_{\Mntrap}|F|^2} +\ep\EM[\psi](\Reals)\nn\\
&&+\sup_{\tau\in\Reals}\left|\int_{\MM_{R_1,+\infty}(-\infty, \tau)}{\Re\Big(}F\ov{\pr_{\tau}\psi}{\Big)}\right| {+\Big(\M[\psi](\Reals)\Big)^{\frac{1}{2}}\Big(\EM[\psi](\Reals)\Big)^{\frac{1}{2}}} \nn\\ 
&& {+\ep\int_{\Mtrap}\tau^{-1-\dec}|F|^2 +\ep\int_{\Mtrap}\left|\Opw(\widetilde{S}^{-1,0}(\MM))F\right|^2,}
\eea
{with $\JJ_i(\tau_0,\tau)$, $i=-1,0,1,\ldots, \iota$, defined in \eqref{eq:definitionofJJiterm}.}

\noindent{\bf Step 8.} Next, we rely on the redshift estimates of Lemma \ref{lemma:redshiftestimates} with $s=0$, $\tau_1=-\infty$ and $\tau_2=+\infty$. Since $\psi=0$ for $\tau\leq 1$, we infer 
\bea
\EMF_{r\leq r_+(1+\dred)}[\psi](\Reals) \les \dred^{-1}\M_{r_+(1+\dred), r_+(1+2\dred)}[\psi](\Reals)+\int_{\MM_{r\leq r_+(1+2\dred)}}|F|^2.
\eea
Also, since $\dred$ has already been fixed, we may compress it in $\les$ which yields
\beaa
\EMF_{r\leq r_+(1+\dred)}[\psi](\Reals) &\les& \M[\psi](\Reals)+\int_{\MM_{r\leq r_+(1+2\dred)}}|F|^2.
\eeaa
Together with \eqref{eq:energyawayfromhorizon:kerrandpertu:energyestimatepart:robust}, and by taking $\ep$ small enough {and using the fact that $[r_+(1+2\dbl), 10m]\subset[r_+(1+\dred), R_1]$}, we infer  
\beaa
&&\sup_{\tau\in\Reals}\E[\psi](\tau) {+{\sup_{\tau\in\Reals}}\left(\sum_{i=-1}^\iota\E_{r_+(1+2\dbl), 10m}[\psi_i](\tau)\right)}\\
 &\les&  {\widetilde{\M}[\psi]}+\sup_{-1\leq \tau_0<\tau}\sum_{i={-1}}^{\iota}\JJ_i(\tau_0,\tau) +{\int_{\Mntrap}|F|^2}+\sup_{\tau\in\Reals}\left|\int_{\MM_{R_1,+\infty}(-\infty, \tau)}{\Re\Big(}F\ov{\pr_{\tau}\psi}{\Big)}\right|\\
 &&+\sup_{\tau\in\Reals}\left|\int_{\MM_{R_1,+\infty}(-\infty, \tau)}{\Re\Big(}F\ov{\pr_{\tau}\psi}{\Big)}\right|  {+\ep\int_{\Mtrap}\tau^{-1-\dec}|F|^2 +\ep\int_{\Mtrap}\left|\Opw(\widetilde{S}^{-1,0}(\MM))F\right|^2,}
\eeaa
{where we have used $\psi_i=\Opw(\Th_i)\psi$ from \eqref{def:psiiandFi:kerrpert},}
and hence\footnote{{In fact, in this paper, we only need the control of the energy of $\psi$ provided by \eqref{eq:energyawayfromhorizon:kerrandpertu:energyestimatepart}. The additional control for the energy of $\psi_i$, $i=-1,\cdots, \iota$, for $r\in [r_+(1+2\dbl), 10m]$ is used in the corresponding derivation of conditional energy estimates for Teukolsky in \cite{MaSz25}.}}
\bea\lab{eq:energyawayfromhorizon:kerrandpertu:energyestimatepart}
\nn&&\sup_{\tau\in\Reals}\E[\psi](\tau) {+{\sup_{\tau\in\Reals}}\left(\sum_{i=-1}^\iota\E_{r_+(1+2\dbl), 10m}[\psi_i](\tau)\right)}\\ 
&\les&  {\widetilde{\M}[\psi]}+\sup_{\tau\in\Reals}\left|\int_{\Mntrap(-\infty, \tau)}{\Re\Big(}F\ov{\pr_{\tau}\psi}{\Big)}\right|+{\int_{\Mntrap}|F|^2}+\sup_{-1\leq \tau_0<\tau}\sum_{i={-1}}^{\iota}\JJ_i(\tau_0,\tau)\nn\\
&& {+\ep\int_{\Mtrap}\tau^{-1-\dec}|F|^2 +\ep\int_{\Mtrap}\left|\Opw(\widetilde{S}^{-1,0}(\MM))F\right|^2,}
\eea
{with $\JJ_i(\tau_0,\tau)$, $i=-1,0,1,\ldots, \iota$, defined in \eqref{eq:definitionofJJiterm}.}

\noindent{\bf Step 9.} Next, we estimate the last term on the RHS of \eqref{eq:energyawayfromhorizon:kerrandpertu:energyestimatepart}. To this end, we introduce the following notation 
\bea\lab{eq:defmathcalJthatincludestheminon3termsandyieldsbothestimsametime}
\nn\mathcal{J} &:=&\min\Bigg[{\sum_{i=1}^{\iota}}\int_{\Mtrap}|F||\Opw(\Theta_i)V_i\Opw(\Theta_i)\psi|, \left(\int_{\Mtrap}\tau^{1+\dec}|F|^2\right)^{\frac{1}{2}}\left(\EM[\psi](\Reals)\right)^{\frac{1}{2}}, \\
&& \qquad\qquad\qquad\qquad\qquad\qquad\qquad\qquad\qquad\left(\int_{\Mtrap}|\pr F|^2\right)^{\frac{1}{2}}\Big({\EM[\psi](\Reals)}\Big)^{\frac{1}{2}}\Bigg].
\eea
Recall that
\beaa
\JJ_i(\tau_0,\tau)={\left|\int_{\Mtrap(\tau_0, \tau)}{\Re\Big(}{|q|^{-2}}\Opw(\Theta_i)(|q|^2F)\ov{V_i(\Opw(\Th_i)\psi)}{\Big)}\right|}.
\eeaa
{We rely on the properties \eqref{eq:definitionofchitau0taui:i=01} of the cut-offs $\chi_{\tau_0, \tau,j}$, $j=0,1$, and use Proposition \ref{prop:PDO:MM:Weylquan:mixedoperators} and Lemma \ref{lemma:actionmixedsymbolsSobolevspaces:MM} to obtain 
\bea
\lab{eq:controlofJJitau0tau:inproof}
\JJ_i(\tau_0,\tau)&\les&\int_{\Mtrap}\chi_{\tau_0, \tau,1}\left|{|q|^{-2}}\Opw(\Theta_i)(|q|^2F)\right|\left|V_i(\Opw(\Th_i)\psi)\right|\nn\\
&&+\left|\int_{\Mtrap}\Re\Big(\chi_{\tau_0, \tau,0}{|q|^{-2}}\Opw(\Theta_i)(|q|^2F)\ov{V_i(\Opw(\Th_i)\psi)}{\Big)}\right|\nn\\
&\les&\int_{\Mtrap}\left|{|q|^{-2}}\Opw(\Theta_i)(|q|^2F)\right|\left|V_i(\Opw(\Th_i)\chi_{\tau_0, \tau,1}\psi)\right|\nn\\
&&+\int_{\Mtrap}\left|\Opw(\widetilde{S}^{0,0}(\MM))F\right|\left|\Opw(\widetilde{S}^{0,0}(\MM))\psi)\right|\nn\\
&&+\left|\int_{\Mtrap}\Re\Big(\chi_{\tau_0, \tau,0}{|q|^{-2}}\Opw(\Theta_i)(|q|^2F)\ov{V_i(\Opw(\Th_i)\psi)}{\Big)}\right|\nn\\
&\les& \bigg(\int_{\Mtrap}|F|^2\bigg)^{\frac{1}{2}}\bigg(\int_{\Mtrap} |\psi|^2\bigg)^{\frac{1}{2}}+\bigg(\int_{\Mtrap}|F|^2\bigg)^{\frac{1}{2}}\bigg(\sup_{\tau\in\Reals}\E[\psi](\tau)\bigg)^{\frac{1}{2}}\nn\\
&& +\left|\int_{\Mtrap}\Re\Big(\chi_{\tau_0, \tau,0}{|q|^{-2}}\Opw(\Theta_i)(|q|^2F)\ov{V_i(\Opw(\Th_i)\psi)}{\Big)}\right|.
\eea
Taking the adjoint of $\Opw(\Theta_i)$ for the last term, and using Proposition \ref{prop:PDO:MM:Weylquan:mixedoperators} and Lemma \ref{lemma:actionmixedsymbolsSobolevspaces:MM}, we deduce, for $i=1,2,\ldots, \iota$,
\beaa
\JJ_i(\tau_0,\tau) &\les& \int_{\Mtrap}|F||\Opw(\Theta_i)V_i\Opw(\Theta_i)\psi|+\M[\psi](\Reals)+\int_{\Mtrap}|F|^2\nn\\
&&+\bigg(\int_{\Mtrap}|F|^2\bigg)^{\frac{1}{2}}\bigg(\sup_{\tau\in\Reals}\E[\psi](\tau)\bigg)^{\frac{1}{2}}.
\eeaa 
Applying} 
{Cauchy-Schwarz, we have, in view of \eqref{eq:intermediarycontrolofenergypsiionrp1pdredR1:4} \eqref{eq:intermediarycontrolofenergypsiionrp1pdredR1:4:4444},
\beaa
\sup_{-1\leq \tau_0<\tau}\JJ_{-1}(\tau_0,\tau) +\sup_{-1\leq \tau_0<\tau}\JJ_{0}(\tau_0,\tau) 
\les \widetilde{\M}[\psi]+\int_{{\Mtrap}}|F|^2 +\bigg(\int_{\Mtrap}|F|^2\bigg)^{\frac{1}{2}}\bigg(\sup_{\tau\in\Reals}\E[\psi](\tau)\bigg)^{\frac{1}{2}}.
\eeaa}
Also, integrating by parts in $V_i$ {in the last term of} \eqref{eq:controlofJJitau0tau:inproof}, we have
\bea
 \JJ_i(\tau_0,\tau)&\les& {\bigg(\int_{\Mtrap}|\pr^{\leq 1}F|^2\bigg)^{\frac{1}{2}}\bigg(\int_{\Mtrap} |\psi|^2\bigg)^{\frac{1}{2}}}\nn\\
 &&
 {+\bigg(\int_{{\Mtrap}}|F|^2\bigg)^{\frac{1}{2}}\bigg(\sup_{\tau\in\Reals}\E[\psi](\tau)\bigg)^{\frac{1}{2}}, \qquad \forall i=-1,0,1,\ldots,\iota.}
\eea
Finally, using Lemma \ref{lem:gpert:MMtrap} {to estimate the last term of \eqref{eq:controlofJJitau0tau:inproof}}, we have{, for any $i=-1,0,1,\ldots, \iota$.}
\beaa
\JJ_i(\tau_0,\tau)&\les& \left(\int_{\Mtrap}\tau^{1+\dec}|F|^2\right)^{\frac{1}{2}}\left(\EM[\psi](\Reals)\right)^{\frac{1}{2}}+\M[\psi](\Reals)+\int_{{\Mtrap}}|F|^2,
\eeaa
and we infer from the three estimates above and the definition of $\JJ$ that 
\beaa
\sup_{-1\leq \tau_0<\tau}\sum_{i={-1}}^{\iota}\JJ_i(\tau_0,\tau)&\les&\JJ + {\widetilde{\M}[\psi](\Reals)}+\int_{{\Mtrap}}|F|^2 {+\bigg(\int_{{\Mtrap}}|F|^2\bigg)^{\frac{1}{2}}\bigg(\sup_{\tau\in\Reals}\E[\psi](\tau)\bigg)^{\frac{1}{2}}}.
\eeaa
Plugging this estimate to \eqref{eq:energyawayfromhorizon:kerrandpertu:energyestimatepart}, we infer
\bea\lab{eq:energyawayfromhorizon:kerrandpertu:energyestimatepart:v2:JJ}
\nn\sup_{\tau\in\Reals}\E[\psi](\tau) &\les&  {{\widetilde{\M}[\psi]}+\int_{\MM}|F|^2+\sup_{\tau\in\Reals}\left|\int_{\Mntrap(-\infty, \tau)}{\Re\Big(}F\ov{\pr_{\tau}\psi}{\Big)}\right|+\JJ}.
\eea

Now, in view of the definition of $\JJ$ in  \eqref{eq:defmathcalJthatincludestheminon3termsandyieldsbothestimsametime}, we have
\beaa
\nn\mathcal{J} &\leq&{\sum_{i=1}^\iota} \int_{\Mtrap}|F||\Opw(\Theta_i)V_i\Opw(\Theta_i)\psi|,
\eeaa
and 
\beaa
\nn\mathcal{J} &\leq& \bigg(\min\bigg(\int_{\Mtrap}\tau^{1+\dec}|F|^2, \int_{\Mtrap}|\pr F|^2\bigg)\bigg)^{\frac{1}{2}}\Big(\EM[\psi](\Reals)\Big)^{\frac{1}{2}}.
\eeaa
Together with \eqref{eq:energyawayfromhorizon:kerrandpertu:energyestimatepart:v2:JJ}, we infer
\beaa
\nn\sup_{\tau\in\Reals}\E[\psi](\tau) &\les& {\widetilde{\M}[\psi]}+\int_{\MM}|F|^2+\sup_{\tau\in\Reals}\left|\int_{\Mntrap(-\infty, \tau)}{\Re\Big(}F\ov{\pr_{\tau}\psi}{\Big)}\right|\\
&&+{\sum_{i=1}^\iota}\int_{\Mtrap}|F||\Opw(\Theta_i)V_i\Opw(\Theta_i)\psi|,
\eeaa
as stated in \eqref{thm:eq:nondeg:EnerandMora:Kerrandpert}, and 
\beaa
\nn\sup_{\tau\in\Reals}\E[\psi](\tau) &\les&  {\widetilde{\M}[\psi]}+\int_{\MM}|F|^2+\sup_{\tau\in\Reals}\left|\int_{\Mntrap(-\infty, \tau)}{\Re\Big(}F\ov{\pr_{\tau}\psi}{\Big)}\right|\\
\nn&& +\bigg(\min\bigg(\int_{\Mtrap}\tau^{1+\dec}|F|^2, \int_{\Mtrap}|\pr F|^2\bigg)\bigg)^{\frac{1}{2}}\Big(\EM[\psi](\Reals)\Big)^{\frac{1}{2}},
\eeaa
as stated in \eqref{eq:energyestimate:awayhorizon:Kerrandpert:lastsect}. This concludes the proof of Proposition \ref{thm:nondeg:EnerandMora:Kerrandpert}.
\end{proof}


\subsubsection{{End of the proof} of Theorem \ref{th:mainenergymorawetzmicrolocal}}
\label{subsubsect:proofoftheorem64:lastsubsubsection}


{We are now ready to conclude the proof of Theorem \ref{th:mainenergymorawetzmicrolocal}. Recall the conditional nondegenerate Morawetz-flux estimate \eqref{prop:eq:nondeg:Morawetz}, i.e 
\beaa
\nn&& \sup_{\tau\in\mathbb{R}}\E_{r\leq r_+(1+\dred)}[\psi](\tau)+{\widetilde{\MF}[\psi]}\\
\nn&\les& \int_{\Mtrap}|F||\pr_\tau\psi| +\int_{\Mntrap}|F|\big(|\pr_r\psi|+r^{-1}|\psi|\big) +\left|\int_{\Mntrap}{F\ov{\pr_{\tau}\psi}}\right| +\int_{\MM}|F|^2\\
&& +\int_{\MM}r^{-4}|\psi|^2 +\ep\sup_{\tau\in\mathbb{R}}\E[\psi](\tau),
\eeaa
and the condition energy estimate \eqref{thm:eq:nondeg:EnerandMora:Kerrandpert}, i.e., 
\beaa
\nn\sup_{\tau\in\Reals}\E[\psi](\tau) &\les& {\widetilde{\M}[\psi]}+\int_{\MM}|F|^2+\sup_{\tau\in\Reals}\left|\int_{\Mntrap(-\infty, \tau)}{\Re\Big(}F\ov{\pr_{\tau}\psi}{\Big)}\right|\\
&&+{\sum_{i=1}^\iota}\int_{\Mtrap}|F||\Opw(\Theta_i)V_i\Opw(\Theta_i)\psi|.
\eeaa
Combining these two estimates immediately yields
\beaa
\nn&& \widetilde{\EMF}[\psi]\\
\nn&\les& \int_{\MM}r^{-4}|\psi|^2+\int_{\Mtrap}|F||\pr_\tau\psi| +\sum_{i=1}^\iota\int_{\Mtrap}|F||\Opw(\Theta_i)V_i\Opw(\Theta_i)\psi|\\
&&+\int_{\Mntrap}|F|\big(|\pr_r\psi|+r^{-1}|\psi|\big) + \sup_{\tau\in\Reals}\left|\int_{\Mntrap(-\infty, \tau)}{F\ov{\pr_{\tau}\psi}}\right|+\int_{\MM}|F|^2 +\ep\sup_{\tau\in\mathbb{R}}\E[\psi](\tau),
\eeaa
and hence, for $\ep$ small enough,
\beaa
\widetilde{\EMF}[\psi] &\les& \int_{\MM}r^{-4}|\psi|^2+\int_{\Mtrap}|F||\pr_\tau\psi| +\sum_{i=1}^\iota\int_{\Mtrap}|F||\Opw(\Theta_i)V_i\Opw(\Theta_i)\psi|\\
&&+\int_{\Mntrap}|F|\big(|\pr_r\psi|+r^{-1}|\psi|\big) + \sup_{\tau\in\Reals}\left|\int_{\Mntrap(-\infty, \tau)}{F\ov{\pr_{\tau}\psi}}\right|+\int_{\MM}|F|^2.
\eeaa
In view of the definition \eqref{eq:definitionwidetildemathcalNpsijnormRHS} of $\widetilde{\mathcal{N}}[\psi, F](\mathbb{R})$, we infer
\beaa
\widetilde{\EMF}[\psi] &\les& \widetilde{\mathcal{N}}[\psi, F](\mathbb{R}){+}\int_{\MM}r^{-4}|\psi|^2,
\eeaa
as stated in \eqref{th:eq:mainenergymorawetzmicrolocal}. This concludes the proof of Theorem \ref{th:mainenergymorawetzmicrolocal}.}


\subsection{{Review of the symbols and operators appearing in the proof of Theorem \ref{th:mainenergymorawetzmicrolocal}}}


We provide  in this section a summary of the properties of useful symbols and operators that are introduced in Sections \ref{sec:proofofth:main:intermediary}  and \ref{sect:CondEMF:Dynamic} for the proof of Theorem \ref{th:mainenergymorawetzmicrolocal}. This will allow us, in our companion paper \cite{MaSz25}, to easily refer to the relevant material in this paper.

\begin{proposition}[Properties of symbols and operators in the proof of Theorem \ref{th:mainenergymorawetzmicrolocal}]
\lab{prop:symbolsandoperators}
We have the following properties in the spacetime region $\MM_{r_+(1+\dhor', R)}$\footnote{This region is where the microlocal Morawetz estimate is proved, with $\dhor'$ and $R$ introduced in Remark \ref{rmk:choiceofconstantRbymeanvalue}.} for the symbols and operators introduced in proving the microlocal energy-Morawetz estimates\footnote{In the following, the symbols in point \eqref{item:symbolsPDOs:1} are used in  Definition \ref{def:microlocalenergyMorawetznorms} to define the microlocal Morawetz norm, the pseudodifferential operators in point \eqref{item:symbolsPDOs:3} are used in Sections \ref{subsect:GeneralEnerIden:PD}--\ref{subsect:CondMoraFlux:Kerrperturb}   to prove the microlocal Morawetz estimate of Proposition \ref{prop:conditionaldegenerateMorawetzflux:pertKerrr:MM}, and the symbols and vectorfields in point \eqref{item:symbolsPDOs:4} are used in Section \ref{subsect:proofoftheorem64} to derive the energy estimates of Proposition \ref{thm:nondeg:EnerandMora:Kerrandpert}.}.
\begin{enumerate}
\item\lab{item:symbolsPDOs:1} (Symbols $\upsilon, r_{\trap}, \sigma_{\trap}, e$). The real valued symbol $\upsilon\in \widetilde{S}^{1,0}(\MM)$ is given as in \eqref{def:upsilonsymbol}  by
\bea
\lab{def:upsilonsymbol:copysummary}
\upsilon=\sqrt{1+\xi_0^2+\mathring{\ga}^{bc} \langle \xi, \pr_{x^b}\rangle \langle \xi, \pr_{x^c}\rangle}
\eea
and satisfies $\pr_r (\upsilon)=0$, 
the real valued symbol  $r_{\trap}\in\widetilde{S}^{0,0}(\MM)$ is given as in \eqref{eq:definitionofrtrapinfunctionofrmaxandcutoffinmathcalG5} and satisfies $\pr_r(r_{\trap})=0$, the real valued symbol $\sigma_{\trap}\in\widetilde{S}^{1,0}(\MM)$ is given as in Definition \ref{def:microlocalenergyMorawetznorms} by 
\beaa
\sigma_{\trap}=(r-r_{\trap})\upsilon,
\eeaa
and the real valued symbol $e\in \widetilde{S}^{1,0}(\MM)$ is given as in \eqref{eq:defintionofthesymboleasasquarerootofsigma2TXEmsumofsquares:1:defe} and satisfies in $\Mtrap$
\bea
\lab{eq:defintionofthesymboleasasquarerootofsigma2TXEmsumofsquares:1:copy}
e\gtrsim 1+ \left(\sqrt{1-\chi_5^2}+|\chi_5||r-r_{\trap}|\right)\upsilon.
\eea

\item\lab{item:symbolsPDOs:3} (PDOs $X, E$ and their symbols). The pseudodifferential operators $X\in \Opw(\widetilde{S}^{1,1}(\MM))$ and $E\in \Opw(\widetilde{S}^{0,0}(\MM))$ are given by
\bea
X:=\Opw(is_0\mu\xi_r+ib_{\tphi}\xiphi+ib_{\tt}\xit)+A\pr_\tau, \qquad E:=\Opw(e_0),
\eea
where $A\geq 2$ is a large enough constant, where the real valued symbols $b_{\tphi}\in \widetilde{S}^{0,0}(\MM)$ and $b_{\tt}\in \widetilde{S}^{0,0}(\MM)$ satisfy in $\Mtrap$  
\begin{equation}\lab{eq:estimatesofbtphiandbtt:intermsofe}
\begin{split}
&\big(|b_{\tphi}|+|b_{\tt}|\big)\upsilon \lesssim e, \quad \big(|b_{\tphi}|+|b_{\tt}|\big)\upsilon^2 \lesssim e^2,\\
&\big(|\{b_{\tphi}, b\}|+|\{b_{\tt}, b\}|\big)\upsilon \lesssim_{b} e\quad \forall\,\, b\in\widetilde{S}^{1,0}(\MM),
\end{split}
\end{equation}
and where the real valued symbols $s_0\in\widetilde{S}^{0,0}(\MM)$  and 
$e_0\in\widetilde{S}^{0,0}(\MM)$  satisfy in $\Mtrap$
\bea
\lab{eq:boundofe0s0:intermsofe}
|s_0|\upsilon \les e,\qquad |e_0|\upsilon \les e.
\eea

\item\lab{item:symbolsPDOs:4} (Symbols $\Th_i$ and vectorfields $V_i$, $i=-1,0,1,\ldots, \iota$). For all $i=-1,0,1,\ldots, \iota$, the real valued symbols $\Th_i\in\widetilde{S}^{0,0}(\MM)$ satisfy $\pr_r(\Th_i)=0$, and the vectorfields $V_i$ are  given by
\bea
V_i:=\pr_\tau+d_i(r)\pr_{\tphi}, \qquad i=-1,0, 1,\cdots,\iota, 
\eea 
for smooth real valued functions $d_i(r)$ supported  in $r\leq 10m$.
\end{enumerate}
\end{proposition}

\begin{proof}
We start with proving point \eqref{item:symbolsPDOs:1}. The properties for $\upsilon$, $r_{\trap}$ and $\sigma_{\trap}$ are immediate from their definition, and the properties of $e$ follow from its definition \eqref{eq:defintionofthesymboleasasquarerootofsigma2TXEmsumofsquares:1:defe} together with the estimate \eqref{eq:defintionofthesymboleasasquarerootofsigma2TXEmsumofsquares:1} and the following trivial bound in $\Mtrap$
\beaa
\sqrt{r^{-2}\xit^2+r^{-4}\xiphi^2 +r^{-4}\Lambda^2}\gtrsim \upsilon.
\eeaa

Next, we show point \eqref{item:symbolsPDOs:3}.  
We have, in view of \eqref{eq:generalformofthePDOmultipliersXandE} and  \eqref{eq:definitionofQhQyQfQzintermsofTXEands0s1e0}, 
\bea\lab{eq:definitionofthesymbolx1andX:summary}
\bsplit
X=&\Opw(i\mu s_0\xi_r)+A\pr_\tau+\Opw(x_1),\\
x_1=&\frac{is_0 \S_1}{\R} + i s_1 -iA\xit, \quad x_1\in\widetilde{S}^{1,0}(\MM).
\end{split}
\eea
Since $z_j=A(\xit+\chi_{z_j}\om_\HH\xiphi)$ for $j=1,2$, and $z_j=A\xit$ for $j=3,4,5$, we have, in view of \eqref{eq:finalchoicesofhyfzsymbols},
\bea\lab{eq:definitionofthesymbolx1andX:summary:bisx1}
x_1 &=& i\sum_{j=1}^2\chi_j^2\left(\frac{2(y_j+f_j)}{r^2+a^2}\S_1+A\chi_{z_j}\om_\HH\xiphi\right)+ i\sum_{j=3}^5\chi_j^2\frac{2(y_j+f_j)}{r^2+a^2}\S_1\nn\\
&=& i\sum_{j=1}^2\chi_j^2A\chi_{z_j}\om_\HH\xiphi+ i\sum_{j=1}^5\chi_j^2\frac{2(y_j+f_j)}{r^2+a^2}\S_1.
\eea
By defining $x_1=b_{\tt}\xit + b_{\tphi}\xiphi$, and 
recalling from \eqref{eq:computationofsymbolmodqsquareboxgam:1} that
\beaa
\S_1= (\R)(1-\mu \tmod')\xit+(a-\Delta\phimod')\xiphi, \qquad 
\S_1\in\widetilde{S}^{1,0}(\MM),
\eeaa
we infer
\bea\lab{eq:definitionofXandbttandbtphi:summary}
\bsplit
X=&\Opw(is_0\mu\xi_r+ib_{\tphi}\xiphi+ib_{\tt}\xit)+A\pr_\tau,\\
b_{\tt}=& 2(1-\mu \tmod')\sum_{j=1}^5\chi_j^2 (y_j+f_j), \quad b_{\tphi}=\sum_{j=1}^2\chi_j^2A\om_\HH\chi_{z_j}
+\frac{2(a-\Delta\phimod')}{r^2+a^2}\sum_{j=1}^5\chi_j^2(y_j+f_j),
\end{split}
\eea
with $b_{\tt}$ and $b_{\tphi}$ real valued symbols in $\widetilde{S}^{0,0}(\MM)$. 
In view of Lemma \ref{lem:specificchoice:normalizedcoord}, both $1-\mu \tmod'$ and $a-\Delta\phimod'$ vanish identically in $\MM_{r_+(1+2\dbl), 12m}$, and hence, the desired estimates \eqref{eq:estimatesofbtphiandbtt:intermsofe} then follow from the lower bound \eqref{eq:defintionofthesymboleasasquarerootofsigma2TXEmsumofsquares:1:copy} for $e$.

Next, in view of \eqref{eq:generalformofthePDOmultipliersXandE},   \eqref{eq:definitionofQhQyQfQzintermsofTXEands0s1e0} and \eqref{eq:finalchoicesofhyfzsymbols}, we have
\beaa
s_0&=&2\sum_{j=1}^5 \chi_j^2(y_j + f_j), \\
e_0&=&\sum_{j=1}^5 \chi_j^2\bigg(\mu h_j + \frac{2\mu r}{\R}y_j - \pr_r (\mu y_j) + \frac{2\mu r}{\R}f_j - \pr_r (\mu f_j) +\mu \pr_r f_j\bigg)\nn\\
&=&\sum_{j=1}^5 \chi_j^2\bigg(\mu h_j + \frac{2\mu r}{\R}y_j - \pr_r (\mu y_j) + \bigg(\frac{2\mu r}{\R} -\pr_r (\mu) \bigg)f_j \bigg),
\eeaa
both of which are real valued symbols in $\widetilde{S}^{0,0}(\MM)$.
By the choices of $h_j$, $y_j$ and $f_j$ made in Section \ref{subsect:scalarwave:Kerr:condiMora}, both $h_5$ and $y_5$ vanish identically in a neighborhood of $r_{max}$ and $f_5$ vanishes linearly at $r_{max}$, hence it follows from the definition \eqref{eq:definitionofrtrapinfunctionofrmaxandcutoffinmathcalG5} of $r_{\trap}$ that
\beaa
|s_0|+|e_0| \les \sum_{j=1}^4\chi_j^2 +\chi_5^2|r-r_{max}|=1-\chi_5^2+\chi_5^2|r-r_{\trap}| \les \sqrt{1-\chi_5^2} +|\chi_5| |r-r_{\trap}|
\eeaa
as desired. This concludes the proof of point \eqref{item:symbolsPDOs:3}.

In the end, we show point \eqref{item:symbolsPDOs:4}. By construction, see  \eqref{eq:definitionofpartitionofunityThetajj=0toiota:propsumto1} and the line above, $\Th_i=\Th_i(\Xi)$ are real valued symbols in $\widetilde{S}^{0,0}(\MM)$ and satisfy $\pr_r(\Th_i)=0$.  Also, in view of the definition \eqref{eq:defintionofthedifferentialoperatorsViformicrolocalNRGestimates} for $\{V_i\}_{i=1,2,\ldots, \iota}$ and the choice of $V_0$ and $V_{-1}$ in Step 6 of the proof for Proposition \ref{thm:nondeg:EnerandMora:Kerrandpert}, it follows that $V_i$ are vectorfields satisfying
\beaa
V_i=\pr_\tau+d_i(r)\pr_{\tphi}, \qquad i=-1,0, 1,\cdots,\iota, 
\eeaa
for smooth real valued functions $d_i(r)$ supported  in $r\leq 10m$.
\end{proof}


\section{Proof of Proposition \ref{prop:energymorawetzmicrolocalwithblackbox}}
\lab{sec:proofofprop:energymorawetzmicrolocalwithblackbox}


In order to prove Proposition \ref{prop:energymorawetzmicrolocalwithblackbox}, we make use of the following lemma, which proves an energy-Morawetz estimate for general inhomogeneous wave equations in a subextremal Kerr spacetime. 

\begin{lemma}[Energy-Morawetz estimate for inhomogeneous wave equations on Kerr]
\lab{lem:EMFestimates:inhomowave:Kerr:general}
Let $\psi$ be a solution to the wave equation
\bea
\square_{\gam}\psi=F.
\eea
Assume $\psi$ vanishes for $\tau\leq 1$ and assume $F$ is compactly supported in $[1, +\infty)$. Then, we have
the following energy-Morawetz estimate
\bea
\lab{eq:EMFestimates:inhomowave:general}
\EMF[\psi](\Reals) &\les& \widetilde{\mathcal{N}}'[\psi, F],
\eea
where $\widetilde{\mathcal{N}}'[\psi, F]$ is given by 
\beaa
\widetilde{\mathcal{N}}'[\psi, F] &:=&\widetilde{\mathcal{N}}'_{trap}[F] +\sup_{\tau\in\Reals}\bigg|\int_{\Mntrap(-\infty, \tau)}F{\ov{\pr_{\tau}\psi}}\bigg|+\int_{\Mntrap}\big(|\pr_r\psi|+r^{-1}|\psi|\big)|F|+\int_{\MM}|F|^2,
\eeaa
with 
\beaa
\widetilde{\mathcal{N}}'_{trap}[F] :=\min\left(\int_{\Mtrap}|\pr F|^2, \int_{\Mtrap}\tau^{1+\dec}|F|^2\right).
\eeaa
\end{lemma}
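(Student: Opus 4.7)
The plan is to identify \eqref{eq:EMFestimates:inhomowave:general} as a direct consequence of the black-box energy-Morawetz-flux estimate for inhomogeneous wave equations on any subextremal Kerr background proved by Dafermos--Rodnianski--Shlapentokh-Rothman in \cite{DRSR}. Since $\g=\gam$ exactly and $\psi$ vanishes for $\tau\le 1$, there are no perturbative errors to absorb and the only remaining task is to match the form of the trapping right-hand side produced in \cite{DRSR} with the norm $\widetilde{\mathcal{N}}'[\psi, F]$ used here.

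I would begin by separating the right-hand side into a non-trapping contribution and a trapping contribution. The non-trapping contribution, consisting of
\[
\sup_{\tau\in\Reals}\Big|\int_{\Mntrap(-\infty,\tau)}F\,\ov{\pr_\tau\psi}\Big|+\int_{\Mntrap}\big(|\pr_r\psi|+r^{-1}|\psi|\big)|F|+\int_{\MM}|F|^2,
\]
arises in the standard way from the physical-space multiplier estimates valid in the full subextremal range: the red-shift vector field of Dafermos--Rodnianski near the horizon (cf.~Lemma \ref{lemma:redshiftestimates}), the asymptotic Morawetz multiplier in the far region $r\ge R$ (cf.~Lemma \ref{lemma:exteriorMorawetzestimates}), and the $\pr_\tau$-energy current in between. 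None of these produces loss of derivative or interacts with the trapping set, so they yield exactly the listed non-trapping terms.

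The substantive step, and the place where the separability of Kerr and Whiting's mode stability enter essentially, is the trapping region $\Mtrap$. For the first alternative in the $\min$, the argument of \cite{DRSR} based on Carter separation of variables and a radial ODE analysis in each frequency regime produces an energy-Morawetz estimate whose trapping-forcing term is $\int_{\Mtrap}|\pr F|^2$, reflecting that at trapped frequencies one must pay a full derivative on the right-hand side (normal hyperbolicity of trapping in subextremal Kerr). For the second alternative $\int_{\Mtrap}\tau^{1+\dec}|F|^2$, I would instead write $\int_{\Mtrap} F\cdot S^1\psi$ using the trapping-adapted first-order operators of Definition \ref{def:microlocalenergyMorawetznorms} and apply the time-weighted Cauchy--Schwarz
\[
\int_{\Mtrap}|F||S^1\psi|\les \Big(\int_{\Mtrap}\tau^{1+\dec}|F|^2\Big)^{1/2}\Big(\int_{\Mtrap}\tau^{-1-\dec}|S^1\psi|^2\Big)^{1/2},
\]
estimating the second factor by a small multiple of $\EM[\psi](\Reals)$ via Lemma \ref{lem:gpert:MMtrap} and absorbing it into the left-hand side. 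Taking the minimum over the two variants delivers $\widetilde{\mathcal{N}}'_{trap}[F]$ and closes the estimate.

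The main obstacle I anticipate is bookkeeping rather than conceptual: one has to match the boundary terms at the spacelike boundary $\AA=\{r=r_+(1-\dhor)\}$ and at $\II_+$ in our $\tau$-foliation with those appearing in the horizon-penetrating foliation used in \cite{DRSR}, and to verify that the trapping-adapted operators $S^1$ we have introduced agree, modulo $\Opw(\widetilde{S}^{0,0}(\MM))$, with the trapping-adapted multipliers implicit in the mode decomposition of \cite{DRSR}. This is a routine symbolic identification and does not require any new analytical input beyond the black-box estimate itself. In particular, no conditional lower-order term of the form $\int_\MM r^{-4}|\psi|^2$ appears, in contrast to Theorem \ref{th:mainenergymorawetzmicrolocal}, because in the exact Kerr setting Whiting's mode stability allows one to absorb low frequencies directly.
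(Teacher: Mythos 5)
Your proposal is correct and follows essentially the same route as the paper: treat the degenerate Morawetz estimate of \cite{DRSR} as a black box (with no past cutoff since $\psi$ vanishes for $\tau\leq 1$), bound the non-trapping forcing terms by Cauchy--Schwarz against the multiplier structure, and obtain the trapping term $\widetilde{\mathcal{N}}'_{trap}[F]$ from the same two alternatives, namely moving a derivative onto $F$ (the paper does this by taking the adjoint of the tangential first-order part $S^1_1$ of the multipliers) or the $\tau$-weighted Cauchy--Schwarz via Lemma \ref{lem:gpert:MMtrap} with absorption into $\EM[\psi](\Reals)$. The only assembly difference is that the paper supplies the nondegenerate energy part not directly from \cite{DRSR} but from its own conditional energy estimate \eqref{eq:energyestimate:awayhorizon:Kerrandpert:lastsect} specialized to exact Kerr ($\ep=0$), whose conditional term $\int_{\MM_{r_+(1+\dhor'),10m}}|\Opw(e)\psi|^2$ is then controlled by the Morawetz-flux step, which is a minor variation rather than a different argument.
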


\begin{proof}
This lemma is an adaption of \cite[Proposition 9.8.1 and Proposition  13.1]{DRSR}. In view of our assumptions on $\psi$ and $F$, the solution $\psi$ is  past integrable and, in view of \cite[Theorem 3.2]{DRSR} for homogeneous scalar wave equation in a subextremal Kerr spacetime, is also future integrable. Hence, compared to \cite[Proposition 9.8.1]{DRSR}, we do not need to apply a cutoff in the past time to the solution and thus do not have $\int_{\Sigma_0}|\psi|^2$ present at the RHS. Repeating the proof of {\cite[Proposition  9.1]{DRSR}} in the same manner, we deduce a Morawetz-flux estimate
\bea
{\widetilde{\MF}[\psi]}&\les& \bigg|\int_{\MM} {F\ov{(S^1 +r^{-1} S^0)\psi}}\bigg|,
\eea
where {$S^1\in\Opw(\widetilde{S}^{1,1}(\MM))$} and {$S^0\in\Opw(\widetilde{S}^{0,0}(\MM))$} denote all the first order and zeroth order pseudodifferential and differential operators {used in the proof}  and where {$e\in\widetilde{S}^{1,0}(\MM)$ satisfies \eqref{eq:defintionofthesymboleasasquarerootofsigma2TXEmsumofsquares:1}}. In particular, the operators $S^1$ and $S^0$ are purely differential operators for $r$ large. 

On $\Mtrap$, we decompose $S^1=S^1_0\pr_r+S^1_1$ where $S^1_0\in\Opw(\widetilde{S}^{0,0}(\MM))$ and $S^1_1\in\Opw(\widetilde{S}^{1,0}(\MM))$. We apply Cauchy--Schwarz to deduce 
\beaa
\bigg|\int_{\Mtrap}{F\ov{(S^1_0\pr_r+r^{-1} S^0)}\psi}\bigg| \les {\bigg(\int_{\Mtrap} |F|^2\bigg)^{\frac{1}{2}} \big(\M[\psi](\Reals)\big)^{\frac{1}{2}}},
\eeaa
and we can control the integral $\big|\int_{\Mtrap} {FS^1_1}\psi\big|$ either by {taking the adjoint of $S^1_1$} to find it bounded by $
(\int_{\Mtrap}|{\pr^{\leq 1} F}|^2)^{\frac{1}{2}}{(\M[\psi](\Reals))^{\frac{1}{2}}}$ or by applying Cauchy--Schwarz and Lemma \ref{lem:gpert:MMtrap} to bound it by
$(\int_{\Mtrap}\tau^{1+\de}| F|^2)^{\frac{1}{2}} {(\EM[\psi](\Reals))^{\frac{1}{2}}}$. To conclude, we have
\bea
 \bigg|\int_{\Mtrap} {F\ov{(S^1 +r^{-1} S^0)\psi}}\bigg|\les \big(\widetilde{\mathcal{N}}'_{trap}[F] \big)^{\frac{1}{2}}{( \EM[\psi](\Reals))^{\frac{1}{2}}},
\eea
which then yields
{\bea
\lab{eq:nondegenerateEMF:MFpart:exactkerr}
{\widetilde{\MF}[\psi]}
&\les& \big(\widetilde{\mathcal{N}}'_{trap}[F] \big)^{\frac{1}{2}} {(\EM[\psi](\Reals))^{\frac{1}{2}}}+\bigg|\int_{\Mntrap}{F\ov{(S^1 +r^{-1} S^0)\psi}}\bigg|.
\eea}

We next consider the integral $\big|\int_{\Mntrap}{F\ov{(S^1 +r^{-1} S^0)\psi}}\big|$. The integral over a finite radius region is manifestly bounded by $\widetilde{\mathcal{N}}'[\psi, F]$ by applying Cauchy--Schwarz. Notice that when the integral is integrated over the large radius region where $S^1$ and $S^0$ are differential operators, $S^1$ takes {the form} $A\pr_{\tau} +O(r^{-1})\pr_{\tau}+O(1)\pr_{r}+O(r^{-2})\pr_{\tphi}$ and $S^0=O(1)$. Hence, {applying Cauchy--Schwarz}, we infer
\bea
\nn\bigg|\int_{\Mntrap}{F\ov{(S^1 +r^{-1} S^0)\psi}}\bigg| &\les& {\bigg|\int_{\Mntrap}F\ov{\pr_\tau\psi}\bigg|+\int_{\Mntrap}|F|\big(|\pr_r\psi|+r^{-1}|\psi|+r^{-1}|\nab\psi|\big)}\\
&\les&\widetilde{\mathcal{N}}'[\psi, F] {+{\bigg(\int_{\Mntrap} |F|^2\bigg)^{\frac{1}{2}} \big(\M[\psi](\Reals)\big)^{\frac{1}{2}}}}.
\eea
Putting together the above estimates yields 
\bea
\label{eq:morawetzfluxestimate:EMFestimate:exactKerr:lastsectproof}
{\widetilde{\MF}[\psi]}&\les& \widetilde{\mathcal{N}}'[\psi, F]+ {\big(\widetilde{\mathcal{N}}'[F] \big)^{\frac{1}{2}}(\EM[\psi](\Reals))^{\frac{1}{2}}}.
\eea

Next, we apply the energy estimate \eqref{eq:energyestimate:awayhorizon:Kerrandpert:lastsect}, which holds in perturbations of Kerr, in the particular case $\g=\gam$. Then,  \eqref{eq:energyestimate:awayhorizon:Kerrandpert:lastsect}, which is applied here in the case $\ep=0$, implies
\bea
\lab{eq:energyestimate:globally:exactKerr:lastsect}
{\sup_{\tau\in\Reals}\E[\psi](\tau)}
&\les&{\widetilde{\MF}[\psi]}
+\int_{\MM}|F|^2 +\sup_{\tau\in\Reals}\bigg|\int_{\Mntrap(-\infty, \tau)}{F\ov{\pr_{\tau}\psi}}\bigg|\nn\\
&&
+\bigg(\min\bigg(\int_{\Mtrap}\tau^{1+\dec}|F|^2, \int_{\Mtrap}|\pr F|^2\bigg)\bigg)^{\frac{1}{2}}\Big(\EM[\psi](\Reals)\Big)^{\frac{1}{2}}.
\eea
Together with the Morawetz-flux estimate \eqref{eq:morawetzfluxestimate:EMFestimate:exactKerr:lastsectproof}, we deduce
\beaa
{\widetilde{\EMF}[\psi]}
&\les& \widetilde{\mathcal{N}}'[\psi, F]+ {\big(\widetilde{\mathcal{N}}'[F] \big)^{\frac{1}{2}}( \EM[\psi](\Reals))^{\frac{1}{2}}}+\int_{\MM}|F|^2 +\sup_{\tau\in\Reals}\bigg|\int_{\Mntrap(-\infty, \tau)}{F\ov{\pr_{\tau}\psi}}\bigg|\\
&&
+\bigg(\min\bigg(\int_{\Mtrap}\tau^{1+\dec}|F|^2, \int_{\Mtrap}|\pr F|^2\bigg)\bigg)^{\frac{1}{2}}\Big(\EM[\psi](\Reals)\Big)^{\frac{1}{2}},
\eeaa
and hence
\beaa
\EMF[\psi](\Reals) &\les& \widetilde{\mathcal{N}}'[\psi, F],
\eeaa
as stated in \eqref{eq:EMFestimates:inhomowave:general}. This concludes the proof of Lemma \ref{lem:EMFestimates:inhomowave:Kerr:general}.
\end{proof}

We are now ready to prove Proposition \ref{prop:energymorawetzmicrolocalwithblackbox}. We have introduced in Lemma \ref{lemma:computationofthederiveativeofsrqtg} the 1-form $N_{det}$ given by  
\beaa
(N_{det})_\mu=\frac{1}{\sqrt{|\g|}}\pr_\mu\sqrt{|\g|} - \frac{1}{\sqrt{|\g_{a,m}|}}\pr_\mu\sqrt{|\g_{a,m}|}. 
\eeaa
This allows us to rewrite the scalar wave equation $\square_\g\psi = F$ as 
\bea\lab{eq:waveequationforpsiafterthrowingthequasilineartermonRHS}
\square_{\g_{a,m}}\psi = F-\err[\psi]
\eea
where 
\bea\lab{eq:waveequationforpsiafterthrowingthequasilineartermonRHS:errorterms}
\bsplit
\err[\psi] =& \err_1[\psi]+\err_2[\psi]+\err_3[\psi],\\
\err_1[\psi]:=&\widecheck{\g}^{\a\b}\pr_{\a}\pr_{\b}\psi, \qquad \err_2[\psi]:=\frac{1}{\sqrt{|\g_{a,m}|}}\pr_{\a}\left(\sqrt{|\g_{a,m}|}\widecheck{\g}^{\a\b}\right)\pr_{\b}\psi,\\
\err_3[\psi]:=&(N_{det})^\a\pr_{\a}F.
\end{split}
\eea

We apply Lemma \ref{lem:EMFestimates:inhomowave:Kerr:general}  to \eqref{eq:waveequationforpsiafterthrowingthequasilineartermonRHS} {which yields}
\bea\lab{eq:waveequationforpsiafterthrowingthequasilineartermonRHS:basicNRJMor}
\nn\EMF[\psi] (\Reals)&\les& {\widetilde{\mathcal{N}}'}[\psi, F{-}\err[\psi]]\\
&\les& {\widetilde{\mathcal{N}}'}[\psi, F]+{\widetilde{\mathcal{N}}'}[\psi, \err_1[\psi]]+{\widetilde{\mathcal{N}}'}[\psi, \err_2[\psi]]+{\widetilde{\mathcal{N}}'}[\psi, \err_3[\psi]].
\eea
Now, the assumptions \eqref{eq:controloflinearizedinversemetriccoefficients} for the perturbed inverse metric coefficients, as well as the control of $N_{det}$ provided by Lemma \ref{lemma:computationofthederiveativeofsrqtg}, immediately yields
\beaa
\sum_{j=1}^3\widetilde{\mathcal{N}}'_{{trap}}[\psi, \err_j[\psi]](\mathbb{R})\les \ep\left({\int_{1}^{\infty}\frac{d\tau}{\tau^{1+\dec}}}\right){\sup_{\tau\in\mathbb{R}}\E^{(1)}[\psi](\tau)}\les\ep\,{\sup_{\tau\in\mathbb{R}}\E^{(1)}[\psi](\tau)}.
\eeaa
Together with \eqref{eq:waveequationforpsiafterthrowingthequasilineartermonRHS:basicNRJMor}, we infer
\bea\lab{eq:waveequationforpsiafterthrowingthequasilineartermonRHS:basicNRJM:1}
\nn\EMF[\psi] (\Reals) &\les& \int_{\Mtrap}|\pr F|^2{+\sup_{\tau\in\Reals}\bigg|\int_{\Mntrap(-\infty, \tau)}{F\ov{\pr_{\tau}\psi}}\bigg|}
+\int_{\Mntrap}\big(|\pr_r\psi|+r^{-1}|\psi|\big)|F|\\
&&+\int_{\MM}|F|^2+\sum_{j=1}^3K_j+\ep\sup_{\tau\in\mathbb{R}}\E^{(1)}[\tau],
\eea
where 
\beaa
K_j&:=&
{\sup_{\tau\in\Reals}\bigg|\int_{\Mntrap(-\infty, \tau)}{\err_j[\psi]\ov{\pr_{\tau}\psi}} \bigg|}\nn\\
&&+\int_{\Mntrap}\big(|\pr_r\psi|+r^{-1}|\psi|\big)|\err_j[\psi]|+\int_{\MM}|\err_j[\psi]|^2, \quad j=1,2,3.
\eeaa

Next, we estimate the terms $K_j$ appearing in the RHS of \eqref{eq:waveequationforpsiafterthrowingthequasilineartermonRHS:basicNRJM:1}. First, in view of the assumptions \eqref{eq:controloflinearizedinversemetriccoefficients} for the perturbed inverse metric coefficients and in view of the control of $N_{det}$ provided by Lemma \ref{lemma:computationofthederiveativeofsrqtg}, we infer from Lemma \ref{lemma:basiclemmaforcontrolNLterms:bis} that 
\beaa
K_1+K_3\les\ep\EM^{(1)}[\psi](\mathbb{R})
\eeaa
which together with \eqref{eq:waveequationforpsiafterthrowingthequasilineartermonRHS:basicNRJM:1} implies
\bea\lab{eq:waveequationforpsiafterthrowingthequasilineartermonRHS:basicNRJM:2}
\nn\EMF[\psi] (\Reals)&\les& \int_{\Mtrap}|\pr F|^2{+\sup_{\tau\in\Reals}\bigg|\int_{\Mntrap(-\infty, \tau)}{F\ov{\pr_{\tau}\psi}}\bigg|}\\
\nn&&+\int_{\Mntrap}\big(|\pr_r\psi|+r^{-1}|\psi|\big)|F|+\int_{\MM}|F|^2+K_2+\ep\EM^{(1)}[\psi](\mathbb{R}).
\eea

It remains to control $K_2$. In view of the definition of $\err_2[\psi]$ in \eqref{eq:waveequationforpsiafterthrowingthequasilineartermonRHS:errorterms}, we  have
\beaa
\err_2[\psi]=N^\a\pr_\a\psi, \qquad N^\a:=\frac{1}{\sqrt{|\g_{a,m}|}}\pr_{x^\b}\left(\sqrt{|\g_{a,m}|}\widecheck{\g}^{\a\b}\right).
\eeaa
Using \eqref{eq:assymptiticpropmetricKerrintaurxacoord:volumeform}, we infer
 \beaa
 N^\a &=& \pr_\tau(\widecheck{\g}^{\a\tau})+\pr_r(\widecheck{\g}^{\a r})+\pr_{x^a}(\widecheck{\g}^{\a a})+O(r^{-1})\widecheck{\g}^{\a r}+{O(1)\widecheck{\g}^{\a a}}\\
 &=& \dk(\widecheck{\g}^{\a\tau})+O(r^{-1})\dk^{\leq 1}(\widecheck{\g}^{\a r})+\dk^{\leq  1}(\widecheck{\g}^{\a a}),
 \eeaa
which together with \eqref{eq:controloflinearizedinversemetriccoefficients} implies
\beaa
N^\tau &=& \dk(\widecheck{\g}^{\tau\tau})+O(r^{-1})\dk^{\leq 1}(\widecheck{\g}^{\tau r})+\dk^{\leq  1}(\widecheck{\g}^{\tau a})=\dk^{\leq 1}\Ga_g,\\
N^r &=& \dk(\widecheck{\g}^{r\tau})+O(r^{-1})\dk^{\leq 1}(\widecheck{\g}^{rr})+\dk^{\leq  1}(\widecheck{\g}^{ra})=r\dk^{\leq 1}\Ga_g,\\
N^a &=& \dk(\widecheck{\g}^{a\tau})+O(r^{-1})\dk^{\leq 1}(\widecheck{\g}^{ar})+\dk^{\leq  1}(\widecheck{\g}^{ab})=\dk^{\leq 1}\Ga_g.
\eeaa
In view of Lemma \ref{lemma:basiclemmaforcontrolNLterms:bis}, we infer
\beaa
K_2\les\ep\EM^{(1)}[\psi](\mathbb{R})
\eeaa
which together with \eqref{eq:waveequationforpsiafterthrowingthequasilineartermonRHS:basicNRJM:2} implies
\beaa
\nn\EMF[\psi] (\Reals)&\les& \int_{\Mtrap}|\pr F|^2{+\sup_{\tau\in\Reals}\bigg|\int_{\Mntrap(-\infty, \tau)}{F\ov{\pr_{\tau}\psi}}\bigg|}\\
\nn&&+\int_{\Mntrap}\big(|\pr_r\psi|+r^{-1}|\psi|\big)|F|+\int_{\MM}|F|^2+\ep\EM^{(1)}[\psi](\mathbb{R})
\eeaa
as stated. This concludes the proof of Proposition \ref{prop:energymorawetzmicrolocalwithblackbox}.



\bigskip
\footnotesize

Siyuan Ma, \par\nopagebreak
\textsc{State Key Laboratory of Mathematical Sciences, Academy of Mathematics and Systems Science, Chinese Academy of Sciences, Beijing 100190, China}\par\nopagebreak
\textit{E-mail address:} \href{mailto:siyuan.ma@amss.ac.cn}{siyuan.ma@amss.ac.cn}

\bigskip

J\'{e}r\'{e}mie Szeftel, \par\nopagebreak
\textsc{CNRS \& Laboratoire Jacques-Louis Lions, Sorbonne Universit\'{e},
4 place Jussieu 75005 Paris, France}\par\nopagebreak
\textit{E-mail address}: \href{mailto:jeremie.szeftel@sorbonne-universite.fr}{jeremie.szeftel@sorbonne-universite.fr}


\end{document}